\documentclass[a4paper,12pt,leqno]{amsart}
\usepackage[a4paper, top=2.4cm, bottom=2cm, left=3cm, right=3cm]{geometry}

\usepackage{amsmath,amssymb,latexsym,amsthm,tikz, enumerate,amscd,hyperref}
\usepackage{graphicx,tikz}

\usetikzlibrary{positioning,arrows}
\usepackage[utf8]{inputenc} 
\usepackage[T1]{fontenc}
\usepackage{hyperref}
\usepackage{theoremref}

\numberwithin{equation}{section}
\theoremstyle{plain}
\newtheorem{lem}[equation]{Lemma}

\newtheorem{prop}[equation]{Proposition}
\newtheorem{thm}[equation]{Theorem}
\newtheorem{cor}[equation]{Corollary}

\newtheorem{conj}[equation]{Conjecture}
\theoremstyle{definition}
\newtheorem{definition}[equation]{Definition}

\newtheorem{remark}[equation]{Remark}

\newtheorem{claim}{Claim}

\newtheorem*{claim*}{Claim}
\newtheorem*{que*}{Question}
\newtheorem*{remark*}{Remark}
\newtheorem*{definition*}{Definition}

\newcommand{\od}{\widehat{\Sigma}}
\newcommand{\Si}{\Sigma}

\newcommand{\bC}{\mathfrak C}

\newcommand{\ca}{\mathcal {A}} 
\newcommand{\prj}{\operatorname{Proj}} 
\newcommand{\s}{\operatorname{Sal}} 

\newcommand{\vertex}{\operatorname{Vert}} 
\newcommand{\lk}{\operatorname{lk}}

\newcommand{\cp}{\mathcal {P}} 
\newcommand{\ce}{\mathcal {E}}

\newcommand{\whC}{\widehat C}

\newcommand{\supp}{\operatorname{Supp}}
\newcommand{\mc}{\operatorname{Mincut}}

\newcommand{\ad}{\textbf{d}}
\newcommand{\uls}{\underline{s}}
\newcommand{\bS}{\mathsf S}

	{ 
	\end{enumerate}
}

\begin{document}

\title{Bestvina metric and tree reduction for $K(\pi,1)$-conjecture}

\author{Jingyin Huang}
\address{Department of Mathematics, The Ohio State University, 100 Math Tower, 231 W 18th Ave, Columbus, OH 43210, U.S.}
\email{huang.929@osu.edu}
\maketitle

\begin{abstract}
We reduce the $K(\pi,1)$-conjecture for all Artin groups to properties of Artin groups whose Coxeter diagrams are trees, from which we deduce new classes of Artin groups satisfying the $K(\pi,1)$-conjecture. This relies on constructing actions of Artin groups on Bestvina complexes of suitable Garside groupoids.
\end{abstract}
\section{Introduction}

\subsection{Main results}
The $K(\pi,1)$-conjecture, due to Arnol'd, Brieskorn, Pham, and Thom, predicts that for each Artin group $A_S$, the space of regular orbits of a canonical action of the associated Coxeter group is a $K(A_S,1)$-space. We refer to \cite{paris2012k} for more background on this conjecture. It has been established for several important classes of Artin groups, notably spherical, type FC, affine etc \cite{deligne,charney1995k,charney2004deligne,callegaro2010k,paolini2021proof,juhasz2023class,goldman2022k,haettel2023new,huang2023labeled,huang2024,goldman2025deligne,huang2025353}. Nevertheless, our understanding of the conjecture remains limited in dimensions $\ge 4$.

The goal of this article is to prove a reduction theorem showing that, provided certain properties hold for Artin groups whose Coxeter diagrams are trees, the $K(\pi,1)$-conjecture follows for all Artin groups.

Let $A_S$ be an Artin group with standard generating set $S$. Recall that the \emph{Artin complex} $\Delta_S$ of $A_S$, has its vertex set in 1-1 correspondence with left cosets of the form $\{gA_{S\setminus\{s\}}\}_{g\in A_S,s\in S}$, where $A_{S\setminus\{s\}}$ is the subgroup of $A_S$ generated by $S\setminus\{s\}$. A collection of vertices span a simplex if the corresponding collection of cosets have non-empty common intersection. This a natural generalization of the classical notion of Coxeter complexes for Coxeter groups to the world of Artin groups.  Artin complexes play important roles in the study Artin groups, for example, proving $K(\pi,1)$-conjecture for all Artin groups is equivalent to proving the contractibility of all Artin complexes $\Delta_S$ whenever $A_S$ is not spherical \cite{godelle2012k}.

 A vertex of the Artin complex $\Delta_S$ is of type $\hat s=S\setminus\{s\}$ if it corresponds to a coset of the form $gA_{S\setminus\{s\}}$. A \emph{special $4$-cycle} in $\Delta_S$ is an embedded $4$-cycle in the $1$-skeleton $\Delta_S^1$ whose vertex types alternate as $\hat s\hat t\hat s\hat t$ for some $s,t\in S$. A special $4$-cycle has an \emph{center}, if its vertices are adjacent to a common vertex in $\Delta_S$.


\begin{thm}(Corollary~\ref{cor:cycle reduction all})
	\label{thm:cycle 1}
	Suppose that for each Artin group $A_S$ with Coxeter diagram being a tree, $A_S$ satisfies the $K(\pi,1)$-conjecture and any special 4-cycle in $\Delta_S$ has a center. Then any Artin group satisfies the $K(\pi,1)$-conjecture.
\end{thm}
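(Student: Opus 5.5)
We will argue by induction on $|S|$, together with the number of edges of the Coxeter diagram $\Gamma_S$ that lie on cycles. Recall from \cite{godelle2012k} that $A_S$ satisfies the $K(\pi,1)$-conjecture if and only if every proper spherical-or-not parabolic piece behaves well and $\Delta_S$ is contractible when $A_S$ is not spherical. By induction on $|S|$ we may assume all proper standard parabolic subgroups $A_{S'}$, $S'\subsetneq S$, satisfy the conjecture. It therefore suffices to show that $\Delta_S$ is contractible, or equivalently, by the standard Charney--Davis/Godelle--Paris reduction, that the modified Deligne complex of $A_S$ is contractible. If $\Gamma_S$ is a tree we are done by hypothesis. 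If $\Gamma_S$ is disconnected, $A_S$ is a direct product and the conjecture follows from the factors. So assume $\Gamma_S$ is connected and contains a cycle.

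The key step is to \emph{unfold} a cycle, following the abstract: build a Garside-type groupoid whose Bestvina complex carries an $A_S$-action. Concretely, I would choose a vertex $s\in S$ lying on a cycle and split it into several copies $s_1,\dots,s_k$, one for each edge at $s$ (or for a suitable partition of these edges). The result is a diagram $\Gamma'$ with fewer cycles. The Artin group $A_{S'}$ of $\Gamma'$ relates to $A_S$ via the identification of the $s_i$. I would then realise $A_S$ as acting on a complex $X$. This complex is assembled from copies of $\Delta_{S'}$, or of the Deligne complexes of $A_{S'}$, glued along the subcomplexes of type $\hat{s}_i$. Equivalently, $X$ is the Bestvina complex of a Garside groupoid whose vertex groups are the tree or smaller-cycle-rank pieces. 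The Bestvina metric on $X$ is a piecewise $\ell^\infty$-type metric built from Garside normal forms. Its contractibility would follow from a convexity or geodesic-bicombing argument, once one knows that links satisfy a ``flag''/lattice condition.

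The link condition is where the special $4$-cycle hypothesis enters. Links of vertices in $X$ reduce to pieces of the Artin complexes $\Delta_{S''}$ for tree diagrams $S''$. The needed lattice property, namely the existence of joins or meets of pairs of vertices of types $\hat s,\hat t$, is exactly the statement that every special $4$-cycle in $\Delta_{S''}$ has a center: two vertices of type $\hat s$ with two common neighbours of type $\hat t$ must have a common upper bound. I would show that centers of $4$-cycles give a bowtie-free/lattice structure on the poset of cosets. This makes the Bestvina normal form well defined and yields a consistent $A_S$-equivariant bicombing. With the metric in hand, $X$ is contractible. The stabilizers are conjugates of smaller Artin groups which satisfy the conjecture by induction. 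A Charney--Davis style complex-of-groups argument then gives that the orbit space, or the Salvetti complex of $A_S$, is aspherical.

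The main obstacle I expect is the construction of the groupoid and the verification that its Bestvina complex is contractible with the required metric property. In particular, one must check that the ``center'' property for $4$-cycles in tree-type Artin complexes propagates to the glued complex, including cycles passing through several copies. I would first treat the case where $\Gamma_S$ has a single cycle, splitting one vertex into two copies, and then iterate, decreasing the first Betti number of $\Gamma_S$ at each step. An alternative route would be to show directly that the hypotheses (conjecture plus $4$-cycle centers) are inherited by each diagram obtained by unfolding, so that the induction runs on the cycle rank.
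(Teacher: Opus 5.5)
Your outer frame matches the paper: induction on $|S|$ with forests as the base case, and reduction via Godelle--Paris to contractibility of $\Delta_S$. The construction at the centre of your proposal, however, does not work, and the main difficulty is left open.

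\textbf{The vertex-splitting step fails.} To lower the cycle rank, some copy $s_i$ must miss an edge $sv$. In $A_{S'}$ that copy then commutes with $v$. In $A_S$, however, $s$ satisfies a braid relation of length $m_{sv}\ge 3$ with $v$, and $sv\neq vs$ already in $W_S$. So identifying the $s_i$ does not even give a homomorphism $A_{S'}\to A_S$, and there is no evident $A_S$-action on a complex glued from copies of $\Delta_{S'}$.

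The paper never changes the diagram. It takes a path $P$ on an induced cycle whose interior vertices have valence $2$, deletes the interior to get $\Lambda_P$, and sets $\mathcal C_P$ to be the interior vertices of $P$ together with $\mc_{\Lambda_P}(\{a\},\{b\})$. Gluing the lattice of minimal cuts to the linear order along $P$ gives a partial cyclic order. The minimal cut complex $\Delta^P_\Lambda$ has vertices the cosets $gA_{\hat T}$, $T\in\mathcal C_P$, and simplices the sets of pairwise comparable types whose cosets meet. It is $\widetilde A_n$-like, hence a Bestvina complex, and its stabilizers are proper parabolic subgroups of $A_S$ itself.

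\textbf{The missing core: propagating the 4-cycle condition.} Even for the correct complex, your claim that ``links reduce to tree pieces'' is false. The link of a vertex of type $\hat T$ is governed by $\Delta_{\Lambda_T}$ for a proper induced subdiagram $\Lambda_T$, which in general still contains cycles. Moreover, the lattice condition there concerns posets of minimal cuts, so it needs the strong labeled 4-cycle condition (Proposition~\ref{prop:strong}). That in turn needs the labeled 4-cycle condition for all induced subdiagrams, not only trees.

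Closing this gap is the propagation result, Proposition~\ref{prop:prop}, which is the heart of the paper. It chooses $P$ on an induced cycle closest to $s$ and uses B-convexity of the subcomplexes $Y_{x_i}$ of $\Delta^P_\Lambda$, which is already available by induction. It then minimizes first the perimeter of admissible 4-gons and then a sum of Bestvina asymmetric distances. Bestvina's non-positive curvature (Proposition~\ref{prop:bnpc}) and the cyclic order on minimal cuts then force the filling disk to degenerate. You name this as ``the main obstacle'' but supply no mechanism.

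\textbf{From a contractible complex to $K(\pi,1)$.} Your closing complex-of-groups step is also unjustified. A contractible complex with parabolic stabilizers yields some $K(A_S,1)$, but not asphericity of the specific orbit space. $\Delta^P_\Lambda$, because of its comparability condition, is not the nerve of the Godelle--Paris covering of the universal cover of the Salvetti complex. The paper instead proves $\Delta_\Lambda$ itself is contractible by showing $\Delta_\Lambda\simeq\Delta_1\simeq\Delta^P_\Lambda$. This uses Lemma~\ref{lem:homotopy injective} and a Mayer--Vietoris/relative Hurewicz/Whitehead argument resting on inductive contractibility of smaller Artin complexes (Lemma~\ref{lem:dr}). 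Only then does it apply Theorem~\ref{thm:kpi1}.
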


\begin{thm}(Corollary~\ref{cor:cycle reduction single})
	\label{thm:cycle 2}
Let $A_S$ be an Artin group with its Coxeter diagram $\Lambda$. Suppose that for any induced subdiagram $\Lambda'\subset \Lambda$ which is a tree, the Artin group $A_{S'}$ defined by $\Lambda'$ satisfies the $K(\pi,1)$-conjecture and any special 4-cycle in $\Delta_{S'}$ has a center. Then $A_S$ satisfies the $K(\pi,1)$-conjecture.
\end{thm}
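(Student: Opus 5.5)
We prove Theorem~\ref{thm:cycle 2} by induction on $|S|$; Theorem~\ref{thm:cycle 1} then follows immediately. By \cite{godelle2012k}, it suffices to show that $\Delta_{S}$ is contractible whenever $A_S$ is not spherical, assuming the $K(\pi,1)$-conjecture for all proper standard parabolic subgroups. The hypothesis passes to every induced subdiagram of $\Lambda$, so we may take it as the inductive hypothesis. Two cases are then immediate. If $\Lambda$ is a tree, the conclusion is part of the hypothesis. If $\Lambda$ is disconnected, $A_S$ is a direct product and the conjecture follows from the factors. So assume $\Lambda$ is connected, is not a tree, and that every proper induced subdiagram already satisfies the conjecture.

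The main construction, suggested by the title, replaces $\Delta_S$ by a complex with a Garside-type metric. Choose a spanning tree $T\subset\Lambda$, or more precisely a vertex $s$ lying on a cycle together with a tree-like decomposition of $\Lambda$. From this data build a Garside groupoid, for instance by cutting $\Lambda$ open along edges not in $T$ and taking the groupoid whose local groups are the tree-type Artin groups $A_{S'}$. We then form its Bestvina complex $X$ and equip it with the Bestvina (piecewise $\ell^\infty$) metric. Bestvina's argument, generalized to Garside groupoids, makes $X$ contractible, indeed CAT(0)-like with convex geodesics. The next step is to construct an action of $A_S$ on $X$ and compare $X$ with $\Delta_S$, or with the Deligne--Salvetti complex of $A_S$. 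We aim to show that a suitable subcomplex of $X$ is $A_S$-equivariantly homotopy equivalent to the universal cover of the Salvetti complex, with stabilizers being parabolic subgroups of tree type. Those stabilizers satisfy the $K(\pi,1)$-conjecture by hypothesis. A standard complex-of-groups / equivariant homotopy argument then shows that the Salvetti complex is aspherical.

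The center condition on special $4$-cycles is used to verify the lattice (Garside) property of the groupoid. Concretely, the relevant posets of vertices of types $\hat s,\hat t$ in $\Delta_{S'}$ must have joins and meets. A non-centered special $4$-cycle would be exactly a pair of elements with two incomparable minimal upper bounds. With centers available, upper bounds exist and are unique, so the groupoid is Garside and the Bestvina metric applies. Contractibility of $\Delta_{S'}$, which is equivalent to the $K(\pi,1)$-conjecture for $A_{S'}$, is used to glue the local pieces.

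The main obstacle is this step: establishing the lattice property of the groupoid's local posets from the $4$-cycle condition, and showing that the glued complex $X$ really carries an $A_S$-action whose fixed sets and links reproduce $\Delta_S$. There is a particular difficulty along the cycles of $\Lambda$ removed when passing to $T$, where the groupoid must encode the extra relations. I would first attempt the case where $\Lambda$ is a single cycle, then handle general $\Lambda$ by removing one vertex at a time and using induction.
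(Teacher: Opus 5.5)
\paragraph{The 4-cycle hypothesis does not reach the subdiagrams you need.}
For a diagram $\Lambda$ containing a cycle, take any Garside-type complex (in the paper, the minimal cut complex $\Delta^P_\Lambda$). The lattice condition in the link of a vertex of type $\hat T$ reduces to the strong labeled $4$-cycle condition for $\Delta_{\Lambda_T}$, where $\Lambda_T$ is the component of $\Lambda\setminus T$ containing $P\setminus T$ (Lemma~\ref{lem:link of mincut}, via Proposition~\ref{prop:strong}). This $\Lambda_T$ is a proper induced subdiagram that in general still contains cycles. Your hypothesis gives centers only for trees, and your induction carries forward only the $K(\pi,1)$ conclusion, so it cannot supply this. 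Your sentence ``with centers available, upper bounds exist and are unique'' assumes exactly what must be proved.

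The missing idea is the propagation theorem, Proposition~\ref{prop:prop}: if $\Lambda$ is connected and not a tree, and every proper induced subdiagram satisfies the labeled $4$-cycle condition, then so does $\Delta_\Lambda$. This is the heart of the paper. For a $4$-cycle of type $\hat s\hat t\hat s\hat t$, one takes $P$ on an induced cycle closest to $s$. One uses that $\Delta^P_\Lambda$ is already an $\widetilde A_n$-like complex by the inductive hypothesis (Proposition~\ref{prop:mincutAn}) and works with the $B$-convex subcomplexes $Y_{x_i}$. A distance-minimizing admissible $4$-gon is then analyzed with Bestvina's asymmetric metric and its non-positive curvature (Proposition~\ref{prop:bnpc}), which forces a degenerate configuration. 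The induction therefore has to run simultaneously on the labeled $4$-cycle condition and on contractibility of Artin complexes, as in Theorem~\ref{thm:cycle reduction}.

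\paragraph{The construction and comparison steps also fail as written.}
You never actually define the Garside groupoid. ``Cutting $\Lambda$ along the non-tree edges'' with tree-type local groups gives no indication of how $A_S$, with its cycle relations, would act. The natural candidate is the relative Artin complex $\Delta_{S,S'}$ over an induced cycle $S'$ with its cyclic orders, but it fails because the link relation $<_x$ is in general not transitive. The paper's fix is $\Delta^P_\Lambda$, whose vertices are cosets $gA_{\hat T}$ with $T$ either an interior vertex of $P$ or an element of $\mc_{\Lambda_P}(\{a\},\{b\})$. Its stabilizers are proper parabolic subgroups of arbitrary shape, not tree-type ones, so induction over all proper induced subdiagrams is needed here too.

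Also, a complex with nontrivial stabilizers cannot be $A_S$-homotopy equivalent to a free one such as the universal cover of the Salvetti complex. What the paper actually shows is that $\Delta_\Lambda$ itself is contractible:
\begin{enumerate}
\item $\Delta^P_\Lambda$ is contractible.
\item $\Delta^P_\Lambda$ is homotopy equivalent to the auxiliary complex $\Delta_1$ (Lemma~\ref{lem:homotopy injective}).
\item The inclusion $\Delta_1\hookrightarrow\Delta_0$, where $\Delta_0$ is a subdivision of $\Delta_\Lambda$, is a homotopy equivalence. This is a Mayer--Vietoris argument using $\Delta_1=\bigcap_{V\in\Theta'}\Delta_0^V$ and the deformation retractions of Lemma~\ref{lem:dr}, which need contractibility of Artin complexes of proper non-spherical subdiagrams; relative Hurewicz and Whitehead then finish it.
\end{enumerate}
Theorem~\ref{thm:kpi1} then gives the $K(\pi,1)$-conjecture.
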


Combining the above results with \cite{huang2023labeled}, we  further reduce to a smaller class.

\begin{thm}(Corollary~\ref{cor:G2})
	\label{thm:cycle 1'}
Suppose that for every Artin group $A_S$ whose Coxeter diagram is a tree with all edge labels at most $5$, the $K(\pi,1)$-conjecture holds and every special $4$-cycle in $\Delta_S$ admits a center. Then the $K(\pi,1)$-conjecture holds for all Artin groups.
\end{thm}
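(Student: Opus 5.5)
The plan is to deduce Theorem~\ref{thm:cycle 1'} from Theorem~\ref{thm:cycle 1} together with the reduction in \cite{huang2023labeled}, which handles edge labels $\ge 6$. By Theorem~\ref{thm:cycle 1} it suffices to show the following: for every Artin group $A_S$ whose Coxeter diagram $\Lambda$ is a tree, $A_S$ satisfies the $K(\pi,1)$-conjecture and every special $4$-cycle in $\Delta_S$ has a center. We assume this is already known for trees all of whose labels are at most $5$. We argue by induction on the number of edges of $\Lambda$ with label $\ge 6$ (including $\infty$). The base case, with no such edges, is exactly the hypothesis.

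For the inductive step, pick an edge $e=\{s,t\}$ of $\Lambda$ with label $m\ge 6$. Removing $e$ splits the tree into two subtrees $\Lambda_1\ni s$ and $\Lambda_2\ni t$. When $m=\infty$, $A_S$ is the amalgamated free product $A_{S_1}*A_{S_2}$ over the trivial group. The $K(\pi,1)$-conjecture then follows from the standard amalgamation results. The Artin complex $\Delta_S$ can be analysed through the Bass--Serre tree, and a special $4$-cycle must map into a single vertex stabilizer, which gives a center.

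For finite $m\ge 6$, we use \cite{huang2023labeled}. Its key input is that the relevant rank-two piece behaves like a ``large'' edge: the $I_2(m)$ Artin complex, for $m\ge 6$, has enough combinatorial convexity that a labeled-$4$-cycle (or bowtie-free) condition propagates across $e$. Concretely, I expect \cite{huang2023labeled} to give a statement of the form: if the induced subdiagrams $\Lambda_1\cup\{t\}$ and $\Lambda_2\cup\{s\}$ satisfy the $K(\pi,1)$-conjecture and the $4$-cycle/center property, then so does $\Lambda$. Both of these subdiagrams are trees with fewer edges of label $\ge 6$ than $\Lambda$, since at least the edge $e$ has been discarded. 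So the inductive hypothesis applies to them, and this concludes the induction. Once it is complete, Theorem~\ref{thm:cycle 1} yields the $K(\pi,1)$-conjecture for all Artin groups.

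The main obstacle is that \cite{huang2023labeled} is phrased for the $K(\pi,1)$-conjecture rather than for the ``special $4$-cycles have centers'' property. Both must be transferred across the edge $e$. My first attempt is to run the induction with the stronger property used in \cite{huang2023labeled}, presumably a flag/bowtie-free condition on the vertex sets of types $\hat s,\hat t$. I would then verify that this property implies that special $4$-cycles have centers, and that the gluing along a label $\ge 6$ edge preserves it. If this stronger property is unavailable, the fallback is to embed a given special $4$-cycle of $\Delta_S$ into the Artin complex of the amalgam decomposition coming from $e$. One then uses the $m\ge 6$ rank-two geometry (girth of the $I_2(m)$ Artin complex at least $2m\ge 12$) to push the cycle into a vertex link, where the inductive hypothesis supplies the center.
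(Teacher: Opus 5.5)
Your overall architecture matches the paper's. You first establish the hypotheses of Theorem~\ref{thm:cycle 1} for \emph{all} trees by dealing with edges of label $\ge 6$, then apply that theorem. The $K(\pi,1)$ half of this is indeed supplied by \cite[Prop 9.12]{huang2023labeled}.

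\textbf{The missing step.} The step you flag as the main obstacle is propagating the center property across edges of label $\ge 6$ (for a class of trees closed under subtrees this is equivalent to the labeled 4-cycle condition, by passing to the link of a center). This is precisely the new content here, and you only expect it rather than prove it. What \cite{huang2023labeled} provides (Proposition~\ref{prop:tree}) is weaker: the labeled 4-cycle condition for the relative complexes $\Delta_{\Lambda,\Lambda'}$, with $\Lambda'$ a component of $\Lambda\setminus E$, plus $6$-solidity of the edges in $E$. It does not give the condition for $\Delta_\Lambda$ itself.

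The paper closes this gap with Proposition~\ref{prop:G2bowtie free}:
\begin{itemize}
\item By \cite[Prop 6.17]{huang2023labeled}, it suffices that $\Delta_{\Lambda,\Theta}$ be bowtie free for every maximal linear subdiagram $\Theta$.
\item The heavy edges of $\Theta$ are crossed one at a time by Lemma~\ref{lem:enlarge0}, inside an induction on the number of vertices of the linear subdiagram through \cite[Lem 6.10]{huang2023labeled}.
\item That lemma makes $\Delta_{\Lambda,b_1b_2b_3}$ CAT(0) with $(\pi/6,\pi/3,\pi/2)$ triangles. This needs girth $\ge 12$ at $\hat b_1$-vertices (from $6$-solidity) and girth $\ge 6$ at $\hat b_3$-vertices (from hypothesis (3) of the lemma).
\item It then proves the subcomplexes $X_i,Y_i$ of vertices adjacent to the cycle vertices are convex, and rules out a non-degenerate perimeter-minimizing admissible 4-gon by the tessellated-rectangle argument.
\end{itemize}
Your fallback, using girth $\ge 12$ of the rank-two complex to push the cycle into a vertex link, does not replace this. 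The cycle has type $\hat s\hat t\hat s\hat t$ for arbitrary $s,t$, and girth of $\Delta_{\Lambda,e}$ by itself produces no center.

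\textbf{Two errors in the set-up.} First, your induction measure does not drop. The induced subdiagrams on $\Lambda_1\cup\{t\}$ and $\Lambda_2\cup\{s\}$ both contain $e$. So when $e$ is the only edge of label $\ge 6$, each still has one such edge, and if $t$ is a leaf then $\Lambda_1\cup\{t\}=\Lambda$. The step is therefore circular exactly in the essential case. The paper instead reduces directly to the components of $\Lambda$ with all heavy edges deleted, which are trees with labels $\le 5$.

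Second, for $m=\infty$ the group $A_S$ is not $A_{S_1}*A_{S_2}$. Non-adjacent vertices of a Coxeter diagram commute, so every $r\in S_1$ and $r'\in S_2$ with $\{r,r'\}\neq\{s,t\}$ commute. The correct splitting is $A_{S\setminus\{s\}}*_{A_{S\setminus\{s,t\}}}A_{S\setminus\{t\}}$. Your claim that a special 4-cycle lies in a single vertex stabilizer is also unjustified. No separate treatment is needed anyway, since label $\infty$ is covered by the label $\ge 6$ argument.
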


\begin{thm}(Corollary~\ref{cor:G2 single})
	\label{thm:cycle 2'}
	Let $A_S$ be an Artin group with its Coxeter diagram $\Lambda$. Suppose that for any induced subdiagram $\Lambda'\subset \Lambda$ which is a tree with edge labels $\le 5$, the Artin group $A_{S'}$ defined by $\Lambda'$ satisfies the $K(\pi,1)$-conjecture and any special 4-cycle in $\Delta_{S'}$ has a center. Then $A_S$ satisfies the $K(\pi,1)$-conjecture.
\end{thm}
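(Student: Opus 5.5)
The plan is to deduce Theorem~\ref{thm:cycle 2'} from Theorem~\ref{thm:cycle 2} by removing the restriction on edge labels through an induction. By Theorem~\ref{thm:cycle 2}, it suffices to prove the following. For every induced subdiagram $\Lambda'\subset\Lambda$ that is a tree, with no restriction on labels, the group $A_{S'}$ satisfies the $K(\pi,1)$-conjecture and every special $4$-cycle in $\Delta_{S'}$ has a center. Every induced subdiagram of $\Lambda'$ is again an induced subdiagram of $\Lambda$. So the hypothesis gives us both properties for all induced subtrees of $\Lambda'$ whose labels are $\le 5$. I will induct on the number $N(\Lambda')$ of edges of $\Lambda'$ with label $\ge 6$, including $\infty$. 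The case $N(\Lambda')=0$ is exactly the hypothesis.

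For the inductive step, fix an edge $e=\{s,t\}$ of $\Lambda'$ with label $m_{st}\ge 6$. Deleting $e$ splits $\Lambda'$ into two subtrees $\Lambda_1\ni s$ and $\Lambda_2\ni t$, each with fewer large edges, so the induction hypothesis applies to both. I would then invoke the machinery of \cite{huang2023labeled}, which handles diagrams in which certain edges carry large labels. The relevant point is that in the dihedral Artin complex of $A_{st}$ with $m_{st}\ge 6$, embedded cycles have length at least $2m_{st}\ge 12$. This large girth is exactly what the arguments of \cite{huang2023labeled} use to glue contractibility, and the combinatorial properties of the Artin complexes, along such an edge. Concretely, I would establish two things.
\begin{enumerate}
\item[(a)] Contractibility of $\Delta_{S'}$ (equivalently, the $K(\pi,1)$-conjecture for $A_{S'}$ by \cite{godelle2012k}) follows from the same property for the pieces $\Lambda_1$, $\Lambda_2$ and the edge $e$.
\item[(b)] Every special $4$-cycle in $\Delta_{S'}$ has a center.
\end{enumerate}

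For (b), I would proceed by cases. Take a special $4$-cycle of type $\hat a\hat b\hat a\hat b$. If $a,b$ lie on the same side of $e$, I would project it into the Artin complex of the corresponding parabolic subgroup, namely the link of an appropriate vertex. There the induction hypothesis provides a center, which then lifts back to $\Delta_{S'}$. If $a,b$ lie on opposite sides, I would use the girth bound from $m_{st}\ge 6$ to show that the cycle must degenerate, or must be centered at a vertex whose type is determined by $e$.

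The main obstacle is this second case of (b), together with making (a) precise. One has to control how links of vertices of $\Delta_{S'}$ decompose along $e$, and show that no essential short cycle can cross the large-label edge. My first attempt would be to realize $\Delta_{S'}$ as built from copies of $\Delta_{S_1\cup\{t\}}$ and $\Delta_{S_2\cup\{s\}}$ glued along the dihedral complex of $A_{st}$. I would then run a disk-diagram or minimal-filling argument, in which the large-girth dihedral pieces force any filling of a $4$-cycle to stay inside one side.
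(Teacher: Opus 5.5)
Your outer reduction matches the paper's: show that every induced subtree of $\Lambda$, with arbitrary labels, satisfies the 4-cycle condition and the $K(\pi,1)$-conjecture, then apply Theorem~\ref{thm:cycle 2} (Corollary~\ref{cor:cycle reduction single}). The gap is that the step you defer to `the machinery of \cite{huang2023labeled}' is the whole content of the statement, and the parts you sketch for it do not work.

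\begin{itemize}
\item The same-side case is circular. To place the $4$-cycle in the link of some vertex, you need a vertex adjacent to all four $x_i$, and that is exactly the center you are looking for. What is actually needed is that the \emph{relative} complex $\Delta_{\Lambda',\Lambda_i}$ (vertex types from one side, cosets in all of $A_{S'}$) satisfies the labeled 4-cycle condition. This is the nontrivial first half of Proposition~\ref{prop:tree}.
\item In the opposite-side case, the girth of the dihedral complex of $A_{st}$ is not the relevant quantity; it is $2m_{st}$ automatically. What matters is the girth of $\Delta_{\Lambda',e}$, i.e.\ that $e$ is $6$-solid in the whole tree. That is the second half of Proposition~\ref{prop:tree}, and it itself needs the labeled 4-cycle condition on $\Lambda_1,\Lambda_2$.
\item Your gluing picture is wrong. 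For finite $m_{st}$, generators on opposite sides of $e$ commute, so $A_{S'}$ does not split over $A_{st}$. Hence $\Delta_{S'}$ is not assembled from $\Delta_{S_1\cup\{t\}}$ and $\Delta_{S_2\cup\{s\}}$ along a dihedral complex.
\item Any induction of this kind must carry the \emph{labeled} 4-cycle condition (a center of type $\hat r$ with $r$ on the path from $a$ to $b$), not merely the existence of a center.
\end{itemize}

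The idea you are missing is the paper's Proposition~\ref{prop:G2bowtie free}. By \cite[Prop 6.17]{huang2023labeled}, the labeled 4-cycle condition for a tree is equivalent to bowtie-freeness of $\Delta_{\Lambda,\Theta}$ for every maximal linear subdiagram $\Theta$. One cuts $\Theta$ at its edges of label $\ge 6$ and propagates bowtie-freeness across each such edge using Lemma~\ref{lem:enlarge0}, which runs as follows.
\begin{itemize}
\item With $b_2b_3$ the large edge, give $\Delta_{\Lambda,b_1b_2b_3}$ the Euclidean $(\pi/6,\pi/2,\pi/3)$ metric. It is CAT$(0)$: $6$-solidity gives girth $\ge 12$ at $\hat b_1$-links, and bowtie-freeness of the adjacent piece gives girth $\ge 6$ at $\hat b_3$-links.
\item The subcomplexes of vertices adjacent to $x_i$, respectively $y_i$, are convex.
\item A minimal-perimeter admissible $4$-gon must have a degenerate corner. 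Otherwise every corner angle is $\ge\pi/2$, the $4$-gon bounds a tiled flat rectangle, and the tiling near a corner contradicts minimality.
\end{itemize}
This produces the edge path required by the criterion \cite[Lem 6.10]{huang2023labeled}.

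Your part (a) is likewise only asserted. The paper does not reprove it; it quotes \cite[Prop 9.12]{huang2023labeled} for the $K(\pi,1)$-conjecture of such trees.
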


We conjecture that the assumption on 4-cycles always hold.
\begin{conj}(\cite[Conj~9.18]{huang2023labeled})
	\label{conj:bowtie free}
	Suppose $A_S$ is an Artin group with its Coxeter diagram being a tree. Then any special 4-cycle in $\Delta_S$ has a center.
\end{conj}

This conjecture holds whenever all edges of the Coxeter diagram have label $\ge 4$. More generally, it is known to hold when $A_S$ is \emph{locally reducible} \cite[Cor.~9.14]{huang2023labeled}. The most difficult case is when all edge labels are $3$. The conjecture is also known when $A_S$ is spherical, or more generally when its Coxeter diagram is a tree satisfying the conditions of \cite[Thm.~1.1]{huang2023labeled}, or when it is of type $\widetilde C_n$, $\widetilde B_4$, or $\widetilde D_4$ \cite{huang2024}, or of dimension $\le 3$ \cite{huang2025353}. In general, proving Conjecture~\ref{conj:bowtie free} reduces to understanding certain pairs of commuting elements in $A_S$ \cite[Prop.~8.3]{huang2023labeled}.

%
%
%

Combining Theorem~\ref{thm:cycle 2'} with the known cases of Conjecture~\ref{conj:bowtie free}, we obtain many new examples of Artin groups satisfying the $K(\pi,1)$-conjecture. As an illustration, we deduce the following generalization of \cite[Thm.~1.1]{huang2023labeled}.

\begin{cor}
	\label{cor:spherical combine0}
	Let $\Lambda$ be a Coxeter diagram. Suppose that, after removing all edges of $\Lambda$ with labels at least $6$, each connected component of the remaining diagram is either spherical, locally reducible, or $3$-dimensional. Then the Artin group $A_\Lambda$ satisfies the $K(\pi,1)$-conjecture.
\end{cor}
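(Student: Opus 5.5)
We deduce Corollary~\ref{cor:spherical combine0} from Theorem~\ref{thm:cycle 2'}. Let $\Lambda'\subset\Lambda$ be an induced subdiagram that is a tree with all edge labels $\le 5$, and let $S'$ be its vertex set. By Theorem~\ref{thm:cycle 2'} it suffices to show two things: that $A_{S'}$ satisfies the $K(\pi,1)$-conjecture, and that every special $4$-cycle in $\Delta_{S'}$ has a center.

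The first step is to place $\Lambda'$ inside a single component. Since $\Lambda'$ has no edge with label $\ge 6$, it survives in the diagram $\Lambda_0$ obtained from $\Lambda$ by deleting all edges with labels $\ge 6$. As $\Lambda'$ is connected (it is a tree), it lies in one connected component $C$ of $\Lambda_0$. Moreover $\Lambda'$ is an induced subdiagram of $C$: two vertices of $\Lambda'$ joined in $\Lambda$ by an edge with label $\ge 6$ would give an edge of $\Lambda'$ with label $\ge6$, which is impossible.

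The next step is to show that each hypothesis on $C$ passes to the induced subdiagram $\Lambda'$.
\begin{itemize}
\item If $C$ is spherical, then $\Lambda'$ is spherical, since induced subdiagrams of spherical diagrams are spherical. In this case the $K(\pi,1)$-conjecture holds by Deligne \cite{deligne}, and special $4$-cycles have centers by the known spherical case of Conjecture~\ref{conj:bowtie free} \cite{huang2023labeled}.
\item If $C$ is locally reducible, then so is $\Lambda'$, because local reducibility is a condition on spherical induced subdiagrams and these are inherited. The $K(\pi,1)$-conjecture for locally reducible Artin groups is classical (Charney), and the $4$-cycle condition is \cite[Cor.~9.14]{huang2023labeled}.
\item If $C$ is $3$-dimensional, meaning that every spherical subset has at most $3$ elements, then $\Lambda'$ has dimension $\le 3$. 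Both the $K(\pi,1)$-conjecture and the $4$-cycle property for dimension $\le 3$ are supplied by \cite{huang2025353} together with the known results in dimension $\le 2$ (Charney--Davis).
\end{itemize}

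The main obstacle is bookkeeping rather than a new idea. We must check that each hypothesis class is stable under passing to induced subdiagrams, and that the cited results really provide both conclusions, namely the $K(\pi,1)$-conjecture and centers of special $4$-cycles. The delicate case is dimension $\le 3$: if \cite{huang2025353} states the $4$-cycle property only for exactly $3$-dimensional trees, then the lower-dimensional trees have to be covered separately. Those of dimension $\le 2$ with all labels $\ge 4$ are locally reducible, hence covered above. The remaining ones need to be checked against the statement in \cite{huang2025353}, which we expect to cover all dimensions $\le 3$.
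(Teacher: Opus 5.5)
Your proposal is correct and follows the paper's route: the paper proves the spherical case (Corollary~\ref{cor:spherical combine}) by feeding the known results for the relevant tree subdiagrams into Corollary~\ref{cor:G2 single}, and the introduction obtains the locally reducible and $3$-dimensional cases in the same way from the known cases of Conjecture~\ref{conj:bowtie free} (\cite[Cor.~9.14]{huang2023labeled} and \cite{huang2025353}), exactly as you do. Your remaining worry about dimension $\le 2$ is moot: a diagram of dimension $\le 2$ has all irreducible spherical subsets of size $\le 2$, so it is locally reducible whatever its labels, and in any case the introduction records that Conjecture~\ref{conj:bowtie free} is known in all dimensions $\le 3$.
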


More applications are given in \cite{treereduction}.

Theorems~\ref{thm:cycle 1} and~\ref{thm:cycle 2} are based on constructing actions of Artin groups whose Coxeter diagrams contain cycles on suitable Garside groupoids.  Garside groups and groupoids have played a central role in earlier approaches to the $K(\pi,1)$-conjecture. For instance, the fundamental group of the complement of a complexified real simplicial central hyperplane arrangement can be identified with the isotropy group of a finite-type Garside groupoid \cite{deligne}. In particular, such a group admits a free action on the morphisms of the associated Garside groupoid with finitely many orbits.

Although most affine Artin groups are not known to admit actions of this type, it was shown in \cite{mccammond2017artin} that every affine Artin group embeds into a larger (infinite-type) Garside group. This can be interpreted as an action of the affine Artin group on the morphisms of a Garside groupoid that is free but has infinitely many orbits.


In this paper, we investigate a complementary situation: actions on Garside groupoids with finitely many orbits but infinite stabilizers. More precisely, we construct actions of Artin groups on Garside groupoids such that the stabilizer of each morphism is itself a smaller Artin group. These smaller Artin groups, in turn, admit analogous actions with stabilizers given by yet smaller Artin groups. This produces a tower of Garside groupoids, yielding an inductive framework for proving $K(\pi,1)$-results (and potentially other structural properties of Artin groups). Such groupoids can be constructed whenever the Coxeter diagram contains a cycle; consequently, the base case of the induction consists of Artin groups whose Coxeter diagrams are trees. More precisely, we establish the following statement, combining Theorem~\ref{thm:cycle reduction}, Proposition~\ref{prop:mincutAn}, and Corollary~\ref{cor:bestvina}.

\begin{thm}
	\label{thm:Bestvina complex}
Let $A_S$ be an Artin group such that its Coxeter diagram $\Lambda$ contains a cycle. Suppose that for each induced subdiagram $\Lambda'\subset\Lambda$ that is a tree, the associated Artin complex $\Delta_{S'}$ satisfies that each special 4-cycle in $\Delta_{S'}$ has a center. Then $A_S$ acts cocompactly on the Bestvina complex of a Garside groupoid, such that stabilizer of each simplex is isomorphic to a proper parabolic subgroup of $A_S$.
\end{thm}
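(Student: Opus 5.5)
The plan is to build the Garside groupoid out of a \emph{cut} of the Coxeter diagram. Since $\Lambda$ contains a cycle, I will look for a minimal cut (the notation $\mc$ suggests this): a set of edges $E$ of $\Lambda$ whose removal disconnects $\Lambda$ in a controlled way.

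The model case is an $n$-cycle, where the relevant combinatorics is that of type $\widetilde A_{n-1}$. The construction then runs as follows.

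\begin{enumerate}
\item Take the Artin complex $\Delta_S$, or more precisely a suitable subcomplex/quotient spanned by vertices of types $\hat s$ for $s$ on the cycle.
\item Equip it with a cyclic ``rank'' or orientation coming from the cycle. The goal is to realize it, via a covering or development, as the nerve of a poset that is a lattice locally.
\item Concretely, for each vertex $v$ of a chosen type, form the interval $[v,v\Delta]$ consisting of the vertices reachable by going once around the cycle. Declare morphisms between vertices to be the ``monotone'' paths, and $\Delta$ to be the full turn.
\end{enumerate}

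Garside theory (Bestvina, Charney–Meier–Whittlesey) then says the Bestvina complex of this groupoid is contractible once the groupoid is Garside.

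The key steps, in order, are these.

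\begin{enumerate}
\item Define the category: objects are vertices of a fixed type in $\Delta_S$ (or cosets $gA_{S\setminus\{s\}}$), and generating morphisms are the edges oriented along the cycle.
\item Show that $A_S$ acts by left multiplication with finitely many orbits of objects and morphisms. The stabilizer of a simplex in the Bestvina complex is then an intersection of conjugates of $A_{S\setminus\{s\}}$'s, which is a proper parabolic subgroup by standard intersection results for parabolics in the relevant subdiagrams.
\item Verify the Garside axioms. This means cancellativity, the existence of a Garside map $\Delta$ with finitely many simple morphisms below it, and, crucially, that the simples under each $\Delta$ form a lattice.
\item Apply the general theorem (Corollary~\ref{cor:bestvina}) that the Bestvina complex of a Garside groupoid is a contractible simplicial complex, and conclude cocompactness from the finitely many orbits of simples.
\end{enumerate}

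The main obstacle is the lattice property. The approach I would try first is to identify each interval $[v,v\Delta]$ with a poset of vertices in Artin complexes $\Delta_{S'}$ of induced subdiagrams obtained by cutting the cycle, which gives linear/tree diagrams. Lattice property for such posets is equivalent to a ``flag/bowtie-free'' condition: any two elements with two common upper bounds have a join. That is exactly what ``every special $4$-cycle has a center'' provides, since a bowtie in the Hasse diagram is a special $4$-cycle of alternating types $\hat s\hat t\hat s\hat t$. So Proposition~\ref{prop:mincutAn} should reduce the lattice check in the cyclic groupoid to the $4$-cycle condition in the tree subdiagrams $\Lambda'$. Theorem~\ref{thm:cycle reduction} would then package this into a Garside structure. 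Care is needed to show that meets and joins are consistent across different cuts (i.e.\ independent of the chosen cut edge), and I expect an induction on the number of vertices in the cut tree, using the hypothesis on all induced tree subdiagrams.
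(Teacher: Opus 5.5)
Your proposal has two genuine gaps, and they fall exactly where the paper needs its new ideas.

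\textbf{The complex.} Your concrete construction takes cosets $gA_{S\setminus\{s\}}$ for $s$ on an induced cycle $S'$, with edges oriented along the cycle. That is the relative Artin complex $\Delta_{S,S'}$ with its cyclic orders, and in general it already fails at the poset stage, before any lattice question arises. Suppose $v<_x u<_x w$ in $\lk(x,\Delta_{S,S'})$ have types $\hat s_i,\hat s_j,\hat s_k$ along the cut-open cycle. You can conclude $v\sim w$ (Lemma~\ref{lem:transitive}) only when $s_j$ separates $s_i$ from $s_k$ in $\Lambda\setminus\{s\}$. As soon as some other path of $\Lambda$ joins two arcs of the cycle, this separation fails and $<_x$ is in general not transitive; this is the obstruction the introduction points out.

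The paper's fix is a different complex, the minimal cut complex $\Delta^P_\Lambda$:
\begin{itemize}
\item take a maximal subpath $P$ of an induced cycle whose interior vertices have valence $2$, with endpoints $a,b$;
\item use as vertex types the interior vertices of $P$ together with the minimal \emph{vertex} cuts in $\mc_{\Lambda_P}(\{a\},\{b\})$;
\item vertices are cosets $gA_{\hat T}$ (barycentres of simplices of $\Delta_\Lambda$), and adjacency requires the cuts to be comparable.
\end{itemize}
Transitivity then comes from Lemma~\ref{lem:separate}. Your ``minimal cut'' is a set of edges whose removal leaves trees; it is a different object and does not touch this issue. The replacement complex must also be shown to be simply connected (Corollary~\ref{cor:Xsc}), which your outline does not address.

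\textbf{From trees to all subdiagrams.} The lattice check does not reduce to tree subdiagrams. In $\Delta^P_\Lambda$, the link of a vertex of type $\hat T$ lives in $\Delta'_{\Lambda_T}$, where $\Lambda_T$ is a component of $\Lambda\setminus T$; in your setting it lives in $\Delta_{\Lambda\setminus\{s\}}$. This is a forest when $\Lambda$ is unicyclic, but in general it still contains cycles. So Proposition~\ref{prop:mincutAn} needs the labeled $4$-cycle condition for \emph{all} proper induced subdiagrams. It needs it in the strong form of Proposition~\ref{prop:strong}, where the centre must have minimal-cut type. Even for trees, a centre of unspecified type is not what a bowtie needs; one must first upgrade to the labeled condition by induction in the link of the centre.

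Passing from trees to non-trees is the propagation result Proposition~\ref{prop:prop}, which is the heart of the paper. Inductively, $X=\Delta^P_\Lambda$ (with $P$ on a cycle closest to $s$) is already $\widetilde A_n$-like. The $4$-cycle gives four $B$-convex subcomplexes $Y_{x_i}$ (Lemma~\ref{lem:convex}). One minimises a perimeter, then Bestvina's asymmetric distance $\ad$, over admissible quadruples. Its non-positive curvature (Proposition~\ref{prop:bnpc}) and the strip structure of normal forms then force the configuration to degenerate. Theorem~\ref{thm:cycle reduction}(1) is this induction.

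Your ``induction on the number of vertices in the cut tree'' has no mechanism for producing centres of $4$-cycles in Artin complexes of non-tree diagrams. In effect your outline can only work when $\Lambda$ has a single cycle. In that case $\Delta_{S,S'}$ is simply connected by Lemma~\ref{lem:sc}, and its links are relative Artin complexes of trees.

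The cocompactness and stabiliser part of your plan is fine. The cosets of a simplex share an element $g$, so the simplex stabiliser is $gA_{S\setminus\bigcup_i T_i}g^{-1}$, a proper parabolic subgroup.
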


Roughly speaking, the Bestvina complex is the ``central quotient'' of the Garside groupoid. The Bestvina complex was originally defined for spherical Artin groups \cite{Bestvina1999}, then it is generalized to Garside groups \cite{charney2004bestvina} and Garside groupoids \cite[Sec 8]{bessis2006garside}. The theorem relies crucially on Bestvina's asymmetric metric on Bestvina complex, introduced in \cite{Bestvina1999}. 


These complexes also belong to a class of complexes studied in \cite{haettel2022link} (type $A$, without locally finite assumption). Consequently, they have an additional metric structure, i.e. admit metric with convex geodesic bicombing (which is a relaxation of CAT$(0)$ \cite{descombes2015convex}), and such metric is invariant under the Artin group action. This can be proved in the same way as \cite[Thm 4.6]{haettel2021lattices}.
We expect this additional metric structure could lead to other interesting properties of Artin groups, however, this aspect is 
 not explored  here.




\subsection{Discussion of proofs}
We only discuss the proof of Theorem~\ref{thm:Bestvina complex}, which is the most important ingredient towards other main theorems. Using a combination of tools from Garside theory and non-positive curvature, we will gradually reduces Theorem~\ref{thm:Bestvina complex} to a problem about triangulation of a 2-dimensional disk. In the following discussion, we do not assume the reader is familiar with Garside theory, and we start with a discussion of $\mathbb Z^n$, which is one of the most simple Garside groups. 

\medskip
\noindent
\textbf{Background on Bestvina complexes}\ \ \ 
Let $G=\mathbb Z^n$ with its standard basis as generators. Let $G^+$ be the positive monoid of $G$ (i.e. $G^+$ is made of elements all of whose coordinates are non-negative). We define a partial order on $G$: $g\le h$ if $h=gk$ with $k\in G^+$. This is indeed a partial order, and $(G,\le)$ is a lattice (i.e. every pair of element has a least common upper bound, and greatest common lower bound). 
Let $\Delta$ be the element $(1,1,\ldots,1)$ in $G$. Non-trivial elements of $G$ that are lower bounded by identity and upper bounded by $\Delta$ are called \emph{atoms} of $G$.
The \emph{Bestvina complex} $X$ for $G$, has a vertex for each cosets of the subgroup $\langle \Delta\rangle$. Two different vertices are joined by an edge if the two cosets are differed by an atom. The flag completion of this 1-skeleton is the Bestvina complex for $G$. 

The complex $X$ has extra structure, inherit from $(G,\le)$, which can be formulated in multiple ways. Here we adopt a treatment in \cite{haettel2021lattices}. 
Note that a maximal chain from $0$ to $\Delta$ in $(G,\le)$ descents to a cycle in $X$ which span a maximal simplex of $X$. Thus the liner order in this maximal chain gives to a cyclic order on the vertex set of this maximal simplex of $K$. Use group action, we obtain cyclic order on the vertex set of each maximal simplex of $X$, and these cyclic orders are compatible in the intersection of  two maximal simplices. As each cyclically order set with one element removed has a canonical induced linear order, we have a well-defined partial order $<_x$ on the vertex set of $\lk(x,X)$ for each vertex $x\in X$. Note that $(\lk(x,X),<_x)$ is isomorphic (as a poset) to $(\hat x,\Delta \hat x)$, where $\hat x$ is any lift of $x$ in $\mathbb Z^n$, and $(\hat x,\Delta\hat x)$ denotes the collection of elements of $(G,\le)$ that is $>\hat x$ and $<\Delta\hat x$.

For any Garside group, and more generally Garside groupoid, one can create a Bestvina complex in a similar fashion with cyclic orders on its maximal simplices. The vertex set of this complex is made of elements in the central quotient. Conversely, give a simplicial complex $X$ with cyclic orders on its maximal simplices, as long as $X$ is simply-connected and the relation $<_x$ in $\lk^0(x,X)$ satisfies that it is transitive (hence $<_x$ is a partial order),  any upper bounded pair $(\lk^0(x,X),<_x)$ has the join, and any lower bounded pair $(\lk^0(x,X),<_x)$ has the meet, then $X$ is the Bestvina complex of some Garside groupoid - this is a consequence of the classical local-to-global characterization of Garside structure, see \cite[Thm 1.3]{haettel2024lattices} for an explanation. It also follows from Garside theory that $X$ is contractible. This is an important local-to-global contractibility criterion used in this article.

Given a Coxeter group $W_S$ with generating set $S$, we can define its Coxeter complex $\bC_S$ and types of vertices in $\bC_S$ in the same fashion as Artin complexes. Now assume $W_S$ is of type $\widetilde A_{n-1}$. Then its Coxeter diagram $\Lambda$ is a cyclic, which gives a natural cyclic order on $S$. By considering types of vertices in $\bC_S$, we obtain a cyclic order on each maximal simplex of $\bC_S$. Then  $\bC_S$ is isomorphic to the Bestvina complex $X$ of $\mathbb Z^n$, such that the isomorphism respects cyclic orders on maximal simplices, see e.g. \cite[Sec 3.2]{hirai2020uniform}. Similarly, we can consider the Artin complex $\Delta_S$ associated with the Artin group $A_S$ of type $\widetilde A_{n-1}$. Again the Coxeter diagram being a cycle induces cyclic orders on maximal simplices of $\Delta_S$.
By work of Crisp-McCammond and Haettel \cite{haettel2021lattices}, the complex $\Delta_S$ with the cyclic orders on its maximal simplices satisfies all the conditions required for it to be a Bestvina complex of a Garside groupoid. 

\medskip
\noindent
\textbf{Failure of poset property}\ \ \ 
Now we consider the more general situation that $\Lambda$ is any Coxeter diagram containing an induced cycle $\Lambda'\subset \Lambda$. Let $S,S'$ be the vertex set of $\Lambda,\Lambda'$ respectively. Let $\Delta_S$ be the Artin complex of $A_S$, and
let $\Delta_{S,S'}$ be the induced subcomplex of $\Delta_S$ spanned by vertices of type $\hat s$ with $s\in S'$. As $S'$ is the vertex set of a cycle, we can endow $S'$ with a natural cyclic order. This gives a cyclic order on the vertex set of each maximal simplex of $\Delta_{S,S'}$. In \cite{huang2023labeled} we studied the question of whether $\Delta_{S,S'}$ is the Bestvina complex of a Garside groupoid. It is shown that for some hyperbolic type Artin groups whose Coxeter diagrams contain a cycle, $\Delta_{S,S'}$ is indeed the Bestvina complex of some Garside groupoid, which leads to $K(\pi,1)$-conjecture for these Artin groups. However, in general, $\Delta_{S,S'}$ can not be a Bestvina complex, since the relation $<_x$ defined as before is not transitive except for very specific Coxeter diagrams, so $(\lk^0(x,\Delta_{S,S'}),<_x)$ is not a poset!

\medskip
\noindent
\textbf{A different complex}\ \ \ 
In order to construct Bestvina complexes for Artin groups with cycles in their Coxeter diagrams in full generality, the first main ingredient is a different complex, which we called \emph{minimal cut complex}, which can be used to fix this fundamental issue of failure of poset property. We demonstrate the complex in a simple example. Let $\Lambda$ be the diagram in Figure~\ref{fig:example}, with $P$ being the thickened oriented path from $b$ to $a$ in $\Lambda$. Let $A_\Lambda$ be the associated Artin group. Let $\Lambda_P$ be the subdiagram of $\Lambda$ obtained by removing interior points of the path $P$ from $\Lambda$. Let $a,b$ be the two vertices in $\Lambda_P$ as in Figure~\ref{fig:example}. Given two sets of vertices $A,B$ of $\Lambda_P$, a \emph{cut} of $\Lambda_P$ between $A$ and $B$ is a set $T$ of vertices of $\Lambda_P$ such that any path from a point in $A\setminus T$ to a point in $B\setminus T$ has non-empty intersection with $T$ (we assume this property holds true automatically if $A\setminus T=\emptyset$ or $B\setminus T=\emptyset$). A \emph{minimal cut} of $\Lambda_P$ between $A$ and $B$ is a cut such that any proper subset of this cut is not a cut. Let $\mc_{\Lambda_P}(\{a\},\{b\})$ be the collection of minimal cuts separating $\{a\}$ and $\{b\}$. We say two such minimal cuts are \emph{comparable}  if one of them is a cut of $\Lambda_P$ between $\{a\}$ and the remaining one.

\begin{figure}
	\centering
	\includegraphics[scale=1]{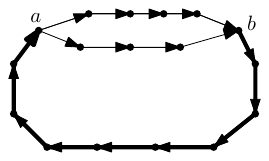}
	\caption{The diagram $\Lambda$.}
	\label{fig:example}
\end{figure}

Let $\mathcal C_P$ be the collection of sets of vertices of $\Lambda$ such that an element of $\mathcal C_P$ is either an element of $\mc_{\Lambda_P}(\{a\},\{b\})$ (which is a \emph{type II element}), or is made of a single vertex that is an interior vertex of $P$ (which is a\emph{type I} element). Two different elements are \emph{comparable}, if either one of them is a type I element, or both of them are type II and they are comparable. Note that orientation of edges in $\Lambda$ as in Figure~\ref{fig:example} induces a cyclic order on any maximal collection of elements in $\mathcal C_P$ that are pairwise comparable.

Now we define a complex $\Delta^P_\Lambda$ upon which the Artin group $A_\Lambda$ acts. Vertices of $\Delta^P_\Lambda$ are in 1-1 correspondence with left cosets of form $gA_{\hat T}$ with $g\in A_\Lambda$ and $T\in \mathcal C_P$, where $A_{\hat T}$ denotes the subgroup of $A_\Lambda$ generated by all generators except those in $T$. A vertex corresponding to $gA_{\hat T}$ has \emph{type} $\hat T$. Two vertices of type $\hat T_1$ and $\hat T_2$ are adjacent, if $T_1$ and $T_2$ are comparable, and the corresponding cosets have non-empty intersection. Then $\Delta^P_\Lambda$ is the flag completion of its 1-skeleton. Given a maximal simplex in $\Delta^P_\Lambda$, by considering type of vertices in this simplex and the previous paragraph, we obtain a cyclic order on its vertex set. The advantage of this new complex is that the relation $<_x$ is now transitive, and it can be defined more generally whenever $\Lambda$ contains a cycle, for an appropriate choice of path $P$ inside $\Lambda$. There is a natural action $A_\Lambda\curvearrowright \Delta^P_\Lambda$, whose vertex stabilizers are proper parabolic subgroups of $A_\Lambda$.

\medskip
\noindent
\textbf{Labeled 4-cycle property and the lattice condition}\ \ \ 
In order to show $\Delta_P$ is a Bestvina complex, it remains to verify that for vertex $x\in \Lambda_P$, the poset $(\lk^0(x,\Delta^P_\Lambda),<x)$ satisfies that each upper bounded pair has the join and each lower bounded pair has the meet. We will refer these two properties as the \emph{lattice condition}, as they will imply if we add a greatest element and a smaller element to $(\lk^0(x,\Delta^P_\Lambda),<x)$, then we obtain a lattice. Our second ingredient relates the lattice condition to \emph{labeled 4-cycle property}. 
\begin{definition}
	\label{def:labaledintro}
	Let $\Lambda$ be a Coxeter diagram, and let $\Delta_\Lambda$ be the associated Artin complex.
	We say $\Delta_\Lambda$ satisfies \emph{labeled 4-cycle condition}, if for any embedded 4-cycle $x_1x_2x_3x_4$ in $\Delta_\Lambda$ with $x_1,x_3$ having the same type $\hat s$ and $x_2,x_4$ having the same type $\hat t$, and any connected induced subgraph $\Lambda'\subset \Lambda$ containing both $s$ and $t$, there is a vertex $z$ such that
	\begin{enumerate}
		\item $z$ is adjacent to each $x_i$ for $1\le i\le 4$. 
		\item $z$ has type $\hat r$ with $r\in \Lambda'$.
	\end{enumerate}
\end{definition}
This condition was defined in the special case when $\Lambda$ is a tree in \cite{huang2023labeled} before we have the correct formulation in full generality here. This property is related to the lattice condition in the following way.

\begin{prop}
	\label{prop:mincutAnintro}
	Let $\Lambda$ be a Coxeter diagram. 	
	Suppose all proper induced subdiagrams $\Lambda'$ of $\Lambda$, $\Delta_{\Lambda'}$ satisfies the labeled 4-cycle condition.  Then all vertex links of complex $\Delta^P_\Lambda$ satisfy the lattice condition. Hence $\Delta^P_\Lambda$ is a Bestvina complex.
\end{prop}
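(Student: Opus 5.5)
The plan is to verify the lattice condition link by link, reducing each link to a product of posets coming from Artin complexes of proper induced subdiagrams. Once every link is a lattice (after adjoining a top and a bottom), the local-to-global Garside criterion recalled above (\cite[Thm 1.3]{haettel2024lattices}) does the rest.

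\textbf{Step 1: describe the links.} Let $x$ be a vertex of $\Delta^P_\Lambda$ of type $\hat T$, say $x=A_{\hat T}$. The link $\lk(x,\Delta^P_\Lambda)$ should be identified with a complex built from the parabolic subgroup $A_{\hat T}$. Its vertices are cosets $hA_{\hat T\cap \hat T'}$ with $h\in A_{\hat T}$ and $T'$ comparable to $T$. The order $<_x$ is read off from the cyclic order on $\mathcal C_P$ with $T$ removed, which becomes a linear order.

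Removing $T$ disconnects $\Lambda$ (or $\Lambda_P$) into pieces. Because $T$ is a minimal cut, or a single interior vertex of $P$, I expect $\hat T$ to split into a "before $T$" part and an "after $T$" part, together with components commuting with these. Accordingly, I would show that $(\lk^0(x),<_x)$ decomposes as an ordered combination of two posets of the same type as in the statement. Each piece is built on a proper induced subdiagram $\Lambda''\subsetneq\Lambda$ containing a path from $T$ to the complementary cut. Its elements are the cosets of minimal cuts of $\Lambda''$ between two fixed vertex sets, ordered by the "separates from $a$" relation. Transitivity of $<_x$ should follow from the comparability relation on cuts being transitive, and I would check this directly from the cut definitions.

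\textbf{Step 2: reduce joins and meets to the labeled 4-cycle condition.} Take a pair $y_1,y_2$ in the link with an upper bound $u$, and assume they are incomparable. Their types are minimal cuts $T_1,T_2$ of $\Lambda''$. I would first treat the case where $T_1,T_2$ are themselves comparable as cuts, and also the case of different types, using the fact that minimal cuts between two sets form a distributive lattice under "closer to $a$". The candidate join then has type $T_1\vee T_2$, the cut formed by the vertices of $T_1\cup T_2$ farthest from $a$.

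The needed coset is produced from a configuration in $\Delta_{\Lambda''}$. Choose a lower bound $x$ and upper bound $u$, so that $x,y_1,u,y_2$ form a 4-cycle. Projecting to the Artin complex of a suitable induced subdiagram gives a special 4-cycle there. The labeled 4-cycle condition for $\Delta_{\Lambda''}$, applied with $\Lambda'$ a connected subgraph containing the relevant cut vertices, yields a center $z$ of type $\hat r$ with $r$ in that region. This $z$, intersected appropriately, gives a common upper bound $\le u$ for $y_1,y_2$. A minimality argument should then show it is the least one: take a minimal-type upper bound and apply the 4-cycle condition again to compare two candidates. Meets are handled by the symmetric argument.

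\textbf{Main obstacle.} I expect the hardest part to be showing that the center $z$ supplied by the labeled 4-cycle condition really corresponds to a vertex of $\Delta^P_\Lambda$, that is, that its type is a legitimate minimal cut comparable with everything. The key point is the freedom in Definition~\ref{def:labaledintro} to choose the connected subgraph $\Lambda'$ that the center's label must lie in. I would first try taking $\Lambda'$ to be the union of a shortest path between the cut vertices with the region strictly between $T_1$ and $T_2$, so that $r$ is forced into a position where $\{r\}\cup(\text{part of }T_1\cup T_2)$ is a minimal cut. Uniqueness of the join would come from iterating this argument together with the fact that the posets have finite height.
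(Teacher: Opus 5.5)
\textbf{The main gap is in Step 2.} When you compare upper bounds in a link, the 4-cycles you must fill are \emph{generalized} 4-cycles in $\Delta'_\Lambda$. Their vertices are barycenters with multi-vertex types such as $\hat T_1,\hat T_z$, where $T_1,T_z$ are minimal cuts. The labeled 4-cycle condition only concerns embedded 4-cycles of single-vertex type $\hat s\hat t\hat s\hat t$, and ``projecting'' does not close this difference. If you replace each $x_i$ by one vertex of $\sigma_{x_i}$, the special 4-cycle you get may degenerate. Even when it doesn't, a center of it is adjacent to the chosen vertices but need not be $\sim$ the original barycenters.

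What the argument needs is the strong labeled 4-cycle condition (Proposition~\ref{prop:strong}): every generalized 4-cycle of type $\hat A\hat B\hat A\hat B$ has a center of type $\hat C$ with $C\in\mc_\Lambda(A,B)$. Deriving this from the hypothesis is the substantive work. Your idea of exploiting the freedom in $\Lambda'$ (iterating along shortest paths and passing to links) only covers $|A|=|B|=1$. For larger $A,B$ the paper peels off a vertex $a$ chosen by Lemma~\ref{lem:graph} and uses joins from Corollary~\ref{cor:strong} in the smaller diagram $\Lambda_a$. So the lattice property and the strong 4-cycle property have to be proved together by induction on $|\Lambda|$ and $|A|+|B|$.

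You also lack the device that makes the cycle alternate in type. If $z$ is a minimal-rank upper bound and $w$ another upper bound of $y_1,y_2$, the cycle $y_1zy_2w$ has unrelated types. Lemma~\ref{lem:4-cycle}, a retraction argument in the oriented Davis complex, supplies $z'$ of the same type as $z$ with $z'\sim\{y_1,y_2,w\}$. The strong condition is then applied to $y_1zy_2z'$ when $T_{y_1}=T_{y_2}$. The case $T_{y_1}\neq T_{y_2}$ is reduced to that one via the meet of $z$ and $z'$ (Lemma~\ref{lem:admissible}).

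Your candidate join of type $T_1\vee T_2$ is also wrong. If $T_1=T_2$ and $y_1\neq y_2$, the join must have strictly larger type. In general its type depends on the cosets, not only on $T_1,T_2$, which is why the paper defines the join as an upper bound of minimal rank.

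Several steps you treat as routine are not. First, transitivity of $<_x$ does not follow from transitivity of comparability of cuts. From $y_1<y_2<y_3$ you only get $y_1\sim y_2\sim y_3$, and you need the cosets of $y_1$ and $y_3$ to intersect. That is exactly what fails for the relative Artin complexes $\Delta_{S,S'}$, even though their type-level cyclic order is fine. The paper obtains it from the fact that the middle cut separates the outer two (Lemma~\ref{lem:separate}, Definition~\ref{def:admissible}(3)), combined with Lemma~\ref{lem:transitive}.

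Second, the link does not split into two independent posets. It is a single complex over the connected diagram $\Lambda_T$, the component of $\Lambda\setminus T$ containing $P\setminus T$, which joins $T^{\{a\}}$ and $T^{\{b\}}$ through $P$. Whether a ``before'' vertex and an ``after'' vertex are comparable depends on coset intersections in $A_{\Lambda_T}$. The paper handles this uniformly by checking that $\mathcal C'_{P,T}$ is admissible in $\Lambda_T$ (Lemma~\ref{lem:admissible example}).

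Third, to conclude that $\Delta^P_\Lambda$ is a Bestvina complex, the local-to-global criterion requires $\Delta^P_\Lambda$ to be simply connected, together with the rank function of Definition~\ref{def:A_n}(4). Simple connectivity is not automatic. The paper proves it separately via the auxiliary complex $\Delta_1$ (Lemma~\ref{lem:Delta1sc}, Lemma~\ref{lem:homotopy injective}, Corollary~\ref{cor:Xsc}), and your proposal does not address it.
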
 

There is a serious gap between the assumption of Theorem~\ref{thm:Bestvina complex} and the assumption of Proposition~\ref{prop:mincutAnintro}, as assumption of the former only implies labeled 4-cycle condition for $\Delta_{\Lambda'}$ with $\Lambda'$ being an induced \textbf{tree} subdiagram of $\Lambda$. This gap can be filled by
 repeatedly applying the following propagation theorem of labeled 4-cycle property, which is the core of this article. Due to the relation between labeled 4-cycle property and lattice condition discussed before, the following can also viewed as a propagation theorem for the lattice condition.

\begin{prop}
	\label{prop:propintro}
	Suppose $\Lambda$ is a connected Coxeter diagram which is not a tree. Suppose all proper induced subdiagrams $\Lambda'$ of $\Lambda$, $\Delta_{\Lambda'}$ satisfies the labeled 4-cycle condition. Then $\Delta_{\Lambda}$ satisfies the labeled 4-cycle condition.
\end{prop}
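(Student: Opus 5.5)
We plan to prove Proposition~\ref{prop:propintro} by exploiting the Bestvina complex $\Delta^P_\Lambda$ built from a cycle in $\Lambda$, together with contractibility and the disk-filling technique mentioned in the discussion of proofs. Since $\Lambda$ is connected and not a tree, it contains an induced cycle; we choose an appropriate oriented path $P$ in that cycle. By hypothesis every proper induced subdiagram satisfies the labeled 4-cycle condition, so Proposition~\ref{prop:mincutAnintro} applies: all vertex links of $\Delta^P_\Lambda$ satisfy the lattice condition, and $\Delta^P_\Lambda$ is the Bestvina complex of a Garside groupoid. In particular it is contractible, and it carries Bestvina's asymmetric metric together with normal-form (Garside) geodesics.

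Now take an embedded 4-cycle $x_1x_2x_3x_4$ in $\Delta_\Lambda$ with types $\hat s,\hat t,\hat s,\hat t$, and a connected induced $\Lambda'\ni s,t$. If $\Lambda'\neq\Lambda$, we plan to reduce to a proper subdiagram. Passing to the link of a suitable vertex (or restricting to a coset of a proper parabolic subgroup containing the cycle) would give a 4-cycle in $\Delta_{\Lambda''}$ for some proper $\Lambda''\supseteq\Lambda'$. Whether this works depends on whether the cosets $x_i$ share a common proper parabolic. In general they do not, so the main case is global. Here we translate the 4-cycle into $\Delta^P_\Lambda$. Each vertex $x_i=g_iA_{\hat s}$ (resp.\ $A_{\hat t}$) determines, via the cosets $g_iA_{\hat T}$ with $T\in\mathcal C_P$ containing $s$ (or contained in suitable complements), a convex subcomplex of $\Delta^P_\Lambda$: the fixed set of the parabolic stabilizer $g_iA_{\hat s}g_i^{-1}$. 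Adjacency of $x_i,x_{i+1}$ means these fixed sets intersect. We therefore obtain a closed loop in $\Delta^P_\Lambda$ passing through four such subcomplexes, and we fill it by a minimal-area simplicial disk $D\to\Delta^P_\Lambda$, using contractibility.

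The key step is a combinatorial analysis of this minimal disk. The lattice condition in links plays the role of nonpositive curvature: interior vertices of $D$ have large combinatorial angle, measured via the cyclic orders and the posets $<_x$. So by a Gauss--Bonnet count, curvature concentrates at the four corners. We would push the boundary paths to be Garside-normal (geodesic in the Bestvina metric) inside each fixed subcomplex and show that a minimal disk must degenerate, meaning the four fixed subcomplexes share a common vertex $v$ of $\Delta^P_\Lambda$. Such a $v$ is a coset $gA_{\hat T}$ contained in the intersections, and its stabilizer is a proper parabolic subgroup containing the relevant conjugates. The labeled 4-cycle condition for the proper subdiagram $\Lambda\setminus T$, applied in the link, then produces the center $z$. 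The choice between type I and type II $T$, and the requirement that $z$ have type $\hat r$ with $r\in\Lambda'$, is handled using connectivity of $\Lambda'$: a path in $\Lambda'$ from $s$ to $t$ must meet $T$, or else stay in the component containing both.

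We expect the main obstacle to be this disk analysis: showing that the four boundary subcomplexes, which are non-simplicial convex pieces, force a common point. This is the promised ``triangulation of a 2-dimensional disk'' problem. We would first attempt it using the asymmetric Bestvina metric, by projecting to the meet/join of corner lifts in the groupoid. If that fails, we would fall back on a case analysis of corner configurations in $D$ with Euler characteristic counting.
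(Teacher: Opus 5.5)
Your overall shape (use $X=\Delta^P_\Lambda$, attach a convex piece to each $x_i$, show a filling degenerates) matches the strategy sketched in the introduction. However, the pieces are misidentified and the decisive step is missing.

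\textbf{The convex pieces and the endgame.} Fixed sets of stabilizers cannot work. If $g\in x_i\cap x_{i+1}$, then $\mathrm{Stab}(x_i)=gA_{\hat s}g^{-1}$ and $\mathrm{Stab}(x_{i+1})=gA_{\hat t}g^{-1}$ generate $A_\Lambda$ (as $\hat s\cup\hat t=S$). Every vertex stabilizer of $X$ is a proper parabolic subgroup, so the fixed sets of adjacent $x_i,x_{i+1}$ are disjoint and no loop is produced. The correct objects are the full subcomplexes $Y_{x_i}$ spanned by vertices of $X$ whose cosets meet $x_i$. These are only invariant under the stabilizer, and they trivially satisfy $Y_{x_i}\cap Y_{x_{i+1}}\neq\emptyset$. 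Their left and right local $B$-convexity is a genuine lemma: Lemma~\ref{lem:convex}, resting on Lemma~\ref{lem:generalized 4cycle1}, i.e.\ on Lemma~\ref{lem:4-cycle} plus the lattice property of links.

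You also aim for a vertex common to all four pieces, which the paper never establishes. Lemma~\ref{lem:key} only gives $u_i=u_{i+1}$ for a minimizing quadruple. Proposition~\ref{prop:bnpc} then yields triple intersections, hence an edge path from $x_1$ to $x_3$ through common neighbours of $x_2,x_4$. A separate argument is needed to extract a single centre from that path: Lemma~\ref{lem:reduction}, using the strong labeled $4$-cycle property (Proposition~\ref{prop:strong}) and joins in posets of vertices with minimal-cut types. The cases where $\Lambda$ is a cycle, or where $\{s\}$ or $\{t\}\in\mathcal C_P$ (Lemma~\ref{lem:degen}), are also handled separately.

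\textbf{The missing core.} The degeneracy step, which is essentially the whole proposition, is only hoped for, and the proposed mechanism would not give it. No simplicial Gauss--Bonnet count is available. Already in the $\widetilde A_3$ Coxeter complex, vertex links contain induced $4$-cycles, so minimal fillings can have interior vertices of degree $4$. Moreover, the paper points out that four cyclically intersecting $B$-convex subcomplexes need not degenerate, and neither need the $Y_{x_i}$ for a badly chosen $P$. The ingredients you are missing are:
\begin{enumerate}
\item[(a)] $P$ must depend on $s$. Take an induced cycle $C$ nearest to $s$, and let $P\subset C$ start at the nearest vertex $\uls$. Then $\uls$ separates $s$ from every other element of $\mathcal C_P$, and a shortest $s$--$\uls$ path avoids them (Lemmas~\ref{lem:disjoint} and~\ref{lem:separateuls}). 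This is what converts separation statements in $\Lambda\setminus T$ into adjacency with $x_1,x_3$ via Lemma~\ref{lem:transitive}.
\item[(b)] A two-stage minimization over quadruples with $u_i\in Y_i\cap Y_{i+1}$: first of $\sum d(u_i,u_{i+1})$, then of $\ad(u_2,u_3)+\ad(u_1,u_3)+\ad(u_4,u_3)$. Lemma~\ref{lem:mn} and Proposition~\ref{prop:bnpc} then force opposite sides to have equal length and $\rho_{21}\cup\rho_{14}$ to be a left $B$-geodesic.
\item[(c)] The disk is not an arbitrary minimal filling; it is assembled from normal-form strips via Theorem~\ref{thm:replace}. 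Lemmas~\ref{lem:nonadj1} and~\ref{lem:nonadj2}, together with a two-case analysis of where the types $A_{ij}$ sit relative to $\uls$ (the sets $\uls^\pm_T$ of Definition~\ref{def:uls+-}), always produce a quadruple of smaller perimeter, a contradiction.
\end{enumerate}
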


\medskip
\noindent
\textbf{Propagation via Bestvina non-positive curvature}\ \ \ 
Now we discuss the proof of Proposition~\ref{prop:propintro}. Given a 4-cycle $\eta=x_1x_2x_3x_4$ in $\Delta_\Lambda$ of type $\hat s\hat t\hat s\hat t$. Our goal is to find an appropriate center of this 4-cycle, i.e. a vertex adjacent to each $x_i$ satisfying Definition~\ref{def:labaledintro}. 

The assumption of Proposition~\ref{prop:propintro} and Proposition~\ref{prop:mincutAnintro} implies that for an appropriate choice of the path $P$ in $\Lambda$, the complex $X=\Delta^P_\Lambda$ is a Bestvina complex. Between each pair of two vertices, there is a preferred shortest edge path in $X^1$ between them introduced by Bestvina in \cite{Bestvina1999}, which we called the \emph{B-geodesic}. An induced subcomplex $Y$ of $X$ is \emph{B-convex}, if for any two vertices of $Y$, the $B$-geodesic between them is contained in $Y$. While the property of being $B$-convex seems to be a global property that is hard to check, we showed in \cite{huang2025353} that it suffices to check that $\lk(y,Y)$ is locally convex in $\lk(y,X)$ for each vertex $y\in Y$ in an appropriate sense, which makes $B$-convex subsets easier to identify.

We reduce the study the 4-cycle $\eta$ in $\Delta_\Lambda$ to the study of a configuration of four $B$-convex subcomplexes of $X$ as follows. Given a vertex $x\in X$ corresponding to $gA_{\hat T}$ for $g\in A_\Lambda$ and $T$ being a set of vertices of $\Lambda$, we can realize $x$ as the barycenter of the simplex in $\Delta_\Lambda$ spanned by vertices corresponding to $\{gA_{\hat s}\}_{s\in T}$. Thus each vertex of $X$ can be viewed as the barycenter of a simplex in $\Delta_\Lambda$. Given $x_i\in \Delta_\Lambda$, let $Y_i$ be the full subcomplex of $X$ spanned by the collection of vertices of $X$ that are contained in the same simplex of $\Delta_\Lambda$ as $x_i$. It turns out that each $Y_i$ is $B$-convex in $X$, and $X_i\cap Y_{i+1}\neq\emptyset$ for $i\in \mathbb Z/4\mathbb Z$. Moreover, the task of finding a center of $\eta$ can be reduced to showing that such configuration of $Y_1,Y_2,Y_3,Y_4$ is degenerate in the sense that either $Y_1\cap Y_3\neq\emptyset$ or $Y_2\cap Y_4\neq\emptyset$. 

Let $\Xi$ be the collection of quadruple $(y_1,y_2,y_3,y_4)$ of vertices in $\Delta^P_\Lambda$ with $y_i\in Y_i\cap Y_{i+1}$ for all $i\in\mathbb Z/4\mathbb Z$. Each such quadruple determines a 4-gon in $X$ by considering the $B$-geodesic from $y_i$ to $y_{i+1}$. The convexity of $Y_i$ implies that each side of this 4-gon is contained in one of $Y_i$. Such 4-gon is called an \emph{admissible 4-gon}. As $X$ is simply-connected, each such 4-gon bound a triangulated disk in $X$. The task of showing the configuration $Y_1,Y_2,Y_3,Y_4$ is degenerate can be further reduced to finding an admissible 4-gon that bounds a degenerate triangulated disk.

In order to find such 4-gon, we look at the collection $\Xi'$ of quadruples in $\Xi$ that minimize the distance sum $\sum_{i\in \mathbb Z/4\mathbb Z}d(y_i,y_{i+1})$, here $d$ denotes the path metric on $X^1$ with edge length one. There is another metric $\ad$ on $X^0$, introduced by Bestvina \cite{Bestvina1999}, which is an asymmetric metric, i.e. $\ad(x,y)\neq\ad(y,x)$ in general. The metric $\ad$ has the advantage of preserving more information from the Garside structure, and it has a form of non-positive curvature \cite[Prop 3.12]{Bestvina1999}, which will play a key role in our argument.

Among quadruples in $\Xi'$, we select a member which also minimizes a carefully designed distance sum of some of the $y_i$'s with respect to Bestvina's asymmetric metric $\ad$, and consider the associated 4-gon. Most of the work here is to show that, one can produce a filling of this 4-gon using $B$-geodesics, such that the resulting triangulated disk from such filling has a very specific combinatorial structure, due to Bestvina's non-positive curvature on this asymmetric metric. Moreover, this triangulated has extra decorations coming from Garside theory, and information from the ambient Artin groups, which further limits the possibility of such triangulation, eventually leads to degeneracy of the disk.

We caution the reader that it is not true that whenever we have four $B$-convex subcomplexes $\{X_i\}_{i=1}^4$ in a Bestvina complex $X$ with $X_i\cap X_{i+1}\neq\emptyset$, then either $X_1\cap X_3\neq\emptyset$ or $X_2\cap X_4\neq\emptyset$. Here we only work with specific convex subcomplexes coming from the 4-cycle $\eta$ as explained before. The Bestvina complex $X=\Delta^P_\Lambda$ depends on the choice of the path $P$ in $\Lambda$. For different choices of $P$, we obtain different Bestvina complexes with four convex subcomplexes coming from $\eta$. It is also not true that these four convex subcomplexes always behave in the way we expect for any choice of $P$ - in the end, the choice of $P$ depends on the type of vertices of $\eta$.

\subsection{Structure of the article}
In Section~\ref{sec:complexes} and Section~\ref{sec:relations}, we collect some preliminary material. Section~\ref{sec:minimal cuts} is a pure graph theoretical discussion on properties of minimal cuts in graphs which are needed in later sections. Section~\ref{sec:labeled 4-cycle} is about the labeled 4-cycle property for Artin complexes $\Delta_\Lambda$, and a companion of this property, called the strong labeled 4-cycle property. We will discuss the relation of these two properties. Section~\ref{sec:garside} collects some background on Garside theory, and has a discussion on Bestvina's asymmetric metric and the non-positive curvature aspect of this metric. In Section~\ref{sec:minimal cut complex}, we define the minimal cut complex in full generality, and prove Proposition~\ref{prop:mincutAnintro}. Section~\ref{sec:propagation} is devoted to the proof of the key propagation result Proposition~\ref{prop:propintro}. In Section~\ref{sec:contractibility} we deduce all the results on $K(\pi,1)$-conjecture in the introduction.
	
\subsection{Acknowledgment}
We thank Piotr Przytycki for valuable discussion related to this article. The author is partially supported by a Sloan fellowship and NSF grant DMS-2305411. The author thanks Chinese Academy of Sciences for hospitality, where part of the work was undertaken.
The author thanks the Isaac Newton Institute for Mathematical Sciences, Cambridge, for support and hospitality during the programme Operators, Graphs, Groups, where part of the work was undertaken. This work was partially supported by EPSRC grant EP/Z000580/1.

\section{Complexes for Artin and Coxeter groups}
\label{sec:complexes}
\subsection{Artin complexes and relative Artin complexes}
\label{subsec:rel Artin complex}
A \emph{Coxeter diagram} $\Lambda$ is a finite simple graph with vertex set $S=\{s_i\}_i$ and labels $m_{ij}=3,4,\ldots ,\infty$ for each edge $s_is_j$. If $s_is_j$ is not an edge, we define $m_{ij}=2$. The Artin group $A_\Lambda$ is the group with generator set $S$ and relations $s_is_js_i\cdots=s_js_is_j\cdots$ with both sides alternating words of length $m_{ij}$, whenever $m_{ij}<\infty$. The Coxeter group $W_\Lambda$ is obtained from $A_\Lambda$ by adding relations $s_i^2=1$.

The \emph{pure Artin group} $PA_\Lambda$ is the kernel of the obvious homomorphism $A_\Lambda\to W_\Lambda$.
We say that $A_\Lambda$ is \emph{spherical}, if $W_\Lambda$ is finite. Recall that any $S'\subset S$ generates a subgroup of $A_\Lambda$ isomorphic to $A_{\Lambda'}$, where $\Lambda'$ is the subdiagram of $\Lambda$ induced on $S'$. Such subgroup is called a \emph{standard parabolic subgroup}. The following is a consequence of \cite[Lem 4.7 and Prop 4.5]{godelle2012k}.

\begin{prop}
	\label{prop:intersection}
Let $\{g_iA_{S_i}\}_{i=1}^n$ be a finite collection of left cosets of standard parabolic subgroups of $A_\Lambda$. Suppose these cosets pairwise have non-empty intersection. Then the common intersection of them is non-empty.
\end{prop}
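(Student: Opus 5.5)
The statement is quoted as a consequence of \cite[Lem 4.7 and Prop 4.5]{godelle2012k}, so the plan is to derive it from two standard facts about parabolic subgroups. The first is that intersections of standard parabolic subgroups are standard parabolic: $A_{S_1}\cap A_{S_2}=A_{S_1\cap S_2}$. The second, more general, fact is that a nonempty intersection of cosets $gA_{S_1}\cap hA_{S_2}$ is itself a coset $kA_{S_1\cap S_2}$.

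I would argue by induction on $n$. The case $n\le 2$ is the hypothesis. For $n\ge 3$, translate by $g_1^{-1}$ and assume $g_1=1$. Set $C=A_{S_1}\cap g_2A_{S_2}$. This is nonempty, so pick $k\in C$; then $g_2A_{S_2}=kA_{S_2}$ and
\[
C=kA_{S_1}\cap kA_{S_2}=k(A_{S_1}\cap A_{S_2})=kA_{S_1\cap S_2},
\]
using that $k\in A_{S_1}$ and the intersection fact.

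We cannot simply replace the first two cosets by $C$ and induct, because we would need $C$ to meet each $g_iA_{S_i}$ for $i\ge 3$. So the key step is the three-coset case: if $A_{S_1}$, $g_2A_{S_2}$, $g_3A_{S_3}$ pairwise intersect, then all three intersect. In coset-graph language this is a Helly-type statement. I would reduce it to a lemma of the form of \cite[Lem 4.7]{godelle2012k}. Take $u\in A_{S_1}\cap g_3A_{S_3}$ and $v\in g_2A_{S_2}\cap g_3A_{S_3}$, and write $v=uw$ with $w\in A_{S_3}$. We also have some element $k$ of $A_{S_1}\cap g_2A_{S_2}$. Then $u^{-1}k\in A_{S_1}$ and $k^{-1}v\in A_{S_2}$ (after adjusting), so $w\in A_{S_1}A_{S_2}\cap A_{S_3}$, up to the bookkeeping of which coset each element lies in.

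The needed factorization is: if $w\in A_{S_3}$ can be written as $ab$ with $a\in A_{S_1}$ and $b\in A_{S_2}$, then it can be written as $a'b'$ with $a'\in A_{S_1\cap S_3}$ and $b'\in A_{S_2\cap S_3}$. This is exactly the content of \cite[Prop 4.5]{godelle2012k}, a retraction/parabolic-closure argument using the retraction $A_S\to A_{S_3}$ of Godelle--Paris (in the spirit of Charney--Paris's convexity of parabolic subgroups). Granting it, the element $ua'$ lies in all three cosets. For $n\ge 4$ we would use this three-coset case to show that $C=kA_{S_1\cap S_2}$ meets each $g_iA_{S_i}$ for $i\ge 3$, and then induct on $n$ with $C$ in place of the first two cosets.

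I expect the factorization in the three-coset case to be the main obstacle, since it is a genuine structural fact about Artin groups. For arbitrary Artin groups it relies on the Godelle--Paris retraction, so here I would cite it rather than reprove it.
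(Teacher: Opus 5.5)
Your proposal is correct and follows essentially the same route as the paper, which simply derives the statement from \cite[Lem 4.7 and Prop 4.5]{godelle2012k}. Your induction from $n$ cosets down to three, using $A_{S_1}\cap A_{S_2}=A_{S_1\cap S_2}$, is sound; your factorization $A_{S_3}\cap A_{S_1}A_{S_2}=A_{S_1\cap S_3}A_{S_2\cap S_3}$ is equivalent to the three-coset case itself, so, as in the paper, all the real content is deferred to Godelle--Paris.
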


The \emph{Artin complex} $\Delta_\Lambda$, introduced in \cite{CharneyDavis} and further studied in \cite{godelle2012k,cumplido2020parabolic}, is a simplicial complex defined as follows. For each $s\in S$, let $A_{\hat s}$ be the standard parabolic subgroup generated by $\hat s=S\setminus\{s\}$. The vertices of $\Delta_\Lambda$ correspond to the left cosets of $\{A_{\hat s}\}_{s\in S}$. Moreover, vertices span a simplex if the corresponding cosets have non-empty common intersection. It follows from \cite[Prop~4.5]{godelle2012k} that $\Delta_\Lambda$ is a flag complex.
The \emph{Coxeter complex} $\bC_\Lambda$ is defined analogously, where we replace $A_{\hat s}$ by $W_{\hat s}<W_\Lambda$ 
generated by $\hat s$. A vertex of $\bC_\Lambda$ or $\Delta_\Lambda$ corresponding a left coset of $W_{\hat s}$ or $A_{\hat s}$ has \emph{type} $\hat s$. 
We have that $\bC_\Lambda$ is the quotient of $\Delta_\Lambda$ under the action of $PA_\Lambda$.

Let $\Delta'_\Lambda$ be the barycentric subdivision of the Artin complex $\Delta_\Lambda$. Given a vertex $x\in \Delta'_\Lambda$ which is the barycenter of a simplex $\sigma\in \Delta_\Lambda$ with vertices of $\sigma$ have type $\hat s_1,\ldots,\hat s_k$, the \emph{type} of this vertex is defined to be $\hat T=S\setminus S'$ such that $\hat T=\cap_{i=1}^k\hat s_i$. Note that vertices of type $\hat T$ in $\Delta'_\Lambda$ are in 1-1 correspondence with left cosets in $A_\Lambda$ of form $gA_{S\setminus T}$, where $A_{S\setminus T}$ is the standard parabolic subgroup generated by elements in $S\setminus T$. Two vertices of $\Delta'_\Lambda$ are adjacent if and only if the left coset associated with one vertex is contained the left coset associated with another vertex. Similarly, we can define types of vertices in the barycentric subdivision $\bC'_\Lambda$ of $\bC_\Lambda$.

\begin{thm}\cite[Thm 3.1]{godelle2012k}
	\label{thm:kpi1}
Suppose $A_S$ is not spherical.	If $\Delta_S$ is contractible and each $\{A_{\hat s}\}_{s\in S}$ satisfies the $K(\pi,1)$-conjecture, then $A_S$ satisfies the $K(\pi,1)$-conjecture.
\end{thm}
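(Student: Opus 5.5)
The plan is to realize the universal cover of the Salvetti complex $\s_S$ as a union of subcomplexes whose nerve is $\Delta_S$, and then apply the nerve lemma. Recall that the universal cover $\widetilde{\s}_S$ has vertex set $A_S$, and its cells are indexed by pairs $(g,T)$ with $g\in A_S$ and $T\subset S$ spherical. For $T\subset S$ and $g\in A_S$, let $g\widetilde{\s}_T\subset \widetilde{\s}_S$ denote the subcomplex made of all cells $(h,T')$ with $h\in gA_T$ and $T'\subset T$. Since $A_T\to A_S$ is injective, $g\widetilde{\s}_T$ is a copy of the universal cover of $\s_T$. The $K(\pi,1)$-conjecture for $A_S$ is equivalent to contractibility of $\widetilde{\s}_S$.

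\textbf{Step 1: the translates of $\widetilde{\s}_{\hat s}$ cover $\widetilde{\s}_S$.} We cover $\widetilde{\s}_S$ by the family $\{g\widetilde{\s}_{\hat s}\}_{g\in A_S,\, s\in S}$, indexed exactly by the vertices of $\Delta_S$. Because $A_S$ is not spherical, every spherical $T$ is a proper subset of $S$, hence $T\subset \hat s$ for some $s$. So every cell $(h,T)$ lies in $h\widetilde{\s}_{\hat s}$, and the family is a cover by subcomplexes.

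\textbf{Step 2: intersections and the nerve.} We identify nonempty finite intersections. By construction,
\[
g_1\widetilde{\s}_{\hat s_1}\cap\cdots\cap g_k\widetilde{\s}_{\hat s_k}
\]
consists of the cells $(h,T')$ with $h\in\bigcap_i g_iA_{\hat s_i}$ and $T'\subset\bigcap_i \hat s_i$. If this intersection is nonempty, then the cosets have a common element $h$. An intersection of cosets of standard parabolics through a common point $h$ is the single coset $hA_{\bigcap_i\hat s_i}$; here I use that $A_{T_1}\cap A_{T_2}=A_{T_1\cap T_2}$, from \cite{godelle2012k}. Hence the intersection equals $h\widetilde{\s}_{T}$ with $T=\bigcap_i\hat s_i$. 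Conversely, the intersection is nonempty exactly when the cosets have a common point, which is exactly when the corresponding vertices span a simplex of $\Delta_S$. Proposition~\ref{prop:intersection} (pairwise intersecting implies common intersection) is consistent with $\Delta_S$ being flag. Therefore the nerve of the cover is exactly $\Delta_S$.

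\textbf{Step 3: each nonempty intersection is contractible.} Each nonempty intersection is a copy of the universal cover of $\s_T$ for some $T\subset \hat s$. By hypothesis $A_{\hat s}$ satisfies the $K(\pi,1)$-conjecture. The conjecture passes to standard parabolic subgroups: $\s_T$ is a retract of $\s_{\hat s}$ in a way compatible with universal covers (van der Lek / Godelle–Paris). Hence every such $\widetilde{\s}_T$ is contractible.

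\textbf{Step 4: conclusion.} The nerve lemma for covers of CW complexes by subcomplexes all of whose nonempty finite intersections are contractible gives $\widetilde{\s}_S\simeq\Delta_S$. Since $\Delta_S$ is contractible by assumption, $\widetilde{\s}_S$ is contractible, so $A_S$ satisfies the $K(\pi,1)$-conjecture.

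I expect the main obstacle to be Step 2. One must check carefully that the intersection of the subcomplexes is exactly the subcomplex attached to the intersected coset, with no stray cells. This relies on the parabolic intersection property $A_{T_1}\cap A_{T_2}=A_{T_1\cap T_2}$ together with the description of cells of $\widetilde{\s}_S$ by pairs $(g,T)$. The hereditary property of the $K(\pi,1)$-conjecture used in Step 3 I would cite rather than reprove.
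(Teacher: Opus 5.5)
Your proposal is correct, and it is essentially the standard argument behind the cited result of Godelle--Paris, which the paper quotes without proof. You cover the universal cover of the Salvetti complex by the translates $g\widetilde{\s}_{\hat s}$, identify the nerve with $\Delta_S$ using the parabolic intersection property (due to van der Lek), get contractibility of the pieces $h\widetilde{\s}_T$ from the hereditary property of the $K(\pi,1)$-conjecture (via the Godelle--Paris retraction), and conclude with the nerve theorem for covers of CW complexes by subcomplexes; this is the same covering-and-nerve technique the paper itself uses in Lemma~\ref{lem:Delta1sc}.
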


Let $A_\Lambda$ be an Artin group with generating set $S$, and $S'\subset S$. The \emph{$(S,S')$-relative Artin complex $\Delta_{S,S'}$} (introduced in \cite{huang2023labeled}) is defined to be the induced subcomplex of the Artin complex $\Delta_S$ of $A_S$ spanned by vertices of type $\hat s$ with $s\in S'$. We also write $\Delta_{S,S'}$ as $\Delta_{\Lambda,\Lambda'}$ where $\Lambda,\Lambda'$ are Coxeter diagrams for $A_S,A_{S'}$ respectively.
Links of vertices in relative Artin complexes can be computed as follows.
\begin{lem}(\cite[Lem 6.4]{huang2023labeled})
	\label{lem:link}
	Let $\Delta=\Delta_{\Lambda,\Lambda'}$, and let $v\in \Delta$ be a vertex of type $\hat s$ with $s\in \Lambda'$. Let $\Lambda_s$ and $\Lambda'_s$ be the induced subgraph of $\Lambda$ and $\Lambda'$ respectively spanned all the vertices which are not $s$. 
	Then there is a type-preserving isomorphism between $\lk(v,\Delta)$ and $\Delta_{\Lambda_s,\Lambda'_s}$.
	Moreover, Let $I_s$ be the union of connected components of $\Lambda_s$ that contain at least one component of $\Lambda'_s$. Then $\Lambda'_s\subset I_s$ and there is a type-preserving isomorphism between $\lk(v,\Delta)$ and $\Delta_{I_s,\Lambda'_s}$.
\end{lem}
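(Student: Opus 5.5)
The plan is to work directly with cosets, reducing everything to intersections of parabolic subgroups and using Proposition~\ref{prop:intersection}. By the action of $A_\Lambda$ (which preserves types), we may assume $v$ is the vertex $A_{\hat s}$. A vertex $w$ of type $\hat t$ with $t\in\Lambda'$, $t\neq s$, is adjacent to $v$ if and only if its coset $gA_{\hat t}$ meets $A_{\hat s}$. In that case we may choose the representative $g\in A_{\hat s}$. We then define a map from $\lk(v,\Delta)$ to $\Delta_{\Lambda_s,\Lambda'_s}$ by sending $gA_{\hat t}$ to $g(A_{\hat s}\cap A_{\hat t})$. Here we use the standard fact (van der Lek) that $A_{\hat s}\cap A_{\hat t}$ is the standard parabolic subgroup generated by $S\setminus\{s,t\}$. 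This is exactly the subgroup $A_{\Lambda_s\setminus\{t\}}$ of $A_{\Lambda_s}=A_{\hat s}$, so the image is a vertex of type $\hat t$ in the Artin complex of $\Lambda_s$, and it lies in the relative complex because $t\in\Lambda'_s$.

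Next we check that the map is well defined and bijective. If $gA_{\hat t}=g'A_{\hat t}$ with $g,g'\in A_{\hat s}$, then $g^{-1}g'\in A_{\hat s}\cap A_{\hat t}$, so the images agree. The argument reverses to give injectivity, and surjectivity is clear. For simplices: a collection of link vertices spans a simplex in $\lk(v,\Delta)$ if and only if these cosets together with $A_{\hat s}$ have nonempty common intersection. By Proposition~\ref{prop:intersection}, this is equivalent to pairwise nonempty intersection. On the other side, two image cosets $g(A_{\hat s}\cap A_{\hat t})$ and $g'(A_{\hat s}\cap A_{\hat r})$ meet inside $A_{\hat s}$ exactly when $gA_{\hat t}\cap g'A_{\hat r}\cap A_{\hat s}\neq\emptyset$. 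Since both complexes are flag, this matches simplices on both sides, which gives the first isomorphism.

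For the second statement, write $\Lambda_s=I_s\sqcup J$, where $J$ is the union of the components containing no vertex of $\Lambda'_s$; there are no edges between $I_s$ and $J$, so $A_{\Lambda_s}=A_{I_s}\times A_J$. Every vertex of $\Delta_{\Lambda_s,\Lambda'_s}$ has type $\hat t$ with $t\in I_s$, so its coset is $g(A_{I_s\setminus\{t\}}\times A_J)$. Write $g=(g_1,g_2)$; this coset equals $g_1A_{I_s\setminus\{t\}}\times A_J$ and depends only on $g_1$. The map $g_1A_{I_s\setminus\{t\}}\mapsto g_1A_{I_s\setminus\{t\}}\times A_J$ is therefore a type-preserving bijection from the vertices of $\Delta_{I_s,\Lambda'_s}$. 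Intersections of such product cosets are nonempty exactly when the $A_{I_s}$-factors intersect, so the bijection respects simplices. The inclusion $\Lambda'_s\subset I_s$ is immediate from the definition of $I_s$.

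The only nonformal input, and so the main point to verify, is the intersection formula for standard parabolic subgroups. It is classical, and is also implicit in the compatibility of types in $\Delta'_\Lambda$ described earlier. Everything else is routine coset bookkeeping.
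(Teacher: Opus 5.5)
Your proposal is correct. The paper itself gives no proof of this lemma; it imports it from \cite[Lem 6.4]{huang2023labeled}, so there is no in-paper argument to compare against, and your coset computation is the standard argument for such a statement. That argument has three steps: identify $\lk(v,\Delta_\Lambda)$ with $\Delta_{\Lambda_s}$ via $gA_{\hat t}\mapsto g(A_{\hat s}\cap A_{\hat t})$ for $g\in A_{\hat s}$, using van der Lek's intersection formula; restrict to types in $\Lambda'_s$; then discard the factor coming from the components $J$ of $\Lambda_s$ disjoint from $\Lambda'_s$, using $A_{\Lambda_s}=A_{I_s}\times A_J$.

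One small simplification: the detour through Proposition~\ref{prop:intersection} and flagness is unnecessary. For $g_1,\dots,g_k\in A_{\hat s}$ one has $\bigcap_i g_i(A_{\hat s}\cap A_{\hat t_i})=A_{\hat s}\cap\bigcap_i g_iA_{\hat t_i}$ directly, which matches simplices of every dimension at once.
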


The following is a direct consequence of definition.

\begin{lem}
	\label{lem:c}
	Suppose $|S'|\ge 2$. Then $\Delta_{S,S'}$ is connected. 
\end{lem}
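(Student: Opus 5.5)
Since $\Delta_{S,S'}$ is a flag simplicial complex whose vertices are the cosets $gA_{\hat s}$ with $s\in S'$ and $g\in A_S$, the plan is to show that every such vertex can be joined by an edge path to the base vertices $A_{\hat s}$, $s\in S'$, and that these base vertices are connected among themselves.

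\emph{The base vertices.} For distinct $s,t\in S'$ the cosets $A_{\hat s}$ and $A_{\hat t}$ both contain the identity. Hence the base vertices $\{A_{\hat s}\}_{s\in S'}$ span a simplex of $\Delta_{S,S'}$, and in particular they lie in one component $C$. This step uses $|S'|\ge 2$ only in the trivial sense that there are at least two vertex types available.

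\emph{Left multiplication by generators preserves $C$.} Left multiplication by $A_S$ acts on $\Delta_{S,S'}$ by type-preserving simplicial automorphisms. Fix a generator $r\in S$. Since $|S'|\ge 2$, we can choose $s\in S'$ with $s\neq r$. Then $r\in A_{\hat s}$, so $rA_{\hat s}=A_{\hat s}$, meaning $r$ fixes the base vertex of type $\hat s$. That vertex lies in $C$, so $rC\cap C\neq\emptyset$, and because $rC$ is a connected component this forces $rC=C$. Since $S$ generates $A_S$, the same conclusion $gC=C$ holds for every $g\in A_S$.

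\emph{Conclusion.} An arbitrary vertex $gA_{\hat s}$ equals $g\cdot A_{\hat s}$, which lies in $gC=C$. Therefore $C$ is all of $\Delta_{S,S'}$, and the complex is connected.

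No step here is a real obstacle. The only point requiring care is the choice of a type $\hat s$ with $s\neq r$, so that the generator $r$ actually fixes a base vertex; this is exactly where the hypothesis $|S'|\ge 2$ is needed.
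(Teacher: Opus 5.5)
Your argument is correct: the base vertices $A_{\hat s}$, $s\in S'$, are pairwise adjacent because they all contain the identity, and each generator $r\in S$ fixes the base vertex $A_{\hat s}$ for some $s\in S'\setminus\{r\}$, so the component containing the base simplex is $A_S$-invariant and therefore contains every vertex. The paper writes out no proof and only remarks that the lemma is a direct consequence of the definition; your argument is the standard way to fill in that remark.
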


\begin{lem}(\cite[Lem 6.2]{huang2023labeled})
	\label{lem:sc}
	Suppose $|S'|\ge 3$. Then $\Delta_{S,S'}$ is simply-connected. 
\end{lem}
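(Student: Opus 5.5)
We need to prove Lemma~\ref{lem:sc}: if $|S'|\ge 3$, then $\Delta_{S,S'}$ is simply-connected.

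The plan is to realize $\Delta_{S,S'}$ as a nerve-type complex over the group $A_S$ and apply the standard criterion for simple connectivity of coset complexes. Concretely, consider the family of subgroups $\{A_{\hat s}\}_{s\in S'}$. By Proposition~\ref{prop:intersection}, a collection of cosets $g_iA_{\hat s_i}$ spans a simplex exactly when the cosets pairwise intersect. Hence $\Delta_{S,S'}$ is the nerve of the covering of $A_S$ by the cosets $\{gA_{\hat s}\}_{s\in S',g\in A_S}$. It is therefore homotopy equivalent to the coset complex in the sense of Abels--Holz / Tits. The classical result in that setting is as follows. The nerve is connected iff the subgroups generate $A_S$. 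It is simply connected iff, in addition, $A_S$ is the amalgam (colimit) of the subgroups $A_{\hat s}$ ($s\in S'$) along their pairwise intersections $A_{\hat s}\cap A_{\hat t}=A_{S\setminus\{s,t\}}$. Here we use that intersections of standard parabolics are standard parabolics, which is part of the van der Lek theory implicit in Proposition~\ref{prop:intersection}.

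The key steps, in order, are these.

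\emph{Step 1.} Connectedness. This is Lemma~\ref{lem:c}: with $|S'|\ge 2$, every generator $s\in S$ lies in some $A_{\hat t}$ with $t\in S'$, $t\ne s$. So the subgroups generate $A_S$.

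\emph{Step 2.} Reduction to the colimit statement. Any edge loop in $\Delta_{S,S'}$ based at the vertex $A_{\hat s_0}$ can be written as a word $h_1h_2\cdots h_k$ with each $h_j$ in some $A_{\hat s_{i_j}}$, and the loop closes up iff the product is trivial. Moving across a 2-simplex corresponds to using a relation within a single subgroup, or merging elements of pairwise intersections. Hence $\pi_1$ is the kernel of the natural map from the colimit $\mathcal G$ of the diagram of groups $A_{\hat s}$, $A_{S\setminus\{s,t\}}$ ($s,t\in S'$) to $A_S$.

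\emph{Step 3.} Show $\mathcal G\to A_S$ is an isomorphism. Each Artin relation involves only two generators $u,v$. Since $|S'|\ge 3$, there is $s\in S'\setminus\{u,v\}$, so this relation already holds inside $A_{\hat s}$. Every generator of $S$ likewise lies in some $A_{\hat s}$, and copies of the same generator in different $A_{\hat s},A_{\hat t}$ are identified through $A_{S\setminus\{s,t\}}$, or through a chain of such identifications when the generator is $s$ or $t$ itself, using a third element of $S'$. Thus $\mathcal G$ is generated by $S$ subject to all Artin relations. This gives the inverse map $A_S\to\mathcal G$, so the map is an isomorphism and $\pi_1=1$.

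The main obstacle is Step 2, namely making the passage from loops in the nerve to the colimit presentation rigorous. I would handle it cleanly via the standard fact that, for a simple complex of groups built from a family of subgroups closed under the relevant intersections, the development (here $\Delta_{S,S'}$, or rather its barycentric version) has fundamental group equal to the kernel of colimit $\to A_S$. This fact is where the condition $|S'|\ge 3$ is genuinely needed; it is used in Step 3. Alternatively, I would verify directly that every edge loop is homotopic to one through vertices of at most two types and then contract it using a third type.
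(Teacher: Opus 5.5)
Your argument is correct, but it takes the algebraic route (coset nerve plus the Abels--Holz / complex-of-groups criterion), whereas the paper relies on a topological nerve-theorem argument. The paper does not reprove this lemma; it cites \cite[Lem 6.2]{huang2023labeled}. For the analogous Lemma~\ref{lem:Delta1sc} its argument runs as follows:
\begin{itemize}
\item take the Cayley complex $Z$ of $A_S$ and cover it by the subcomplexes $Z(gA_{\hat s})$, $s\in S'$;
\item by van der Lek \cite{lek} these are copies of Cayley complexes of the parabolic subgroups, hence simply connected, and their nonempty pairwise intersections $Z(g'A_{S\setminus\{s,t\}})$ are connected;
\item the nerve of this cover is exactly $\Delta_{S,S'}$, so Bj\"orner's nerve theorem \cite[Thm 6]{bjorner2003nerves} gives $\pi_1(\Delta_{S,S'})\cong\pi_1(Z)=1$.
\end{itemize}
There, $|S'|\ge 3$ is used only to see that the pieces actually cover $Z$. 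A $2$-cell comes from a relator in two generators $u,v$, and it lies in $Z(gA_{\hat s})$ for any $s\in S'\setminus\{u,v\}$. This is exactly where you use the hypothesis in Step 3, when you put each Artin relation inside some $A_{\hat s}$.

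Your version needs the same van der Lek input in algebraic form: $A_{\hat s}\cap A_{\hat t}=A_{S\setminus\{s,t\}}$, and the standard presentation of parabolic subgroups. In return it gives the finer identification $\pi_1(\Delta_{S,S'})\cong\ker(\mathcal G\to A_S)$. This also explains why $|S'|=2$ fails: then $\mathcal G=A_{\hat s}*_{A_{S\setminus\{s,t\}}}A_{\hat t}$, which lacks the relation between $s$ and $t$. The nerve-theorem version is shorter once Bj\"orner's theorem is available and avoids complexes-of-groups machinery.

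Some minor points:
\begin{itemize}
\item $\Delta_{S,S'}$ is the coset nerve by definition. Proposition~\ref{prop:intersection} is only needed for flagness, and ``homotopy equivalent to the coset complex'' should read ``equal to''.
\item The ``chain of identifications'' in Step 3 is unnecessary. All copies of a generator $u$ live in the $A_{\hat r}$ with $r\in S'\setminus\{u\}$, and any two such copies are identified directly through $A_{S\setminus\{r,r'\}}\ni u$.
\item Your fallback (homotope loops into the two-type graph, then contract using a third type) only relocates the difficulty. Contracting loops of $\Delta_{S,\{s,t\}}$ through third-type vertices is where all the work lies, so Step 2 should rest on the cited theorem.
\end{itemize}
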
 

The following is a special case of \cite[Lem 11.7 (2)]{huang2025353}.
\begin{lem}
	\label{lem:dr}
	Let $T$ be a subset of vertices of a Coxeter diagram $\Lambda$. Suppose $\Delta_{\Lambda\setminus R}$ is contractible for each nonempty subset $R$ of $T$. Then $\Delta_{\Lambda}$ deformation retracts onto $\Delta_{\Lambda,\Lambda\setminus T}$.
\end{lem}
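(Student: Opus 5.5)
The claim is that $\Delta_\Lambda$ deformation retracts onto the induced subcomplex $\Delta_{\Lambda,\Lambda\setminus T}$. The plan is to delete the vertices whose type lies in $T$ one type at a time, checking at each step that the link of every deleted vertex is contractible. Specifically, I would argue by induction on $|T|$. Enumerate $T=\{t_1,\dots,t_k\}$ and set $T_j=\{t_1,\dots,t_j\}$. I will show that for each $j$ the complex $\Delta_{\Lambda,\Lambda\setminus T_{j-1}}$ deformation retracts onto $\Delta_{\Lambda,\Lambda\setminus T_j}$; composing these retractions gives the statement.

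For the single step, consider the vertices of type $\hat t_j$ inside $\Delta_{\Lambda,\Lambda\setminus T_{j-1}}$. No two such vertices are adjacent, since distinct cosets of $A_{\hat t_j}$ are disjoint. Hence the complex is obtained from $\Delta_{\Lambda,\Lambda\setminus T_j}$ by attaching, for each such vertex $v$, a cone over $\lk(v)$ along $\lk(v)$, and these cones are attached independently of one another. If every such link is contractible, each cone can be collapsed onto its base, and this is done simultaneously over the disjoint family of vertices; by a standard CW or Whitehead-type argument this gives a deformation retraction.

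It remains to identify the links. By Lemma~\ref{lem:link}, the link of a vertex $v$ of type $\hat t_j$ in $\Delta_{\Lambda,\Lambda\setminus T_{j-1}}$ is isomorphic to the relative Artin complex $\Delta_{\Lambda_{t_j},\,\Lambda_{t_j}\setminus T_{j-1}}$, where $\Lambda_{t_j}=\Lambda\setminus\{t_j\}$. This is where the induction enters: apply the statement itself, with smaller $T$, to the diagram $\Lambda_{t_j}$ and the subset $T_{j-1}$. Its hypotheses hold, because for each nonempty $R\subseteq T_{j-1}$ the complex $\Delta_{\Lambda_{t_j}\setminus R}=\Delta_{\Lambda\setminus(R\cup\{t_j\})}$ is contractible by assumption, as $R\cup\{t_j\}\subseteq T$ is nonempty. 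So $\Delta_{\Lambda_{t_j}}$ deformation retracts onto $\Delta_{\Lambda_{t_j},\Lambda_{t_j}\setminus T_{j-1}}$. In addition, $\Delta_{\Lambda_{t_j}}$ itself is contractible, taking $R=\{t_j\}$. Therefore the link is contractible. The base case $T_{j-1}=\emptyset$ is exactly this last contractibility statement.

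The main obstacle is the bookkeeping in the links. One must confirm that Lemma~\ref{lem:link} applies here, i.e.\ that $t_j$ lies in the second argument $\Lambda\setminus T_{j-1}$, which it does. One must also handle the degenerate case where $\Lambda\setminus T_j$ is empty or very small, so that the relative complexes are empty or discrete and the statements become vacuous or must be read as such. The remaining subtlety is infinite-dimensionality and local infiniteness. Since $\Delta_\Lambda$ is finite-dimensional, attaching contractible cones along contractible subcomplexes yields a homotopy equivalence that is a deformation retraction, by the homotopy extension property for CW pairs.
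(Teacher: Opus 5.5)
Your proof is correct. The type-$\hat t_j$ vertices are pairwise non-adjacent, so $\Delta_{\Lambda,\Lambda\setminus T_{j-1}}$ is $\Delta_{\Lambda,\Lambda\setminus T_j}$ with cones $v*\lk(v)$ attached that meet one another only inside the base. Lemma~\ref{lem:link}, the inductive hypothesis applied to $(\Lambda\setminus\{t_j\},T_{j-1})$, and the contractibility of $\Delta_{\Lambda\setminus\{t_j\}}$ together make each link contractible, so each $\lk(v)$ is a strong deformation retract of its cone and these retractions glue.

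The paper gives no argument of its own here; it only cites the statement as a special case of \cite[Lem 11.7 (2)]{huang2025353}. Your induction is therefore a self-contained proof. Note that finite-dimensionality plays no role in the gluing step.
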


\subsection{Davis complexes}
\label{subsec:complex}

\begin{definition}[Davis complex]
	Given a Coxeter group $W_\Lambda$, let $\mathcal{P}$ be the poset of left cosets of spherical standard parabolic subgroups in $W_\Lambda$ (with respect to inclusion) and let $b\Si_\Lambda$ be the geometric realization of this poset (i.e.\ $b\Si_\Lambda$ is a simplicial complex whose simplices correspond to chains in $\mathcal{P}$). Now we modify the cell structure on $b\Si_\Lambda$ to define a new complex $\Sigma_\Lambda$, called the \emph{Davis complex}. The cells in $\Sigma_\Lambda$ are induced subcomplexes of $b\Si_\Lambda$ spanned by a given vertex $v$ and all other vertices which are $\le v$ (note that vertices of $b\Si_\Lambda$ correspond to elements in $\mathcal{P}$, hence inherit the partial order).
\end{definition}

If $\Lambda'\subset \Lambda$ is an induced subgraph, then $W_{\Lambda'}\to W_{\Lambda}$ induces an embedding $\Sigma_{\Lambda'}\to \Sigma_{\Lambda}$. The image of this embedding and its left translations are \emph{standard subcomplexes} of type $\Lambda'$. There is a correspondence between standard subcomplexes of type $\Lambda'$ in $\Sigma_{\Lambda}$ and left cosets of $W_{\Lambda'}$ in $W_{\Lambda}$.

\begin{definition}
	\label{def:associated subcomplex}
	For each vertex $x\in \bC'_\Lambda$ of type $\hat T$, let $gW_{S\setminus T}$ be the associated left coset in $W_\Lambda$. Recall that vertices of the Davis complex $\Si_\Lambda$ can be identified with $W_\Lambda$. The \emph{standard subcomplex of $\Si_\Lambda$ associated with the vertex $x\in \bC_\Lambda$} is defined to be the standard subcomplex of $\Si_\Lambda$ spanned by vertices in $gW_{S\setminus T}$.
\end{definition}

Lemma~\ref{lem:gate} below is standard, see e.g.\ \cite{bourbaki2002lie} or \cite{davis2012geometry}. Let $d$ be the path metric on the 1-skeleton of $\Si_\Lambda$, with each edge having length 1.
Lemma~\ref{lem:gate} describes nearest point projection into the vertex set of a standard subcomplex.
\begin{lem}
	\label{lem:gate}
	Let $F$ be a standard subcomplex of $\Si_\Lambda$ and let $x\in \Si_\Lambda$ be a vertex. Then the following hold.
	\begin{enumerate}
		\item For two vertices $x_1,x_2\in\mathcal R$, the vertex set of any geodesic in $\Si^1_\Lambda$ joining $v_1$ and $v_2$ is inside $F$. Moreover, there exists a unique vertex $x_F\in F$ such that $d(x,x_F)\le d(x,y)$ for any vertex $y\in F$, where $d$ denotes the path metric on the 1-skeleton of $\Sigma_\Lambda$. The vertex $x_F$ is called the \emph{projection} of $x$ to $F$, and is denoted $\prj_F(x)$. 
		\item For any vertex $y\in F$, there exists a shortest edge path $\omega$ in $\Si^1_\Lambda$ from $x$ to $y$ so that $\omega$ passes through $x_F$ and so that the segment of $\omega$ between $x_F$ and $y$ is contained in $F$.
	\end{enumerate}
\end{lem}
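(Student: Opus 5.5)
The statement is the standard gate property for standard subcomplexes of the Davis complex, so I would prove it from the word combinatorics of Coxeter groups. Identify the vertices of $\Si_\Lambda$ with $W_\Lambda$. Two vertices are joined by an edge exactly when they differ by right multiplication by a generator in $S$, so $d(g,h)=\ell(g^{-1}h)$, where $\ell$ is word length with respect to $S$. By left translation we may assume $F$ is spanned by $W_{S'}$ for some $S'\subset S$.

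For the first assertion of (1), I would invoke the standard fact that every reduced expression of an element of $W_{S'}$ uses only letters from $S'$. This is a consequence of Tits' solution to the word problem: any two reduced words for the same element are related by braid moves, and braid moves preserve the set of letters used. Now a geodesic edge path in $\Si^1_\Lambda$ from $g$ to $h$, with $g,h\in W_{S'}$, reads off a reduced word for $g^{-1}h\in W_{S'}$. All its letters lie in $S'$, so every vertex of the path lies in $gW_{S'}=W_{S'}$.

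For the projection in (1) and for (2), I would use minimal coset representatives. Write $x=uv$ with $v\in W_{S'}$ and $u$ the unique element of minimal length in the coset $xW_{S'}$. The key fact is that $\ell(uw)=\ell(u)+\ell(w)$ for all $w\in W_{S'}$. This follows from the exchange condition: $u$ has no right descent in $S'$, and one proves additivity by induction on $\ell(w)$, where a failure of additivity would, via exchange, produce an element of the coset shorter than $u$.

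Setting $x_F=v\in W_{S'}$, for any $y\in W_{S'}$ we get
\[
d(x,y)=\ell(v^{-1}u^{-1}y)=\ell\bigl(u^{-1}y\bigr)\ \text{ rewritten as } \ell\bigl((y^{-1}u)^{-1}\bigr)=\ell(u)+\ell(v^{-1}y)\ge \ell(u)=d(x,v).
\]
Here the middle equalities use $u^{-1}y=u^{-1}v\cdot v^{-1}y$, together with additivity applied to $u\cdot(y^{-1}v)$ after inverting. Equality holds only when $y=v$, which gives uniqueness of $x_F$.

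For (2), concatenate a geodesic from $x$ to $v$, given by a reduced word for $u^{-1}$ (equivalently for $u$ read backwards), with a geodesic from $v$ to $y$ inside $F$. Its length is $\ell(u)+\ell(v^{-1}y)=d(x,y)$, so it is shortest, and its second segment lies in $F$ by the first part.

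The main obstacle is the additivity $\ell(uw)=\ell(u)+\ell(w)$ for minimal coset representatives. I would take it from Bourbaki, as the paper does, since it is the genuinely Coxeter-theoretic input; everything else is formal bookkeeping.
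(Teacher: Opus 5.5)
Your route is the standard one: the paper gives no argument and simply cites Bourbaki and Davis, where this minimal-coset-representative proof appears. Your treatment of the first assertion of (1) is fine, with one small addition. Before using that braid moves preserve the letter set, you need one reduced word of $w\in W_{S'}$ that lies in $S'$; the deletion condition applied to any $S'$-word supplies it.

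The projection step, however, is set up on the wrong side, and as written it is false. Since $d(x,y)=\ell(x^{-1}y)$, as $y$ runs over $W_{S'}$ the elements whose length you must minimize form the coset $x^{-1}W_{S'}$. Equivalently, $y^{-1}x$ runs over the right coset $W_{S'}x$. The coset $xW_{S'}$ plays no role. Concretely, take $S=\{s,t\}$ with $m_{st}\ge 3$, $S'=\{s\}$ and $x=st$. Then $xW_{S'}=\{st,sts\}$, so your decomposition gives $u=st$ and $v=1$. But $d(st,1)=\ell(ts)=2$ while $d(st,s)=\ell(t)=1$, so $\prj_F(x)=s\neq v$.

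Your displayed chain fails at its first step: $\ell(v^{-1}u^{-1}y)=\ell(u^{-1}y)$ simply drops $v^{-1}$. Moreover, additivity for $u\cdot(y^{-1}v)$ gives, after inverting, a formula for $\ell(v^{-1}yu^{-1})$, not for $\ell(v^{-1}u^{-1}y)$. Likewise in (2), a reduced word for $u^{-1}$ read from $x=uv$ ends at $uvu^{-1}$, not at $v$.

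The fix is small. Write $x=vu$ with $v\in W_{S'}$ and $u$ the unique minimal-length element of the right coset $W_{S'}x$; you get this by applying the Bourbaki result to $x^{-1}$ and inverting. Then $\ell(wu)=\ell(w)+\ell(u)$ for every $w\in W_{S'}$, and for $y\in W_{S'}$
\[ d(x,y)=\ell(y^{-1}x)=\ell\bigl((y^{-1}v)u\bigr)=\ell(y^{-1}v)+\ell(u)\ge \ell(u)=d(x,v), \]
with equality only for $y=v$, so $\prj_F(x)=v$ is unique. Since $xu^{-1}=v$, your path in (2) now works exactly as you described. It consists of a reduced word for $u^{-1}$ from $x$ to $v$, followed by a geodesic from $v$ to $y$, which lies in $F$ by the first part of (1). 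Its length is $\ell(u)+\ell(v^{-1}y)=d(x,y)$.
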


\begin{definition}
	\label{def:projection1}
	Let $F$ be a standard subcomplex of $\Si_\Lambda$. Lemma~\ref{lem:gate} gives a map $\Pi_F:\vertex\Si_\Lambda\to\vertex F$ which extends to a retraction $\Pi_F:\Si_\Lambda\to F$ as follows. Note that for each face $E$ of $\Si_\Lambda$, $\pi(\vertex E)$ is the vertex set of a face $E'\subset F$. Then we extends $\pi$ to a map $\pi'$ from the vertex set of $b\Si_\Lambda$ to the vertex set of $bF$, by sending the barycenter of $E$ to the barycenter of $E'$. As $\pi'$ map vertices in a simplex to vertices in a simplex, it extends linearly to a map $\Pi_F:b\Si_\Lambda\cong \Si_\Lambda\to bF\cong F$.
\end{definition}

\subsection{Oriented Davis complexes and Salvetti complexes}
\label{subsec:Sal}
Let $\mathcal P$ be the poset of faces of $\Si_\Lambda$ (under containment), and let $V$ be the vertex set of $\Si_\Lambda$. We now define the \emph{oriented Davis complex} $\widehat\Si_\Lambda$ as follows.
Consider the set of pairs $(F,v)\in \cp \times V$.  Define  an equivalence relation $\sim$ on this set by $$(F,v)\sim (F,v') \iff F=F' \text{\ and\ } \prj_F(v') = \prj_F(v).$$
Denote the equivalence class of $(F,v')$ by $[F,v']$ and let $\ce(\ca)$ be  the set of equivalence classes.   Note  that each equivalence class $[F,v']$ contains a unique representative of the form $(F,v)$, with $v\in \vertex F$.  The \emph{oriented Davis complex} $\widehat\Si_\Lambda$ is defined as the regular CW complex given by taking  $\Si_\Lambda\times V$ (i.e., a disjoint union of copies of $\Si_\Lambda$) and then identifying faces $F\times v$ and $F\times v'$ whenever $[F,v]=[F,v']$, i.e.,
\begin{equation}
	\widehat\Si_\Lambda=( \Si_\Lambda\times V)/ \sim \ .
\end{equation}
For example, for each edge $F$ of $\Si_\Lambda$ with endpoints $v_0$ and $v_1$, we get two $1$-cells $[F,v_0]$ and $[F,v_1]$ of $\widehat\Si_\Lambda$ glued together along their endpoints $[v_0,v_0]$ and $[v_1,v_1]$.  So, the $0$-skeleton of $\widehat\Si_\Lambda$ is equal to the $0$-skeleton of $\widehat\Si_\Lambda$ while its $1$-skeleton is formed from the $1$-skeleton of $\widehat\Si_\Lambda$ by doubling each edge.  
There is a natural map $\pi:\widehat\Si_\Lambda\to\Si_\Lambda$ defined by ignoring the second coordinate. 

The definition of oriented Davis complex traced back to work of Salvetti \cite{s87}, so it is also called Salvetti complex by  other authors. The naming ``oriented Davis complex'' comes from an article of J. McCammond \cite{mccammond2017mysterious}, clarifying the relation between Salvetti's work and Davis complex.



\begin{definition}
	\label{def:label}
	As the 1-skeleton of $\Si_\Lambda$ can be identified with the unoriented Cayley graph of $W_\Lambda$, each edge of $\Si_\Lambda$ is labeled by an element in the generating set $S$.
	We pull back the edge labeling from $\Si_\Lambda$ to $\od_\Lambda$ via the map $\pi:\od_\Lambda\to \Si_\Lambda$. For a subset $E$ in $\Sigma$, we define $\supp(E)$ to be the collection of labels of edges in $E$. 
\end{definition}

For each subcomplex $Y$ of $\Si_\Lambda$, we write $\widehat Y=p^{-1}(Y)$ and call $\widehat Y$ the subcomplex of $\od_\Lambda$ associated with $Y$.
A \emph{standard subcomplex} of $\widehat\Si_\Lambda$ is a subcomplex of $\od_\Lambda$ associated with a standard subcomplex of $\Si_\Lambda$. In other words, if $F\subset \Si_\Lambda$ is a standard subcomplex, then $\widehat F$ is the union of faces of form $E\times v$ in $\widehat \Si_\Lambda$ with $E\subset F$ and $v$ ranging over vertices in $\Si_\Lambda$.

\begin{lem}(\cite[Lem 3.12]{huang2023labeled})
	\label{lem:compactible}
	Let $E$ be a face of $\Si_\Lambda$ and let $F$ be a standard subcomplex of $\Si_\Lambda$.
	If $[E,v_1]=[E,v_2]$, then $[\prj_F(E),v_1]=[\prj_F(E),v_2]$.
\end{lem}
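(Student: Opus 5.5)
The plan is to translate both the hypothesis and the conclusion into statements about walls of $\Si_\Lambda$. Write $E'=\prj_F(E)$ for the face of $F$ spanned by $\prj_F(\vertex E)$, as in Definition~\ref{def:projection1}. By definition, $[E,v_1]=[E,v_2]$ means $\prj_E(v_1)=\prj_E(v_2)$, and we want $\prj_{E'}(v_1)=\prj_{E'}(v_2)$.

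\textbf{Step 1: wall characterization of projections.} Recall the standard facts about the wall structure of $\Si_\Lambda$ (see \cite{davis2012geometry}). Walls are the fixed sets of reflections; each wall separates $\Si_\Lambda$ into two halfspaces; and $d(x,y)$ is the number of walls separating $x$ and $y$. Faces and standard subcomplexes are residues, so Lemma~\ref{lem:gate} applies to them. For such a residue $R$ and a vertex $v$, the gate $\prj_R(v)$ is the unique vertex of $R$ lying on the same side as $v$ of every wall that crosses $R$. I would derive this from Lemma~\ref{lem:gate}(2): a geodesic from $v$ to $y\in R$ through $\prj_R(v)$ crosses each wall at most once, and it crosses no wall meeting $R$ before it reaches $R$. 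As a consequence, $\prj_R(v_1)=\prj_R(v_2)$ if and only if no wall separating $v_1$ from $v_2$ crosses $R$.

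\textbf{Step 2: reduce to a statement about walls.} By Step 1, the hypothesis says that no wall separating $v_1,v_2$ crosses $E$. The conclusion would therefore follow from the claim that every wall crossing $E'$ also crosses $E$.

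\textbf{Step 3: prove the claim.} Let $H$ be a wall crossing $E'$. Since $E'\subset F$, the wall $H$ also crosses $F$. Suppose, for contradiction, that $H$ does not cross $E$. Then all vertices of $E$ lie in a single halfspace of $H$. By Step 1 applied to $R=F$, the projection $\prj_F$ keeps each vertex on its own side of every wall crossing $F$. Hence all of $\prj_F(\vertex E)=\vertex E'$ lies on one side of $H$, which contradicts $H$ crossing $E'$.

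The main point requiring care is Step 1. In particular, one needs that the face $E'$ really is a residue, so that its walls are exactly those crossing it. This follows from the fact, implicit in Definition~\ref{def:projection1}, that $\prj_F(\vertex E)$ is the vertex set of a face of $F$. Everything else is formal.
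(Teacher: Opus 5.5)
Your proof is correct. This paper gives no proof of the lemma; it imports it from \cite{huang2023labeled}, so there is no in-text argument to compare against. Your wall-theoretic route is the standard one, and it works:
\begin{enumerate}
\item Characterize $\prj_R(v)$ as the unique vertex of $R$ lying on $v$'s side of every wall crossing $R$.
\item Note that $\prj_F$ preserves sides of walls crossing $F$, so every wall crossing $E'=\prj_F(E)$ already crosses $E$.
\end{enumerate}
In fact it gives the slightly stronger identity $\prj_{E'}(v)=\prj_{E'}(\prj_E(v))$ for every vertex $v$, from which the lemma is immediate.

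One step in Step 1 should be worded more carefully. The assertion that the geodesic ``crosses no wall meeting $R$ before it reaches $R$'' is exactly what has to be shown. It does not follow from Lemma~\ref{lem:gate}(2) for an arbitrary fixed target $y\in R$; you have to choose $y$ depending on the wall. The argument goes as follows.
\begin{itemize}
\item Suppose a wall $H$ crosses $R$ and separates $v$ from $p=\prj_R(v)$.
\item Since $H$ crosses $R$, you can pick $y\in R$ on $v$'s side of $H$.
\item Apply Lemma~\ref{lem:gate}(2) to this $y$. The resulting geodesic through $p$ crosses $H$ once between $v$ and $p$, and again between $p$ and $y$.
\item This contradicts the fact that a geodesic crosses each wall at most once, which follows from $d(x,y)$ being the number of separating walls.
\end{itemize}
Uniqueness is immediate, since two distinct vertices of $R$ are separated by some wall, and that wall crosses $R$. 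With this fixed, Steps 2 and 3 go through as written. They use only that $\prj_F(\vertex E)$ is exactly the vertex set of the face $E'$, which is a standard subcomplex, so Step 1 applies to it.
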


We will use the following important construction in \cite{godelle2012k}.
\begin{definition}
	\label{def:retraction}
	Let $F$ be a face in $\Si_\Lambda$. Then there is a retraction map $\Pi_{\widehat F}:\widehat\Si_\Lambda\to \widehat F$ defined as follows. Recall that $\widehat\Si_\Lambda=( \Si_\Lambda\times V)/ \sim$. For each $v\in V$, let $(\Si_\Lambda)_v$ be the union of all faces in $\widehat\Si_\Lambda$ of form $E\times v$ with $E$ ranging over faces of $\Si_\Lambda$. By Definition~\ref{def:projection1}, there is a retraction $(\Pi_F)_v:(\Si_\Lambda)_v\to F\times v$ for each $v\in V$. It follows from Lemma~\ref{lem:compactible} that these maps $\{(\Pi_F)_v\}_{v\in V}$ are compatible in the intersection of their domains. Thus they fit together to define a retraction $\Pi_{\widehat F}:\widehat\Si_\Lambda\to \widehat F$.
\end{definition}
The following lemma is a direct consequence of the definition.
\begin{lem}
	\label{lem:retraction property}
	Take standard subcomplexes $E,F\subset \Si_\Lambda$. Then $\Pi_{\widehat F}(\widehat E)=\widehat{\Pi_F(E)}$.
\end{lem}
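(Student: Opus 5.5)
We need to prove Lemma \ref{lem:retraction property}: for standard subcomplexes $E,F\subset\Si_\Lambda$, $\Pi_{\widehat F}(\widehat E)=\widehat{\Pi_F(E)}$. Since everything is defined facewise, the plan is to reduce to statements about individual faces.

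First, recall that $\widehat E$ is the union of the cells $[E',v]$, where $E'$ ranges over faces of $\Si_\Lambda$ contained in $E$ and $v$ ranges over $V$. By Definition~\ref{def:retraction}, $\Pi_{\widehat F}$ restricted to the copy $E'\times v$ equals $(\Pi_F)_v$. This maps $E'\times v$ onto $\prj_F(E')\times v$, because $\Pi_F$ sends $E'$ onto the face $\prj_F(E')$ of $F$: the projection of the vertex set of $E'$ is the vertex set of a face, and $\Pi_F$ is extended linearly over barycentric subdivisions. Hence
\[
\Pi_{\widehat F}(\widehat E)=\bigcup_{E'\subset E,\ v\in V}\bigl[\prj_F(E'),v\bigr].
\]
By definition, $\widehat{\Pi_F(E)}$ is the union of $[E'',v]$ over faces $E''\subset \Pi_F(E)$ and $v\in V$.

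It therefore suffices to show that the set of faces $\{\prj_F(E') : E'\subset E\}$ generates the same subcomplex as the set of faces of $\Pi_F(E)$.

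For the inclusion "$\subset$": each $\prj_F(E')$ lies in $\Pi_F(E)$, so every cell $[\prj_F(E'),v]$ lies in $\widehat{\Pi_F(E)}$.

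For the inclusion "$\supset$": take any face $E''\subset \Pi_F(E)$ and any $v\in V$. Since $\Pi_F(E)$ is the image of $E$, and $\Pi_F$ is cellular and sends faces onto faces, $E''$ is contained in $\prj_F(E')$ for some face $E'\subset E$. Then $[E'',v]$ is a face of the cell $[\prj_F(E'),v]$, because faces of $[G,v]$ are exactly the cells $[G',v]$ with $G'\subset G$. So $[E'',v]$ lies in the left-hand side.

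The only point requiring care is that the cells $[\cdot,v]$ are well defined, i.e. independent of the chosen representative $v$. This is exactly what Lemma~\ref{lem:compactible} guarantees, and it is already built into Definition~\ref{def:retraction}. With that in hand, the argument is routine and I expect no real obstacle.
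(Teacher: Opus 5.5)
Your proof is correct and is essentially the definition-unwinding the paper has in mind; the paper gives no argument and simply calls the lemma a direct consequence of the definitions. One soft spot: your claim that $\Pi_F$ maps a face $E'$ \emph{onto} $\prj_F(E')$ is true, but it does not follow from the linear-extension remark alone, which only gives ``into''. You never actually need it, because $\widehat E=\bigcup_{v\in V}E\times v$ and $\Pi_{\widehat F}$ acts on each sheet $(\Si_\Lambda)_v$ as $\Pi_F$, so $\Pi_{\widehat F}(\widehat E)=\bigcup_{v\in V}\Pi_F(E)\times v=\widehat{\Pi_F(E)}$ directly.
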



\subsection{Generalized cycles}
\label{subsec:generalized cycles}
Let $\Lambda$ be a connected Dynkin diagram with its vertex set $S$.
Given two vertices $x,y$ in the barycentric subdivision $\Delta'_\Lambda$ of the Artin complex $\Delta_\Lambda$, we write $x\sim y$ if they are contained a common simplex of $\Delta_\Lambda$. Then $x\sim y$ if and only if the associated two left cosets have nonempty intersection. Let $\sigma_x$ be the simplex of $\Delta_\Lambda$ such that $x$ is the barycenter of $\sigma_x$. A collection of vertices $x_1x_2\cdots x_n$ in $\Delta'_\Lambda$ form a \emph{generalized cycle} if $x_i\sim x_{i+1}$ for $1\le i\le n-1$ and $x_n\sim x_1$.

\begin{lem}(\cite[Lem 2.2]{huang2025353})
	\label{lem:transitive}
	Given vertices $x_1,x_2,x_3$ of types $\hat S_1,\hat S_2,\hat S_3$ in $\Delta'_{\Lambda}$. Suppose that for any $x\in S_1\setminus S_2$ and $y\in S_3\setminus S_2$, $x$ and $y$ are in different components of $\Lambda\setminus S_2$. Suppose $x_1\sim x_2$ and $x_2\sim x_3$. Then $x_1\sim x_3$. 
\end{lem}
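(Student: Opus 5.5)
We translate to cosets. Write $x_i$ as corresponding to $g_iA_{S\setminus S_i}$, where $x_i$ has type $\hat S_i$; recall that $\hat S_i$ denotes $S\setminus S_i$. The relation $x_1\sim x_2$ means $g_1A_{S\setminus S_1}\cap g_2A_{S\setminus S_2}\neq\emptyset$, and similarly for $x_2\sim x_3$. After translating, we may assume $x_2$ corresponds to $A_{S\setminus S_2}$ itself. Pick $h_1\in g_1A_{S\setminus S_1}\cap A_{S\setminus S_2}$ and $h_3\in g_3A_{S\setminus S_3}\cap A_{S\setminus S_2}$. Then $x_1$ corresponds to $h_1A_{S\setminus S_1}$ and $x_3$ to $h_3A_{S\setminus S_3}$, with $h_1,h_3\in A_{S\setminus S_2}$. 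The goal is to produce an element lying in both $h_1A_{S\setminus S_1}$ and $h_3A_{S\setminus S_3}$.

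The key step uses the hypothesis. Decompose $\Lambda\setminus S_2$ into connected components $C_1,\dots,C_k$. Then $A_{S\setminus S_2}=A_{C_1}\times\cdots\times A_{C_k}$, since distinct components commute. The hypothesis says that $S_1\setminus S_2$ and $S_3\setminus S_2$ lie in disjoint unions of components. So we can split the components into two groups, $P$ and $Q$: $P$ is the union of components meeting $S_1\setminus S_2$, and $Q$ is the union of the rest. Then $S_3\setminus S_2\subset Q$, and $A_{S\setminus S_2}=A_P\times A_Q$. Now $P\cap S_3=\emptyset$, so $A_P\subset A_{S\setminus S_3}$. Likewise $Q\cap S_1=\emptyset$, so $A_Q\subset A_{S\setminus S_1}$.

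Write $h_1=p_1q_1$ and $h_3=p_3q_3$ with $p_i\in A_P$ and $q_i\in A_Q$, and set $g=p_3q_1$. We check that $g\in h_1A_{S\setminus S_1}$. Indeed $h_1^{-1}g=q_1^{-1}p_1^{-1}p_3q_1$, and since $A_P$ and $A_Q$ commute this equals $p_1^{-1}p_3$. The element $p_1^{-1}p_3$ lies in $A_P\cap A_{S\setminus S_2}$. We need this in $A_{S\setminus S_1}$, which fails in general, so the roles should be arranged the other way: take $g=p_1q_3$. Then $h_1^{-1}g=q_1^{-1}q_3\in A_Q\subset A_{S\setminus S_1}$, and $h_3^{-1}g=p_3^{-1}p_1\in A_P\subset A_{S\setminus S_3}$. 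Hence $g$ lies in both cosets, and therefore $x_1\sim x_3$.

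The only potential subtlety is the product decomposition $A_{S\setminus S_2}\cong A_P\times A_Q$, together with the inclusions of standard parabolic subgroups. The decomposition is standard, because generators in different components of the induced diagram commute and the standard parabolic subgroups are the corresponding Artin groups (as recalled above). I do not expect any genuine obstacle. The main care needed is choosing the cross-term $g=p_1q_3$ correctly, and checking that $S_1\setminus S_2$ and $S_3\setminus S_2$ may be empty without affecting the argument.
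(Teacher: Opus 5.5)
Your proof is correct. After translating so that $x_2$ is the identity coset, you write $A_{S\setminus S_2}=A_PA_Q$ as a product of two commuting factors with $A_P\subset A_{S\setminus S_3}$ and $A_Q\subset A_{S\setminus S_1}$. The cross term $g=p_1q_3$ then lies in both $h_1A_{S\setminus S_1}$ and $h_3A_{S\setminus S_3}$, as you check.

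The paper itself gives no proof of this statement; it only cites \cite[Lem 2.2]{huang2025353}. Your argument is the standard one, and it matches the direct-factor reasoning the paper uses in the proof of Lemma~\ref{lem:link of mincut}. An equivalent geometric phrasing runs through links. The link $\lk(\sigma_{x_2},\Delta_\Lambda)\cong\Delta_{\Lambda\setminus S_2}$ is the join of the Artin complexes of the components of $\Lambda\setminus S_2$. The vertices of $\sigma_{x_1}\setminus\sigma_{x_2}$ and of $\sigma_{x_3}\setminus\sigma_{x_2}$ lie in different join factors, so $\sigma_{x_1}\cup\sigma_{x_2}\cup\sigma_{x_3}$ spans a simplex.

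Two small points for a write-up. First, delete the false start with $g=p_3q_1$. Second, the subtlety you flag is not really one. You only use that the generators in $P$ commute with those in $Q$ and that together they generate $A_{S\setminus S_2}$, so every $h$ factors as $pq$. Uniqueness of the factorization is never needed, so you do not need van der Lek's theorem or $A_P\cap A_Q=1$.
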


\begin{lem}
	\label{lem:4-cycle}
	Given vertices $\{x_i\}_{i\in \mathbb Z/4\mathbb Z}$ in $\Delta'_\Lambda$ such that $x_i\sim x_{i+1}$ for each $i$. Then there is a vertex $x'_2$ of $\Delta'_\Lambda$ of the same type as $x_4$ such that $x'_2\sim x_i$ for $1\le i\le 3$. 
	
		Let $\{x_i\}_{i\in \mathbb Z/5\mathbb Z}$ be vertices of $\Delta'_\Lambda$ such
	that $x_i\sim x_{i+1}$ for each $i$. Then there exist vertices $x'_2,x'_3$ of
	$\Delta'_\Lambda$ of the same types as $x_2,x_3$, respectively, such that
	\[
	x'_3\sim\{x'_2,x_4,x_5\}
	\quad\text{and}\quad
	x'_2\sim\{x'_3,x_1,x_5\}.
	\]
\end{lem}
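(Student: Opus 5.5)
Both parts follow from Proposition~\ref{prop:intersection} applied to the cosets attached to the vertices. For a vertex $x$ of $\Delta'_\Lambda$ of type $\hat T$, write $C_x=gA_{S\setminus T}$ for the associated coset. Then $x\sim y$ means exactly $C_x\cap C_y\neq\emptyset$. The plan is to replace the offending vertex by a translate of its coset passing through a common point of the other cosets.

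\textbf{The 4-cycle.} Since $x_1\sim x_2$ and $x_2\sim x_3$, the cosets $C_{x_1},C_{x_2},C_{x_3}$ are not yet known to meet pairwise, because $x_1\sim x_3$ is not assumed. We therefore work with pairs. Pick $h\in C_{x_1}\cap C_{x_4}$. Let $x_4$ have type $\hat T_4$, so $C_{x_4}=hA_{S\setminus T_4}$.

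It suffices to find $g$ with $gA_{S\setminus T_4}$ meeting $C_{x_1}$, $C_{x_2}$ and $C_{x_3}$. Choosing $g\in C_{x_1}\cap C_{x_2}$ gives a coset meeting $C_{x_1}$ and $C_{x_2}$, but there is no reason for it to meet $C_{x_3}$.

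The better choice is to use $C_{x_2}$ itself. Pick $p\in C_{x_1}\cap C_{x_2}$ and $q\in C_{x_2}\cap C_{x_3}$, so that $C_{x_2}=pA_{S\setminus T_2}$ and $q=pu$ with $u\in A_{S\setminus T_2}$. We need an element $w\in A_{S\setminus T_2}$ such that $pwA_{S\setminus T_4}$ meets $C_{x_1}$ and $C_{x_3}$.

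\textbf{Reformulated goal.} Intersecting the whole picture with $C_{x_2}$ and using Proposition~\ref{prop:intersection} repeatedly, the claim reduces to a statement inside the parabolic $A_{S\setminus T_2}$. The traces $C_{x_i}\cap C_{x_2}$ for $i=1,3$ are cosets of the parabolics $A_{(S\setminus T_i)\cap(S\setminus T_2)}$ by the intersection property of parabolics (Godelle--Paris/van der Lek). The 4-cycle gives a path from these traces through $x_4$. I would aim to show that some coset of type $\hat T_4$ meets both traces and then define $x_2'$ from it.

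\textbf{Expected obstacle.} In general this is false without using $x_4$, so the step where $x_4$ enters is the crux. The natural trick is a cyclic relabelling. The conclusion only asks for $x'$ of type $\hat T_4$ with $x'\sim x_1,x_2,x_3$, and $x_4$ itself already satisfies $x_4\sim x_1,x_3$. So we would like to move $x_4$ by an element fixing $C_{x_1}$ and $C_{x_3}$ setwise until it meets $C_{x_2}$. I expect this to be impossible in general. Indeed, the statement is presumably proved in the source (cited as \cite[Lem 2.3]{huang2025353}-type) via the Salvetti/Davis retraction $\Pi_{\widehat F}$ of Definition~\ref{def:retraction}.

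\textbf{Retraction argument.} Realize the generalized cycle as a loop in $\widehat\Si_\Lambda$ passing through the standard subcomplexes $\widehat F_i$ associated with the $x_i$. Then apply the retraction $\Pi_{\widehat F_2}$ to $\widehat F_4$. By Lemma~\ref{lem:retraction property}, $\Pi_{\widehat F_2}(\widehat F_4)$ is a standard subcomplex associated with a vertex of the projected type. Projecting $F_4$ to $F_2$ in the Davis complex and lifting gives $x_2'$. The same-type requirement then needs $\Pi$ to preserve type, which fails in general. So the vertex $x_2'$ should be obtained instead by projecting a point of $\widehat F_2$ onto $\widehat F_4$ and translating $\widehat F_4$ so that it passes through that projection.

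\textbf{The 5-cycle.} Apply the 4-cycle statement twice. First shortcut $x_5x_1x_2$ against $x_3x_4$ to produce $x_2'$. Then shortcut the resulting cycle to produce $x_3'$, checking that the first replacement is preserved. The main obstacle remains establishing the 4-cycle case via the retraction and verifying the type bookkeeping.
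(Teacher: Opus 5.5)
There is a genuine gap: the 4-cycle case, which you yourself flag as the main obstacle, is never proved. Your reduction is fine: it suffices to find $p\in C_{x_1}\cap C_{x_2}$ and $q\in C_{x_2}\cap C_{x_3}$ with $p^{-1}q\in A_{S\setminus T_4}$, and then take $x'_2$ to be the vertex of $pA_{S\setminus T_4}$. But the retraction paragraph does not produce such $p,q$. Retracting $\widehat F_4$ onto $\widehat F_2$, or projecting a point of $\widehat F_2$ to $\widehat F_4$ and translating, never uses the fact that the $x_i$ close up into a cycle. Nothing in it forces a single coset of $A_{S\setminus T_4}$ to pass through both traces.

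The missing idea is to retract the \emph{cycle}, not a subcomplex. Let $F_i\subset\Sigma_\Lambda$ be the standard subcomplex associated with the image of $x_i$ in $\bC'_\Lambda$. Choose $\ell_i\in C_{x_{i-1}}\cap C_{x_i}$ and put $w_i=\ell_i^{-1}\ell_{i+1}\in A_{S\setminus T_i}$. The relation $w_1w_2w_3w_4=1$ gives a null-homotopic loop $P_1P_2P_3P_4$ in $\widehat\Sigma_\Lambda$ with $P_i\subset\widehat F_i$. The retraction $\Pi_{\widehat F_2}$ fixes $P_2$. By Lemma~\ref{lem:retraction property} it sends $P_1$, $P_3$, $P_4$ into $\widehat F_2\cap\widehat F_1$, $\widehat F_2\cap\widehat F_3$ and $\widehat E_{2,4}$ respectively, where $E_{2,4}=\Pi_{F_2}(F_4)$. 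Hence $P_2$ is homotopic rel endpoints in $\widehat F_2$ to $\bar Q_1\bar Q_4\bar Q_3$, with $Q_i=\Pi_{\widehat F_2}(P_i)$. The outer pieces are then absorbed by re-choosing $\ell_2$ inside $C_{x_1}\cap C_{x_2}$ and $\ell_3$ inside $C_{x_2}\cap C_{x_3}$.

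A second ingredient is still needed, because $\supp(E_{2,4})$ need not lie in $\supp(F_2)\cap\supp(F_4)$. So $\bar Q_4$ need not represent an element of $A_{S\setminus T_4}$. The paper uses \cite[Lem 5.10]{huang2023labeled}: since $F_1$ meets both $F_2$ and $F_4$, the irreducible components of $\supp(E_{2,4})$ not contained in $\supp(F_1)$ lie in $\supp(F_2)\cap\supp(F_4)$. Split $E_{2,4}=E'\times E''$ accordingly. The $E''$-part of $\bar Q_4$ lies in $\widehat F_2\cap\widehat F_1$ and is also absorbed into $\ell_2$, leaving $\ell_2^{-1}\ell_3\in A_{S\setminus T_4}$.

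Your 5-cycle step is also unjustified. A 5-cycle contains no 4-cycle, so ``apply the 4-cycle statement twice'' needs an auxiliary vertex that you never specify. You also do not check that the first replacement survives the second. The paper instead reruns the retraction onto $\widehat F_5$, splitting $E_{5,3}$ with the help of $F_4$ and $E_{5,2}$ with the help of $F_1$. This produces $\ell'\in C_{x_5}$ with $\ell_5^{-1}\ell'\in A_{S\setminus T_3}$ and $(\ell')^{-1}\ell_1\in A_{S\setminus T_2}$. One then takes $x'_3=\ell_5A_{S\setminus T_3}$ and $x'_2=\ell_1A_{S\setminus T_2}$.
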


\begin{proof}
	Let $\bC$ be the Coxeter complex for $W_\Lambda$, with its barycentric subdivision denoted by $\bC'$. Let $\bar\pi:\Delta'_\Lambda\to \bC'$ be the simplicial map induced by action of the pure Artin group on $\Delta_\Lambda$. Let $\bar z_i=\bar\pi(x_i)$ and let $C_i$ be the standard subcomplex of $\Si_\Lambda$ associated with $\bar z_i$ (in the sense of Definition~\ref{def:associated subcomplex}). Let $\whC_i$ be the standard subcomplex of $\od_\Lambda$ associated with $C_i$.
	Let $L_i$ be the left coset of $A_\Lambda$ associated with $x_i$. Then $L_i\cap L_{i+1}\neq\emptyset$. Take $\ell_i\in L_{i-1}\cap L_i$, and let $w_i\in A_\Lambda$ be such that $\ell_{i+1}=\ell_iw_i$. Then $w_1w_2w_3w_4$ is trivial in $A_\Lambda$. This gives a null-homotopic edge loop in $\s_\Lambda$, which lifts a null-homotopic edge loop $P$ in $\od_\Lambda$. We can assume $P$ is a concatenation of four paths $P_1P_2P_3P_4$ such that each $P_i\subset \whC_i$ corresponds to $w_i$.
	
	
	Let $E_{i,j}=\Pi_{C_i}(C_j)$ (see Definition~\ref{def:projection1}). Let $I=\supp(E_{2,4})$ (Definition~\ref{def:label}). Let $I'$ be the union of irreducible components of $I$ that are not contained in $\supp(C_1)$, and $I''=I\setminus I'$. As $C_1$ has nonempty intersection with both $C_2$ and $C_4$, \cite[Lem 5.10]{huang2023labeled} implies that $I'\subset \supp(C_2)\cap \supp(C_4)$. The definition of $I''$ implies that $I''\subset \supp(C_2)\cap \supp(C_1)$. Let $E_{2,4}=E'\times E''$ with $\supp(E')=I'$ and $\supp(E'')=I''$. Let $\widehat E_{2,4},\widehat E',\widehat E''$ be the associated complexes in $\od_\Lambda$.
	
	As $P$ is null-homotopic in $\od_\Lambda$, $\Pi_{\widehat C_2}(P)=Q_1Q_2Q_3Q_4$ is null-homotopic in $\whC_2$, where $Q_i=\Pi_{\widehat C_2}(P_i)$. As $P_2=Q_2$, so $P_2$ is homotopic rel endpoints in $\whC_2$ to $\bar Q_1\bar Q_4\bar Q_3$ where $\bar Q_i$ is the inverse path of $Q_i$. By Lemma~\ref{lem:retraction property}, $Q_1\subset \widehat E_{2,1}=\whC_2\cap \whC_1$, and $Q_3\subset \whC_2\cap \whC_3$. By previous paragraph, $\bar Q_4$ is homotopic rel endpoints in $\widehat E_{2,4}\subset \whC_2$ to $\bar Q''_4\bar Q'_4$ with $\supp(Q''_4)\subset I''$ and $\supp(Q'_4)\subset I'$. As $\bar Q_4$ (hence $\bar Q''_4$) starts with a vertex in $\whC_2\cap\whC_1$, and $I''\subset \supp(C_2)\cap \supp(C_1)$, we know $\bar Q''_4\subset \whC_2\cap \whC_1$. Thus we can assume $P_2$ is a concatenation of path in $\whC_2\cap \whC_1$ (i.e. $\bar Q_1\bar Q''_4$), $\bar Q'_4$, and a path in $\whC_2\cap \whC_3$. Up to adjusting the choice of $\ell_i\in L_{i-1}\cap L_i$, we can assume $P_2=\bar Q'_4$. Then $\supp(P_2)\subset \supp(\whC_2)\cap\supp(\whC_4)$ by the previous paragraph. Hence there is a left coset $L=gA_{S\setminus T_4}$ (assuming $x_4$ has type $\widehat T_4$) containing both $\ell_2$ and $\ell_3$. In particular, $L\cap L_i\neq\emptyset$ for $i=1,2,3$, and it suffices to choose $x'_2$ to be the vertex corresponding to $L$.
	
	Now we prove the second part of the lemma. We can produce $\bar z_i, L_i,\ell_i,C_i,\whC_i,w_i,P_i$ from $\{x_i\}_{i\in \mathbb Z/5\mathbb Z}$ as before. Let $I_{53}=\supp(E_{5,3})$ and $I_{52}=\supp(E_{5,2})$. Let $I'_{53}$ be the union of irreducible components of $I_{53}$ that are not contained in $\supp(C_4)$, and $I''_{53}=I_{53}\setminus I'_{53}$. As $C_4$ has nonempty intersection with $C_3$ and $C_5$, \cite[Lem 5.10]{huang2023labeled} implies that $I'_{53}\subset \supp(C_5)\cap\supp(C_3)$. The definition of $I''_{53}$ implies that $I''_{53}\subset \supp(C_5)\cap\supp(C_4)$. Let $E_{5,3}=E'_{5,3}\times E''_{5,3}$ with $\supp(E'_{5,3})=I'_{53}$ and $\supp(E''_{5,3})=I''_{53}$. Let $\widehat E_{5,3},\widehat E'_{5,3},\widehat E''_{5,3}$ be the associated complexes in $\od_\Lambda$. Similarly, using $C_1$ has nonempty intersection with $C_5$ and $C_2$, we obtain product decomposition $\widehat E_{5,2}=\widehat E'_{5,2}\times \widehat E''_{5,2}$ such that $\supp(\widehat E'_{5,2})\subset \supp(C_5)\cap\supp(C_2)$ and $\supp(\widehat E''_{5,2})\subset \supp(C_5)\cap\supp(C_1)$.
	
Note that $\Pi_{\whC_5}(P)=Q_1Q_2Q_3Q_4Q_5$ is null-homotopic in $\whC_5$, where $Q_i=\Pi_{\whC_5}(P_i)$. As $P_5=Q_5$, $P_5$ is homotopic rel endpoints in $\whC_5$ to $\bar Q_4\bar Q_3\bar Q_2\bar Q_1$, with $\bar Q_4\subset \whC_5\cap \whC_4$ and $\bar Q_1\subset \whC_5\cap \whC_1$. By previous paragraph, $\bar Q_3$ is homotopic rel endpoints in $\widehat E_{5,3}$ to $\bar Q''_3\bar Q'_3$ with $\supp(\bar Q''_3)\subset \supp(C_5)\cap\supp(C_4)$ and $\supp(\bar Q'_3) \subset\supp(C_5)\cap\supp(C_3) $; and $\bar Q_2$ is homotopic rel endpoints in $\widehat E_{5,2}$ to $\bar Q'_2\bar Q''_2$ with $\supp(\bar Q'_2)\subset \supp(C_5)\cap\supp(C_2)$ and $\supp(\bar Q''_2)\subset \supp(C_5)\cap\supp(C_1)$. Thus we can assume $P_5$ is a concatenation of $\bar Q_4\bar Q''_3\subset \whC_5\cap\whC_4$, $\bar Q'_3\bar Q'_2$, and $\bar Q''_2\bar Q_1\subset \whC_5\cap\whC_1$. Up to adjusting the choice of $\ell_i\in L_{i-1}\cap L_i$, we assume $P_5=\bar Q'_3\bar Q'_2$. This gives an element $\ell'\in L_5$ such that $\ell_5$ and $\ell'$ differ by an element in $A_{S\setminus T_3}$, and $\ell'$ and $\ell_1$ differ by an element in $A_{S\setminus T_2}$ (assume $x_i$ has type $\widehat T_i$). This gives the desired $x'_3$ and $x'_2$.
\end{proof}

\section{Relation on sets}
\label{sec:relations}
\subsection{Posets}
A poset $P$ is called \emph{weakly graded} if there is a \emph{poset map} $r:P\to \mathbb Z$, i.e.
such that for every $x<y$ in $P$, we have $r(x)<r(y)$: the map $r$ is called
a \emph{rank map}. A poset $P$ is \emph{weakly boundedly graded} if there is a rank
map $r:P\to\mathbb Z$ with finite image. Note that a finite weakly graded poset is
always weakly boundedly graded.
An \emph{upper bound} for a pair of elements $a,b\in P$ is an element $c\in P$ such
that $a\le c, b\le c$. A \emph{minimal upper bound} for $a,b$ is an upper bound $c$ such
that there does not exist upper bound $c'$ of $a,b$ such that $c'<c$. The \emph{join} of
two elements $a, b$ in $P$ is an upper bound $c$ of them such that for any other
upper bound $c'$ of $a, b$, we have $c\le c'$. We define \emph{lower bound}, \emph{maximal lower
bound}, and \emph{meet} similarly. In general, the meet or join of two elements in $P$
might not exist. A poset $P$ is a \emph{lattice} if any pair of elements have a meet
and a join.

We will mainly interested in posets arising from relative Artin complexes and their variations. Let $\Lambda$ be a Coxeter diagram, and let $\Lambda'\subset\Lambda$ be a linear induced subdiagram. We choose a linear order of the vertices $\{s_i\}_{i=1}^n$ of $\Lambda'$, and define a relation on the vertex set of $\Delta_{\Lambda,\Lambda'}$ as follows: for vertices $w,v\in \Delta_{\Lambda,\Lambda'}$, $v<w$ if $v$ and $w$ are adjacent in $\Delta_{\Lambda,\Lambda'}$ and the type $\hat s_i$ of $v$ and the type $\hat s_j$ of $w$ satisfy $i<j$. This relation depends on the choice of the linear order on $\Lambda'$.

We say an induced subdiagram $\Lambda'$ of a Coxeter diagram $\Lambda$ is \emph{admissible}, if for any vertex $s\in \Lambda'$, if $s_1,s_2\in \Lambda'$ are vertices in different connected components of $\Lambda'\setminus\{s\}$, then they are in different components of $\Lambda\setminus\{s\}$. It is shown that \cite[Lem 6.7]{huang2023labeled} that if $\Lambda'$ an admissible linear subdiagram of $\Lambda$, then for any choice of the linear order on $\Lambda'$, the associated relation on $\Delta^0_{\Lambda,\Lambda'}$ is a poset. 

A \emph{bowtie} $x_1y_1x_2y_2$ consists of distinct elements of $P$ satisfying $x_i<y_j$ for all $i,j=1,2$. 
\begin{definition}
	A poset $P$ is \emph{bowtie free} if for any bowtie $x_1y_1x_2y_2$ there exists $z\in P$ such that $x_i\le z\le y_j$ for all $i,j=1,2$.
\end{definition}

\begin{definition}(\cite[Def 6.8]{huang2023labeled})
	\label{def:bowtie free}
Suppose $\Lambda'$ is an admissible linear subdiagram of an Coxeter diagram $\Lambda$ with consecutive vertices of $\Lambda'$ being $\{s_i\}_{i=1}^n$. We define $\Delta_{\Lambda,\Lambda'}$ is \emph{bowtie free} if the poset defined on its vertex set as above is bowtie free. The property of being bowtie free does not depend on the choice of one of the two linear orders on $\Lambda'$.
\end{definition}


\subsection{Partial cyclic order}
\label{subsec:pco}
\begin{definition}
	\label{def:partial cyclic}
A \emph{partial cyclic order} on a set $X$ is a relation $C\subset X^3$, written as $[a,b,c]$, that satisfies the following axioms:
\begin{enumerate}
	\item Cyclicity: if $[a,b,c]$ then $[b,c,a]$;
	\item Asymmetry: if $[a,b,c]$ then not $[c,b,a]$;
	\item Transitivity: if $[a,b,c]$ and $[a,c,d]$ then $[a,b,d]$.
\end{enumerate}
This partial cyclic order is a \emph{cyclic order} if it satisfies an extra condition: if $a,b$ and $c$ are mutually distinct, then either $[a,b,c]$ or $[c,b,a]$.
\end{definition}
It follows from Definition~\ref{def:partial cyclic} that if $[a,b,c]$, then $\{a,b,c\}$ are mutually distinct.

Given a set $X$ with a linear order $<$, the cyclic order on $X$ induced by $<$ is defined as follows:
\begin{center}
$[a,b,c]$ if and only if $a<b<c$ or $b<c<a$ or $c<a<b$.
\end{center}
For example, the natural linear order on $\mathbb Z$ induces a cyclic order on $\mathbb Z$, which we refer to the \emph{canonical cyclic order} on $\mathbb Z$.

Given a partial cyclic order on $X$ and an element $x\in X$, we can define a binary relation $< $ on $(X\setminus\{x\})^2$ such that $x_1< x_2$ if $[x_1,x_2,x]$. One readily verifies that $(X\setminus\{x\},<)$ is a poset.

Sometimes we can ``glue'' posets to form partial cyclic ordered sets. We will be specifically interested in the following situation. Let $X$ be a set such that $X=X_1\cup X_2$ and $X_1\cap X_2=\{a,b\}$. Suppose $X_1$ and $X_2$ are also posets such that $b$ is the maximal element in $X_2$ and $a$ is the minimal element in $X_2$; and $b$ is the minimal element in $X_1$ and $a$ is the maximal element in $X_1$. 

Now we define a relation $C\subset X^3$, written as $[x,y,z]$ made of pairwise distinct triple satisfying one of the following conditions and all cyclic permutation of such triples:
\begin{enumerate}
	\item $x,y,z$ are in $X_2$ and $x< y< z$;
	\item $x,y,z$ are in $X_1$ and $x< y< z$;
	\item $x,y$ are in $X_2$ and $z$ is in $X_1$, and $x< y$;
	\item $x$ is in $X_1$ and $y,z$ are in $X_2$, and $y< z$.
\end{enumerate}

The following lemma is a definition chase.
\begin{lem}
	This ternary relation on $X$ is a partial cyclic order.
\end{lem}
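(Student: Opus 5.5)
The plan is to verify the three axioms of Definition~\ref{def:partial cyclic} by a case analysis, after first reformulating transitivity as a statement about posets attached to each pivot element. Cyclicity holds by construction, since $C$ is defined as the set of triples satisfying (1)--(4) together with all their cyclic permutations. For the other two axioms, fix a \emph{pivot} $p\in X$ and define a relation $\prec_p$ on $X\setminus\{p\}$ by $u\prec_p v$ if and only if $[p,u,v]$. Transitivity of $C$ is literally the statement that $\prec_p$ is transitive for every $p$. Asymmetry of $C$, combined with cyclicity, reduces to the statement that $\prec_p$ is antisymmetric. So the whole lemma follows once we identify each $\prec_p$ with an explicit strict partial order.

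The guiding picture is a circle traversed from $a$ up through $X_2$ to $b$, and then from $b$ up through $X_1$ back to $a$. Cases (1)--(4) are exactly the triples that are read in increasing order along this circle, up to cyclic rotation. Accordingly, I will compute $\prec_p$ explicitly as an \emph{ordinal sum} of pieces of $X_1$ and $X_2$. Take first $p\in X_2\setminus\{a,b\}$. Then $\prec_p$ should be the ordinal sum, in this order, of the following pieces, each carrying its restricted order:
\begin{itemize}
\item $\{x\in X_2: x>p\}$ with the order of $X_2$;
\item $X_1\setminus\{a,b\}$ with the order of $X_1$, together with $b$ placed at its bottom and $a$ at its top;
\item $\{x\in X_2: x<p\}$ with the order of $X_2$.
\end{itemize}
Elements of $X_2$ incomparable to $p$ are related to nothing. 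The case $p\in X_1\setminus\{a,b\}$ is symmetric, with the roles of $X_1$ and $X_2$ exchanged. For $p=a$ or $p=b$, the order $\prec_p$ is the ordinal sum of $X_2\setminus\{a,b\}$ and $X_1\setminus\{a,b\}$ in the appropriate order, with the other special point inserted between the two pieces.

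In each case the check proceeds in two steps. First I read off from (1)--(4) and their cyclic rotations precisely which pairs $(u,v)$ satisfy $[p,u,v]$, and confirm that they match the description above. For each pair of "zones" containing $u$ and $v$, this is a short check of which of the four defining conditions (after rotation) can apply. Second, I note that an ordinal sum of strict partial orders is again a strict partial order. This gives both transitivity and antisymmetry of $\prec_p$, which yields asymmetry of $C$: if $[p,u,v]$ and $[v,u,p]$ both held, then cyclicity gives $[p,v,u]$, so $u\prec_p v$ and $v\prec_p u$, which is impossible. As a byproduct, $[x,y,z]$ forces $x,y,z$ to be pairwise distinct, as the paper notes.

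I expect the main obstacle to be bookkeeping for the two shared points $a$ and $b$. Each of them lies in both $X_1$ and $X_2$, so a single triple can fall under several of the defining cases at once. For example, with $z=a$, case (3) needs $x<y$ in $X_2$, while case (2) could instead involve $x,y$ in $X_1$. I must confirm that these overlaps never produce a triple in both orientations. This is exactly where the hypotheses are used: $a$ is minimal in $X_2$ but maximal in $X_1$, and $b$ is the reverse. These extremality conditions guarantee that the placements of $a$ and $b$ in each ordinal sum are forced and consistent. I would handle this first, by listing for each case (1)--(4) what happens when one of $x,y,z$ equals $a$ or $b$, before doing the generic zone-by-zone comparison.
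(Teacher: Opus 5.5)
Your reduction is sound: transitivity of $C$ is exactly transitivity of every $\prec_p$, and asymmetry of $C$ follows from asymmetry of $\prec_p$ together with cyclicity. Identifying each $\prec_p$ with an ordinal sum is therefore a good way to carry out the ``definition chase'' the paper leaves to the reader.

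The step that fails is that identification, for conditions (1)--(4) as literally stated. Condition (4) ($x\in X_1$, $y<z$ in $X_2$) is just the cyclic rotation $[y,z,x]$ of condition (3). So no condition produces a triple consisting of one element of $X_2\setminus\{a,b\}$ and two comparable elements of $X_1\setminus\{a,b\}$. This has three consequences:
\begin{itemize}
\item your claim that (1)--(4) are exactly the triples read in increasing order along the circle is false;
\item for $p\in X_2\setminus\{a,b\}$, the middle block $X_1\setminus\{a,b\}$ of your $\prec_p$ is actually an antichain;
\item the case $p\in X_1\setminus\{a,b\}$ is \emph{not} symmetric to the case $p\in X_2\setminus\{a,b\}$.
\end{itemize}
In fact the literal relation is not transitive. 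Take $X_2=\{a,n,b\}$ with $a<n<b$, and $X_1=\{b,p,q,a\}$ with $b<p<q<a$. Then $[p,q,a]$ holds by (2) and $[p,a,n]$ holds by (4). But none of $(p,q,n)$, $(q,n,p)$, $(n,p,q)$ satisfies any of (1)--(4), so $[p,q,n]$ fails.

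Before your argument can work, you have to repair the statement. Condition (4) should read ``$x\in X_2$ and $y,z\in X_1$, and $y<z$'', the mirror image of (3) under $X_1\leftrightarrow X_2$, $a\leftrightarrow b$. This is also the reading needed when the lemma is applied to $\mathcal C_P$: there the types of every simplex must be cyclically ordered, including simplices whose types are two interior vertices of $P$ and a type II element other than $\{a\},\{b\}$.

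With this reading the definition is invariant under that swap. Your zone-by-zone computation then does produce the ordinal sums you describe, and the rest of your argument goes through.

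One further bookkeeping point: your outer blocks $\{x\in X_2: x>p\}$ and $\{x\in X_2: x<p\}$ contain $b$ and $a$, which you also place in the middle block. Remove them from the outer blocks, so the chain is
\begin{itemize}
\item $\{x\in X_2:x>p\}\setminus\{b\}$,
\item $b$,
\item $X_1\setminus\{a,b\}$,
\item $a$,
\item $\{x\in X_2:x<p\}\setminus\{a\}$,
\end{itemize}
with elements incomparable to $p$ isolated. Otherwise the ordinal sum would force $b\prec_p b$.
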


\section{Minimal cuts in graphs}
\label{sec:minimal cuts}
All the graphs in this section are finite and simplicial.
Let $\Lambda$ be a graph. Let $A,B,C$ be sets of vertices of $\Lambda$. We say \emph{$B$ separates $A$ from $C$}, if for any $x\in A\setminus C$ and $y\in B\setminus C$, $x$ and $y$ are in different connected components of $\Lambda\setminus C$.

\begin{definition}
	\label{def:mincut}
Let $\Lambda$ be a graph, with two sets of vertices $A$ and $B$. A \emph{minimal cut between $A$ and $B$} is a set $C$ of vertices in $\Lambda$ such that
\begin{enumerate}
	\item $C$ separates $A$ from $B$;
	\item any proper subset $C'$ of $C$ does not separate $A$ from $B$.
\end{enumerate}
If $A\setminus C=\emptyset$ or $B\setminus C=\emptyset$, then it is considered that property (1) is satisfied. 

We will use $\mc_\Lambda(A,B)$ to denote the collection of all minimal cuts between $A$ and $B$, and we write $\mc(A,B)$ if $\Lambda$ is clear. 
\end{definition}

The set $\mc_\Lambda(A,B)$ is always non-empty, although it could possibly contain the empty subset of $\Lambda$. Also if $C\in \mc_\Lambda(A,B)$, then $A\cap B\subset C$.

Given a path $\omega$ in $\Lambda$, viewed as a map $f:[a,b]\to \Lambda$. Let $T\subset \Lambda$ be a subset. We say $\omega$ is \emph{$T$-tight}, if $f^{-1}(T)$ is a single point. For multiple subsets $T_1,T_2,\ldots$ of $\Lambda$, we say $\omega$ is \emph{$(T_1,T_2,\ldots)$-tight} if $\omega$ is $T_i$-tight for each $i$.

\begin{remark}
	\label{rmk:visit}
If $C\in \mc_\Lambda(A,B)$ and $x\in C$, then there exist a path $\omega$ from $a\in A$ to $b\in B$ such that $\omega\cap  (C\setminus\{x\})=\emptyset$. Note that $\omega$ must visit $x$, possibly multiple times. Up to removing certain subpaths of $\omega$, we can assume $\omega$ visit $x$ once. This gives a path from $a\in A$ to $b\in B$ visiting $x$ once, but does not visit any other vertices of $C$. Moreover, we can assume this path meets $A$ only in its starting point $a$, and meets $B$ only in its ending point $b$. In other words, this path is $(A,B,C)$-tight.
\end{remark}

The following is a slightly reformulation of Definition~\ref{def:mincut}.
\begin{lem}
	\label{lem:reformulate}
Let $\Lambda,A,B$ be as before. Then $C\in \mc_\Lambda(A,B)$ if and only if
\begin{enumerate}
	\item any path from $a\in A$ to $b\in B$ must have non-empty intersection with $C$;
	\item for any $x\in C$, there exist $a\in A$, $b\in B$ and a path from $a$ to $b$ outside $C\setminus \{x\}$.
\end{enumerate}
\end{lem}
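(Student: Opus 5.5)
We prove the two implications separately. The only tools are Definition~\ref{def:mincut}, Remark~\ref{rmk:visit}, and the convention that separation holds vacuously when $A\setminus C$ or $B\setminus C$ is empty.

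First we check that condition (1) of the lemma is equivalent to ``$C$ separates $A$ from $B$''. Suppose $C$ separates $A$ from $B$, and let $\omega$ be a path from $a\in A$ to $b\in B$. If $a\in C$ or $b\in C$, then $\omega$ meets $C$ and there is nothing to show. Otherwise $a\in A\setminus C$ and $b\in B\setminus C$. By separation they lie in different components of $\Lambda\setminus C$, so $\omega$ cannot stay in $\Lambda\setminus C$ and must meet $C$.

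Conversely, suppose every path from $A$ to $B$ meets $C$. Take $a\in A\setminus C$ and $b\in B\setminus C$. If they were in the same component of $\Lambda\setminus C$, a path between them inside that component would avoid $C$, a contradiction. Hence $C$ separates $A$ from $B$; the vacuous case is automatic.

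Next we check that, given (1), condition (2) of the lemma is equivalent to minimality. Suppose $C$ is minimal and $x\in C$. Then $C\setminus\{x\}$ does not separate $A$ from $B$. By the equivalence just proved, applied to $C\setminus\{x\}$, some path from some $a\in A$ to some $b\in B$ avoids $C\setminus\{x\}$. This is exactly (2), and is the content of Remark~\ref{rmk:visit}.

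Conversely, assume (1) and (2), and let $C'\subsetneq C$. Pick $x\in C\setminus C'$. By (2) there is a path from $A$ to $B$ avoiding $C\setminus\{x\}$, and hence avoiding $C'\subset C\setminus\{x\}$. By the first equivalence, $C'$ does not separate $A$ from $B$. Therefore $C$ is minimal.

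The only point requiring care is the vacuous convention: when $a\in C$ or $b\in C$, the path meets $C$ trivially, and this case must be matched with the convention in Definition~\ref{def:mincut}. Apart from that, the argument is a direct definition chase.
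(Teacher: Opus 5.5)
Your proof is correct and takes the same approach as the paper, which gives no separate argument and simply calls the lemma a slight reformulation of Definition~\ref{def:mincut}. Your two steps make that reformulation explicit: first, ``separates $\Leftrightarrow$ condition (1)'' holds for an arbitrary vertex set. Then applying this to $C\setminus\{x\}$, and to every proper subset $C'\subset C\setminus\{x\}$, gives ``minimality $\Leftrightarrow$ condition (2)''.
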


%

\begin{lem}
	\label{lem:comparable}
Let $\Phi_1,\Phi_2\in \mc_\Lambda(A,B)$. Let $\Phi^A_i$ be the union of components of $\Lambda\setminus \Phi_i$ containing at least one element in $A\setminus \Phi_i$. Similarly we define $\Phi^B_i$. Then the following are equivalent:
\begin{enumerate}
	\item $\Phi^A_1\subset \Phi^A_2$;
	\item $\Phi^B_1\supset \Phi^B_2$;
	\item $\Phi_1$ separates $\Phi_2$ from $A$;
	\item $\Phi_2$ separates $\Phi_1$ from $B$.
\end{enumerate} 
\end{lem}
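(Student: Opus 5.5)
The plan is to prove the cycle of implications $(1)\Rightarrow(3)\Rightarrow(2)\Rightarrow(4)\Rightarrow(1)$. By the symmetry that swaps $A$ with $B$ and the indices $1$ with $2$, the pair $(1)\Leftrightarrow(3)$ corresponds to $(2)\Leftrightarrow(4)$. So it suffices to prove $(1)\Leftrightarrow(3)$ and $(1)\Leftrightarrow(2)$.

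The basic structural fact I will use first comes from minimality via Lemma~\ref{lem:reformulate}(2) and Remark~\ref{rmk:visit}. For $\Phi\in\mc_\Lambda(A,B)$, every $x\in\Phi$ is adjacent to (or lies in the closure of) both $\Phi^A$ and $\Phi^B$. Indeed, an $(A,B,\Phi)$-tight path through $x$ enters $x$ from a component containing a point of $A\setminus\Phi$ and leaves into one containing a point of $B\setminus\Phi$. The degenerate case where the path starts or ends at $x$ (that is, $x\in A$ or $x\in B$) needs a separate remark. Also, $\Phi^A\cap\Phi^B=\emptyset$ by the separation property, and $\Lambda\setminus\Phi$ may contain further components lying in neither.

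For $(1)\Rightarrow(3)$, assume $\Phi^A_1\subset\Phi^A_2$. Take a path from $a\in A\setminus\Phi_1$ to some $y\in\Phi_2\setminus\Phi_1$, and suppose it avoids $\Phi_1$. Then $y\in\Phi^A_1\subset\Phi^A_2$, which is impossible since $\Phi^A_2$ is disjoint from $\Phi_2$. Hence (3) holds.

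For $(3)\Rightarrow(1)$, take a component $K$ of $\Phi^A_1$ containing $a\in A\setminus \Phi_1$. By (3), $K$ misses $\Phi_2$, and since $a\notin\Phi_2$, $K$ lies in one component of $\Lambda\setminus\Phi_2$. That component contains $a$, so $K\subset\Phi^A_2$.

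For $(1)\Rightarrow(2)$, let $K$ be a component of $\Phi^B_2$. First I show $K\cap\Phi_1=\emptyset$. If $x\in K\cap\Phi_1$, then by the basic fact $x$ is adjacent to $\Phi^A_1\subset\Phi^A_2$. But $K$ is a component of $\Lambda\setminus\Phi_2$, so a neighbor of $x$ outside $\Phi_2$ lies in $K$, which forces $K\cap\Phi^A_2\neq\emptyset$, a contradiction. More carefully, that neighbor is in $\Phi^A_1$, hence not in $\Phi_2$, hence in $K$. So $K$ sits inside one component of $\Lambda\setminus\Phi_1$, which contains a point of $B\setminus\Phi_2$; that point is not in $\Phi_1$, so $K\subset\Phi^B_1$. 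The converse $(2)\Rightarrow(1)$ is the symmetric argument.

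The main obstacle is the degenerate cases. These are points of $A\cap\Phi$ or $B\cap\Phi$, for which the adjacency claim can fail, and empty $\Phi^A$ (when $A\subset\Phi$). I would handle these by noting that $x\in A\cap\Phi_1$ yields a tight path starting at $x$ whose next vertex lies in $\Phi^B_1$, and then arguing directly in terms of paths rather than adjacency. For example, in $(1)\Rightarrow(2)$, if $x\in A\cap\Phi_1\cap K$, then $x\in A\setminus\Phi_2$ and $x\in K$ gives $K\subset\Phi^A_2$. Since $K$ is a component of $\Phi^B_2$, this contradicts $\Phi^A_2\cap\Phi^B_2=\emptyset$.
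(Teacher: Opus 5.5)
Your proof is correct and is essentially the paper's argument: your $(3)\Rightarrow(1)$ matches the paper's, and your $(1)\Rightarrow(2)$ uses the same key input from minimality, just phrased via adjacency with the case $x\in A\cap\Phi_1$ handled separately instead of absorbed into a possibly trivial path. That input, via Remark~\ref{rmk:visit}, is that a vertex of $\Phi_1$ lying in a component of $\Phi^B_2$ links back to $A$ through $\Phi^A_1\subset\Phi^A_2$. The one real difference is $(1)\Rightarrow(3)$, which you get directly from the definitions (a point of $\Phi_2\setminus\Phi_1$ joined to $A$ off $\Phi_1$ lies in $\Phi^A_1\subset\Phi^A_2$, which misses $\Phi_2$) rather than via $(1)\Rightarrow(2)$ and a path from $B$ as the paper does, so your route shows that $(1)\Leftrightarrow(3)$ and $(2)\Leftrightarrow(4)$ need no minimality at all.
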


\begin{proof}
We first prove $(1)\Rightarrow(2)$.
We argue by contradiction and take $x\in \Phi^B_2\setminus \Phi^B_1$. Let $\omega$ be a shortest edge path from $x$ to a vertex $b'$ in $B\setminus \Phi_2$ outside $\Phi_2$. By the choice of $x$, $\omega\cap\Phi_1\neq\emptyset$. Let $y\in \Phi_1$ be the point in $\omega$ closest to $b'$, and let $\omega_0$ be the subpath of $\omega$ from $y$ to $b'$. 

By Remark~\ref{rmk:visit}, there is a path $\omega'$ from $a'\in A$ to $b'\in B$ which is $(A,B,\Phi_1)$-tight and meets $\Phi_1$ at $y$. Let $\omega''$ be the subpath of $\omega'$ from $a'$ until it hits $y$ the first time. Then either $\omega''$ is the trivial path (if $a'\in \Phi_1$) or $\omega''$ is contained in $\Phi^A_1$ except its endpoint $y$ (if $a'\notin \Phi_1$). In either case $\omega''\setminus\{y\}\subset\Phi^A_1\subset \Phi^A_2$. Then $\omega''\cup\omega_0$ is a path from $a'$ to $b'$ avoiding $\Phi_2$. This is a contradiction. 


$(2)\Rightarrow(1)$ is similar. Now we prove $(1)\Rightarrow(3)$. Suppose by contradiction that $x\in \Phi_2$ and $y\in A$ are connected by a path $\omega$ avoiding $\Phi_1$. We assume $x\notin B$, otherwise we have a contradiction immediately. Similar to the previous paragraph (using Remark~\ref{rmk:visit}), there is a path $\omega'$ from $b\in B$ to $x$ such that $\omega'\setminus\{x\}\subset \Phi^B_2$. As  $\Phi^B_2\subset \Phi^B_1$ by $(1)\Rightarrow(2)$, we obtain that $\omega\cup \omega'$ is a path from a point in $A$ to a point in $B$ avoiding $\Phi_1$, contradiction. Similarly, we can prove $(2)\Rightarrow(4)$.

For $(3)\Rightarrow(1)$, note that $(3)$ implies that $\Phi_2\setminus \Phi_1$ is disjoint from $\Phi^A_1$. As $\Phi_1\cap\Phi^A_1=\emptyset$, we obtain $\Phi^A_1\cap \Phi_2=\emptyset$. It follows that $\Phi^A_1\subset \Phi^A_2$. $(4)\Rightarrow(2)$ is similar.
\end{proof}

In the case of Lemma~\ref{lem:comparable}, we will say $\Phi_1$ and $\Phi_2$ are \emph{comparable} in $\mc_\Lambda(A,B)$.

\begin{lem}
	\label{lem:comparable1}
In the case of Lemma~\ref{lem:comparable} (1), the following hold:
\begin{enumerate}
	\item $\Phi_1\subset \Phi_2^A\cup \Phi_2$;
	\item if $\Phi'_1=\Phi_1\cap \Phi^A_2$, then $\Phi_1=\bar\Phi_1$ where $\bar\Phi_1$ is $\Phi'_1$ together with all vertices in $\Phi_2$ that can be connected to a vertex in $A$ by a $\Phi_2$-tight path in $(\Phi_2^A\cup\Phi_2)\setminus \Phi'_1$;
	\item $\Phi_1^A$ can be alternatively characterized as points in $\Phi_2^A$ that can be connected to a point in $A$ by a path in $\Phi_2^A\setminus \Phi'_1$.
\end{enumerate}
\end{lem}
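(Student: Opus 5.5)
We work with the convention from Lemma~\ref{lem:comparable}: $\Phi^A_i$ (resp.\ $\Phi^B_i$) is the union of components of $\Lambda\setminus\Phi_i$ meeting $A\setminus\Phi_i$ (resp.\ $B\setminus\Phi_i$). We assume (1) of Lemma~\ref{lem:comparable}, and hence, by that lemma, also (2)--(4). The three claims are then proved in order, each using Remark~\ref{rmk:visit} to produce tight paths through a given cut vertex.

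For (1), suppose $x\in\Phi_1$ with $x\notin\Phi_2\cup\Phi^A_2$. By Remark~\ref{rmk:visit} there is an $(A,B,\Phi_1)$-tight path $\omega$ from $a\in A$ to $b\in B$ meeting $\Phi_1$ only at $x$. Let $\omega_a$ be the initial segment of $\omega$ up to $x$. All vertices of $\omega_a$ other than $x$ avoid $\Phi_1$, so they lie in $\Phi^A_1\subset\Phi^A_2$, or else $\omega_a$ is trivial and $a=x$. In the nontrivial case, the vertex of $\omega_a$ just before $x$ lies in $\Phi^A_2$ and is adjacent to $x\notin\Phi_2$, so $x\in\Phi^A_2$, a contradiction. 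In the trivial case $x=a\in A$, so $x\in\Phi_2$ or $x\in A\setminus\Phi_2\subset\Phi^A_2$, again a contradiction.

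For (3), write $U$ for the set of points of $\Phi^A_2$ joined to $A$ by a path in $\Phi^A_2\setminus\Phi'_1$.
\begin{itemize}
\item $\Phi^A_1\subset U$: take $y\in\Phi^A_1$ and a path from $y$ to $A\setminus\Phi_1$ avoiding $\Phi_1$. This path stays in $\Phi^A_1\subset\Phi^A_2$, and it avoids $\Phi'_1\subset\Phi_1$, so $y\in U$.
\item $U\subset\Phi^A_1$: a path in $\Phi^A_2\setminus\Phi'_1$ avoids $\Phi_1\cap\Phi^A_2=\Phi'_1$, and since it lies in $\Phi^A_2$ it also avoids the rest of $\Phi_1$. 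If its endpoint lies in $A\setminus\Phi_1$, it witnesses membership in $\Phi^A_1$.
\item Endpoint issue: the endpoint in $A$ might lie in $\Phi_1$. This cannot happen: points of $A\cap\Phi_1$ inside $\Phi^A_2$ belong to $\Phi'_1$, which the path avoids.
\end{itemize}

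For (2), by (1) we have $\Phi_1=\Phi'_1\sqcup(\Phi_1\cap\Phi_2)$, so it suffices to show that $\Phi_1\cap\Phi_2$ equals the set $R$ of points of $\Phi_2$ joined to $A$ by a $\Phi_2$-tight path in $(\Phi^A_2\cup\Phi_2)\setminus\Phi'_1$.
\begin{itemize}
\item $\Phi_1\cap\Phi_2\subset R$: for $x\in\Phi_1\cap\Phi_2$, take the segment $\omega_a$ of a tight path from Remark~\ref{rmk:visit}. Its interior lies in $\Phi^A_1\subset\Phi^A_2$ and it avoids $\Phi_1\setminus\{x\}$. It meets $\Phi_2$ only at $x$, since $\Phi^A_2\cap\Phi_2=\emptyset$, and it avoids $\Phi'_1$, so it witnesses $x\in R$.
\item $R\subset\Phi_1$: given $x\in R$ with such a path $\sigma$, its vertices before $x$ lie in $\Phi^A_2\setminus\Phi'_1$, hence avoid $\Phi_1$ and lie in $\Phi^A_1$ by (3), or $\sigma$ is trivial and $x\in A$. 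Suppose $x\notin\Phi_1$. Since $x\in\Phi_2$ and $\Phi_2$ is a minimal cut, Remark~\ref{rmk:visit} gives a path from $x$ to $B$ whose interior avoids $\Phi_2$. That interior lies in $\Phi^B_2\subset\Phi^B_1$, so it avoids $\Phi_1$. Concatenating with $\sigma$ gives a path from $A$ to $B$ missing $\Phi_1$, a contradiction.
\end{itemize}

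The main obstacle is the bookkeeping in degenerate cases: vertices of $A$ or $B$ lying inside the cuts, and trivial paths. These need to be handled uniformly with the convention that a cut property holds vacuously when $A\setminus C=\emptyset$. The argument for (2) also needs the fact that a point of $\Phi_2$ not in $\Phi_1$ adjacent to $\Phi^A_1$ reaches $B$ while avoiding $\Phi_1$; this is where Lemma~\ref{lem:comparable}(2) enters.
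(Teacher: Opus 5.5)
Your proof is correct and follows the paper's argument: (1) and the inclusion $\Phi_1\cap\Phi_2\subset\bar\Phi_1$ come from the tight paths of Remark~\ref{rmk:visit}, exactly as in the paper, and you supply the details the paper omits for (3). For the reverse inclusion you can skip the detour through $\Phi^B_2\subset\Phi^B_1$: the neighbour of $x$ on $\sigma$ lies in $\Phi^A_1$ (or else $x\in A$), so $x\notin\Phi_1$ would put $x\in\Phi^A_1\subset\Phi^A_2$, which is disjoint from $\Phi_2$ --- this is the paper's one-line reason ``consequence of $\Phi^A_1\subset\Phi^A_2$''.
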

\begin{proof}
	\label{rmk:comparable}
 For the first assertion, for any $x\in \Phi_1$, by Remark~\ref{rmk:visit}, there is an $(A,\Phi_1)$-tight path $\omega$ from $a\in A$ to $x\in\Phi_1$.  Then $\omega\setminus\{x\}\subset \Phi_1^A\subset \Phi_2^A$. If $x\notin \Phi_2$, then $x\in \Phi_2^A$. For Assertion 2, $\bar\Phi_1\subset\Phi_1$ is a consequence of $\Phi^A_1\subset \Phi^A_2$, and $\Phi_1\subset\bar\Phi_1$ is a consequence of Remark~\ref{rmk:visit} and Assertion 1. Assertion 3 is clear.
 \end{proof}




\begin{lem}
	\label{lem:latticecut}
We endow elements in $\mc_\Lambda(A,B)$ with a relation $<$ such that $\Phi_1<\Phi_2$ if $\Phi^A_1\subset \Phi^A_2$ and $\Phi_1\neq\Phi_2$. Then $(\mc_\Lambda(A,B),<)$ is a lattice. Moreover, given $\Phi_1,\Phi_2\in (\mc_\Lambda(A,B),<)$, both the join and meet of $\Phi_1$ and $\Phi_2$ are contained in $\Phi_1\cup\Phi_2$.
\end{lem}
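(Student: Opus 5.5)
The strategy is to transport the problem to the lattice of vertex sets $\Phi^A$ and to build explicit meet and join there. The correspondence $\Phi\mapsto\Phi^A$ is injective on $\mc_\Lambda(A,B)$. To see this, recall from Lemma~\ref{lem:comparable1}(2), or directly from Remark~\ref{rmk:visit}, that $\Phi$ is recovered from $\Phi^A$ as the set of vertices outside $\Phi^A$ that are adjacent to $\Phi^A$ or lie in $A$, and that can be continued to $B$ by a path avoiding the other vertices of $\Phi$. More simply, by Lemma~\ref{lem:comparable} the equality $\Phi_1^A=\Phi_2^A$ forces each cut to separate the other from $A$, and then minimality gives $\Phi_1=\Phi_2$. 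Consequently $<$ is a partial order, induced from inclusion of the sets $\Phi^A$. Since $\mc_\Lambda(A,B)$ is finite, it is enough to show that any two elements $\Phi_1,\Phi_2$ have a meet and a join. The argument is symmetric under swapping $A$ and $B$, because $\Phi_1^A\subset\Phi_2^A$ if and only if $\Phi_1^B\supset\Phi_2^B$ by Lemma~\ref{lem:comparable}. So it suffices to construct the meet, and the join then follows by symmetry.

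For the meet, take $U=\Phi_1^A\cap\Phi_2^A$. Let $U'$ be the set of vertices of $U$ that can be joined to $A$ inside $U$; this is the union of the components of $U$ meeting $A$. Define $\Psi$ to be the set of vertices $x\in\Phi_1\cup\Phi_2$, with $x\notin U'$, such that $x$ is adjacent to $U'$ or $x\in A$, and such that $x$ can be joined to a vertex of $B$ by a path whose interior avoids $\Phi_1\cup\Phi_2\cup U'$. The proof then proceeds in four steps.

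First, check that $\Psi$ separates $A$ from $B$. Take a path from $A$ to $B$ and consider the last moment it lies in $U'\cup A$. The next vertex it visits lies outside both $\Phi_1^A$ and $\Phi_2^A$, or it is the path's first exit from $\Phi_1^A$ or from $\Phi_2^A$. In either case that vertex lies in $\Phi_1\cup\Phi_2$, because every neighbor of $\Phi_i^A$ outside $\Phi_i^A$ lies in $\Phi_i$. Now take the first vertex of $\Phi_1\cup\Phi_2$ that occurs after this moment; the tail of the path from there shows the vertex belongs to $\Psi$.

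Second, check minimality of $\Psi$ using Lemma~\ref{lem:reformulate}(2). Each $x\in\Psi$ has by construction a path from $A$ through $U'$ to $x$ and then on to $B$, and this path avoids $\Psi\setminus\{x\}$. So $\Psi\in\mc_\Lambda(A,B)$, and moreover $\Psi^A=U'$ and $\Psi\subset\Phi_1\cup\Phi_2$.

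Third, show that $\Psi$ lies below both cuts. The inclusion $\Psi^A=U'\subset\Phi_i^A$ gives $\Psi\le\Phi_i$ for $i=1,2$.

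Fourth, show that $\Psi$ is the greatest lower bound. If $\Theta\le\Phi_1$ and $\Theta\le\Phi_2$, then $\Theta^A\subset U$. Since $\Theta^A$ consists of components that meet $A$, it in fact lies in $U'=\Psi^A$, so $\Theta\le\Psi$.

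The main obstacle is the first step: verifying carefully that $\Psi^A$ is exactly $U'$. In particular one must rule out that a vertex of $(\Phi_1\cup\Phi_2)\setminus\Psi$ opens a route from $A$ to $B$. This is handled by the tight-path argument of Lemma~\ref{lem:comparable}. The degenerate cases, where $A\setminus\Psi$ or $B\setminus\Psi$ is empty or where $A\cap B\neq\emptyset$, need to be treated separately. Since $A\cap B$ is contained in every minimal cut, vertices of $A\cap B$ are placed into $\Psi$ directly.
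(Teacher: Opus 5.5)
There is a genuine gap, and it is in the definition of $\Psi$, not in the degenerate cases. Requiring that $x$ be joined to $B$ by a path whose interior avoids $\Phi_1\cup\Phi_2\cup U'$ is too strong, and Step 1 fails. After the first vertex of $\Phi_1\cup\Phi_2$, the tail of the path may pass through further vertices of $\Phi_1\cup\Phi_2$, so it does not witness membership in $\Psi$.

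Here is a concrete counterexample. Let $\Lambda$ be the path graph with consecutive vertices $a,x,y,b$, and take $A=\{a\}$, $B=\{b\}$, $\Phi_1=\{x\}$, $\Phi_2=\{y\}$. Then $\Phi_1<\Phi_2$ and $U=U'=\{a\}$. The only route from $x$ to $b$ passes through $y\in\Phi_2$, so $x\notin\Psi$. Also $y\notin\Psi$, since $y$ is neither adjacent to $U'$ nor in $A$. Hence $\Psi=\emptyset$, which is not a cut, whereas the meet is $\{x\}$. The excluded vertex $x$ really does open a route from $A$ to $B$, so no tight-path argument can rescue the definition as stated. Your Step 2 goes through only because the difficulty has been pushed into Step 1, where it becomes false.

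The fix, which is what the paper does, is to drop the $B$-condition. Let $\Psi$ be the set of vertices of $\Phi_1\cup\Phi_2$ that are adjacent to $U'$ or lie in $A$. Your $U'$ is exactly the paper's set $E$ of vertices reachable from $A$ outside $\Phi_1\cup\Phi_2$, because $U\cap(\Phi_1\cup\Phi_2)=\emptyset$.

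With this definition, separation is immediate. A path from $A$ to $B$ starting at a vertex of $A\setminus\Psi$ starts in $U'$, so its first vertex in $\Phi_1\cup\Phi_2$ is adjacent to $U'$ and hence lies in $\Psi$. The equality $\Psi^A=U'$ also holds, because every neighbour of $U'$ outside $U'$ lies in $\Phi_1\cup\Phi_2$.

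The real work moves to minimality, and this is the idea your proposal is missing. Take $x\in\Psi$, say $x\in\Phi_1$. Remark~\ref{rmk:visit} gives a path $\omega$ from $x$ to some $b_0\in B$ that meets $\Phi_1$ only at $x$. Suppose $\omega$ met some $y\in\Psi\setminus\{x\}$; then $y\in\Phi_2\setminus\Phi_1$. Concatenate a path from $A$ through $U'$ to $y$ (or the trivial path if $y\in A$) with the tail of $\omega$ from $y$. This is a path from $A$ to $B$ avoiding $\Phi_1$, a contradiction. Note that $\omega$ may well pass through $\Phi_2\setminus\Psi$, which is exactly why you cannot require it to avoid $\Phi_1\cup\Phi_2$. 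With this change, your Steps 3 and 4, the injectivity of $\Phi\mapsto\Phi^A$, and the symmetry argument for the join all go through as written.
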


\begin{proof}
The relation is transitive by definition.	
We will only prove $\Phi_1$ and $\Phi_2$ have the meet. Existence of join will be similar, due to the symmetry between $A$ and $B$ and Lemma~\ref{lem:comparable}. 

Let $E$ be the collection of points in $\Lambda$ that can be connected to a point in $A$ by a (possibly trivial) path outside $\Phi_1\cup\Phi_2$. Let $\partial E=\bar E\setminus E$ where $\bar E$ is the closure of $E$ in $\Lambda$ (it is possible that $E=\emptyset$, in which case $\partial E=\emptyset$). Set $$\Phi=\partial E\cup (\Phi_1\cap A)\cup(\Phi_2\cap A).$$
Note that $\partial E\cap B\subset \Phi_i\cap B$ for $i=1,2$. As $A\cap B\subset \Phi_i$ for $i=1,2$, we obtain $\Phi\cap B\subset\Phi_i\cap B$ for $i=1,2$. Moreover, $\Phi\subset\Phi_1\cup\Phi_2$. 


We claim $\Phi\in \mc_\Lambda(A,B)$. We first show any path $\omega\subset\Lambda$ from $a\in A\setminus \Phi$ to $b\in B\setminus \Phi$ has non-trivial intersection with $\Phi$. Note that $a\notin \Phi_i$ for $i=1,2$. If $\omega\cap\Phi=\emptyset$, then $\omega\subset E$ and $\omega\cap (\Phi_1\cup\Phi_2)=\emptyset$, contradicting that $\Phi_1\in \mc_\Lambda(A,B)$.


Next we show for any $x\in \Phi$, there exist $a_0\in A$, $b_0\in B$ and a path $\omega$ from $a_0$ to $b_0$ such that $\omega$ has empty intersection with $\Phi\setminus\{x\}$. Indeed, by definition of $\Phi$, we can find a path $\omega_1$ from $a_0\in A$ to $x$ intersecting $\Phi$ only at its endpoint $x$ (it is possible that $a_0=x$ and $\omega_1$ is the trivial path). Note that $x\in \Phi_1\cup\Phi_2$. Without loss of generality, we assume $x\in \Phi_1$. By Remark~\ref{rmk:visit}, we can find a path $\omega_2$ from $x$ to $b_0\in B\setminus \Phi_1$ such that $\omega_2$ only visits $\Phi_1$ at its starting point $x$. Let $\omega=\omega_1\cup\omega_2$. Then $\omega\cap \Phi_1=\{x\}$. Let $y\in \omega\cap \Phi_2$. By the choice of $\omega_1$, $y\in \omega_2$. If $y\in \Phi$ and $y\neq x$, then we can find a path $\omega'$ from $a'_0\in A$ to $y$ such that $\omega'$ meets $\Phi$ only in its endpoint $y$. As $y$ is in the interior of $\omega_2$, the subpath $\omega''$ of $\omega_2$ from $y$ to $b_0$ satisfies $\omega''\cap \Phi_1=\emptyset$. Then $\omega'\cup \omega''$ is a path from $a'_0\in A$ to $b_0\in B$ outside $\Phi_1$, contradiction. It follows that if $y\in \omega\cap \Phi_2$ satisfies $y\neq x$, then $y\notin \Phi$. As $\Phi\subset\Phi_1\cup\Phi_2$, it follows that $\omega$ meets $\Phi$ only at $\{x\}$, hence $\omega$ has empty intersection with $\Phi\setminus\{x\}$, as desired.
	
Lastly, we show if $\Phi_0\in \mc_\Lambda(A,B)$ satisfies $\Phi_0\leq \Phi_i$ for $i=1,2$, then $\Phi_0\le \Phi$. Indeed, as $\Phi^A_0\subset\Phi^A_i$ for $i=1,2$, we know points in $\Phi^A_0$ can be connected to a point in $A$ via a path outside $\Phi_1\cup \Phi_2$. In particular, $\Phi^A_0\subset E$. On the other hand, $E\subset \Phi^A$. Thus $\Phi^A_0\subset\Phi^A$.
\end{proof}

\begin{lem}
	\label{lem:squeez}
Given $\Phi_1,\Phi_2\in\mc_\Lambda(A,B)$ with $\Phi_1<\Phi_2$. If $\Phi\in \mc_\Lambda(\Phi_1,\Phi_2)$, then $\Phi\in \mc_\Lambda(A,B)$ and $\Phi_1\le \Phi\le\Phi_2$.
\end{lem}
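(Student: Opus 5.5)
We verify the two conditions of Lemma~\ref{lem:reformulate} for $\Phi$ with respect to $(A,B)$, and then compare the $A$-sides of $\Phi_1$, $\Phi$ and $\Phi_2$ via Lemma~\ref{lem:comparable}.

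\textbf{Every $A$--$B$ path meets $\Phi$.} Let $\omega$ be a path from $a\in A$ to $b\in B$. Since $\Phi_1$ and $\Phi_2$ are cuts, $\omega$ meets both. Let $x$ be the last point of $\omega$ in $\Phi_1$. After $x$, the path stays in $\Phi_1^B$ until it reaches $b$ (or $b=x$).

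Now $\Phi_1<\Phi_2$ gives $\Phi_2\subset \Phi_1^B\cup\Phi_1$, by the symmetric version of Lemma~\ref{lem:comparable1}(1). So the tail of $\omega$ starting at $x$ must meet $\Phi_2$; otherwise it would be a path from $x$ to $b\in B$ avoiding $\Phi_2$, which combines with the initial part inside $\Phi_1^A\subset\Phi_2^A$ to give an $A$--$B$ path avoiding $\Phi_2$. Hence the subpath from $x$ to its first point in $\Phi_2$ is a path from $\Phi_1$ to $\Phi_2$. Because $\Phi$ separates $\Phi_1$ from $\Phi_2$, this subpath meets $\Phi$. The degenerate cases, where $x\in\Phi_1\cap\Phi_2$, are handled by the convention in Definition~\ref{def:mincut}: then $x\in\Phi$, since $\Phi_1\cap\Phi_2\subset\Phi$ as noted after that definition.

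\textbf{Minimality of $\Phi$.} Fix $y\in\Phi$. By Remark~\ref{rmk:visit} applied to $\Phi\in\mc(\Phi_1,\Phi_2)$, there is a $(\Phi_1,\Phi_2,\Phi)$-tight path $\gamma$ from $p\in\Phi_1$ to $q\in\Phi_2$ meeting $\Phi$ only at $y$. Applying Remark~\ref{rmk:visit} to $p\in\Phi_1$, we get a path from some $a\in A$ to $p$ whose interior lies in $\Phi_1^A$. Applying it to $q\in\Phi_2$, we get a path from $q$ to some $b\in B$ whose interior lies in $\Phi_2^B$.

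We must check that these two extension paths avoid $\Phi\setminus\{y\}$. Take $z\in\Phi_1^A$ and suppose $z\in\Phi$. Then $z\notin\Phi_1$. Minimality of $\Phi$ gives a tight path from $\Phi_1$ to $\Phi_2$ passing through $z$ and otherwise avoiding $\Phi$. This path goes from $z$ to $\Phi_2$, and since $\Phi_2$ is disjoint from $\Phi_1^A$ (as $\Phi_1^A\subset\Phi_2^A$), it must cross $\Phi_1$. Its portion from $z$ to $\Phi_1$, together with its portion from $z$ to $\Phi_2$, yields a path from $\Phi_1$ to $\Phi_2$ through $z$. Splicing in the part of $\Phi_1^A$, I expect to produce a path from $\Phi_1$ to $\Phi_2$ that avoids $\Phi$, a contradiction. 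I expect this step to be the main obstacle, since it needs careful bookkeeping of components. An alternative route is to first show $\Phi\subset\Phi_1^B\cup\Phi_1$ (and symmetrically $\Phi\subset\Phi_2^A\cup\Phi_2$) via minimality, and then use that $\Phi\cap\Phi_1^A=\emptyset$ directly. The same argument works on the $B$ side. Concatenating the three paths then gives the path required by Lemma~\ref{lem:reformulate}(2).

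\textbf{Ordering.} Once we know $\Phi\in\mc(A,B)$, we show $\Phi_1\le\Phi$ through condition (3) of Lemma~\ref{lem:comparable}, i.e. that $\Phi_1$ separates $\Phi$ from $A$. By the containment above, $\Phi\setminus\Phi_1\subset\Phi_1^B$, which is separated from $A$ by $\Phi_1$. Symmetrically, condition (4) gives $\Phi\le\Phi_2$.
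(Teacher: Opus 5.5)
Your plan has the same skeleton as the paper's proof. You show every $A$--$B$ path meets $\Phi$, show $\Phi$ is disjoint from $\Phi_1^A$ and from $\Phi_2^B$, concatenate the three paths supplied by Remark~\ref{rmk:visit}, and read off $\Phi_1\le\Phi\le\Phi_2$ from Lemma~\ref{lem:comparable}(3),(4).

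Your first step is fine, if roundabout. The paper simply notes that $\omega$ meets both $\Phi_1$ and $\Phi_2$, so the subpath between two such points meets $\Phi$. In your version, the ``initial part'' must be a Remark~\ref{rmk:visit} path ending at $x$, not the initial segment of $\omega$, which need not lie in $\Phi_1^A$.

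The genuine gap is the middle claim $\Phi\cap\Phi_1^A=\emptyset$ (and $\Phi\cap\Phi_2^B=\emptyset$). Both your minimality step and your ordering step depend on it, and you do not prove it. The ``splicing'' paragraph ends with ``I expect'' and never exhibits a path from $\Phi_1$ to $\Phi_2$ avoiding $\Phi$. The ``alternative route'' merely restates the claim with ``via minimality''.

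The missing argument is short, and you already had the key observation. Take $z\in\Phi\setminus\Phi_1$ and the $(\Phi_1,\Phi,\Phi_2)$-tight path $\gamma$ from $p\in\Phi_1$ to $q\in\Phi_2$ meeting $\Phi$ only at $z$.
\begin{itemize}
\item Since $\gamma$ meets $\Phi_1$ only at its initial point and $z\notin\Phi_1$, the segment of $\gamma$ from $z$ to $q$ avoids $\Phi_1$.
\item Moreover $q\notin\Phi_1$, since otherwise $\gamma$, which is nontrivial because $z\neq p$, would meet $\Phi_1$ twice.
\item Thus $z$ lies in the component of $\Lambda\setminus\Phi_1$ containing $q\in\Phi_2\setminus\Phi_1$.
\item By Lemma~\ref{lem:comparable}(3), $\Phi_1$ separates $\Phi_2$ from $A$, so that component contains no point of $A$. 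Hence $z\notin\Phi_1^A$.
\end{itemize}
Put differently, your remark that the path from $z$ to $\Phi_2$ ``must cross $\Phi_1$'' already contradicts the $\Phi_1$-tightness you had just invoked, so no splicing is needed. The $B$-side follows symmetrically from Lemma~\ref{lem:comparable}(4).

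This is exactly the claim the paper establishes before concatenating. The paper truncates the path so it meets $\Phi_1$ only at its start, and uses $\Phi_1\cap\Phi_2\subset\Phi\setminus\{c\}$ to see that the endpoint is not in $\Phi_1$. Once this claim is in place, your concatenation and ordering steps go through as you wrote them.
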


\begin{proof}
Note that any path $\omega$ from $a\in A\setminus \Phi$ to $b\in B\setminus \Phi$ must have non-empty intersection with $\Phi$. Indeed, suppose $\omega$ meets $\Phi_1$ in vertex $u_1$, and meets $\Phi_2$ in vertex $u_2$. Then the subpath of $\omega$ between $u_1$ and $u_2$ must meet $\Phi$. 

We claim any $a\in A\setminus \Phi_1$ and any $c\in \Phi\setminus \Phi_1$ are in different connected components of $\Lambda\setminus \Phi_1$.  As $\Phi\in\mc_\Lambda(\Phi_1,\Phi_2)$,  there exist $u_1\in \Phi_1$, $u_2\in \Phi_2$ and a path $\omega_0$ from $u_1$ to $u_2$ outside $\Phi\setminus\{c\}$. Note that $c\in \omega_0$. 
Up to passing a sub-path, we can assume $\omega_0$ meets $\Phi_1$ only at its starting point $u_1$.
As $c\notin \Phi_1$ and $\Phi_1\cap\Phi_2\subset \Phi$, we obtain $\Phi_1\cap\Phi_2\subset \Phi\setminus\{c\}$. In particular, $u_2\notin \Phi_1$, and $\omega_0$ is not the trivial path. As $c\in \omega_0$, $c$ and $u_2$ are in the connected component of $\Lambda\setminus \Phi_1$. As $a$ and $u_2$ are in different components of $\Lambda\setminus \Phi_1$ by Lemma~\ref{lem:comparable}, the claim follows. Similarly, any $c\in \Phi\setminus \Phi_2$ and $b\in B\setminus \Phi_2$ are in different connected components of $\Lambda\setminus \Phi_2$.

Given $c\in \Phi$, we show there exists $a_0\in A$, $b_0\in B$ and a path $\omega$ from $a_0$ to $b_0$ avoiding $\Phi\setminus\{c\}$. By Remark~\ref{rmk:visit}, there exist $u_1\in \Phi_1,u_2\in\Phi_2$, and a $(\Phi_1,\Phi,\Phi_2)$-tight path $\omega_0$ from $u_1$ to $u_2$ visiting at $\Phi$ once at $c$. By Remark~\ref{rmk:visit}, there is an $(A,\Phi_1)$-tight path $\omega_1$ from $a_0\in A$ to $u_1$, and a $(\Phi_2,B)$-tight path $\omega_2$ from $u_2$ to $b_0\in B$. By previous paragraph, $\omega_1\setminus\{u_1\}$ and any point in $\Phi\setminus\Phi_1$ are in different connected components of $\Lambda\setminus \Phi_1$. In particular $(\omega_1\setminus\{u_1\})\cap \Phi=\emptyset$. Similarly, $(\omega_2\setminus\{u_2\})\cap\Phi=\emptyset$. Thus the concatenation $\omega_1\omega_0\omega_2$ is the path as desired. It follows that $\Phi\in \mc_\Lambda(\Phi_1,\Phi_2)$. The previous paragraph implies that $\Phi_1\le \Phi\le \Phi_2$.
\end{proof}

\begin{lem}
	\label{lem:separate}
	Given $\Phi_1,\Phi_2,\Phi_3\in\mc_\Lambda(A,B)$ with $\Phi_1<\Phi_2<\Phi_3$. Then $\Phi_2$ separates $\Phi_1$ from $\Phi_3$.
\end{lem}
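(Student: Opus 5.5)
We need to show: for any $x\in \Phi_1\setminus\Phi_2$ and $y\in \Phi_3\setminus\Phi_2$, $x$ and $y$ lie in different components of $\Lambda\setminus\Phi_2$. The plan is to prove $x\in \Phi_2^A$ and $y\in \Phi_2^B$, where $\Phi_2^A,\Phi_2^B$ are as in Lemma~\ref{lem:comparable}, and then show these two sets are unions of distinct components of $\Lambda\setminus\Phi_2$.

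First, since $\Phi_1<\Phi_2$, we have $\Phi_1^A\subset\Phi_2^A$. Lemma~\ref{lem:comparable1}(1) then gives $\Phi_1\subset\Phi_2^A\cup\Phi_2$, so every $x\in\Phi_1\setminus\Phi_2$ lies in $\Phi_2^A$.

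Symmetrically, $\Phi_2<\Phi_3$ means $\Phi_2^A\subset\Phi_3^A$, which by Lemma~\ref{lem:comparable}(2) is equivalent to $\Phi_3^B\subset\Phi_2^B$. Applying the $A\leftrightarrow B$ symmetric version of Lemma~\ref{lem:comparable1}(1), we get $\Phi_3\subset\Phi_2^B\cup\Phi_2$. To justify that symmetric version directly: take $z\in\Phi_3$. By Remark~\ref{rmk:visit} there is a $(\Phi_3,B)$-tight path from $z$ to some $b\in B$. Removing $z$ from this path leaves a set inside $\Phi_3^B\subset\Phi_2^B$, so if $z\notin\Phi_2$ then $z$ itself lies in $\Phi_2^B$. (If $z\in B$ the path is trivial, but then $z\in B\setminus\Phi_2\subset\Phi_2^B$ anyway.) Hence every $y\in\Phi_3\setminus\Phi_2$ lies in $\Phi_2^B$.

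It remains to show that no component of $\Lambda\setminus\Phi_2$ meets both $A\setminus\Phi_2$ and $B\setminus\Phi_2$. This holds because $\Phi_2$ is a cut: a component containing both some $a\in A\setminus\Phi_2$ and some $b\in B\setminus\Phi_2$ would give a path from $a$ to $b$ avoiding $\Phi_2$, which is impossible. So $\Phi_2^A\cap\Phi_2^B=\emptyset$, and since both are unions of components, $x$ and $y$ lie in different components of $\Lambda\setminus\Phi_2$.

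The only delicate step is the symmetric form of Lemma~\ref{lem:comparable1}(1), including the degenerate case where $z\in B$; it is routine via Remark~\ref{rmk:visit}.
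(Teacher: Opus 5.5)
Your proof is correct and follows essentially the same route as the paper. The paper also uses Remark~\ref{rmk:visit} to get a path from a point of $A$ to $x$ inside $\Phi_1^A\subset\Phi_2^A$ and a path from $y$ to a point of $B$ inside $\Phi_3^B\subset\Phi_2^B$, then notes that a path from $x$ to $y$ avoiding $\Phi_2$ would give an $A$--$B$ path avoiding $\Phi_2$; you phrase the same argument as the disjointness of $\Phi_2^A$ and $\Phi_2^B$, and your treatment of the degenerate case $z\in B$ is fine.
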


\begin{proof}
Take $v\in \Phi_1\setminus \Phi_2$ and $u\in \Phi_3\setminus \Phi_2$. By Remark~\ref{rmk:visit}, there is a path $\omega$ from $a\in A$ to $v$ such that $\omega\setminus\{v\}\subset \Phi^A_1$ (we do allow $\omega=\{v\}$), and there is a path $\omega'$ from $u$ to $b\in B$ with $\omega'\setminus\{u\}\subset \Phi^B_3$. Lemma~\ref{lem:comparable} implies that $\omega\subset \Phi^A_2$ and $\omega'\subset \Phi^B_2$. If $v$ and $u$ are connected by a path $\omega_0$ outside $\Phi_2$, then the concatenation $\omega\omega_0\omega'$ connects $a\in A$ to $b\in B$ outside $\Phi_2$, contradiction. This finishes the proof.
\end{proof}

\section{The labeled 4-cycle condition}
\label{sec:labeled 4-cycle}
\begin{definition}
	\label{def:labaled}
We say $\Delta_\Lambda$ satisfies \emph{labeled 4-cycle condition}, if for any embedded 4-cycle $x_1x_2x_3x_4$ in $\Delta_\Lambda$  of vertex type $\hat s\hat t \hat s\hat t$, and any connected induced subgraph $\Lambda'\subset \Lambda$ containing both $s$ and $t$, there is a vertex $z$ such that
\begin{enumerate}
\item $z$ is adjacent to each $x_i$ for $1\le i\le 4$. 
\item $z$ has type $\hat r$ with $r\in \Lambda'$.
\end{enumerate}
\end{definition}


Recall generalized cycles are defined in Section~\ref{subsec:generalized cycles}.

\begin{definition}
	\label{def:strong labaled}
	We say $\Delta_\Lambda$ satisfies \emph{strong labeled 4-cycle condition}, if the following holds.
	Let $A,B$ be two sets of vertices of $\Lambda$. Then for any generalized 4-cycle $x_1x_2x_3x_4$ in $\Delta'_\Lambda$ of type $\hat A,\hat B,\hat A,\hat B$ respectively, there exists $C\in \mc_\Lambda(A,B)$ and a vertex $y\in \Delta'_\Lambda$ of type $\hat C$ such that $y\sim x_i$ for each $i$.
\end{definition}

\begin{definition}
	\label{def:admissible}
Let $\mathcal Q$ be a subset of $2^{V\Lambda}$, i.e. the power set of the vertex set of $\Lambda$, endowed with a relation $<$. We say $(\mathcal Q,<)$ is \emph{admissible} if the following are true:
\begin{enumerate}
	\item the relation $<$ is a poset;
	\item if $A,B\in \mathcal Q$ are comparable with respect to the relation $<$, then any element of $\mc_\Lambda(A,B)$ is contained in $\mathcal Q$; moreover, if $C\in \mc_\Lambda(A,B)$, then either $A<C<B$ or $B<C<A$;
	\item if $A<B<C$ in $\mathcal Q$, then $B$ separates $A$ from $C$;
	\item two elements $A,B$ of $\mathcal Q$ with a common upper bound have the join $C$ with $C\subset A\cup B$, and two elements $A,B$ of $\mathcal Q$ with a common lower bound have the meet $C$ with $C\subset A\cup B$.
\end{enumerate}
\end{definition}

\begin{lem}
	\label{lem:mincut admissible}
	Let $\mathcal Q=\mc_\Lambda(A,B)$ endowed with the relation $<$ in Lemma~\ref{lem:latticecut}. Then $(\mathcal Q,<)$ is admissible.
\end{lem}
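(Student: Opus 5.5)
We verify the four conditions of Definition~\ref{def:admissible} for $\mathcal Q=\mc_\Lambda(A,B)$ one at a time, using the results of Section~\ref{sec:minimal cuts}.

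\emph{Condition (1).} We show $<$ is a strict partial order.

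Transitivity holds because inclusion $\Phi^A_1\subset\Phi^A_2\subset\Phi^A_3$ is transitive. We must still rule out $\Phi_1=\Phi_3$. If that happened, then $\Phi^A_1=\Phi^A_2$. So it suffices to show that the map $\Phi\mapsto\Phi^A$ is injective on $\mc_\Lambda(A,B)$. This in turn gives irreflexivity and antisymmetry.

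To prove injectivity, recover $\Phi$ from $\Phi^A$. Lemma~\ref{lem:comparable1}(2), applied with $\Phi_1=\Phi_2$, says that $\Phi$ is
\[
(\Phi\cap A)\cup\{x\notin A:\ x \text{ is adjacent to } \Phi^A\}.
\]
Here we use Remark~\ref{rmk:visit}: every $x\in\Phi\setminus A$ is reached by an $(A,\Phi)$-tight path, whose other vertices lie in $\Phi^A$.

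There is a subtlety: vertices of $\Phi\cap A$ are not seen by $\Phi^A$. We handle this using Lemma~\ref{lem:comparable}, which says $\Phi_1^A\subset\Phi_2^A$ if and only if $\Phi^B_1\supset\Phi^B_2$. Hence equal $\Phi^A$ forces equal $\Phi^B$. A vertex of $\Phi\cap A$ is then determined as a vertex of $A$ that lies in neither $\Phi^A$ nor $\Phi^B$ nor the rest of the complement. Concretely, $\Phi$ is exactly the complement of the union of the components of $\Lambda\setminus\Phi$. We check this by showing that if $\Phi\ne\Phi'$ with $\Phi^A=\Phi'^A$, then by minimality some $x\in\Phi\setminus\Phi'$ gives a path from $A$ to $B$ in the complement of $\Phi'$. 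That contradicts $\Phi'$ being a cut.

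\emph{Condition (2).} Let $\Phi_1<\Phi_2$ in $\mathcal Q$. Lemma~\ref{lem:squeez} shows that every $C\in\mc_\Lambda(\Phi_1,\Phi_2)$ lies in $\mc_\Lambda(A,B)$ and satisfies $\Phi_1\le C\le\Phi_2$.

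We still need strict inequalities. It is not immediate that $C\ne\Phi_1$: for example $\Phi_1$ itself may belong to $\mc_\Lambda(\Phi_1,\Phi_2)$. Since the definition of admissible requires $A<C<B$, we must address this. I expect to read the definition as intended for the relation $\le$, or to check that endpoint cases like $C=\Phi_1$ are harmless. Clarifying this strictness issue is where I expect the only real friction.

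\emph{Condition (3).} This is exactly Lemma~\ref{lem:separate}.

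\emph{Condition (4).} By Lemma~\ref{lem:latticecut}, $(\mc_\Lambda(A,B),<)$ is a lattice. So any two elements have a join and a meet, regardless of whether a common bound exists, and both are contained in $\Phi_1\cup\Phi_2$.

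Thus the main work is the injectivity check in (1) and the strictness bookkeeping in (2); conditions (3) and (4) are direct citations.
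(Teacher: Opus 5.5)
Your proposal is correct and follows the paper's proof. The paper cites Lemma~\ref{lem:latticecut} for (1) and (4), Lemma~\ref{lem:squeez} for (2) and Lemma~\ref{lem:separate} for (3). Your injectivity check of $\Phi\mapsto\Phi^A$ supplies a step the paper dismisses as ``transitive by definition''; a cleaner route is Lemma~\ref{lem:comparable1}(1) with $\Phi_1^A=\Phi_2^A$, which gives $\Phi_1\subset\Phi_1^A\cup\Phi_2$, hence $\Phi_1\subset\Phi_2$, and then symmetry. On (2), drop the hedge and commit to the non-strict reading. The strict inequalities are false as written, since $\Phi_1$ and $\Phi_2$ always lie in $\mc_\Lambda(\Phi_1,\Phi_2)$. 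The paper only proves, and later only uses, $\Phi_1\le C\le\Phi_2$ (e.g.\ in the proof of Lemma~\ref{lem:admissible}).
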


\begin{proof}
 Definition~\ref{def:admissible} (1) (4) follow from Lemma~\ref{lem:latticecut}. Definition~\ref{def:admissible} (2) follows from Lemma~\ref{lem:squeez}.  Definition~\ref{def:admissible} (3) follows from Lemma~\ref{lem:separate}.
\end{proof}

\begin{lem}
	\label{lem:admissible}
Let $(\mathcal Q,<)$ be admissible as in Definition~\ref{def:admissible}. Let $\mathcal P$ be the collection of vertices in $\Delta'_\Lambda$ with type $\hat C$ such that $C\in \mathcal Q$. For $x\neq y\in \mathcal P$ of type $\hat T_x,\hat T_y$, we define $x<y$ if $x\sim y$ and $T_x<T_y$. 

Suppose $\Delta_\Lambda$ satisfies the strong labeled 4-cycle condition. Then the relation $<$ on $\mathcal P$ is a weakly boundedly graded poset such that any upper bounded pair has the meet and any lower bounded pair has the join.
\end{lem}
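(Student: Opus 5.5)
First I will show that $<$ is a strict partial order on $\mathcal P$. Irreflexivity and asymmetry come from the fact that $<$ on $\mathcal Q$ is a poset (Definition~\ref{def:admissible}(1)). For transitivity, suppose $x<y<z$ with types $\hat T_x,\hat T_y,\hat T_z$. Then $T_x<T_y<T_z$ in $\mathcal Q$, so $T_y$ separates $T_x$ from $T_z$ by Definition~\ref{def:admissible}(3). Lemma~\ref{lem:transitive}, applied to $x\sim y\sim z$, then gives $x\sim z$. Together with $T_x<T_z$ this yields $x<z$.

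For the grading, I will take the rank $r(x)$ to be the rank of $T_x$ in a linear extension of the finite poset $\mathcal Q$. This rank has finite image, so $\mathcal P$ is weakly boundedly graded.

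The main step is the lattice property; I treat the meet of a pair $x_1,x_3$ with a common upper bound $x_2$. (The statement's wording swaps meet and join, but the argument is symmetric.) Write $x_1,x_3$ of types $\hat A_1,\hat A_3$ and let $\hat M$ be the type of the meet $M$ of $A_1,A_3$ in $\mathcal Q$, if it exists.

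\textbf{Step 1: produce a candidate below any two lower bounds.} Suppose $x_1,x_3$ have two maximal lower bounds $y,y'$. Then $x_1\, y\, x_3\, y'$ is a generalized 4-cycle.
\begin{itemize}
\item In the cleanest case the types match: $y,y'$ both of type $\hat B$, and $x_1,x_3$ both of type $\hat A$. The strong labeled 4-cycle condition then gives a vertex $z$ of type $\hat C$ with $C\in\mc_\Lambda(A,B)$ and $z\sim$ all four. By Definition~\ref{def:admissible}(2), $C\in\mathcal Q$ and $B<C<A$, so $y<z<x_1$, $y<z<x_3$, and similarly for $y'$. Maximality then forces $y=y'$.
\item In general the types differ, and I reduce to equal types. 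I will use Lemma~\ref{lem:4-cycle} and transitivity to replace vertices by ones of the type of the relevant meet or join in $\mathcal Q$. Concretely, from $y<x_1$ and $y<x_3$ we get $T_y\le M$. I will seek a vertex $m$ of type $\hat M$ with $m\sim x_1,x_3,y$. Lemma~\ref{lem:transitive} plus the separation properties ($M$ separates $T_y$ from each $A_i$, using Definition~\ref{def:admissible}(3) and $M\subset A_1\cup A_3$ from (4)) should allow this.
\end{itemize}

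\textbf{Step 2: existence of a vertex of type $\hat M$ below $x_1,x_3$.} This is the step I expect to be the main obstacle, and I will handle it with the strong labeled 4-cycle condition plus Lemma~\ref{lem:4-cycle}.
\begin{itemize}
\item Given a lower bound $y$ and the upper bound $x_2$, form the generalized 4-cycle $x_1\,x_2\,x_3\,y$. Lemma~\ref{lem:4-cycle} replaces it by one of types alternating $\hat A_1$-$\hat A_3$-type.
\item Apply the strong condition to the pair $(A_1,A_3)$. This yields a vertex whose type is a minimal cut between $A_1$ and $A_3$ and which is $\sim$ all the cycle's vertices.
\item Using Lemma~\ref{lem:squeez} and the lattice description in Lemma~\ref{lem:latticecut}, I will identify the meet $M$ among such cuts lying between $T_y$ and the $A_i$. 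Transitivity (Lemma~\ref{lem:transitive}) then lets me pass to a vertex of type $\hat M$ adjacent to $x_1,x_3$ and lying above $y$.
\end{itemize}
Delicate points are whether $M$ exists in $\mathcal Q$ (guaranteed by (4), since $T_y$ is a common lower bound) and ensuring the new vertex is distinct or comparable appropriately.

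\textbf{Step 3: conclusion.} Having $m$ of type $\hat M$ with $y\le m\le x_1,x_3$ for every lower bound $y$, uniqueness of $m$ follows from Step 1's argument. Hence $m$ is the meet. The join is proved symmetrically.
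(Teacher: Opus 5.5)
\textbf{There is a genuine gap in the lattice part.} Your proof that $<$ is a poset (Lemma~\ref{lem:transitive} plus Definition~\ref{def:admissible}(3)) and your grading via a rank on $\mathcal Q$ match the paper. Your ``cleanest case'' in Step~1 is essentially the paper's key use of the strong labeled 4-cycle condition. The problem is how you reach that case.

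Steps~2--3 aim to show that the meet of $x_1,x_3$ is a vertex of type $\hat M$ with $M=A_1\wedge A_3$ in $\mathcal Q$. This is false in general. Two vertices of $\Delta'_\Lambda$ of the same type that satisfy $\sim$ coincide, since they are intersecting cosets of one subgroup.
\begin{itemize}
\item If $A_1=A_3$ and $x_1\neq x_3$, then no vertex of type $\hat M=\hat A_1$ is $\sim$ to both.
\item If $A_1<A_3$, then $M=A_1$, so your $m$ would have to be $x_1$. That forces $x_1\sim x_3$, which is not given.
\end{itemize}
The type of the meet in $\mathcal P$ is not determined by the types of the pair, so no combination of Lemma~\ref{lem:transitive} and separation can produce your $m$.

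Step~2 has further defects:
\begin{itemize}
\item You set up the pair with a common upper bound $x_2$ (the typo'd reading of the statement) and then use both $x_2$ and a lower bound $y$. A lower-bounded pair need not have any upper bound.
\item The strong condition needs a generalized 4-cycle of types $\hat A,\hat B,\hat A,\hat B$. In $x_1x_2x_3y$ the vertices of types $\hat A_1,\hat A_3$ are opposite, not alternating.
\item For incomparable $A_1,A_3$, Definition~\ref{def:admissible}(2) says nothing about $\mc_\Lambda(A_1,A_3)$.
\end{itemize}

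\textbf{The missing idea is a mechanism that forces equal types on opposite corners.} Here is how the paper does it, written for meets. Take a lower bound $z$ of $x,y$ of maximal rank and an arbitrary lower bound $w$. Apply Lemma~\ref{lem:4-cycle} to $xzyw$ to get $z'$ of the same type as $z$ with $z'\sim x,y,w$.
\begin{itemize}
\item If $z'=z$, then $z\sim w$. The join $T_u$ of $T_z,T_w$ in $\mathcal Q$ exists by Definition~\ref{def:admissible}(4), and $T_u\subset T_z\cup T_w$, so it is realized by a vertex $u\sim z,w$. Since $z,w,x$ (resp.\ $z,w,y$) lie in a common simplex, $u$ is again a lower bound. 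Maximality of rank gives $u=z$, hence $w\le z$.
\item If $z'\neq z$ and $T_x=T_y$, then $xzyz'$ has alternating types. Your cleanest-case argument then yields a lower bound of strictly larger rank, a contradiction.
\item If $z'\neq z$ and $T_x\neq T_y$, then $z,z'$ have equal type and the common upper bound $x$. By the dual equal-type case (proved the same way) they have a join. That join is a lower bound of $x,y$ of larger rank than $z$, again a contradiction.
\end{itemize}
This two-stage bootstrap, first equal types and then the general case, is what should replace your reduction to the type $A_1\wedge A_3$.
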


\begin{proof}
It follows from Lemma~\ref{lem:transitive} and Definition~\ref{def:admissible} (3) that $\mathcal P$ is a poset.	
As $\mathcal Q$ is a poset with finitely many elements, $\mathcal Q$ is weakly graded. There is a natural map $f:\mathcal P\to \mathcal Q$ by considering types of elements in $\mathcal P$. Note that $x<y$ implies $f(x)<f(y)$. Thus $\mathcal P$ is weakly graded.

Let $r:\mathcal P\to \mathbb Z$ be a rank function that factors through a rank function $\mathcal Q\to \mathbb Z$ that is also denoted by $r$. We first show that given $x,y\in \mathcal P$ with an upper bound such that $T_x=T_y$, $x$ and $y$ have the join. Let $z\in \mathcal P$ such that $r(z)$ is smallest among all the common upper bounds of $x$ and $y$. We claim $z$ is the join of $x$ and $y$. Let $w$ be an upper bound of $x$ and $y$. By Lemma~\ref{lem:4-cycle}, there is vertex $z'$ of $\Delta'_\Lambda$ of the same type as $z$ such that $w\sim z'$, $x\sim z'$ and $y\sim z'$. 

We first consider the case $z=z'$. Then $z\sim w$. Let $T_u$ be the meet of $T_z$ and $T_w$ in $\mathcal Q$ - it exists by Definition~\ref{def:admissible} (4) and $T_u\subset T_z\cup T_w$.
Let $u$ be the vertex of $\Delta'_\Lambda$ such that $u\sim z$, $u\sim w$. As $z,w,x$ are contained in a single simplex of $\Delta_\Lambda$, we obtain $u\sim x$. Similarly, $u\sim y$. As $T_u$ is an upped bound of $\{T_x,T_y\}$, we know $u$ is an upped bound of $\{x,y\}$. By the choice of $u$, $r(u)=r(T_u)\le r(T_z)=r(z)$. On the other hand, $r(z)\le r(u)$. So $r(T_u)=r(T_z)$, implying $T_u=T_z$. As $u\sim z$, we obtain $u=z$. As $T_u\le T_w$, we obtain $z\le w$, as desired.

Now we consider the case $z\neq z'$. We will show this case is impossible by producing an upper bound $z_0\in \mathcal P$ of $\{x,y\}$ such that $r(z_0)<r(z)$. By the strong labeled 4-cycle condition, there is a vertex $z_0\in \Delta'_\Lambda$ with $z_0\sim \{ x,y, z,z'\}$ and $T_{z_0}\in \mc_\Lambda(T_x,T_z)$. By Definition~\ref{def:admissible} (2), $T_{z_0}\in \mathcal Q$ and $T_x\le T_{z_0}\le T_z$. Thus $z_0\in \mathcal P$ and $\{x,y\}\le z_0\le z$. As $z\neq z'$, $z_0<z$, hence $r(z_0)<r(z)$, as desired.

Similarly we know for any $x,y\in \mathcal P$ with a lower bound such that $T_x=T_y$, $x$ and $y$ have the meet. Now we consider the case $T_x\neq T_y$ and $x,y$ have a common upper bound. We define $z$ as before, and claim $z$ is the join of $x$ and $y$. Let $w$ and $z'$ be as before. The case $z=z'$ follows from the same argument as before. Suppose $z\neq z'$. As $T_{z}=T_{z'}$, we know $z$ and $z'$ have the meet, denoted $z_0$. Then $z_0$ is a common upper bound of $\{x,y\}$, and $r(z_0)<r(z)$, which is a contradiction.
\end{proof}

The following is a consequence of Lemma~\ref{lem:mincut admissible} and Lemma~\ref{lem:admissible}.

\begin{cor}
	\label{cor:strong}
	Let $\Delta_\Lambda, A, B$ be as in Definition~\ref{def:strong labaled}. Let $\mathcal P$ be the collection of vertices in $\Delta'_\Lambda$ which have type $\hat C$ for some $C\in \mc_\Lambda(A,B)$. For $x\neq y\in \mathcal P$ of types $\hat T_x,\hat T_y$, we define $x<y$ if $x\sim y$ and $T^A_x\subset T^A_y$, where $T^A_x$ is defined in Lemma~\ref{lem:comparable}.
	
	Suppose $\Delta_\Lambda$ satisfies the strong labeled 4-cycle condition. Then the relation $<$ on $\mathcal P$ is a weakly boundedly graded poset such that any upper bounded pair has the meet and any lower bounded pair has the join.
\end{cor}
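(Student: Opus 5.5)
The plan is to apply Lemma~\ref{lem:admissible} with $\mathcal Q=\mc_\Lambda(A,B)$. First I check that the relation in the corollary is the relation of Lemma~\ref{lem:admissible} for this choice of $\mathcal Q$, once $\mathcal Q$ carries the order $<$ of Lemma~\ref{lem:latticecut}. In Lemma~\ref{lem:latticecut}, $\Phi_1<\Phi_2$ means $\Phi^A_1\subset\Phi^A_2$ and $\Phi_1\neq\Phi_2$.

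In the corollary, $x<y$ is defined for $x\neq y$ by $x\sim y$ and $T^A_x\subset T^A_y$. There are two cases.

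\emph{Case $T_x\neq T_y$.} Then the condition $T^A_x\subset T^A_y$ is exactly $T_x<T_y$ in $\mathcal Q$, so the two relations agree.

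\emph{Case $T_x=T_y$.} Here $x$ and $y$ are distinct vertices of the same type $\hat T$, i.e. distinct cosets $gA_{S\setminus T}$ and $g'A_{S\setminus T}$ of the same subgroup. Two such cosets are disjoint, so $x\not\sim y$. Hence the corollary's relation never relates $x$ and $y$, which agrees with Lemma~\ref{lem:admissible}, where $T_x<T_y$ is impossible.

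One subtlety: $T^A_x$ alone might not determine $T_x$. To see that it does, I would use Lemma~\ref{lem:comparable}. If $\Phi^A_1=\Phi^A_2$, then $\Phi_1$ and $\Phi_2$ are comparable in both directions. By (3) and (4) each then separates the other from $A$ and from $B$. Combined with Lemma~\ref{lem:comparable1}(2), this forces $\Phi_1=\Phi_2$, so the order in Lemma~\ref{lem:latticecut} is genuinely antisymmetric. In any case, the paper already asserts in Lemma~\ref{lem:latticecut} that $(\mc_\Lambda(A,B),<)$ is a lattice, so I will take antisymmetry from there.

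Next, Lemma~\ref{lem:mincut admissible} shows that $(\mathcal Q,<)$ is admissible in the sense of Definition~\ref{def:admissible}. The strong labeled 4-cycle condition is assumed. Lemma~\ref{lem:admissible} then applies directly. It gives that $\mathcal P$ is a weakly boundedly graded poset in which bounded pairs have joins and meets, in the form stated. The weak boundedness comes from finiteness of $\mathcal Q$ through the rank function that factors through types.

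The only real point of care is the identification of the two relations, in particular the same-type case above. Beyond that, the corollary is a formal combination of Lemma~\ref{lem:mincut admissible} and Lemma~\ref{lem:admissible}.
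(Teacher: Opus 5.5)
Your proposal is correct and is exactly the paper's argument: the paper derives this corollary in one line from Lemma~\ref{lem:mincut admissible} and Lemma~\ref{lem:admissible}. Your check that the two relations on $\mathcal P$ coincide (two distinct vertices of the same type are never $\sim$) spells out what the paper leaves implicit. Your side remark that $\Phi\mapsto\Phi^A$ is injective on $\mc_\Lambda(A,B)$, via Lemma~\ref{lem:comparable1}(2), is also correct. It is in fact what the transitivity of the strict order, which Lemma~\ref{lem:latticecut} asserts ``by definition'', quietly relies on.
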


\begin{lem}
	\label{lem:graph}
Let $\Lambda$ be a connected graph, and $X$ be a finite set of vertices in $\Lambda$. Then there exists $x\in X$ such that $X\setminus \{x\}$ are contained in the same connected component of $\Lambda\setminus\{x\}$. Moreover, if $|X|>1$ and we are given $x_0\in X$, then we can choose such $x$ with $x\neq x_0$.
\end{lem}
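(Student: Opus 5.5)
We will use a spanning tree of $\Lambda$ and pick a suitable leaf. Fix a spanning tree $\mathcal T$ of the connected graph $\Lambda$. We then pass to the minimal subtree $\mathcal T_X\subset\mathcal T$ containing $X$, which is the union of the unique tree paths between pairs of points of $X$. This subtree is finite, and it is a single vertex exactly when $|X|=1$; in that case $X\setminus\{x\}=\emptyset$ and the statement holds vacuously.

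Suppose $|X|>1$. By minimality, every leaf of $\mathcal T_X$ lies in $X$, since a leaf outside $X$ could be pruned. A finite tree with at least two vertices has at least two leaves, so we may choose a leaf $x$ of $\mathcal T_X$ with $x\neq x_0$.

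It remains to check that $X\setminus\{x\}$ lies in one component of $\Lambda\setminus\{x\}$. Removing a leaf from a tree leaves it connected, so $\mathcal T_X\setminus\{x\}$ is a connected subgraph of $\Lambda\setminus\{x\}$. It contains $X\setminus\{x\}$, and therefore all of $X\setminus\{x\}$ lies in a single connected component of $\Lambda\setminus\{x\}$.

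There is no real obstacle here; the only point needing care is that leaves of the minimal subtree belong to $X$. One could also bypass the subtree altogether: choose a vertex $x\in X\setminus\{x_0\}$ maximizing the tree distance to $x_0$ in $\mathcal T$. Then for any $y\in X\setminus\{x\}$, the tree path from $y$ to $x_0$ avoids $x$, since otherwise $d(y,x_0)>d(x,x_0)$. Hence every such $y$ is joined to $x_0$ in $\Lambda\setminus\{x\}$, which gives the same conclusion.
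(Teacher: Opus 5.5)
Your proof is correct, and it takes a different route from the paper's. The paper works directly in $\Lambda$ by iterative improvement. It starts from an arbitrary $x\in X$. If some component $C$ of $\Lambda\setminus\{x\}$ contains only $k<|X|-1$ points of $X\setminus\{x\}$, it picks $x'\in X$ in a different component $C'$. Since every point of $C$ joins $x$ by a path avoiding $C'$, the set $(C\cap X)\cup\{x\}$ lies in one component of $\Lambda\setminus\{x'\}$. The count strictly increases, so the process terminates at the desired vertex.

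That argument needs no auxiliary structure, but it leaves the ``moreover'' clause implicit. To get it, one must start at $x=x_0$ and observe that the tracked component always contains $x_0$, so each new $x'$ is chosen outside it and differs from $x_0$.

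Your spanning-tree argument makes the choice in one step and gives the ``moreover'' clause for free. All leaves of the Steiner subtree $\mathcal T_X$ lie in $X$, and there are at least two of them, so one avoids $x_0$.

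Your farthest-point variant is also sound, and it does not need $\mathcal T$ at all. Take $x\in X\setminus\{x_0\}$ maximizing the graph distance $d$ to $x_0$ in $\Lambda$ itself. For $y\in X\setminus\{x\}$ with $y\neq x_0$, no geodesic from $y$ to $x_0$ can pass through $x$, since that would force $d(y,x_0)=d(y,x)+d(x,x_0)>d(x,x_0)$. For $y=x_0$ the trivial path already avoids $x$.
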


\begin{proof}
Take arbitrary $x\in X$. It suffices to show that if one of the components $C$ of $\Lambda\setminus\{x\}$ contains exactly $k$ elements from $X\setminus\{x\}$ with $k<|X|-1$, then we can find $x'\in X\setminus\{x\}$, such that $\Lambda\setminus \{x'\}$ has a component containing all elements in $(C\cap X)\cup\{x\}$. Indeed, as $k<|X|-1$, we know $\Lambda\setminus\{x\}$ has a component $C'\neq C$ such that $C'$ has at least one element from $X$. 
Let $x'\in X\cap C'$. We now show $(C\cap X)\cup\{x\}$ is contained in a single component of $\Lambda\setminus\{x'\}$. As $\Lambda$ is connected, each element in $C$ can be connected to $x$ via a path in $\Lambda$. Up to passing subpaths, we can assume such path does not visit $x$ except at its endpoint. Thus each element in $C$ can be connected to $x$ via a path that is outside $C'$, hence outside $\{x'\}$. Moreover, $x$ is connected to $x'$ via a path visiting $x'$ only at its endpoint. Thus $(C\cap X)\cup\{x\}$ is contained in a single component of $\Lambda\setminus\{x'\}$, as desired.
\end{proof}

\begin{prop}
	\label{prop:strong}
Let $\Lambda$ be a connected Coxeter diagram such that for any induced subdiagrams $\Lambda'$ of $\Lambda$, $\Delta_{\Lambda'}$ satisfies the labeled 4-cycle property. Then $\Delta_\Lambda$ satisfies the strong labeled 4-cycle property. 
\end{prop}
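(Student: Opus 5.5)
We induct on $|A|+|B|$, reducing a generalized 4-cycle $x_1x_2x_3x_4$ of type $\hat A\hat B\hat A\hat B$ to an honest 4-cycle in the Artin complex of a smaller induced subdiagram, where the labeled 4-cycle property applies.

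\textbf{Degenerate cases.} If $x_1=x_3$ (or $x_2=x_4$), then since the four vertices pairwise intersect appropriately, Proposition~\ref{prop:intersection} gives a vertex of type $\hat A$ (or $\hat B$) related to all $x_i$. Both $A$ and $B$ contain a minimal cut in $\mc_\Lambda(A,B)$ (e.g. $A$ itself separates $A$ from $B$), so we pass to a suitable subset. Passing to a face of the simplex spanned by $y$ preserves $\sim$, so $C\subset A$ produces the desired vertex of type $\hat C$.

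\textbf{Main step: reduction to singletons.}
\begin{enumerate}
\item Since $x_1\sim x_2$, there is a common element of the cosets. Hence, inside the link of a common face, we may replace $x_1,x_3$ by single vertices of $\Delta_\Lambda$ of some type $\hat a$ with $a\in A$. We choose $a$ via Lemma~\ref{lem:graph}, so that $A\setminus\{a\}$ lies in one component of $\Lambda\setminus\{a\}$.
\item We compare $x_1,x_3$ through $A\setminus\{a\}$. We use Lemma~\ref{lem:4-cycle} to adjust representatives so that $x_1,x_3$ share the vertices of types $\hat a'$ for $a'\in A\setminus\{a\}$ whenever possible.
\item Where sharing is possible, we pass to the link of the shared face. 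By Lemma~\ref{lem:link}, this link is an Artin complex of a smaller diagram, namely the components of $\Lambda\setminus(A\setminus\{a\})$ relevant to $a$ and $B$. Within it we obtain a generalized 4-cycle with smaller $|A|$.
\item By induction this smaller cycle has a center of type $\hat C'$, with $C'$ a minimal cut between $\{a\}$ and $B$ in the smaller diagram.
\item We then set $C=C'\cup(\text{part of }A\setminus\{a\})$ and check, via Lemma~\ref{lem:reformulate} and Lemma~\ref{lem:comparable1}, that some subset of $C$ lies in $\mc_\Lambda(A,B)$.
\end{enumerate}
Symmetrically we reduce $B$. This brings us to $A=\{s\}$, $B=\{t\}$: an embedded 4-cycle of type $\hat s\hat t\hat s\hat t$ in $\Delta_\Lambda$ (or of a component), possibly degenerate.

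\textbf{Base case $A=\{s\}$, $B=\{t\}$.} Here we use the labeled 4-cycle property directly. We pick a minimal cut $C\in\mc_\Lambda(\{s\},\{t\})$ and apply the labeled 4-cycle condition to connected induced subgraphs $\Lambda'$ containing $s,t$, chosen to avoid all but one vertex of the current partial cut. Each application yields a center of type $\hat r$ with $r\in\Lambda'$. Iterating inside links gives centers whose types accumulate to a full separating set. Taking $\Lambda'$ to be the component of $\Lambda\setminus(\text{already chosen vertices})$ containing $s,t$ forces the collected set to separate $s$ from $t$. Minimality is achieved by discarding redundant vertices, which is harmless since a face of a center is still a center. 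Links of the collected vertices are Artin complexes of proper induced subdiagrams (Lemma~\ref{lem:link}), and the hypothesis covers all induced subdiagrams, so the iteration is legitimate.

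\textbf{Main obstacle.} The hardest part is the bookkeeping in the reduction step. Replacing a barycenter-vertex $x_i$ by one vertex of its simplex, while keeping the remaining vertices of the simplex compatible across both $x_1$ and $x_3$, is not automatic. We would attempt it with Lemma~\ref{lem:transitive} and the second part of Lemma~\ref{lem:4-cycle}, and then verify that the resulting set is genuinely a minimal cut for the original pair $(A,B)$ rather than just for $(\{a\},B)$.
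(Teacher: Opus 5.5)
Your base case $A=\{s\}$, $B=\{t\}$ is fine and is essentially the paper's argument. The paper feeds the labeled 4-cycle condition a shortest $s$--$t$ path in the current diagram rather than the whole component, but either choice works, and shrinking to a minimal cut at the end is harmless.

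The gap is in the reduction step: Steps 1--2 cannot be carried out. The vertices $x_1,x_3$ are fixed. Any vertex $w$ of type $\hat a'$ ($a'\in A$) with $w\sim x_1$ is forced to be the $\hat a'$-vertex of $\sigma_{x_1}$, because a simplex of $\Delta_\Lambda$ has at most one vertex of each type. So there is nothing to ``adjust'': the faces of $\sigma_{x_1}$ and $\sigma_{x_3}$ of a given type either coincide or they do not. Lemma~\ref{lem:4-cycle} only manufactures new vertices; it cannot move $x_1,x_3$. Likewise, replacing $x_1,x_3$ by single faces discards information, since a center of the reduced cycle need not be $\sim x_1$. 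Your argument therefore handles only the case where the relevant faces of $x_1$ and $x_3$ already coincide. It says nothing about the generic case where they differ, which is exactly the obstacle you flag at the end and leave open. Lemma~\ref{lem:transitive} cannot close this on its own, because its separation hypothesis is precisely what is unknown there.

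The missing idea is a canonical choice of centers, coming from the lattice structure on a smaller diagram. This needs an outer induction on $|\Lambda|$ in addition to the one on $|A|+|B|$. The paper proceeds as follows.
\begin{enumerate}
\item Pick $a\in A$ by Lemma~\ref{lem:graph} so that $(A\cup B)\setminus\{a\}$ lies in one component $\Lambda_a$ of $\Lambda\setminus\{a\}$, and set $A_1=A\setminus\{a\}$.
\item If $a\in A\cap B$, all four $x_i$ share their $\hat a$-vertex and one passes to its link.
\item Otherwise, apply the inner induction to $(\{a\},B)$ and the cycle $x_1'x_2x_3'x_4$ built from the $\hat a$-faces. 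This gives $z$ with $T_z\in\mc_\Lambda(\{a\},B)$. If $T_z=\{a\}$, then $x_1'=x_3'$ and again one passes to a link. These are the only places where a shared face comes for free.
\item If $a\notin T_z$, the strong condition for $\Delta_{\Lambda_a}$ (outer induction) together with Corollary~\ref{cor:strong} gives a poset of vertices of $\Delta'_{\Lambda_a}$ with types in $\mc_{\Lambda_a}(A_1,B)$. In it, $x_2$ and $x_4$ have a join $z_1'$, of type $\hat T_1$.
\item Uniqueness of the join yields $z_1'\sim z$ via Lemma~\ref{lem:4-cycle}.
\item Minimality of the join is what proves that $T_z$ separates $T_1$ from $a$; otherwise $T_z\cup T_1$ would contain a cut giving a strictly smaller common upper bound of $x_2,x_4$.
\item That separation feeds Lemma~\ref{lem:transitive}. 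Then $T_1\cup(T_z\setminus T_1^{A_1})$ contains an element of $\mc_\Lambda(A,B)$ whose vertex is $\sim\{x_1',x_1'',x_2,x_3',x_4\}$.
\item A second pass with $x_1,x_3$ swapped also captures $x_3''$.
\end{enumerate}
Without such a canonical (join) center, there is no mechanism to make the centers for $(\{a\},B)$ and for $(A_1,B)$ compatible, so the induction on $|A|+|B|$ alone does not go through.
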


\begin{proof}
We induct on the number of vertices in $\Lambda$. The base case when $\Lambda$ has one vertex is clear. Now suppose the proposition holds for all connected Dynkin diagram with number of vertices $\le |\Lambda|-1$. Let $A,B$ be two sets of vertices of $\Lambda$. Consider a collection of vertices $x_1x_2x_3x_4$ in $\Delta'_\Lambda$ of type $\hat A,\hat B,\hat A,\hat B$ respectively such that $x_i\sim x_{i+1}$ for $i\in \mathbb Z/4\mathbb Z$. We aim to find $C\in \mc_\Lambda(A,B)$ and a vertex $y\in \Delta'_\Lambda$ of type $\hat C$ such that $y\sim x_i$ for each $i$.
	
We employ an inner layer of induction on the number of vertices in $A$ and $B$. If $|A|=1$ and $|B|=1$, let $\omega_1$ be a minimal length path connecting $A$ and $B$. By Definition~\ref{def:labaled}, there is a vertex $z_1\in \Delta_\Lambda$ adjacent to each of $\{x_i\}_{i=1}^4$ such that $z_1$ is of type $\hat s_1$ with $s_1\in \omega_1$. Now consider $\lk(z_1,\Lambda)$, which is a copy of $\Delta_{\Lambda\setminus\{s_1\}}$. If $A$ and $B$ are in different connected components of $\Lambda\setminus\{s_1\}$, then we are done. Otherwise we take a minimal length path $\omega_2$ connecting $A$ and $B$. As $\Delta_{\Lambda\setminus\{s_1\}}$ satisfies the labeled 4-cycle condition, there is a vertex $z_2\in\Delta_{\Lambda\setminus\{s\}}\cong\lk(z_1,\Lambda)$ adjacent to each of $\{x_i\}_{i=1}^4$ such that $z_2$ has type $s_2$ with $s_2\in \omega_2$. By repeating this finitely many times, we obtain $C=\{s_1,\ldots,s_k\}\in \mc_\Lambda(A,B)$ and vertices $\{z_1,\ldots z_k\}\in \Delta_{\Lambda}$ spanning a simplex such that $z_i$ has type $\hat s_i$ and $z_i$ is adjacent to each of $\{x_i\}_{i=1}^4$, as desired. 

Suppose at least one of $|A|$ and $|B|$ is $>1$. By Lemma~\ref{lem:graph}, there is an element of $A\cup B$, say $a\in A$ such that $\Lambda\setminus\{a\}$ contains $(A\cup B)\setminus\{a\}$ in a single connected component. The moreover part of Lemma~\ref{lem:graph} implies that we can assume $A\setminus\{a\}\neq\emptyset$. Let $\Lambda_a$ be the full subgraph of $\Lambda$ spanned by vertices in the component of $\Lambda\setminus\{a\}$ containing $(A\cup B)\setminus\{a\}$. 

We first consider the case $a\in A\cap B$. Then there is a vertex $y\in \Delta_\Lambda$ of type $\hat a$ such that $y\sim x_i$ for each $i$. Let $\bar x_1,\bar x_2,\bar x_3,\bar x_4$ be vertices of type $\hat A_1,\hat B_1,\hat A_1,\hat B_1$ with $A_1=A\setminus\{a\}$ and $B_1=B\setminus\{a\}$ such that $\bar x_i\sim x_i$ for each $i$. Note that $\{\bar x_i\}_{i=1}^4$ can be viewed as vertices in $\Delta'_{\Lambda_a}$. By our assumption, $\Delta_{\Lambda_a}$ also satisfies the labeled 4-cycle condition. As $\Lambda_a$ is connected, by induction we know there is $\bar y\in \Delta'_{\Lambda_a}$ of type $\hat C_1$ such that $\bar y\sim \bar x_i$ for each $i$ and $C_1\in\mc_{\Lambda_a}(A_1,B_1)$.  As $y\sim \{\bar y,\bar x_1,\bar x_2,\bar x_3,\bar x_4\}$, $y$ and $\bar y$ determine a vertex $y_0\in \Delta'_\Lambda$ of type $\hat C$ such that $y_0\sim x_i$ for $1\le i\le 4$ and $C=C_1\cup \{a\}$. Note that $C\in \mc_\Lambda(A,B)$.

Now we assume $a\notin B$. We refer to Figure~\ref{fig:strong} for the following discussion.
For $i=1,3$, let $x'_i\in \Delta'_\Lambda$ be the vertex of type $\hat a$ with $x'_i\sim x_i$, let $x''_i\in \Delta'_\Lambda$ be the vertex of type $\hat A_1$ with $x''_i\sim x_i$. 
By induction assumption, there is a vertex $z\in \Delta'_\Lambda$ of type $\hat T_z$ such that $z\sim x_i$ for $i=2,4$, $z\sim x'_i$ for $i=1,3$ and $T_z\in\mc_\Lambda(\{a\},B)$. If $a\in T_z$, then $T_z=\{a\}$ and $x'_1=x'_3=z$. As $x'_1\sim\{x''_1,x_2,x''_3,x_4\}$ and $a\notin B,A_1$, we can view $x''_1,x_2,x''_3,x_4$ as vertices in $\Delta'_{\Lambda_a}$. Note that $\Delta_{\Lambda_a}$ also satisfies the labeled 4-cycle condition. Similar as the previous paragraph, there exist $C_1\in \mc_{\Lambda_a}(A_1,B)$ and a vertex $\bar y\in \Delta'_{\Lambda_a}$ of type $\hat C_1$ such that $\bar y\sim\{x''_1,x_2,x''_3,x_4\}$. Thus $\bar y\sim x_i$ for all $i$. If $C_1\in \mc_\Lambda(A,B)$, then we are done. If not, then $C\in \mc_\Lambda(A,B)$ for $C=C_1\cup\{a\}$. Note that
$\bar y$ and $z$ determine a vertex $y\in \Delta'_\Lambda$ of type $\hat C$ such that $y\sim x_i$ for all $i$, as desired.

\begin{figure}
	\centering
	\includegraphics[scale=1]{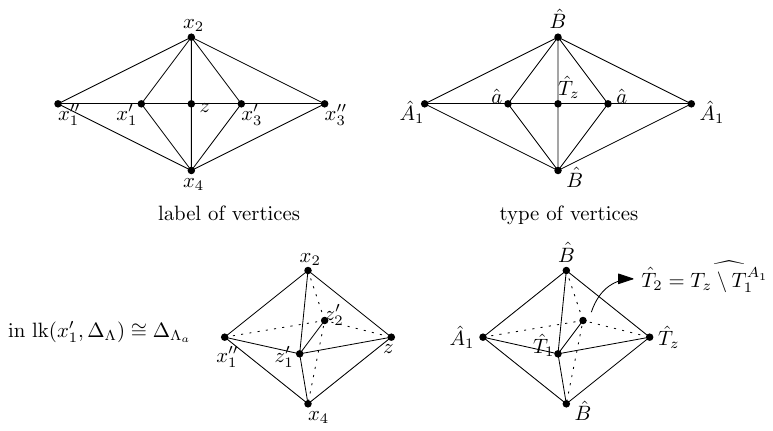}
	\caption{Two vertices are joined by an edge if they satisfy the relation $\sim$.}
	\label{fig:strong}
\end{figure}

We now assume $a\notin T_z$. It suffices to find $C\in \mc_\Lambda(A,B)$ and a vertex $y\in \Delta'_\Lambda$ of type $\hat C$ such that $y\sim \{x'_1,x''_1,x_2,x'_3,x''_3,x_4\}$. We will do it in two steps - first we produce such $y$ such that $y\sim\{x'_1,x''_1,x_2,x'_3,x_4\}$.

Note that $x'_1\sim x''_1,x_2,z,x_4$. As $a\notin A_1,B,T_z$, we can view $x''_1,x_2,z,x_4$ as vertices in $\Delta'_{\Lambda_a}$. By our assumption $\Delta_{\Lambda_a}$ also satisfies the labeled 4-cycle condition. By induction, $\Delta_{\Lambda_a}$ satisfies the strong labeled 4-cycle condition. Let $\mathcal P_1$ be the collection vertices in $\Delta'_{\Lambda_a}$ whose types belong to $\mc_{\Lambda_a}(A_1,B)$. We endow $\mathcal P_1$ with a poset structure as in Corollary~\ref{cor:strong} such that $x_2<x''_1$.
Let $z'_1$ be the join of $x_2$ and $x_4$ in $\mathcal P_1$, which exists by Corollary~\ref{cor:strong}. We denote the type of $z'_1$ by $\hat T_1$. In particular, $T_1\in \mc_{\Lambda_a}(A_1,B)$. 

Now we show $z'_1\sim z$. Indeed, by Lemma~\ref{lem:4-cycle} applied to the generalized 4-cycle $z'_1x_2zx_4$ in $\Delta'_{\Lambda_a}$, there is a vertex $z''_1\in \Delta'_{\Lambda_1}$ of type $\hat T_1$ such that $z''_1\sim\{x_2,z,x_4\}$. By the choice of $z'_1$, we know $z'_1=z''_1$. Thus $z'_1\sim z$.

\begin{claim*}
In $\Lambda$ we have $T_z$ separates $T_1$ from $\{a\}$. 
\end{claim*}

\begin{proof}[Proof of the claim]
 By contradiction we assume $a$ is connected to $c\in T_1\setminus T_z$ by a path $\omega$ outside $T_z$. Let $T^B_1$ be the union of components of $\Lambda_a\setminus T_1$ that contains a vertex in $B\setminus T_1$, and let $c^+$ be the collection of points in the $\epsilon$-sphere of $c$ that are contained in $T^B_1$. Next we show there is a path $\omega'$ in $T^B_1$ from a point in $t^+$ to $b\in B$ avoiding $T_z$. 
If this is not true, we can find $S_z\subset T_z$ such that $S_z\in \mc_{ T^B_1}(c^+,B)$. 
Let $$T=S_z\cup \{t\in T_1 \mid \mathrm{a\ point\ in\ }t^+\ \mathrm{is\ connected\ to\ a\ vertex\ in}\ B\ \mathrm{in}\  T^B_1\setminus S_z\}$$
Then $T\in \mc_{\Lambda_a}(B,A_1)$ and $T\le T_1$ in $\mc_{\Lambda_a}(B,A_1)$ - this is a consequence of the following three observations and Lemma~\ref{lem:comparable}.
\begin{enumerate}
	\item Any path in $\Lambda_a$ from a point in $A_1$ to a point in $B$ must meet $T$. Indeed, this path must meet $T_1$, and its subpath from the last time it meets $T_1$ to its endpoint in $B$ is contained in $T^B_1$. So if this subpath is outside $T\setminus S_z$, then one of its endpoint is in $T$.
	\item For any $t\in T$, there is $a_t\in A_1$ and $b_t\in B$ such that $a_t$ is connected to $b_t$ via a path in $\Lambda_a$ outside $T\setminus\{t\}$. Indeed, if $t\in S_z$, By Remark~\ref{rmk:visit}, there is path $\omega_1\subset T^B_1$ from a point in $c^+$ to $b_t\in B$ outside $S_z\setminus \{t\}$. As $c\in T_1$ and $T_1\in \mc_{\Lambda_a}(A_1,B)$, by Remark~\ref{rmk:visit}, there is an $(A_1,T_1)$-tight path $\omega_2\subset \Lambda_a$ from $a_t\in A_1$ to $c\in T_1$. By filling the small gap between $\omega_2$ and $\omega_1$, we obtain a path outside $T\setminus\{t\}$. If $t\in T\setminus S_z$, then there is a path $\omega'_1\subset T^B_1$ from a point in $t^+$ to $b_t\in B$ outside $S_z$. We define $\omega_2$ as before and obtain a path outside $T\setminus\{t\}$ by filling the gap between $\omega_2$ and $\omega'_1$.
	\item  Any element in $T_1\setminus T$ and any element in $B\setminus T$ are contained in different connected components of $\Lambda_a\setminus T$. Indeed, for any path in $\Lambda_a$ from a point in $T_1\setminus T$ to a point in $B\setminus T$, consider the subpath from the last time this path meets $T_1$ to its endpoint in $B$. Then either this subpath meets $S_z$, or it starts at a point in $T$.
\end{enumerate}
Note that each of $\{z'_1,z\}$ and each of $\{x_2,x_4\}$ are contained in a common simplex of $\Delta_{\Lambda_a}$, and $z'_1\sim z$. As $T\subset T_z\cup T_1$, we know $z'_1$ and $z$ determine a vertex of type $\hat T$ in $\Delta'_{\Lambda_a}$ which is $\sim$ to each of $x_2$ and $x_4$. This contradicts the choice of $z'_1$ (and $T_1$) and justifies the existence of $\omega'$. By filling the small gap between $\omega$ and $\omega'$, we obtain a path from $a$ to $b\in B$ avoiding $T_z$, contradicting the choice of $T_z$. Hence the claim is proved.
\end{proof}

Let $T_1^{A_1}$ be the union of components of $\Lambda_a\setminus T_1$ that contains a vertex in $A_1\setminus T_1$, and let $T_2=T_z\setminus T_{1}^{A_1}$. If $T_2\neq\emptyset$, then let $z'_2$ be the vertex in $\Delta'_{\Lambda}$ (also viewed as a vertex in $\Delta'_{\Lambda_a}$) of type $\hat T_2$ such that $z\sim z'_2$. Next we show
\begin{enumerate}
	\item any path in $\Lambda$ from a point in $A$ to a point in $B$ intersects $T_1\cup T_2$;
	\item $z'_1\sim\{x''_1,z,z'_2,x'_3\}$, and $z'_2\sim\{x''_1,x'_3\}$ (if $z'_2$ exists).
\end{enumerate}
For (1), given $x\in A$ and $y\in B$, and a path $\omega$ in $\Lambda$ from $x$ to $y$. If $a\notin \omega$, then $\omega\subset \Lambda_a$ and $\omega\cap T_1\neq\emptyset$ by the definition of $T_1$. Now suppose $a\in \omega$. Up to passing to a subpath, we assume $\omega$ starts with $a$, and never returns to $a$. Then $\omega\cap T_z\neq\emptyset$ by the definition of $T_z$. Take $w\in T_z\cap \omega$, if $w\in T_2$, then we are done. Now suppose $w\notin T_2$. Then $w\in T^{A_1}_1$. Note that the subpath of $\omega$ from $w$ to $y\in B$ is contained in $\Lambda_a$ by our choice of $\omega$. Thus this subpath must meet  $T_1$, which proves (1).

For (2), the proof of $z'_1\sim \{x''_1,z'_2\}$ is similar to $z'_1\sim z$ before. As $z'_1\sim z$ and $z\sim x'_3$, the above claim and Lemma~\ref{lem:transitive} imply $z'_1\sim x'_3$. Now we consider $z'_2$. Note that $z'_2\sim x'_3$ is a consequence of $z\sim x'_3$. As $z'_2\sim z'_1$, $z'_1\sim x''_1$ and any point in $T_2\setminus T_1$ and any point in $A_1\setminus T_1$ are in different components of $\Lambda_a\setminus T_1$, by Lemma~\ref{lem:transitive}, $z'_2\sim x''_1$. Thus (2) is proved.

(1) and (2) imply that each of $\{z'_1,z'_2\}$ is $\sim$ to each of $\{x_2,x_4,x''_1,x'_1,x'_3\}$, and $T_1\cup T_2$ contain a subset $T_y\subset \mc_\Lambda(A,B)$. Thus $z'_1,z'_2$ determine vertex $y\in \Delta'_\Lambda$ such that $y$ has type $\hat T_y$. This finishes step 1. By switching the role of $x_1$ and $x_3$, we can assume $y\sim \{x_2,x_4,x_3,x'_1\}$. Now we repeat the previous argument, with the role of $z,T_z$ replaced by $y,T_y$, to produce the desired vertex of $\Delta'_\Lambda$ (now the correct version of the above Claim in this new setting should be that $T_y$ separates $T_1$ from $A$ in $\Lambda$, however, this can be proved in the same way as the above Claim using the stronger assumption that $T_y\subset \mc_\Lambda(A,B)$).
\end{proof}

\section{Garside categories, $\widetilde A_n$-like complexes and Bestvina non-positive curvature}
\label{sec:garside}
\subsection{Background in Garside category}
Let $\mathcal C$ be a small category. One may think of $\mathcal C$ as of an oriented graph, whose vertices are objects in $\mathcal C$ and oriented edges are morphisms of $\mathcal C$. Arrows in $\mathcal C$ compose like paths: $x\stackrel{f}{\to} y\stackrel{g}{\to} z$ is composed into $x\stackrel{fg}{\to} z$. For objects $x,y\in\mathcal C$, let $\mathcal C_{x\to}$ denote the collection of morphisms whose source object is $x$. Similarly we define $\mathcal C_{\to y}$ and $\mathcal C_{x\to y}$. 

For two morphisms $f$ and $g$, we define $f\preccurlyeq g$ if there exists a morphism $h$ such that $g=fh$. Define $g\succcurlyeq f$ if there exists a morphism $h$ such that $g=hf$. Then $(\mathcal C_{x\to},\preccurlyeq)$ and $(\mathcal C_{\to y},\succcurlyeq)$ are posets. A nontrivial morphism $f$ which cannot be factorized into two nontrivial factors is an \emph{atom}.

The category $\mathcal C$ is \emph{cancellative} if, whenever a relation $afb=agb$ holds between composed morphisms, it implies $f=g$. $\mathcal C$ is \emph{homogeneous} if there exists a length function $l$ from the set
of $\mathcal C$-morphisms to $\mathbb Z_{\ge 0}$ such that $l(fg) = l(f) + l(g)$ and $(l(f) = 0)\Leftrightarrow$ ($f$ is a unit).

We consider the triple $(\mathcal C,\mathcal C\stackrel{\phi}{\to}\mathcal C,1_\mathcal C \stackrel{\Delta}{\Rightarrow}\phi)$ where $\phi$ is an automorphism of $\mathcal C$ and $\Delta$ is a natural transformation from the identity function to $\phi$. For an object $x\in \mathcal C$, $\Delta$ gives morphisms $x\stackrel{\Delta(x)}{\longrightarrow} \phi(x)$ and $\phi^{-1}(x)\stackrel{\Delta(\phi^{-1}(x))}{\longrightarrow} x$. We denote the first morphism by $\Delta_x$ and the second morphism by $\Delta^x$. A morphism $x\stackrel{f}{\to} y$ is \emph{simple} if there exists a morphism $y\stackrel{f^\ast}{\to} \phi(x)$ such that $f f^\ast=\Delta_x$. When $\mathcal C$ is cancellative, such $f^\ast$ is unique. 

\begin{definition}[\cite{bessis2006garside}]
	\label{def:Garside}
	A \emph{homogeneous categorical Garside structure} is a triple $(\mathcal C,\mathcal C\stackrel{\phi}{\to}\mathcal C,1_\mathcal C \stackrel{\Delta}{\Rightarrow}\phi)$ such that:
	\begin{enumerate}
			\item $\phi$ is an automorphism of $\mathcal C$ and $\Delta$ is a natural transformation from the identity function to $\phi$;
			\item $\mathcal C$ is homogeneous and cancellative;
			\item all atoms of $\mathcal C$ are simple;
			\item for any object $x$, $\mathcal C_{x\to}$ and $\mathcal C_{\to x}$ are lattices.
		\end{enumerate}
\end{definition}

\begin{definition}
	A \emph{Garside category} is a category $\mathcal C$ that can be equipped with $\phi$ and $\Delta$ to obtain a homogeneous categorical Garside structure. A \emph{Garside groupoid} is the enveloping groupoid of a Garside category. Informally speaking, it is a groupoid obtained by adding formal inverses to all morphisms in a Garside category.
\end{definition}

A fundamental property of $\mathcal C$ is that the natural map $\mathcal C\to\mathcal G$ is an embedding, where $\mathcal G$ denotes the enveloping groupoid, as follows from the discussion in \cite[Section 2]{bessis2006garside}.

\begin{thm}(\cite[Section 2]{bessis2006garside})
	\label{thm:normal form0}
	Consider homogeneous categorical Garside structure $(\mathcal C,\mathcal C\stackrel{\phi}{\to}\mathcal C,1_\mathcal C \stackrel{\Delta}{\Rightarrow}\phi)$, and let $\mathcal G$ be the associated Garside groupoid. For each morphism $f\in \mathcal G$, there is a unique way of writing $f$ as $f=s_1s_2\cdots s_l\Delta^k$ where $s_1,s_2,\ldots, s_l$ are simple elements in $\mathcal C$ with sources $x_1,x_2,\ldots,x_l$ and $k\in \mathbb Z$ such that
	\begin{itemize}
			\item $s_i\prec \Delta_{x_i}$ for $1\le i\le l$;
			\item $s_i=s_is_{i+1}\wedge \Delta_{x_i}$ in $(\mathcal C_{x_i\to},\preccurlyeq)$ (recall that $\wedge$ means meet);
			\item $\Delta^k$ means a concatenation of $|k|$ copies of $\Delta$ or $\Delta^{-1}$ with appropriate sources. 
		\end{itemize}
\end{thm}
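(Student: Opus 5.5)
The statement is the normal form theorem for Garside groupoids, cited from Bessis. I would prove it in two stages: first for morphisms of the category $\mathcal C$, then for the enveloping groupoid $\mathcal G$, using $\Delta$ to pass from $\mathcal G$ to $\mathcal C$.

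\textbf{Step 1: the greedy factorization in $\mathcal C$.} Let $f\in\mathcal C_{x\to}$. Define the \emph{head} $H(f)=f\wedge\Delta_x$; the meet exists because $(\mathcal C_{x\to},\preccurlyeq)$ is a lattice. Two facts are needed.
\begin{itemize}
\item Simple elements are closed under left divisors: if $s\preccurlyeq \Delta_x$ and $g\preccurlyeq s$, then $g\preccurlyeq\Delta_x$. This uses cancellativity and the uniqueness of $s^\ast$.
\item $H(f)$ is non-trivial whenever $f$ is non-trivial. By homogeneity $f$ has an atom as a left divisor, and atoms are simple.
\end{itemize}
Write $f=H(f)f'$ and iterate on $f'$. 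Since $l(f')<l(f)$, homogeneity forces termination.

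This produces $f=\Delta^{k}s_1\cdots s_l$ with all $s_i\ne\Delta$. Every factor equal to a $\Delta_{x}$ appears first, since $H$ is maximal. Using the natural transformation identity $g\Delta=\Delta\phi(g)$, I would move these $\Delta$ factors to the right, replacing each $s_i$ by $\phi^{-k}$-type conjugates. The required conditions are preserved because $\phi$ is an automorphism, so it respects $\preccurlyeq$, meets, and simplicity.

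\textbf{Step 2: the left-weighted condition.} The key lemma is
\[
H(s_1 g)=s_1 H(s_1^\ast\text{-compatible part})\cdot\ldots,
\]
or more precisely the standard statement $H(ab)=H(aH(b))$. I would prove this from the lattice property together with the fact that $s\preccurlyeq \Delta$ if and only if $\Delta\succcurlyeq s$ (via $s^\ast$ and $\phi$). Granting this lemma, the greedy decomposition gives exactly $s_i=s_is_{i+1}\wedge\Delta_{x_i}$. This is the main technical step and I expect it to be the hardest. Bessis's treatment handles it through the "Garside germ" or Deligne-style argument.

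\textbf{Step 3: extension to $\mathcal G$.} Show that every $f\in\mathcal G$ can be written as $\Delta^{-m}g$ with $g\in\mathcal C$. This is an Ore-type condition: any fraction $a^{-1}b$ can be rewritten using $a^{-1}=\Delta^{-1}\cdot a^\ast$-type identities. Here one writes $a$ as a product of simples $a=t_1\cdots t_r$ and uses $t^{-1}=t^\ast\Delta^{-1}$ (up to applying $\phi$), then collects the $\Delta^{-1}$ factors using naturality.

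Applying Step 1 to $g$ yields existence. For uniqueness, take two normal forms of the same $f$. Multiply both by a sufficiently high power of $\Delta$ so that both lie in $\mathcal C$; the embedding $\mathcal C\to\mathcal G$ is used here. Uniqueness of the greedy decomposition in $\mathcal C$ then follows from $s_1=H(f)$, which is determined by $f$. Inducting with cancellativity, and comparing lengths to pin down $k$, completes the argument.
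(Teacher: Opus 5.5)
Your outline is the standard Deligne--Charney argument transported to categories, which is the route behind the cited Section~2 of Bessis; the paper gives no proof of its own, and your key lemma $H(ab)=H(aH(b))$ is exactly its Theorem~\ref{thm:intial} in the language of $\widehat X$. Two points need tightening. First, the key lemma follows at once from $H(ab)\preccurlyeq\Delta_x\preccurlyeq\Delta_x\phi(a)=a\Delta_y$ together with $ab\wedge a\Delta_y=a(b\wedge\Delta_y)$, which holds by left cancellativity. Second, uniqueness rests on the inductive consequence $H(s_1s_2\cdots s_l)=H(s_1H(s_2\cdots s_l))=H(s_1s_2)=s_1$ for any left-weighted sequence, and it is this, not a comparison of lengths, that fixes $k$: once the $\Delta^N$ is moved to the left, $k+N$ is the largest $j$ with $\Delta^j$ left-dividing the product, because the remaining cofactor has head $\phi^{k+N}(s_1)\neq\Delta$.
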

Following \cite{charney2004bestvina}, the decomposition in the above theorem is called the \emph{left greedy Deligne normal form} of $f$.

\subsection{$\widetilde A_n$-like complex}

\label{subsec:A_nlike}
\begin{definition}
	\label{def:A_n}
	Let $X$ be a simplicial complex. We say $X$ is $\widetilde A_n$-like if
	\begin{enumerate}
		\item $X$ is flag and simply-connected;
		\item the vertex set of each simplex of $X$ has a cyclic order, and these cyclic orders are consistent with respect to inclusion of simplices;
		\item for each vertex $x\in X$ and any simplex $\sigma$ containing $x$, the cyclic order on $\sigma^0$ induces a linear order in $\sigma^0\setminus\{x\}$ (Section~\ref{subsec:pco}), and the previous item implies these linear orders fit together to form a relation $<_x$ on $\lk^0(x,X)$; we require this relation to be a partial order, and any pair of upper bounded elements have the join, any pair of lower bounded elements have the meet;
		\item there is a function $f:X^0\to \mathbb Z$ with finite image such that for each simplex $\sigma$ of $X$, $f|_{\sigma^0}$ is an injective morphism between cyclically ordered sets (we endow $\mathbb Z$ with the canonical cyclic order).
	\end{enumerate}
\end{definition}

This notion appeared in \cite[\S 4.2]{haettel2022link}. These complexes are Bestvina complexes of certain  Garside groupoids \cite{bessis2006garside}, as we will see below.
Item (4) is not required in \cite{haettel2022link}, however, it would simplify the discussion below so we include it (although it might not be essential for the discussion below). Here we do not require $X$ to be locally finite as in \cite[\S 4.2]{haettel2022link}.
Examples of $\widetilde A_n$-like simplicial complexes include the Coxeter complex of type $\widetilde A_n$, Euclidean buildings of type $\widetilde A_n$ \cite[\S 3.2]{hirai2020uniform}, and the Artin complex of type $\widetilde A_n$ \cite[Thm 4.3]{haettel2021lattices}. 

Up to translation, we assume $f(X^0)\subset[0,n-1]$. Following \cite[\S 3.2]{hirai2020uniform} and \cite[\S 4]{haettel2021lattices}, we define a simplicial complex structure on $\widehat X=X\times \mathbb R$ as follows.

The vertex set~$\widehat X^0$ of~$\widehat X$ is 
$$\{(x, i) \in X^0 \times \mathbb Z \mid f(x)\equiv i\ \mathrm{mod}\ n\},$$ 
The vertices $(x,i)$ and $(x',j)$ are neighbours if $x$ and $x'$ are equal or neighbours in~$X$, and $|i-j|  \leq  n$. Let $\widehat X$ be the flag simplicial complex with that $1$-skeleton. Note that any maximal simplex of $\widehat X$ has vertices $$(x_i,kn+f(x_i)),(x_{i+1},kn+f(x_{i+1})),\ldots,(x_n,kn+f(x_n)),(x_1,kn+n+f(x_1)),
\ldots,(x_{i},kn+n+f(x_i)),$$ where $k\in \mathbb Z$, $1\le i\le n,$ and $x_1,x_2,\ldots,x_n$ are vertices of a maximal simplex of $X$ with $f(x_1)<f(x_2)<\cdots<f(x_n)$.

We define a relation $\le$ on $\widehat X^0$, by requiring $(x,i)<(y,j)$ if there is an edge from $(x,i)$ to $(y,j)$ and $i<j$. Then there is an automorphism $\varphi$ of $(\widehat X^0,<)$ with $\varphi((x,i))=(x,i+n)$.
Note that the transitive closure $\le_t$ of $\le$ is a partial order on $\widehat X^0$. Then $(\widehat X^0,\le_t)$ is a weakly graded poset with rank function $r((x,i))=i$. 

\subsection{From $\widetilde A_n$-like complexes to Garside categories}
\label{subsec:widehatX}
\begin{definition}[{\cite[Def~4.6]{haettel2024lattices}}]
	\label{def:garsideflag}
	Let $\widehat X$ be a simply connected flag simplicial complex. Suppose that we have a binary relation $<$ on $\widehat X^0$ (not necessarily a partial order) such that vertices $x,y$ are neighbours exactly when $x<y$ or $y<x$. Furthermore, suppose that the transitive closure of $<$ is a partial order that is weakly graded with rank function $r$. We write $x\leq y$ when $x<y$ or $x=y$.
	
	Assume that we have an automorphism $\varphi$ of $(\widehat X^0,<)$ such that 
	\begin{itemize}
		\item 
		$r\circ \varphi=t\circ r$, for a translation $t\colon \mathbb Z\to \mathbb Z$, and
		\item
		$x\leq y$ if and only if $y\leq \varphi(x)$, for all $x,y\in \widehat X^0,$ and 
		\item 
		the interval $[x,\varphi(x)]=\{z\in \widehat X^0\mid x\le z\ \mathrm{and}\ z\le \varphi(x)\}$ is a lattice for all $x\in \widehat X^0$ (in particular, the relation $<$ restricted to $[x,\varphi(x)]$ is transitive).
	\end{itemize}
	We then call $\widehat X$ a \emph{Garside flag complex.}
\end{definition} 

\begin{lem}
	\label{lem:garsideA_n}
	Suppose $X$ is an $\widetilde A_n$-like complex. Then
	\begin{enumerate}
		\item 	the complex $\widehat X$ with the automorphism $\varphi$ and rank function $r$ on $(\widehat X^0,<)$ is a Garside flag complex;
		\item  for any $x\in \widehat X^0$, the posets $\{w\in \widehat X^0\mid w\ge_t x\}$  and $\{w\in \widehat X^0\mid w\le_t x\}$ are lattices;
		\item $\widehat X$ is contractible, hence $X$ is contractible.
	\end{enumerate}
\end{lem}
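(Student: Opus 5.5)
The plan is to verify the axioms of Definition~\ref{def:garsideflag} for $\widehat X$ directly from Definition~\ref{def:A_n}, and then to invoke the local-to-global Garside theory of \cite{haettel2024lattices} and \cite{bessis2006garside} to obtain (2) and (3).

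\textbf{Step 1: verifying (1).} By construction the vertices $(x,i)$ and $(y,j)$ are neighbours exactly when $x\ne y$ or $i\neq j$, $x,y$ are equal or adjacent, and $|i-j|\le n$. Since $f(x)\equiv i$ and $f(y)\equiv j$ mod $n$, with $f$ injective on simplices, we get $i\neq j$ for distinct neighbours. Hence neighbours are exactly the pairs with $x<y$ or $y<x$.

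Flagness holds by definition. For simple connectivity, I would observe that $\widehat X\to X$, $(x,i)\mapsto x$, extends to a map whose fibres are contractible. Concretely, $\widehat X$ is homotopy equivalent to $X\times\mathbb R$; equivalently, loops can be pushed into the $1$-skeleton and homotoped using triangles $(x,i),(x,i+n),(y,j)$. This gives simple connectivity from that of $X$.

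The rank function is $r((x,i))=i$, and $\varphi$ shifts the rank by $n$. For the condition ``$x\le y$ iff $y\le\varphi(x)$'': if $(x,i)<(y,j)$ then $i<j\le i+n$, so $j<i+n$ unless $y=x$. Hence $(y,j)<(x,i+n)$, and the converse is symmetric. The edge relation between $(y,j)$ and $(x,i+n)$ comes from the same adjacency of $x,y$ in $X$.

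The main point is that the interval $[(x,i),(x,i+n)]$ is a lattice. Its interior consists of the vertices $(y,j)$ with $y\in\lk(x,X)$ and $j$ the unique representative of $f(y)$ in $(i,i+n)$. I would check that $<$ restricted to this set coincides with $<_x$ from Definition~\ref{def:A_n}(3). Indeed, $(y,j)<(y',j')$ means $y,y'$ are adjacent and $j<j'$. In the simplex $\{x,y,y'\}$, the cyclic order transported by $f$ to $\mathbb Z/n$, starting just after $f(x)$, reads $y$ before $y'$, which is exactly $y<_x y'$.

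Adding a bottom and a top element to $(\lk^0(x,X),<_x)$ gives a lattice, precisely because upper bounded pairs have joins and lower bounded pairs have meets. Pairs that are not upper bounded get join equal to $(x,i+n)$, and pairs that are not lower bounded get meet $(x,i)$. Transitivity on the interval also follows from $<_x$ being a partial order.

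\textbf{Step 2: deducing (2) and (3).} Once (1) is established, \cite[Thm 1.3 / Sec 4]{haettel2024lattices} identifies $(\widehat X^0,\le_t)$ with the poset of a Garside category whose objects are the $\varphi$-orbits. More precisely, it says that a Garside flag complex is the flag complex of the ``simple'' relation of a Garside category, and that the posets $\mathcal C_{x\to}$ and $\mathcal C_{\to x}$ are lattices. Translating back gives (2): the sets $\{w\ge_t x\}$ and $\{w\le_t x\}$ correspond to $\mathcal C_{x\to}$ and $\mathcal C_{\to x}$.

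For (3), I would use that $\widehat X$ is the universal cover realization of the Garside groupoid's Bestvina complex lifted with $\Delta$. Contractibility then follows from \cite{bessis2006garside}, or from a direct argument: the lattice property yields a normal-form (left greedy, Theorem~\ref{thm:normal form0}) combing of $\widehat X$ to a basepoint, as in \cite{charney2004bestvina}. Finally, $\widehat X\simeq X\times\mathbb R$, so $X$ is contractible.

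\textbf{Main obstacle.} I expect the main obstacle to be the simple connectivity of $\widehat X$ together with the precise identification needed to apply the cited local-to-global theorem. Handling the boundary cases $j=i+n$ in the interval computation also needs care.
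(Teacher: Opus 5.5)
Your proposal is correct and follows essentially the paper's route. You identify the interval $[(x,i),(x,i+n)]$ with $(\lk^0(x,X),<_x)$ plus a bottom and a top element via Definition~\ref{def:A_n}(4), treat the remaining axioms of Definition~\ref{def:garsideflag} as routine (you spell out simple connectivity via $\widehat X\cong X\times\mathbb R$ and the $\varphi$-condition, which the paper simply calls clear), and deduce (2) and (3) from \cite[Thm 1.3]{haettel2024lattices}. One small slip: in the resulting Garside category the objects are the vertices of $\widehat X$, not the $\varphi$-orbits (see Corollary~\ref{cor:bestvina}), and this is what your identification of $\{w\ge_t x\}$ with $\mathcal C_{x\to}$ actually uses.
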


\begin{proof}
	If $(x,f(x)+kn)<(y,i)<(x,f(x)+kn+n)$, then $y\in \lk(x,X)$, and 
	\begin{enumerate}
		\item $i=f(y)+kn$ if $f(x)<f(y)$ (such $y$ is of \emph{type I});
		\item $i=f(y)+kn+n$ if $f(x)>f(y)$ (such $y$ is of \emph{type II}). 
	\end{enumerate}
	Given two such $(y_1,i_1)$ and $(y_2,i_2)$ that are comparable in $[(x,f(x)+kn),(x,f(x)+kn+n)]$. If they have different types, then the type II one is bigger. If they have the same type, then the one has $y_i$ with bigger $f(y_i)$ value is bigger. 
	Definition~\ref{def:A_n} (4) implies that this gives an isomorphism between $([(x,f(x)+kn),(x,f(x)+kn+n)],<)$ and $(\lk^0(x,X),<_x)$. Now Definition~\ref{def:A_n} (3) implies that $[(x,f(x)+kn),(x,f(x)+kn+n)]$ is a lattice. Other requirements of Definition~\ref{def:garsideflag} are clear. The last two statements follow from \cite[Thm 1.3]{haettel2024lattices}.
\end{proof}


\begin{cor}
	\label{cor:bestvina}
	Suppose $X$ is an $\widetilde A_n$-like complex. Then  $\widehat X$ gives a homogeneous categorical Garside structure $\mathcal C$. Objects of this category corresponds to elements in $\widehat X^0$, $x\le_t y$ corresponds to a morphism from $x$ to $y$, and $\varphi:\widehat X^0\to\widehat X^0$ plays the role of $\phi$ in Definition~\ref{def:Garside}. Moreover, $X$ is the Bestvina complex of $\mathcal C$ in the sense of \cite[\S 8]{bessis2006garside}.
\end{cor}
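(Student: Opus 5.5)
The plan is to combine Lemma~\ref{lem:garsideA_n} with the dictionary between Garside flag complexes and Garside categories from \cite{haettel2024lattices}, and then to compare the Bestvina complex of the resulting category with $X$ directly.

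\textbf{Step 1: define the category.} By Lemma~\ref{lem:garsideA_n}(1), $\widehat X$ with $<$, $\varphi$ and $r$ is a Garside flag complex. Define $\mathcal C$ as follows:
\begin{itemize}
\item objects are the vertices of $\widehat X^0$;
\item morphisms $x\to y$ are the pairs with $x\le_t y$, one morphism per comparable pair;
\item composition is forced by transitivity of $\le_t$.
\end{itemize}
So $\mathcal C$ is the category of a poset, and it is cancellative for trivial reasons. It is homogeneous with length $l(x\to y)=r(y)-r(x)$, which is additive and vanishes only on identities.

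\textbf{Step 2: the Garside data.} Set $\phi=\varphi$ on objects, extended to morphisms in the obvious way; this is a functor because $\varphi$ is a poset automorphism. Let $\Delta_x$ be the morphism $x\to\varphi(x)$. It exists because $x\le\varphi(x)$, which follows from the second bullet of Definition~\ref{def:garsideflag} with $y=x$. Naturality is automatic, since morphisms in a poset category are unique.

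We then check the remaining axioms of Definition~\ref{def:Garside}.
\begin{itemize}
\item \emph{Atoms are simple.} Atoms are covering relations $x<y$ with $y\in[x,\varphi(x)]$. Such a $y$ satisfies $y\le \varphi(x)$, so $x\to y$ is a prefix of $\Delta_x$.
\item \emph{Lattice property.} $\mathcal C_{x\to}$ and $\mathcal C_{\to x}$ are exactly the up-set and down-set of $x$ under $\le_t$. These are lattices by Lemma~\ref{lem:garsideA_n}(2).
\end{itemize}
The one subtle point is that simple morphisms must be exactly the elements of $[x,\varphi(x)]$, and that $<$ and $\le_t$ agree on this interval. This is the transitivity of $<$ on intervals built into Definition~\ref{def:garsideflag}. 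Together these give a homogeneous categorical Garside structure.

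\textbf{Step 3: identify the Bestvina complex.} In the sense of \cite[\S 8]{bessis2006garside}, the Bestvina complex has as vertices the $\phi$-orbits of objects, in other words the quotient $\widehat X^0/\langle\varphi\rangle$. Simplices are the images of chains of simple morphisms $x<y_1<\dots<y_k<\varphi(x)$.

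Define the projection $(x,i)\mapsto x$. It identifies $\widehat X^0/\langle\varphi\rangle$ with $X^0$, because for each $x$ the admissible values of $i$ form a single coset $f(x)+n\mathbb Z$.

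Using the explicit description of maximal simplices of $\widehat X$ given above, we show that chains in $[(x,i),\varphi(x,i)]$ project onto simplices of $X$ containing $x$. Conversely, every simplex of $X$ lifts to such a chain, with the cyclic order matching the one induced by $f$. Hence the projection is a simplicial isomorphism, and it respects the cyclic orders.

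\textbf{Main obstacle.} The principal point of care is Step 3: matching the conventions of \cite{bessis2006garside}, for instance whether one quotients by $\Delta$ on objects, with the flag structure of $X$. Flagness of $X$ (Definition~\ref{def:A_n}(1)) is what ensures that no extra simplices appear, so that the quotient complex is exactly $X$.
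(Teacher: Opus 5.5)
Your Steps 1 and 2 are correct. The poset category of $(\widehat X^0,\le_t)$ satisfies every axiom of Definition~\ref{def:Garside}, taking $\phi=\varphi$ and $\Delta_x$ to be the unique morphism $x\to\varphi(x)$. The lattice axiom is exactly Lemma~\ref{lem:garsideA_n}(2), and simplicity of atoms follows from the second bullet of Definition~\ref{def:garsideflag}. This is a more hands-on verification than the paper gives: it just reads the corollary off Lemma~\ref{lem:garsideA_n} and the flag-complex/Garside-category correspondence of \cite{haettel2024lattices}. Note that the ``subtle point'' you raise plays no role in these axioms.

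The gap is in Step 3, where that point is actually needed. In your $\mathcal C$, the simple morphisms out of $x$ are all $y$ with $x\le_t y\le_t\varphi(x)$. So the Bestvina complex of $\mathcal C$ equals $X$ only if two things hold:
\begin{enumerate}
\item this $\le_t$-interval coincides with the edge interval $[x,\varphi(x)]$, i.e.\ every directed edge path from $x$ to $\varphi(x)$ in $\widehat X$ runs through neighbours of $x$;
\item $\le_t$ agrees with $\le$ on that interval.
\end{enumerate}
Otherwise the quotient would contain edges from $x$ to vertices not adjacent to $x$ in $X$. Flagness of $X$ cannot rule this out; it only says that cliques of $X$ are simplices.

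Your justification, transitivity of $<$ on $[x,\varphi(x)]$, is circular. From $x<p_1<p_2$ it gives $x<p_2$ only once you already know $p_2\in[x,\varphi(x)]$, which is the claim itself. The equality is a genuine local-to-global statement.

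The standard proof goes through the germ theorem behind \cite[Thm 1.3]{haettel2024lattices}:
\begin{itemize}
\item The category generated by the upward edges of $\widehat X$, modulo the triangle relations, is Garside, and the germ embeds in it as the set of simple morphisms.
\item Its enveloping groupoid is the fundamental groupoid of $\widehat X$. Simple connectivity therefore makes morphisms unique, so this category is exactly your poset category.
\item Hence any factorization $x\to y\to\varphi(x)$ composes to $\Delta_x$. So $y$ is simple, hence a germ element, hence $x\le y$.
\end{itemize}
You need to cite or prove this. With it, the rest of your Step 3 goes through: simplices of $X$ lift via $f$, and distinct ranks inside an open interval give distinct vertices.
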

\subsection{Normal forms on $\widehat X$}
\label{subsec:normal form}
Theorem~\ref{thm:normal form0} translates to a left greedy Deligne normal form on $\widehat X$.



\begin{thm} 
	\label{thm:normal form}
	For each $x,y\in \widehat X^0$, there is a unique edge-path $x_1\cdots x_l\cdots x_n$ from $x_1=x$ to $x_n=y$ such that
	\begin{itemize}
		\item $x_i<x_{i+1} \neq \varphi(x_i)$ for $1\leq i<l$, and
		\item $x_{i}=x_{i+1}\wedge \varphi(x_{i-1})$ for $1< i<l$ in $[x_i,\varphi(x_i)]$, and
		\item $x_i=\varphi^{\pm (i-l)} (x_l)$ for $l\leq i\leq n$, with all signs positive or all signs negative.
	\end{itemize}
If $x<_t y$, then all signs in the last item are positive.
\end{thm}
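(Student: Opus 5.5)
We obtain Theorem~\ref{thm:normal form} by transporting Theorem~\ref{thm:normal form0} along the dictionary of Corollary~\ref{cor:bestvina}. In the Garside category $\mathcal C$ built from $\widehat X$, objects are vertices of $\widehat X^0$. A morphism $x\to y$ is a relation $x\le_t y$, and it is unique when it exists, since the category is a poset-category. Composition is transitivity, $\phi=\varphi$, and $\Delta_x$ is the morphism $x\to\varphi(x)$. The enveloping groupoid $\mathcal G$ then has a morphism between any two objects in the same connected component. Since $\widehat X$ is connected (it is simply connected by Lemma~\ref{lem:garsideA_n}), $\mathcal G$ has a unique morphism $f:x\to y$ for every pair $x,y\in\widehat X^0$.

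The first step is to identify the pieces of the normal form. A morphism $x_i\to z$ is simple exactly when $x_i\le z\le\varphi(x_i)$; by the proof of Lemma~\ref{lem:garsideA_n}, this means $z$ lies in the interval $[x_i,\varphi(x_i)]$, i.e.\ $z=x_i$, $z=\varphi(x_i)$, or $z$ is adjacent to $x_i$ with $x_i<z$. The condition $s_i\prec\Delta_{x_i}$ becomes $x_{i+1}\neq\varphi(x_i)$. We should also rule out trivial factors: if $s_i$ were trivial, greediness would force all later factors to be trivial, so they are absorbed into $l$. This gives $x_i<x_{i+1}\neq\varphi(x_i)$. The greedy condition $s_i=s_is_{i+1}\wedge\Delta_{x_i}$ is a meet in $(\mathcal C_{x_i\to},\preccurlyeq)$. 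We read this meet as the meet of the endpoints $x_{i+2}$ and $\varphi(x_i)$ in the lattice $\{w\ge_t x_i\}$, which is a lattice by Lemma~\ref{lem:garsideA_n}(2). After shifting the index by one, this becomes the stated condition $x_i=x_{i+1}\wedge\varphi(x_{i-1})$. Finally, $\Delta^k$ corresponds to the path $x_l,\varphi^{\pm1}(x_l),\varphi^{\pm2}(x_l),\dots$, which is an edge path because $z$ and $\varphi(z)$ are adjacent. The sign is positive or negative according to the sign of $k$.

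Existence and uniqueness of the edge path now follow from existence and uniqueness of the left greedy Deligne normal form of $f$. The correspondence between factorizations $s_1\cdots s_l\Delta^k$ and vertex sequences is bijective, because morphisms in a poset-category are determined by their endpoints.

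For the last claim, suppose $x<_t y$. Then $f$ lies in $\mathcal C$. We compare lengths using the homogeneity function $l$, which here is the rank difference $r(y)-r(x)>0$. A positive morphism has a normal form with $k\ge 0$: it is standard in Garside theory that the normal form of an element of the monoid uses no negative powers of $\Delta$. The quickest route is to apply the greedy algorithm directly inside $\mathcal C$ and invoke uniqueness.

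The main obstacle is the bookkeeping in the meet translation. We must check that the meet in $\mathcal C_{x_i\to}$ agrees with the meet of the target vertices in $[x_i,\varphi(x_i)]$ as used in the statement. This holds because the relevant elements lie below $\varphi(x_i)$ and the interval is an order-convex sublattice. We must also check that the indices in the statement match those of Theorem~\ref{thm:normal form0}.
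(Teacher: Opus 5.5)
Your proposal is correct and follows the same route as the paper. The paper's only justification is that Theorem~\ref{thm:normal form0} translates into a normal form on $\widehat X$ through the dictionary of Corollary~\ref{cor:bestvina}, and you carry out that translation in detail: simple morphisms become edges into $[x_i,\varphi(x_i)]$, the condition $s_i\prec\Delta_{x_i}$ becomes $x_{i+1}\neq\varphi(x_i)$, the greedy condition becomes a meet of endpoints, and positivity follows from the greedy algorithm together with uniqueness.

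One step needs a better reason: uniqueness of the morphism in $\mathcal G(x,y)$ does not follow from connectivity. It holds because every element of $\mathcal G(x,y)$ can be written $ab^{-1}$ with $a,b$ in $\mathcal C$ sharing a target. Any two such targets have a common upper bound, since an edge path of length $k$ from $x$ to $y$ gives $x,y\le_t\varphi^k(x)$. Finally, morphisms of $\mathcal C$ are determined by their endpoints, so any two such representatives agree.
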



Given $x\le_t y\in \widehat X^0$, let $\alpha(xy)$ be the maximal element (with respect to $\le_t$) in $[x,\varphi(x)]$ which is $\le_t y$. Note that $\alpha(xy)$ is well-defined by Lemma~\ref{lem:garsideA_n}. We can produce an edge path from $x$ to $y$ as follows: we go from $x=x_1$ to  $x_2=\alpha(xy)$, then from $x_2$ to $x_3=\alpha(x_2y)$, and repeat this procedure until we reach $x_n=y$. This gives a path $x_1\cdots x_n$ with $1\le l\le n$ such that $x_{i+1}=\varphi(x_i)$ for $1\le i<l$, $x_{i+1}<\varphi(x_i)$ for $l\le i\le n-1$ and  $x_i=x_{i+1}\wedge\varphi(x_{i-1})$ for $1<i<n$. We will call such path a \emph{left greedy quasi-normal path} from $x$ to $y$. If $l=1$, then such path is already a left greedy normal form path. In general, we consider an alternative path from $x=x_1$ to $y=x_n$ which is a concatenation of $\varphi^{-(l-1)}(x_lx_{l+1}\cdots x_n)$ and $\varphi^{-(l-1)}(x_n)\varphi^{-(l-2)}(x_n)\cdots x_n$, which gives the left greedy normal form path from $x$ to $y$ as in Theorem~\ref{thm:normal form}.

The following is proved as \cite[Prop 2.4]{holt2010garside} for Garside group. However, the same proof works in our setting.
\begin{prop}
	\label{prop:shortest}
Take $x,y\in \widehat X^0$ with $x<_t y$. Then the edge path in Theorem~\ref{thm:normal form} is a shortest edge path in $\widehat X^1$ from $x$ to $y$.
\end{prop}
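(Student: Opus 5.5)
The plan is to transport the standard argument for Garside groups (as in Holt's proof) into the groupoid $\widehat X$, using Lemma~\ref{lem:garsideA_n} and Corollary~\ref{cor:bestvina}. Throughout, $x<_t y$, and the normal form path $x_1\cdots x_n$ of Theorem~\ref{thm:normal form} has $n-1$ edges. Because all signs are positive, it consists of $l-1$ steps that are proper simple morphisms (each $x_{i+1}\in(x_i,\varphi(x_i))$), followed by $n-l$ steps each equal to $\varphi$. In Garside language, the morphism $x\to y$ factors as $s_1\cdots s_{l-1}\Delta^{k}$ with $k=n-l$.

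The key step is a lower bound: any edge path $y_0=x,y_1,\dots,y_m=y$ in $\widehat X^1$ satisfies $m\ge n-1$. I will measure each step by an invariant that increases by at most $1$ per edge. Each edge $y_jy_{j+1}$ satisfies either $y_j<y_{j+1}$ (a simple morphism, lying in $[y_j,\varphi(y_j)]$) or $y_{j+1}<y_j$, and in the second case $y_{j+1}\le \varphi^{-1}$-shifted terms by the condition ``$x\le y$ iff $y\le\varphi(x)$''. So every step $y_j\to y_{j+1}$ can be written as $\varphi^{-\epsilon_j}(s_j)$, with $s_j$ simple and $\epsilon_j\in\{0,1\}$.

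The standard invariants are the infimum and supremum with respect to $\varphi$:
\[
\inf(u,v)=\max\{k: \varphi^k(u)\le_t v\},\qquad \sup(u,v)=\min\{k: v\le_t\varphi^k(u)\}.
\]
Using the lattice property (Lemma~\ref{lem:garsideA_n}(2)) and the fact that $\varphi$ is an automorphism, one shows:
\begin{itemize}
\item for the normal form path, $\inf(x,y)=k=n-l$ and $\sup(x,y)=n-1$;
\item along any edge, $\sup(x,y_{j+1})\le \sup(x,y_j)+1$, since $y_{j+1}\le_t\varphi(y_j)$;
\item along any edge, $\inf(x,y_{j+1})\ge \inf(x,y_j)-1$.
\end{itemize}
The sup estimate alone gives $m\ge \sup(x,y)=n-1$, since $\sup(x,x)=0$. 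This is the main claim: the number of factors of a left greedy normal form equals $\sup$. The proof goes by induction, using the greedy property $x_i=x_{i+1}\wedge\varphi(x_{i-1})$. One shows that $x_{i}=\alpha(x_{i-1}y)$ is the maximal element of $[x_{i-1},\varphi(x_{i-1})]$ below $y$. Hence $y\le_t\varphi^{j}(x)$ forces $y\le_t \varphi^{j-1}(x_2)$, so $\sup(x_2,y)=\sup(x,y)-1$.

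The main obstacle is the $\sup$ claim in the groupoid setting. It requires that $\varphi^{j}(x)\wedge \varphi^{j-1}(x_2)$-type arguments are valid, meaning meets in $\{w\ge_t x\}$ commute with $\varphi$. It also requires that $\alpha$ is compatible with $\varphi$, namely $\alpha(\varphi(u)\varphi(v))=\varphi(\alpha(uv))$. Both follow since $\varphi$ is a poset automorphism. A second subtlety is checking that each backward edge satisfies $y_{j+1}\le_t \varphi(y_j)$, so that $\sup$ grows by at most $1$. This uses the second bullet of Definition~\ref{def:garsideflag}: if $y_{j+1}\le y_j$ then $y_j\le\varphi(y_{j+1})$, and also $y_{j+1}\le y_j\le_t\varphi(y_j)$. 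With these in place, Holt's argument goes through verbatim.
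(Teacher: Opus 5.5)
\textbf{There is a genuine gap at the key step.} Your overall strategy is the standard Charney--Holt argument that the paper invokes, and your lower bound is correct. Every edge $y_jy_{j+1}$ of $\widehat X$ satisfies $y_{j+1}\le_t\varphi(y_j)$ by the second bullet of Definition~\ref{def:garsideflag}, and $\varphi$ preserves $\le_t$. So $\sup(x,\cdot)$ grows by at most one per edge, and every edge path from $x$ to $y$ has at least $\sup(x,y)$ edges.

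The gap is the other half, $n-1\le\sup(x,y)$: you need that $y\le_t\varphi^j(x)$ forces the normal form path to have at most $j$ edges. You justify this by saying that $x_2=\alpha(xy)$ is the largest element of $[x,\varphi(x)]$ below $y$, ``hence'' $y\le_t\varphi^{j}(x)$ implies $y\le_t\varphi^{j-1}(x_2)$. That does not follow. Maximality of $x_2$ only controls elements lying below both $\varphi(x)$ and $y$. It says nothing about whether the remaining part $x_2\to y$ fits under $\varphi^{j-1}(x_2)$. In Garside terms the implication reads: if $g\preccurlyeq\Delta^j$ and $s=g\wedge\Delta$, then $s^{-1}g\preccurlyeq\Delta^{j-1}$. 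Equivalently, every left divisor of $\Delta^j$ has at most $j$ factors in its left normal form. This is exactly the nontrivial content of the cited result, and it needs a genuine Garside lemma. The points you single out as the main obstacles, $\varphi$-equivariance of meets and of $\alpha$, are immediate and are not where the difficulty lies. Two minor points: in Theorem~\ref{thm:normal form} the $\varphi$-steps come last, so $x_2\neq\alpha(xy)$ when $n>l$, and your induction really concerns the quasi-normal path (which has the same length). The $\inf$ statements are not needed.

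\textbf{A fix using tools already in the paper.} One way to close the gap is Theorem~\ref{thm:replace}.
For $v\ge_t x$, let $L(v)$ be the number of edges of the left greedy quasi-normal path from $x$ to $v$; this equals the length of the normal form path. Let $x_1\cdots x_n$ be this path to $v$, and let $v<w$.
The strip produces the path $x_1x'_2\cdots x'_n w$ to $w$. By descending induction and maximality of $\alpha$, one has $x_{i+1}\le_t x'_{i+1}$.
Suppose a collapse $x'_i=x'_{i+1}$ occurred with $2\le i\le n-1$. Then $x'_{i+1}\in[x_{i-1},\varphi(x_{i-1})]$, so $x_{i+1}\in[x_{i-1},\varphi(x_{i-1})]$ and $x_{i+1}\le_t v$. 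Hence $x_{i+1}\le_t\alpha(x_{i-1}v)=x_i$, contradicting $x_i<x_{i+1}$.
Thus only $x'_n=w$ can collapse, and $L(v)\le L(w)$.
Now choose a chain $y=w_0<w_1<\cdots<w_p=\varphi^j(x)$. This gives $L(y)\le L(\varphi^j(x))=j$, so $n-1\le\sup(x,y)$. Your Lipschitz bound then finishes the proof.
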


We will also be using the right greedy normal form as follows. Let $x\le_t y\in \widehat X^0$. Let $\beta(xy)$ be the smallest element (with respect to $\le_t$) in $[\varphi^{-1}(y),y]$ which is $\ge_t x$. We go from $y=y_1$ to $y_2=\beta(xy)$, then from $y_2$ to $y_3=\beta(xy_2)$. We repeat this procedure until we reach $y_n=x$, which gives the right greedy normal form path from $x$ to $y$. If $x\le_t y$ is not true, then let $m$ be the smallest positive integer such that $x\le_t\varphi^m(y)$. Then the right greedy normal form is a concatenation of the right greedy normal form from $x$ to $\varphi^m(y)$, and the path $\varphi^m(y),\varphi^{m-1}(y),\ldots,y$. Right greedy normal form admits a similar characterization as Theorem~\ref{thm:normal form}.

\begin{thm} 
	\label{thm:right normal form}
	For each $x,y\in \widehat X^0$, there is a unique edge-path $x_1\cdots x_l\cdots x_n$ from $x_1=x$ to $x_n=y$ such that
	\begin{itemize}
		\item $x_i<x_{i+1} \neq \varphi(x_i)$ for $1\leq i<l$, and
		\item $x_{i}=x_{i-1}\vee \varphi^{-1}(x_{i+1})$ for $1< i<l$ in $[\varphi^{-1}(x_i),x_i]$, and
		\item $x_i=\varphi^{\pm (i-l)} (x_l)$ for $l\leq i\leq n$, with all signs positive or all signs negative.
	\end{itemize}
	If $x<_t y$, then all signs in the last item are positive,	
\end{thm}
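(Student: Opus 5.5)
The plan is to deduce Theorem~\ref{thm:right normal form} from Theorem~\ref{thm:normal form} by a duality argument: reverse the order on $\widehat X^0$ and replace $\varphi$ by $\varphi^{-1}$. Concretely, let $\widehat X^{op}$ be the same flag complex with relation $x<^{op}y$ iff $y<x$, rank function $-r$, and automorphism $\varphi^{op}=\varphi^{-1}$. The first step is to check that $\widehat X^{op}$ is again a Garside flag complex in the sense of Definition~\ref{def:garsideflag}. Neighbours are still exactly the comparable pairs, and the transitive closure is still a weakly graded partial order. The translation condition holds with the opposite translation. The condition ``$x\le y \iff y\le\varphi(x)$'' becomes ``$y\le x\iff \varphi^{-1}(x)\le y$''; this is the original condition applied to the pair $(\varphi^{-1}(x),y)$, since $\varphi$ is an automorphism. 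Finally, $[x,\varphi^{op}(x)]^{op}$ is the interval $[\varphi^{-1}(x),x]$ with the order reversed, which is again a lattice. Equivalently, one could reverse the cyclic orders on the simplices of $X$ and negate $f$, so that $X$ with reversed cyclic orders is still $\widetilde A_n$-like; the link posets then become the opposite posets, and these still have the required joins and meets.

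The second step is to apply Theorem~\ref{thm:normal form} in $\widehat X^{op}$ to the pair $(y,x)$. This gives a unique path $y=y_1,\dots,y_n=x$ satisfying the following.
\begin{enumerate}
\item $y_i<^{op}y_{i+1}\neq\varphi^{-1}(y_i)$, that is, $y_{i+1}<y_i$ and $y_{i+1}\neq\varphi^{-1}(y_i)$.
\item $y_i$ is the meet in the opposite order of $y_{i+1}$ and $\varphi^{-1}(y_{i-1})$. This is the join in $\le$, taken in $[\varphi^{-1}(y_i),y_i]$.
\item The path ends with a run of powers of $\varphi^{\mp1}$.
\end{enumerate}
Reversing the indices ($x_i=y_{n+1-i}$) turns this into a path from $x$ to $y$. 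Its greedy part satisfies $x_i<x_{i+1}$, $x_{i+1}\ne\varphi(x_i)$, and $x_i=x_{i-1}\vee\varphi^{-1}(x_{i+1})$. The $\varphi$-power run now sits at the beginning of the reversed path, and some care is needed here. In Theorem~\ref{thm:normal form} the $\Delta$-part sits after the simple factors. Reversing therefore puts the $\Delta$-run first, which does not literally match the displayed index convention ($l$ separating a greedy part from a $\varphi$-part). The way around this is to use that $\varphi$ commutes with the whole structure, so the $\varphi^{\pm}$ run can be slid to either end by applying $\varphi^{\mp k}$ to the greedy segment; this is exactly the maneuver used above for left greedy quasi-normal paths. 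Uniqueness transfers through these bijections. This matches the explicit right greedy construction via $\beta(xy)$ and the smallest $m$ with $x\le_t\varphi^m(y)$ described before the statement.

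The last step is the claim that all signs are positive when $x<_t y$. The run consists of $\varphi^{\pm}$ steps, and these change rank by $\pm n$ each. The greedy steps strictly increase rank. So if the run were all negative, the total rank change would force the greedy part to be long. The cleaner route is to argue directly from the construction: when $x\le_t y$ we take $m=0$, and the path is produced by iterating $\beta$. Each $\beta(xy_k)\ge_t x$ stays in $[\varphi^{-1}(y_k),y_k]$, which yields only forward steps and $\varphi$-steps. I expect the main obstacle to be this bookkeeping of where the $\varphi$-power block sits and of its sign, rather than the duality itself.
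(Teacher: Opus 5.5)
Your argument is correct and is essentially the intended one. The paper gives no separate proof; it only remarks that the right greedy normal form admits a characterization similar to Theorem~\ref{thm:normal form}, and your construction is a natural way to make that symmetry precise: pass to the opposite Garside flag complex $(\widehat X^0,>,\varphi^{-1})$, reverse the indices, and slide the $\varphi$-block to the end.

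The sign clause needs neither your rank heuristic nor the $\beta$-construction. (The $\beta$-construction would also require the dual of Theorem~\ref{thm:intial} to check the join condition.) The clause is simply the dual of the last clause of Theorem~\ref{thm:normal form}: $y<_t^{\mathrm{op}}x$ holds iff $x<_t y$, a positive $\varphi^{-1}$-run from $y$ becomes a forward $\varphi$-run when read from $x$ to $y$, and sliding preserves the sign.
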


 
The following was proved for certain Garside groups in \cite{deligne} and \cite[Lem 2.4]{charney1992artin}, however, the same proof works for Garside categories.
\begin{thm}
	\label{thm:intial}
Take $x\le_t y\le_t z\in\widehat X^0$. Let $z'$ be the endpoint of $\alpha(yz)$. Then $\alpha(xz)=\alpha(xz')$.
\end{thm}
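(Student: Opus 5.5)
The plan is to reduce the statement to an equivalence: for every $w\in[x,\varphi(x)]$,
\[
w\le_t z \iff w\le_t z'.
\]
Once this holds, the set of elements of $[x,\varphi(x)]$ lying below $z$ equals the set lying below $z'$. So their maximal elements agree, which gives $\alpha(xz)=\alpha(xz')$. Here $z'=\alpha(yz)$, and $\alpha(xz')$ makes sense because $x\le_t y\le_t z'$. Throughout I use the description of $\alpha$ given before Theorem~\ref{thm:normal form}: $\alpha(uv)$ is the $\le_t$-maximal element of $[u,\varphi(u)]$ that is $\le_t v$.

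The direction $w\le_t z'\Rightarrow w\le_t z$ is immediate, since $z'=\alpha(yz)\le_t z$ by definition.

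For the converse, fix $w\in[x,\varphi(x)]$ with $w\le_t z$. Both $w$ and $y$ lie in the poset $\{v\in\widehat X^0\mid v\ge_t x\}$, and $z$ is a common upper bound. By Lemma~\ref{lem:garsideA_n}(2) this poset is a lattice, so the join $u=w\vee y$ exists in it. Then $u\ge_t y$ and $u\ge_t w$, and since $u$ is the least upper bound and $z$ is an upper bound, $u\le_t z$.

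Next I show $u\le_t\varphi(y)$, using two facts.
\begin{itemize}
\item $\varphi$ is an automorphism of $(\widehat X^0,<)$, hence preserves $\le_t$. So $x\le_t y$ gives $\varphi(x)\le_t\varphi(y)$, and therefore $w\le_t\varphi(x)\le_t\varphi(y)$.
\item $y\le_t\varphi(y)$. This follows from Definition~\ref{def:garsideflag} applied with $x=y$, or directly from $\varphi((y,i))=(y,i+n)$ and the edge between them.
\end{itemize}
Thus $\varphi(y)$ is a common upper bound of $w$ and $y$ inside $\{v\ge_t x\}$, and so $u\le_t\varphi(y)$. Combined with $u\ge_t y$, this puts $u$ in $[y,\varphi(y)]$.

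Since $u\le_t z$ and $u\in[y,\varphi(y)]$, maximality of $\alpha(yz)$ gives $u\le_t\alpha(yz)=z'$. Hence $w\le_t u\le_t z'$, which completes the equivalence.

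The step I expect to need the most care is bookkeeping rather than substance. The interval $[y,\varphi(y)]$ in Definition~\ref{def:garsideflag} is phrased with $\le$ (edge relation or equality), while $\alpha$ and the lattices of Lemma~\ref{lem:garsideA_n}(2) use $\le_t$. I will check that an element $v$ with $y\le_t v\le_t\varphi(y)$ actually lies in $[y,\varphi(y)]$ in the sense of the definition. This should follow from the Garside flag axiom "$y\le v$ iff $v\le\varphi(y)$" together with transitivity of $<$ on such intervals. If that identification is awkward, I would instead read $\alpha$ entirely in terms of $\le_t$, as the paper does when defining it. With this point settled, the argument is the standard one of \cite{charney1992artin} and uses only the lattice property and monotonicity of $\varphi$.
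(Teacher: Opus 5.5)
Your proof is correct and is exactly the standard argument of \cite{deligne} and \cite[Lem 2.4]{charney1992artin}, which the paper invokes for Theorem~\ref{thm:intial} without writing it out: the join $u=w\vee y$, bounded above by $\varphi(y)$ because $w\le_t\varphi(x)\le_t\varphi(y)$, is the poset version of the usual step $a\vee s\preccurlyeq a\Delta$ in the Garside-monoid proof. On your bookkeeping point, the identification of $\{v\mid y\le_t v\le_t\varphi(y)\}$ with $[y,\varphi(y)]$ does not come out of the axiom $y\le v\Leftrightarrow v\le\varphi(y)$ plus transitivity on intervals (pushing along a chain just reproduces the same question in another interval); it is part of the Garside structure supplied by Lemma~\ref{lem:garsideA_n} and Corollary~\ref{cor:bestvina}, which the paper also uses implicitly, or you can avoid it by reading the interval via $\le_t$ as you suggest.
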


The following was proved for certain Garside groups in \cite[Prop 3.3]{charney1992artin} (see also \cite[Prop 2.6]{gebhardt2010cyclic}), however, the same proof works for Garside categories.
\begin{thm}
	\label{thm:replace}
Let $x_1\cdots x_n$ be the left quasi-normal path from $x_1$ to $x_n$ in $\widehat X$. Take $y\in \widehat X^0$ with $x_n<y$. Then we can produce the left quasi-normal path from $x_1$ to $y$ by modifying $x_1\cdots x_ny$ in the following way. First we replace $x_{n-1}x_ny$ by the left quasi-normal path $x_{n-1}x'_ny$ from $x_{n-1}$ to $y$ (it is possible that $x'_n=y$). Second we replace $x_{n-2}x_{n-1}x'_n$ by the left quasi-normal path $x_{n-2}x'_{n-1}x'_n$ from $x_{n-2}$ to $x'_n$. Repeating this procedure until the last step, when we replace $x_1x_2x'_3$ by the left quasi-normal path $x_1x'_2x'_3$. The resulting path is the left quasi-normal path from $x_1$ to $y$.
\end{thm}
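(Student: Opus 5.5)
We must prove Theorem~\ref{thm:replace}: starting from the left quasi-normal path $x_1\cdots x_n$ and $y$ with $x_n<y$, successive local replacements from the right produce the left quasi-normal path from $x_1$ to $y$. The plan is to induct on $n$, using Theorem~\ref{thm:intial} as the main tool. For $n=1$ there is nothing to do, and for $n=2$ the statement is the definition of the left quasi-normal path $x_1x_2'y$ from $x_1$ to $y$.

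For the inductive step, first observe that $x_2\cdots x_n$ is itself the left quasi-normal path from $x_2$ to $x_n$. Indeed, by definition $x_{i+1}=\alpha(x_ix_n)$ for every $i$, and this recursion depends only on the tail. By induction applied to $x_2\cdots x_n$ and $y$, the replacement procedure restricted to indices $\ge 2$ turns $x_2\cdots x_ny$ into the left quasi-normal path $x_2x_3'\cdots x_n'y$ from $x_2$ to $y$. The procedure then performs one more step: it replaces $x_1x_2x_3'$ by $x_1x_2'x_3'$ with $x_2'=\alpha(x_1x_3')$. It therefore suffices to show that $\alpha(x_1x_3')=\alpha(x_1y)$. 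Granting this, the concatenation $x_1x_2'x_3'\cdots y$ satisfies $x_2'=\alpha(x_1y)$, and its tail from $x_2'$ to $y$ is the left quasi-normal path from $x_2'$ to $y$. For this last point we need that the left quasi-normal path from $x_2'$ to $y$ also passes through $x_3'$ and continues as before. This is where the order of the induction matters, and to handle it we phrase the claim slightly more strongly: the resulting path has $x_{i+1}'=\alpha(x_i'y)$ for every $i$. This is proved index by index from right to left, with exactly the argument below applied at each step.

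The key identity is the following. In a single step we have three consecutive vertices $u,v,w$, where $v=\alpha(uv_{\mathrm{old}}')$ came from the earlier path and $w$ is the new successor, and we want $\alpha(uw)=\alpha(u y)$ once we know $w=\alpha(v y)$ (for the old $v$, after the later steps). We apply Theorem~\ref{thm:intial} with $x=u$, $y=v$ (noting $u\le_t v$) and $z=y$: the endpoint of $\alpha(vy)$ is $w$, so $\alpha(uy)=\alpha(uw)$. Hence the new middle vertex $\alpha(uw)$ equals $\alpha(uy)$. The inequalities $u\le_t v\le_t y$ hold because $v\le_t x_n<y$.

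The main obstacle is the bookkeeping. After $v$ has been replaced by $v'=\alpha(uy)$, the next vertex to the right must still equal $\alpha(v'y)$, not merely $\alpha(vy)$. I plan to obtain this by applying Theorem~\ref{thm:intial} again, with the triple $v'\le_t w\le_t y$, and to check that $\alpha(v'y)=w$. This holds because $w\le_t\varphi(v)$ and $v\le v'$, so $w$ lies in $[v',\varphi(v')]$ and is maximal below $y$ there by maximality in $[v,\varphi(v)]$. The inequality $w\le_t \varphi(v)\le_t\varphi(v')$ is where the Garside lattice property (Lemma~\ref{lem:garsideA_n}(2)) is used. I expect verifying this maximality transfer cleanly to be the only nontrivial point.
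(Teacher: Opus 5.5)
Your outer structure is the standard one, which is also what the paper relies on by citing Charney: induct on $n$, note that the tail $x_2x'_3\cdots x'_ny$ is the left quasi-normal path from $x_2$ to $y$ by induction, and use Theorem~\ref{thm:intial} to get $x'_2=\alpha(x_1x'_3)=\alpha(x_1y)$. The vertices $x'_i$ with $i\ge 3$ are untouched in the last step, so no strengthening of the induction is needed. Everything reduces to the single identity $\alpha(x'_2y)=x'_3$, and that is where your proposal has a genuine gap.

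\textbf{Why your justification of $\alpha(x'_2y)=x'_3$ does not work.} You argue that $x'_3$ is maximal below $y$ in $[x'_2,\varphi(x'_2)]$ ``by maximality in $[x_2,\varphi(x_2)]$''. This does not follow.
\begin{enumerate}
\item The interval $[x'_2,\varphi(x'_2)]$ reaches up to $\varphi(x'_2)$, which in general is strictly above $\varphi(x_2)$. A competitor $z\in[x'_2,\varphi(x'_2)]$ with $z\le_t y$ therefore need not lie in $[x_2,\varphi(x_2)]$, so maximality there says nothing about $z$.
\item The inequality $x'_3\le_t\varphi(x_2)\le_t\varphi(x'_2)$ you cite is only monotonicity of $\varphi$, not a lattice property. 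It gives only the easy direction $x'_3\le_t\alpha(x'_2y)$.
\item Applying Theorem~\ref{thm:intial} to $x'_2\le_t x'_3\le_t y$ only rewrites $\alpha(x'_2y)$ in terms of the endpoint of $\alpha(x'_3y)$, which does not help.
\item Most importantly, your argument never uses that the original path is left-weighted at $x_2$. Without that hypothesis the conclusion is false. In the Garside group $\mathbb Z^2$ with $\Delta=(1,1)$, take $u=(0,0)$, $v=(1,0)$, $y=(1,2)$. Then $w=\alpha(vy)=(1,1)=\alpha(uy)=v'$, and every fact you list holds. Yet $\alpha(v'y)=(1,2)\neq w$. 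This example violates $v=\alpha(ux_n)$, as it must.
\end{enumerate}

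\textbf{The missing step.} Let $z=\alpha(x'_2y)$; you need $z\le_t\varphi(x_2)$.
\begin{enumerate}
\item We have $z\le_t\varphi(x'_2)$, and $z\le_t y\le_t\varphi(x_n)$ because $x_n<y$.
\item Work in the lattice $\{w\ge_t x_1\}$ of Lemma~\ref{lem:garsideA_n}(2) and let $m=x'_2\wedge x_n$. Then $x_1\le_t m\le_t x'_2\le_t\varphi(x_1)$ and $m\le_t x_n$, so $m\le_t\alpha(x_1x_n)=x_2$.
\item Conversely $x_2\le_t x_n$, and $x_2\le_t\alpha(x_1y)=x'_2$ since $x_2\le_t y$. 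Hence $m=x_2$.
\item The meet of $\varphi(x'_2)$ and $\varphi(x_n)$ in $\{w\ge_t x_1\}$ lies above $\varphi(x_1)$. It therefore equals the $\varphi$-image of $m$, namely $\varphi(x_2)$, so $z\le_t\varphi(x_2)$.
\item Now $x_2\le_t x'_2\le_t z\le_t\varphi(x_2)$ and $z\le_t y$, so $z\le_t\alpha(x_2y)=x'_3$. Together with $x'_3\le_t z$ this gives equality.
\end{enumerate}
This is exactly where the hypothesis $x_2=\alpha(x_1x_n)$ and the lattice structure enter. With this step supplied, your induction is complete.
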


See Figure~\ref{fig:strip} for an illustration of Theorem~\ref{thm:replace}. The configuration in Figure~\ref{fig:strip} is a \emph{strip}, whose vertices are mapped to $\widehat X^0$. It is possible that $x_i=x'_i$ for some $i$.

\begin{figure}
	\centering
	\includegraphics[scale=1]{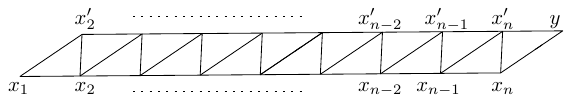}
	\caption{A strip.}
	\label{fig:strip}
\end{figure}

We record the following consequence of Proposition~\ref{prop:shortest} and Theorem~\ref{thm:replace}.

\begin{lem}
	\label{lem:smaller dist}
	Take $x,y,z\in \widehat X^0$ with $x<_t y<_t z$. Then the length of the left quasi-normal path from $x$ to $y$ is upped bounded by the length of the left quasi-normal path from $x$ to $z$. In particular,
	$d(x,y)\le d(x,z)$, where $d$ denotes the path metric on $\widehat X^1$ with edge length 1.
\end{lem}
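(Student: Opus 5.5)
The plan is to compare left quasi-normal paths directly, using Theorem~\ref{thm:replace} to pass from the path ending at $y$ to the path ending at $z$. Since $x<_t y<_t z$, and the relation $<$ generates $\le_t$, pick a chain $y=y_0<y_1<\cdots<y_k=z$ of $<$-steps. By induction on $k$ it suffices to treat $k=1$, that is, $y<z$ with $y,z$ adjacent. This is because the left quasi-normal path from $x$ to each $y_j$ is defined, as $x<_t y_j$.

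For $k=1$, let $x_1\cdots x_n$ be the left quasi-normal path from $x=x_1$ to $y=x_n$. By Theorem~\ref{thm:replace}, the left quasi-normal path from $x_1$ to $z$ is obtained from $x_1\cdots x_n z$ by successive local replacements. First $x_{n-1}x_nz$ becomes $x_{n-1}x'_nz$, where possibly $x'_n=z$ and the triple collapses to $x_{n-1}z$. Then $x_{n-2}x_{n-1}x'_n$ becomes $x_{n-2}x'_{n-1}x'_n$, and so on down to $x_1x'_2x'_3$. Each of the later replacements swaps one middle vertex for another and keeps the number of vertices fixed. The first replacement changes the $n+1$ vertices $x_1\cdots x_nz$ into either $n+1$ or $n$ vertices. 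The resulting path therefore has at least $n$ vertices, so its length is at least $n-1$, which is the length of the path from $x$ to $y$.

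The one point needing care is whether a later replacement could also collapse, making $x'_i$ coincide with its neighbour. I would exclude this by noting that each later step replaces a two-step left quasi-normal path from $x_{i-1}$ to $x'_{i+1}$ by another such path with the same endpoints. Collapse would require $x_{i-1}<x'_{i+1}$ with $x'_{i+1}\le\varphi(x_{i-1})$. In that case $\alpha(x_{i-1}x'_{i+1})=x'_{i+1}$, and by Theorem~\ref{thm:intial} this would force the earlier path through $x_i$ to be non-greedy at $x_{i-1}$, contradicting $x_i=\alpha(x_{i-1}x_n)$. This contradiction uses $x_i\ne x'_{i+1}$, and at the final step the end $y$ is dominated by $z$.

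With the length comparison in hand, the final claim follows from Proposition~\ref{prop:shortest}. The left quasi-normal path from $x$ to $z$ has the same length as the normal form path, because it has the same number of $\varphi$-steps and the same tail after reordering. Hence that length equals $d(x,z)$, and likewise for $y$, giving $d(x,y)\le d(x,z)$. I expect the main obstacle to be the non-collapsing check for the later replacements in the previous paragraph; the rest is bookkeeping.
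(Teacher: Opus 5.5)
Your proof is correct and follows the paper's route: the lemma is recorded there as a consequence of Theorem~\ref{thm:replace} (after reducing to a single $<$-step, only the first replacement can drop a vertex) combined with Proposition~\ref{prop:shortest} to identify quasi-normal length with $d$. The statement of Theorem~\ref{thm:replace} already writes the later replacements as three-vertex paths, so they cannot collapse; if you want to verify this yourself, make explicit that $x_{i+1}\le_t x'_{i+1}$ (induct down the strip using Theorem~\ref{thm:intial} and monotonicity of $\alpha$ in its second argument), which places $x_{i+1}$ in $[x_{i-1},\varphi(x_{i-1})]$ and contradicts $x_i=\alpha(x_{i-1}y)$.
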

\subsection{Bestvina convexity}
\begin{definition}
	Suppose $X$ is $\widetilde A_n$-like complex.
	Given an edge-path $P=x_1\cdots x_n$ in $X$, 
	an \emph{admissible lift} of $P$ is an edge-path $\widehat P=\hat x_1\cdots \hat x_n$ in $\widehat X$ 
	such that $\pi(\hat x_i)=x_i$, for $1\le i\le n$, and  $\hat x_i<\hat x_{i+1}$, for $1\le i\le n-1$.
	Note that for each edge-path $P$ in $X$, once a lift $\hat x_1$ of $x_1$ has been chosen, there is a unique admissible lift of $P$ starting at $\hat x_1$. Different admissible lifts of $P$ differ by the translation by $\varphi^k$ for some $k\in \mathbb Z$.
	
	Let $a,b\in X^0$. Following \cite{Bestvina1999,charney2004bestvina}, we say that an edge-path $P$ from $a$ to $b$ is a \emph{left geodesic}, (or \emph{left B-geodesic}) if some (hence all) admissible lift of $P$ to $\widehat X$ has left Deligne normal form with $n=l$. Similarly we define right geodesic from $a$ to $b$. By \cite[Lemma 2.1 and Proposition 2.2]{Bestvina1999}, these are indeed geodesics in the sense that their length equal to $d(a,b)$, where $d$ denotes the path metric on $X^1$ with edge length 1.
\end{definition}

The following lemma was proved for left $B$-geodesics in \cite[Lem 11.5 and 11.6]{huang2025353}. The statements for right $B$-geodesics can be proved similarly.

\begin{lem}
	\label{lem:B-geodesics}
	Suppose $X$ is an $\widetilde A_n$-like complex. 
	\begin{enumerate}
		\item For $a,b\in X^0$, there is a unique left $B$-geodesic from $a$ to $b$, and a unique right $B$-geodesic from $a$ to $b$.
		\item An edge path $z_1\cdots z_n$ in $X$ is left $B$-geodesic if and only if for each $2\le i\le n-2$, $z_{i-1}$ and $z_{i+1}$ do not have a common lower bound in $(\lk^0(z_i,X),<_{z_i})$. An edge path $z_1\cdots z_n$ in $X$ is right $B$-geodesic if and only if for each $2\le i\le n-2$, $z_{i-1}$ and $z_{i+1}$ do not have a common upper bound in $(\lk^0(z_i,X),<_{z_i})$. 
	\end{enumerate}
\end{lem}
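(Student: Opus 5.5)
The plan is to transfer the statements about $\widetilde A_n$-like complexes to the Garside flag complex $\widehat X$ and use the normal form theory of Section~\ref{subsec:normal form}, following the model of \cite[Lem 11.5 and 11.6]{huang2025353} for left $B$-geodesics. For the right $B$-geodesics I will either repeat that argument with $\beta$ in place of $\alpha$, or reduce to the left case by a symmetry.

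\textbf{The symmetry.} Reverse the partial order: replace $<$ on $\widehat X^0$ by its opposite and $\varphi$ by $\varphi^{-1}$. Correspondingly, reverse the cyclic orders on simplices of $X$ and replace $f$ by $-f$. One checks that Definition~\ref{def:A_n} is invariant under this operation. The relation $<_x$ on $\lk^0(x,X)$ becomes its opposite, so joins and meets are swapped. Under this operation a right greedy normal form path from $x$ to $y$, as in Theorem~\ref{thm:right normal form}, is read backwards as a left greedy normal form path from $y$ to $x$ in the opposite structure. Hence right $B$-geodesics from $a$ to $b$ in $X$ are exactly the reverses of left $B$-geodesics from $b$ to $a$ in the opposite $\widetilde A_n$-like structure.

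\textbf{Part (1).} Existence and uniqueness of the right $B$-geodesic follow from uniqueness in Theorem~\ref{thm:right normal form}. Fix an admissible lift $\hat a$ of $a$. Since different admissible lifts differ by powers of $\varphi$, among the lifts of $b$ there is exactly one $\hat b$ with $\hat a<_t \hat b$ whose right normal form has $n=l$; namely, take the minimal such lift so that no $\varphi$-steps occur. Projecting its unique normal form path gives the unique right $B$-geodesic. This is the step where care is needed: I must check that two distinct right $B$-geodesics from $a$ to $b$ would lift, after translating by a power of $\varphi$, to two normal form paths with the same endpoints, contradicting uniqueness.

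\textbf{Part (2).} Use the dictionary from the proof of Lemma~\ref{lem:garsideA_n} between $[\hat z_i,\varphi(\hat z_i)]$ and $(\lk^0(z_i,X),<_{z_i})$. The right greedy condition $\hat z_i=\hat z_{i-1}\vee\varphi^{-1}(\hat z_{i+1})$ in $[\varphi^{-1}(\hat z_i),\hat z_i]$ translates to the condition in $\lk(z_i,X)$ that $z_{i-1}$ and $z_{i+1}$ have no common upper bound. This uses that an admissible lift has $\hat z_{i-1}<\hat z_i<\hat z_{i+1}$, together with the fact that a join strictly below $\hat z_i$ in the interval corresponds to a common upper bound in the link. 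Combined with the local-to-global uniqueness of normal forms, this gives the stated criterion.

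The main obstacle is this last translation: identifying precisely where $\varphi^{-1}(\hat z_{i+1})$ and $\hat z_{i-1}$ sit in the link poset. I expect this to be the mirror image of the bookkeeping done for the left case in \cite{huang2025353}.
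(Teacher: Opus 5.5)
This is essentially the paper's argument: the paper cites \cite[Lem 11.5 and 11.6]{huang2025353} for the left case and says the right case is proved similarly. Your order-reversal duality ($<\mapsto>$, $\varphi\mapsto\varphi^{-1}$, cyclic orders reversed, $f\mapsto -f$ up to translation) swaps left and right greedy normal forms and meets with joins, and reverses paths, so it is a correct way to make that ``similarly'' precise; one small point is that the index range in (2) should be $2\le i\le n-1$, i.e.\ all interior vertices (the paper itself later applies (2) to three-vertex paths such as $\hat a_{im}\hat c_i\hat c_{i+1}$), and this range is invariant under $i\mapsto n+1-i$, as your reversal argument requires.
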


By Lemma~\ref{lem:B-geodesics}, the left $B$-geodesic from $a$ to $b$ coincidence with the left $B$-geodesic from $b$ to $a$. A similar statement holds for right $B$-geodesics.

\begin{definition}
	\label{def:Bconvex}
	Let $X$ be an $\widetilde A_n$-like complex as before.
	Let $Y\subset X$ be a full subcomplex. 
	We say that $Y$ is \emph{left locally B-convex} if for each vertex $y\in Y^0$ and any vertices $y_1,y_2$ of $\lk(y,Y)$, if the meet $y_1\wedge y_2$ in the poset $(\lk(y,X),\le_y)$ exists, then $y_1\wedge y_2\in \lk(y,Y)$. 	We say that $Y$ is \emph{right locally B-convex} if for each vertex $y\in Y^0$ and any vertices $y_1,y_2$ of $\lk(y,Y)$, if the join $y_1\vee y_2$ in the poset $(\lk(y,X),\le_y)$ exists, then $y_1\vee y_2\in \lk(y,Y)$.
\end{definition}

The following is proved for left locally $B$-convex subcomplexes in \cite[Prop 11.8]{huang2025353} (the notions of $\widetilde A_n$-like complexes and locally $B$-convex subcomplexes in \cite{huang2025353} are more restricted, however, the same proof works in our setting here). The statement for right locally $B$-convex subcomplexes can be proved in a similar way.
\begin{prop}
	\label{prop:convex}
	Let $X$ 
	be an $\widetilde A_n$-like complex, 
	and let $Y\subset X$ be a connected left locally B-convex subcomplex. 
	Then $Y$ is simply-connected, and $Y$ is itself an $\widetilde A_n$-like complex, with the function $f$ induced from $X$. Moreover, for any pair of vertices $y_1,y_2\in Y^0$, the left B-geodesic in $X$ from $y_1$ to $y_2$ is contained in $Y$.
	
	If $Y$ is right locally B-convex instead, then all previous conclusions still hold, except that for any pair of vertices $y_1,y_2\in Y^0$, the right B-geodesic in $X$ from $y_1$ to $y_2$ is contained in $Y$.
\end{prop}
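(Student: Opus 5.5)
The argument lifts everything to $\widehat X$, uses the Garside structure there, and works with the left case; the right case is symmetric, swapping meets and joins and left and right normal forms.

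\emph{Step 1: $Y$ is $\widetilde A_n$-like.} Flagness of $Y$ is inherited because $Y$ is full, and the cyclic orders and the function $f$ restrict to $Y$. For $y\in Y^0$, the relation $<_y$ on $\lk^0(y,Y)$ is the restriction of the partial order $<_y$ on $\lk^0(y,X)$, so it is a partial order. Let $y_1,y_2\in\lk^0(y,Y)$ be lower bounded in $\lk(y,Y)$. They are then lower bounded in $\lk(y,X)$, so the meet $y_1\wedge y_2$ exists there, and left local $B$-convexity places it in $\lk(y,Y)$; it is therefore the meet in $\lk(y,Y)$. Joins are the delicate point. If $y_1,y_2$ have an upper bound $u\in\lk(y,Y)$, I will use the link description from the proof of Lemma~\ref{lem:garsideA_n}: the interval $[\hat y,\varphi(\hat y)]$ is isomorphic to $\lk^0(y,X)$. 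Passing from $\hat y$ to $\hat u$ turns the join of $y_1,y_2$ below $u$ into a meet in $\lk(u,X)$ of elements lying in $\lk(u,Y)$. Concretely, in the interval $[\varphi^{-1}(\hat u),\hat u]$ the join of $\hat y_1,\hat y_2$ equals the meet over the elements lying above both of them, and I will realize this as a meet of two vertices of $\lk(u,Y)$, namely $y$ and the meet-type complement coming from the cyclic symmetry. Then convexity at $u$ gives the join in $Y$. I expect this to need some care but no new idea.

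\emph{Step 2: $B$-geodesics stay in $Y$.} Once Step 1 and simple connectivity are in hand, $Y$ is an $\widetilde A_n$-like complex in its own right and has its own left $B$-geodesic between any $y_1,y_2\in Y^0$. By Lemma~\ref{lem:B-geodesics}(2), a path $z_1\cdots z_n$ in $Y$ is left $B$-geodesic in $Y$ if and only if consecutive $z_{i-1},z_{i+1}$ have no common lower bound in $\lk(z_i,Y)$. Suppose they had a common lower bound in $\lk(z_i,X)$. Then the meet $z_{i-1}\wedge z_{i+1}$ exists in $X$ and lies in $\lk(z_i,Y)$ by convexity, which is a contradiction. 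So the path is also a left $B$-geodesic in $X$, and by uniqueness (Lemma~\ref{lem:B-geodesics}(1)) it is the left $B$-geodesic of $X$. This step requires only that $Y$ is connected, together with the existence of a left $B$-geodesic in $Y$. Existence in $Y$ needs the Garside normal form theory on $\widehat Y$, hence needs simple connectivity.

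\emph{Step 3: simple connectivity, which is the main obstacle.} I would avoid circularity by building normal forms directly, without quoting the Garside theory of $Y$. For an edge path in $Y$, take its admissible lift to $\widehat X$. The greedy procedure of Theorem~\ref{thm:replace} rewrites a path into left quasi-normal form through strips of triangles. Each elementary move replaces $x_{i-1}x_iy$ by $x_{i-1}x_i'y$ with $x_i'=\alpha(x_{i-1}y)$, and $\alpha$ is expressible through meets in the link of $x_{i-1}$ (namely $\varphi(x_{i-2})\wedge$ of the two neighbours). By left local $B$-convexity all such vertices stay in $Y$, and the whole strip is a disk filled by triangles in $Y$. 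Every edge loop in $Y$ is therefore homotopic inside $Y$ to a concatenation of normal form paths from a basepoint. By uniqueness of normal forms in $\widehat X$, a closed loop reduces to a $\varphi$-power path, and that path is trivial in $X$. This shows that $Y$ is simply connected and, at the same time, that left $B$-geodesics of $X$ between vertices of $Y$ are produced inside $Y$, which gives the moreover part directly. The hardest verification is that each $\alpha$-step, which is a meet in an interval of $\widehat X$, genuinely corresponds to a meet in $\lk(\cdot,X)$ of two vertices already in $Y$. I would check this with the type I/type II description of intervals from the proof of Lemma~\ref{lem:garsideA_n}.
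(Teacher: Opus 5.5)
The gap is in the join half of Step~1. You try to show that the join $y_1\vee y_2$, computed in $(\lk^0(y,X),<_y)$, lies in $\lk(y,Y)$, by rewriting it at $u$ as a meet of $y$ with a ``complement''. That statement is false, so no such rewriting can exist.

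Here is a counterexample. Let $X$ be the Bestvina complex of $\mathbb Z^4$ with $\Delta=(1,1,1,1)$, i.e.\ the $\widetilde A_3$ Coxeter complex. Identify $\lk([g],X)$ with the nonempty proper subsets $S\subsetneq\{1,2,3,4\}$, ordered by inclusion, via $g+e_S$. Let $Y$ be the full subcomplex on $[0],[e_1],[e_2],u=[e_1+e_2+e_3]$. Since $[e_1]$ and $[e_2]$ are not adjacent, $Y$ is two triangles glued along the edge $[0]u$. Its links are:
\begin{itemize}
\item $\{1\},\{2\},\{1,2,3\}$ at $[0]$;
\item $\{2,3\},\{2,3,4\}$ at $[e_1]$;
\item $\{1,3\},\{1,3,4\}$ at $[e_2]$;
\item $\{4\},\{1,4\},\{2,4\}$ at $u$.
\end{itemize}
Every meet that exists stays inside these links, so $Y$ is connected and left locally $B$-convex. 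However, in $\lk([0],Y)$ the pair $\{1\},\{2\}$ has the upper bound $\{1,2,3\}$, while its join in $X$ is $\{1,2\}$, which is not in $Y$. Seen from $u$, this join is $\{1,2,4\}$, and it is not the meet of any two elements of $\lk(u,Y)$; any meet involving $y=\{4\}$ is contained in $\{4\}$. Consistently, the right $B$-geodesic $[e_1][e_1+e_2][e_2]$ leaves $Y$.

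Definition~\ref{def:A_n}(3) only asks for a join inside the subposet $(\lk^0(y,Y),<_y)$, and that follows from meet-closure. Your own remark that a join is the meet of all upper bounds is the right idea; it just has to be applied in $\lk(y,Y)$ rather than in $\lk(y,X)$.
\begin{itemize}
\item Let $U$ be the set of common upper bounds of $y_1,y_2$ in $\lk^0(y,Y)$.
\item Two elements of $U$ have the common lower bound $y_1$ in $\lk(y,X)$, so their meet exists. By convexity it lies in $\lk(y,Y)$, and it belongs to $U$.
\item $\lk^0(y,X)\cong(\hat y,\varphi(\hat y))$ carries a rank function with finite image. Take $z\in U$ of minimal rank. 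Then $z\wedge u=z$ for every $u\in U$, so $z$ is the join in $Y$. In the example this join is $\{1,2,3\}$.
\end{itemize}
The right-convex case is dual.

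With this repair the proposal goes through, and Step~3 is correct. One correction there: the meet is taken at the middle vertex, not at $x_{j-1}$. When a move replaces $x_{j-1}x_jz$ by $x_{j-1}x'_jz$, the vertex $x'_j=\alpha(x_{j-1}z)$ is the meet of $\varphi(x_{j-1})$ and $z$ in $[x_j,\varphi(x_j)]$, because $x_j$ lies below both. So $x'_j$ is either one of $x_j,\varphi(x_{j-1}),z$, or the meet in $\lk(x_j,X)$ of two vertices of $\lk(x_j,Y)$. The strip triangles are chains in intervals, so they lie in the full subcomplex $Y$. This gives simple connectivity and the moreover part at once, which makes Step~2 unnecessary.

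The paper gives no argument here and defers to \cite[Prop~11.8]{huang2025353}. The standard local-to-global proof goes through the universal cover $\tilde Y$: the image in $X$ of a $B$-geodesic of $\tilde Y$ is a local, hence global, $B$-geodesic of $X$. Your normal-form route avoids putting a Garside structure on $\tilde Y$. Either route still needs the corrected link argument to conclude that $Y$ is $\widetilde A_n$-like.
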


\subsection{Bestvina non-positive curvature}
\label{subsec:bestvina asymmetric}

Given $a,b\in X^0$, the \emph{Bestvina asymmetric distance from $a$ to $b$}, denoted $\ad(a,b)$, is defined as follows. Let $x_1\cdots x_n$ be the left $B$-geodesic in $X$ from $a$ to $b$, with $\hat x_1\cdots \hat x_n$ be an admissible lift. Let $r:\widehat X^0\to \mathbb Z$ be the rank function in Lemma~\ref{lem:garsideA_n}. Then $\ad(a,b):=r(\hat x_n)-r(\hat x_1)$. Note that $\ad(a,b)$ does not depend on the choice of the lift. But in general $\ad(a,b)$ might not equal to $\ad(b,a)$. This asymmetric distance satisfies a non-positive curvature like feature (Proposition~\ref{prop:bnpc} below), which will play a key role in this article.

Given $a,x,y\in X^0$ such that $x$ and $y$ are adjacent. Let $P_x$ (resp. $P_y$) be the left $B$-geodesic from $a$ to $x$ (resp. $y$). Let $Q_y$ be the concatenation of $P_x$ and the edge $xy$, and let $Q_x$ be the concatenation of $P_y$ and $yx$. Let $\hat P_x,\hat P_y,\hat Q_x,\hat Q_y$ be admissible lifts of these four paths, starting at the same vertex $\hat a$, with endpoints $\hat x,\hat y,\hat x',\hat y'$. Then $\hat x'=\varphi^m(\hat x)$ and $\hat y'=\varphi^n(\hat y)$ for integers $m,n$.

\begin{lem}(\cite[Lem 3.4]{Bestvina1999})
	\label{lem:mn}
One of $m,n$ is $0$, the other is $1$. In particular, either $\ad(a,x)<\ad(a,y)$, or $\ad(a,y)<\ad(a,x)$.
\end{lem}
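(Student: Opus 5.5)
The plan is to characterize the endpoint of the admissible lift of a left $B$-geodesic as a minimal lift, and then read off $m,n$ from how lifts of adjacent vertices interleave in $\widehat X$. Throughout, $N$ denotes the period of $\varphi$, i.e.\ $r\circ\varphi=r+N$.

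\textbf{Step 1 (minimality of geodesic lifts).} Fix a lift $\hat a$ of $a$. For $b\in X^0$, I claim that the endpoint $\hat b$ of the admissible lift, starting at $\hat a$, of the left $B$-geodesic from $a$ to $b$ is the unique lift $\tilde b$ of $b$ such that
\[
\hat a\le_t \tilde b \quad\text{and}\quad \hat a\not\le_t\varphi^{-1}(\tilde b).
\]
\begin{itemize}
\item Existence of a lift with $\hat a\le_t\tilde b$: any admissible lift of any edge path from $a$ to $b$ is increasing, so its endpoint $\tilde b$ satisfies $\hat a\le_t\tilde b$.
\item Such lifts form an up-closed family $\{\varphi^k(\tilde b_0)\}_{k\ge0}$, because $\tilde b\le_t\varphi(\tilde b)$. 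Hence there is a least one, $\tilde b_0$.
\item The left greedy quasi-normal path from $\hat a$ to $\tilde b_0$ has $l=1$. Otherwise its second vertex is $\varphi(\hat a)$, which gives $\varphi(\hat a)\le_t\tilde b_0$, i.e.\ $\hat a\le_t\varphi^{-1}(\tilde b_0)$, contradicting minimality.
\item So that path is in left normal form with $n=l$, and it projects to a left $B$-geodesic. By uniqueness (Lemma~\ref{lem:B-geodesics}(1)), this is $P_b$, so $\hat b=\tilde b_0$.
\end{itemize}
Consequently every admissible lift from $\hat a$ of an edge path $a\to b$ ends at $\varphi^k(\hat b)$ with $k\ge 0$. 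Applied to $Q_y$ and $Q_x$, this gives $n\ge 0$ and $m\ge 0$.

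\textbf{Step 2 (interleaving of adjacent lifts).} Since $x,y$ are adjacent in $X$ and $f(x)\neq f(y)$, the lifts of $y$ adjacent to a given lift $(x,i)$ are exactly two, $(y,j)$ and $(y,j-N)$ with $i-N<j-N<i<j$. The larger of these lies immediately above $(x,i)$ and the smaller immediately below. Symmetrically, $\tilde x<\tilde y$ is an edge iff $\tilde y$ is the lift of $y$ immediately above $\tilde x$, iff $\tilde x$ is the lift of $x$ immediately below $\tilde y$.

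\textbf{Step 3 (the relation $m+n=1$).} By construction:
\begin{itemize}
\item $\hat y'=\varphi^n(\hat y)$ is the lift of $y$ immediately above $\hat x$, so $\varphi^{n-1}(\hat y)$ is the lift immediately below $\hat x$. Hence $\hat x$ is the lift of $x$ immediately above $\varphi^{n-1}(\hat y)$.
\item $\hat x'=\varphi^m(\hat x)$ is the lift of $x$ immediately above $\hat y$.
\end{itemize}
Applying $\varphi^{1-n}$ to the first point shows that $\varphi^{1-n}(\hat x)$ is the lift of $x$ immediately above $\hat y$. Therefore $\hat x'=\varphi^{1-n}(\hat x)$, i.e.\ $m=1-n$. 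Combined with $m,n\ge0$ from Step 1, we get $\{m,n\}=\{0,1\}$.

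\textbf{Step 4 (comparison of $\ad$).} We have $\ad(a,x)=r(\hat x)-r(\hat a)$ and $\ad(a,y)=r(\hat y)-r(\hat a)$.
\begin{itemize}
\item If $n=0$, then $\hat y=\hat y'$ lies strictly above $\hat x$, so $r(\hat y)>r(\hat x)$ and $\ad(a,y)>\ad(a,x)$.
\item If $n=1$, then $m=0$, so $\hat x=\hat x'$ lies strictly above $\hat y$, and $\ad(a,x)>\ad(a,y)$.
\end{itemize}

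The main obstacle is Step 1. It requires checking that the quasi-normal path with $l=1$ really is the normal form with $n=l$, and that the greedy vertices $\alpha(\cdot)$ are genuine edges of $\widehat X$ whose projections are edges of $X$. This should follow from Theorem~\ref{thm:normal form} and Lemma~\ref{lem:garsideA_n}.
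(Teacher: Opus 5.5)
Your proposal is correct and is essentially the paper's argument. The paper gets $m+n=1$ from the two-edge increasing path $\varphi^{-n}(\hat x)<\hat y<\hat x'$ joining two lifts of $x$, which is your interleaving Step 3. It gets $m,n\ge 0$ by noting that, say, $n<0$ would give $\varphi(\hat a)\le_t\hat y$, contradicting that the lift of the left $B$-geodesic is in normal form with $n=l$; your Step 1 is the same fact, packaged as a minimality characterization of the geodesic endpoint's lift.
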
 

\begin{proof}
Note that $\varphi^{-n}(\hat x)<\varphi^{-n}(\hat y')=\hat y$ and $\hat y<\hat x'$. The concatenation of these two edges go from $\varphi^{-n}(\hat x)$ to $\hat x'=\varphi^{n+m}(\varphi^{-n}(\hat x))$. Thus $n+m=1$. 

It remains to show $n\ge 0$ and $m\ge 0$. Assume $n<0$. Then $\hat a<_t\hat x<_t\hat y'<_t\hat y$. However, the admissible lift of the left $B$-geodesic from $\hat a$ to $\hat y$ satisfies $n=l$ in Theorem~\ref{thm:normal form}, contradiction. Similarly, $m\ge 0$.
\end{proof}

\begin{prop}(\cite[Prop 3.12]{Bestvina1999})
	\label{prop:bnpc}
Given $a,b,c\in X^0$ and let $x_1\cdots x_n$ be the left $B$-geodesic in $X$ from $b$ to $c$. Then $\ad(a,x_i)$ first strictly decreases and then strictly increases as $i$ goes from $1$ to $n$ (the decreasing part or the increasing part is allowed to be trivial).
\end{prop}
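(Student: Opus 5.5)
We prove Proposition~\ref{prop:bnpc} by looking at each consecutive pair of vertices along the $B$-geodesic, as Bestvina does. By Lemma~\ref{lem:mn}, for each $i$ the values $\ad(a,x_i)$ and $\ad(a,x_{i+1})$ are distinct, so the sequence $i\mapsto \ad(a,x_i)$ is strictly monotone on each step. It is therefore enough to rule out a strict local maximum. That is, we must exclude an index $2\le i\le n-1$ with $\ad(a,x_{i-1})<\ad(a,x_i)>\ad(a,x_{i+1})$. Once such a peak is impossible, the sequence can change from increasing to decreasing at most zero times. It must then first strictly decrease and afterwards strictly increase.

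\textbf{Setting up a peak.} Suppose there is a peak at $x_i$. Fix a lift $\hat a$ of $a$ and let $\hat x_j$ denote the endpoint of the admissible lift, starting at $\hat a$, of the left $B$-geodesic from $a$ to $x_j$. By the proof of Lemma~\ref{lem:mn}, on the edge $x_{i-1}x_i$ the inequality $\ad(a,x_{i-1})<\ad(a,x_i)$ means the following. The concatenation of the geodesic to $x_{i-1}$ with the edge $x_{i-1}x_i$ lifts without a $\varphi$-shift, so $\hat x_{i-1}<\hat x_i$. Likewise $\ad(a,x_{i+1})<\ad(a,x_i)$ gives $\hat x_{i+1}<\hat x_i$.

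Thus both $\hat x_{i-1}$ and $\hat x_{i+1}$ lie in the interval $[\varphi^{-1}(\hat x_i),\hat x_i]$, and both are $\ge_t\hat a$. This interval is a lattice by Lemma~\ref{lem:garsideA_n}, and the set $\{w\ge_t\hat a\}$ is also a lattice. So the meet $\hat m=\hat x_{i-1}\wedge\hat x_{i+1}$ exists, it satisfies $\hat m\ge_t \hat a$, and it lies in $[\varphi^{-1}(\hat x_i),\hat x_i]$. Here we use the fact that $\{w\ge_t \hat a\}$ is closed under meets of upper-bounded pairs. Since $\hat x_{i-1}$ and $\hat x_{i+1}$ are distinct, $\hat m$ differs from at least one of them.

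\textbf{Contradiction with the local criterion.} Project the lifted configuration to $\lk(x_i,X)$. Via the isomorphism between $\big([\varphi^{-1}(\hat x_i),\hat x_i],<\big)$ and $(\lk^0(x_i,X),<_{x_i})$ from the proof of Lemma~\ref{lem:garsideA_n}, the vertices $x_{i-1}$ and $x_{i+1}$ become elements of the link. If $\hat m\ne\varphi^{-1}(\hat x_i)$, then $\hat m$ projects to a common lower bound of $x_{i-1}$ and $x_{i+1}$ in $(\lk^0(x_i,X),<_{x_i})$. This contradicts the characterization of left $B$-geodesics in Lemma~\ref{lem:B-geodesics}(2).

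If instead $\hat m=\varphi^{-1}(\hat x_i)$, then $\varphi^{-1}(\hat x_i)\ge_t\hat a$. Hence $\hat a\le_t\varphi^{-1}(\hat x_i)$, and the left normal form from $\hat a$ to $\hat x_i$ ends with a $\varphi$-step. This contradicts the fact that the lift of a left $B$-geodesic has $n=l$ in Theorem~\ref{thm:normal form}. This settles the boundary case only when $\hat m$ is the bottom of the interval; the main obstacle is precisely handling this carefully.

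An alternative route is to use Theorem~\ref{thm:intial}. It says that the first factor $\alpha(\hat a\,\cdot)$ is determined through the greedy normal form, and this shows the geodesics to $x_{i-1}$ and $x_{i+1}$ both pass through the geodesic to $\hat m$. That yields a strictly shorter $B$-path from $x_{i-1}$ to $x_{i+1}$ avoiding $x_i$, which contradicts the geodesicity of $x_1\cdots x_n$ (Proposition~\ref{prop:shortest}).

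The main step I expect to be delicate is justifying that the meet $\hat m$ computed in the global lattice $\{w\ge_t\hat a\}$ actually lies in the interval $[\varphi^{-1}(\hat x_i),\hat x_i]$. Only then does it correspond to a vertex of the link. I would argue this from the third condition in Definition~\ref{def:garsideflag} together with rank considerations via $r$.
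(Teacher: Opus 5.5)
Your main argument is correct and takes a different route from the paper's. Both proofs start from the same data: $\hat x_{i-1}<\hat x_i$ and $\hat x_i<\varphi(\hat x_{i+1})$, which is equivalent to your $\hat x_{i\pm1}\in[\varphi^{-1}(\hat x_i),\hat x_i]$. From there the paper writes the $B$-geodesic property as $\alpha(\hat x_{i-1}\varphi(\hat x_{i+1}))=\hat x_i$. It then applies the head lemma, Theorem~\ref{thm:intial}, to $\hat a\le_t\hat x_{i-1}\le_t\varphi(\hat x_{i+1})$ and gets $\alpha(\hat a\hat x_i)=\alpha(\hat a\varphi(\hat x_{i+1}))=\varphi(\hat a)$.

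You instead read the $B$-geodesic condition at $x_i$ through Lemma~\ref{lem:B-geodesics}(2): the only common lower bound of $\hat x_{i-1},\hat x_{i+1}$ in $[\varphi^{-1}(\hat x_i),\hat x_i]$ is the bottom. You then push the common lower bound $\hat a$ below that bottom. Both proofs end at the same contradiction, namely $\varphi(\hat a)\le_t\hat x_i$ against the lift of the $B$-geodesic to $x_i$ having $n=l$. Yours replaces the imported Theorem~\ref{thm:intial} by lattice properties and the local criterion. Note that your case $\hat m\neq\varphi^{-1}(\hat x_i)$ needs nothing about $\hat a$; the local criterion excludes it directly.

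The step you flagged, however, is the entire content, and Definition~\ref{def:garsideflag}(3) plus rank considerations will not settle it. A meet taken in $\{w\ge_t\hat a\}$ and a meet taken in the interval are a priori different objects. Close it with Lemma~\ref{lem:garsideA_n}(2) applied to the lattice $\{w\le_t\hat x_i\}$, which contains $\hat a$, $\varphi^{-1}(\hat x_i)$ and $\hat x_{i\pm1}$.

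\begin{itemize}
\item Let $j$ be the join of $\hat a$ and $\varphi^{-1}(\hat x_i)$ in this lattice.
\item Both $\hat x_{i-1}$ and $\hat x_{i+1}$ are upper bounds of $\hat a$ and $\varphi^{-1}(\hat x_i)$ inside it, so $j\le_t\hat x_{i\pm1}$. Hence $\varphi^{-1}(\hat x_i)\le_t j\le_t\hat x_{i-1}<\hat x_i$, so $j$ lies in the interval, as in the definition of $\alpha$.
\item If $j\neq\varphi^{-1}(\hat x_i)$, then $j$ projects to a common lower bound of $x_{i-1},x_{i+1}$ in $\lk(x_i,X)$, contradicting Lemma~\ref{lem:B-geodesics}(2).
\item Therefore $\hat a\le_t j=\varphi^{-1}(\hat x_i)$, which is exactly what you need.
\end{itemize}

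Finally, drop the ``alternative route''. Nothing in Theorem~\ref{thm:intial} says that the $B$-geodesics to $x_{i-1}$ and $x_{i+1}$ pass through one to $\hat m$, or produces a shorter path avoiding $x_i$. That is not how the theorem enters, as the comparison above shows.
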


\begin{proof}
Suppose on the contrary $\ad(a,x_{i-1})<\ad(a,x_i)>\ad(a,x_{i+1})$ for some $i$. Let $\hat P_{i-1},\hat P_i,\hat P_{i+1}$ be admissible lifts of left $B$-geodesics from $a$ to $x_{i-1},x_i,x_{i+1}$, starting from the same point $\hat a$ and ending at $\hat x_{i-1},\hat x_i,\hat x_{i+1}$. By Lemma~\ref{lem:mn}, $\hat x_{i-1}<\hat x_i$, $\hat x_i<\hat x'_{i+1}$ and $\hat x'_{i+1}=\varphi(\hat x_{i+1})$, where $\hat x_i\hat x'_{i+1}$ is the admissible lift of $x_ix_{i+1}$. As $x_1\cdots x_n$ is a left $B$-geodesic, $\alpha(\hat x_{i-1}\hat x'_{i+1})=\hat x_{i-1}\hat x_{i}$ with $\alpha$ defined in Section~\ref{subsec:normal form}, so Theorem~\ref{thm:intial} implies that $\alpha(\hat a\hat x'_{i+1})=\alpha(\hat a\hat x_i)$. However, as $\hat a<_t\hat x_{i+1}<_t\hat x'_{i+1}$, we know $\alpha(\hat a\hat x'_{i+1})=\hat a\varphi(\hat a)$. On the other hand, $\hat P_i$ satisfies $n=l$ in Theorem~\ref{thm:normal form}. This is a contradiction.
\end{proof}

%

%
%

\section{The minimal cut complex}
\label{sec:minimal cut complex}
Let $\Lambda$ be a Coxeter diagram. 
Let $P\subset \Lambda$ be a nontrivial path from vertex $a\in \Lambda$ to vertex $b\in \Lambda$ such that $P$ is embedded except possibly $a=b$, and all interior vertices of $P$ has valence $2$ in $\Lambda$. We also assume that $a,b$ are not valence one vertices of $\Lambda$. We remove all the interior points of $P$ from $\Lambda$ to obtain a subgraph $\Lambda_P$ of $\Lambda$.

Let $\mathcal C_P$ be the collection of sets of vertices of $\Lambda$ such that elements in $\mathcal C_P$ are either single vertices in the interior of $P$, called \emph{type I} elements, or belong to $\mc_{\Lambda_P}(\{a\},\{b\})$, called \emph{type II} elements. Two elements $S,S'\in\mathcal C_P$ are \emph{comparable}, if either at least one of them is of type I, or they are both of type II and they are comparable in $\mc_{\Lambda_P}(\{a\},\{b\})$.

\begin{definition}
We define the \emph{minimal cut complex} $\Delta^P_\Lambda$ associated to $P\subset\Lambda$ to be a simplicial complex as follows. Vertices of $\Delta^P_\Lambda$ are in 1-1 correspondence with vertices of $\Delta'_{\Lambda}$ that are of type $\hat S$ with $S\in \mathcal C_P$. Two vertices of $\Delta^P_{\Lambda}$, one of type $\hat S_1$ and another of type $\hat S_2$, are adjacent if they are contained in a common simplex of $\Delta_\Lambda$ and $S_1$ and $S_2$ are comparable. Then $\Delta^P_\Lambda$ is the flag complex on its 1-skeleton. A vertex of $\Delta^P_\Lambda$ is of \emph{type I or II} if it is of type $\hat S$ with $S$ being of type I or II.

In other words, vertices of $\Delta^P_\Lambda$ are in 1-1 correspondence with left cosets of form $gA_{\hat S}$, with $S\in \mathcal C_P$. A collection of vertices span a simplex if and only if the common intersection of the associated cosets is non-empty, and their types are pairwise comparable. By Proposition~\ref{prop:intersection}, this description coincides with the description in the previous paragraph.
\end{definition}

The goal of this section is the following.
\begin{prop}
	\label{prop:mincutAn}
Let $\Lambda$ be a Coxeter diagram. 	
Suppose that for all proper induced subdiagrams $\Lambda'$ of $\Lambda$, $\Delta_{\Lambda'}$ satisfies the labeled 4-cycle condition. Let $P$ be an embedded path in $\Lambda$ as above such that $\Lambda_P$ is connected. Then the complex $\Delta^P_\Lambda$ is an $\widetilde A_n$-like complex in the sense of Definition~\ref{def:A_n}.
\end{prop}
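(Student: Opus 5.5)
We verify the four conditions of Definition~\ref{def:A_n} in turn, ending with the link lattice condition, which carries the real content.

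\textbf{Step 1: the cyclic order and the function $f$ (items (2) and (4)).} Orient $P$ from $a$ to $b$. On $\mathcal C_P$ we put a partial cyclic order using the gluing construction of Section~\ref{subsec:pco}. Take $X_2=\mc_{\Lambda_P}(\{a\},\{b\})$ with the lattice order of Lemma~\ref{lem:latticecut}; its minimal element is $\{a\}$ and its maximal element is $\{b\}$. Take $X_1$ to be the interior vertices of $P$ together with $\{a\},\{b\}$, linearly ordered along $P$ from $b$ to $a$. The gluing lemma then gives a partial cyclic order. Any maximal pairwise comparable family in $\mathcal C_P$ consists of all of $X_1$ together with a chain in $X_2$, so it is cyclically ordered.

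Since the types of the vertices of a simplex of $\Delta^P_\Lambda$ are pairwise distinct and pairwise comparable, each simplex inherits a cyclic order, and these orders are compatible under inclusion. For $f$, choose a rank function $r$ on the finite poset $X_2$ with $r(\{a\})=0$. Then extend it by sending the interior vertices of $P$, read from $b$ to $a$, to consecutive integers above $r(\{b\})$. With this choice $f$ is injective and cyclic-order preserving on each simplex.

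\textbf{Step 2: flagness and simple connectivity (item (1)).} Flagness holds by definition. For simple connectivity, I compare $\Delta^P_\Lambda$ with the relative Artin complex $\Delta_{\Lambda,\Lambda'}$, where $\Lambda'$ consists of $a$, $b$ and the interior vertices of $P$. Each type II vertex of type $\hat C$ is a barycenter of a simplex of $\Delta_\Lambda$. I would show that every edge loop in $\Delta^P_\Lambda$ can be homotoped into the subcomplex spanned by vertices of type $\hat a$, $\hat b$ and type I. This uses that any type II vertex is adjacent to the type $\hat a$ and type $\hat b$ vertices in its simplex, which in turn follows from Lemma~\ref{lem:transitive} and Lemma~\ref{lem:comparable}.

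That subcomplex is a copy of $\Delta_{\Lambda,\Lambda'}$. Since $|\Lambda'|\ge 3$ (when $P$ has an interior vertex), it is simply connected by Lemma~\ref{lem:sc}. If there are no interior vertices, I would handle this case separately via Lemma~\ref{lem:4-cycle}.

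\textbf{Step 3: links (item (3)).} This is the main step. Fix a vertex $x$.

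\emph{If $x$ is of type I}, with type $\{s\}$, then by Lemma~\ref{lem:link} the link is the complex for $\Lambda\setminus\{s\}$. Since $\Lambda_P$ is connected, this splits into a chain part from the two arcs of $P\setminus\{s\}$ and a part $\mathcal P$ of barycenter vertices with types in $\mc_{\Lambda_P}(\{a\},\{b\})$, inside $\Delta'_{\Lambda\setminus s}$. The order $<_x$ is the concatenation given by the cyclic order. Transitivity follows from Lemma~\ref{lem:transitive}: a vertex of $P$ is separated from $\Lambda_P$ by $a$ or $b$.

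The hypothesis gives the labeled 4-cycle condition on all induced subdiagrams of $\Lambda\setminus\{s\}$. Proposition~\ref{prop:strong} then upgrades this to the strong condition on each component, and Corollary~\ref{cor:strong} yields meets and joins in $\mathcal P$. The chain pieces are handled by Lemma~\ref{lem:transitive} and Proposition~\ref{prop:intersection}.

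\emph{If $x$ is of type II}, with type $\hat C$, then the link consists of:
\begin{itemize}
\item type I vertices, all of which are $<_x$-comparable to everything;
\item type II vertices above $C$;
\item type II vertices below $C$.
\end{itemize}
Transitivity uses Definition~\ref{def:admissible}(3) together with Lemma~\ref{lem:transitive}.

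For the lattice condition, I would apply Lemma~\ref{lem:admissible} with $\mathcal Q=\{C'\in\mc_{\Lambda_P}(\{a\},\{b\}):C'>C\}$ and with the analogous set below $C$. The strong labeled 4-cycle condition is needed for $\Lambda\setminus C$, which is a proper subdiagram, so the hypothesis plus Proposition~\ref{prop:strong} applies. Admissibility of $\mathcal Q$ comes from Lemma~\ref{lem:mincut admissible}, after identifying interval cuts via Lemma~\ref{lem:squeez} and Lemma~\ref{lem:comparable1}.

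\textbf{Main obstacle.} I expect the hardest part to be the bookkeeping for a type II vertex: joins and meets must stay inside the link of $x$, i.e.\ remain $\sim x$. The containment $C''\subset C\cup C'$ in the lattice lemmas, combined with Lemma~\ref{lem:4-cycle}, is what I would use to guarantee this.
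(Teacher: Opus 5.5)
Step~1 agrees with the paper. Step~3 has a genuine gap, and Step~2 has a hole in one case.

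\textbf{The lattice condition in links.} For a type II vertex $x$ of type $\hat C$, the order $<_x$ on $\lk^0(x,\Delta^P_\Lambda)$ is a concatenation: (types $>C$) $<$ (type I) $<$ (types $<C$). Applying Lemma~\ref{lem:admissible} separately to $\{C'>C\}$ and $\{C'<C\}$ says nothing about joins and meets of pairs lying in different pieces, and these pairs are not trivial.

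For example, let $y_1$ have type $C_1>C$ and $y_2$ have type $C_2<C$, with a common upper bound $w$ of type $C_3$, where $C_2<C_3<C$. In $\Lambda\setminus C$ the set $C_3$ does not separate $C_1\setminus C$ from $C_2\setminus(C\cup C_3)$, since both are joined to $P$ in the complement of $C\cup C_3$. So Lemma~\ref{lem:transitive} does not force $y_1\sim y_2$. The join must then come from the minimal-rank argument of Lemma~\ref{lem:admissible}, which invokes Definition~\ref{def:admissible}(2) for minimal cuts between types in different pieces; such cuts include $\{a\}$, $\{b\}$ and interior vertices of $P$. So you need one admissible family containing all three pieces.

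Moreover, the strong condition is available only for connected proper subdiagrams. The family must therefore live in the component $\Lambda_T$ of $\Lambda\setminus C$ containing $P\setminus C$, with each type $R$ replaced by $R\cap\Lambda_T$; the other components split off as a direct factor of $A_{\Lambda\setminus C}$. Your $\mathcal Q$ consists of subsets of $\Lambda_P$ that may meet $C$, so it is not a family of subsets of $\Lambda\setminus C$ at all.

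The missing ingredient is exactly Lemma~\ref{lem:admissible example}. It shows that $R\mapsto R\cap\Lambda_T$ is a bijection $\mathcal C_{P,T}\to\mathcal C'_{P,T}$, with inverse given by Lemma~\ref{lem:comparable1}(2). It also shows that the glued family satisfies Definition~\ref{def:admissible} in $\Lambda_T$, including (2) and (3) for pairs and triples straddling the pieces. Once the link is described intrinsically as a complex over $A_{\Lambda_T}$, Proposition~\ref{prop:strong} and Lemma~\ref{lem:admissible} finish the argument. The obstacle you flagged (joins staying $\sim x$) then disappears.

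The same problem occurs for a type I vertex of type $\{s\}$. Lemma~\ref{lem:transitive} and Proposition~\ref{prop:intersection} cannot produce the join of a chain-piece vertex and a non-adjacent type II vertex. The fix is that $\mathcal C_{P,\{s\}}$ is, as a poset, exactly $\mc_{\Lambda\setminus\{s\}}(\{s_1\},\{s_2\})$, where $s_1,s_2$ are the neighbours of $s$ on $P$. Lemma~\ref{lem:mincut admissible} then handles the whole link at once.

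\textbf{Simple connectivity.} Your Step~2 takes a different route from the paper, which compares $b\Delta^P_\Lambda$ with $\Delta_1$ and covers the Cayley complex. For $|P|\ge 3$ your route works. You can push loops off a type II vertex $v$ of type $\hat C$ because the vertices of type $\hat a$, $\hat b$ or type I in $\lk(v,\Delta^P_\Lambda)$ form a copy of $\Delta_{\Lambda\setminus C,P}$, which is connected by Lemma~\ref{lem:c}. That is the fact you need, not adjacency to ``the $\hat a$ vertex in its simplex''. Lemma~\ref{lem:sc} then finishes.

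When $P$ is a single edge $ab$, which does occur for the choice of $P$ in Section~\ref{sec:propagation}, this fails. The target $\Delta_{\Lambda,\{a,b\}}$ is a graph that is in general not a tree; it contains the $2m_{ab}$-cycle coming from the $ab$-relation when $m_{ab}<\infty$. Such loops are killed only by coning them off at type II vertices $gA_{\hat T}$ with $a,b\notin T$, and Lemma~\ref{lem:4-cycle} gives nothing of this kind. This is why the paper's proof of Lemma~\ref{lem:Delta1sc} treats the length-one case separately, choosing $T$ inside the neighbours of $a$ in $\Lambda_P$.
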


\subsection{Simply-connectedness of $\Delta^P_\Lambda$}
\label{subsec:sc}


Let $\Sigma_P$ be the collection of sets of vertices of $\Lambda$ such that elements in $\Sigma_P$ are either a single vertex of $P$, or a subset of $\Lambda_P\setminus\{a,b\}$ that contains at least one type II element of $\mathcal C_P$. Then $\mathcal C_P\subset \Sigma_P$.


\begin{definition}
	\label{def:Delta1}
We define a simplicial complex $\Delta_1$ as follows. Vertices of $\Delta_1$ are in 1-1 correspondence of left cosets of form $gA_{\hat T}$ with $T\in \Sigma_P$. Such a vertex is \emph{special} if $T$ is a vertex of $P$, otherwise the vertex is \emph{non-special}. A special vertex is adjacent to another vertex is the two associated left cosets have non-empty intersection. Two non-special vertices are adjacent the left coset associated with one vertex is contained in the left coset associated with another vertex. Then $\Delta_1$ is the flag complex on its 1-skeleton.
\end{definition}

\begin{lem}
	\label{lem:Delta1sc}
The complex $\Delta_1$ is simply-connected.
\end{lem}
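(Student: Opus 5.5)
Our plan is to compare $\Delta_1$ with the relative Artin complex $\Delta_{\Lambda,P}$ spanned by vertices of type $\hat s$ with $s$ a vertex of $P$. That complex is simply-connected by Lemma~\ref{lem:sc} when $P$ has at least three vertices; when $P$ has fewer vertices we treat the degenerate cases separately, but use the same argument. The special vertices of $\Delta_1$ are exactly the vertices of $\Delta_{\Lambda,P}$, and two special vertices are adjacent in $\Delta_1$ if and only if they are adjacent in $\Delta_{\Lambda,P}$. By flagness, the full subcomplex of $\Delta_1$ spanned by the special vertices is therefore $\Delta_{\Lambda,P}$.

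Since the vertex sets agree, it suffices to show two things. First, every vertex of $\Delta_1$ is joined to a special vertex. Second, every edge loop in $\Delta_1$ is homotopic to a loop in the special subcomplex. For the first point, let $x$ be a non-special vertex of type $\hat T$ with $T\subset\Lambda_P\setminus\{a,b\}$. Its coset $gA_{\hat T}$ meets a coset $gA_{\hat a}$, so $x$ is adjacent to a special vertex of type $\hat a$.

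For the second point, we replace each non-special vertex on a loop by a detour through special vertices. Let $x$ be a non-special vertex with neighbours $y,z$ on the loop. The link of $x$ in $\Delta_1$ contains all special vertices whose cosets meet the coset $L_x$ of $x$. Those of type $\hat a$ and $\hat b$ form a complex isomorphic to the relative Artin complex $\Delta_{\hat T, \{a,b\}}$ of $A_{\hat T}$, in the sense of Lemma~\ref{lem:link}. We need this piece to be connected, and we need the non-special neighbours of $x$ to attach to it.

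The key input is that $T$ contains a type II element $C\in\mc_{\Lambda_P}(\{a\},\{b\})$. Hence $a$ and $b$ lie in different components of $\Lambda_P\setminus T$. Moreover $P$ is a path from $a$ to $b$ outside $T$, so in $\Lambda\setminus T$ the vertices $a$ and $b$ together with the interior of $P$ span a connected subdiagram. Thus the special vertices in the link of $x$, namely those of type $\hat s$ with $s\in P$, form the relative Artin complex of $A_{\hat T}$ relative to the path $P$. This complex is connected by Lemma~\ref{lem:c} and in fact simply-connected by Lemma~\ref{lem:sc} when $P$ has at least three vertices.

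We then do the reduction edge by edge. A non-special neighbour $y$ of $x$ is comparable with $x$ by inclusion of cosets, so any special vertex adjacent to the smaller of the two cosets is adjacent to both. Hence the edge $xy$ can be pushed into the star of a special vertex. Triangles of $\Delta_1$ containing $x$ are handled the same way, using Proposition~\ref{prop:intersection} to find common special neighbours. Replacing each non-special vertex by a path in its (connected) special link therefore homotopes any loop into $\Delta_{\Lambda,P}$, which is simply-connected.

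The main obstacle is the bookkeeping when two consecutive loop vertices are both non-special. This is where the nesting condition in Definition~\ref{def:Delta1} is used: nested cosets share special neighbours, and Lemma~\ref{lem:4-cycle} can be used to adjust choices. The degenerate cases, $a=b$ or $P$ a single edge, need separate checking via Lemma~\ref{lem:c} and direct coset arguments.
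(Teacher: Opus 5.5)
Your route differs from the paper's, and when $P$ has at least three vertices it is sound and fairly elementary:
\begin{itemize}
\item the full subcomplex on the special vertices is $\Delta_{\Lambda,P}$, which is simply connected by Lemma~\ref{lem:sc};
\item an edge between two non-special vertices lies in a triangle with a special vertex (nested cosets plus flagness);
\item the special part of the link of a non-special vertex of type $\hat T$ is $\Delta_{\Lambda\setminus T,P}$, which is connected by Lemma~\ref{lem:c}, so each non-special vertex can be coned out of a loop.
\end{itemize}
The case $a=b$ is also fine. The only element of $\mc_{\Lambda_P}(\{a\},\{a\})$ is $\{a\}$, so $\Sigma_P$ consists of singletons of $P$ and $\Delta_1=\Delta_{\Lambda,P}$ with $|P|\ge 3$. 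The \emph{bookkeeping} you worry about is not where the difficulty lies.

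The genuine gap is the case where $P$ is a single edge $ab$ with $a\neq b$, which you defer to unspecified \emph{direct coset arguments}. This case really occurs: in Section~\ref{sec:propagation}, $P$ is a maximal subpath of an induced cycle whose interior vertices have valence $2$, and that can be a single edge.

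In this case the special subcomplex is the bipartite graph $\Delta_{\Lambda,\{a,b\}}$, and it is not simply connected. For $m_{ab}<\infty$, the cosets of $A_{\hat a}$ and $A_{\hat b}$ through $1,a,ab,\dots$ and $1,b,ba,\dots$ form an embedded $2m_{ab}$-cycle. So homotoping loops into the special subcomplex proves nothing, and Lemma~\ref{lem:c} gives only connectivity. Here it is the non-special vertices that kill $\pi_1$, and you need two ingredients you do not supply.
\begin{itemize}
\item[(i)] Some $T\in\Sigma_P$ with $T\cap\{a,b\}=\emptyset$, so that the cycle above lies in the link of the vertex $A_{\hat T}$. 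The paper takes $T$ to be the set of neighbours of $a$ in $\Lambda_P$, which separates $a$ from $b$ there.
\item[(ii)] The fact that the $A_\Lambda$-translates of this cycle generate $\pi_1(\Delta_{\Lambda,\{a,b\}})$. This follows from Bass--Serre theory: that fundamental group is the kernel of $A_{\hat a}*_{A_{\hat a}\cap A_{\hat b}}A_{\hat b}\to A_\Lambda$, which is normally generated by the single $a$--$b$ Artin relator.
\end{itemize}

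The paper avoids the case split altogether. It first shows that $\Delta_1$ is homotopy equivalent to the complex $\Delta_2$, whose simplices are collections of cosets with nonempty common intersection. It then identifies $\Delta_2$ with the nerve of the cover of the Cayley complex of $A_\Lambda$ by the simply connected subcomplexes $Z(gA_{\hat T})$, $T\in\Sigma_P$. Everything then reduces to checking that each edge of $\Lambda$ misses some $T\in\Sigma_P$. The single-edge case is exactly where that check needs the non-singleton $T$ from (i).
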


\begin{proof}
We define an auxiliary complex $\Delta_2$ whose vertex set is the same as $\Delta_1$. A collection of vertices of $\Delta_2$ span a simplex if the associated cosets have non-empty common intersection. By Proposition~\ref{prop:intersection} $\Delta_2$ is a flag complex. 

Note that $\Delta_1$ and $\Delta_2$ are homotopic equivalent. Indeed, let $V$ be the collection of vertices such that the associated coset contain the identity element. For $i=1,2$, let $K_i$ be the full subcomplex of $\Delta_i$ spanned by $V$. By Proposition~\ref{prop:intersection}, $\mathcal U_1=\{gK_1\}_{g\in A_\Gamma}$ form a covering of $\Delta_1$. Note that if finitely many elements in $\mathcal U_1$ have non-empty intersection, then the intersection is contractible. To see this, it suffices to show the full subcomplex $L$ of  $K_1$ spanned by vertices whose associated cosets containing a finite list of given elements $\{g_i\}_{i=1}^k$ of $A_\Gamma$ is contractible. This is clear if $L$ contains a special vertex, as any other vertex in $L$ is adjacent to this special vertex. If $L$ does not contain any special vertex, then let $x$ be the vertex of $L$ corresponding to the intersection of all cosets associated with vertices of $L$. Then $x$ is adjacent to any other vertex of $L$. Let $\mathcal N_1$ be the nerve of this covering. Then $\mathcal N_1$ is homotopic to $\Delta_1$ by \cite[Thm 6]{bjorner2003nerves}.
Let $\mathcal N_2$ be the nerve of $\{gK_2\}_{g\in A_\Gamma}$. Then $\mathcal N_1\cong \mathcal N_2$. Similarly, $\mathcal N_2$ is homotopic equivalent to $\Delta_2$, so $\Delta_1$ and $\Delta_2$ are homotopic equivalent. 
	
Let $Z$ be the Cayley complex of the Artin group $A_\Lambda$ (i.e. the universal cover of its presentation complex). Then $Z$ is simply-connected. Then the 0-skeleton of $Z$ can be identified with elements in $A_\Lambda$. For each left coset $gA_{\hat T}$, let $Z(gA_{\hat T})$ be the full subcomplex of $Z$ spanned by vertices in $gA_{\hat T}$. By \cite{lek}, $Z(gA_{\hat T})$ is a copy of the Cayley complex of $A_{\hat T}$, hence is simply-connected. We claim the collection $\{Z(gA_{\hat T})\}_{g\in A_\Lambda, T\in \Sigma_P}$ forms a covering of $Z$. As the nerve of this covering is $\Delta_2$,
by \cite[Thm 6]{bjorner2003nerves}, $\pi_1(\Delta_2)\cong \pi_1(Z)$, and the lemma follows. To prove the claim, it suffices to show that for each edge $e$ of $\Lambda$, there is $T\in \Sigma_P$ such that $e\cap T=\emptyset$, as this would imply each 2-cell of $Z$ is contained in one of the subcomplexes in $\{Z(gA_{\hat T})\}_{g\in A_\Lambda, T\in \Sigma_P}$. Suppose $P$ has length $\ge 2$. Let $c$ be an interior vertex of $P$. Then any edge of $\Lambda_P$ must be disjoint from $c$, and $\{c\}\in \Sigma_P$; and any edge contained in $P$ is disjoint from either $a$, or $b$. Suppose $P$ has length $1$. Then $a$ and $b$ have distance $\ge 2$ in $\Lambda_P$. Hence any edge of $\Lambda_P$ is either disjoint from $a$, or from $b$. Now we consider the unique edge $e$ of $P$. Let $T$ be the collection of vertices in $\Lambda_P$ that has distance $1$ from $a$. Then $T$ is disjoint from $a$ and $b$. As $T$ separates $\{a\}$ from $\{b\}$ in $\Lambda_P$, $T$ contains a subset which belongs to $\mc_{\Lambda_P}(\{a\},\{b\})$. Thus $T\in \Sigma_P$. By the construction of $T$, we know $e\cap T=\emptyset$.
\end{proof}



\begin{definition}
We define a subdivision $b\Delta^P_\Lambda$ of $\Delta^P_\Lambda$ as follows. A vertex of $\Delta^P_\Lambda$ is \emph{special}, if it is of type $\hat s$ for vertex $s\in P$, otherwise the vertex is \emph{non-special}. A special simplex of $\Delta^P_\Lambda$ is a simplex made of special vertices. Similarly, we define non-special simplices of $\Delta^P_\Lambda$. Then $b\Delta^P_\Lambda$ is obtained from $\Delta^P_\Lambda$ by only performing barycentric subdivision on non-special simplices of $\Delta^P_\Lambda$, i.e. each simplex of $b\Delta^P_\Lambda$ is the join of a special simplex of $\Delta^P_\Lambda$ and a simplex of the barycentric subdivision of some non-special simplex of $\Delta^P_\Lambda$. We define a \emph{special} vertex of $b\Delta^P_\Lambda$ to be a vertex corresponding to a special vertex of $\Delta^P_\Lambda$. Other vertices of $b\Delta^P_\Lambda$ are non-special.

Note that a vertex $x$ of $b\Delta^P_\Lambda$ corresponds to a left coset of form $gA_{\hat T}$, where $T$ is either a vertex of $P$ (when $x$ is a special vertex), or a union of a collection of pairwise comparable elements in $\mc_{\Lambda_P}(\{a\},\{b\})\setminus\{\{a\},\{b\}\}$ (when $x$ is non-special). Although two different non-special vertices of $b\Delta^P_\Lambda$ could possibly give the same left coset, as the union of two different collections of pairwise comparable elements in $\mc_{\Lambda_P}(\{a\},\{b\})\setminus\{\{a\},\{b\}\}$ could be the same set.
\end{definition}


\begin{definition}
We define a map $\rho:b\Delta^P_\Lambda\to \Delta_1$ as follows. As each vertex of $b\Delta^P_\Lambda$ corresponds a left coset of form $gA_{\hat T}$ with $T\in \Sigma_P$, we obtain $\rho: (b\Delta^P_\Lambda)^0\to \Delta^0_1$. Note that $\rho$ sends adjacent vertices of $b\Delta^P_\Lambda$ to adjacent or identical vertices of $\Delta_1$, so $\rho$ extends to a simplicial map.
\end{definition}

\begin{lem}
	\label{lem:homotopy injective}
The map $\rho:b\Delta^P_\Lambda\to \Delta_1$ induces a homotopy equivalence between these two complexes.
\end{lem}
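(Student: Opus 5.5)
We will prove the statement with Quillen's fiber lemma, in its nerve/covering form, in the same spirit as the proof of Lemma~\ref{lem:Delta1sc}. Equivalently, we will show that the preimage under $\rho$ of each closed star, or of each suitable subcomplex of $\Delta_1$, is contractible. Concretely, we cover $\Delta_1$ by the closed stars $\mathrm{St}(v)$ of vertices $v$, or by the subcomplexes $\Delta_1(\le v)$ spanned by $v$ and the vertices whose cosets are comparable to that of $v$. We then compare this covering with the covering of $b\Delta^P_\Lambda$ by the preimages $\rho^{-1}$ of these pieces. If every finite intersection of pieces on both sides is either empty or contractible, and the two nerves coincide, then \cite[Thm 6]{bjorner2003nerves} gives $b\Delta^P_\Lambda\simeq\mathcal N\simeq\Delta_1$ compatibly with $\rho$.

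The natural choice on the $\Delta_1$ side is to use a poset structure. Non-special vertices are ordered by inclusion of cosets, so a coset $gA_{\hat T}$ sits below every $gA_{\hat T'}$ with $T'\subset T$. For each non-special vertex $v$ of $\Delta_1$, of type $\hat T$, we consider the fiber $F_v=\rho^{-1}(\Delta_1(\ge v))$. This consists of the non-special vertices of $b\Delta^P_\Lambda$ whose coset contains $gA_{\hat T}$, together with the special vertices meeting $gA_{\hat T}$. A non-special vertex of $b\Delta^P_\Lambda$ of type $\hat T'$ in this fiber corresponds to a chain of pairwise comparable minimal cuts with union $T'\subset T$. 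Since $T\in\Sigma_P$ contains a type II element of $\mathcal C_P$, the set of minimal cuts contained in $T$ is nonempty. By Lemma~\ref{lem:latticecut}, the join and meet of two cuts are contained in their union, so the minimal cuts contained in $T$ form a sublattice of $\mc_{\Lambda_P}(\{a\},\{b\})$ with a top element $\Phi_T$. The part of the fiber lying over $gA_{\hat T}$ is then the order complex of chains in this lattice, possibly joined with special vertices. This order complex has a cone point, namely the vertex of type $\hat\Phi_T$, or the join of the cuts, so it is contractible.

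For special vertices $v$ of $\Delta_1$, of type $\hat s$ with $s\in P$, the fiber consists of $v$ itself together with vertices adjacent to it. Here we must check that every vertex of $b\Delta^P_\Lambda$ that $\rho$ maps into $\mathrm{St}(v)$ is adjacent to $v$ in $b\Delta^P_\Lambda$, so that the fiber is a cone on $v$. This requires the comparability condition in $\Delta^P_\Lambda$. Interior vertices of $P$ are comparable with everything. For $s=a$ or $s=b$, the cuts $\{a\}$ and $\{b\}$ are the extreme elements of $\mc_{\Lambda_P}(\{a\},\{b\})$, so they are comparable with every cut. Hence adjacency in $b\Delta^P_\Lambda$ reduces to the condition that the cosets intersect, which is exactly adjacency in $\Delta_1$.

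We therefore take the covering of $\Delta_1$ by closed stars of special vertices together with the subcomplexes $\Delta_1(\ge v)$ for non-special $v$. Intersections of such pieces are again of the same cone type, by Proposition~\ref{prop:intersection} and the intersection argument used in Lemma~\ref{lem:Delta1sc}. The corresponding intersections of preimages are handled by the lattice argument above, applied to $T_1\cap\cdots$, or more precisely to the coset intersection.

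The main obstacle is the contractibility of the preimages of intersections in which several non-special vertices appear together with special ones. There we need the sublattice of cuts contained in a given set $T$ to be nonempty exactly when the corresponding intersection in $\Delta_1$ is nonempty. We also need the cone vertex $\hat\Phi_T$ to be adjacent to all the relevant special vertices. We expect both to follow from Lemma~\ref{lem:latticecut} together with Lemma~\ref{lem:squeez}, but checking that the nerves match exactly is the delicate bookkeeping step.
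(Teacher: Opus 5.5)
\textbf{Gap: the intersections of your cover pieces are not controlled.} Your single-piece computations are fine. The preimage of the star of a special vertex $w$ is a cone on $w$. The set $\rho^{-1}(\Delta_1(\ge v))$ is the join of its special part with the subdivision of a cone on $\Phi_T$; the vertex of type $\hat\Phi_T$ is not literally a cone point after subdivision, but the complex is still contractible.

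The problem is the sentence asserting that intersections of the pieces are ``again of the same cone type''. A nerve argument needs every nonempty finite intersection on the $\Delta_1$ side to be contractible, and with your cover this is not justified even for two pieces:
\begin{itemize}
\item If $w_1,w_2$ are non-adjacent special vertices, then $\mathrm{St}(w_1)\cap\mathrm{St}(w_2)$ is the full subcomplex on the vertices whose cosets meet two \emph{disjoint} cosets.
\item Likewise, $\Delta_1(\ge v_1)\cap\Delta_1(\ge v_2)$ reduces to such a complex when no non-special coset contains both cosets of $v_1$ and $v_2$.
\end{itemize}
Such a complex has no cone point. Proposition~\ref{prop:intersection} needs pairwise intersecting cosets. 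The argument of Lemma~\ref{lem:Delta1sc} concerns cosets \emph{containing} prescribed group elements, not cosets \emph{meeting} prescribed cosets.

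Even connectivity of $\mathrm{St}(w_1)\cap\mathrm{St}(w_2)$ already says that every $4$-cycle $w_1uw_2u'$ can be bridged by a path of common neighbours. That is exactly the kind of statement (compare Lemma~\ref{lem:reduction}) which the paper only obtains under labeled 4-cycle hypotheses, whereas the present lemma is unconditional. So the obstacle you flag at the end is not bookkeeping on the preimage side: with this cover, the $\Delta_1$ side itself is not under control.

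\textbf{How to repair it, and comparison with the paper.} Change the cover to the translates $gK_1$ from the proof of Lemma~\ref{lem:Delta1sc}, i.e.\ the vertices whose coset contains $g$.
\begin{itemize}
\item A finite intersection $\bigcap_i g_iK_1$ is a cone, as shown there.
\item Its preimage is also a cone. If some special vertex's coset contains all $g_i$, that vertex is adjacent in $b\Delta^P_\Lambda$ to every vertex of the preimage, by your comparability observation.
\item Otherwise the preimage is the subdivision of the flag complex on
\[
M=\{U\in\mc_{\Lambda_P}(\{a\},\{b\})\setminus\{\{a\},\{b\}\} : g_i\in g_1A_{\hat U}\ \text{for all } i\}.
\]
$M$ is closed under meet and join, since these lie in $U_1\cup U_2$ (Lemma~\ref{lem:latticecut}) and $A_{\hat U_1}\cap A_{\hat U_2}=A_{\widehat{U_1\cup U_2}}$. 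Hence it is a cone on its top element.
\item The nonemptiness patterns agree, because every $T\in\Sigma_P$ contains a minimal cut.
\end{itemize}
The map version of the nerve theorem then shows that $\rho$ itself is a homotopy equivalence.

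The paper avoids nerves altogether. It writes down a homotopy inverse $\lambda$ sending $gA_{\hat T}$ to $gA_{\hat T^-}$, where $T^-$ is the smallest minimal cut contained in $T$. It then checks $\rho\circ\lambda\simeq\mathrm{id}$ and $\lambda\circ\rho\simeq\mathrm{id}$ by carrier arguments. The essential input in both routes is the one you isolated: the minimal cuts contained in a given $T$ form a nonempty sublattice.
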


\begin{proof}
	We will construct a continuous map $\lambda:\Delta_1\to b\Delta^P_\Lambda$ such that $\lambda\circ \rho$ and $\rho\circ\lambda$ are homotopic to identity map.
	
	
	Each special vertex of $\Delta_1$ corresponds to a special vertex of $b\Delta^P_\Lambda$, which gives the definition of $\lambda$ on special vertices. Take a non-special vertex $v$ of $\Delta_1$ corresponding to $gA_{\hat T}$ with $T\in \Sigma_P$. Consider the partial order on $\mc_\Lambda(\{a\},\{b\})$ in Lemma~\ref{lem:latticecut}, such that $\{a\}<\{b\}$. Let $T^-\in \mc_\Lambda(\{a\},\{b\})$ be the smallest possible element such that $T^-\subset T$, which is well-defined by Lemma~\ref{lem:latticecut}. Consider the unique left $A_{\hat T^-}$-coset containing $gA_{\hat T}$, which gives a vertex $v^-$ in $\Delta^P_\Lambda$. Then $\lambda(v)=v^-$.
	
	We claim $\lambda$ sends vertices of $\Delta_1$ in a simplex to points that are contained in a common simplex of $\Delta^P_\Lambda$. It suffices to prove this claim for a collection of non-special vertices $\{v_i\}_{i=1}^k$ of $\Delta_1$. Suppose $v_i$ is of type $\hat T_i$, and assume $T_1\subset T_2\subset\cdots\subset T_k$. We define $T^-_i,v^-_i$ as in the previous paragraph. Then $T^-_k\le T^-_{k-1}\le\cdots\le T^-_1$. As the coset associated with $v^-_i$ contains the coset associated with $v_i$, we obtain 
	that the cosets associated with $\{v^-_1,\ldots,v^-_k\}$ have non-empty common intersection. Thus $\{v^-_1,\ldots,v^-_k\}$ spans a simplex in $\Delta^P_\Lambda$, and the claim follows. This claim implies that we can extend $\lambda$ using the affine structure on each simplex of $\Delta^P_\Lambda$ to obtain a continuous map $\lambda:\Delta_1\to \Delta^P_\Lambda$, which can also be viewed as a map to $b\Delta^P_\Lambda$.
	
Next we show $\rho\circ\lambda$ is homotopic to the identity map. Given a simplex $\sigma$ of $\Delta_1$, we write $\sigma$ as a join $\sigma_1*\sigma_2$ where $\sigma_1$ is made of special vertices and $\sigma_2$ is made of non-special vertices. Note that $\rho\circ\lambda(\sigma_1)=\sigma_1$. Suppose $\sigma_2$ has vertices $\{v_i\}_{i=1}^k$ such that $v_i$ has type $\hat T_i$ with $T_1\subset\cdots\subset T_k$. We define $T^-_i$ and $v^-_i$ as before. Let $\sigma^-_2$ be the simplex of $\Delta^P_\Lambda$ spanned by $\{v^-_1,\ldots,v^-_k\}$.  
Define $K(\sigma_2)$ to the full subcomplex of $\Delta_1$ spanned by $\sigma_2$ and $\rho(\sigma^-_2)$. As $T^-_i\subset T_i$, we know any union of $\{T^-_1,\ldots,T^-_k\}$ is contained in $T_k$. Thus each vertex of $K(\sigma_2)$ is adjacent or equal to $v_k$, hence $K(\sigma_2)$ is contractible. As each vertex of $K(\sigma_2)$ is adjacent every vertex of $\sigma_1$, we define $K(\sigma)=K(\sigma_2)*\sigma_1$, which is also contractible. It follows from construction that for two simplices $\sigma\subset \tau$ of $\Delta_1$, we have $K(\sigma)\subset K(\tau)$. As for each simplex $\sigma$ of $\Delta_1$, $\sigma$ and $\rho\circ\lambda(\sigma)$ are contained in $K(\sigma)$, we can construct skeleton by skeleton a continuous map $F:\Delta_1\times [0,1]\to \Delta_1$ such that $F\mid_{\Delta_1\times\{0\}}$ is the identity map and $F\mid_{\Delta_1\times\{1\}}=\rho\circ\lambda$. Hence $\rho\circ\lambda$ is homotopic to identity.

	Given a simplex $\sigma\subset b\Delta^P_\Lambda$ with vertices $\{v_i\}_{i=1}^k$, we claim $$\{v_1,\ldots,v_k,\lambda\circ\rho(v_1),\ldots,\lambda\circ\rho(v_k)\}$$ are contained in he smallest simplex $\bar \sigma$ of $\Delta^P_\Lambda$ that contains $\sigma$. 
	Again, it suffices to consider the case that all $v_i$ are non-special.
	Suppose $v_i$ has type $\hat T_i$ and suppose $T_1\subset\cdots\subset T_k$. Suppose vertices of $\bar\sigma$ are $\{u_i\}_{i=1}^n$ such that $u_i$ is of type $\hat U_i$ and  $U_1\le U_2\le \cdots\le U_n$ in $\mc_{\Lambda_P}(\{a\},\{b\})$. Then $T_k=\cup_{i=1}^n U_n$, and each  $T_i$ is a union of some members of $\{U_1,\ldots,U_n\}$. We define $T^-_i$ and $v^-_i$ as before. Then $T^-_i\in \{U_1,\ldots,U_n\}$ by Lemma~\ref{lem:comparable}, hence $v^-_i$ is a vertex of $\bar\sigma$. Note that $\lambda\circ\rho (v_i)=v^-_i$. Hence the claim is proved.
	Note that $\lambda\circ\rho\mid_{\sigma}$ is the unique affine map extending $\lambda\circ\rho\mid_{\sigma^0}$. So we can use the affine structure on $\bar\sigma$ to build the homotopy from $\lambda\circ\rho$ to the identity map.
\end{proof}

The following is a consequence of Lemma~\ref{lem:Delta1sc} and Lemma~\ref{lem:homotopy injective}.
\begin{cor}
	\label{cor:Xsc}
The complex $\Delta^P_\Lambda$ is simply-connected.
\end{cor}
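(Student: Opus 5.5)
The plan is to transfer simple-connectedness from $\Delta_1$ back to $\Delta^P_\Lambda$ through the subdivision $b\Delta^P_\Lambda$. Since the corollary is announced as a consequence of Lemma~\ref{lem:Delta1sc} and Lemma~\ref{lem:homotopy injective}, no new combinatorics should be needed.

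First, I observe that $b\Delta^P_\Lambda$ is a subdivision of $\Delta^P_\Lambda$, so the two complexes have homeomorphic geometric realizations. Concretely, $b\Delta^P_\Lambda$ is obtained by barycentrically subdividing only the non-special simplices; this is a legitimate subdivision because a face of a non-special simplex is either non-special or special. Each simplex of $\Delta^P_\Lambda$ is the join of its special face and its non-special face, and it is subdivided as the join of the special face with the barycentric subdivision of the non-special face. This is compatible under taking faces, so the realizations agree. It therefore suffices to show that $b\Delta^P_\Lambda$ is simply-connected.

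Second, Lemma~\ref{lem:homotopy injective} says that the simplicial map $\rho:b\Delta^P_\Lambda\to\Delta_1$ is a homotopy equivalence, with homotopy inverse the map $\lambda$ constructed there. A homotopy equivalence induces a bijection on path components and isomorphisms on all fundamental groups. By Lemma~\ref{lem:Delta1sc}, $\Delta_1$ is simply-connected, and in particular connected and nonempty. Hence $b\Delta^P_\Lambda$ is connected and $\pi_1(b\Delta^P_\Lambda)\cong\pi_1(\Delta_1)=1$. Combining this with the first step gives that $\Delta^P_\Lambda$ is simply-connected.

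The only point needing care is the first step: checking that the partial barycentric subdivision is well defined, so that the realization of $b\Delta^P_\Lambda$ really is homeomorphic to that of $\Delta^P_\Lambda$. This follows from the join decomposition of simplices described above. The substantive work, namely the nerve arguments and the construction of $\lambda$, is already contained in the two cited lemmas.
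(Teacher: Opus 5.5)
Your proposal is correct and matches the paper's argument: the paper deduces the corollary directly from Lemma~\ref{lem:Delta1sc} and Lemma~\ref{lem:homotopy injective}, using that $b\Delta^P_\Lambda$ is a subdivision of $\Delta^P_\Lambda$. Your extra check that the partial barycentric subdivision is well defined (join of the special face with the barycentrically subdivided non-special face) only spells out what the paper leaves implicit.
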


\subsection{A  graph theoretical observations}
\label{subsec:graph theoretic}
We put a partial order on the collection of type II elements in $\mathcal C_P$ as in Lemma~\ref{lem:latticecut}, such that $\{a\}<\{b\}$. We put a linear order on the collection of type I elements in $\mathcal C_P$ along $P$ such that $\{b\}<\{a\}$. As $\mathcal C_P$ is obtained by gluing this two posets together, we can invoke Section~\ref{subsec:pco} and put a partial cyclic order on $\mathcal C_P$. Let $T\in \mathcal C_P$. Let $\mathcal C_{P,T}$ be the collection of elements in $\mathcal C_{P}$ that are comparable but not equal to $T$. By Section~\ref{subsec:pco}, $\mathcal C_{P,T}$ inherit a partial order from the partial cyclic order on $\mathcal C_P$.

Given $T\in \mathcal C_P$. Let $\Lambda_T$ be the full subgraph of $\Lambda$ spanned by all the vertices of $\Lambda\setminus T$ that are in the same connected component of $\Lambda\setminus T$ as $P\setminus T$. We consider the intersection of each element of $\mathcal C_{P,T}$ with $\Lambda_T$, which gives a collection $\mathcal C'_{P,T}$ of sets of vertices of $\Lambda_T$, and a map  $\phi:\mathcal C_{P,T}\to \mathcal C'_{P,T}$. If $T$ is of type I, then $\mathcal C'_{P,T}=\mathcal C_{P,T}$.


Let $\hat\Lambda_T$	be the connected component of $\Lambda\setminus T$ that contains $P\setminus T$. When $T$ is of type II, we let $T^{\{a\}}$ be the connected component of $\Lambda_P\setminus T$ that contains $a$ (if $a\in T$, then $T^{\{a\}}=\emptyset$). Similarly we define $T^{\{b\}}$. Then 
\begin{equation}
	\label{eq:decomp}
	\hat\Lambda_T=T^{\{a\}}\cup T^{\{b\}}\cup(P\setminus T).
\end{equation}
By Lemma~\ref{lem:comparable} and Lemma~\ref{lem:comparable1}, given $R\in \mathcal C_{P,T}$, either $R\subset T^{\{a\}}\cup T$, or $R\subset T^{\{b\}}\cup T$, or $R$ is an interior vertex of $P$; and these three cases are mutually exclusive. Thus for $R'\in \mathcal C'_{P,T}$, either $R'\subset T^{\{a\}}$, or $R'\subset T^{\{b\}}$, or $R$ is an interior vertex of $P$.

\begin{remark}
	\label{rmk:ab}
	We will often use the following observation, which follows from the definition.
	Suppose $\Lambda_P$ is connected. Then $\{a\}\in \mc_{\Lambda_P}(\{a\},\{b\})$ and $\{b\}\in \mc_{\Lambda_P}(\{a\},\{b\})$. Hence, if $C\in \mc_{\Lambda_P}(\{a\},\{b\})$ satisfies that $C\neq \{a\}$ and $C\neq\{b\}$, then $a\notin C$ and $b\notin C$.
\end{remark}

\begin{lem}
	\label{lem:admissible example}
Suppose $\Lambda_P$ is connected. Then $\phi:\mathcal C_{P,T}\to \mathcal C'_{P,T}$ is a bijection, hence $\mathcal C'_{P,T}$ inherits a partial order from $\mathcal C_{P,T}$. This partial order on $\mathcal C'_{P,T}$ satisfies all the properties in Definition~\ref{def:admissible} with $\Lambda$ in the definition replaced by $\Lambda_T$.
\end{lem}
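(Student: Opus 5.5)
We split into the cases $T$ of type I and $T$ of type II, and in each case check injectivity of $\phi$ before verifying the four items of Definition~\ref{def:admissible} for $\mathcal C'_{P,T}$ inside $\Lambda_T$.

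\textbf{Type I.} Suppose $T=\{c\}$ with $c$ an interior vertex of $P$. Then $\phi$ is the identity. Since $\Lambda_P$ is connected and interior vertices of $P$ have valence $2$, we have $\Lambda_T=\Lambda\setminus\{c\}$. This graph is $\Lambda_P$ with two hanging paths attached at $a$ and at $b$. The partial order on $\mathcal C_{P,T}$ is the one cut out of the cyclic order at $c$. It is linear along the $b$-side arc of $P$, then runs through $\mc_{\Lambda_P}(\{a\},\{b\})$, then is linear along the $a$-side arc. In $\Lambda_T$ this is precisely $\mc_{\Lambda_T}(\{e\},\{e'\})$, where $e,e'$ are the endpoints of the two hanging paths next to $c$. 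Indeed, a minimal cut of $\Lambda_T$ between $e$ and $e'$ is either a single vertex on a hanging path (a cut vertex) or a minimal cut of $\Lambda_P$ between $a$ and $b$. With this identification, admissibility follows directly from Lemma~\ref{lem:mincut admissible}.

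\textbf{Type II, injectivity.} Suppose $T\in\mc_{\Lambda_P}(\{a\},\{b\})$. By Remark~\ref{rmk:ab}, either $T\in\{\{a\},\{b\}\}$ or $a,b\notin T$. Using \eqref{eq:decomp} and Lemma~\ref{lem:comparable1}, each type II $R<T$ satisfies $R\subset T^{\{a\}}\cup T$. By Lemma~\ref{lem:comparable1}(2), such an $R$ is recovered from $R\cap T^{\{a\}}$ together with those vertices of $T$ reachable from $a$ by tight paths avoiding $R\cap T^{\{a\}}$. This gives injectivity of $\phi$ on the elements below $T$. The elements above $T$ are handled symmetrically, and $\phi$ is the identity on type I elements.

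\textbf{Type II, identifying the image.} Consider the graph $\Lambda_T$, which is $T^{\{a\}}\cup T^{\{b\}}$ joined through the path $P$ (here $P\setminus T$ is all of $P$ when $a,b\notin T$). Let $N_a$ be the set of vertices of $T^{\{a\}}$ adjacent to $T$, and $N_b$ the analogous set in $T^{\{b\}}$. The claim is that $\phi(\mathcal C_{P,T})$ equals
\[
\mc_{T^{\{a\}}}(N_a,\{a\})\ \cup\ \{\text{interior vertices of } P\}\ \cup\ \mc_{T^{\{b\}}}(\{b\},N_b).
\]
This reduces the structure to a cut lattice in the graph $\Lambda_T$, namely $\mc_{\Lambda_T}(N_a,N_b)$ with $P$ running from $a$ to $b$. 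Modulo the boundary cases, the poset $\mathcal C'_{P,T}$ should then be exactly this cut poset. Admissibility would follow from Lemmas~\ref{lem:latticecut}, \ref{lem:squeez} and \ref{lem:separate} applied to $\Lambda_T$ with $A=N_a$, $B=N_b$.

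\textbf{Expected obstacle.} The delicate step is the identification of $\phi(R)$ with a minimal cut of $\Lambda_T$ between $N_a$ and $N_b$, together with the reverse correspondence. It must also respect the order.

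For the forward direction, minimality of $R\cap T^{\{a\}}$ requires showing that every vertex of $R\cap T^{\{a\}}$ lies on an $R$-tight path from $a$ to $N_a$. I would obtain such a path from Remark~\ref{rmk:visit}: take a tight $a$–$b$ path through the given vertex of $R$, and truncate it when it first enters $T$, using Lemma~\ref{lem:comparable} to see that it does enter $T$ after $R$.

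For the converse, take a minimal cut $R'$ of $\Lambda_T$ between $N_a$ and $N_b$ contained in $T^{\{a\}}$. Rebuild $R$ via the formula in Lemma~\ref{lem:comparable1}(2), and check minimality through Lemma~\ref{lem:reformulate}.

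Order preservation then follows from Lemma~\ref{lem:comparable}(1), since both orders are given by inclusion of the $a$-sides. Items (2)–(4) of Definition~\ref{def:admissible} transfer through the bijection. The one point needing a separate check is the join/meet containment $C\subset A\cup B$. For a pair with one element on each side of $T$, I would check this directly: their join or meet is one of the two elements or a type I vertex, which the cyclic gluing of Section~\ref{subsec:pco} makes clear.
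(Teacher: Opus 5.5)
Your type I case and your type II injectivity argument (recovering $R$ from $R\cap T^{\{a\}}$ via Lemma~\ref{lem:comparable1}(2)) agree with the paper.

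\textbf{The gap.} The type II ``identification of the image'' is false, and not only in boundary cases. If $R<T$ contains a vertex $t\in T$, nothing forces $R\cap T^{\{a\}}$ to separate $a$ from the neighbours of $t$ in $T^{\{a\}}$. So $\phi(R)$ can fail to be a cut between $N_a$ and $\{a\}$ at all; your forward-direction sketch only addresses minimality.

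Concretely, let $\Lambda_P$ be the $6$-cycle $a\,u_1\,t_1\,b\,t_2\,u_2$, let $P=a\,p\,b$ with one interior vertex, and let $T=\{t_1,t_2\}$. Then $\mc_{\Lambda_P}(\{a\},\{b\})=\{\{a\},\{u_1,u_2\},\{u_1,t_2\},\{t_1,u_2\},T,\{b\}\}$. The poset $\mathcal C_{P,T}$ is ordered $\{b\}<\{p\}<\{a\}<\{u_1,u_2\}<\{u_1,t_2\},\{t_1,u_2\}$. The last two elements are incomparable and both maximal, since their join in $\mc_{\Lambda_P}(\{a\},\{b\})$ is $T$, which is excluded.

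Their images $\{u_1\},\{u_2\}$ lie neither in $\mc_{T^{\{a\}}}(N_a,\{a\})=\{\{a\},\{u_1,u_2\}\}$ (here $N_a=\{u_1,u_2\}$) nor in $\mc_{\Lambda_T}(N_a,N_b)$. More fundamentally, $\mathcal C'_{P,T}$ here has two maximal elements and so is not a lattice. Every $\mc_{\Lambda_T}(A,B)$, by contrast, is a lattice by Lemma~\ref{lem:latticecut}. Hence no choice of $A,B$ realizes $\mathcal C'_{P,T}$ as a minimal-cut poset of $\Lambda_T$, and your plan of importing Lemma~\ref{lem:mincut admissible} cannot be rescued. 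This is also why Definition~\ref{def:admissible}(4) only asks for joins and meets of bounded pairs.

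\textbf{What is needed instead.} Verify the four items for $\mathcal C'_{P,T}$ directly, transporting structure from $\mc_{\Lambda_P}(\{a\},\{b\})$ through $\phi$ and its inverse $\varphi$ from Lemma~\ref{lem:comparable1}(2), as the paper does.
\begin{enumerate}
\item Item (4) comes from Lemma~\ref{lem:latticecut}. Two elements below $T$ with a common upper bound in $\mathcal C_{P,T}$ have their $\mc_{\Lambda_P}$-join strictly below $T$ and contained in $R_1\cup R_2$.
\item Item (3) comes from Lemma~\ref{lem:separate} together with \eqref{eq:decomp}, plus a short case analysis.
\item Item (2) is the substantive part. Take $T'_1<T'_2$ inside $T^{\{a\}}$ and $M'\in\mc_{\Lambda_T}(T'_1,T'_2)$. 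One shows $M'\subset T^{\{a\}}$ and $(T'_1)^{\{a\}}\subset (M')^{\{a\}}\subset (T'_2)^{\{a\}}$. Then one checks $\varphi(M')\in\mc_{\Lambda_P}(T_1,T_2)$ via Lemma~\ref{lem:reformulate}, and concludes with Lemma~\ref{lem:squeez}. The mixed cases reduce to this one through $\{a\}$ or $\{b\}$.
\end{enumerate}
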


\begin{proof}
First assume $T$ is of type I. Then $T=\{s\}$ with $s$ being an interior vertex of $P$, and $\Lambda\setminus\{s\}$ is connected. Then $\mathcal C'_{P,T}$ can be naturally identified with $\mc_{\Lambda_T}(\{s_1\},\{s_2\})$ (as posets), where $s_1,s_2$ are vertices of $P$ that are adjacent to $s$ in $P$. Hence we are done by Lemma~\ref{lem:mincut admissible}.

	
Assume $T$ is of type II.	
Define $\varphi:\mathcal C'_{P,T}\to \mathcal C_{P,T}$ as follows. Take $R'\in \mathcal C'_{P,T}$. If $R'\subset T^{\{a\}}$ (resp. $T^{\{b\}}$), then $\varphi(R')$ is $R'$ together with all vertices in $T$ that can be connected to $a$ (resp. $b$) via a $T$-tight path in $(T^{\{a\}}\cup T)\setminus R'$ (resp. $(T^{\{b\}}\cup T)\setminus R'$). If $R'$ is an interior vertex of $P$, then $\varphi(R')=R'$. By Lemma~\ref{lem:comparable1}, $\varphi(R')\in \mathcal C_{P,T}$ and $\varphi$ and $\phi$ are inverse of each other.

Now we verify the conditions in Definition~\ref{def:admissible}. Condition 1 is clear. For condition 4, given $R_1,R_2\in \mathcal C_{P,T}$ with an upper bound $R$. Suppose $R_1\subset T^{\{a\}}\cup T$. It suffices to consider the case $R_2\subset T^{\{a\}}\cup T$ as otherwise the join of $R_1$ and $R_2$ is $R_1$. By Lemma~\ref{lem:latticecut}, $R_1$ and $R_2$ have the join which is contained in $R_1\cup R_2$. Thus the same holds for $\phi(R_1)$ and $\phi(R_2)$. Other possibilities of $R_1$ as well as the lower bound part of condition 4 can be handled similarly. 

For condition 3, take $R_1<R_2<R_3$ in $\mathcal C_{P,T}$. If all of them are in $T^{\{a\}}\cup T$, then $R_2$ separates $R_1$ from $R_3$ in $\Lambda_P$ by Lemma~\ref{lem:separate}. By \eqref{eq:decomp}, $R'_2$ separates $R'_1$ from $R'_3$ in $\Lambda_T$  where $R'_i=\phi(R_i)$. If only two of them are in $T^{\{a\}}\cup T$, then they are $R_2$ and $R_3$. 
Thus either $R'_2=\{a\}$, or $R'_2$ separates $\{a\}$ from $R'_3$ in $\Lambda_T$. In the latter case, by \eqref{eq:decomp} $a$ is connected to each vertex of $R'_1$ via a path outside $R'_2$. Thus in both cases, $R'_2$ separates $R'_1$ from $R'_3$. Other cases of $R_1,R_2,R_3$ can be handled similarly.

For condition 2, given $T'_1<T'_2$ in $\mathcal C'_{P,T}$, we first consider the case that $T'_i\subset T^{\{a\}}$ for $i=1,2$. Take $M'\in \mc_{\Lambda_T}(T'_1,T'_2)$. By \eqref{eq:decomp} we know $M'\subset T^{\{a\}}$. 
Let $(M')^{\{a\}}$ be the collection of points in $T^{\{a\}}$ that can be connected to $a$ by a path in $T^{\{a\}}\setminus M'$. Similarly we define $(T'_i)^{\{a\}}$. We claim
\begin{equation}
	\label{eq:contain}
(T'_1)^{\{a\}}\subset(M')^{\{a\}}\subset(T'_2)^{\{a\}}.
\end{equation}
Now prove this claim. Let $T_i=\varphi(T'_i)$.
By Lemma~\ref{lem:comparable1}, $(T'_i)^{\{a\}}=T^{\{a\}}_i$. Hence $(T'_1)^{\{a\}}\subset (T'_2)^{\{a\}}$. 
By Remark~\ref{rmk:visit}, for each $m\in M'$, there is a $(T'_1,M',T'_2)$-path $\omega_m$ in $\Lambda_T$ from a point in $T'_1$ to a point in $T'_2$ passing through $m$. By \eqref{eq:decomp}, we can assume $\omega_m\subset T^{\{a\}}$. Then $\omega_m\cap (T'_1)^{\{a\}}=\emptyset$ (if such a path enters $(T'_1)^{\{a\}}$, then it must exits and hit $T'_1$ the second time). Hence $m\notin (T'_1)^{\{a\}}$, and  $M'\cap (T'_1)^{\{a\}}=\emptyset$, which implies $(T'_1)^{\{a\}}\subset(M')^{\{a\}}$. If $(M')^{\{a\}}\subset (T'_2)^{\{a\}}$ is not true, then there is a path $\omega\subset T^{\{a\}}\setminus M'$ from $a$ to $x\notin (T'_2)^{\{a\}}$. As for $i=1,2$, $x\notin T_i^{\{a\}}$ and $T_i\in \mc_{\Lambda_P}(\{a\},\{b\})$, $\omega$ must meet both $T_1$ and $T_2$. As $\omega\subset T^{\{a\}}$, $\omega$ contains a subpath outside $M'$ from a point in $T'_1$ to a point in $T'_2$, which contradicts  $M'\in \mc_{\Lambda_T}(T'_1,T'_2)$.

Let $M=\varphi(M')$. We claim $M\in \mc_{\Lambda_P}(T_1,T_2)$. 
To verify Lemma~\ref{lem:reformulate} (1), let $\omega\subset \Lambda_P$ be a path from $t_1\in T_1$ to $t_2\in T_2$. Up to passing to a subpath of $\omega$, we can assume $\omega$ is $(T_1,T_2)$-tight. 
As $T_1\le T_2\le T\in \mc_{\Lambda_P}(\{a\},\{b\})$, by Lemma~\ref{lem:comparable} (3), we can assume either $\omega\cap T=\emptyset$ or $\omega$ is $T$-tight with $t_2\in T\cap T_2$. In the former case, $\omega\subset T^{\{a\}}$, then $\omega$ can be viewed as a path in $\Lambda_T$, hence it must visit $M'$. In the latter case, let $\omega_1$ be a $T_1$-tight path in $T^{\{a\}}\cup T$ from $a$ to $t_1$, which exists by Remark~\ref{rmk:visit}. Then $(\omega_1\setminus\{t_1\})\cap M'=\emptyset$ by \eqref{eq:contain}. If $\omega$ is outside $M'$, then by considering the concatenation of $\omega_1$ and $\omega$, we know $t_2\in M$ by the definition of $\varphi(M)$. Thus Lemma~\ref{lem:reformulate} (1) follows. Now we verify Lemma~\ref{lem:reformulate} (2). Take $x\in M$. If $x\in M'$, as $M'\in \mc_{\Lambda_T}(T'_1,T'_2)$, there is a path in $\Lambda_T$ from a point in $T'_1$ to a point in $T'_2$ avoiding $M'\setminus\{x\}$. We can assume this path is in $T^{\{a\}}$, hence it avoids $M\setminus\{x\}$. If $x\in M\cap T$, then there is a $T$-tight path $\omega\subset T^{\{a\}}\cup T$ from $x$ to $a$ avoiding $M'$. By \eqref{eq:contain}, $(M')^{\{a\}}\subset T_2^{\{a\}}\subset T^{\{a\}}$, then $x\in T_2$.  If $\omega\cap T'_1=\emptyset$, then $x\in T_1$ by Lemma~\ref{lem:comparable1} (2) and it suffices to consider the constant path at $x$. If $\omega\cap T'_1\neq\emptyset$, then we consider the subpath of $\omega$ from $x$ to when it hits $T'_1$ the first time.

By Lemma~\ref{lem:squeez} $M\in \mc_{\Lambda_P}(\{a\},\{b\})$, and Lemma~\ref{lem:comparable1} and \eqref{eq:contain} imply that $(T_1)^{\{a\}}\subset M^{\{a\}}\subset(T_2)^{\{a\}}\subset T^{\{a\}}$. Thus $M'\in \mathcal C'_{P,T}$ and $T'_1\le M'\le T'_2$.

Now we consider the case $T'_2\subset T^{\{a\}}$ and $T'_1\nsubseteq T^{\{a\}}$. Take $M'$ as before. We will only consider the nontrivial situation $M'$ is not a vertex of $P$. Then either $M'\subset T^{\{a\}}$ or $M'\subset T^{\{b\}}$. Up to symmetry we consider $M'\subset T^{\{a\}}$. Then $M'\in\mc_{\Lambda_T}(\{a\},T'_2)$. By the above discussion, $M'\in \mathcal C'_{P,T}$ and $T'_1\le \{a\}<M\le T'_2$, as desired. Thus Condition 2 holds when at least one of $\{T'_1,T'_2\}$ is contained in $T^{\{a\}}$. By symmetry, the condition also holds when at least one of $\{T'_1,T'_2\}$ is contained in $T^{\{b\}}$. The remaining case of both $T'_1$ and $T'_2$ are vertices of $P$ is trivial.
\end{proof}

\subsection{Vertex links of $\Delta^P_\Lambda$}
Given a vertex $x\in \Delta^P_\Lambda$ of type $\hat T$ with $T\in \mathcal C_P$. Then vertices in $\lk(x,\Delta^P_\Lambda)$ have types in $\mathcal C_{P,T}$. We define a relation $<$ on $\lk(x,\Delta^P_\Lambda)^0$ as follows. Given vertices $x_1,x_2\in \lk(x,\Delta^P_\Lambda)$ of type $\hat S_1,\hat S_2$, we define $x_1<x_2$ if $x_1$ and $x_2$ are adjacent and $S_1<S_2$ in $(\mathcal C_{P,T},<)$.

\begin{lem}
	\label{lem:link of mincut}
Suppose that for any proper induced subdiagram $\Lambda'\subsetneq \Lambda$, $\Delta_{\Lambda'}$ satisfies the labeled 4-cycle property. Suppose $\Lambda\setminus\{p\}$ is connected for any interior point $p\in P$.
Then $(\lk(x,\Delta^P_\Lambda),<)$ is a weakly graded poset such that each pair with a common upper bound have the join and each pair with a common lower bound have the meet.
\end{lem}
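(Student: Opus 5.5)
The plan is to reduce the lemma to Lemma~\ref{lem:admissible}, applied in the Artin complex of the smaller diagram $\Lambda_T$.

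\textbf{Step 1: identify the link.} Let $x$ correspond to $gA_{\hat T}$. The vertices of $\lk(x,\Delta^P_\Lambda)$ are cosets $hA_{\hat S}$ with $S\in\mathcal C_{P,T}$ meeting $gA_{\hat T}$. Translating by $g$, each such vertex is determined by the coset $gA_{\hat T}\cap hA_{\hat S}$ of $A_{\hat T\cap\hat S}$ inside $gA_{\hat T}\cong A_{\Lambda\setminus T}$.

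Write $\Lambda\setminus T$ as the join of $\Lambda_T$ and the remaining components. The generators in $S\setminus(T\cup\Lambda_T)$ lie in other components, and they are irrelevant for intersections. The reason is that, by the decomposition \eqref{eq:decomp} and Lemma~\ref{lem:comparable1}, $S\setminus T$ lies in $\hat\Lambda_T=\Lambda_T$. So this vertex corresponds to a vertex of $\Delta'_{\Lambda_T}$ of type $\widehat{\phi(S)}$.

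By Lemma~\ref{lem:admissible example}, $\phi$ is a bijection. Adjacency in the link means that the cosets intersect and the types are comparable, which is exactly $\sim$ together with comparability in $\mathcal C'_{P,T}$. Hence $(\lk^0(x,\Delta^P_\Lambda),<)$ is isomorphic to the set $\mathcal P$ of Lemma~\ref{lem:admissible}, built from $\mathcal Q=\mathcal C'_{P,T}$ and $\Lambda_T$. Here I use Proposition~\ref{prop:intersection} to pass freely between pairwise and common intersection.

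\textbf{Step 2: admissibility and the strong labeled 4-cycle condition.} Lemma~\ref{lem:admissible example} gives admissibility of $(\mathcal C'_{P,T},<)$ in $\Lambda_T$. It requires $\Lambda_P$ to be connected. If $T$ is of type I, I instead use the hypothesis that $\Lambda\setminus\{p\}$ is connected, which gives the identification with $\mc_{\Lambda_T}(\{s_1\},\{s_2\})$.

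Next, $\Lambda_T$ is a connected proper induced subdiagram of $\Lambda$. Every induced subdiagram of $\Lambda_T$ is also a proper induced subdiagram of $\Lambda$, so by hypothesis it satisfies the labeled 4-cycle property. Proposition~\ref{prop:strong} then gives the strong labeled 4-cycle condition for $\Delta_{\Lambda_T}$.

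\textbf{Step 3: conclude.} Lemma~\ref{lem:admissible} yields that the poset is weakly boundedly graded and has the required meets and joins. As stated, that lemma says upper bounded pairs have the meet; I read it as the intended statement that upper bounded pairs have the join and lower bounded pairs have the meet, which is what its proof establishes.

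\textbf{Main obstacle.} The main obstacle is Step 1, namely checking that the order $<$ on the link agrees with the order transported via $\phi$. Concretely, two link vertices of types $S_1<S_2$ are adjacent in $\Delta^P_\Lambda$ if and only if their images are $\sim$ in $\Delta'_{\Lambda_T}$. One direction is clear. For the other, I would use Proposition~\ref{prop:intersection} on the three cosets $x,x_1,x_2$: if the three pairwise intersect, they have a common element, and the common element restricted to the $\Lambda_T$-factor gives the equivalence. The generators of $T$ that are dropped by $\phi$ are already in $\hat S_i\cap T$ bookkeeping and do not affect intersections within $gA_{\hat T}$.
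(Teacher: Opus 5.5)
Your proposal is correct and is essentially the paper's proof. You identify $\lk(x,\Delta^P_\Lambda)$ with the poset of vertices of $\Delta'_{\Lambda_T}$ whose types lie in $\mathcal C'_{P,T}$, via $A_{\Lambda\setminus T}\cap gA_{\Lambda\setminus R}=g(A_{\Lambda_T\setminus R'}\oplus H)$, which is exactly your observation that $R\setminus T\subset\Lambda_T$; you then combine Proposition~\ref{prop:strong} for the connected proper subdiagram $\Lambda_T$ with Lemma~\ref{lem:admissible example} and Lemma~\ref{lem:admissible}.

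You are right that the statement of Lemma~\ref{lem:admissible} swaps join and meet relative to what its proof shows. Your hesitation about needing $\Lambda_P$ connected in the type II case is resolved as follows. Interior vertices of $P$ have valence $2$, so $\Lambda\setminus\{p\}$ deformation retracts onto $\Lambda_P$, and the hypothesis of the lemma is therefore equivalent to $\Lambda_P$ being connected.
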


\begin{proof}
Let $A_{\Lambda\setminus T}$ be the subgroup of $A_\Lambda$ generated by vertices in $\Lambda\setminus T$.
Up to a left translation, we assume $x$ corresponds to the identity coset $A_{\Lambda\setminus T}$, which contains $A_{\Lambda_T}$ as a direct summand. Suppose $A_{\Lambda\setminus T}\cong A_{\Lambda_T}\oplus H$ where $H$ is another standard parabolic subgroups.
Vertices of $\lk(x,\Delta^P_\Lambda)$ correspond to left cosets of form $gA_{\Lambda\setminus R}$ with $g\in A_{\Lambda\setminus T}$ and $R\in \mathcal C_{P,T}$. Let $R'=\phi(R)=R\cap \Lambda_T$. By the discussion in the beginning of Section~\ref{subsec:graph theoretic},  $$A_{\Lambda\setminus T}\cap gA_{\Lambda\setminus R}=g(A_{\Lambda_T\setminus R'}\oplus H).$$
This implies the following alternative description of $\lk(x,\Delta^P_\Lambda)$: vertices are in 1-1 corresponds with left cosets of form $g A_{\Lambda_T\setminus R'}$ with $g\in A_{\Lambda_T}$ and $R'\in \mathcal C'_{P,T}$. Two vertices are adjacent if their types are comparable with respect to the partial order on $\mathcal C'_{P,T}$ defined in Lemma~\ref{lem:admissible example} and they are contained in the same simplex of $\Delta_{\Lambda_T}$. As $\Lambda_T$ satisfies the labeled 4-cycle property and it is connected, Proposition~\ref{prop:strong} implies that $\Delta_{\Lambda_T}$ satisfies the strong labeled 4-cycle property. Now we are done by Lemma~\ref{lem:admissible} and Lemma~\ref{lem:admissible example}.
\end{proof}

\begin{proof}[Proof of Proposition~\ref{prop:mincutAn}]
By Lemma~\ref{lem:latticecut}, there is a rank function $$f:\mc_{\Lambda_P}(\{a\},\{b\})\to \mathbb Z.$$
We can extend $f$ to be $f:\mathcal C_P \to \mathbb Z$ such that $f(\{s\})=f(\{b\})+k$ for any interior vertex $s$ of $P$, where $k$ is the distance of $s$ and $b$ in $P$.
Then $f$ is a morphism of partial cyclically ordered sets (with respect to the canonical cyclic order on $\mathbb Z$), i.e. $[x,y,z]$ implies $[f(x),f(y),f(z)]$. Then $f$ induces  $(\Delta^P_\Lambda)^0\to\mathbb Z$ by considering types of vertices in $\Delta^P_\Lambda$, which we also denote by $f$. Given a simplex in $\Delta^P_\Lambda$, then the collection of types of its vertices are pairwise comparable in $\mathcal C_P$, hence forms a cyclically ordered subset of $\mathcal C_P$. Hence Definition~\ref{def:A_n} (2) and (4) follow. Definition~\ref{def:A_n} (3) follows from Lemma~\ref{lem:link of mincut}. It remains to prove the simply-connectedness assertion of Definition~\ref{def:A_n} (1), which follows from Corollary~\ref{cor:Xsc}.
\end{proof}

\section{Propagation of labeled four cycle condition}
\label{sec:propagation}
The goal of this section is the following.
\begin{prop}
	\label{prop:prop}
Suppose $\Lambda$ is a connected Coxeter diagram which is not a tree. Suppose all proper induced subdiagrams $\Lambda'$ of $\Lambda$, $\Delta_{\Lambda'}$ satisfies the labeled 4-cycle condition. Then $\Delta_{\Lambda}$ satisfies the labeled 4-cycle condition.
\end{prop}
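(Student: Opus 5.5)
Fix an embedded 4-cycle $\eta=x_1x_2x_3x_4$ in $\Delta_\Lambda$ of type $\hat s\hat t\hat s\hat t$ and a connected induced subgraph $\Lambda'\ni s,t$; we must find a common neighbour of type $\hat r$ with $r\in\Lambda'$.

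\textbf{Reduction to the case $\Lambda'=\Lambda$.} If $\Lambda'\neq\Lambda$, pick $u\notin\Lambda'$. By Lemma~\ref{lem:4-cycle} and Proposition~\ref{prop:intersection} there is a vertex $x_u$ of type $\hat u$ with $x_u\sim x_i$ for all $i$; I have not verified this step and it may need refinement. If it holds, $\eta$ lies in $\lk(x_u,\Delta_\Lambda)\cong\Delta_{\Lambda\setminus\{u\}}$ (Lemma~\ref{lem:link}), which satisfies the labeled 4-cycle condition by hypothesis, and $\Lambda'\subset\Lambda\setminus\{u\}$ gives the required center. If $s,t$ are separated by some vertex, a center of type in $\Lambda'$ would likewise come from the hypothesis on a proper subdiagram via a link. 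So the essential case is $\Lambda'=\Lambda$: we need \emph{some} center.

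\textbf{Choosing $P$.} Since $\Lambda$ is not a tree, choose a path $P$ as in Section~\ref{sec:minimal cut complex}, inside a cycle (after reducing to an induced cycle and, where needed, passing to a link to arrange the valence-two condition), with $\Lambda_P$ connected and $\Lambda\setminus\{p\}$ connected for interior $p$. The choice depends on the position of $s,t$ relative to the cycle, e.g.\ $s,t$ interior to $P$ or lying in $\Lambda_P$. By Proposition~\ref{prop:mincutAn}, $X=\Delta^P_\Lambda$ is $\widetilde A_n$-like, hence contractible with $B$-geodesics and Bestvina's asymmetric metric $\ad$. For each $x_i$ let $Y_i\subset X$ be the full subcomplex spanned by vertices whose coset meets the coset of $x_i$, i.e.\ lying in a common simplex of $\Delta_\Lambda$ with $x_i$. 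Using the link description (Lemma~\ref{lem:link of mincut}, Lemma~\ref{lem:admissible example}) and the lattice property, show that $Y_i$ is connected and locally $B$-convex (left or right as appropriate), hence $B$-convex by Proposition~\ref{prop:convex}. Moreover $Y_i\cap Y_{i+1}\neq\emptyset$, via $x_i\sim x_{i+1}$ and a vertex of $X$ in the common simplex. Then show that $Y_1\cap Y_3\neq\emptyset$ or $Y_2\cap Y_4\neq\emptyset$ produces a center: a vertex $y$ of type $\hat T$ with $y\sim$ all $x_i$, from which a vertex of type $\hat r$, $r\in T$, adjacent to all $x_i$ follows by Lemma~\ref{lem:transitive}/Lemma~\ref{lem:4-cycle}.

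\textbf{The main step: degeneracy.} Among quadruples $y_i\in Y_i\cap Y_{i+1}$, minimize $\sum d(y_i,y_{i+1})$, then a secondary sum of $\ad$-distances (e.g.\ $\ad(y_1,y_3)+\ad(y_2,y_4)$ or the distances from a corner). Join consecutive $y_i$ by $B$-geodesics inside $Y_{i+1}$ and fill the 4-gon by $B$-geodesics from a corner. Proposition~\ref{prop:bnpc} (first decreasing then increasing $\ad$), together with Lemma~\ref{lem:mn}, controls the filling: minimality forces each interior fan to be of the shape of a strip (Theorem~\ref{thm:replace}). The types of vertices, coming from $\mathcal C_P$ and the cyclic order, then force the disk to collapse to a segment, so some $y_i$ lies in $Y_{i+2}$. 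I expect this combinatorial analysis of the minimal disk to be the main obstacle. It is also where the choice of $P$ relative to $s,t$ matters. I would first treat the case where one of $s,t$ is an interior vertex of $P$, since then the $Y_i$ of that type are single stars and convexity arguments are simplest.
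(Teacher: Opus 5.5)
Your plan follows the strategy sketched in the paper's introduction, but it has genuine gaps at both ends and in the middle. The opening reduction is circular. Lemma~\ref{lem:4-cycle} only produces a vertex adjacent to three of the four $x_i$. Proposition~\ref{prop:intersection} presupposes that the $\hat u$-coset already meets all four cosets, so you cannot get a common neighbour of a \emph{prescribed} type $\hat u$. Reverse the order: it suffices to find \emph{any} $x$ adjacent to all $x_i$. If $x$ has type $\hat r$ with $r\notin\Lambda'$, then $\eta\subset\lk(x,\Delta_\Lambda)\cong\Delta_{\Lambda\setminus\{r\}}$, and the hypothesis for the proper subdiagram $\Lambda\setminus\{r\}\supset\Lambda'$ finishes.

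At the other end, degeneracy does not yield a centre as you claim. A vertex of $Y_1\cap Y_3$, or a $y_i\in Y_{i+2}$, is adjacent to only two or three of the $x_i$, and Lemma~\ref{lem:transitive} and Lemma~\ref{lem:4-cycle} do not supply the missing one. The paper needs three further steps:
\begin{enumerate}
\item From a triple intersection such as $Y_2\cap Y_3\cap Y_4\neq\emptyset$, it obtains $Y_1\cap Y_2\cap Y_4\neq\emptyset$ by minimizing $\ad(v_1,v_2)+\ad(v_1,v_3)$ and applying Proposition~\ref{prop:bnpc}.
\item Connectedness of $Y_2\cap Y_4$ (Lemma~\ref{lem:convex}, Proposition~\ref{prop:convex}) gives an edge path from $x_1$ to $x_3$ through common neighbours of $x_2,x_4$.
\item Lemma~\ref{lem:reduction} shortens such a path to length two, using joins of $x_2,x_4$ in posets of minimal-cut types inside links. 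These joins exist by Corollary~\ref{cor:strong}, which rests on Proposition~\ref{prop:strong}.
\end{enumerate}
This last lemma is a separate ingredient absent from your plan.

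The larger gap is the step you flag as the main obstacle, which is where essentially the whole proof lies. Bestvina non-positive curvature plus the strip structure do not by themselves force collapse. Four cyclically intersecting $B$-convex subcomplexes need not be degenerate, and for a poor choice of $P$ the $Y_i$ coming from $\eta$ need not be either. The paper makes it work by choosing $P$ relative to $s$: take an induced cycle $C$ minimizing $d(s,C)$, its unique nearest vertex $\uls$, and $P\subset C$ a maximal valence-two subpath starting at $\uls$. This gives Lemmas~\ref{lem:disjoint} and~\ref{lem:separateuls}, which convert separation by $\uls$ in the posets $\mathcal C_{P,T}$ into adjacencies with $x_1,x_3$ via Lemma~\ref{lem:transitive}. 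The argument also tracks whether $t\in\uls^+_T$ or $t\in\uls^-_T$ along the boundary of the disk, after the non-adjacency Lemmas~\ref{lem:nonadj1} and~\ref{lem:nonadj2}. Together these force linear chains of types in $\mathcal C_{P,\{\uls\}}$ such as \eqref{eq:chain}, and ultimately an adjacency that produces a quadruple with smaller \eqref{eq:ds}, in each of the two type cases.

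The secondary minimization must also be the specific one, \eqref{eq:ads}, taken over quadruples with $d(y_2,y_3)$ and $d(y_3,y_4)$ bounded. This is what shows that $\rho_{21}\cup\rho_{14}$ is a $B$-geodesic and that opposite sides have equal length. Finally, you cannot in general arrange $s$ or $t$ to lie on $P$, since they need not lie on any cycle. When $\{s\}$ or $\{t\}\in\mathcal C_P$ the argument is the easy Lemma~\ref{lem:degen}, not the hard case.
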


In the rest of this section, $x_1x_2x_3x_4$ is a 4-cycle of type $\hat s\hat t\hat s\hat t$ in $\Delta_\Lambda$. It suffices to find $x\in \Delta^0_\Lambda$ which is adjacent to all $x_i$. Indeed, let $\Lambda'$ be a connected induced subdiagram of $\Lambda$ containing $s$ and $t$. If $x$ is of type $\hat r$ with $r\in \Lambda'$, then we are done. If $r\notin \Lambda'$, then $x_1x_2x_3x_4$ can be viewed as a 4-cycle in $\lk(x,\Delta_\Lambda)\cong \Delta_{\Lambda\setminus\{r\}}$. As $\Delta_{\Lambda\setminus\{r\}}$ satisfies the labeled 4-cycle condition by our assumption, there is a vertex $x'$ neighboring to all $x_i$ with type $\hat s'$ such that $s'\in \Lambda'$.

We assume $\Lambda$ is not a cycle, otherwise Proposition~\ref{prop:mincutAn} implies that $\Delta_\Lambda$ is an $\widetilde A_n$-like complex and the proposition follows from \cite[Lem 4.9]{huang2023labeled}.


\subsection{Choice of $P$}
\label{subsec:P}
We endow $\Lambda$ with the path metric $d$ such that each edge has length $1$.
Let $s$ be as above. Let $C\subset\Lambda$ be an induced cycle such that the value $d(s,C)$ is smallest possible.  
Let $\uls\in C$ be a vertex with $d(s,\uls)=d(s,C)$. Note that $\uls$ is unique in $C$, otherwise we can find an induced cycle which is closer to $s$.  Let $P$ be a maximal subpath of $C$ from $\uls$ to vertex $\underline{b}$ such that all interior vertices of $P$ have valence 2 in $\Lambda$ (it is possible that $P$ goes around $C$ once and $\uls=\underline{b}$). Let $X=\Delta^P_\Lambda$ for this specific choice of $P$. Let $\mathcal C_P$ and $\Lambda_P$ be as in Section~\ref{sec:minimal cut complex}. If $\uls\neq\underline{b}$, then we put a partial order on the collection of type II elements in $\mathcal C_P$ as in Lemma~\ref{lem:latticecut}, such that $\{\uls\}<\{\underline{b}\}$. We put a linear order on the collection of type I elements in $\mathcal C_P$ along $P$ such that $\{\underline{b}\}<\{\uls\}$. If $\uls=\underline{b}$, then $P=C$ and elements in $\mathcal C_P$ are vertices of $P$. So $\mathcal C_P$ has a cyclic order induced from a chosen cyclic order on $C$.
Given $T\in \mathcal C_P$, let $\mathcal C_{P,T}$ be as in Section~\ref{sec:minimal cut complex}, endowed with its partial order. 


By Proposition~\ref{prop:mincutAn}, $X$ is an $\widetilde A_n$-like complex. Let $\widehat X$ be the complex constructed from $X$ as in Section~\ref{subsec:A_nlike}.

\begin{lem}
	\label{lem:disjoint}
	Let $Q$ be a shortest edge path in $\Lambda$ from $s$ to $\uls$. Let $T\in \mathcal C_P$ with $T\neq\{\uls\}$. Then $T\cap Q=\emptyset$.
\end{lem}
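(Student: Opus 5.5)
Since $T\in\mathcal C_P$ is either a single interior vertex of $P$ (type I) or a minimal cut in $\mc_{\Lambda_P}(\{\uls\},\{\underline b\})$ (type II), I split into these two cases. Throughout I use that $Q$ has length $d(s,C)$ and meets $C$ only at its endpoint $\uls$: any other vertex of $Q$ in $C$ would be strictly closer to $s$ than $\uls$.

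\emph{Type I.} Here $T=\{p\}$ with $p$ an interior vertex of $P\subset C$ and $p\neq\uls$. By the observation above, $Q\cap C=\{\uls\}$, so $p\notin Q$ and $T\cap Q=\emptyset$.

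\emph{Type II.} Here $T\in\mc_{\Lambda_P}(\{\uls\},\{\underline b\})$ and $T\neq\{\uls\}$. By Remark~\ref{rmk:ab}, $\uls\notin T$ unless $T=\{\uls\}$, which is excluded. (If $\uls=\underline b$ there are no type II elements and nothing needs to be checked.) Suppose, for contradiction, that some vertex $q\in Q\setminus\{\uls\}$ lies in $T$. By Lemma~\ref{lem:reformulate}(2) and Remark~\ref{rmk:visit}, there is a path $\omega$ in $\Lambda_P$ from $\uls$ to $\underline b$ that meets $T$ only at $q$, and we may take it to visit $q$ once. Concatenating the arc of $Q$ from $q$ to $\uls$, the path $\omega$ traversed backwards, and the subpath of $\omega$ from $\uls$ to $q$ does not directly give a cycle, so I use $\omega$ differently. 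Let $\omega_1$ be the subpath of $\omega$ from $\uls$ to $q$ and $\omega_2$ the subpath from $q$ to $\underline b$. Then $\omega_2$ followed by $P$ (traversed from $\underline b$ to $\uls$) and then $\omega_1$ is a closed walk through $q$. This walk contains a path from $q$ to $\uls$ that avoids $\uls$'s neighbours on $Q$ in a controlled way. Shortcutting the closed walk $\omega\cup P$ yields an induced cycle $C'$ whose vertices lie in $\omega\cup P$; I expect to arrange that it passes through $q$, or through a vertex of $\omega$ close to $q$.

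The key step, and the main obstacle, is to show that some such induced cycle $C'$ satisfies $d(s,C')<d(s,\uls)$, contradicting the minimality of $d(s,C)$. Since $q\in Q\setminus\{\uls\}$, we have $d(s,q)<d(s,\uls)$, so it suffices that $q\in C'$. The plan is the following.
\begin{enumerate}
\item Take $\omega$ to be $(\{\uls\},\{\underline b\},T)$-tight and, among such paths, of minimal length.
\item Form the cycle $\omega\cup P$. Since interior vertices of $P$ have valence $2$, no chord is incident to an interior vertex of $P$, so chords can only join vertices of $\omega$.
\item Choose a shortest path in $\omega\cup P$ through $q$ and use it to extract an induced cycle containing $q$. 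If $q$ is cut off by a chord, replace $q$ by an endpoint of that chord, which is still within distance $1$ of $q$. Hence it lies at distance at most $d(s,q)+1\le d(s,\uls)$ from $s$, with equality only in a degenerate case.
\item Handle that equality case using the uniqueness of $\uls$ as the closest vertex of $C$. If two vertices of an induced cycle were both at distance $d(s,C)$ from $s$, one could produce an even closer induced cycle, as in the paper's uniqueness remark; the same mechanism should apply here.
\end{enumerate}
If this cycle-extraction argument becomes messy, the fallback is a direct argument: the edge of $Q$ at $\uls$ joins $\uls$ to a vertex $q_1$ with $d(s,q_1)=d(s,\uls)-1$. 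Connectedness of $\Lambda_P$ (which is needed for Proposition~\ref{prop:mincutAn}) together with the minimal-cut property of $T$ then forces a cycle through $q_1$, giving an induced cycle closer to $s$.
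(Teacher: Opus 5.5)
Your type I argument is correct, and more explicit than the paper's ``similar''. Every vertex of $Q$ other than $\uls$ is at distance $<d(s,C)$ from $s$, so $Q\cap C=\{\uls\}$.

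The type II case has a genuine gap, exactly at the step you flag as the main obstacle. You never show that the induced cycle extracted from $\omega\cup P$ contains $q$, or any vertex strictly closer to $s$ than $\uls$.
\begin{enumerate}
\item Step 3 claims that if a chord ``cuts off'' $q$, one of its endpoints is within distance $1$ of $q$. Nothing justifies this: a chord $\omega(i)\omega(j)$ with $i<k<j$, where $q=\omega(k)$, can have both endpoints far from $q$.
\item Even granting step 3, you only obtain a vertex at distance $\le d(s,\uls)=d(s,C)$ from $s$. That does not contradict the minimality of $d(s,C)$.
\item Step 4 (``the same mechanism should apply'') and the fallback through $q_1$ are not arguments.
\item Step 1 as phrased minimizes over all $T$-tight paths. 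The minimizer may meet $T$ at a vertex other than $q$, so you can lose $q$ altogether.
\end{enumerate}

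The missing idea is that, by the cut property of $T$, no shortcut can bypass $q$. Fix $q$ and let $\omega$ be a shortest path in $\Lambda_P$ from $\uls$ to $\underline b$ whose only vertex in $T$ is $q$; such paths exist by Remark~\ref{rmk:visit}. Note that $\uls,\underline b\notin T$: by Remark~\ref{rmk:ab}, since $T\neq\{\uls\}$, and $T=\{\underline b\}$ is impossible because $\underline b\in C$. Suppose $\omega$ repeated a vertex or had a chord in $\Lambda_P$. Shortcutting would then give a strictly shorter path from $\uls$ to $\underline b$ using only vertices of $\omega$. Since $T$ separates $\uls$ from $\underline b$ in $\Lambda_P$, that path must meet $T$, hence must contain $q$, contradicting minimality. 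So $\omega$ is embedded and induced in $\Lambda_P$ and passes through $q$.

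Now use that interior vertices of $P$ have valence $2$, and that the only edges of $\Lambda$ missing from $\Lambda_P$ are edges of $P$. Together these give that $\omega\cup P$ is an induced cycle of $\Lambda$ containing $q$. Since $d(s,q)<d(s,\uls)=d(s,C)$, this contradicts the choice of $C$. This is exactly the paper's proof ($\omega$ ``embedded and induced'' through $v$), and with this observation your steps 3 and 4 become unnecessary.
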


\begin{proof}
	Suppose $T$ is of type II. We argue by contradiction and take $v\in T\cap Q$. Then $v\neq \uls$. By Remark~\ref{rmk:visit}, there is an edge path $\omega\subset \Lambda_P$ from $\uls$ to $\underline{b}$ with $v\in \omega$, and we can assume $\omega$ is embedded and induced. Then $\omega\cup P$ is an  induced cycle in $\Lambda$ which is closer to $s$ than $C$, contradiction. The case of $T$ being type I is similar.
\end{proof}

\begin{lem}
	\label{lem:separateuls}
	Suppose $s\neq \uls$. Take $T\in \mathcal C_P$ with $T\neq\{\uls\}$. Then $\uls$ separates $s$ from $T$ in $\Lambda$.
\end{lem}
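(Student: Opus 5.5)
We argue by contradiction, following the template of Lemma~\ref{lem:disjoint}. Suppose $s\neq\uls$ and there is $T\in\mathcal C_P$ with $T\neq\{\uls\}$ such that $\uls$ does not separate $s$ from $T$. Then some $t\in T\setminus\{\uls\}$ is joined to $s$ by an edge path $\omega$ in $\Lambda\setminus\{\uls\}$. By passing to a subpath we may take $\omega$ embedded and induced, starting at $s$ and meeting $C\cup T$ only at its last vertex. In particular $\omega\cap C$ is contained in $\{\text{endpoint of }\omega\}$. The aim is to produce an induced cycle $C'$ that either passes closer to $s$ than $C$, or is at the same distance but meets the ball around $s$ in a vertex other than $\uls$. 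This is impossible, since $\uls$ is the unique closest point and $d(s,C)$ is minimal.

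\emph{Type I case.} Here $T=\{p\}$ with $p$ an interior vertex of $P$, so $p$ has valence $2$ and both its neighbours lie on $C$. Since $s\notin C$, whenever $s\neq\uls$ (as $d(s,C)>0$), any path from $s$ to $p$ must enter $C$ before reaching $p$. The first vertex of $C$ it meets is some $c\in C$ with $c\neq\uls$, reached by a path avoiding $\uls$. So the problem reduces to the following claim.

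\emph{Claim.} No vertex $c\in C\setminus\{\uls\}$ can be reached from $s$ by a path avoiding $\uls$.

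To prove the claim, take a shortest path $\omega$ from $s$ to $C$ avoiding $\uls$, landing at $c\ne \uls$. Also fix a shortest path $Q$ from $s$ to $\uls$; its interior avoids $C$. Concatenating $Q$ and $\omega$ gives a path from $\uls$ to $c$ off $C$. We close it up with one of the two arcs of $C$ between $\uls$ and $c$. Shortcutting to an induced cycle, we take the cycle formed by an induced path $\gamma\subset (Q\cup\omega)$ from $C$ to $C$ together with an arc of $C$. Choosing $\gamma$ minimal so that it meets $C$ only at its endpoints, the result is an induced cycle $C'$. This $C'$ contains a vertex of $Q\setminus\{\uls\}$, or of $\omega$, at distance less than $d(s,C)$ from $s$, unless $\gamma$ degenerates.

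\emph{Type II case.} If $T$ is of type II, then $T\subset\Lambda_P\setminus\{\uls\}$ (using Remark~\ref{rmk:ab} when $T\ne\{\underline b\}$). By Remark~\ref{rmk:visit}, $t$ lies on an induced embedded path $\omega'\subset\Lambda_P$ from $\uls$ to $\underline b$. Then $P\cup\omega'$ is an induced cycle, as in the proof of Lemma~\ref{lem:disjoint}. We now repeat the Type I argument with this cycle $C''=P\cup\omega'$ in place of $C$ and with $t\in C''$. By minimality of $d(s,C)$ we have $d(s,C'')\ge d(s,C)$, and since $\uls\in C''$ this is an equality. The path $\omega$ from $s$ to $t$ avoiding $\uls$ then yields a contradiction, just as in the Claim.

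\emph{Main obstacle.} The delicate step is the degenerate case in the cycle-shortcutting. The two paths $Q$ and $\omega$ could merge so that $\gamma$ never comes closer to $s$. To handle this, I will choose $\omega$ to minimise $d(s,\cdot)$ along its first vertex off $Q$. Let $q\in Q$ be the last common vertex of $\omega$ and $Q$. Then $d(s,q)<d(s,\uls)$, and the cycle built from the subpath of $Q$ from $q$ to $\uls$, the subpath of $\omega$ from $q$ to $c$, and an arc of $C$ contains $q$. If needed, we shortcut it by chords, using that any chord from these paths to $C$ lands at a vertex $\ne\uls$ at distance $<d(s,C)+$(something). The chosen minimality must be arranged so that every shortcut still contains a vertex strictly closer to $s$ than $d(s,C)$, or else produces a second vertex of $C$ realising $d(s,C)$. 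Both outcomes contradict the choice of $C$ and $\uls$.
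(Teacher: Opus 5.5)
\textbf{There is a genuine gap: your Claim, which carries the whole lemma, is not proved.}

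Your reductions are sound. In the type I case, and when $P=C$, the relevant vertex of $T$ lies on $C\setminus\{\uls\}$. In the type II case you pass, as the paper does, to the induced cycle $C''=P\cup\omega'$ through $\uls$ and $t$, and you correctly get $d(s,C'')=d(s,C)$.

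The problem is that shortcutting the closed walk $Q\cup\omega\cup{}$(an arc of $C$) by chords gives no control over which vertices survive.
\begin{itemize}
\item Only the vertices of $Q\setminus\{\uls\}$ are guaranteed to satisfy $d(s,\cdot)<d(s,C)$. The vertices of $\omega$ can be arbitrarily far from $s$.
\item A vertex of $\omega$ adjacent to $\uls$, or to two vertices of $C$, yields an induced cycle that misses $Q\setminus\{\uls\}$ entirely. Such a cycle gives no contradiction.
\item A path meeting $C$ only at its endpoints, together with an arc of $C$, need not be induced: interior vertices of the path may be adjacent to interior vertices of the arc.
\end{itemize}
Your final paragraph recognises this obstacle. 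But it only states what ``must be arranged'', with an unspecified ``$+$(something)'', so it does not resolve it.

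The missing idea is to fix an edge instead of shortcutting blindly.
\begin{enumerate}
\item Let $q_0$ be the vertex of $Q$ adjacent to $\uls$, so $d(s,q_0)=d(s,C)-1$. By minimality of $d(s,C)$, $q_0$ lies on no induced cycle. In particular $q_0\notin C$ and $q_0\notin C''$.
\item Under your contradiction hypothesis, consider the walk from $q_0$ back along $Q$ to $s$, then along $\omega$, then along an arc of $C$ (resp.\ the subpath of $\omega'$) to $\uls$. It meets $\uls$ only at its end, and its last edge is not $q_0\uls$, because $q_0\notin C$ (resp.\ $q_0\notin C''$). Hence the edge $q_0\uls$ is not a bridge.
\item A shortest cycle through a given edge is induced: a chord would split it into two shorter cycles, one of which still contains that edge.
\item So $q_0$ lies on an induced cycle, contradicting step 1.
\end{enumerate}

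This is exactly the content of the paper's argument. It first shows $(\eta'\cup P)\cap Q=\{\uls\}$. It then uses that $Q\cup\eta\cup\eta''$ passes through $\uls$ only once and is locally embedded there. That is precisely what makes the edge of $Q$ at $\uls$ lie on a cycle.
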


\begin{proof}
First we consider the case $T$ is of type II. We argue by contradiction and let $\eta$ be an edge path from $s$ to $x\in T$ avoiding $\uls$. We can assume $\eta$ is embedded and induced in $\Lambda$.  Let $\Lambda_P$ be as in the beginning of Section~\ref{sec:minimal cut complex}. Then Remark~\ref{rmk:visit} implies that there is an edge path $\eta'\subset \Lambda_P$ from $\uls$ to $\underline b$ passing through $x$ such that $\eta'$ is $T$-tight. We can assume $\eta'$ is embedded and induced in $\Lambda_P$. Then $\eta'\cup P$ gives an embedded and induced cycle in $\Lambda$. Then $(\eta'\cup P)\cap Q=\{\uls\}$, otherwise we found a cycle closer to $s$ than $C$. Let $\eta''$ be the subpath of $\eta'$ from $\uls$ to $x$. Now we consider the closed path $Q\cup \eta\cup\eta''$, where $Q$ is in Lemma~\ref{lem:disjoint}. From $(\eta'\cup P)\cap Q=\{\uls\}$, we know the path $Q\cup \eta''$ is locally embedded at $\uls$. On the other hand, $\uls\notin \eta$. So we can produce an induced and embedded cycle from $Q\cup \eta\cup \eta''$ which is closer to $s$ than $C$, contradiction. The case of $T$ being of type I is similar.
\end{proof}

\subsection{Some generalized 4-cycles in $X$}

\begin{lem}
	\label{lem:generalized 4cycle1}
	Take a vertex $u\in X$ of type $\hat T_u$. Take $z_1,z_2,w\in (\lk^0(u,X),<_{u})$ such that $w$ is either the join or meet of $z_1$ and $z_2$. Let $x$ be a vertex of $\Delta_\Lambda$ of type $\hat s$ with $s\notin T_u$ and $x\sim u$. Suppose $xz_1wz_2$ forms a generalized 4-cycle in $\Delta'_\Lambda$. Then $x\sim w$.
\end{lem}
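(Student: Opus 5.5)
The plan is to reduce to the join/meet property of $(\lk^0(u,X),<_u)$ from Lemma~\ref{lem:link of mincut}, via the first part of Lemma~\ref{lem:4-cycle}. By symmetry it suffices to treat the case that $w$ is the join of $z_1$ and $z_2$; the meet case is identical with all inequalities reversed.

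\emph{Step 1: absorb $u$ into $x$.} Since $x\sim u$ and $s\notin T_u$, there is a simplex of $\Delta_\Lambda$ containing $x$ and $\sigma_u$. Let $\bar x$ be the vertex of $\Delta'_\Lambda$ of type $\widehat{T_u\cup\{s\}}$ whose coset is the intersection of the cosets of $x$ and $u$. Then $\bar x\sim x$ and $\bar x\sim u$, and a vertex is $\sim\bar x$ exactly when it is $\sim$ to both $x$ and $u$, because by Proposition~\ref{prop:intersection} pairwise intersecting cosets have a common point. The vertices $z_1,z_2,w$ lie in $\lk(u,X)$, so each is $\sim u$; each $z_i$ is also $\sim x$. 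Hence $\bar x\sim z_1$ and $\bar x\sim z_2$, and $\bar x z_1 w z_2$ is a generalized $4$-cycle in $\Delta'_\Lambda$.

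\emph{Step 2: apply Lemma~\ref{lem:4-cycle}.} Apply its first part to the generalized $4$-cycle $z_1\, w\, z_2\, \bar x$, in the ordering where $w$ and $\bar x$ are opposite. This gives a vertex $w'$ of $\Delta'_\Lambda$ of the same type $\hat T_w$ as $w$ with $w'\sim z_1$, $w'\sim z_2$ and $w'\sim\bar x$. From $w'\sim \bar x$ we get $w'\sim u$ and $w'\sim x$. Since $T_w\in\mathcal C_{P,T_u}$ is comparable with $T_u$, and comparable with and strictly above each $T_{z_i}$ (because $z_i<_u w$), the vertex $w'$ is a vertex of $\lk(u,X)$. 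Moreover $w'$ is adjacent to $z_i$ in $X$ with $z_i<_u w'$ for $i=1,2$, so $w'$ is a common upper bound of $z_1,z_2$ in $(\lk^0(u,X),<_u)$.

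\emph{Step 3: conclude $w=w'$.} Since $w$ is the join, $w\le_u w'$. If $w\neq w'$, then $w<_u w'$ requires $T_w<T_{w'}$ in $\mathcal C_{P,T_u}$, which is impossible since $T_w=T_{w'}$. Hence $w=w'$, and therefore $x\sim w$.

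The step I expect to need the most care is Step 2: checking that $w'$ really lies in $\lk(u,X)$ and that the relation $<_u$ there agrees with adjacency plus comparison of types. This is where passing to $\bar x$ rather than $x$ matters, since it forces $w'\sim u$. Once that is in place, the join property supplied by Lemma~\ref{lem:link of mincut} finishes the argument.
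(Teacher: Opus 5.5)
Your proof is correct and follows the paper's argument: the first part of Lemma~\ref{lem:4-cycle} produces a vertex $w'$ of the same type as $w$ that is a common upper bound of $z_1,z_2$ in $(\lk^0(u,X),<_u)$, and then the join property together with equality of types forces $w=w'$. The paper guarantees that $w'$ lies in the link by passing to $\Delta'_{\Lambda\setminus T_u}$, identifying $\lk(u,X)$ with cosets in $A_{\Lambda\setminus T_u}$ as in Lemma~\ref{lem:link of mincut}, whereas your auxiliary vertex $\bar x$ of type $\widehat{T_u\cup\{s\}}$ achieves the same thing directly inside $\Delta'_\Lambda$; one small omission is that when $w=z_i$ your claim ``strictly above each $T_{z_i}$'' fails, but in that case $x\sim z_i=w$ is immediate.
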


\begin{proof}
	We consider the case when $w$ is the join of $z_1$ and $z_2$.
	Let $\Lambda'$ be the full subgraph spanned by vertices in $\Lambda\setminus T_{u}$. 
Then $x$ corresponds to a vertex in $\Delta_{\Lambda'}$, and $w,z_1,z_2$ correspond to vertices $w',z'_1,z'_2$ of $\Delta'_{\Lambda'}$ as in the proof of Lemma~\ref{lem:link of mincut}. By Lemma~\ref{lem:4-cycle}, there is a vertex $w''\in \Delta'_{\Lambda'}$ (also viewed as an element in $(\lk^0(u,X),<_{u})$) with the same type as $w'$ such that $w''\sim\{x,z'_1,z'_2\}$. Then $w''$ is an upper bound of $z'_1$ and $z'_2$ in $(\lk^0(u,X),<_{u})$. As $w'$ is the join of $z'_1$ and $z'_2$, $w'\le_u w''$. As $w'$ and $w''$ have the same type, $w'=w''$. So $x\sim w'$, hence $x\sim w$. Then case of $w$ being the meet of $z_1$ and $z_2$ is similar.
\end{proof}

\begin{lem}
	\label{lem:generalized 4cycle2}
	Take a vertex $u\in X$ of type $\hat T_u$. Let $z,w$ be adjacent vertices in $(\lk^0(u,X),<_{u})$ with $z<_u w$. Let $x,y$ be adjacent vertices in $\Delta_\Lambda$ of type $\hat s,\hat t$ with $s,t\notin T_u$, and assume $\{x,y\}\sim u$. Suppose $xyzw$ forms a generalized 4-cycle in $\Delta'_\Lambda$. Then there is a vertex $\theta\in (\lk^0(u,X),<_{u})$ with $z\le \theta\le w$ such that $\theta\sim\{x,y,z,w\}$.
\end{lem}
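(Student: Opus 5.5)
The plan is to pass to the link of $u$, exactly as in the proof of Lemma~\ref{lem:generalized 4cycle1}, and then apply the strong labeled 4-cycle property there. Let $\Lambda'$ be the full subgraph of $\Lambda$ spanned by $\Lambda\setminus T_u$. Since $\{x,y,z,w\}\sim u$, we translate so that $u$ corresponds to the identity coset. Then $x,y$ become vertices of $\Delta_{\Lambda'}$ (using $s,t\notin T_u$). Likewise $z,w$ become vertices $z',w'$ of $\Delta'_{\Lambda_{T_u}}$, with types $R'_z<R'_w$ in $\mathcal C'_{P,T_u}$, as in the proof of Lemma~\ref{lem:link of mincut}. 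By Lemma~\ref{lem:admissible example}, the set $\mathcal C'_{P,T_u}$ with its partial order is admissible in $\Lambda_{T_u}$. Since $\Lambda_{T_u}$ is a connected proper induced subdiagram, Proposition~\ref{prop:strong} shows that $\Delta_{\Lambda_{T_u}}$ has the strong labeled 4-cycle property. The connected component $\Lambda_{T_u}$ is a direct factor of $\Lambda'$, so I will also project $x$ and $y$ to $\Delta'_{\Lambda_{T_u}}$. Concretely, I replace $x$ (resp.\ $y$) by the vertex of $\Delta'_{\Lambda_{T_u}}$ whose coset is the intersection of the coset of $x$ (resp.\ $y$) with the identity $A_{\Lambda_{T_u}}$-coset. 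Call these $\bar x,\bar y$; they have types $\hat A,\hat B$, where $A=\{s\}\cap\Lambda_{T_u}$ and $B=\{t\}\cap\Lambda_{T_u}$.

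If $s$ (or $t$) is not in $\Lambda_{T_u}$, then $\bar x$ is the whole identity coset and is $\sim$ everything. So the only content is when $s,t\in\Lambda_{T_u}$. Since $x\sim y$, we may as well work with the single vertex $v$ corresponding to $\bar x\cap\bar y$, of type $\widehat{\{s,t\}}$. We then have a generalized triangle $v z' w'$. The cleaner formulation is a generalized 4-cycle $\bar x\,\bar y\,z'\,w'$ with types $\hat A,\hat B,\hat R_z,\hat R_w$. This is not of the alternating form required in Definition~\ref{def:strong labaled}, so I will create one by repetition. Set $v'$ to be $\bar x\cap\bar y$, of type $\widehat{A\cup B}$. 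Then $v'\,z'\,v'\,w'$ is a generalized 4-cycle of type $\widehat{A\cup B},\hat R_z,\widehat{A\cup B},\hat R_w$, but this is degenerate. The right move is instead to consider the 4-cycle $z'\,v'\,w'\,v'$ alternating between the types $\hat R_z$ (at $z'$) and $\hat R_w$ (at $w'$), with $v'$ placed in between. This only yields a center of $\{z',w',v'\}$ if $z'=w'$ in type.

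So I will instead use Lemma~\ref{lem:4-cycle} on the cycle $v'z'w'$, doubled, as follows. Since $z'\sim w'$, the cosets of $z',w',v'$ pairwise intersect, and by Proposition~\ref{prop:intersection} they have a common element $g$. I then take $\theta_0$ to be the vertex of type $\hat M$ through $g$, where $M\in\mc_{\Lambda_{T_u}}(R'_z,R'_w)$ is chosen suitably. For any such $M$, the coset $gA_{\Lambda\setminus M}$ contains $g$ and so is $\sim$ to $x,y,z,w$. By Definition~\ref{def:admissible}(2), $M\in\mathcal C'_{P,T_u}$ and $R'_z\le M\le R'_w$. Taking $M=R'_z$ itself (which lies in $\mc(R'_z,R'_w)$) gives $\theta=z$ trivially. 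The real point is therefore that $\theta$ should be adjacent to $z,w$ in $X$, which holds because comparable types together with a common coset element give adjacency.

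The argument is thus essentially Proposition~\ref{prop:intersection} plus admissibility. The main obstacle I anticipate is justifying that the three cosets (of $x$, $y$ and $z$, $w$) pairwise intersect within the link, i.e.\ correctly transporting $\sim$ through the direct-product decomposition $A_{\Lambda\setminus T_u}\cong A_{\Lambda_T}\oplus H$. If the pairwise intersection fails, I would fall back to applying the strong labeled 4-cycle property to $\bar x\,z'\,\bar y\,w'$ after merging types via Lemma~\ref{lem:transitive}.
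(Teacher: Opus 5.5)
Your reduction to the link of $u$, and your treatment of the cases $s\notin\Lambda_{T_u}$ or $t\notin\Lambda_{T_u}$, are fine. The core step, however, fails. The hypothesis that $xyzw$ is a generalized 4-cycle gives only $x\sim y$, $y\sim z$, $z\sim w$ and $w\sim x$; nothing relates $x$ to $z$ or $y$ to $w$.

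Since $v'$ comes from the intersection of the cosets of $x$ and $y$, the relation $v'\sim z'$ says exactly, by Proposition~\ref{prop:intersection}, that $z\sim x$ and $z\sim y$. You need this relation both for your ``generalized triangle'' $vz'w'$ and for the claim that the cosets of $z',w',v'$ pairwise intersect. But it already means $\theta=z$ satisfies the lemma. Likewise $v'\sim w'$ is the statement $w\sim y$.

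So this is not a technicality about transporting $\sim$ through $A_{\Lambda\setminus T_u}\cong A_{\Lambda_{T_u}}\oplus H$. The claim is false in general, and assuming it amounts to assuming the conclusion, which is why your construction collapses to $\theta=z$. The fallback does not help either: $\bar x\,z'\,\bar y\,w'$ is not a generalized 4-cycle for the same reason. Moreover its four types $\hat s,\hat R'_z,\hat t,\hat R'_w$ are pairwise distinct, whereas Definition~\ref{def:strong labaled} requires opposite vertices of equal type.

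The missing idea is an iteration combining Lemma~\ref{lem:4-cycle} with the lattice structure of the link. Work in $\Delta'_{\Lambda'}$ with $\Lambda'=\Lambda\setminus T_u$. Apply Lemma~\ref{lem:4-cycle} to the generalized 4-cycle $w\,x\,y\,z$ to get $z''$ of the same type as $z$ with $z''\sim\{w,x,y\}$; then $z''$ is a vertex of $\lk(u,X)$ with $z''<_u w$.

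If $z''=z$, take $\theta=z$. Otherwise $z$ and $z''$ have the common upper bound $w$, so by Lemma~\ref{lem:link of mincut} they have a join $z_1$ with $z<_u z_1\le_u w$. Lemma~\ref{lem:generalized 4cycle1} applied to $y\,z\,z_1\,z''$ then gives $y\sim z_1$. Now $x\,y\,z_1\,w$ is a generalized 4-cycle of the same shape, with $z$ replaced by the strictly larger $z_1$, so you can repeat.

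Strictly increasing chains in $(\lk^0(u,X),<_u)$ have bounded length, because its rank function factors through the finite poset $\mathcal C_{P,T_u}$. So the process ends at some $\theta$ with $z\le_u\theta\le_u w$ for which the vertex produced by Lemma~\ref{lem:4-cycle} is $\theta$ itself, i.e.\ $\theta\sim x$. Together with $\theta\sim y$, and $\theta\sim z,w$ (from $z\le_u\theta\le_u w$), this is the required vertex. The strong labeled 4-cycle property enters only through the existence of joins in the link, not by being applied to $xyzw$ directly.
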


\begin{proof}
	Let $\Lambda'$ be the full subgraph spanned by vertices in $\Lambda\setminus T_{u}$. 
	Then $x,y$ correspond to vertices in $\Delta_{\Lambda'}$, and $w,z$ correspond to vertices $w',z'$ of $\Delta'_{\Lambda'}$ as in the proof of Lemma~\ref{lem:link of mincut}. Suppose $w$ is of type $\hat T_{w}$. Then $T_{w'}=T_{w}\cap \Lambda'$ and $T_{z'}=T_{z}\cap \Lambda'$. 
	Lemma~\ref{lem:4-cycle} implies there is a vertex $z''$ with the same type as $z'$ such that $z''\sim\{w',x,y\}$. If $z''=z'$, then lemma is proved. Suppose $z''\neq z'$. Then $w'$ is an upper bound for $\{z',z''\}$, when we view these elements as in $(\lk^0(u,X),<_{u})$. By Lemma~\ref{lem:link of mincut}, $z''$ and $z'$ have the join in this poset, represented by $z'_{11}\in \Delta'_{\Lambda'}$. Note that $z'< z'_{11}\le w'$. By Lemma~\ref{lem:generalized 4cycle1}, $z'_{11}\sim y$. Now we repeat this argument with $z'_{11}$ replacing the role $z'$, either to find $x\sim z'_{11}$ and the claim is proved, or we produce $z'_{12}$ with $z'_{12}\sim\{y,w',z'_{11}\}$ and $z'_{11}<z'_{12}\le w'$ (this implies $z'\sim z'_{12}$). As the length of strictly increasing chain in $(\lk^0(u,X),<_{u})$ is bounded, this stops after finitely many steps and we find the desired element. 
\end{proof}

%
%

\begin{lem}
	\label{lem:convex}
Let $x\in \Delta_\Lambda$ be a vertex. Let $Y_x$ be the full subcomplex of $X$ spanned by vertices that are contained in the same simplex of $\Delta_\Lambda$ as $x$. Then $Y_x$ is connected. Moreover, $Y_x$ is a both left locally $B$-convex and right locally $B$-convex subcomplex of $X$ in the sense of Definition~\ref{def:Bconvex}.
\end{lem}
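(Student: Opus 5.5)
Up to left translation we assume $x$ corresponds to the coset $A_{\hat s}$. A vertex $v\in X$ of type $\hat T$ lies in $Y_x$ exactly when its coset $gA_{\hat T}$ meets $A_{\hat s}$; equivalently $v$ is a vertex of $X$ whose coset can be chosen as $gA_{\hat T}$ with $g\in A_{\hat s}$. So $Y_x$ is the image, under the natural map, of the pieces of $X$ coming from $A_{\hat s}$. The proof has three steps: connectedness, then local left $B$-convexity, then local right $B$-convexity (the last by symmetry, exchanging joins and meets).

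\textbf{Connectedness.} Let $v,v'\in Y_x$ have cosets $gA_{\hat T}$ and $g'A_{\hat T'}$ with $g,g'\in A_{\hat s}$. Write $g^{-1}g'$ as a word in the generators of $S\setminus\{s\}$, and walk through $A_{\hat s}$ one generator at a time. For each generator $r\ne s$ I need some $R\in\mathcal C_P$ with $r\notin R$; the cosets $hA_{\hat R}$ for successive $h$ then lie in $Y_x$ and consecutive ones overlap. This should follow as in the covering argument of Lemma~\ref{lem:Delta1sc}: any vertex avoids some element of $\mathcal C_P$, using $\{\uls\}$, $\{\underline b\}$ or an interior vertex of $P$. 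Consecutive vertices of $Y_x$ with intersecting cosets but incomparable types are then joined through an auxiliary vertex of type I, or through $\{\uls\},\{\underline b\}$, which are comparable to everything. Connectedness of $\Delta_{S,S'}$-type complexes (Lemma~\ref{lem:c}) is the model.

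\textbf{Local convexity.} Fix $y\in Y_x$ of type $\hat T_y$, so $x\sim y$. Take $y_1,y_2\in\lk(y,Y_x)$, so $y_i\sim x$, and suppose the meet $w=y_1\wedge y_2$ exists in $(\lk^0(y,X),<_y)$. We must show $w\sim x$.

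First suppose $s\notin T_y$. Then $xy_1wy_2$ is a generalized 4-cycle in $\Delta'_\Lambda$: indeed $w\sim y_i$ because they are adjacent in $X$. Lemma~\ref{lem:generalized 4cycle1}, applied with $u=y$, gives $x\sim w$ directly. The join case, giving right convexity, is identical.

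Now suppose $s\in T_y$. Then $x$ is a vertex of the simplex of $\Delta_\Lambda$ whose barycenter is $y$. Since every vertex of $\lk(y,X)$ has coset containing or intersecting the coset of $y$, and $\hat T_y\subset\hat s$, the coset of $y$ is contained in $A_{\hat s}$. Hence every vertex adjacent to $y$ is $\sim x$, and the claim is automatic.

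\textbf{Main obstacle.} The delicate point is connectedness when two vertices of $Y_x$ have incomparable type II types. There the intermediate vertex must be chosen carefully. I would first try the lattice structure of $\mc_{\Lambda_P}(\{\uls\},\{\underline b\})$ (Lemma~\ref{lem:latticecut}) to interpolate through the join of the two types, whose coset still meets $A_{\hat s}$ since both original cosets do. If that fails, I would fall back on Proposition~\ref{prop:intersection}.
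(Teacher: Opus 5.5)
Your proposal is correct and follows the paper's proof. The convexity step is exactly the paper's case split: $s\in T_y$ gives $x\sim w$ for free, and $s\notin T_y$ is Lemma~\ref{lem:generalized 4cycle1} with $u=y$. Your word-walk through $A_{\hat s}$ via singleton types $\{\uls\}$, $\{\underline b\}$ or type I vertices, which are comparable to every element of $\mathcal C_P$, is an unpacked form of the paper's argument: every vertex of $Y_x$ is adjacent to a type $\hat s_1$ vertex of $Y_x$, and the $\hat s_1,\hat s_2$ part is a copy of the connected complex $\Delta_{\Lambda\setminus\{s\},s_1s_2}$ (Lemma~\ref{lem:c}). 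So the ``main obstacle'' you flag is already dispatched by your own auxiliary-vertex trick, and no lattice joins are needed.
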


\begin{proof}
Suppose $x$ is of type $\hat s$. We assume $s\notin P$, otherwise $x\in \Delta^P_\Lambda$ and $Y_x$ is the close star of $x$ in $\Delta^P_\Lambda$ which is connected.
Without loss of generality, we assume $x$ corresponding to the identity coset $A_{\hat s}$. Then vertices in $Y_x$ corresponds to left cosets of form $gA_{\hat T}$ such that $T\in \mathcal C_P$ and $gA_{\hat T}\cap A_{\hat s}\neq\emptyset$. Take two distinct vertices $s_1$ and $s_2$ in $P$. By definition of $\Delta^P_\Lambda$, any vertex in $Y_x$ is adjacent to a vertex of type $\hat s_1$ in $Y_x$. However, the full subcomplex of $Y_x$ spanned by all type $\hat s_1$ or $\hat s_2$ vertices is a copy of $\Delta_{\Lambda\setminus\{s\},s_1s_2}$, which is connected by Lemma~\ref{lem:c}. Thus $Y_x$ is connected. 


We only treat the left locally $B$-convex case, as the other case is similar.
Given vertex $y\in Y^0_x$ and a pair of vertices $y_1,y_2$ of $\lk(y,Y_x)$. We assume they have a meet $z=y_1\wedge y_2$ in the poset $(\lk(y,X),\le_y)$. Suppose $x$ has type $\hat s$, and $y$ has type $\hat T$. If $s\in T$, then $x\sim z$ since $y\sim z$. If $s\notin T$, then $x\sim z$ by Lemma~\ref{lem:generalized 4cycle1}.
\end{proof}


\subsection{Reduction}
Let $x_1,x_2,x_3,x_4,s,t$ be as in the beginning of Section~\ref{sec:propagation}.

\begin{lem}
	\label{lem:reduction}
Suppose there is an embedded edge path $w_1\cdots w_n$ in $\Delta_\Lambda$ from $x_1$ to $x_3$ such that each $w_i$ is adjacent to both $x_2$ and $x_4$. Then there is an $x\in \Delta^0_\Lambda$ adjacent to all $x_i$, hence Proposition~\ref{prop:prop} holds.
\end{lem}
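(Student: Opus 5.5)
We work inside the links of $x_2$ and $x_4$ and induct on the length $n$ of the path. Write $w_1=x_1$, $w_n=x_3$. By hypothesis every $w_i$ lies in $\lk(x_2,\Delta_\Lambda)\cap\lk(x_4,\Delta_\Lambda)$. Since $x_2,x_4$ both have type $\hat t$, each $w_i$ has type $\hat r_i$ with $r_i\neq t$. The reason to use the path is that consecutive vertices are adjacent, so each $w_iw_{i+1}$ is an edge of $\lk(x_2)\cap\lk(x_4)$. The aim is a vertex adjacent to all four $x_i$, and the paragraph after Proposition~\ref{prop:prop} already reduces the proposition to finding such a vertex.

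\textbf{Step 1 (one edge at a time).} Fix $i$. Then $x_2\,w_i\,x_4\,w_{i+1}$ is a closed 4-cycle. It is not quite special, since $w_i,w_{i+1}$ may have different types, but it is a generalized 4-cycle in $\Delta'_\Lambda$. Replace the edge $w_iw_{i+1}$ by the barycenter $y_i$ of the edge, so $y_i\sim w_i$ and $y_i\sim w_{i+1}$. Now $x_2\,y_i\,x_4\,y_i$ is degenerate, so trivially $y_i\sim x_2,x_4$. This shows that consecutive $w$'s together with $x_2,x_4$ lie in common simplices; no center is needed at this stage.

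\textbf{Step 2 (reduce to a path in a smaller link).} The real input is to pass into the link of a suitable vertex and apply the labeled 4-cycle condition for proper subdiagrams. Consider $w_2$, which is adjacent to $x_1,x_2,x_4$. If $n=2$, then $w_2=x_3$ is adjacent to $x_1=w_1$. Since $x_1$ and $x_3$ have the same type, this forces $x_1=x_3$, contradicting that the 4-cycle is embedded. So $n\ge 3$.

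Suppose $w_2$ has type $\hat r$ with $r\neq s,t$. Then $\lk(w_2)\cong\Delta_{\Lambda\setminus\{r\}}$ contains $x_1,x_2,x_4$. I would run the induction as follows: shorten the path by finding a vertex $w'$ adjacent to $x_1$ (or to $x_3$), $x_2$, $x_4$ and $w_3$, and then replace the path by a shorter one.

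The cleanest route seems to be to take the vertex $w_{j}$ closest to $x_3$ along the path for which $x_1,\dots$ can be captured in a proper link. More concretely, I would take the last index $k$ with $w_k\sim x_1$. If $k\ge2$ and $w_k\sim x_3$ as well, we are done with $x=w_k$. Otherwise, inside $\lk(w_k)\cong\Delta_{\Lambda\setminus\{r_k\}}$ (a proper subdiagram) I would use Lemma~\ref{lem:4-cycle}. Applied to the generalized cycle $x_1\,x_2\,w_{k+1}\,x_4$, it produces a vertex of type $\hat s$, call it $x_1'$, with $x_1'\sim x_2,w_{k+1},x_4$. This trades $x_1$ for a new vertex $x_1'$ strictly further along the path. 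Iterating terminates, since the remaining path length decreases, at a vertex of type $\hat s$ that equals or is adjacent to $x_3$ through the path end.

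The difficulty is that this replaces $x_1$, and we must transport a center back. A center $z$ for $x_1'x_2x_3x_4$ must be upgraded to a center for $x_1x_2x_3x_4$. I would handle this with the labeled 4-cycle condition in the link of $w_k$, which is of proper type by hypothesis. The cycle $x_1\,x_2\,x_1'\,x_4$ is a special $\hat s\hat t\hat s\hat t$ cycle inside $\lk(w_k)$ (or is degenerate), so it has a center $z'$ in that link. Combining $z'$, $w_k$ and the center for the shortened configuration via Lemma~\ref{lem:transitive} and Proposition~\ref{prop:intersection} should then yield a common neighbor of all four $x_i$.

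\textbf{Main obstacle.} The hardest step is the last gluing. Two centers $z,z'$ of overlapping 4-cycles need not be adjacent. I expect to need the strong labeled 4-cycle property (Proposition~\ref{prop:strong}) in $\lk(x_2)\cong\Delta_{\Lambda\setminus\{t\}}$. The reason is that all the $w_i$, $x_1$, $x_3$ and the centers lie in $\lk(x_2)\cap\lk(x_4)$, and the problem becomes showing that $x_1,x_3$ have a common neighbor there that is also adjacent to $x_4$. My first attempt would be to view everything in $\lk(x_2)$, a proper Artin complex. The path $w_1\cdots w_n$ together with $x_4$ makes a cone, so it is contractible in $\lk(x_2)$. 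Applying the strong labeled 4-cycle condition to the generalized 4-cycle $x_1\,y\,x_3\,x_4$, with $y$ the barycenter of a simplex collecting the path, should produce the required vertex.
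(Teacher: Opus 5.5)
There is a genuine gap: your induction never actually reduces the problem. Trading $x_1$ for $x_1'$ via Lemma~\ref{lem:4-cycle} in $\lk(w_k)$ and applying induction only gives a center of $x_1'x_2x_3x_4$. You then have to transport that center back to $x_1x_2x_3x_4$, and you have no mechanism for doing so.

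To see that nothing is gained, take $n=4$ with $w_2\not\sim x_3$ and $w_3\not\sim x_1$. Then $k=2$, and the new cycle $x_1'x_2x_3x_4$ already has the center $w_3$ for free. So finding a center of $x_1x_2x_3x_4$ is exactly as hard as at the start. Combining a center $z'$ of $x_1x_2x_1'x_4$ with a center $z$ of $x_1'x_2x_3x_4$ only produces another path, such as $x_1\,z'\,x_1'\,z\,x_3$, of the kind in the hypothesis, with no decrease in length.

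Moreover, $z'$ need not exist. The labeled 4-cycle condition for $\Delta_{\Lambda\setminus\{r_k\}}$ says nothing when $r_k$ separates $s$ from $t$.

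Your proposed repair via the strong labeled 4-cycle condition in $\lk(x_2)$ does not work as stated:
\begin{itemize}
\item $x_4\notin\lk(x_2)$, because distinct vertices of the same type $\hat t$ are never adjacent.
\item No simplex contains the whole path, since otherwise $x_1\sim x_3$.
\item $x_1\,y\,x_3\,x_4$ does not have the alternating type pattern $\hat A\hat B\hat A\hat B$ required in Definition~\ref{def:strong labaled}.
\end{itemize}

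The missing idea is to shorten the path itself, using canonical joins in the links of the $w_i$. In the paper's proof, take consecutive $w_i,w_{i+1}$ of types $\hat s_i,\hat s_{i+1}$. Let $\Lambda_i$ be the component of $\Lambda\setminus\{s_i\}$ containing $t$, swapping the roles of $w_i,w_{i+1}$ if needed so that $s_{i+1}\in\Lambda_i$. Consider the poset $\mathcal P$ of vertices of $\Delta'_{\Lambda_i}$ of type $\hat C$ with $C\in\mc_{\Lambda_i}(\{t\},\{s_{i+1}\})$. In $\mathcal P$ the vertices $x_2,x_4$ are minimal and have the common upper bound $w_{i+1}$. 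So Proposition~\ref{prop:strong} and Corollary~\ref{cor:strong} give their join $\theta$.

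Because the join is canonical (Lemma~\ref{lem:4-cycle} plus uniqueness, as in Lemma~\ref{lem:generalized 4cycle1}), it is forced that $\theta\sim w_{i-1}$ as well as $\theta\sim w_{i+1}$. There are then two cases:
\begin{itemize}
\item If $\theta=w_{i+1}$, you can delete $w_i$ from the path.
\item Otherwise, a second join computed in $\lk(w_{i+1})$, together with a separation argument and Lemma~\ref{lem:squeez}, shows $\theta\sim w_{i+2}$. A vertex of $\Delta_\Lambda$ in the simplex of $\theta$ then replaces $w_iw_{i+1}$.
\end{itemize}
Either way the path gets shorter, and the induction ends at $n=3$, where $w_2$ is the center. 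It is exactly this uniqueness of joins that supplies the compatibility your gluing step lacks.
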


\begin{proof}
We induct on $n$. The base case $n=3$ is clear. Suppose $w_i$ has type $\hat s_i$ for vertex $s_i\in \Lambda$, and let $\Lambda_i$ be the connected component of $\Lambda\setminus\{s_i\}$ containing $t$.

Given two consecutive vertices $w_i,w_{i+1}$. 
We assume $s_{i+1}\in \Lambda_i$ (if this is not true, then as $\Lambda$ is connected, $t$ and $s_i$ are contained in the same component of $\Lambda\setminus\{s_{i+1}\}$ and we switch the role of $w_i$ and $w_{i+1}$ for the following discussion). Let $\mathcal P$ be the collection of vertices in $\Delta'_{\Lambda_{i}}$ with type $\hat C$ for some $C\in\mc_{\Lambda_{i}}(\{t\},\{s_{i+1}\})$. We endow $\mathcal P$ with the partial order as in Corollary~\ref{cor:strong} such that elements of type $\hat t$ are minimal. In particular, $x_2,x_4,w_{i+1}$ are elements of $\mathcal P$ and $w_{i+1}$ is a common upper bound of $x_2,x_4$. By Proposition~\ref{prop:strong} and Corollary~\ref{cor:strong}, $x_2$ and $x_4$ have the join in $\mathcal P$, denoted by $\theta$. Then $\theta\sim w_{i+1}$. By an argument similar to Lemma~\ref{lem:generalized 4cycle1}, $\theta\sim w_{i-1}$.

Assume $\theta$ has type $\hat B$ with $B\in \mc_{\Lambda_{i}}(\{t\},\{s_{i}\})$. 
If $B=\{s_{i+1}\}$, then $\theta=w_{i+1}$ and $w_{i-1}\sim w_{i+1}$, which decreases the length of the path from $x_1$ to $x_3$ and we are done by induction assumption. Now suppose $B\neq\{s_{i+1}\}$. Then $B\subset \Lambda_{i+1}$.

We first consider the case $s_i\in \Lambda_{i+1}$.
Let $\mathcal P'$ be the collection of vertices in $\Delta'_{\Lambda_{i+1}}$ with type $\hat C$ for some $C\in\mc_{\Lambda_{i+1}}(\{t\},B\cup\{s_i\})$, endowed with the partial order as in Corollary~\ref{cor:strong} such that elements of type $\hat t$ are minimal. Then $w_i$ and $\theta$ together determine a maximal element in $\mathcal P'$ which is an upper bound for $x_2,x_4$.
Then we argue as before to obtain $\theta'$ of type $\hat B'$ with $B'\in \mc_{\Lambda_{i+1}}(\{t\},B\cup\{s_i\})$ such that $\theta'$ is the join of $\{x_2,x_4\}$ in $\mathcal P'$, and $\theta'\sim \{w_i,\theta,w_{i+2}\}$. 

We claim $B'\setminus\{s_i\}$ separates $t$ from $B$ in $\Lambda\setminus\{s_i\}$. Indeed, take a path $\omega$ in $\Lambda\setminus\{s_i\}$ from $t$ to a point in $B$. As $B'\in \mc_{\Lambda_{i+1}}(\{t\},B\cup\{s_i\})$, we are done if $s_{i+1}\notin\omega$. Now assume $s_{i+1}\in\omega$. 
We take the subpath $\omega'$ of $\omega$ from $t$ until it first meets $s_{i+1}$. As $B\in \mc_{\Lambda_i}(\{t\},s_{i+1})$, $\omega'$ must meets $B$ and we consider the subpath $\omega''$ of $\omega'$ from $t$ to the first time when it meets $B$. Then $\omega''\in \Lambda_{i+1}$ by the choice of $\omega'$, hence $\omega''$ meets $B'$, as desired. 

Thus we find $B''\subset B'$ such that $B''\in \mc_{\Lambda_{i}}(\{t\},B)$. Let $\theta''$ be the vertex of type $\hat B''$ such that $\theta''\sim\theta'$. As $\theta'\sim \{x_2,x_4,w_i,\theta,w_{i+2}\}$, we obtain $\theta''\sim \{x_2,x_4,w_i,\theta,w_{i+2}\}$. Lemma~\ref{lem:squeez} implies that $B''\in \mc_{\Lambda_i}(\{t\},\{s_{i+1}\})$. Thus $\theta''$ is an element of $\mathcal P$. By Lemma~\ref{lem:squeez} again, $\theta''\le \theta$ in $\mathcal P$. As $\theta''$ is an upper bound for $\{x_2,x_4\}$, we know $\theta''=\theta$. Thus $\theta\sim w_{i-1}$ and $\theta\sim w_{i+2}$, which decreases the length of the path from $x_1$ to $x_3$.

The case $s_i\notin \Lambda_{i+1}$ is similar. We instead let $\mathcal P'$ to be the collection of vertices in $\Delta'_{\Lambda_{i+1}}$ with type $\hat C$ for some $C\in\mc_{\Lambda_{i+1}}(\{t\},B)$ and let $\theta'$ to be the join of $x_2$ and $x_4$ in $\mathcal P'$. Then $\theta'\sim\{\theta,w_{i+2}\}$. We still have $\theta'\sim w_i$ as $\Delta_{\Lambda_{i+1}}$ is a join factor of $\Delta_{\Lambda\setminus\{s_{i+1}\}}$. The rest of the argument is similar.
\end{proof}


\begin{lem}
	\label{lem:degen}
	Suppose one of $\{s\}$ and $\{t\}$ is an element in $\mathcal C_P$. Then the assumption of Lemma~\ref{lem:reduction} holds, hence Proposition~\ref{prop:prop} holds.
\end{lem}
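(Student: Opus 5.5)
Suppose $\{s\}\in\mathcal C_P$; the case $\{t\}\in\mathcal C_P$ is symmetric after exchanging the roles of $(x_1,x_3)$ and $(x_2,x_4)$ in Lemma~\ref{lem:reduction}. The idea is to view $x_1,x_3$ as vertices of the $\widetilde A_n$-like complex $X=\Delta^P_\Lambda$ (Proposition~\ref{prop:mincutAn}), since they have type $\hat s$ with $\{s\}\in\mathcal C_P$. Consider the subcomplex $Y=Y_{x_2}\cap Y_{x_4}$ of $X$, where $Y_x$ is as in Lemma~\ref{lem:convex}, i.e.\ the full subcomplex spanned by vertices $\sim x_2$ and $\sim x_4$. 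Both $x_1$ and $x_3$ lie in $Y$. By Lemma~\ref{lem:convex}, each $Y_{x_i}$ is left locally $B$-convex, and an intersection of left locally $B$-convex full subcomplexes is again left locally $B$-convex, because the defining condition on meets in links is inherited.

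\textbf{Main step: a path inside $Y$.} If $Y$ were connected, Proposition~\ref{prop:convex} would put the left $B$-geodesic $x_1=v_1\cdots v_m=x_3$ of $X$ inside $Y$. Each $v_j$ is a vertex of $\Delta'_\Lambda$ of type $\hat T_j$ with $T_j\in\mathcal C_P$, and consecutive vertices lie in a common simplex of $\Delta_\Lambda$. I would avoid proving connectivity of $Y$ directly. Instead, take the $B$-geodesic in $X$ from $x_1$ to $x_3$ and show it stays in $Y$ by the local criterion Lemma~\ref{lem:B-geodesics}(2) together with Lemma~\ref{lem:generalized 4cycle1}. The difficulty is that Lemma~\ref{lem:generalized 4cycle1} needs a generalized 4-cycle to start from. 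So I would start from a generalized 4-cycle $x_1x_2x_3x_4$ in which $x_2,x_4$ are replaced by their $\mathcal P$-join in a suitable minimal-cut poset, in the manner of Lemma~\ref{lem:reduction}. Concretely, I would prove $Y$ is connected by induction on $d(x_1,x_3)$, using Lemma~\ref{lem:generalized 4cycle2} to push an edge path with endpoints in $Y$ into $Y$, one link at a time. I expect this to be the main obstacle.

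\textbf{From the path in $Y$ to the hypothesis of Lemma~\ref{lem:reduction}.} For each $v_j$ choose a vertex $w_j\in\Delta_\Lambda$ of type $\hat r_j$ with $r_j\in T_j$, lying in the simplex $\sigma_{v_j}$, so that $w_j\sim x_2,x_4$. Consecutive $v_j,v_{j+1}$ share a simplex of $\Delta_\Lambda$, hence $w_j,w_{j+1}$ are adjacent or equal. Deleting repetitions and shortcutting loops gives an embedded edge path in $\Delta_\Lambda$ from $x_1$ to $x_3$ all of whose vertices are adjacent to $x_2$ and $x_4$, and Lemma~\ref{lem:reduction} applies.
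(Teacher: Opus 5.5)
There is a genuine gap in your main step: you never produce the edge path from $x_1$ to $x_3$ inside $Y=Y_{x_2}\cap Y_{x_4}$. You treat connectivity of the intersection $Y$ as the crux, and for it you offer only a plan: an induction on $d(x_1,x_3)$ via joins in a minimal-cut poset and Lemma~\ref{lem:generalized 4cycle2}, which you yourself flag as the main obstacle. It is also unclear how that plan would start. Lemma~\ref{lem:generalized 4cycle2} needs its two $\Delta_\Lambda$-vertices $x,y$ to be adjacent, but $x_2$ and $x_4$ are opposite vertices of an embedded $4$-cycle. Moreover, replacing $x_2,x_4$ by a join changes the subcomplexes the path is supposed to lie in.

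The missing observation is that connectivity is needed only for each $Y_{x_i}$ separately, and Lemma~\ref{lem:convex} already provides it: for every vertex $x\in\Delta_\Lambda$, $Y_x$ is connected and left locally $B$-convex. So apply Proposition~\ref{prop:convex} twice. Since $x_1,x_3\in Y_{x_2}$, the left $B$-geodesic of $X$ from $x_1$ to $x_3$ lies in $Y_{x_2}$. Since $x_1,x_3\in Y_{x_4}$, the same geodesic (unique by Lemma~\ref{lem:B-geodesics}(1)) lies in $Y_{x_4}$. Hence it is an edge path from $x_1$ to $x_3$ inside $Y_{x_2}\cap Y_{x_4}$. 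This also shows the intersection is connected, so your remark about intersections of locally $B$-convex subcomplexes is not needed. This is exactly the paper's argument. With that path in hand, your final paragraph (choosing $w_j$ in $\sigma_{v_j}$, observing consecutive $w_j$ are adjacent or equal and adjacent to $x_2,x_4$, then applying Lemma~\ref{lem:reduction}) agrees with the paper.
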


\begin{proof}
Suppose $\{s\}$ is an element of $\mathcal C_P$. Then $x_1,x_3$ are vertices of $X$. Let $Y_{x_2},Y_{x_4}$ be as in Lemma~\ref{lem:convex}. Then $x_1,x_3\in Y_{x_2},Y_{x_4}$. Lemma~\ref{lem:convex} and Proposition~\ref{prop:convex} imply that $Y_{x_2}\cap Y_{x_4}$ is connected, hence contained an edge path $z_1\cdots z_n$ from $x_1$ to $x_3$. Suppose $z_i$ has type $\hat T_i$. For each $i$, let $y_i$ be vertex of $\Delta_\Lambda$ of type $\hat s_i$ with $s_i\in T_i$ and $y_i\sim z_i$. As $z_i\sim \{x_2,x_4\}$, we obtain $y_i\sim\{x_2,x_4\}$ and $y_i\sim y_{i+1}$. Thus the assumption of Lemma~\ref{lem:reduction} is satisfied.
\end{proof}

From now on, we assume $\{s\}$ and $\{t\}$ are not elements in $\mathcal C_P$.
In later subsections, we  show the assumption of Lemma~\ref{lem:reduction} still holds, hence finish the proof.

\subsection{Distance minimizing quadruples}
For $1\le i\le 4$, let $Y_i=Y_{x_i}$ be as defined in Lemma~\ref{lem:convex}, which is $B$-convex. Let $\Xi$ be the collection of all quadruple $(y_1,y_2,y_3,y_4)$ of vertices in $\Delta^P_\Lambda$ with $y_i\in Y_i\cap Y_{i+1}$ for all $i\in\mathbb Z/4\mathbb Z$. Let $d$ denotes the path metric on $X^1$ with edge length 1. 

Let $\varphi:\widehat X^0\to\widehat X^0$ be as in Section~\ref{subsec:widehatX}. Recall that there are two relations on $\widehat X^0$ (Section~\ref{subsec:widehatX}), $<$ and its transitive closure denoted by $<_t$.

\begin{lem}
	\label{lem:distance minimizing}
Let $(v_1,v_2,v_3,v_4)\in \Xi$ be an element with minimal possible value of 
\begin{equation}
	\label{eq:ds}
d(v_1,v_2)+d(v_2,v_3)+d(v_3,v_4)+d(v_4,v_1).
\end{equation}
 Then either $v_i= v_{i+1}$ for some $i$, or $d(v_1,v_2)=d(v_3,v_4)$ and $d(v_2,v_3)=d(v_4,v_1)$.
\end{lem}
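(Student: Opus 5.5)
The plan is to show, by an exchange argument, that in a minimizing quadruple each side can be compared with the opposite side in both directions. Concretely, assume $v_i\neq v_{i+1}$ for all $i$, and suppose for contradiction that $d(v_1,v_2)\neq d(v_3,v_4)$; by the symmetry $1\leftrightarrow 3$, $2\leftrightarrow 4$ of the configuration we may assume $d(v_1,v_2)>d(v_3,v_4)$. Recall the sides: $v_1v_2$ lies in $Y_2$, $v_2v_3$ in $Y_3$, $v_3v_4$ in $Y_4$ and $v_4v_1$ in $Y_1$; here we connect consecutive $v_i$ by left $B$-geodesics, which stay in the relevant $Y_i$ by Lemma~\ref{lem:convex} and Proposition~\ref{prop:convex}. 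The aim is to produce a new quadruple $(v'_1,v'_2,v_3,v_4)\in\Xi$ that keeps the side $v_3v_4$ and satisfies:
\begin{itemize}
\item $d(v'_1,v'_2)\le d(v_3,v_4)$;
\item $d(v'_2,v_3)\le d(v_2,v_3)$;
\item $d(v_4,v'_1)\le d(v_4,v_1)$.
\end{itemize}
The first inequality is strict because of our assumption, so the sum \eqref{eq:ds} strictly drops, contradicting minimality. The same argument applied to the pair $v_2v_3$, $v_4v_1$ gives the second equality.

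\textbf{Constructing $v'_1$ and $v'_2$.} The idea is to ``transport'' the short side $v_4v_3\subset Y_4$ across to $Y_2$. Work in $\Delta'_\Lambda$ with the relation $\sim$. The vertices $x_1,v_1,x_2,v_2,x_3,v_3,x_4,v_4$ form a generalized $8$-cycle, and each $v_i$ is $\sim$ to $x_i$ and $x_{i+1}$. Let $v_4=u_0,u_1,\dots,u_m=v_3$ be the left $B$-geodesic in $Y_4$, with $m=d(v_4,v_3)$. We proceed inductively along this geodesic.
\begin{itemize}
\item For $u_0=v_4$: apply the second ($5$-cycle) part of Lemma~\ref{lem:4-cycle} to the generalized $5$-cycle $x_2,v_1,x_1,v_4,x_4$ (in some cyclic arrangement). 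This should replace $v_1$ by a vertex $u'_0$ of the same type as $u_0$ with $u'_0\sim\{x_1,x_2\}$ and $u'_0\sim u_0$, so $u'_0\in Y_1\cap Y_2$.
\item Inductive step: having $u'_{j}$, apply the first part of Lemma~\ref{lem:4-cycle} to the generalized $4$-cycle $u'_{j},u_j,u_{j+1},x_2$ (or a $5$-cycle including $x_4$) to get $u'_{j+1}$ of the same type as $u_{j+1}$, with $u'_{j+1}\sim\{u'_j,x_2\}$. Since types are preserved and consecutive types are comparable in $\mathcal C_P$, the vertices $u'_j u'_{j+1}$ are adjacent in $X$, so $u'_0\cdots u'_m$ is an edge path in $Y_2$ of length $m$.
\item At the end, $u'_m$ should also be $\sim x_3$, using $u_m=v_3\sim x_3$ together with Lemma~\ref{lem:transitive}. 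Then $u'_m\in Y_2\cap Y_3$.
\end{itemize}
Set $v'_1=u'_0$ and $v'_2=u'_m$. This gives the first inequality $d(v'_1,v'_2)\le m$.

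\textbf{The remaining two inequalities.} It remains to compare $d(v'_2,v_3)$ with $d(v_2,v_3)$, and $d(v_4,v'_1)$ with $d(v_4,v_1)$. For this, choose the replacement at each step as an extremal element in the relevant vertex link: a join or meet, using Lemma~\ref{lem:generalized 4cycle1} and Lemma~\ref{lem:generalized 4cycle2} applied in the links of $u_j$. With this choice $u'_j$ lies between $u_j$ and the old vertex in the poset $<_{u_j}$. The distance comparison should then follow from Lemma~\ref{lem:smaller dist}: after lifting to $\widehat X$, the new endpoint sits between the old endpoints in $<_t$, so distances cannot increase.

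\textbf{Main obstacle.} The hard step is making the transport coherent: the vertices $u'_j$ must stay simultaneously adjacent in $X$, $\sim x_2$, and end up $\sim x_3$ and $\sim x_1$ at the two ends. I expect to need the $5$-cycle version of Lemma~\ref{lem:4-cycle} at the endpoints, together with the separation properties Lemma~\ref{lem:separateuls} and Lemma~\ref{lem:transitive}, which control how $\sim$ propagates. If the direct transport fails, the fallback is to argue on admissible lifts in $\widehat X$, via Theorem~\ref{thm:replace} and the strip picture, to compare the lengths of opposite sides.
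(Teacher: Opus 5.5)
Your proposal has a genuine gap: the ``transport'' construction, which carries the whole argument, cannot work as written.

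\textbf{Why the transport fails.} Two vertices of $\Delta'_\Lambda$ of the same type with $u\sim u'$ are equal, because two left cosets of the same standard parabolic subgroup meet only if they coincide. So asking for $u'_0$ of the type of $u_0=v_4$ with $u'_0\sim u_0$ forces $u'_0=v_4$. The further requirement $u'_0\sim x_2$ would then already put $v_4$ in $Y_1\cap Y_2\cap Y_4$. The same collapse happens at every inductive step: $u'_j\sim u_j$ forces $u'_j=u_j$.

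Moreover, the cycles you feed into Lemma~\ref{lem:4-cycle} are not closed. The vertices $x_2$ and $x_4$ are distinct vertices of the same type, so $x_2\not\sim x_4$, and $x_2,v_1,x_1,v_4,x_4$ is not a generalized $5$-cycle. Likewise, $u'_j,u_j,u_{j+1},x_2$ needs $u_{j+1}\sim x_2$, i.e.\ $u_{j+1}\in Y_2\cap Y_4$, which is exactly what you do not have.

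More fundamentally, nothing couples $Y_2$ and $Y_4$ locally. There is therefore no mechanism for copying a path of $Y_4$ into $Y_2$ with the same type sequence. The endpoint claim $u'_m\sim x_3$ via Lemma~\ref{lem:transitive} has no separation hypothesis to rest on. Finally, invoking Lemma~\ref{lem:smaller dist} for the other two inequalities requires a $<_t$-chain in $\widehat X$ from a common base point, which you never produce.

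\textbf{The missing idea.} The paper does not compare opposite sides directly. It compares the two halves of the perimeter, $v_2\to v_1\to v_4$ against $v_2\to v_3\to v_4$. Assume $d(v_2,v_3)+d(v_3,v_4)<d(v_4,v_1)+d(v_1,v_2)$. Restrict to quadruples in $\Xi$ whose sides $y_2y_3$ and $y_3y_4$ are no longer than $v_2v_3$ and $v_3v_4$, and among these minimize the asymmetric quantity $\ad(u_2,u_3)+\ad(u_1,u_3)+\ad(u_4,u_3)$.

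At a minimizer, local moves in vertex links (Lemma~\ref{lem:4-cycle}, Lemma~\ref{lem:generalized 4cycle2}, and the lattice property of links) show that $\ad(u_1,\cdot)$ strictly increases at the first step of the left $B$-geodesics from $u_2$ and from $u_4$ to $u_3$. Bestvina's non-positive curvature (Proposition~\ref{prop:bnpc}) then makes it increase all the way along these geodesics. This lets one prove that the concatenation of the left $B$-geodesics $u_2\to u_1\to u_4$ is itself the left $B$-geodesic from $u_2$ to $u_4$. Hence
\[
d(u_2,u_1)+d(u_1,u_4)=d(u_2,u_4)\le d(u_2,u_3)+d(u_3,u_4),
\]
which gives a quadruple with strictly smaller value of \eqref{eq:ds}, a contradiction.

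The two resulting equalities of sums of adjacent sides then yield equality of opposite sides. Your strategy of bounding one side by the opposite one needs some substitute for this global asymmetric-metric argument; local copying along a side cannot provide it.
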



\begin{proof}
Suppose $v_i\neq v_{i+1}$ for all $i$.
It suffices to prove $d(v_1,v_2)+d(v_2,v_3)=d(v_3,v_4)+d(v_4,v_1)$, and $d(v_2,v_3)+d(v_3,v_4)=d(v_4,v_1)+d(v_1,v_2)$. We only prove the second equality as the first will be similar. Assume by contradiction that $d(v_2,v_3)+d(v_3,v_4)<d(v_1,v_2)+d(v_2,v_3)$. Let $\Xi'\subset \Xi$ made of $(y_1,y_2,y_3,y_4)$ with $d(y_2,y_3)\le d(v_2,v_3)$ and $d(y_3,y_4)\le d(v_3,v_4)$. Let $\ad$ be the asymmetric distance discussed in Section~\ref{subsec:bestvina asymmetric}. In $\Xi'$ we select $(u_1,u_2,u_3,u_4)$ which minimizes the value of
\begin{equation}
	\label{eq:ads}
\ad(u_2,u_3)+\ad(u_1,u_3)+\ad(u_4,u_3).
\end{equation}
Let $\omega=z_1\cdots z_n$ be the left $B$-geodesic from $u_2$ to $u_3$, and let $\omega'=z'_1\cdots z'_m$ be the left $B$-geodesic from $u_4$ to $u_3$. We will show
\begin{enumerate}
	\item $\ad(u_1,z_1)<\ad(u_1,z_2)$ and $\ad(u_1,z'_1)<\ad(u_1,z'_2)$;
	\item the concatenation of the left $B$-geodesic from $u_2$ to $u_1$ and the left $B$-geodesic from $u_1$ to $u_4$ is the left $B$-geodesic from $u_2$ to $u_4$.
\end{enumerate}
Note that Assertion (2) leads to a contradiction as it implies $$d(u_2,u_1)+d(u_1,u_4)=d(u_2,u_4)\le d(u_2,u_3)+d(u_3,u_4)\le d(v_2,v_3)+d(v_3,v_4).$$ So $(u_1,u_2,u_3,u_4)$ gives an element with smaller value of \eqref{eq:ds}, contradiction. It remains to prove Assertions (1) and (2).

\smallskip

For Assertion (1), let $\omega_{12}$ be the right $B$-geodesic from $u_1$ to $u_2$ and let $w_2$ be the vertex in $\omega_{12}$ adjacent to $u_2$. By Lemma~\ref{lem:convex} and Proposition~\ref{prop:convex}, $\omega_{12}\subset Y_2$. We consider admissible lifts of $\omega_{12}$ and the left $B$-geodesic from $u_1$ to $z_2$, starting at the same vertex $\hat u_1\in \widehat X$ and ending at $\hat u_2,\hat z_2$ respectively. As $\ad(u_1,z_2)<\ad(u_1,u_2)$, by Lemma~\ref{lem:mn}, $\hat z_2<\hat u_2$. Let $\hat w_2$ be the induced lift of $w_2$. Then the right greedy property of $\omega_{12}$ implies that $\hat w_2\le \hat z_2$. Thus $z_2$ and $w_2$ are adjacent in $\Delta^P_\Lambda$ and $w_2\le_{u_2} z_2$ in $(\lk^0(u_2,\Delta^P_\Lambda),<_{u_2})$. If $z_2= w_2$, then $w_2\in Y_2\cap Y_3$ and replacing $u_2$ by $w_2$ decreases the value of \eqref{eq:ads}. Indeed, we consider admissible lifts of $u_2z_2\cdots z_n$ and $w_2z_2\cdots z_n$ with the same endpoint $\hat z_n$. The two lifts coincide on the part $z_2\cdots z_n$, however, as $u_2<_{z_2} w_2$ in $(\lk^0(z_2,\Delta^P_\Lambda),<_{z_2})$, we have $\hat u_2<\hat w_2$. Thus
$$
\ad(w_2,u_3)\le r(\hat z_n)-r(\hat w_2)<r(\hat z_n)-r(\hat u_2)=\ad(u_2,u_3).
$$
After replacing $u_2$ by $w_2$, we still obtain an element in $\Xi'$, contradiction.

Now suppose $z_2\neq w_2$.
We assume $t\notin T_{u_2}$ where $u_2$ is of type $\hat T_{u_2}$, otherwise $u_2\sim z_2$ implies $x_2\sim z_2$, hence $z_2\in Y_2\cap Y_3$ and we can replace $u_2$ by $z_2$ to reduce the value of \eqref{eq:ads} but still remain in $\Xi'$. Similarly, we assume $s\notin T_{u_2}$ (otherwise we replace $u_2$ by $w_2$).
As $x_2x_3z_2w_2$ gives a generalized 4-cycle in $\Delta'_\Lambda$, by Lemma~\ref{lem:4-cycle},  there is a vertex $\theta\in (\lk^0(u_2,\Delta^P_\Lambda),<_{u_2})$ with $z_2\le \theta\le w_2$ such that $\theta\sim\{x_2,x_3,z_2,w_2\}$.
Note that $\theta\in Y_2\cap Y_3$. We can assume $\theta\neq z_2$, then $u_2<_{z_2}\theta$ in $\lk^0(z_2,\Delta^P_\Lambda)$. By the argument in the previous paragraph, replacing $u_2$ by $\theta$ decreases the value of \eqref{eq:ads} and keeps the tuple in $\Xi'$, contradiction. Thus Assertion (1) is proved (the other part is similar).


\smallskip

Now we prove Assertion (2). Let $\omega'_{12}$ and $\omega'_{14}$ be the left B-geodesic from $u_1$ to $u_2,u_4$ respectively. Similarly we define $\omega'_{23}$ and $\omega'_{43}$. For $i=2,4$, let $\theta_i$ be the vertex on $\omega'_{1i}$ that is adjacent to $u_1$.  By Proposition~\ref{prop:bnpc}, Proposition~\ref{prop:mincutAn} and Assertion (1), we know $\ad(u_1,z_i)<\ad(u_1,z_{i+1})$ and $\ad(u_1,z'_i)<\ad(u_1,z'_{i+1})$ for any $i$. We consider admissible lifts of the paths $\omega'_{12}z_1\cdots z_n$ and $\omega'_{14}z'_1\cdots z'_m$ in $\widehat X$ starting from the same vertex $\hat u_1$. Then Lemma~\ref{lem:mn} implies that these two lifts end in the same vertex $\hat u_3=\hat z_n=\hat z'_m$, and $\ad(u_1,u_3)=r(\hat u_3)-r(\hat u_1)$. Thus $\hat u_1<_t\hat \theta_i<_t\hat u_3$ for $i=2,4$. In particular, $\hat \theta_2,\hat\theta_4$ have the join (with respect to $<_t$) which is $<\varphi(\hat u_1)$. Consequently, $\theta_2$ and $\theta_4$ have the join $\bar \theta$ in $(\lk^0(u_1,X),<_{u_1})$. 

Suppose Assertion (2) fails. Then by Lemma~\ref{lem:B-geodesics} (2), $\theta_2$ and $\theta_4$ also have the meet $\underline\theta$ in $(\lk^0(u_1,X),<_{u_1})$.
Assume $s\notin T_{u_1}$ and $t\notin T_{u_1}$ (where $u_1$ has type $\hat T_{u_1}$), otherwise we can replace $u_3$ by $\theta_2$ or $\theta_4$ to decrease \eqref{eq:ads}. Let $\Lambda_1$ be the full subgraph spanned by vertices in $\Lambda\setminus T_{u_1}$. Let  $\bar\theta',\underline\theta',\theta'_2,\theta'_4$ be vertices in $\Delta'_{\Lambda_1}$ corresponding to $\bar\theta,\underline\theta,\theta_2,\theta_4$ respectively (as in the proof of Lemma~\ref{lem:link of mincut}). As $$\underline\theta'\le_{u_1}\theta'_2\le_{u_1}\bar\theta',$$ Lemma~\ref{lem:admissible example} and Definition~\ref{def:admissible} (3) implies $T_{\theta'_2}$ separates $T_{\underline\theta'}$ from $T_{\bar\theta'}$ in $\Lambda_1$. Thus in $\Lambda_1$, $T_{\theta'_2}$ separates $t$ from one of $\{T_{\underline\theta'},T_{\bar\theta'}\}$, say $T_{\underline\theta'}$. Then Lemma~\ref{lem:transitive} implies that $x_2\sim \underline\theta'$. By considering the generalized 4-cycle $x_2\underline\theta\theta_4x_1$ and applying Lemma~\ref{lem:generalized 4cycle2}, we find $\theta\in \lk(u_1,X)$ such that $\theta\sim\{x_2,\underline\theta,\theta_4,x_1,u_1\}$ and $\underline\theta\le\theta\le\theta_4$ in $ (\lk^0(u_1,X),<_{u_1})$. In particular, $\theta\in Y_1\cap Y_2$.
Replacing $u_1$ by $\theta$ decreases the value of \eqref{eq:ads}. Indeed, the path $u_1\underline\theta\theta\theta_4$ has an admissible lift to a path from $\hat u_1$ to $\hat \theta_4$. Let $\hat\theta$ be the lift of $\theta$. Then $\hat u_2<_t\hat\theta<_t\hat u_3$. Note that the path in $X$ from $\theta$ to $\theta_4$, then follow $\omega_{14}$ and $z'_1\cdots z'_m$ to $u_3$ have an admissible lift starting at $\hat\theta$ and ending at $\hat u_3$. Thus
$$
\ad(\theta,u_3)\le r(\hat u_3)-r(\hat \theta)<r(\hat u_3)-r(\hat u_1)=\ad(u_1,u_3).
$$
The case $x_2\sim \bar\theta'$ can be handled in a similar way. This finishes the proof.
\end{proof}


Take $(u_1,u_2,u_3,u_4)$ as in Lemma~\ref{lem:distance minimizing} which minimizes \eqref{eq:ds} among $\Xi$. Let $\rho_{ij}$ be the left $B$-geodesic from $u_i$ to $u_j$.
We can assume in addition $\rho_{21}\cup\rho_{14}=\rho_{24}$ -- indeed, we consider an element minimizing \eqref{eq:ads} among $\Xi'$, which satisfies $\rho_{21}\cup\rho_{14}=\rho_{24}$ by the proof of Assertion (2). By the definition of $\Xi'$, such an element still minimizes \eqref{eq:ds} among $\Xi$. Our next goal, occupying Section~\ref{subsec:disk} to Section~\ref{subsec:case 2}, is the following.
\begin{lem}
	\label{lem:key}
There exists $i\in \mathbb Z/4\mathbb Z$ such that $u_i=u_{i+1}$.
\end{lem}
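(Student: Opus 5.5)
We argue by contradiction: suppose $u_i\neq u_{i+1}$ for all $i\in\mathbb Z/4\mathbb Z$. By Lemma~\ref{lem:distance minimizing}, $d(u_1,u_2)=d(u_3,u_4)$ and $d(u_2,u_3)=d(u_4,u_1)$. By the normalization made above, $\rho_{21}\cup\rho_{14}=\rho_{24}$. The plan is to build a triangulated disk $D$ whose boundary is the $4$-gon $\rho_{12}\rho_{23}\rho_{34}\rho_{41}$, with each side lying in the $B$-convex subcomplex $Y_{i+1}$ (Lemma~\ref{lem:convex}, Proposition~\ref{prop:convex}). We then read off from the structure of $D$ a vertex lying in $Y_1\cap Y_3$ or $Y_2\cap Y_4$, or a quadruple contradicting minimality.

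\textbf{Step 1: fill the $4$-gon.} Lift everything to $\widehat X$ via admissible lifts from a fixed lift $\hat u_1$. Using Assertion~(1) from the proof of Lemma~\ref{lem:distance minimizing}, together with Proposition~\ref{prop:bnpc} and Lemma~\ref{lem:mn}, the lifts of $\rho_{12}\rho_{23}$ and $\rho_{14}\rho_{43}$ end at a common vertex $\hat u_3$, and $\ad(u_1,\cdot)$ increases strictly along $\rho_{23}$ and along $\rho_{43}$. So the disk lives inside the interval $[\hat u_1,\hat u_3]$ in $(\widehat X^0,\le_t)$, which is a lattice by Lemma~\ref{lem:garsideA_n}. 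We fill it by strips as in Theorem~\ref{thm:replace}. Take the left quasi-normal path from $\hat u_1$ to $\hat u_3$ through $\hat u_2$, and push it vertex by vertex along $\rho_{23}$. Then do the same from the $\hat u_4$ side. This gives a grid-like disk in which every small square is a generalized $4$-cycle $(\hat a,\hat a\vee\hat b,\hat b,\hat a\wedge\hat b)$ inside a vertex link.

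\textbf{Step 2: use the $\ad$-minimality to constrain the grid.} The quadruple also minimizes \eqref{eq:ads} within $\Xi'$. Any interior vertex of $D$ adjacent to a side and lying in $Y_i\cap Y_{i+1}$ would give a smaller value of \eqref{eq:ads}, by exactly the arguments used for Assertions~(1) and~(2). To find such vertices we apply Lemma~\ref{lem:generalized 4cycle1} to join/meet squares and Lemma~\ref{lem:generalized 4cycle2} to mixed squares, using the $\Delta_\Lambda$-vertices $x_i$. This forces the first row of strips along each side to be "rigid": each strip vertex must leave $Y_{i+1}$ immediately. Combined with $d(u_1,u_2)=d(u_3,u_4)$, $d(u_2,u_3)=d(u_4,u_1)$ and Lemma~\ref{lem:smaller dist}, the disk must then be a flat rectangle, with the two families of strips crossing in a product pattern.

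\textbf{Step 3: degeneracy from the choice of $P$.} In the flat rectangle, look at the corner near $u_1$, where $\theta_2$ and $\theta_4$ have a join but no meet in $\lk(u_1,X)$. Pass to the types $\hat T$ of the vertices. The Garside decoration makes types rotate cyclically along $\mathcal C_P$ as one moves across the strips. Lemmas~\ref{lem:disjoint} and~\ref{lem:separateuls} say that $\uls$ separates $s$ from every $T\neq\{\uls\}$ in $\mathcal C_P$, which plays the role of Lemma~\ref{lem:transitive}. This should show that some vertex of type $\hat T$ with $\uls\in T$ (or of type $\{\uls\}$) on a side of $D$ satisfies $\sim x_1$ and $\sim x_3$. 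We also assume $\{s\},\{t\}\notin\mathcal C_P$, as after Lemma~\ref{lem:degen}. This gives a vertex in $Y_1\cap Y_3$, and hence a quadruple with smaller \eqref{eq:ds}, a contradiction. I expect to split this step into two cases, according to whether $t$ lies in the component of $\Lambda\setminus\{\uls\}$ containing $s$.

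\textbf{Main obstacle.} Step 3 is the hard part. The general statement is false for arbitrary convex subcomplexes, so it can only come from the interaction between the cyclic types in $\mathcal C_P$ and the location of $s$ relative to $\uls$. I would first try the case where $t$ lies on the $s$-side of $\uls$, where Lemma~\ref{lem:transitive} applies directly, and then handle the other case by swapping the roles of $s$ and $t$ in the $\ad$-minimization.
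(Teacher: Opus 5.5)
There is a genuine gap: Steps 2 and 3 state the conclusions without the argument, and the mechanisms you indicate are not the ones that work.

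\textbf{Step 2.} The conclusion that the disk is a flat rectangle is neither justified nor what is needed. The strip diagram of Theorem~\ref{thm:replace} is already grid-shaped by construction, and its images may degenerate. What is really needed is rigidity along the two sides at one corner: $x_4\nsim a_{n-1,i}$ and $x_1\nsim a_{i,m-1}$ (Lemmas~\ref{lem:nonadj1} and~\ref{lem:nonadj2}). That is, the row next to $\rho_{34}$ leaves $Y_4$ and the row next to $\rho_{41}$ leaves $Y_1$.

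This does not follow from excluding vertices of $Y_i\cap Y_{i+1}$ via \eqref{eq:ads}; Assertions (1) and (2) only treat neighbours of the corners $u_1,u_2$. The paper gets rigidity from \eqref{eq:ds}-minimality, as follows.
\begin{enumerate}
\item It normalizes $d(u_4,u_1)\le d(u_4,u_3)$. This is what produces the two cases, according to the types of $x_1,x_4$.
\item It fills from $\hat u_2$, where $\rho_{21}\cup\rho_{14}=\rho_{24}$ is a normal form and $\rho_{23}\cup\rho_{34}$ has the same length by Lemma~\ref{lem:distance minimizing}, so both lifts end at $\hat u_4$. Your two-sided filling from $\hat u_1$ loses this, since $\rho_{12}\rho_{23}$ need not be geodesic.
\item It builds $b_m\in Y_1\cap Y_4$ in $\lk(u_4,X)$ between $a_{n,m-1}$ and $a_{n-1,m}$ using Lemma~\ref{lem:generalized 4cycle2}, and the right-greedy chain $\hat b_i=\beta(\hat a_{n,i-1}\hat b_{i+1})$ in $Y_4$.
\item It compares normal forms via Theorem~\ref{thm:replace} and Lemma~\ref{lem:smaller dist} to exhibit a quadruple with smaller \eqref{eq:ds} whenever rigidity fails.
\end{enumerate}

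\textbf{Step 3.} This is where the lemma is actually proved, and your outline only says it ``should'' work. Lemmas~\ref{lem:disjoint} and~\ref{lem:separateuls} are inputs used to verify the separation hypothesis of Lemma~\ref{lem:transitive}, not a substitute for it.

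The relevant dichotomy is not whether $t$ lies in the component of $\Lambda\setminus\{\uls\}$ containing $s$. It is whether $t\in\uls^+_T$ or $t\in\uls^-_T$: on which side of $\uls$, relative to the edge $e$ of $P$ at $\uls$, the vertex $t$ lies in $\hat\Lambda_T$. This depends on the type $T$ and must be propagated along $\rho_{34}$ (Lemmas~\ref{lem:inductiont1}, \ref{lem:inductiont2}, \ref{lem:t}).

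In the surviving subcase this gives the strictly increasing chain \eqref{eq:chain} of types in $\mathcal C_{P,\{\uls\}}$ along the boundary. That chain is pushed triangle by triangle through the strips of $D$ down to the corner $u_2=a_{11}$, giving $A'_{21}<\{\uls\}<A'_{12}$ in $\mathcal C_{P,A_{11}}$. One more application of Lemma~\ref{lem:generalized 4cycle2} at $a_{11}$ gives $x_2\sim a_{21}$ or $x_3\sim a_{12}$, so moving $u_2$ decreases \eqref{eq:ds}.

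The remaining subcases are closed by explicit replacements. These are $A_{n1}=\{\uls\}$, $t\in\uls^-_{A_{n1}}$ and $t\in\uls^-_{A_{1m}}$ in Case~1, and their analogues in Case~2. The replacements include $u_3\mapsto b_2$ or $a_{n-1,1}$, $u_4\mapsto b_m$, and $u_1\mapsto c_1$ or $a_{1,m-1}$.

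None of this appears in your proposal. The corner near $u_1$ that you single out, where $\theta_2,\theta_4$ have a join but no meet, only encodes the normalization $\rho_{21}\cup\rho_{14}=\rho_{24}$; it is not where the contradiction arises.
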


\begin{lem}
Assuming Lemma~\ref{lem:key}. Then Proposition~\ref{prop:prop} holds.
\end{lem}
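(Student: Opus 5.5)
Assuming Lemma~\ref{lem:key}, the plan is to reduce to Lemma~\ref{lem:reduction}: I will build an embedded edge path in $\Delta_\Lambda$ from $x_1$ to $x_3$ all of whose vertices are adjacent to both $x_2$ and $x_4$. By the cyclic symmetry of the configuration, two cases suffice: $u_4=u_1$ and $u_1=u_2$.

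\textbf{Case $u_4=u_1$.} Then $u_1\in Y_4\cap Y_1\cap Y_2$, so $u_1\sim\{x_4,x_1,x_2\}$. Likewise $u_2\sim\{x_2,x_3\}$ and $u_3\sim\{x_3,x_4\}$.
\begin{enumerate}
\item By Lemma~\ref{lem:convex} and Proposition~\ref{prop:convex}, the $B$-geodesic from $u_1$ to $u_3$ lies in $Y_4$, and the $B$-geodesics from $u_1$ to $u_2$ and from $u_2$ to $u_3$ lie in $Y_2$.
\item Alternatively, I would run a Lemma~\ref{lem:degen}-type argument inside $Y_2\cap Y_4$. This set contains $u_1$, and $Y_2\cap Y_4$ is connected: it is left locally $B$-convex, being an intersection of two left locally $B$-convex subcomplexes (by the link criterion), so Proposition~\ref{prop:convex} applies to it.
\item The issue is that $u_2,u_3$ need not lie in $Y_2\cap Y_4$. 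So I would instead use the minimality of \eqref{eq:ds}. If $u_4=u_1$, the quadruple degenerates to a triangle $u_1,u_2,u_3$, and minimality should force $u_2=u_3$ or a further coincidence.
\item Failing that, I would take the vertex $x$ of type $\hat r$ with $r\in T_{u_1}$ determined by $u_1$. Then $x\sim x_1,x_2,x_4$, and the $4$-cycle $x_1x_2x_3x_4$ reduces to the generalized $4$-cycle $x\,x_2x_3x_4$. Applying Lemma~\ref{lem:4-cycle} then yields a vertex of the type of $x$ that is adjacent to $x_2,x_3,x_4$.
\end{enumerate}

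\textbf{Concrete route.} Choose a vertex $y_1$ of $\Delta_\Lambda$ with $y_1\sim u_1$; it is adjacent to $x_1$ (or equal to it), and to $x_2$ and $x_4$. By Lemma~\ref{lem:4-cycle} applied to $y_1x_2x_3x_4$, there is $y_3$, of the type of $y_1$, with $y_3\sim\{x_2,x_3,x_4\}$ and $y_3\sim y_1$. The path $x_1y_1y_3x_3$ (after removing repetitions and making it embedded) then satisfies the hypothesis of Lemma~\ref{lem:reduction}. The one point to check is adjacency in the Artin complex rather than merely $\sim$; this is handled by picking vertices of single types inside the simplices.

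\textbf{Case $u_1=u_2$.} Now $u_1\in Y_4\cap Y_1\cap Y_2$ with $x_1\in\{s\}$-type, so the same argument applies with the roles of the indices shifted.

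\textbf{Main obstacle.} I expect the hardest step to be guaranteeing that the chosen vertices are distinct from, or genuinely adjacent to, the $x_i$, and that the resulting path is embedded. I would handle this by shortcutting whenever a repetition occurs.
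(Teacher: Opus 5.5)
\textbf{There is a genuine gap in your concrete route.}

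Your concrete route fails at its one substantive step. Lemma~\ref{lem:4-cycle} gives a vertex of the type of one corner of a generalized $4$-cycle that is $\sim$ to the \emph{other three} corners. It says nothing about how that vertex relates to the corner it replaces, so ``$y_3\sim y_1$'' is unjustified. It is in fact false in general.

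Your route uses $u_4=u_1$ only to produce some vertex $\sim\{x_1,x_2,x_4\}$, and $x_1$ itself is such a vertex. Take $y_1=x_1$. Then your $y_3$ would have type $\hat s$ with $y_3\sim x_1$ and $y_3\sim x_3$. Two $\sim$-related vertices of the same type $\hat s$ coincide, since their cosets intersect and hence are equal. So $x_1=y_3=x_3$, contradicting that $x_1x_2x_3x_4$ is embedded.

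Items 3 and 4 do not repair this. You give no argument that minimality of \eqref{eq:ds} forces a further coincidence. Item 4 just recreates the original configuration with $x$ in place of $x_1$.

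\textbf{The missing idea is a Helly-type statement,} which is exactly the obstacle you isolate in item 3. In your case $u_4=u_1$, the subcomplexes $Y_2,Y_3,Y_4$ pairwise intersect: $u_1\in Y_2\cap Y_4$, $u_2\in Y_2\cap Y_3$, $u_3\in Y_3\cap Y_4$. What you need is $Y_2\cap Y_3\cap Y_4\neq\emptyset$.

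The paper obtains this from Bestvina's non-positive curvature via a fresh minimization, not from \eqref{eq:ds}.
\begin{itemize}
\item Choose $v_1\in Y_3\cap Y_4$, $v_2\in Y_4\cap Y_2$, $v_3\in Y_2\cap Y_3$ minimizing $\ad(v_1,v_2)+\ad(v_1,v_3)$.
\item If two of them coincide, that vertex already lies in $Y_2\cap Y_3\cap Y_4$.
\item Otherwise let $z_1\cdots z_n$ be the left $B$-geodesic from $v_2$ to $v_3$; it lies in $Y_2$ by Proposition~\ref{prop:convex}.
\item The exchange argument of Assertion (1) in the proof of Lemma~\ref{lem:distance minimizing} gives $\ad(v_1,z_1)<\ad(v_1,z_2)$ and $\ad(v_1,z_n)<\ad(v_1,z_{n-1})$. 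This contradicts Proposition~\ref{prop:bnpc}.
\end{itemize}

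Your item 2 then finishes the proof. $Y_2\cap Y_4$ is connected, so an edge path in it from $u_1$ to a vertex of $Y_2\cap Y_3\cap Y_4$ yields, exactly as in Lemma~\ref{lem:degen}, the path required by Lemma~\ref{lem:reduction}.

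\textbf{Two minor points.} The connectedness of $Y_2\cap Y_4$ should be justified by applying Proposition~\ref{prop:convex} to $Y_2$ and to $Y_4$ separately: the left $B$-geodesic between two of its vertices lies in both. Applying that proposition to the intersection itself presupposes the connectedness you want. Also, your separate case $u_1=u_2$ is unnecessary by symmetry, and in that case one has $u_1\in Y_1\cap Y_2\cap Y_3$, not $Y_4\cap Y_1\cap Y_2$.
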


\begin{proof}
We assume without loss of generality that $u_2=u_3$. Then $Y_2\cap Y_3\cap Y_4$ is non-empty. Now we consider all triples $(v_1,v_2,v_3)$ with $v_1\in Y_1\cap Y_2$, $v_2\in Y_2\cap Y_4$ and $v_3\in Y_4\cap Y_1$, and take one such triple which minimizes $\ad(v_1,v_2)+\ad(v_1,v_3)$. We claim $\{v_1,v_2,v_3\}$ are identical. Suppose by contradiction that $\{v_1,v_2,v_3\}$ are mutually distinct. Let $\omega=z_1\cdots z_n$ be the left $B$-geodesic from $v_2=z_1$ to $v_3=z_n$. Similar to the proof of Assertion (1) in Lemma~\ref{lem:distance minimizing}, we have $\ad(v_1,z_1)<\ad(v_1,z_2)$ and $\ad(v_1,z_n)<\ad(v_1,z_{n-1})$. However, this contradicts Proposition~\ref{prop:bnpc}. Thus the claim follows.

By the claim, $Y_1\cap Y_2\cap Y_4\neq\emptyset$. By Lemma~\ref{lem:convex} and Proposition~\ref{prop:convex}, $Y_2\cap Y_4$ is connected. By considering an edge path in $Y_2\cap Y_4$ from a vertex in 
	$Y_2\cap Y_3\cap Y_4$ to a vertex in $Y_2\cap Y_3\cap Y_4$, we can produce (as in Lemma~\ref{lem:degen}) the path satisfying the assumption of Lemma~\ref{lem:reduction}, which finishes the proof.
\end{proof}

It remains to prove Lemma~\ref{lem:key}.
We argue by contradiction and assume $u_i\neq u_{i+1}$ for all $i\in \mathbb Z/4\mathbb Z$ in Section~\ref{subsec:disk} to Section~\ref{subsec:case 2}. Then Lemma~\ref{lem:distance minimizing} implies $d(u_1,u_2)=d(u_3,u_4)$ and $d(u_2,u_3)=d(u_1,u_4)$. In the end we will prove there is a different quadruple with a smaller value of \eqref{eq:ds}, leading to a contradiction. Up a symmetry of the cycle $x_1x_2x_3x_4$, we can assume:
\begin{equation}
	\label{eq:assumption}
d(u_4,u_1)\le d(u_4,u_3).
\end{equation}
This symmetry  brings us two cases to consider, Case 1 being $x_4$ has type $\hat t$ and $x_1$ has type $\hat s$; and Case 2 being $x_4$ has type $\hat s$ and $x_1$ has type $\hat t$.

\subsection{A disk diagram}
\label{subsec:disk}
Suppose $\rho_{34}=a_{n1}a_{n2}\cdots a_{nm}$ where $a_{n1}=u_3$ and $a_{nm}=u_4$. We consider admissible lifts of $\rho_{21}\cup\rho_{14}$ and $\rho_{23}\cup\rho_{34}$ to $\widehat X$, starting at the same vertex $\hat u_2$. 
As $\rho_{21}\cup\rho_{14}$ is distance minimizing, so is $\rho_{23}\cup\rho_{34}$ by Lemma~\ref{lem:distance minimizing}, hence $d(a_{ni},u_2)<d(a_{n,i+1},u_2)$ for $1\le i<m$. By Lemma~\ref{lem:mn}, $\hat a_{ni}<\hat a_{n,i+1}$ and these two lifts have the same ending vertex $\hat u_4=\hat a_{nm}$. By considering the procedure of obtaining the left normal from $\hat u_2$ to $\hat a_{n,i+1}$ using the left normal from $\hat u_2$ to $\hat a_{ni}$ as in Theorem~\ref{thm:replace}, we produce a triangulated disk $D$ as in Figure~\ref{fig:systolic} (I) which is a union of strips as in Figure~\ref{fig:strip} (we highlight the first strip by gray) and a map $f$ from vertices of $D$ to $\widehat X^0$. The thickened line in $D$ (Figure~\ref{fig:systolic} (I)) is mapped to the admissible lift of $\rho_{21}\cup\rho_{14}$ starting at $\hat u_2$. By \eqref{eq:assumption} and repeatedly applying Theorem~\ref{thm:intial}, we can prove by induction on $i$ that the left vertical edge of the $i$-th strip in Figure~\ref{fig:systolic} (I) coincides with the lift of the $i$-th edge of $\rho_{21}\cup\rho_{14}$.
We will slightly abuse notation and use the same symbol to denote a vertex in $D$ and its $f$-image (note that two different vertices of $D$ might be mapped to the same vertex of $\widehat X$).  We label the vertices in $D$ as follows: vertices on the left most vertical line are labeled by $\hat a_{11},\hat a_{12},\cdots,\hat a_{1m}$ from bottom to top; vertices on the left most but one vertical line are labeled by $\hat a_{21},\hat a_{22},\cdots,\hat a_{2m}$, etc. Assume there are $n$ vertical lines in $D$. Let $a_{ij}$ be the image of $\hat a_{ij}$ in $X$.

\begin{figure}
	\centering
	\includegraphics[scale=0.85]{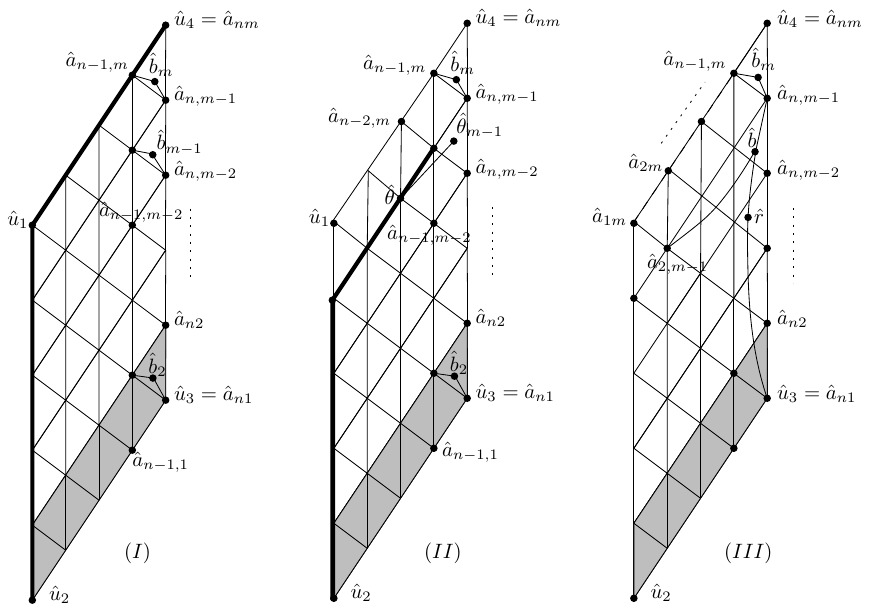}
	\caption{The disk $D$.}
	\label{fig:systolic}
\end{figure}

As $u_i\neq u_{i+1}$ for any $i$, we have $a_{n-1,m}\neq a_{nm}$ and $a_{n,m-1}\neq a_{nm}$.
Assume $a_{n-1,m}\neq a_{n,m-1}$, otherwise $a_{n-1,m}\in Y_1\cap Y_4$ and we can replace $u_4$ by $a_{n-1,m}$ to decrease \eqref{eq:ds}.
By considering the generalized 4-cycle $x_1a_{n-1,m}a_{n,m-1}x_4$ and applying Lemma~\ref{lem:generalized 4cycle2}, we know there is a vertex $b_m\in \lk(u_4,X)$ such that $a_{m-1,n}\le_{u_4} b_m\le_{u_4} a_{n-1,m}$ and $b_m\sim\{x_1,x_4\}$. So $b_m\in Y_1\cap Y_4$. 

We assume $b_m\neq a_{n-1,m}$ and $b_m\neq a_{m-1,n}$, otherwise we can replace $u_4$ by $b_m$ to decrease \eqref{eq:ds}. Let $\hat a_{m-1,n}\hat b_m\hat a_{n-1,m}$ be an admissible lift of $a_{m-1,n}b_ma_{n-1,m}$. Then $\hat a_{m-1,n}\le \hat b_m\le\hat a_{n-1,m}$.

We claim the admissible lift of the left $B$-geodesic from $a_{n1}$ to $b_m$ starting at $\hat a_{n1}$ must end at $\hat b_m$. Indeed, as $\varphi(\hat a_{n1})$ is not bounded above by $\hat a_{nm}$ (with respect to $<_t$), the same holds with $\hat a_{nm}$ replaced by $\hat b_m$. By considering admissible lift of $a_{n1}\cdots a_{n,m-1}b_m$ and apply Lemma~\ref{lem:mn}, the claim follows.


When $m\ge 3$, we assume $b_m$ is not adjacent to $a_{n,m-2}$, otherwise $d(u_3,b_m)<d(u_1,u_4)$ and replacing $u_4$ by $b_m$ decreases \eqref{eq:ds}. Let $\hat b_{m-1}=\beta(\hat a_{n,m-2}\hat b_m)$ where $\beta$ is defined in Section~\ref{subsec:normal form}. Then $b_{m-1}$ is on the right $B$-geodesic from $b_m$ to $a_{n,m-2}$. As $b_m,a_{n,m-2}\in Y_4$, by Lemma~\ref{lem:convex}, $b_{m-1}\in Y_4$. As $\hat a_{n,m-2}\le \hat a_{n-1,m-1}<\hat b_m$ and $d(b_m,a_{n,m-2})=2$, we know $\hat a_{n,m-2}<\hat b_{m-1}\le\hat a_{n-1,m-1}$. Define $\{\hat b_i\}_{i=2}^{m-1}$ inductively by $\hat b_i=\beta(\hat a_{n,i-1}\hat b_{i+1})$. Then $b_i\sim x_4$ and $\hat a_{n,i-1}\le \hat b_i\le \hat a_{n-1,i}$ for $2\le i\le m$. We assume $d(u_3,b_m)=d(u_1,u_4)$, otherwise we can replace $u_4$ by $b_m$. Thus $\hat a_{n,i-1}< \hat b_i\le \hat a_{n-1,i}$ for $2\le i\le m$.

Suppose $a_{ij}$ has type $\hat A_{ij}$, and $b_i$ has type $\hat B_i$.


\begin{lem}
	\label{lem:nonadj1}
We have $x_4\nsim a_{n-1,i}$ for $2\le i\le m$ (in particular $\hat a_{n-1,i}$ and $\hat a_{n,i-1}$ give different vertices in $\widehat X$). 
\end{lem}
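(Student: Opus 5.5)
Argue by contradiction: suppose $x_4\sim a_{n-1,i}$ for some $2\le i\le m$. Then $a_{n-1,i}$ can replace a vertex of the quadruple (or of the chain $b_m,\dots,b_2$) and strictly decrease the quantity \eqref{eq:ds}, contradicting the minimality of $(u_1,u_2,u_3,u_4)$.

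The column $\hat a_{n-1,1},\dots,\hat a_{n-1,m}$ is the $(n-1)$-st vertical line of $D$. It is the image of the left quasi-normal path from $\hat u_2$ to a point on the lift of $\rho_{23}$, adjusted column by column as in Theorem~\ref{thm:replace}. Two facts about this column are needed.

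First, every $a_{n-1,j}$ lies in $Y_1$. For $i=m$ this is the given fact $\hat a_{n-1,m}\in Y_1\cap Y_4$. For general $j$, consider the top row of $D$, which is the admissible lift of $\rho_{14}$ (the left vertical edges of the strips coincide with the lift of $\rho_{21}\cup\rho_{14}$). Walking down a strip, each new vertex is a meet $x_{j}=x_{j+1}\wedge\varphi(x_{j-1})$ of neighbours lying in $Y_1$. Since $Y_1$ is left locally $B$-convex (Lemma~\ref{lem:convex}), membership in $Y_1$ propagates. If this is awkward to set up, use Proposition~\ref{prop:convex} instead: $a_{n-1,j}$ lies on the left $B$-geodesic between two vertices of $Y_1$.

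Second, $d(u_2,a_{n-1,i})$ is known. The quasi-normal path from $\hat u_2$ to $\hat a_{n-1,i}$ has length $n-2+i-1$, one less than the corresponding length to $\hat a_{n,i}$. This follows from Lemma~\ref{lem:smaller dist} and Proposition~\ref{prop:shortest}, since $\hat a_{n-1,i}\le_t\hat a_{n,i}$ along the strip.

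Now assume $x_4\sim a_{n-1,i}$. Then $a_{n-1,i}\in Y_1\cap Y_4$, so $(u_1,u_2,u_3,a_{n-1,i})\in\Xi$. Compute its value of \eqref{eq:ds}. We have $d(u_1,a_{n-1,i})\le d(u_1,u_4)$, and if $i<m$ the inequality is strict, via the top row and the column below $a_{n-1,m}$. Also $d(u_3,a_{n-1,i})\le d(u_3,a_{n,i})+1$, and $d(u_3,a_{n,i})=i-1$ along $\rho_{34}$. The main obstacle is showing that the total drops strictly and does not merely tie. For $i=m$ the tie is exactly the case already excluded by the earlier assumptions ($b_m\ne a_{n-1,m}$ and $d(u_3,b_m)=d(u_1,u_4)$). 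For $i<m$, I would first try to show that replacing $u_4$ by $a_{n-1,i}$ saves $m-i$ on the $u_1$ side while costing at most $1-(m-i)\le 0$ on the $u_3$ side; this needs $d(u_3,a_{n-1,i})\le i$ together with $d(u_1,a_{n-1,i})\le d(u_1,u_4)-(m-i)$.

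If this bookkeeping fails, fall back on the $b$-chain. The inequalities $\hat a_{n,i-1}<\hat b_i\le\hat a_{n-1,i}$ together with $x_4\sim a_{n-1,i}$ give $b_i\sim a_{n-1,i}$, both lying in $Y_4$. Use Lemma~\ref{lem:generalized 4cycle2} on the generalized 4-cycle $x_1a_{n-1,i}b_ix_4$ to produce a vertex of $Y_1\cap Y_4$ that is strictly closer to $u_3$ than $b_m$. This contradicts the standing assumption $d(u_3,b_m)=d(u_1,u_4)$ from the minimality of \eqref{eq:ds}.

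The parenthetical claim then follows at once. If $\hat a_{n-1,i}=\hat a_{n,i-1}$, then $a_{n-1,i}=a_{n,i-1}\in\rho_{34}\subset Y_4$, so $x_4\sim a_{n-1,i}$, which contradicts the main statement.
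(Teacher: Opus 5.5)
There is a genuine gap. Both your main route and your fallback need $a_{n-1,i}\in Y_1$, i.e.\ $x_1\sim a_{n-1,i}$, and nothing in the setup provides it.

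Your meet-propagation does not work. In the strip recursion $x_j=x_{j+1}\wedge\varphi(x_{j-1})$, one input lies on the lower boundary path of the strip, which is not in $Y_1$ in general. So local $B$-convexity of $Y_1$ has nothing to act on. The alternative justification also fails: the column $a_{n-1,1}\cdots a_{n-1,m}$ is not a $B$-geodesic between two vertices of $Y_1$, since it starts at $a_{n-1,1}\in\rho_{23}\subset Y_3$. In fact your claim for $j=1$ would give $a_{n-1,1}\in Y_1\cap Y_3$, which is exactly the degeneracy the whole section is working towards. Moreover, Lemma~\ref{lem:nonadj2}, proved under the same standing assumptions, asserts $x_1\nsim a_{n-1,m-1}$ whenever $n\ge 3$. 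So for $2\le i<m$ the quadruple $(u_1,u_2,u_3,a_{n-1,i})$ need not lie in $\Xi$.

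The distance bookkeeping you worried about is not the obstacle. Since $\hat a_{n-1,i}$ is joined to $\hat a_{n,i-1}$ by a diagonal of $D$, you get $d(u_3,a_{n-1,i})+d(a_{n-1,i},u_1)\le (i-1)+(m-i)+(n-2)<d(u_3,u_4)+d(u_4,u_1)$. Membership in $Y_1$ is the whole difficulty.

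Your fallback has the same defect: $x_1a_{n-1,i}b_ix_4$ is a generalized 4-cycle only if $x_1\sim a_{n-1,i}$. In addition, Lemma~\ref{lem:generalized 4cycle2} needs a centre vertex that is $\sim\{x_1,x_4\}$ and whose link contains both $a_{n-1,i}$ and $b_i$, and you do not name one. Also, a vertex of $Y_1\cap Y_4$ closer to $u_3$ contradicts minimality of \eqref{eq:ds} only if its distance to $u_1$ is controlled too.

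Two parts of your proposal are fine. The case $i=m$ works: $a_{n-1,m}\in\rho_{14}\subset Y_1$, and replacing $u_4$ by it strictly decreases \eqref{eq:ds}; there is no tie. This is the paper's $m=2$ step. The parenthetical deduction is also correct.

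The missing idea is that a new vertex of $Y_1\cap Y_4$ must be manufactured, using as $Y_1$-input only $b_m$ and vertices of $\rho_{14}$. The paper does this for $i=2$, $m\ge 3$, as follows.
\begin{enumerate}
\item Since $x_4\sim a_{n-1,2}$ and $x_4\sim b_m$, convexity of $Y_4$ (Lemma~\ref{lem:convex}) puts the left normal form path $\hat a_{n-1,2}\hat\theta_3\cdots\hat\theta_{m-1}\hat b_m$ into $Y_4$.
\item Comparing this path with $D$ via Theorem~\ref{thm:replace} and Lemma~\ref{lem:mn}, and using minimality of \eqref{eq:ds} to exclude $\theta_{m-1}=b_m$, gives $\hat a_{n-1,m-1}\le_t\hat\theta_{m-1}<\hat a_{n-2,m}$.
\item Lemma~\ref{lem:generalized 4cycle2} is applied at $b_m$ to the generalized 4-cycle $\theta_{m-1}a_{n-1,m}x_1x_4$. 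Here $x_1\sim a_{n-1,m}$ holds precisely because $a_{n-1,m}\in\rho_{14}$. This produces $u\sim\{x_1,x_4\}$ with $a_{n-2,m}\le_{\theta_{m-1}}a_{n-1,m}\le_{\theta_{m-1}}u$, so $u$ is adjacent to $a_{n-2,m}$.
\item Hence $d(u_1,u)<d(u_1,u_4)$ and $d(u_3,u)\le 1+(m-3)+1=d(u_3,u_4)$, contradicting minimality.
\end{enumerate}
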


\begin{proof}
We refer to Figure~\ref{fig:systolic} (II) for the following proof.	
We only prove treat the case of $x_4\nsim a_{n-1,2}$ as the other cases are similar. If $m=2$, then we can replace $u_4$ by $a_{n-1,2}$ to decrease \eqref{eq:ds}.
We assume $m\ge 3$.	Then $\hat a_{n-1,2}\le_t \hat b_m$. Let $\hat a_{n-1,2}\hat \theta_3\hat\theta_4\cdots\hat\theta_{m-1}\hat b_m$ be the left normal form from $\hat a_{n-1,2}$ to $\hat b_m$ (Figure~\ref{fig:systolic} (II)). As $x_4\sim\{b_m,a_{n-1,2}\}$, by Lemma~\ref{lem:convex}, $x_4\sim \theta_i$ for $3\le i\le m-1$.
As $\hat a_{n-1,m-1}\le \hat b_m$, we deduce that $\hat a_{n-1,i}\le_t \hat \theta_{i}$ for $3\le i\le m-1$ by repeatedly applying Theorem~\ref{thm:replace}.

Let $\eta$ be the thickened path in Figure~\ref{fig:systolic} (II) from $\hat u_2$ to $\hat a_{n-1,m-1}$. Let $\eta'$ be the edge path $$\hat a_{11}\hat a_{21}\cdots \hat a_{n-1,1}\hat a_{n-1,2}\hat \theta_3\hat\theta_4\cdots\hat\theta_{m-1}.$$ Then $\eta$ and $\eta'$ have the same length.
Let $\hat \theta=\hat a_{n-2,m-1}$. As $\eta$ is a left normal form in $\widehat X$ by the construction of $D$, by Theorem~\ref{thm:replace}, $\hat\theta$ is adjacent to $\hat\theta_{m-1}$. Thus $d(\hat \theta_{m-1},\hat u_1)\le d(\hat a_{n-1,m},\hat u_1)$, hence $d(\theta_{m-1},u_1)\le d(a_{n-1,m},u_1)$. In particular, $\hat\theta_{m-1}\neq\hat b_m$ in $\widehat X$, otherwise $d(u_1,b_m)<d(u_1,u_4)$ and replacing $u_4$ by $b_m$ decreases \eqref{eq:ds}. 

Next we show $\hat \theta_{m-1}\le\hat a_{n-2,m}$. It suffices to show $\hat a_{n-2,m}\le \varphi(\hat\theta_{m-1})$. Note that $\varphi(\hat u_1)\le_t \varphi(\hat \theta_{m-1})$ is not true, otherwise $\hat u_1\le_t \hat \theta_{m-1}\le \hat b_m\le \hat a_{n-1,m}$, and by Lemma~\ref{lem:smaller dist}, $d(\hat u_1,\hat b_m)\le d(\hat u_1,\hat a_{n-1,m})<d(\hat u_1, \hat u_4)$ and replacing $u_4$ by $b_m$ decreases \eqref{eq:ds}. Consider the path $\hat a_{1m}\hat a_{2m}\cdots\hat a_{n-1,m}\varphi(\theta_{m-1})$ (note that $\hat a_{n-1,m}\le \varphi(\hat\theta_{m-1})$ as $\hat a_{n-1,m-1}\le\hat\theta_{m-1}\le\hat a_{n-1,m}$). The previous discussion implies that $\alpha(\hat u_1\varphi(\hat\theta_{m-1}))<\varphi(\hat u_1)$ ($\alpha$ is defined in Section~\ref{subsec:normal form}). Then Lemma~\ref{lem:mn} implies that the admissible lift of the left $B$-geodesic from $u_1$ to $\theta_{m-1}$ starting at $\hat u_1$ must end at $\varphi(\hat\theta_{m-1})$. As $d(u_1,\theta_{m-1})\le d(u_1,a_{n-1,m})$, Theorem~\ref{thm:replace} implies that $\hat a_{n-2,m}\le \varphi(\hat\theta_{m-1})$, as desired. Actually $\hat \theta_{m-1}<\hat a_{n-2,m}$, otherwise we can replace $u_4$ by $b_m$ to decrease \eqref{eq:ds}.

Recall that $\theta_{m-1}\sim x_4$. Consider the generalized 4-cycle $\theta_{m-1}a_{n-1,m}x_1x_4$ around $b_m$. Lemma~\ref{lem:generalized 4cycle2} implies that there is $u\in \lk^0(b_m,X)$ such that $a_{n-1,m}\le_{b_m}u\le_{b_m} \theta_{m-1}$ in $(\lk^0(b_m,X),\le_{b_m})$ and $u\sim\{x_1,x_4\}$. Thus $u\in Y_1\cap Y_4$ and $a_{n-1,m}\le_{\theta_{m-1}} u$ in $(\lk^0(\theta_{m-1},X),\le_{\theta_{m-1}})$.
By previous paragraph, $a_{n-2,m},a_{n-1,m}\in \lk(\theta_{m-1},X)$ and $a_{n-2,m}\le_{\theta_{m-1}}a_{n-1,m}$. It follows that $a_{n-2,m}\le_{\theta_{m-1}}u$, and $a_{n-2,m}$ and $u$ are adjacent in $X$. Then $d(u_1,u)<d(u_1,u_4)$, and 
\begin{align*}
d(u_3,u)&\le d(u_3,a_{n-1,2})+d(a_{n-1,2},\theta_{m-1})+d(\theta_{m-1},u)\\
&=d(a_{n-1,2},\theta_{m-1})+2\le d(u_3,u_4).
\end{align*}
Thus replacing $u_4$ by $u$ decreases \eqref{eq:ds}. The lemma follows. 
\end{proof}


\begin{lem}
	\label{lem:nonadj2}
We have $x_1\nsim a_{i,m-1}$ for $2\le i\le n$. 
\end{lem}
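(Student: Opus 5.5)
The statement is the mirror image of Lemma~\ref{lem:nonadj1}, with the roles of the two sides of the disk $D$ exchanged. In Lemma~\ref{lem:nonadj1} we used the right edge $\rho_{34}$ of $D$ (the column $\hat a_{n1}\cdots\hat a_{nm}$), the auxiliary vertices $\hat b_i$, and the vertex $x_4$. Here we use the top edge $\rho_{14}$ of $D$ (the row $\hat a_{1m}\cdots\hat a_{nm}$) and the vertex $x_1$. The plan is to build a row of auxiliary vertices $\hat c_2,\dots,\hat c_n$ along the top of $D$, analogous to the $\hat b_i$, and then rerun the argument of Lemma~\ref{lem:nonadj1} with rows and columns exchanged.

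\textbf{Step 1: constructing the $\hat c_i$.} Start from $b_m\in Y_1\cap Y_4$, which satisfies $\hat a_{n,m-1}<\hat b_m<\hat a_{n-1,m}$. Set $\hat c_n=\hat b_m$ and define the others backwards along the top row using the right greedy operation $\beta$, e.g.\ $\hat c_{i}=\beta(\hat a_{i-1,m-1}\hat c_{i+1})$, or the left greedy $\alpha$ if the orientation of the strips demands it. Lemma~\ref{lem:convex} and Proposition~\ref{prop:convex} then give $c_i\sim x_1$, since each $c_i$ lies on a $B$-geodesic between two vertices of $Y_1$. Minimality of \eqref{eq:ds}, together with the normalization $d(u_3,b_m)=d(u_1,u_4)$, should give strict inequalities of the form $\hat a_{i-1,m-1}<\hat c_i\le\hat a_{i,m-1}$. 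I expect to need the hypothesis \eqref{eq:assumption} here, in place of its symmetric role in Lemma~\ref{lem:nonadj1}.

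\textbf{Step 2: assume $x_1\sim a_{i,m-1}$ for some $i$ and derive a contradiction.} I treat $i=n-1$ (or whichever index is extremal) and the others similarly. If $n=2$, replacing $u_1$ or $u_4$ by $a_{i,m-1}$ already lowers \eqref{eq:ds}. Otherwise:
\begin{enumerate}
\item Take the left normal form from $\hat a_{i,m-1}$ to $\hat b_m$; its intermediate vertices $\theta_j$ lie in $Y_1$ by $B$-convexity.
\item Repeated use of Theorem~\ref{thm:replace} compares these $\theta_j$ with the entries $\hat a_{j,m}$ of the top row. Since the alternative path has the same length as the corresponding normal-form path through $D$, this shows that the last $\theta$ lies below $\hat a_{n-1,m}$ (or a neighbouring entry) in the local poset.
\item Lemma~\ref{lem:smaller dist} and the failure of $\varphi(\hat u_3)\le_t\varphi(\hat\theta)$, which would otherwise shorten distances to $u_3$, give the analogue of $\hat\theta_{m-1}<\hat a_{n-2,m}$.
\item Lemma~\ref{lem:generalized 4cycle2} applied to the generalized 4-cycle $\theta\, a_{n,m-1}\, x_4\, x_1$ around $b_m$ produces $u\in Y_1\cap Y_4$ with $d(u_3,u)<d(u_3,u_4)$ and $d(u_1,u)\le d(u_1,u_4)$. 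Replacing $u_4$ by $u$ then decreases \eqref{eq:ds}, the required contradiction.
\end{enumerate}

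\textbf{Main obstacle.} The difficulty is that the setup is not symmetric. The disk $D$ was built from left normal forms based at $\hat u_2$, so its columns and rows behave differently: the left vertical edges of the strips agree with $\rho_{21}\cup\rho_{14}$, but the top row is only a concatenation of strip edges. Also, $\ad$ is asymmetric, and the choice \eqref{eq:assumption} breaks the symmetry. So the inequalities in Step 2 need to be rederived rather than copied from Lemma~\ref{lem:nonadj1}.

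I would first try to apply Lemma~\ref{lem:nonadj1} directly to the transposed configuration. Using the extra normalization $\rho_{21}\cup\rho_{14}=\rho_{24}$, the disk could be rebuilt from $\hat u_2$ via $\rho_{23}\cup\rho_{34}$ instead, and the lifts have the same endpoints. If this rebuilt disk coincides with $D$ reflected, the lemma follows by symmetry. If not, I fall back to the direct argument above, checking each use of Theorem~\ref{thm:replace} and Lemma~\ref{lem:mn} in the transposed direction.
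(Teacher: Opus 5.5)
There is a genuine gap: the argument in your Step~2 does not transpose. In $D$ the diagonals satisfy $\hat a_{i+1,j}\le\hat a_{i,j+1}$. Exchanging rows and columns therefore reverses orientation, and the left-greedy device of Lemma~\ref{lem:nonadj1} must be replaced by a right-greedy one.

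Concretely, let $\theta$ be the vertex before $\hat b_m$ on the left normal form from $\hat a_{i,m-1}$ to $\hat b_m$. Theorem~\ref{thm:replace} at best gives $\hat a_{n-1,m-1}\le\hat\theta\le\hat b_m$. Nothing makes $\theta$ comparable with $a_{n,m-1}$ in $(\lk^0(b_m,X),<_{b_m})$, yet Lemma~\ref{lem:generalized 4cycle2} requires this for the 4-cycle $x_1\,\theta\,a_{n,m-1}\,x_4$. Without $\hat\theta\le\hat a_{n,m-1}$ you also get no bound on the distance from $u_3$ to the vertex that lemma produces. Your sub-steps (2)--(3) relate $\theta$ to the top-row vertices $\hat a_{n-1,m},\hat a_{n-2,m}$. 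But these lie in $Y_1$, the same side as $\theta$, so they cannot serve as the $Y_4$-corner of the 4-cycle.

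Step~1 is both unsound and unnecessary. You cannot place $c_i=\beta(\hat a_{i-1,m-1}\hat c_{i+1})$ in $Y_1$ by $B$-convexity, because $a_{i-1,m-1}\in Y_1$ is exactly what the lemma rules out. (The row of $\hat c_i$'s that appears later, built from meets with the $\hat a_{im}$, itself uses this lemma.) The symmetry shortcut is also unavailable: \eqref{eq:assumption} and the construction of $D$ from left normal forms based at $\hat u_2$ are not symmetric.

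The missing idea is a single right-greedy step, with no auxiliary row. Assume $x_1\sim a_{i,m-1}$ and let $\hat b=\beta(\hat a_{i,m-1}\hat b_m)$.
\begin{enumerate}
\item Since $a_{i,m-1},b_m\in Y_1$, Lemma~\ref{lem:convex} and Proposition~\ref{prop:convex} give $b\in Y_1$.
\item $\beta$ returns the \emph{smallest} element of $[\varphi^{-1}(\hat b_m),\hat b_m]$ that is $\ge_t\hat a_{i,m-1}$, and $\hat a_{i,m-1}\le_t\hat a_{n,m-1}\le\hat b_m$. Hence $\hat b\le\hat a_{n,m-1}$ comes for free.
\item Lemma~\ref{lem:generalized 4cycle2}, applied to $x_1\,b\,a_{n,m-1}\,x_4$ around $b_m$, yields $p\in Y_1\cap Y_4$ with $\hat b\le\hat p\le\hat a_{n,m-1}$.
\item Here \eqref{eq:assumption} enters: it gives $\hat u_3=\hat a_{n1}\le_t\hat a_{i,m-1}\le_t\hat p$. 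So Lemma~\ref{lem:smaller dist} gives $d(u_3,p)\le d(\hat a_{n1},\hat a_{n,m-1})=d(u_3,u_4)-1$.
\item Since $b$ precedes $b_m$ on a geodesic from $a_{i,m-1}$ and $d(b,p)\le 1$, we get $d(u_1,p)\le d(u_1,a_{i,m-1})+d(\hat a_{i,m-1},\hat b_m)$. Using $\hat b_m\le\hat a_{n-1,m}$ and Lemma~\ref{lem:smaller dist} again, this is at most $(i-1)+d(\hat a_{i,m-1},\hat a_{n-1,m})\le (i-1)+(n-i)=d(u_1,u_4)$.
\end{enumerate}
Replacing $u_4$ by $p$ therefore strictly lowers \eqref{eq:ds}, which is the contradiction.
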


\begin{proof}
We refer to Figure~\ref{fig:systolic} (III) for the following proof. We assume $d(b_m,u_3)=d(u_4,u_3)$, otherwise replacing $u_4$ by $b_m$ decreases \eqref{eq:ds}. 
	
We assume $n\ge 3$, otherwise the lemma is already clear. Then \eqref{eq:assumption} implies $m\ge 3$.
We only show $x_1\nsim a_{2,m-1}$, as other cases are similar. Assume by contradiction that $x_1\sim a_{2,m-1}$. Let $\hat b=\beta(\hat a_{2,m-1}\hat b_m)$. By Lemma~\ref{lem:convex}, $b\sim x_1$. As $\hat a_{2,m-1}\le_t\hat a_{n-1,m-1}\le \hat b_m$, we know $\hat b\le \hat a_{n-1,m-1}$.
By \eqref{eq:assumption}, $\hat a_{n1}\le_t \hat a_{2,m-1}$. So $\hat a_{n1}\le_t \hat b$. Let $\eta$ be the left normal form path from $\hat a_{n1}$ to $\hat b$. As $\hat a_{2,m-1}\le_t\hat a_{n,m-1}\le \hat b_m$, we know $\hat a_{n1}\le_t\hat b\le \hat a_{n,m-1}$.
Thus length$(\eta)\le d(\hat a_{n,m-1},\hat a_{n1})=d(u_4,u_3)-1$.
As $d(b_m,u_3)=d(u_4,u_3)$, we know $\eta$ has length $d(u_4,u_3)-1$. Let $\hat r$ be the vertex in $\eta$ that is adjacent to $\hat b$. Then Lemma~\ref{lem:mn} and Theorem~\ref{thm:replace} imply that $\hat r\le \hat a_{n,m-1}$ (in particular these two vertices are adjacent). 

Consider the generalized 4-cycle $x_1ba_{n,m-1}x_4$ around $b_m$. Lemma~\ref{lem:generalized 4cycle2} implies that there is $p\in (\lk^0(b_m,X),<_{b_m})$ with $b\le_{b_m}p\le_{b_m}a_{n,m-1}$ and $p\sim\{x_1,x_4\}$. Let $\hat b\hat p\hat a_{n,m-1}$ be the admissible lift of $bpa_{n,m-1}$ starting at $\hat b$. Then $\hat b\le\hat p\le \hat a_{n,m-1}$. By Lemma~\ref{lem:smaller dist}, $d(u_3,p)\le d(\hat u_3,\hat p)\le d(\hat u_3,\hat a_{n,m-1})<d(u_3,u_4)-1$. Thus replacing $u_4$ by $p$ decreases \eqref{eq:ds} -- it suffices to show $d(u_1,p)\le d(u_1,u_4)$.
Note that $d(u_1,p)\le d(u_1,a_{2,m-1})+d(a_{2,m-1},b)+d(b,p)=2+d(a_{2,m-1},b)\le 2+d(\hat a_{2,m-1},\hat b)$. However, $d(\hat a_{2,m-1},\hat b)=d(\hat a_{2,m-1},\hat b_m)-1\le d(\hat a_{2,m-1},\hat a_{n-1,m})-1\le d(u_1,u_4)-2$. Thus $d(u_1,p)\le d(u_1,u_4)$.
\end{proof}

\subsection{Case 1}
\label{subsec:case 1}
The goal of this section is to prove that under the following assumptions we can replace $(u_1,u_2,u_3,u_4)$ by another element in $\Xi$ that decreases \eqref{eq:ds}:
\begin{enumerate}
	\item $x_1,x_3$ are of type $\hat s$, and $x_2,x_4$ are of type $\hat t$;
	\item  $u_i\neq u_{i+1}$ for any $i\in \mathbb Z/4\mathbb Z$.
\end{enumerate}
This will prove Lemma~\ref{lem:key} in Case 1.


Let $A^c_{ij}$ be the induced subgraph of $\Lambda$ spanned by vertices in $\Lambda\setminus A_{ij}$. For $T,T_0\in \mathcal C_P$ that are comparable, let $\mathcal C'_{P,T}$ be as in the beginning of Section~\ref{subsec:graph theoretic}, and let $T'_0$ be the element in $\mathcal C'_{P,T}$ corresponding to $T_0$. For subsets $A,B,C$ of a graph, we write $A\mid BC$ if $A$ does not separate $B$ from $C$, and $B\mid A\mid C$ if $A$ separates $B$ from $C$.





\begin{definition}
	\label{def:uls+-}
Given $T\in \mathcal C_P$ with $T\neq \{\uls\}$, let $\hat\Lambda_T$ be as in the beginning of Section~\ref{subsec:graph theoretic}. Let $e$ be the edge in $P$ containing $\uls$ and the vertex of $P$ that is $<\uls$ with respect to the order on $P$ discussed in the beginning of Section~\ref{subsec:P}. 
We define $\uls^-_T$ to be the connected components of $\hat\Lambda_T\setminus\{\uls\}$ that contains $e\setminus\{\uls\}$, and $\uls^+_T$ to be the union of other connected components of $\hat\Lambda_T\setminus\{\uls\}$.
\end{definition}

\begin{lem}
	\label{lem:rolling}
Take vertex $r\in \Lambda$ with $r\in \uls^-_T$. Let $T_0\in \mathcal C_{P,T}$ with $T_0<\{\uls\}$ such that $r$ and $\uls$ are connected by a path $\eta\subset \hat\Lambda_T\setminus T_0$. Then $r\in \uls^-_{T_0}$.
\end{lem}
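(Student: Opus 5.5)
The plan is to unwind the definitions of $\uls^-_T$ and $\uls^-_{T_0}$ and reduce the statement to a separation property of $T_0$ relative to $T$ and $\{\uls\}$. That property should come from the admissibility of $\mathcal C'_{P,T}$ (Lemma~\ref{lem:admissible example}, Definition~\ref{def:admissible}~(3)) together with the decomposition \eqref{eq:decomp}.

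Write $p$ for the endpoint of $e$ other than $\uls$. The conclusion $r\in\uls^-_{T_0}$ needs two things.
\begin{enumerate}
\item $r$ lies in $\hat\Lambda_{T_0}$.
\item $r$ can be joined to $p$ by a path in $\hat\Lambda_{T_0}\setminus\{\uls\}$.
\end{enumerate}
For (1), note that $T_0\neq\{\uls\}$, so $\uls\notin T_0$ by Remark~\ref{rmk:ab} (or trivially if $T_0$ is of type I). Hence $\uls\in P\setminus T_0$. The given path $\eta\subset\hat\Lambda_T\setminus T_0$ then joins $r$ to a vertex of $P\setminus T_0$ inside $\Lambda\setminus T_0$. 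This shows $r\in\hat\Lambda_{T_0}$.

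For (2), I would first locate $T_0$ precisely. Unwinding the cyclic order on $\mathcal C_P$ (going $\{\uls\}\to$ type II $\to\{\underline b\}\to$ interior of $P\to\{\uls\}$), the relation $T_0<\{\uls\}$ in $\mathcal C_{P,T}$ means $[T_0,\{\uls\},T]$. So $T_0$ sits between $T$ and $\{\uls\}$ on the side of $\uls$ that contains $e$. Passing to $\mathcal C'_{P,T}$ via $\phi$ and using Definition~\ref{def:admissible}~(3) for the chain $T'<T'_0<\{\uls\}$, or the appropriate chain when $T$ is of type I, I get that $T'_0$ separates $\{\uls\}$ from the part of $\hat\Lambda_T$ lying ``beyond'' $T_0$. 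In particular, the component of $\hat\Lambda_T\setminus T_0$ containing $\uls$ is contained in $\hat\Lambda_{T_0}$, and $r$ lies in this component because of $\eta$.

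Now take a path $\gamma\subset\hat\Lambda_T\setminus\{\uls\}$ from $r$ to $p$, which exists since $r\in\uls^-_T$.
\begin{itemize}
\item If $\gamma$ avoids $T_0$, then $\gamma\subset\hat\Lambda_{T_0}\setminus\{\uls\}$, and we are done.
\item Otherwise, argue by contradiction: suppose $r\in\uls^+_{T_0}$. Then $r$ and $p$ are separated by $\{\uls\}\cup T_0$ inside $\hat\Lambda_{T_0}$ (or $p\in T_0$, the degenerate type I case, handled separately). Let $v$ be the first vertex of $\gamma$ in $T_0$. Use Remark~\ref{rmk:visit} to get a $T_0$-tight path from $v$ into the $e$-side: when $T_0$ is of type II, this is a path from $v$ to $\underline b$ or to $\uls$ that meets $T_0$ only at $v$. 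Then use Lemma~\ref{lem:comparable1} and \eqref{eq:decomp} to show that this tight path can be chosen to reach $P\setminus\{\uls\}$, hence $p$ along $P$, without touching $\uls$. Concatenating the initial segment of $\gamma$ up to $v$ with this path shows that $v$ is adjacent to the $e$-side component. Combined with the separation statement above, this forces $r$ and $p$ into the same component of $\hat\Lambda_{T_0}\setminus\{\uls\}$, which is the desired contradiction.
\end{itemize}

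The main obstacle is this rerouting step: choosing the tight path so that it avoids $\uls$ and stays in the component of $\Lambda\setminus T_0$ containing $r$. I would handle it by splitting into cases.
\begin{itemize}
\item Both $T$ and $T_0$ are of type II. Here Lemma~\ref{lem:comparable} and Lemma~\ref{lem:separate} control where $T_0$ lies relative to $T^{\{\uls\}}$ and $T^{\{\underline b\}}$.
\item $T_0$ is an interior vertex of $P$. This is nearly trivial, since $P$ minus a point is a path.
\item $T$ is of type I. Here $\mathcal C'_{P,T}\cong\mc_{\Lambda_T}(\{s_1\},\{s_2\})$ as in the proof of Lemma~\ref{lem:admissible example}.
\end{itemize}
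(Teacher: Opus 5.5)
Your part (1) is fine, but the argument for (2) has a genuine gap.

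\textbf{The contradiction step does not work.} You produce $v\in T_0$ on $\gamma$ and a $T_0$-tight path from $v$ towards the $e$-side. From this you conclude that $v$ is adjacent both to the component of $r$ and to the component of $p$ in $\hat\Lambda_{T_0}\setminus\{\uls\}$. That gives no contradiction. The concatenated path runs through $v\in T_0$, so it is not a path in $\hat\Lambda_{T_0}$. Moreover, a vertex of a cut touching both sides is the typical situation (compare Remark~\ref{rmk:visit}). The separation property from Definition~\ref{def:admissible}~(3) cannot rescue this either, since it only ever separates and never connects.

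\textbf{The hypothesis on $\eta$ is not really used.} It enters your argument only through the remark that $r$ lies in the component of $\hat\Lambda_T\setminus T_0$ containing $\uls$. You never explain how this produces a path from $r$ to $p$ avoiding $T_0\cup\{\uls\}$. Yet that hypothesis is essential. Take $T$ of type II, $T_0=\{c\}$ with $c\neq p$ an interior vertex of $P$, and $r\in T^{\{\underline b\}}$. Then $T_0<\{\uls\}$ and $r\in\uls^-_T$, but $r\in\uls^+_{T_0}$. Your $\gamma$ meets $T_0$ at $c$, which is adjacent to both sides, exactly as in your argument. Only the non-existence of $\eta$ rules this case out.

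\textbf{The missing idea.} The path $\eta$ itself is the required path, because $e$ is the only edge joining $\uls$ to $\uls^-_T$. By \eqref{eq:decomp} and the fact that interior vertices of $P$ have valence $2$, every neighbour of $\uls$ other than $p$ lies in $T$ or in $T^{\{\uls\}}$. Moreover, $T^{\{\uls\}}\setminus\{\uls\}$ is a union of components of $\hat\Lambda_T\setminus\{\uls\}$ not containing $p$. (For $T$ of type I, $\uls^-_T$ is just the open arc of $P$ between $\uls$ and $T$.) The argument then runs as follows.
\begin{itemize}
\item Let $\eta_0$ be the initial segment of $\eta$ from $r$ up to its first visit to $\uls$. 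Then $\eta_0$ lies in $\uls^-_T$ and ends at $p$, and it avoids both $T_0$ and $\uls$.
\item $\eta\subset\hat\Lambda_{T_0}$, since $\eta$ is connected, misses $T_0$, and contains $\uls\in P\setminus T_0$.
\item Hence $r$ lies in the component of $\hat\Lambda_{T_0}\setminus\{\uls\}$ containing $e\setminus\{\uls\}$, i.e.\ $r\in\uls^-_{T_0}$.
\end{itemize}
This is the paper's two-line argument (``we may assume $\eta$ starts with $e$ and never uses $e$ again''). No localisation of $T_0$, no admissibility and no case analysis are needed. The hypothesis $T_0<\{\uls\}$ is used only to ensure $T_0\neq\{\uls\}$.
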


\begin{proof}
As $r\in \uls^-_T$, we can assume $\eta$ starts with the edge $e$ and never uses $e$ again. As $\eta\cap T_0=\emptyset$, we know $\eta\in \hat\Lambda_{T_0}$. As $\eta$ only uses $e$ once, $r\in \uls^-_{T_0}$.
\end{proof}

\begin{lem}
	\label{lem:+}
Let $T\in \mathcal C_P$ with $T\neq\{\uls\}$. Suppose there is an edge path $\eta\subset T^c$ from a vertex $r\in \Lambda$ to $\uls$ without using the edge $e$. Then $r\in \uls^+_T$.
\end{lem}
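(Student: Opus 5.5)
The plan is to follow the path $\eta$ from $r$ and keep track of the component of $\hat\Lambda_T\setminus\{\uls\}$ in which it lies. Since $T\neq\{\uls\}$, Lemma~\ref{lem:disjoint} gives $\uls\notin T$. Hence $\uls\in P\setminus T$, and so $\uls\in\hat\Lambda_T$. The path $\eta$ lies in $T^c=\Lambda\setminus T$, is connected, and ends at $\uls$, so the whole of $\eta$ lies in the component $\hat\Lambda_T$ of $\Lambda\setminus T$ containing $P\setminus T$. In particular $r\in\hat\Lambda_T$. If $r=\uls$ there is nothing to prove, or this case is excluded by convention, so we assume $r\neq\uls$.

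Next, truncate $\eta$ at its first visit to $\uls$. This gives a subpath $\eta_0$ from $r$ to $\uls$ that meets $\uls$ only at its endpoint, and $\eta_0$ still does not use $e$. Remove the endpoint: $\eta_0\setminus\{\uls\}$ is a connected subset of $\hat\Lambda_T\setminus\{\uls\}$ containing $r$. Its last edge is an edge $e'\neq e$ at $\uls$, with other endpoint $v'$. Therefore $r$ lies in the same component of $\hat\Lambda_T\setminus\{\uls\}$ as $v'$.

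It remains to show that $v'$ does not lie in $\uls^-_T$, i.e.\ that the component containing $e\setminus\{\uls\}$ is not reached from $\uls$ through any other edge. This is the main obstacle, and it is really about the meaning of Definition~\ref{def:uls+-}. The definition splits $\hat\Lambda_T\setminus\{\uls\}$ into the component through $e$ and the rest. So the lemma holds only if the neighbour of $\uls$ along $e$ is not connected to $v'$ inside $\hat\Lambda_T\setminus\{\uls\}$.

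I would prove this using the choice of $P$ and the type of $T$. If $T$ is type~I, then $T$ is an interior vertex $p$ of $P$ on the $e$-side. The edge $e$ leads from $\uls$ along $P$ towards $\underline b$, and since interior vertices of $P$ have valence $2$, removing $p$ cuts that arc. Hence the component through $e$ is just the segment of $P$ strictly between $\uls$ and $p$, which touches $\uls$ only via $e$.

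If $T$ is type~II, then by \eqref{eq:decomp} $\hat\Lambda_T=T^{\{a\}}\cup T^{\{b\}}\cup(P\setminus T)$ with $a=\uls$. The edge $e$ lies in $P$ and enters the interior of $P$, which connects to the rest only through $\underline b$ and then $T^{\{b\}}$. Any path from $v'$ back to the $e$-side avoiding $\uls$ would have to pass from $T^{\{a\}}$ (or from wherever $v'$ lies, if $v'\neq$ the $e$-neighbour) to $T^{\{b\}}\cup(P\setminus\{\uls\})$ while avoiding $T\cup\{\uls\}$. This contradicts $T$ separating $\{a\}$ from $\{b\}$ in $\Lambda_P$ (Lemma~\ref{lem:comparable}).

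If this separation argument turns out to need $v'\in T^{\{a\}}$, I would split into the cases $v'\in T^{\{a\}}$ and $v'$ an interior vertex of $P$. The second case is impossible unless $e'=e$, because of the valence-$2$ condition. This would conclude $r\in\uls^+_T$.
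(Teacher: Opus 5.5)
Your proposal is correct and follows essentially the paper's route. The paper's one-line proof uses the same key fact you isolate: $\uls^-_T$ is attached to $\uls$ only through $e$, so removing an interior point of $e$ separates any $r\in\uls^-_T$ from $\uls$ in $\hat\Lambda_T$. The paper asserts this without justification, whereas your type I/type II analysis actually proves it, using the valence-$2$ interior vertices of $P$, \eqref{eq:decomp}, and the fact that $v'\in T^{\{a\}}$ holds automatically because $e'\neq e$ is an edge of $\Lambda_P$ at $\uls$ with $v'\notin T$.

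One small correction: when $r=\uls$ the conclusion fails rather than having nothing to prove. The constant path satisfies the hypothesis, but $\uls\notin\uls^+_T$. So the lemma must be read with $r\neq\uls$, which holds in its applications since there $r=t$ and $\{t\}\notin\mathcal C_P\ni\{\uls\}$.
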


\begin{proof}
If $r\in \uls^-_T$, then for any interior point $x\in e$, $r$ and $\uls$ are in different connected components of $\hat\Lambda_T\setminus\{x\}$. Thus any edge path from $r$ to $\uls$ must use the edge $e$. Now the lemma follows.
\end{proof}

\begin{lem}
	\label{lem:am}
We have $t\notin A_{ni}$ for $1\le i\le m$, $\{\uls\}\neq A_{nm}$ and $t\in \uls^+_{A_{nm}}$ in $A^c_{nm}$. 
\end{lem}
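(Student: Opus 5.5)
The plan is to get all three assertions from the non-adjacency statements Lemma~\ref{lem:nonadj1} and Lemma~\ref{lem:nonadj2}, combined with the transitivity criterion Lemma~\ref{lem:transitive}. Throughout, we are in Case 1, so $x_4$ has type $\hat t$ and $x_1$ has type $\hat s$. We also have $\{s\},\{t\}\notin\mathcal C_P$. Recall that $\rho_{34}=a_{n1}\cdots a_{nm}$ lies in $Y_4$ by Lemma~\ref{lem:convex} and Proposition~\ref{prop:convex}, so $a_{ni}\sim x_4$ for all $i$.

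\emph{Step 1: $t\notin A_{ni}$.} Suppose $t\in A_{ni}$. Since $a_{ni}\sim x_4$ and $x_4$ has type $\hat t$, the vertex $x_4$ is a vertex of the simplex $\sigma_{a_{ni}}$ of $\Delta_\Lambda$. Hence every vertex of $X$ adjacent to $a_{ni}$, which lies in a common simplex of $\Delta_\Lambda$ with it, is $\sim x_4$. From the construction of $D$ and of the $b_i$ we have $\hat a_{n,i-1}<\hat b_i\le\hat a_{n-1,i}$, so $a_{ni}$ is adjacent to $a_{n-1,i+1}$ for $1\le i\le m-1$. The strip structure also gives that $a_{nm}=u_4$ is adjacent to $a_{n-1,m}$. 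In either case we get $x_4\sim a_{n-1,j}$ for some $2\le j\le m$, which contradicts Lemma~\ref{lem:nonadj1}.

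\emph{Step 2: $A_{nm}\ne\{\uls\}$.} Suppose $A_{nm}=\{\uls\}$. We cannot have $s=\uls$, since $\{s\}\notin\mathcal C_P$, so $s\ne\uls$. The vertex $a_{n,m-1}$ is adjacent to $u_4$ and distinct from it, so its type $A_{n,m-1}$ is a member of $\mathcal C_P$ different from $\{\uls\}$. By Lemma~\ref{lem:separateuls}, $\uls$ separates $s$ from $A_{n,m-1}$. Since $x_1\sim u_4\sim a_{n,m-1}$, Lemma~\ref{lem:transitive} (with $S_2=\{\uls\}$) gives $x_1\sim a_{n,m-1}$. This contradicts Lemma~\ref{lem:nonadj2} with $i=n$.

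\emph{Step 3: $t\in\uls^+_{A_{nm}}$.} This is the main obstacle. Write $T=A_{nm}$. I would argue by contradiction: suppose $t$ either lies outside $\hat\Lambda_T$ or lies in $\uls^-_T$. The aim is to find a neighbour $v\in\{a_{n-1,m},b_m\}$ of $u_4$ whose type lies, after intersecting with $\Lambda_T$ (using the dictionary of Lemma~\ref{lem:admissible example} and the decomposition \eqref{eq:decomp}), on the side of $\hat\Lambda_T$ that is separated from $t$ by $T$. Then Lemma~\ref{lem:transitive} applied to $x_4\sim u_4\sim v$ would yield $x_4\sim a_{n-1,m}$, contradicting Lemma~\ref{lem:nonadj1} (if $v=b_m$, first transfer through the order relations in $\lk(u_4,X)$ as in Lemma~\ref{lem:generalized 4cycle2}).

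Before that, I would first try to rule out directly the situation that $t$ is in a component of $\Lambda\setminus T$ disjoint from $\hat\Lambda_T$. In that case $T$ separates $t$ from every type in $\mathcal C_{P,T}$, so Lemma~\ref{lem:transitive} immediately gives $x_4\sim a_{n-1,m}$. For the case $t\in\uls^-_T$, the key is to check which side of $\uls$ the types of $a_{n-1,m}$ and $a_{n,m-1}$ lie on. Here I would use Lemma~\ref{lem:rolling} and Lemma~\ref{lem:+}, together with the fact that $a_{n,m-1}<_{u_4}a_{n-1,m}$ in the link order. The expected difficulty is the case analysis of whether these neighbouring types are of type I or type II relative to $T$, and on which side of $\uls$ they sit.
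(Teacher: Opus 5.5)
Your Steps 1 and 2 are correct and match the paper. In Step 1, the adjacency $a_{ni}\sim a_{n-1,i+1}$ is simply the anti-diagonal edge in the last column of strips of $D$. You do not need the $b_i$ for it, and reading it off from $\hat a_{ni}<\hat b_{i+1}\le \hat a_{n-1,i+1}$ would need a transitivity of $<$ that you have not justified. Your handling of the case $t\notin T\cup\hat\Lambda_T$ in Step 3 is also fine.

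The gap is the main case of Step 3, $t\in\uls^-_T$ with $T=A_{nm}$, where the proposed mechanism cannot work. Here $t$ and every type in $\mathcal C'_{P,T}$ lie in the same component $\hat\Lambda_T$ of $\Lambda\setminus T$. So $T$ never separates $t$ from the type of $a_{n-1,m}$ or of $b_m$, and Lemma~\ref{lem:transitive} with middle vertex $u_4$ gives nothing. The dichotomy $\uls^{\pm}_T$ is about separation by $T\cup\{\uls\}$, and $\uls$ is not in the type of any vertex you use. Lemmas~\ref{lem:rolling} and~\ref{lem:+} cannot decide on which side of $\uls$ the vertex $t$ sits without further input. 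Your sketch never uses $x_1$, $s$, Lemma~\ref{lem:nonadj2} or Lemma~\ref{lem:disjoint}, which is exactly where that input comes from.

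The missing idea is to take as middle vertex the vertex of type $\widehat{B_m\cup A_{nm}}$ determined by the edge $u_4b_m$ (it is $\sim$ to $x_1,x_4,a_{n,m-1},a_{n-1,m}$ by Proposition~\ref{prop:intersection}).
\begin{itemize}
\item Since $b_m\sim\{x_1,x_4\}$, Lemmas~\ref{lem:nonadj1} and~\ref{lem:nonadj2} give $b_m\neq a_{n-1,m},a_{n,m-1}$. Hence $A'_{n,m-1}<B'_m<A'_{n-1,m}$ strictly in $\mathcal C_{P,A_{nm}}$.
\item Lemma~\ref{lem:transitive} with middle set $B_m\cup A_{nm}$ shows that this set separates neither $s$ from $A_{n,m-1}$ (else $x_1\sim a_{n,m-1}$) nor $t$ from $A_{n-1,m}$ (else $x_4\sim a_{n-1,m}$).
\item Exactly as in your Step 2, $B_m\neq\{\uls\}$. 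So by Lemma~\ref{lem:disjoint} a shortest path from $s$ to $\uls$ avoids $B_m\cup A_{nm}$, and $B'_m$ does not separate $\uls$ from $A'_{n,m-1}$ in $A^c_{nm}$.
\item By the separation axiom (Definition~\ref{def:admissible}(3) via Lemma~\ref{lem:admissible example}), this rules out $B'_m<\{\uls\}$. Since $\{\uls\}$ is comparable to everything, $\{\uls\}<B'_m<A'_{n-1,m}$, so $B'_m$ separates $\uls$ from $A'_{n-1,m}$.
\item Take a path from $t$ to some $c\in A'_{n-1,m}$ avoiding $B_m\cup A_{nm}$. By the previous point it must also avoid $\uls$. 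Since $A'_{n-1,m}>\{\uls\}$ lies in $\uls^+_{A_{nm}}$, so does $t$.
\end{itemize}
Without this step that places $B'_m$ above $\{\uls\}$ using the $s$-side information, nothing in your outline decides which side of $\uls$ the vertex $t$ lies on.
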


\begin{proof} 
The first statement follows from Lemma~\ref{lem:nonadj1}.
As $\hat b_m\sim \{x_1,x_4\}$, Lemma~\ref{lem:nonadj1} and \ref{lem:nonadj2} implies that $\hat b_m\neq \hat a_{n-1,m}$ and $\hat b_m\neq \hat a_{n,m-1}$. Hence $B_m, A_{n,m-1},A_{n-1,m},A_{nm}$ are mutually distinct. Note that $(B_m\cup A_{nm})\mid sA_{n,m-1}$ in $\Lambda$, otherwise as $x_1\sim\{a_{nm},b_m\}$ and $a_{n,m-1}\sim\{a_{nm},b_m\}$, we deduce $x_1\sim a_{n,m-1}$ by Lemma~\ref{lem:transitive}, contradicting Lemma~\ref{lem:nonadj2}. Similarly, we deduce from Lemma~\ref{lem:transitive} and Lemma~\ref{lem:nonadj1} that $(B_m\cup A_{nm})\mid tA_{n-1,m}$ in $\Lambda$.
Note that $\{\uls\}\neq A_{nm}$ and $\{\uls\}\neq B_m$, otherwise Lemma~\ref{lem:separateuls} and Lemma~\ref{lem:transitive} imply that $x_1\sim a_{n,m-1}$, contradiction.
Lemma~\ref{lem:disjoint} implies that $(B_m\cup A_{nm})\mid \uls A_{n,m-1}$ in $\Lambda$. Hence $B_m\mid \uls A_{n,m-1}$ in $A^c_{nm}$. As $A'_{n,m-1}<B'_m$ in $\mathcal C_{P,A_{nm}}$, Lemma~\ref{lem:admissible example} and Definition~\ref{def:admissible} imply that $\{\uls\}<B'_m$ in $\mathcal C_{P,A_{nm}}$.
Then $\{\uls\}<B'_m<A'_{n-1,m}$ in $\mathcal C_{P,A_{nm}}$. Hence  $\uls\mid B'_m\mid A'_{n-1,m}$ in $A^c_{nm}$ by Lemma~\ref{lem:admissible example}.
As $B'_m\mid tA'_{n-1,m}$ in $A^c_{nm}$, we have $\{\uls\}\mid tA'_{n-1,m}$ in $A^c_{nm}$. Thus there is an edge path from $t$ to $c\in A'_{n-1,m}$ avoiding $\{\uls\}\cup A_{nm}$. As $A'_{n-1,m}\in \uls^+_{A_{nm}}$, the lemma follows.
\end{proof}

\begin{lem}
	\label{lem:inductiont1}
For $2\le i\le m-1$, suppose $A_{ni}\neq\{\uls\}$ and $t\in \uls^-_{A_{ni}}$ in $A^c_{ni}$. Then
	\begin{enumerate}
		\item $A'_{n,i-1}< \{\uls\}$ in $\mathcal C'_{P,A_{ni}}$ (in particular $A_{n,i-1}\neq\{
		\uls\}$).
		\item There is an edge path from $t$ to $\uls$ in $A^c_{ni}$ avoiding $A'_{n,i-1}$.
		\item We have $t\in \uls^-_{A_{n,i-1}}$.
	\end{enumerate}
\end{lem}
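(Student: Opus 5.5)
We would follow the template of Lemma~\ref{lem:am}, now using the hypothesis $t\in\uls^-_{A_{ni}}$ in place of $t\in\uls^+$, and working in the link of $a_{ni}$. The relevant vertices are $a_{n,i-1}$, $a_{n,i+1}$ and $b_{i+1}$, all of which lie in $\lk(a_{ni},X)$. Their types are compared in $\mathcal C'_{P,A_{ni}}$, which is admissible in $A^c_{ni}$ by Lemma~\ref{lem:admissible example}. By the construction of $D$ (consecutive vertices of $\rho_{34}$ are increasing in the lift), we have $A'_{n,i-1}<A'_{n,i+1}$ in $\mathcal C'_{P,A_{ni}}$. Also $\hat a_{ni}<\hat b_{i+1}\le\hat a_{n-1,i+1}$. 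Since $x_4\sim\{b_{i+1},a_{ni}\}$ and $x_4\nsim a_{n-1,i}$ by Lemma~\ref{lem:nonadj1}, Lemma~\ref{lem:transitive} applied around $a_{ni}$ should give the non-separation statement $A_{ni}\mid tA_{n,i-1}$ (or its analogue with $B$'s) in $\Lambda$. In other words, $t$ can reach $A_{n,i-1}$ in $A^c_{ni}$.

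For item (1), we argue by contradiction and suppose $A'_{n,i-1}$ is not $<\{\uls\}$. Because the elements are comparable, this means either $A'_{n,i-1}=\{\uls\}$ or $\{\uls\}<A'_{n,i-1}$. In both cases, Definition~\ref{def:admissible}(3) applied to the chain $\{\uls\}\le A'_{n,i-1}<\cdots$ shows that $A'_{n,i-1}$ separates $\uls$ from the larger elements. Combined with $t\in\uls^-_{A_{ni}}$ and Lemma~\ref{lem:+}, this should force every path from $t$ to $A'_{n,i-1}$ in $A^c_{ni}$ to pass through $\uls$. Then $\uls$ separates $t$ (and, via Lemma~\ref{lem:separateuls} and Lemma~\ref{lem:disjoint}, also $s$) from $A_{n,i-1}$. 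Lemma~\ref{lem:transitive} would then yield $x_4\sim a_{n-1,i}$ or $x_1\sim a_{i,m-1}$-type adjacencies, contradicting Lemma~\ref{lem:nonadj1} or Lemma~\ref{lem:nonadj2}. The parenthetical claim $A_{n,i-1}\neq\{\uls\}$ follows because $\varphi$ is a bijection between $\mathcal C_{P,A_{ni}}$ and $\mathcal C'_{P,A_{ni}}$.

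For item (2), we use that $t\in\uls^-_{A_{ni}}$. Choose a path from $t$ to $\uls$ inside $\hat\Lambda_{A_{ni}}$ whose last edge is $e$. Since $A'_{n,i-1}<\{\uls\}$, the element $A'_{n,i-1}$ lies on the $\uls^-$ side, and by Lemma~\ref{lem:separate} it separates the smaller cuts from $\{\uls\}$. So we must show that $A'_{n,i-1}$ does not cut $t$ off from $\uls$. If it did, $t$ and $\uls$ would lie in different components of $A^c_{ni}\setminus A'_{n,i-1}$. Then $A_{n,i-1}\cup A_{ni}$ would separate $t$ from $\uls$, and hence, by Lemma~\ref{lem:separateuls}, from the types of $a_{n-1,i}$ and $b_i$. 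Lemma~\ref{lem:transitive} applied to $x_4\sim b_i\sim a_{n,i-1}$ (or similar) would then give $x_4\sim a_{n-1,i}$, contradicting Lemma~\ref{lem:nonadj1}.

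For item (3), we apply Lemma~\ref{lem:rolling} with $T=A_{ni}$, $T_0=A_{n,i-1}$ and $r=t$, using the path provided by item (2). This gives $t\in\uls^-_{A_{n,i-1}}$.

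We expect the main obstacle to be item (1): identifying exactly which Lemma~\ref{lem:transitive} application, around $a_{ni}$ or around $b_i$, yields the contradiction. Getting this right requires careful bookkeeping of the separations coming from the corresponding ones in Lemma~\ref{lem:am}.
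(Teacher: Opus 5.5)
Your item (3) matches the paper (Lemma~\ref{lem:rolling} with $T=A_{ni}$, $T_0=A_{n,i-1}$, $r=t$). However, items (1) and (2) are missing the concrete input the lemma rests on, and the substitutes you propose do not work.

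The relevant configuration is the triangle $\hat a_{n,i-1}<\hat a_{n-1,i}<\hat a_{ni}$ in the bottom strip of $D$, not $a_{n,i+1}$ or $b_{i+1}$. Your comparison $A'_{n,i-1}<A'_{n,i+1}$ is unjustified; $a_{n,i-1}$ and $a_{n,i+1}$ are not even adjacent, since $\rho_{34}$ is a geodesic. The triangle gives $A'_{n,i-1}<A'_{n-1,i}$ in $\mathcal C'_{P,A_{ni}}$. Moreover:
$x_4\sim a_{ni}$ and $x_4\sim a_{n,i-1}$, because $\rho_{34}\subset Y_4$;
$a_{n-1,i}$ is adjacent to both;
$x_4\nsim a_{n-1,i}$ by Lemma~\ref{lem:nonadj1}.
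So Lemma~\ref{lem:transitive}, applied with the middle vertex of type $\widehat{A_{ni}\cup A_{n,i-1}}$ determined by $a_{ni}$ and $a_{n,i-1}$, gives the non-separation $(A_{ni}\cup A_{n,i-1})\mid tA_{n-1,i}$ in $\Lambda$, i.e.\ $A'_{n,i-1}\mid tA'_{n-1,i}$ in $A^c_{ni}$. Your statement $A_{ni}\mid tA_{n,i-1}$ cannot be extracted from Lemma~\ref{lem:transitive}: there is no non-adjacency to exploit, since $x_4\sim a_{n,i-1}$. Even the correct weaker statement $A_{ni}\mid tA_{n-1,i}$ would not suffice, because the path it provides may pass through $\uls$.

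With this in hand, (1) goes as follows. Suppose $A'_{n,i-1}\ge\{\uls\}$; then $\{\uls\}\le A'_{n,i-1}<A'_{n-1,i}$. So $t\in\uls^-_{A_{ni}}$ forces $\uls$ to separate $t$ from $A'_{n-1,i}$ in $A^c_{ni}$. If moreover $A'_{n,i-1}\ne\{\uls\}$, Lemma~\ref{lem:admissible example} and Definition~\ref{def:admissible}(3) show that $A'_{n,i-1}$ separates $\uls$ from $A'_{n-1,i}$. Either way $A'_{n,i-1}$ separates $t$ from $A'_{n-1,i}$, contradicting the non-separation above.

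In your version you only obtain that $\uls$ separates $t$ from $A_{n,i-1}$, and then assert that Lemma~\ref{lem:transitive} yields $x_4\sim a_{n-1,i}$. That would require the type of some vertex adjacent to both $x_4$ and $a_{n-1,i}$ to separate $t$ from $A_{n-1,i}$, which you never produce. The detours through $s$, Lemma~\ref{lem:separateuls}, Lemma~\ref{lem:disjoint} and Lemma~\ref{lem:nonadj2} are irrelevant here.

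Likewise in (2), Lemma~\ref{lem:separateuls} is about separating $s$ and cannot give what is needed: a path in $A^c_{ni}$ from a point of $A'_{n-1,i}$ to $\uls$ avoiding $A'_{n,i-1}$. The paper gets this path from $A'_{n,i-1}\le A'_{n-1,i}$, Lemma~\ref{lem:comparable1} and \eqref{eq:decomp}. Then (2) is just its concatenation with the path from $t$ to $A'_{n-1,i}$ avoiding $A_{ni}\cup A_{n,i-1}$ supplied by the non-separation.
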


\begin{proof}
	For (1), we argue by contradiction and assume $A'_{n,i-1}\ge \{\uls\}$ in $\mathcal C'_{P,A_{ni}}$. Then $\{
	\uls\}\le A'_{n,i-1}\le A'_{n-1,i}$. Hence $t\in \uls^-_{A_{ni}}$ implies that $\uls$ separates $t$ from $A'_{n-1,i}$ in $A^c_{ni}$. By Lemma~\ref{lem:admissible example}, $\uls\mid A'_{n,i-1}\mid A'_{n-1,i}$ in $A^c_{ni}$ if $A'_{n,i-1}\neq\{\uls\}$. Thus $t\mid A'_{n,i-1}\mid A'_{n-1,i}$ in $A^c_{ni}$. On the other hand, Lemma~\ref{lem:nonadj1} and Lemma~\ref{lem:transitive} imply that $A'_{n,i-1}\mid tA'_{n-1,i}$ in $A^c_{ni}$, contradiction. Thus (1) follows. 
	For (2), by $A'_{n,i-1}\mid tA'_{n-1,i}$ in $A^c_{ni}$ there is an edge path $\eta'$ from $t$ to a point $c\in A'_{n-1,i}$ avoiding $A_{ni}\cup A_{n,i-1}$. When $A_{ni}$ is of type II, by $\uls\in P$, $A'_{n,i-1}\le A'_{n-1,i}$ in $\mathcal C'_{P,A_{ni}}$, Lemma~\ref{lem:comparable1} and \eqref{eq:decomp}, we deduce that there is an edge path $\eta''$ from $c$ to $\uls$ avoiding $A'_{n,i-1}$ in $A^c_{ni}$. Thus (2) follows.
	(3) follows from (1), (2) and Lemma~\ref{lem:rolling}.
\end{proof}

\begin{lem}
	\label{lem:inductiont2}
Suppose $2\le i\le m$. Suppose $A_{ni}\neq \{\uls\}$ and $t\in \uls^+_{A_{ni}}$ in $A^c_{ni}$.
	\begin{enumerate}
		\item If $A'_{n,i-1}<\{\uls\}$ in $\mathcal C_{P,A_{ni}}$, then $t\in \uls^+_{A_{n,i-1}}$.
		\item If $A'_{n,i-1}>\{\uls\}$ in $\mathcal C_{P,A_{ni}}$, then $t\in \uls^-_{A_{n,i-1}}$.
	\end{enumerate}
\end{lem}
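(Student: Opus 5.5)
Both parts rest on the same two facts, established exactly as in Lemmas~\ref{lem:am} and \ref{lem:inductiont1}. First, by Lemma~\ref{lem:nonadj1} and Lemma~\ref{lem:transitive}, $A'_{n,i-1}\mid tA'_{n-1,i}$ in $A^c_{ni}$: otherwise $x_4\sim a_{ni}$ and $a_{n,i-1}\sim a_{ni}$ would give $x_4\sim a_{n-1,i}$. So there is an edge path $\eta'$ from $t$ to some $c\in A'_{n-1,i}$ avoiding $A_{ni}\cup A_{n,i-1}$. Second, $A'_{n,i-1}<A'_{n-1,i}$ in $\mathcal C'_{P,A_{ni}}$, and by Lemma~\ref{lem:admissible example} the partial order there obeys Definition~\ref{def:admissible}. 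We then use Lemma~\ref{lem:rolling} and Lemma~\ref{lem:+} to transfer the side of $\uls$ containing $t$ from $A_{ni}$ to $A_{n,i-1}$.

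For (2), assume $\{\uls\}<A'_{n,i-1}<A'_{n-1,i}$. By Definition~\ref{def:admissible}~(3), $A'_{n,i-1}$ separates $\uls$ from $A'_{n-1,i}$ in $A^c_{ni}$. Concatenating $\eta'$ with a path from $c$ to $t$ shows that $t$ and $A'_{n-1,i}$ lie in the same component of $A^c_{ni}\setminus A'_{n,i-1}$, so $A'_{n,i-1}$ separates $t$ from $\uls$ in $A^c_{ni}$. Any path from $t$ to $\uls$ inside $\hat\Lambda_{A_{n,i-1}}$ must therefore leave $A^c_{ni}$ through $A_{ni}\setminus A_{n,i-1}$, which lies on the $\{a\}$-side of $A_{n,i-1}$ (Lemma~\ref{lem:comparable1} and \eqref{eq:decomp}). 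In $\hat\Lambda_{A_{n,i-1}}$ this side is attached to $\uls$ only through the edge $e$, because $\uls=a$ and the $<\uls$ neighbour along $P$ is the one toward $\underline b$. Hence $t\in\uls^-_{A_{n,i-1}}$. I expect this step, which uses \eqref{eq:decomp} to pin down on which side of $\uls$ each component lies, to be the main obstacle, since it requires carefully identifying the component $\uls^-$ in $\hat\Lambda_{A_{n,i-1}}$.

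For (1), assume $A'_{n,i-1}<\{\uls\}$. Because $t\in\uls^+_{A_{ni}}$, there is a path from $t$ to $\uls$ in $\hat\Lambda_{A_{ni}}$ whose last edge is not $e$. I want to choose this path so that it also avoids $A_{n,i-1}$. Mirroring the proof of Lemma~\ref{lem:inductiont1}~(2), I take $\eta'$ from $t$ to $c$ and extend it from $c$ to $\uls$ through the $\uls^+$ side. This is possible because $c\in A'_{n-1,i}$ and $A'_{n,i-1}<\{\uls\}$, so by Definition~\ref{def:admissible}~(3) and Lemma~\ref{lem:comparable1}, $\{\uls\}$ separates $A'_{n,i-1}$ from the part of the graph beyond $\uls$ that $\eta'$ reaches. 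The resulting path lies in $A_{n,i-1}^c$ and does not use $e$, so Lemma~\ref{lem:+} gives $t\in\uls^+_{A_{n,i-1}}$.

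A degenerate case must be handled separately: $A'_{n-1,i}$ may lie on the other side of $\uls$, so that $c$ sits in $\uls^-$. In that case we use instead the direct path witnessing $t\in\uls^+_{A_{ni}}$. If that path met $A_{n,i-1}$, then $A'_{n,i-1}$ would meet the $\uls^+$ side, contradicting $A'_{n,i-1}<\{\uls\}$ via Lemma~\ref{lem:separate}.
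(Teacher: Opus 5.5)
The gap is in part (2), at exactly the step you flagged as the main obstacle. You assert that $A_{ni}\setminus A_{n,i-1}$ lies on the $\{a\}$-side of $A_{n,i-1}$, and that this side is attached to $\uls$ only through $e$. Since $\uls=a$, the $\{a\}$-side $A_{n,i-1}^{\{a\}}$ contains $\uls$ itself. By \eqref{eq:decomp}, the set $A_{n,i-1}^{\{a\}}\setminus\{\uls\}$ is exactly $\uls^+_{A_{n,i-1}}$, which is joined to $\uls$ by edges of $\Lambda_P$, not by $e$. So your two assertions are incompatible. If $A_{ni}\setminus A_{n,i-1}$ really lay on the $\{a\}$-side, your argument would give $t\in\uls^+_{A_{n,i-1}}$, the opposite conclusion.

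What is missing is the translation of the hypothesis from $\mathcal C_{P,A_{ni}}$ to $\mathcal C_{P,A_{n,i-1}}$ through the partial cyclic order. $\{\uls\}<A'_{n,i-1}$ in $\mathcal C_{P,A_{ni}}$ means $[\{\uls\},A_{n,i-1},A_{ni}]$, hence $[A_{n,i-1},A_{ni},\{\uls\}]$, i.e.\ $A'_{ni}<\{\uls\}$ in $\mathcal C_{P,A_{n,i-1}}$. This places $A_{ni}\setminus A_{n,i-1}$ in $\uls^-_{A_{n,i-1}}$: for $A_{n,i-1}$ of type II, on its $\{\underline b\}$-side or in the interior of $P$. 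It is the same kind of fact you use correctly in (1). With this correction your argument closes. Take a path from $t$ to $\uls$ in $\hat\Lambda_{A_{n,i-1}}$ and cut it at its first visit to $\uls$. By the separation you established in $A^c_{ni}$, it must meet some $w\in A_{ni}\setminus A_{n,i-1}$, and $w\in\uls^-_{A_{n,i-1}}$ forces $t\in\uls^-_{A_{n,i-1}}$.

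The paper's proof of (2) works entirely in $\mathcal C_{P,A_{n,i-1}}$. After the rotation, $\hat a_{n,i-1}<\hat a_{n-1,i}<\hat a_{ni}$ gives $A'_{n-1,i}<A'_{ni}<\{\uls\}$ there, so $A'_{ni}$ separates $A'_{n-1,i}$ from $\uls$ in $A^c_{n,i-1}$. Your path $\eta'$ therefore avoids $\uls$ and ends at $c\in\uls^-_{A_{n,i-1}}$. This gives the conclusion without using $t\in\uls^+_{A_{ni}}$ at all.

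Your part (1) is correct, but only through what you call the degenerate case. That argument is in fact the paper's whole proof, and it works in every case. The path witnessing $t\in\uls^+_{A_{ni}}$, cut at its first visit to $\uls$, lies in $\uls^+_{A_{ni}}\cup\{\uls\}$. Meanwhile $A'_{n,i-1}<\{\uls\}$ in $\mathcal C_{P,A_{ni}}$ puts $A'_{n,i-1}$ inside $\uls^-_{A_{ni}}$. This follows from the description of $\mathcal C_{P,T}$ via the cyclic order and the trichotomy stated after \eqref{eq:decomp}, not from Lemma~\ref{lem:separate}. Hence the path avoids $A_{n,i-1}$ and does not use $e$, so Lemma~\ref{lem:+} applies. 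The detour through $\eta'$ and $c$ is unnecessary.
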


\begin{proof}
For (1), let $e$ be as in Definition~\ref{def:uls+-}. Then $e\subset \hat \Lambda_{A_{ni}}$. As $t\in \uls^+_{A_{ni}}$, we have an edge path $\eta$ in $\hat \Lambda_{A_{ni}}$ connecting $t$ and $\uls$ without using the edge $e$. As $A'_{n,i-1}<\{\uls\}$, we deduce $A'_{n,i-1}\subset \uls^-_{A_{ni}}$. Thus we can assume $\eta$ is disjoint from $A'_{n,i-1}$, hence can viewed as a path in $\hat \Lambda_{A_{n,i-1}}$. As this path does not use $e$,  $t\in \uls^+_{A_{n,i-1}}$.

For (2), by the partial cyclic order structure on $\mathcal C_P$, $A'_{n,i-1}>\{\uls\}$ in $\mathcal C_{P,A_{ni}}$ implies that $A'_{ni}<\{\uls\}$ in $\mathcal C_{P,A_{n,i-1}}$. As $\hat a_{n,i-1}<\hat a_{n-1,i}<\hat a_{n,i}$, we obtain $A'_{n-1,i}<A'_{ni}$ in $\mathcal C_{P,A_{n,i-1}}$. Lemma~\ref{lem:nonadj1} implies that $A'_{ni}\mid tA'_{n-1,i}$ in $A^c_{n,i-1}$. By Lemma~\ref{lem:admissible example}, $A'_{n-1,i}\mid A'_{n,i}\mid \{\uls\}$ in $A^c_{n,i-1}$. Thus there is an edge path $\eta$ from $t$ to a point $c\in A'_{n-1,i}$ avoiding $\uls$. As $A'_{n-1,i}<\{\uls\}$ in $\mathcal C_{P,A_{n,i-1}}$, $c\in \uls^-_{A_{n,i-1}}$. Thus $t\in \uls^-_{A_{n,i-1}}$. 
\end{proof}

Note that the proof of Lemma~\ref{lem:inductiont2} (2) does not use the assumption $t\in \uls^+_{A_{ni}}$.
The proof of the following is similar to that of Lemma~\ref{lem:inductiont2} (2).

\begin{lem}
	\label{lem:induction3}
Let $2\le i\le m-1$. Suppose $A_{in}=\{\uls\}$. Then $t\in s^-_{A_{n,i-1}}$.
\end{lem}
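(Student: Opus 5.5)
We will follow the proof of Lemma~\ref{lem:inductiont2} (2), with the pair $(A_{ni},A_{n,i-1})$ replaced by $(\{\uls\},A_{n,i-1})$. (We read the statement as $A_{ni}=\{\uls\}$ and $t\in\uls^-_{A_{n,i-1}}$, matching the indexing of the previous lemmas.) The two inputs are unchanged. First, the lifts satisfy $\hat a_{n,i-1}<\hat a_{n-1,i}<\hat a_{ni}$, coming from the strip structure of the disk $D$. Second, Lemma~\ref{lem:nonadj1} gives $x_4\nsim a_{n-1,i}$.

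\textbf{Step 1: the relative position of $\{\uls\}$.} The element $A_{ni}=\{\uls\}$ and $A_{n,i-1}$ are comparable in $\mathcal C_P$, since $a_{ni}$ and $a_{n,i-1}$ are adjacent in $X$. By Lemma~\ref{lem:am}, $A_{n,i-1}\neq\{\uls\}$. We have $\hat a_{n,i-1}<\hat a_{ni}$, and we place $\{\uls\}$ in $\mathcal C_{P,A_{n,i-1}}$ using the partial cyclic order of Section~\ref{subsec:pco}. The aim is to show $A'_{n-1,i}<\{\uls\}$ in $\mathcal C_{P,A_{n,i-1}}$. This should follow directly from $\hat a_{n-1,i}<\hat a_{ni}$ read in the link of $a_{n,i-1}$, i.e.\ $a_{n-1,i}<_{a_{n,i-1}}a_{ni}$.

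\textbf{Step 2: a path from $t$ avoiding $\uls$.} Apply Lemma~\ref{lem:transitive} to $x_4\sim a_{n,i-1}$ (here $t\notin A_{n,i-1}$ by Lemma~\ref{lem:am}) and $a_{n,i-1}\sim a_{n-1,i}$. Since $x_4\nsim a_{n-1,i}$, we get $A_{n,i-1}\mid tA_{n-1,i}$ in $\Lambda$. Passing to $A^c_{n,i-1}$, this yields an edge path $\eta$ from $t$ to some $c\in A'_{n-1,i}$ with $\eta\cap A_{n,i-1}=\emptyset$.

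Next use the chain $A'_{n-1,i}<A'_{ni}=\{\uls\}$ in $\mathcal C'_{P,A_{n,i-1}}$. Definition~\ref{def:admissible} (2) and (3), via Lemma~\ref{lem:admissible example}, give that $\{\uls\}$ separates $A'_{n-1,i}$ from the elements above it. What we actually need is that $\eta$ can be chosen to avoid $\uls$. If $\eta$ passes through $\uls$, we truncate it at its last visit to $\uls$ and restart the argument from there. Alternatively, we exploit the following: Lemma~\ref{lem:transitive} also forbids $t$ and $A_{n-1,i}$ from being separated by $\{\uls\}\cup A_{n,i-1}$, since $x_4\sim a_{ni}$ and $a_{ni}\sim a_{n-1,i}$ (here $t\ne \uls$ because $x_4\sim a_{ni}$ and $\{t\}\notin\mathcal C_P$). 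This second route directly gives $\eta$ avoiding both $\uls$ and $A_{n,i-1}$.

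\textbf{Step 3: conclude.} Because $A'_{n-1,i}<\{\uls\}$ in $\mathcal C_{P,A_{n,i-1}}$, the set $A'_{n-1,i}$ lies in $\uls^-_{A_{n,i-1}}$, by the decomposition \eqref{eq:decomp} and the argument used in Lemma~\ref{lem:inductiont2} (1). The path $\eta$ lies in $\hat\Lambda_{A_{n,i-1}}\setminus\{\uls\}$ and joins $t$ to $c\in\uls^-_{A_{n,i-1}}$. Hence $t\in\uls^-_{A_{n,i-1}}$.

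\textbf{Main obstacle.} The delicate point is Step 1: correctly placing $\{\uls\}$ relative to $A'_{n-1,i}$ in the glued partial cyclic order, including the degenerate case where $A_{n,i-1}$ is of type I. If the direction turns out to be the opposite one, we would instead combine Lemma~\ref{lem:rolling} with the path $\eta$.
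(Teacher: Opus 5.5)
Your Steps 1 and 3, combined with the second route in Step 2, are exactly the paper's argument. It is the proof of Lemma~\ref{lem:inductiont2}~(2) with $A'_{ni}=\{\uls\}$. There the separation step $A'_{n-1,i}\mid A'_{ni}\mid\{\uls\}$ drops out, because a path avoiding $A'_{ni}$ already avoids $\uls$. Also, $A'_{n-1,i}<\{\uls\}$ in $\mathcal C_{P,A_{n,i-1}}$ does follow directly from $\hat a_{n,i-1}<\hat a_{n-1,i}<\hat a_{ni}$, so the hedging in your ``main obstacle'' paragraph is unnecessary.

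Three points to tidy up:
\begin{enumerate}
\item Derive the non-separation of $t$ from $A_{n-1,i}$ by $A_{n,i-1}\cup\{\uls\}$ from Lemma~\ref{lem:transitive}, using the vertex of type $\widehat{A_{n,i-1}\cup\{\uls\}}$ determined by the edge $a_{n,i-1}a_{ni}$. Both $x_4$ and $a_{n-1,i}$ are $\sim$ to this vertex. Using $a_{ni}$ alone only gives a path avoiding $\uls$, which may still pass through $A_{n,i-1}$.
\item $A_{n,i-1}\neq\{\uls\}$ holds simply because adjacent vertices of $X$ have distinct types; Lemma~\ref{lem:am} does not give this.
\item Drop the ``truncate at the last visit to $\uls$'' alternative. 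A truncated path no longer starts at $t$, so it proves nothing.
\end{enumerate}
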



\begin{lem}
	\label{lem:t}
The following holds:
\begin{enumerate}
	\item If $A_{n1}\neq\{\uls\}$ and $t\in \uls^+_{A_{n1}}$, then $A_{ni}\neq \{\uls\}$ and $t\in \uls^+_{A_{ni}}$ for each $1\le i\le m$. Moreover, $A'_{n,i-1}<\{\uls\}$ in $\mathcal C_{P,A_{ni}}$ for $2\le i\le m$.
	\item If $A_{n1}=\{\uls\}$ or $t\in \uls^-_{A_{n1}}$, then we can replace $(u_1,u_2,u_3,u_4)$ by another element in $\Xi$ with smaller value of \eqref{eq:ds}.
\end{enumerate}
\end{lem}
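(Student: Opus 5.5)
Part (1) goes by downward induction along the right column $\hat a_{nm},\hat a_{n,m-1},\dots,\hat a_{n1}$. Part (2) goes by tracking the side of $\uls$ on which $t$ sits as $i$ decreases from $m$ to $1$, and comparing with the value forced at $i=m$ by Lemma~\ref{lem:am}.

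For (1), suppose first that $A_{n1}\neq\{\uls\}$ and $t\in\uls^+_{A_{n1}}$. Lemma~\ref{lem:am} gives the top case: $A_{nm}\neq\{\uls\}$ and $t\in\uls^+_{A_{nm}}$. Take $i$ with $2\le i\le m$, and assume $A_{ni}\neq\{\uls\}$ and $t\in\uls^+_{A_{ni}}$. Since $A_{n,i-1}$ and $A_{ni}$ are types of adjacent vertices, $A'_{n,i-1}$ is comparable with $\{\uls\}$ in $\mathcal C_{P,A_{ni}}$, or equal to it. If $A'_{n,i-1}>\{\uls\}$, then Lemma~\ref{lem:inductiont2}~(2) gives $t\in\uls^-_{A_{n,i-1}}$. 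Combining Lemma~\ref{lem:inductiont1} (iterated down to $i=1$) with Lemma~\ref{lem:induction3} if some $A_{nj}$ along the way equals $\{\uls\}$, this propagates to $t\in\uls^-_{A_{n1}}$ or $A_{n1}=\{\uls\}$, contradicting the hypothesis. The one point I have to check carefully is that once $t$ lies on the $-$ side, it stays there for all lower indices; I believe Lemma~\ref{lem:inductiont1} and Lemma~\ref{lem:induction3} give exactly this.

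If instead $A_{n,i-1}=\{\uls\}$, Lemma~\ref{lem:induction3} (or the same argument when $i-1=1$) again forces the $-$ side below, which is also a contradiction. So $A'_{n,i-1}<\{\uls\}$, and Lemma~\ref{lem:inductiont2}~(1) gives $t\in\uls^+_{A_{n,i-1}}$. By Lemma~\ref{lem:inductiont1}~(1), or directly because $A'_{n,i-1}<\{\uls\}$, we also get $A_{n,i-1}\neq\{\uls\}$. This completes the induction and gives the ordering statement in (1).

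For (2), assume $A_{n1}=\{\uls\}$ or $t\in\uls^-_{A_{n1}}$. Run the same propagation upward. I will look for the largest index $k$ with $A_{nk}\neq\{\uls\}$ and $t\in\uls^+_{A_{nk}}$. Such a $k$ exists because $k=m$ qualifies by Lemma~\ref{lem:am}, and $k\ge 2$ by the hypothesis. Applying the analysis of (1) at $k$ shows that $A'_{n,k-1}\ge\{\uls\}$ in $\mathcal C_{P,A_{nk}}$. This is where the transition from the $+$ side to the $-$ side happens.

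At this transition I will produce a better quadruple. Since $\uls$ separates $s$ from every other element of $\mathcal C_P$ (Lemma~\ref{lem:separateuls}), the vertex $x_1$ of type $\hat s$ interacts with both $a_{nk}$ and $a_{n,k-1}$ only through $\uls$. Lemma~\ref{lem:transitive} should then show that $x_1$ is related to a vertex of type $\hat\uls$ lying between $a_{n,k-1}$ and $a_{n-1,k}$ in $\lk(a_{nk},X)$. Using $x_4\sim a_{nk}$ together with Lemma~\ref{lem:generalized 4cycle2}, I expect a vertex $\theta\in Y_1\cap Y_4$ adjacent to $a_{nk}$ with $\hat a_{n,k-1}\le\hat\theta\le\hat a_{n-1,k}$. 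Then $d(u_3,\theta)\le k-1$ and, by Lemma~\ref{lem:smaller dist}, $d(u_1,\theta)\le d(u_1,u_4)$. Replacing $u_4$ by $\theta$ therefore decreases \eqref{eq:ds} whenever $k<m$; the case $k=m$ is handled by the same argument at $b_m$.

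I expect this last step to be the main obstacle: producing $\theta\sim x_1$ requires keeping track of which side of $\uls$ the cut $A_{nk}$ places $s$, using Lemmas~\ref{lem:disjoint} and~\ref{lem:separateuls}.
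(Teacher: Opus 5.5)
Your part (1) is the paper's argument. You start from Lemma~\ref{lem:am} at $i=m$. You use Lemma~\ref{lem:inductiont2} to continue on the $+$ side, and Lemmas~\ref{lem:inductiont1} and~\ref{lem:induction3} to show that once you leave it you stay on the $-$ side down to $i=1$. That contradicts the hypothesis. One small slip: the ``largest'' index $k$ in your part (2) is always $m$ by Lemma~\ref{lem:am}; you mean the smallest $k$ such that every $j\ge k$ is on the $+$ side.

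Part (2) has a genuine gap: nothing supports the construction of $\theta\in Y_1\cap Y_4$ next to $a_{nk}$.
\begin{itemize}
\item To use Lemma~\ref{lem:separateuls} through Lemma~\ref{lem:transitive}, you need a chain $x_1\sim y\sim z$ in which $y$ has type $\hat\uls$ and is already related to $x_1$.
\item The only vertices known to satisfy $\sim x_1$ are those of $Y_1$, such as $u_1$, $u_4$, $b_m$ and $\rho_{14}$. Nothing relates $x_1$ to $a_{nk}$ or $a_{n,k-1}$ for $k<m$, and Lemma~\ref{lem:nonadj2} even gives $x_1\nsim a_{n,m-1}$.
\item Your transition index encodes where $t$ sits, which is information about $x_2$ and $x_4$. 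It says nothing about $x_1$.
\item The bound $d(u_1,\theta)\le d(u_1,u_4)$ is also unsupported. In $D$ you have $\hat u_1=\hat a_{1m}$, and there is no $\le_t$-chain from it through a vertex between $\hat a_{n,k-1}$ and $\hat a_{n-1,k}$. So Lemma~\ref{lem:smaller dist} does not apply.
\end{itemize}

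The missing idea is to work at the corner $u_3=a_{n1}$ with $x_3$, which in Case 1 also has type $\hat s$ and satisfies $x_3\sim a_{n1}$. The transition index plays no role.
\begin{itemize}
\item If $A_{n1}=\{\uls\}$, then $b_2\sim a_{n1}$, so Lemmas~\ref{lem:separateuls} and~\ref{lem:transitive} give $x_3\sim b_2$. Hence $b_2\in Y_3\cap Y_4$, and $(u_1,u_2,b_2,b_m)$ has smaller value of \eqref{eq:ds}.
\item If $A_{n1}\neq\{\uls\}$ and $t\in\uls^-_{A_{n1}}$, Lemma~\ref{lem:nonadj1} ($x_4\nsim a_{n-1,2}$) gives $B'_2<\{\uls\}$ in $\mathcal C'_{P,A_{n1}}$, as in Lemma~\ref{lem:inductiont1}(1). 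You also have $B'_2<A'_{n-1,1}$. Apply Lemma~\ref{lem:generalized 4cycle2} to the generalized 4-cycle $b_2x_4x_3a_{n-1,1}$ around $a_{n1}$. This gives a vertex $a\sim\{x_3,x_4\}$ of type $\hat A$ with $B'_2\le A'\le A'_{n-1,1}$.
\item If $A'_{n-1,1}=\{\uls\}$, or $A'\le\{\uls\}$, then Lemmas~\ref{lem:admissible example}, \ref{lem:disjoint} and~\ref{lem:transitive} give $x_3\sim b_2$.
\item If $\{\uls\}<A'$, the hypothesis $t\in\uls^-_{A_{n1}}$ forces $x_4\sim a_{n-1,1}$.
\item In either case, replace $u_3$ by $b_2$ or by $a_{n-1,1}$, and replace $u_4$ by $b_m$.
\end{itemize}
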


\begin{proof}
(1) follows from Lemma~\ref{lem:am}, Lemma~\ref{lem:inductiont1}, Lemma~\ref{lem:induction3} and Lemma~\ref{lem:inductiont2}.

For (2), first we consider the case $A_{n1}=\{\uls\}$. If $s\neq\uls$, then in $\Lambda$ we know $\uls$ separates $s$ from any elements of $\mathcal C_P$ that is distinct from $\{\uls\}$ by Lemma~\ref{lem:separateuls}. Then $x_3\sim b_2$ by Lemma~\ref{lem:transitive}. We still have $x_3\sim b_2$ if $s=\uls$. So $b_2\in Y_3\cap Y_4$, and replacing $u_4$ by $b_m$ and replacing $u_3$ by $b_2$ decreases \eqref{eq:ds} (as $d(u_2,b_2)=d(u_2,u_3)$, $d(b_2,b_m)<d(u_3,u_4)$ and $d(u_1,b_m)\le d(u_1,u_4)$).

Now assume $A_{n1}\neq\{\uls\}$ and $t\in \uls^-_{A_{n1}}$. As $x_4\sim\{a_{n1},b_2\}$, we have $(B_2\cup A_{n1})\mid tA_{n-1,2}$ in $\Lambda$ (otherwise $a_{n-1,2}\sim x_4$). Hence $B'_2\mid tA_{n-1,2}$ in $A^c_{n1}$. Similarly to Lemma~\ref{lem:inductiont1} (1), we deduce from $t\in \uls^-_{A_{n1}}$ that $B'_2<\{\uls\}$ in $\mathcal C'_{P,A_{n1}}$.  

Consider $A'_{n-1,1}$ in $\mathcal C'_{P,A_{n1}}$. As $\hat a_{n-1,1}<\hat a_{n1}<\hat b_2<\hat a_{n-1,2}$ and $\hat a_{n-1,1}<\hat a_{n-1,2}$, we know $a_{n-1,1}\sim b_2$, and $b_2<_{u_3}a_{n-1,1}\in (\lk^0(u_3,X),<_{u_3})$. Thus $B'_2<A'_{n-1,1}$ in $\mathcal C'_{P,A_{n1}}$.
Assume $A'_{n-1,1}\neq\{\uls\}$, otherwise we deduce from $a_{n-1,1}\sim b_2$ that 
$x_3\sim b_2$, and we replace $u_3$ by $b_2$ to decrease \eqref{eq:ds}.

If $B'_2<A'_{n-1,1}<\{\uls\}$ in $\mathcal C'_{P,A_{n1}}$, then Lemma~\ref{lem:admissible example} implies $\uls\mid(A_{n1}\cup A_{n-1,1})\mid B_2$ in $\Lambda$. By Lemma~\ref{lem:disjoint}, $s\mid (A_{n1}\cup A_{n-1,1})\mid B_2$ in $\Lambda$. As $x_3\sim\{a_{n1},a_{n-1,1}\}$ and $b_2\sim\{a_{n1},a_{n-1,1}\}$, by Lemma~\ref{lem:transitive}, $b_2\sim x_3$, and we conclude as before.

Suppose $A'_{n-1,1}>\{\uls\}$. 
We consider the generalized 4-cycle $b_2x_4x_3a_{n-1,1}$ around $a_{n1}$. Lemma~\ref{lem:generalized 4cycle2} implies that there is a vertex $a\in (\lk^0(a_{n1},X),<_{a_{n1}})$ such that with $b_2\le_{a_{n1}} a\le_{a_{n1}} a_{n-1,1}$ and $a\sim \{x_3,x_4\}$. Suppose $a$ has type $\hat A$. Then $B'_2\le A'\le A'_{n-1,1}$ in $\mathcal C_{P,A_{n1}}$. If $A'<\{\uls\}$, then Lemma~\ref{lem:admissible example} implies that $B'_2\mid A'\mid \{\uls\}$ in $A^c_{n1}$. By Lemma~\ref{lem:disjoint}, $B'_2\mid A'\mid s$ in $A^c_{n1}$. As $x_3\sim \{a_{n1},a\}$ and $b_2\sim\{a_{n1},a\}$, by Lemma~\ref{lem:transitive}, $b_2\sim x_3$ and we conclude as before. Now suppose $\{\uls\}\le A'< A'_{n-1,1}$. As $t\in \uls^-_{A_{n1}}$, $A'_{n-1,1}\mid\uls\mid t$ in $A^c_{n1}$. By Lemma~\ref{lem:admissible example}, $\uls\mid A'\mid A'_{n-1,1}$ in $A^c_{n1}$ if $A'\neq\{\uls\}$. Thus $A'_{n-1,1}\mid A'\mid t$ in $A^c_{n1}$, hence $A_{n-1,1}\mid (A\cup A_{n1})\mid t$ in $\Lambda$. As $x_4\sim\{a,a_{n1}\}$ and $a_{n-1,1}\sim \{a,a_{n1}\}$, by Lemma~\ref{lem:transitive}, $x_4\sim a_{n-1,1}$. Thus $a_{n-1,1}\in Y_4\cap Y_3$, and replacing $u_3$ by $a_{n-1,1}$ and $u_4$ by $b_m$ decreases \eqref{eq:ds}, as $d(u_2,a_{n-1,1})<d(u_2,u_3)$ and $d(a_{n-1,1},b_m)=1+d(b_2,b_m)\le d(u_3,u_4)$. It remains to consider $A'=A'_{n-1,1}$, in which case $a=a_{n-1,1}$ and $x_4\sim a_{n-1,1}$. Then $a_{n-1,1}\in Y_4\cap Y_3$. As before, replacing $u_4$ by $b_m$ and $u_3$ by $a_{n-1,1}$ decreases \eqref{eq:ds}.
\end{proof}

In the rest of this section, we assume Lemma~\ref{lem:t} (1) holds. In particular, $A'_{n,m-1}<\{\uls\}<A'_{n-1,m}$ in $\mathcal C_{P,A_{nm}}$.

\begin{lem}
\label{lem:s}
We have $A_{im}\neq\{\uls\}$ for $1\le i\le n-1$, and $A_{i,m-1}\neq\{\uls\}$ for $2\le i\le n$. Moreover, in $\mathcal C_{P,\{\uls\}}$, $A_{1m}<A_{2m}<\cdots<A_{n-1,m}<A_{2,m-1}<A_{3,m-1}<\cdots<A_{n,m-1}$.
\end{lem}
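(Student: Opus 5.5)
The idea is to mirror, on the top row and the last-but-one column of the disk $D$, the argument that established Lemma~\ref{lem:t} (1) for the right column. There the vertex tracked was $t$ and the reference set was $\mathcal C_{P,A_{ni}}$. Here the tracked vertex is $s$ (the type of $x_1$), and everything is read inside the link of $\uls$, i.e. in $\mathcal C_{P,\{\uls\}}$.

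\textbf{Step 1: the two bottom-right entries are not $\{\uls\}$.} We start at the corner. By Lemma~\ref{lem:t} (1) we have $A'_{n,m-1}<\{\uls\}<A'_{n-1,m}$ in $\mathcal C_{P,A_{nm}}$, so $A_{n-1,m}\neq\{\uls\}\neq A_{n,m-1}$.

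\textbf{Step 2: no entry of the top row or of column $m-1$ equals $\{\uls\}$.} Suppose some $A_{im}$ with $i\le n-1$, or some $A_{i,m-1}$ with $i\ge 2$, equals $\{\uls\}$. In that case Lemma~\ref{lem:separateuls} applies: $\uls$ separates $s$ from every other element of $\mathcal C_P$ (this is automatic if $s=\uls$). Lemma~\ref{lem:transitive}, applied along the edges of $D$ through that vertex, should then give $x_1\sim a_{j,m-1}$ for a suitable $j$ with $2\le j\le n$. This contradicts Lemma~\ref{lem:nonadj2}. If the transitivity step yields $x_4\sim a_{n-1,j}$ instead, it contradicts Lemma~\ref{lem:nonadj1}. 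The argument is the same one as in the first paragraph of the proof of Lemma~\ref{lem:t} (2).

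\textbf{Step 3: order the vertices in $\mathcal C_{P,\{\uls\}}$.} Once all these $A$'s differ from $\{\uls\}$, every one of them is comparable with $\{\uls\}$, because type~II elements are comparable with the type~I element $\{\uls\}$, or $\{\uls\}$ itself is type~II. So all of them lie in the poset $\mathcal C_{P,\{\uls\}}$. This poset is the partial cyclic order of $\mathcal C_P$ cut open at $\{\uls\}$. In $D$ we have the chains $\hat a_{1m}<\hat a_{2m}<\cdots<\hat a_{n-1,m}$ along the top, and $\hat a_{2,m-1}<\cdots<\hat a_{n,m-1}$ along column $m-1$. Each of these is an edge in $\widehat X$, so consecutive types satisfy $[A_{i,m},A_{i+1,m},T]$ for the appropriate cyclic order. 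We then need to show that, cutting at $\{\uls\}$, these edges never cross the cut. I would do this by an induction exactly parallel to Lemmas~\ref{lem:inductiont1}--\ref{lem:t}, with $s$ in place of $t$ and with the sets $\uls^{\pm}_{A_{im}}$ of Definition~\ref{def:uls+-}. The base case is at $A_{n-1,m}$, where Lemma~\ref{lem:am} and $\{\uls\}<A'_{n-1,m}$ give the side. The inductive step rolls along the row using Lemma~\ref{lem:rolling}, Lemma~\ref{lem:+} and Lemma~\ref{lem:nonadj2}. The junction $A_{n-1,m}<A_{2,m-1}$ comes from $\hat a_{n-1,m-1}\le\hat b_m\le\hat a_{n-1,m}$. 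Together with the cyclic-order chain through $\hat a_{nm}$, this places $A_{n-1,m}$ before $A_{2,m-1}$ once the cyclic order is cut at $\{\uls\}$.

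\textbf{Main obstacle.} The delicate step is the crossing from the top row to column $m-1$. There the chain turns a corner of the disk, and the cyclic ordering must be shown to wrap in the correct direction, rather than placing $\{\uls\}$ between $A_{n-1,m}$ and $A_{2,m-1}$. I would first try to derive it from Lemma~\ref{lem:t} (1) at $i=m$, namely $A'_{n,m-1}<\{\uls\}<A'_{n-1,m}$. Cyclically rotating the triple $[A_{n,m-1},\{\uls\},A_{n-1,m}]$ viewed from $\{\uls\}$ should give exactly $A_{n-1,m}<A_{n,m-1}$ in $\mathcal C_{P,\{\uls\}}$. Transitivity of the partial cyclic order (Definition~\ref{def:partial cyclic}) along the lifted chains should then propagate this to the whole sequence.
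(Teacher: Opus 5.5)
There is a genuine gap. Your Step~1 and the top-row half of Step~2 are fine. Rotating $A'_{n,m-1}<\{\uls\}<A'_{n-1,m}$ in $\mathcal C_{P,A_{nm}}$ to get $A'_{n-1,m}<A'_{n,m-1}$ in $\mathcal C_{P,\{\uls\}}$ is exactly how the paper starts.

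The gap is the propagation in Step~3, which is the real content of the lemma; your ``main obstacle'' is not the corner, which is immediate, but this step.

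\begin{itemize}
\item \emph{Cyclic-order transitivity cannot do it.} The triangles of $D$ give cyclic triples among the $A_{ij}$ only, none involving $\{\uls\}$. These triples are compatible with $\{\uls\}$ lying between $A_{i-1,m}$ and $A_{im}$.
\item \emph{Tracking $s$ via $\uls^\pm$ gives no information.} The path $Q$ avoids every $T\neq\{\uls\}$ (Lemma~\ref{lem:disjoint}). It does not end with the edge $e$, since $\uls$ is the unique point of $C$ nearest $s$. So by Lemma~\ref{lem:+}, $s\in\uls^+_T$ for every such $T$ when $s\neq\uls$. The side of $s$ is fixed once and for all. What varies is the position of $\{\uls\}$ in the link posets $\mathcal C_{P,A_{im}}$, and it must be located anew at each top-row vertex.
\item \emph{The column-$(m-1)$ half of Step~2 also fails.} The argument of Lemma~\ref{lem:t}~(2) needs the $\{\uls\}$-vertex to be already adjacent to $x_1$. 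That holds on the top row but not for $a_{i,m-1}$. Getting $x_1\sim a_{i,m-1}$ via Lemma~\ref{lem:transitive} through top-row vertices would need $A_{im}$ (or $A_{i-1,m}\cup A_{im}$) to separate $s$ from $\uls$, which Lemma~\ref{lem:disjoint} forbids.
\end{itemize}

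The missing step, as in the paper, is carried out at $a_{im}$ for $i=n-1,\dots,2$.
\begin{enumerate}
\item Assume $A'_{i,m-1}<A'_{i+1,m-1}<\{\uls\}$ in $\mathcal C_{P,A_{im}}$. For $i=n-1$ this is the rotated corner relation together with the triangle $\hat a_{n-1,m-1}<\hat a_{n,m-1}<\hat a_{n-1,m}$.
\item The triangle $\hat a_{i,m-1}<\hat a_{i-1,m}<\hat a_{im}$ gives $A'_{i,m-1}<A'_{i-1,m}$.
\item We have $x_1\sim a_{i-1,m},a_{im}$, since the top row lies in $Y_1$, while $x_1\nsim a_{i,m-1}$ by Lemma~\ref{lem:nonadj2}. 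So Lemma~\ref{lem:transitive} gives $A'_{i-1,m}\mid sA'_{i,m-1}$ in $A^c_{im}$. Lemma~\ref{lem:disjoint} upgrades this to $A'_{i-1,m}\mid \uls A'_{i,m-1}$.
\item Definition~\ref{def:admissible}~(3) holds in $\mathcal C'_{P,A_{im}}$ by Lemma~\ref{lem:admissible example}. It therefore excludes $A'_{i,m-1}<A'_{i-1,m}<\{\uls\}$, so $A'_{i-1,m}>\{\uls\}$.
\item Rotating gives $A'_{i-1,m}<A'_{im}<A'_{i,m-1}<A'_{i+1,m-1}$ in $\mathcal C_{P,\{\uls\}}$. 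Together with the triangle $\hat a_{i-1,m-1}<\hat a_{i,m-1}<\hat a_{i-1,m}$, this is the hypothesis at $a_{i-1,m}$. The strict inequalities also yield $A_{i,m-1}\neq\{\uls\}$.
\end{enumerate}

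Finally, the inequality $\hat a_{n-1,m-1}\le\hat b_m\le\hat a_{n-1,m}$ you cite for the junction carries no information about $\{\uls\}$. The paper has no separate junction step; it concludes directly from these interleaved relations.
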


\begin{proof}
Lemma~\ref{lem:nonadj2} and Lemma~\ref{lem:disjoint} imply that $A_{im}\neq\{\uls\}$ for $1\le i\le n-1$.
The discussion in Lemma~\ref{lem:am} implies that in $\mathcal C_{P,\{\uls\}}$, $A'_{n-1,m}<A'_{n,m-1}$. As $\hat a_{n-1,m-1}<\hat a_{n,m-1}<\hat a_{n-1,m}$, we have $A'_{n-1,m-1}<A'_{n,m-1}$ in $\mathcal C_{P,A_{n-1,m}}$. Lemma~\ref{lem:nonadj2} implies $A'_{n-2,m}\mid sA'_{n-1,m-1}$ in $A^c_{n-1,m}$. Lemma~\ref{lem:disjoint} implies that $A'_{n-2,m}\mid \uls A'_{n-1,m-1}$ in $A^c_{n-1,m}$. As $A'_{n-1,m-1}<A'_{n,m-1}<\{\uls\}$ in $\mathcal C_{P,A_{n-1,m}}$ and $A'_{n-1,m-1}<A'_{n-2,m}$ (this follows from $\hat a_{n-1,m-1}<\hat a_{n-2,m}<\hat a_{n-1,m}$), we deduce from 
Lemma~\ref{lem:admissible example} that $A'_{n-2,m}>\{\uls\}$ in $\mathcal C_{P,A_{n-1,m}}$. So $A'_{n-1,m-1}<A'_{n,m-1}<\{\uls\}<A'_{n-2,m}$ in $\mathcal C_{P,A_{n-1,m}}$. It follows that $A'_{n-2,m}<A'_{n-1,m}<A'_{n-1,m-1}<A'_{n,m-1}$ in $\mathcal C_{P,\{\uls\}}$. Repeating this argument, we obtain $A'_{i-1,m}<A'_{i,m}<A'_{i,m-1}<A'_{i+1,m-1}$ in $\mathcal C_{P,\{\uls\}}$ for $2\le i\le n-1$. Thus the lemma follows.
\end{proof}

\begin{lem}
Suppose $t\in \uls^-_{A_{1m}}$. Then we can replace $(u_1,u_2,u_3,u_4)$ by another element in $\Xi$ with smaller value of \eqref{eq:ds}.
\end{lem}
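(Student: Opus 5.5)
We argue in the spirit of Lemma~\ref{lem:t} (2), transported from the bottom row of $D$ to the top row. By Lemma~\ref{lem:t} (1) we have $t\in \uls^+_{A_{nm}}$. By hypothesis $t\in \uls^-_{A_{1m}}$, and by Lemma~\ref{lem:s} the types $A_{1m},\dots,A_{n-1,m}$ differ from $\{\uls\}$ and increase in $\mathcal C_{P,\{\uls\}}$. Hence there is a first index $i$ with $1\le i\le n-1$ such that $t\in\uls^-_{A_{im}}$ and either $t\in \uls^+_{A_{i+1,m}}$ or $A_{i+1,m}=A_{nm}$. Along the top row we will use Lemma~\ref{lem:inductiont1}, Lemma~\ref{lem:inductiont2} and Lemma~\ref{lem:rolling}, with $x_1$ and $s$ playing the role that $x_4$ and $t$ played in the bottom column. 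Lemma~\ref{lem:nonadj2} gives the non-adjacency input $x_1\nsim a_{i,m-1}$, which is the analogue of Lemma~\ref{lem:nonadj1}.

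At the transition index we work in the link of $a_{i+1,m}$, or of $a_{im}$, and compare the positions of $A'_{i,m}$, $A'_{i+1,m-1}$ and $\{\uls\}$ in $\mathcal C'_{P,A_{i+1,m}}$. The order relations from Lemma~\ref{lem:s} together with the switch from $\uls^-$ to $\uls^+$ should force, as in the proof of Lemma~\ref{lem:inductiont1} (1), that $\{\uls\}$ lies between $A'_{im}$ and $A'_{i+1,m-1}$. Next apply Lemma~\ref{lem:admissible example} with Definition~\ref{def:admissible} (3), and then Lemma~\ref{lem:disjoint} to replace $\uls$ by $s$. Together these give a separation of the form $s\mid (A_{im}\cup A_{i+1,m})\mid A_{i,m-1}$, or of the form $t\mid\cdots\mid$ for the opposite arrangement.

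With that separation we use Lemma~\ref{lem:transitive} to obtain a new adjacency. In one case $x_4\sim a_{im}$, so $a_{im}\in Y_1\cap Y_4$. In the other case $x_1\sim a_{i+1,m-1}$, which is impossible by Lemma~\ref{lem:nonadj2}, or else a vertex produced by Lemma~\ref{lem:generalized 4cycle2} lies in $Y_1\cap Y_4$, exactly as with the vertex $a$ in the proof of Lemma~\ref{lem:t} (2). In the first case we replace $u_4$ by $a_{im}$. Since $a_{im}$ lies on $\rho_{14}$, $d(u_1,a_{im})<d(u_1,u_4)$. Also $d(u_3,a_{im})\le d(u_3,u_4)$, using that $\hat a_{im}\le_t \hat a_{nm}$ and Lemma~\ref{lem:smaller dist}, or the strip structure of $D$. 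So \eqref{eq:ds} strictly decreases. In the Lemma~\ref{lem:generalized 4cycle2} case we bound distances as at the end of Lemma~\ref{lem:nonadj2}.

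The main obstacle is the case analysis at the transition index. We must determine which side of $\{\uls\}$ the relevant types lie on in the link poset, using the partial cyclic order on $\mathcal C_P$. We must also handle the boundary case $i=1$, where $a_{1m}=u_1$ and we compare with $x_2$ instead. There the replacement should be of $u_1$, or of both $u_1$ and $u_4$, and the distance bookkeeping has to be checked separately.
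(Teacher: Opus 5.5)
\textbf{There is a genuine gap: the transition-index step does not produce any adjacency, and the corner case you defer is where the whole proof lives.}

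Lemma~\ref{lem:transitive} yields $x\sim z$ only when you already have a middle vertex $y$ with $x\sim y\sim z$. In the Case~1 setting you use (Lemma~\ref{lem:t}~(1), Lemma~\ref{lem:s}), the situation along the top row is as follows:
\begin{itemize}
\item The only vertex known to be $\sim$ to every $a_{im}$ is $x_1$, of type $\hat s$.
\item The vertices of type $\hat t$ are $x_2$, known to be $\sim a_{1m}=u_1$, and $x_4$, known to be $\sim a_{nm}=u_4$. Moreover $x_4\nsim a_{n-1,m}$ by Lemma~\ref{lem:nonadj1}.
\end{itemize}
So at an interior index $i$, the position of $t$ relative to $A_{im}$ and $A_{i+1,m}$ cannot be turned into an adjacency with $x_2$ or $x_4$. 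Deriving $x_4\sim a_{im}$ from a separation whose middle set is $A_{im}\cup A_{i+1,m}$ would already require $x_4\sim a_{im}$, so it is circular. For $x_1$, the relevant separations are governed by $\uls$ alone (Lemmas~\ref{lem:separateuls} and~\ref{lem:disjoint}), independently of where $t$ sits, so the $\uls^-/\uls^+$ switch gives you nothing there.

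Concretely, the ordering $A'_{i+1,m-1}<\{\uls\}<A'_{im}$ in $\mathcal C_{P,A_{i+1,m}}$ already follows from Lemma~\ref{lem:s} via the partial cyclic order, without the switch. It also makes $\uls$ the separator, and Lemma~\ref{lem:disjoint} cannot trade a separator $\uls$ for $s$. Even granting $s\mid(A_{im}\cup A_{i+1,m})\mid A_{i,m-1}$ (which also needs $a_{i,m-1}\sim a_{i+1,m}$), you would only get $x_1\sim a_{i,m-1}$, which contradicts Lemma~\ref{lem:nonadj2} for $i\ge 2$. A propagation of $t$ along the top row is meaningful only when $x_1$ itself has type $\hat t$, as in Case~2 (Lemmas~\ref{lem:t2} and~\ref{lem:t2'}). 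Separately, your choice of a ``first index'' ignores the possibility $t\in A_{i+1,m}$.

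The hypothesis has to be used at the corner $u_1=a_{1m}$, where $x_2$ is available, and there the proposal is missing the key construction. Set $\hat c_{n-1}=\hat b_m$ and $\hat c_i=\hat c_{i+1}\wedge \hat a_{im}$. By Lemma~\ref{lem:B-geodesics}, Lemma~\ref{lem:convex} and Proposition~\ref{prop:convex}, each $c_i\in Y_1$. Minimality of \eqref{eq:ds} and Lemma~\ref{lem:nonadj2} give $\hat a_{i+1,m-1}<\hat c_i<\hat a_{im}$. This construction supplies two things.

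\begin{enumerate}
\item \emph{A neighbour of $u_1$ above $\{\uls\}$.} The vertex $c_1$ satisfies $A'_{1,m-1}\le A'_{2,m-1}<\{\uls\}<C'_1$ in $\mathcal C_{P,A_{1m}}$; the last inequality comes from $x_1\nsim a_{2,m-1}$ together with Lemma~\ref{lem:disjoint}. Apply Lemma~\ref{lem:generalized 4cycle2} to $a_{1,m-1}c_1x_1x_2$ around $a_{1m}$ to get $a\sim\{x_1,x_2\}$ with $a_{1,m-1}\le_{a_{1m}}a\le_{a_{1m}}c_1$.
\begin{itemize}
\item If $A'$ lies below $\{\uls\}$, then $x_1\sim a_{1,m-1}$, via $s$ and Lemma~\ref{lem:disjoint}.
\item If $A'$ lies above $\{\uls\}$, then $x_2\sim c_1$, and this is exactly where $t\in\uls^-_{A_{1m}}$ is used.
\end{itemize}
\item \emph{The distance saving.} Moving $u_1$ alone to $a_{1,m-1}$ or $c_1$ need not decrease \eqref{eq:ds}. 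You must also move $u_4$ to $b_m$, and use the path $c_1\cdots c_{n-1}$ in $Y_1$, of length $n-2<d(u_1,u_4)$, together with $d(u_3,b_m)\le d(u_3,u_4)$.
\end{enumerate}
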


\begin{proof}
Let $\hat c_{n-1}=\hat b_m$. We define $\hat c_i$ to be the meet of $\hat c_{i+1}$ and $\hat a_{im}$ in $(\widehat X^0,\le_t)$ for $1\le i<n-1$. Note that $\hat a_{i+1,m-1}\le\hat c_i\le \hat a_{i,m}$ for $1\le i\le n-1$. Note that $\hat c_i\neq \hat a_{i,m}$ for $1\le i\le n-1$, otherwise $d(u_1,c_{n-1})<d(u_1,u_4)$ and replacing $u_4$ by $b_m=c_{n-1}$ decreases \eqref{eq:ds}.
By Lemma~\ref{lem:B-geodesics} (2), $\hat a_{im}\hat c_i\hat c_{i+1}$ gives a left $B$-geodesic from $a_{im}$ to $c_{i+1}$ for $1\le i<n-1$. 
As $c_{n-1}\in Y_1$, using Lemma~\ref{lem:convex} we deduce inductively that $c_i\in Y_1$ for $1\le i\le n-1$. Then by Lemma~\ref{lem:nonadj2}, $\hat a_{i+1,m-1}\neq \hat c_i$ for $1\le i\le n-1$. Thus $\hat a_{i+1,m-1}<\hat c_i<\hat a_{i,m}$. Suppose $c_i$ has type $\hat C_i$.

Note that $C'_1\mid sA'_{2,m-1}$ in $A^c_{1m}$, otherwise by $x_1\sim \{a_{1m},c_1\}$, $a_{2,m-1}\sim\{a_{1m},c_1\}$ and Lemma~\ref{lem:transitive}, we know $x_1\sim a_{2,m-1}$, contradicting Lemma~\ref{lem:nonadj2}. By Lemma~\ref{lem:disjoint},  $C'_1\mid \uls A'_{2,m-1}$ in $A^c_{1m}$. 
By Lemma~\ref{lem:s}, $A'_{2,m-1}<\{\uls\}$ in $\mathcal C_{P,A_{1m}}$. Moreover, $\hat a_{2,m-1}<\hat c_1<\hat a_{1m}$ implies that $A'_{2,m-1}<C'_1$ in $\mathcal C_{P,A_{1m}}$. Thus $C'_1>\{\uls\}$ in $A^c_{1m}$. As $a_{1,m-1}<\hat a_{2,m-1}<\hat a_{1m}$, thus $A'_{1,m-1}\le A'_{2,m-1}$ in $A^c_{1m}$. Thus $A'_{1,m-1}\le A'_{2,m-1}<\{\uls\}<C'_1$ in $A^c_{1m}$. Now we consider the generalized 4-cycle $a_{1,m-1}c_1x_1x_2$ around $a_{1m}$. Lemma~\ref{lem:generalized 4cycle2} implies there exists $a\in (\lk^0(a_{1m},X),<_{a_{1m}})$ with $a_{1,m-1}\le_{a_{1m}} a\le_{a_{1m}} c_1$ such that $a\sim \{x_1,x_2\}$. Suppose $a$ is of type $\hat A$. Then $A'_{1,m-1}\le A'\le C'_1$ in $\mathcal C_{P,A_{1m}}$.

We assume $A'$ is not one of $\{A'_{1,m-1},C'_1,\{\uls\}\}$, otherwise either $x_1\sim a_{1,m-1}$ or $x_2\sim c_1$ (we use Lemma~\ref{lem:separateuls} when $A'=\{\uls\}$). We can decrease \eqref{eq:ds} by replacing $u_1$ by $c_1$ and $u_4$ by $b_m$ when $x_2\sim c_1$; or by replacing $u_1$ by $a_{1,m-1}$ and $u_4$ by $b_m$ when $x_1\sim a_{1,m-1}$. If $A'_{1,m-1}<A'<\{\uls\}$ in $\mathcal C_{P,A_{1m}}$, then $\uls\mid A'\mid A'_{1,m-1}$ in $A^c_{1m}$ by Lemma~\ref{lem:admissible example}. Hence $s\mid A'\mid A'_{1,m-1}$ in $A^c_{1m}$ by Lemma~\ref{lem:disjoint}. By $x_1\sim \{a_{1m},a\}$, $a_{1,m-1}\sim\{a_{1m},a\}$ and Lemma~\ref{lem:transitive}, we obtain $x_1\sim a_{1,m-1}$ and finish as before. It remains to consider $\{\uls\}<A'<C'_1$. Using $t\in \uls^-_{A_{1m}}$, we deduce $x_1\sim c_1$ from $c_1\sim\{a,a_{1m}\}$ and $x_1\sim\{a,a_{1m}\}$ in the same way as the last paragraph of the proof of Lemma~\ref{lem:t}, and conclude as before.
\end{proof}

Note that $t\notin A_{1m}$, otherwise $x_2\sim c_1$ and we decrease \eqref{eq:ds} as before. So it remains to consider $t\in \uls^+_{A_{1m}}$.
\begin{lem}
	\label{lem:chain}
	Suppose $t\in \uls^+_{A_{1m}}$. Then we can replace $(u_1,u_2,u_3,u_4)$ by another element in $\Xi$ with smaller value of \eqref{eq:ds}. 
\end{lem}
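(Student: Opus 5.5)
We reuse the chain $\hat c_1,\dots,\hat c_{n-1}$ from the proof of the previous lemma: $\hat c_{n-1}=\hat b_m$, $\hat c_i$ is the meet of $\hat c_{i+1}$ and $\hat a_{im}$, $\hat a_{i+1,m-1}<\hat c_i<\hat a_{im}$, $c_i\in Y_1$, and $c_i$ has type $\hat C_i$. The plan is to propagate the information $t\in\uls^+_{A_{1m}}$ along the top row $A_{1m},A_{2m},\dots,A_{n-1,m}$ and along the chain $c_i$. We want to show that either some $c_i$ or some $a_{im}$ becomes $\sim x_2$, or we reach the far corner $u_4$ with a configuration contradicting Lemma~\ref{lem:t}(1).

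\textbf{Propagation along the top row.} First, as in Lemma~\ref{lem:inductiont2}(1), we show inductively that $A_{im}\neq\{\uls\}$ (this is Lemma~\ref{lem:s}) and $t\in\uls^+_{A_{im}}$ for $1\le i\le n-1$. The ordering $A_{1m}<\dots<A_{n-1,m}<A_{2,m-1}<\dots$ in $\mathcal C_{P,\{\uls\}}$ from Lemma~\ref{lem:s} should translate, via the partial cyclic order on $\mathcal C_P$, into $A'_{i+1,m}<\{\uls\}$ in $\mathcal C_{P,A_{im}}$ for consecutive indices. Then the path from $t$ to $\uls$ avoiding $e$ survives, by the argument of Lemma~\ref{lem:inductiont2}(1) with Lemma~\ref{lem:+}.

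\textbf{Comparison with $c_i$.} At each vertex $a_{im}$ we compare $C'_i$ with $\{\uls\}$ in $\mathcal C_{P,A_{im}}$; the ordering $A'_{i+1,m-1}<C'_i$ holds there.
\begin{itemize}
\item If $C'_i<\{\uls\}$: Lemma~\ref{lem:nonadj2} and Lemma~\ref{lem:transitive} give $C'_i\mid sA'_{i+1,m-1}$. Lemma~\ref{lem:disjoint} and the admissibility of $\mathcal C'_{P,A_{im}}$ (Lemma~\ref{lem:admissible example}, Definition~\ref{def:admissible}(3)) then force a contradiction, exactly as in the previous lemma. So we expect $C'_i>\{\uls\}$.
\item If $C'_i>\{\uls\}$: combined with $t\in\uls^+_{A_{im}}$, we try to show that $t$ and $C'_i$ lie in the same component of $A^c_{im}\setminus\{\uls\}$, or else are separated appropriately.
\end{itemize}
In the separated case, Lemma~\ref{lem:transitive}, applied to $x_2\sim a_{im}$ (for $i=1$) and then inductively to the generalized 4-cycles $x_2\,a_{im}\,c_i\,x_1$ via Lemma~\ref{lem:generalized 4cycle2}, yields an intermediate vertex $a\in Y_1\cap Y_2$ between $a_{i,m-1}$ or $a_{i+1,m}$ and $c_i$. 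Replacing $u_1$ by $a$ (or by $c_i$) and $u_4$ by $b_m$ then decreases \eqref{eq:ds}, using $d(u_2,c_i)\le d(u_2,u_1)$, which follows from $\hat a_{i+1,m-1}<\hat c_i<\hat a_{im}$ and Lemma~\ref{lem:smaller dist}.

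\textbf{Endgame at $i=n-1$.} If no reduction occurs at any step, we arrive at $i=n-1$, where $c_{n-1}=b_m$. There the relation of $B'_m$ to $\{\uls\}$ in $\mathcal C_{P,A_{n-1,m}}$ is determined by Lemma~\ref{lem:am}, namely $\{\uls\}<B'_m<A'_{n-1,m}$ in $\mathcal C_{P,A_{nm}}$. Transported via the cyclic order, this should contradict the inductively maintained inequality, closing the argument.

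\textbf{Main obstacle.} The main difficulty is the bookkeeping of the partial cyclic orders $\mathcal C_{P,T}$ as $T$ varies along the row. Translating an inequality in $\mathcal C_{P,A_{im}}$ into one in $\mathcal C_{P,A_{i+1,m}}$ requires care with which side of $\uls$ each cut lies on. I would first settle this using the decomposition \eqref{eq:decomp} and Lemma~\ref{lem:rolling}.
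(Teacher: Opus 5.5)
\textbf{There is a genuine gap.} Your argument never leaves the corner $u_1$ and the top row, but when $t\in\uls^+_{A_{1m}}$ this is exactly where the local argument is inconclusive.

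In the preceding lemma (the case $t\in\uls^-_{A_{1m}}$), that hypothesis is used only in the subcase $\{\uls\}<A'<C'_1$ in $\mathcal C_{P,A_{1m}}$. There it guarantees that $A'$ separates $t$ from $C'_1$, which gives $x_2\sim c_1$. When $t\in\uls^+_{A_{1m}}$, both $t$ and $C'_1$ lie in $\uls^+_{A_{1m}}$, and $t$ may sit on the far side of $A'$. In that case the separation arguments via Lemma~\ref{lem:transitive} give neither $x_2\sim c_1$ nor $x_1\sim a_{1,m-1}$; the latter fails because $s$ hangs off $\uls$ (Lemma~\ref{lem:separateuls}). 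So your ``separated case'' is precisely the case you cannot force.

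Moving along the row does not help either. For $i\ge 2$ the vertex $a_{im}$ lies on $\rho_{14}\subset Y_1$ but is not known to lie in $Y_2$. Hence there is no generalized $4$-cycle $x_2\,a_{im}\,c_i\,x_1$, and no way to manufacture a vertex of $Y_1\cap Y_2$ there. Your bound $d(u_2,c_i)\le d(u_2,u_1)$ also fails for $i\ge 2$: Lemma~\ref{lem:smaller dist} only gives $d(u_2,c_i)\le d(u_2,a_{im})=m+i-2$.

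The endgame does not produce a contradiction. By Lemma~\ref{lem:am}, the types $A_{nm},\{\uls\},B_m,A_{n-1,m}$ are cyclically ordered in that order. Hence $\{\uls\}<B'_m$ in $\mathcal C_{P,A_{n-1,m}}$, which agrees with the inequality $C'_{n-1}>\{\uls\}$ you expect to maintain rather than contradicting it.

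\textbf{The missing idea} is to transport the information across the whole disk $D$ to the opposite corner $u_2=a_{11}$. There, $t\in\uls^+$ becomes the favourable hypothesis, and $x_3$ (of type $\hat s$) enters in place of $x_1$. The steps are:
\begin{enumerate}
\item Combine Lemma~\ref{lem:t}(1) (the right column) with Lemma~\ref{lem:s} (the top row) to get the chain \eqref{eq:chain} in $\mathcal C_{P,\{\uls\}}$.
\item Process the triangles of $D$ strip by strip. If $\hat a_{ij}\le\hat a_{i+1,j}\le\hat a_{i,j+1}$ are distinct and $A'_{i,j+1}<A'_{i+1,j}$, then $A'_{i,j+1}<A'_{ij}<A'_{i+1,j}$ in $\mathcal C_{P,\{\uls\}}$. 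This yields $A'_{1m}<A'_{12}<A'_{11}<A'_{21}<A'_{n1}$.
\item Consequently $t\in\uls^+_{A_{11}}$ and $A'_{21}<\{\uls\}<A'_{12}$ in $\mathcal C_{P,A_{11}}$.
\item Apply Lemma~\ref{lem:generalized 4cycle2} to the generalized $4$-cycle $x_2\,a_{12}\,a_{21}\,x_3$ around $a_{11}$. This gives $a\sim\{x_2,x_3\}$ whose type $\hat A$ satisfies $A'_{21}\le A'\le A'_{12}$.
\item If $A'<\{\uls\}$, then $t\in\uls^+_{A_{11}}$ forces $x_2\sim a_{21}$. If $A'>\{\uls\}$, then Lemma~\ref{lem:disjoint} forces $x_3\sim a_{12}$. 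The boundary cases are immediate.
\item Replacing $u_2$ by $a_{21}$ or by $a_{12}$ then strictly decreases \eqref{eq:ds}.
\end{enumerate}
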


\begin{proof}
By Lemma~\ref{lem:t} (1), $A'_{n,i-1}<\{\uls\}$ in $\mathcal C_{P,A_{ni}}$. Thus $A'_{n,i}<A'_{n,i-1}$ in $\mathcal C_{P,\{\uls\}}$ for $1\le i\le m$. Hence $A'_{n,m-1}<A'_{n,m-2}<\dots<A'_{n1}$ in $\mathcal C_{P,\{\uls\}}$. This together with Lemma~\ref{lem:s} implies that in $\mathcal C_{P,\{\uls\}}$ we have:
\begin{align}
	\label{eq:chain}
 A'_{1m}<A'_{2m}<\dots<A'_{n-1,m}&<A'_{2,m-1}<A'_{3,m-1}<\cdots\\
 &
 <A'_{n,m-1}<A'_{n,m-2}<\cdots<A'_{n1}. \nonumber
\end{align}

Let $D$ be the disk in Figure~\ref{fig:systolic}. By construction, edges of $D$ that are in the same direction as $\hat a_{n-1,m}\hat a_{nm}$ or $\hat a_{n,m-1}\hat a_{nm}$ are mapped to non-degenerated edges in $\widehat X$.

We claim $A'_{1m}<A'_{12}<A'_{11}<A'_{21}<A'_{n1}$ in $\mathcal C_{P,\{\uls\}}$. To prove this, we start with an observation: if there is a triangle $\hat a_{ij}\le \hat a_{i+1,j}\le \hat a_{i,j+1}$ in $D$ such that
\begin{enumerate}
	\item the three vertices in $\widehat X$ are mutually distinct,
	\item  $A'_{i,j+1}<A'_{i+1,j}$ in $\mathcal C_{P,\{\uls\}}$;
\end{enumerate}
Then $A'_{i,j+1}<A'_{ij}<A'_{i+1,j}$ in $\mathcal C_{P,\{\uls\}}$. Indeed, this follows from $A'_{ij}<A'_{i+1,j}<\{\uls\}$ in $\mathcal C_{P,A_{i,j+1}}$. Now look at the top strip of $D$. Then $\hat a_{1,m-1}\neq \{\hat a_{1m},\hat a_{2,m-1}\}$ in $\widehat X$. Moreover, $\hat a_{1m}\neq\hat a_{2,m-1}$ as $A'_{1m}<A'_{2,m-1}$ in $\mathcal C_{P,\{\uls\}}$. Thus
So the above observation implies that $A'_{1m}<A'_{1,m-1}<A'_{2,m-1}$ in $\mathcal C_{P,\{\uls\}}$. Thus
$$
A'_{1m}<A'_{1,m-1}<A'_{2,m-1}<\cdots<A'_{n,m-1}.
$$
Next we look at the second top strip of $D$, from right to left, first at the triangle with vertices $\hat a_{n-1,m-2},\hat a_{n,m-2},\hat a_{n-1,m-1}$. Then $\hat a_{n-1,m-1}\neq \hat a_{n,m-2}$ in $\widehat X$ by \eqref{eq:chain}, and $\hat a_{n-1,m-2}\neq \hat a_{n,m-2}$ and $\hat a_{n-1,m-2}\neq\hat a_{n-1,m-1}$ in $\widehat X$ by the construction of $D$. Thus the above observation implies that $A'_{n-1,m-1}<A'_{n-1,m-2}<A'_{n,m-2}$ in $\mathcal C_{P,\{\uls\}}$. This together with \eqref{eq:chain} implies $A'_{n-2,m-1}<A'_{n-1,m-1}<A'_{n-1,m-2}$ in $\mathcal C_{P,\{\uls\}}$. In particular, $\hat a_{n-2,m-1}\neq\hat a_{n-1,m-2}$ in $\widehat X$. We argue as before for the triangle with vertices $\hat a_{n-2,m-2},\hat a_{n-1,m-2},\hat a_{n-2,m-1}$, and deduce $A'_{n-2,m-1}<A'_{n-2,m-2}<A'_{n-1,m-2}$ in $\mathcal C_{P,\{\uls\}}$. We process triangles in this strip from right to left, and obtain 
$$A'_{1,m-1}<A'_{1,m-2}<A'_{2,m-2}<\cdots<A'_{n-1,m-2}<A'_{n,m-2}.$$ 
We successively look at lower strips of $D$, and repeat such argument to deduce $A'_{12}<A'_{11}<A'_{21}<\cdots<A'_{n-1,1}<A'_{n1}$ in $\mathcal C_{P,\{\uls\}}$. Thus the claim follows.

As $t\in \uls^+_{A_{1m}}$, by the previous claim, $t\in \uls^+_{A_{11}}$. Considering the generalized 4-cycle $x_2a_{12}a_{21}x_3$ around $a_{11}$. By Lemma~\ref{lem:generalized 4cycle2}, there is $a\in (\lk^0(a_{11},X),<_{a_{11}})$ such that $a_{21}<_{a_{11}} a<_{a_{11}} a_{12}$ and $a\sim\{x_2,x_3\}$. Suppose $a$ has type $\hat A$. Then $A'_{21}\le A'\le A'_{12}$ in $\mathcal C_{P,A_{11}}$. By the claim, $A'_{21}<\{\uls\}<A'_{12}$ in $\mathcal C_{P,A_{11}}$.
We will show either $x_2\sim a_{21}$ or $x_3\sim a_{12}$, in which case replacing $u_2$ by $a_{21}$ or $a_{12}$ decreases \eqref{eq:ds}. This is clear when $A'\in \{A'_{21},A'_{12},\{\uls\}\}$. If $A'_{21}<A'<\{\uls\}$, then $A'_{21}\mid A'\mid t$ in $A^c_{11}$ as $t\in \uls^+_{A_{11}}$, hence we deduce $x_2\sim a_{21}$ from $x_2\sim\{a_{11},a\}$ and $a_{21}\sim\{a_{11},a\}$. If $\{\uls\}<A'<A'_{12}$, then $\uls\mid A'\mid A'_{12}$, hence $s\mid A'\mid A'_{12}$ in $A^c_{11}$ by Lemma~\ref{lem:disjoint}. It follows that $x_3\sim a_{12}$. 
\end{proof}

\subsection{Case 2}
\label{subsec:case 2}
The goal of this section is to prove that under the following assumptions we can replace $(u_1,u_2,u_3,u_4)$ by another element in $\Xi$ that decreases \eqref{eq:ds}:
\begin{enumerate}
	\item $x_1,x_3$ are of type $\hat t$, and $x_2,x_4$ are of type $\hat s$;
	\item $u_i\neq u_{i+1}$ for any $i\in \mathbb Z/4\mathbb Z$.
\end{enumerate}
This will prove Lemma~\ref{lem:key} in Case 2.

\begin{lem}
	\label{lem:s2}
In $\mathcal C_{P,\{\uls\}}$, we have $A'_{nm}<A'_{n,m-1}<\cdots<A'_{n1}<B'_2$.
\end{lem}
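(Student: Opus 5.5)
The plan is to mirror the Case 1 arguments with the roles of $s$ and $t$ exchanged. Now $x_4$ has type $\hat s$, so $\uls$ (and the shortest path $Q$ from $s$ to $\uls$) interacts with the right column $a_{n1},\dots,a_{nm}$ and with the $b_i$. Throughout I use three tools: Lemma~\ref{lem:disjoint} and Lemma~\ref{lem:separateuls}, which turn separation statements about $s$ into separation statements about $\uls$; Lemma~\ref{lem:transitive}, combined with the non-adjacency statements Lemma~\ref{lem:nonadj1} and Lemma~\ref{lem:nonadj2}; and the separation axiom of Lemma~\ref{lem:admissible example} (Definition~\ref{def:admissible}~(3)).

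\textbf{Step 1: $A_{ni}\neq\{\uls\}$ and $B_2\neq\{\uls\}$.} Since $x_4\sim b_i$ and $x_4\sim a_{ni}$, Lemma~\ref{lem:separateuls} and Lemma~\ref{lem:transitive} would force $x_4\sim a_{n-1,i}$ whenever one of these types equals $\{\uls\}$. This contradicts Lemma~\ref{lem:nonadj1}. (If $s=\uls$ the situation is even more direct, since then $s$ lies in the type.) So every element in the asserted chain is a vertex other than $\uls$ and lies in $\mathcal C_{P,\{\uls\}}$.

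\textbf{Step 2: the local inequality $A'_{n,i-1}<\{\uls\}$ in $\mathcal C'_{P,A_{ni}}$.} Fix the triangle $\hat a_{n,i-1}<\hat a_{n-1,i}<\hat a_{ni}$, so that $A'_{n,i-1}<A'_{n-1,i}$ in $\mathcal C'_{P,A_{ni}}$. Lemma~\ref{lem:nonadj1} together with Lemma~\ref{lem:transitive} gives $A'_{n,i-1}\mid s A'_{n-1,i}$ in $A^c_{ni}$, and Lemma~\ref{lem:disjoint} upgrades this to $A'_{n,i-1}\mid \uls A'_{n-1,i}$. Suppose instead that $\{\uls\}<A'_{n,i-1}$. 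Then Definition~\ref{def:admissible}~(3) gives $\uls\mid A'_{n,i-1}\mid A'_{n-1,i}$, a contradiction. Hence $A'_{n,i-1}<\{\uls\}$ in $\mathcal C'_{P,A_{ni}}$. By the partial cyclic order on $\mathcal C_P$ (Section~\ref{subsec:pco}), this rotates to $A'_{ni}<A'_{n,i-1}$ in $\mathcal C_{P,\{\uls\}}$. Doing this for $2\le i\le m$ yields the chain $A'_{nm}<\cdots<A'_{n1}$.

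\textbf{Step 3: the last comparison $A'_{n1}<B'_2$.} The same argument applies in $\lk(b_2,X)$, or alternatively in $\lk(a_{n1},X)$, using the triangle $\hat a_{n1}<\hat b_2<\hat a_{n-1,2}$ and $x_4\sim\{a_{n1},b_2\}$. Lemma~\ref{lem:nonadj1} gives $(B_2\cup A_{n1})\mid s A_{n-1,2}$ in $\Lambda$. Then Lemma~\ref{lem:disjoint} and the separation axiom place $\{\uls\}$ strictly above $B'_2$ in $\mathcal C'_{P,A_{n1}}$, after checking the two alternative orderings as in Step 2. Cyclic rotation then gives $A'_{n1}<B'_2$ in $\mathcal C_{P,\{\uls\}}$.

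I expect Step 3, and the first step $i=m$ of Step 2, to be the main obstacle. At the top corner one must use $b_m$ rather than $a_{n-1,m}$, exactly as in Lemma~\ref{lem:am}, and the case $m=2$, where $b_2=b_m$, must be handled separately. I would also need to confirm the direction convention relating the order in $\mathcal C_{P,T}$ to the order in $\mathcal C_{P,\{\uls\}}$, checking it carefully against the orientation chosen in Section~\ref{subsec:P}.
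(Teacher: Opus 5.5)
Your argument is correct and follows the route the paper indicates (the argument of Lemma~\ref{lem:s} with Lemma~\ref{lem:nonadj1} in place of Lemma~\ref{lem:nonadj2}): Lemma~\ref{lem:nonadj1}, Lemma~\ref{lem:transitive} and Lemma~\ref{lem:disjoint} give the non-separation, the separation axiom from Lemma~\ref{lem:admissible example} excludes $\{\uls\}<A'_{n,i-1}$ (resp.\ $\{\uls\}<B'_2$) in the link of $a_{ni}$ (resp.\ $a_{n1}$), and cyclic rotation gives the chain in $\mathcal C_{P,\{\uls\}}$. The unknown type is always the smaller one in its triangle, so each comparison is settled locally and the difficulties you anticipate do not arise: Lemma~\ref{lem:nonadj1} covers $i=m$ directly, no $m=2$ case split is needed, and $b_2\neq a_{n-1,2}$ because $x_4\sim b_2$ but $x_4\nsim a_{n-1,2}$; the only slip is in Step~1 for $A_{n1}$, where you should use $a_{n-1,2}$ (adjacent to $a_{n1}$) rather than $a_{n-1,1}$, which Lemma~\ref{lem:nonadj1} does not cover.
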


The proof of this lemma is similar to that of Lemma~\ref{lem:s}, using Lemma~\ref{lem:nonadj1}.

\begin{lem}
Suppose $t\in \uls^+_{A_{n1}}$. Then we can replace $(u_1,u_2,u_3,u_4)$ by another element in $\Xi$ with smaller value of \eqref{eq:ds}. 
\end{lem}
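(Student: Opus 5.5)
The plan is to mirror the last paragraph of the proof of Lemma~\ref{lem:t} and the proof of Lemma~\ref{lem:chain}, now with the roles of $s$ and $t$ exchanged: $x_3,x_1$ are of type $\hat t$ and $x_4,x_2$ of type $\hat s$. The work happens around the vertex $u_3=a_{n1}$, the bottom-right corner of the disk $D$. Lemma~\ref{lem:s2} gives $A'_{n1}<B'_2$ in $\mathcal C_{P,\{\uls\}}$. Using the partial cyclic order on $\mathcal C_P$, I first translate this into $B'_2<\{\uls\}$ in $\mathcal C_{P,A_{n1}}$. I also want $B'_2<A'_{n-1,1}$ in $\mathcal C'_{P,A_{n1}}$. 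This follows, as in Lemma~\ref{lem:t}, from $\hat a_{n-1,1}<\hat a_{n1}<\hat b_2<\hat a_{n-1,2}$, which gives $a_{n-1,1}\sim b_2$ and $b_2<_{u_3}a_{n-1,1}$.

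If $A'_{n-1,1}=\{\uls\}$, then Lemma~\ref{lem:separateuls} and Lemma~\ref{lem:transitive} give $x_4\sim b_2$; note that $s$ is the relevant label for $x_4$ in this case. Then I replace $u_3$ by $b_2$ and $u_4$ by $b_m$. Because $d(u_2,b_2)=d(u_2,u_3)$, $d(b_2,b_m)<d(u_3,u_4)$ and $d(u_1,b_m)\le d(u_1,u_4)$, this decreases \eqref{eq:ds}. Otherwise I consider the generalized 4-cycle $b_2x_4x_3a_{n-1,1}$ around $a_{n1}$ and apply Lemma~\ref{lem:generalized 4cycle2}. This produces $a\in(\lk^0(a_{n1},X),<_{a_{n1}})$ of type $\hat A$ with $B'_2\le A'\le A'_{n-1,1}$ and $a\sim\{x_3,x_4\}$. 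I then split according to the position of $A'$ relative to $\{\uls\}$.

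If $B'_2<A'<\{\uls\}$, Lemma~\ref{lem:admissible example} gives $B'_2\mid A'\mid\uls$ in $A^c_{n1}$. Lemma~\ref{lem:disjoint} turns this into $B'_2\mid A'\mid s$, and Lemma~\ref{lem:transitive} yields $x_4\sim b_2$. I finish as above. This is the point where the hypothesis is used in the roles opposite to Case 1: here $s$ lies on the $\uls$ side through $Q$.

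If $\{\uls\}<A'\le A'_{n-1,1}$, I use $t\in\uls^+_{A_{n1}}$ together with the chain $\{\uls\}<A'<A'_{n-1,1}$ to show that $A'$ separates $A'_{n-1,1}$ from $t$, or alternatively that $A'$ separates $B'_2$ from $t$. Lemma~\ref{lem:transitive} then gives $x_3\sim a_{n-1,1}$ or $x_3\sim b_2$. If $x_3\sim a_{n-1,1}$, then $a_{n-1,1}\in Y_2\cap Y_3$, and replacing $u_3$ by $a_{n-1,1}$ and $u_4$ by $b_m$ lowers \eqref{eq:ds} by the same count as in Lemma~\ref{lem:t}. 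The degenerate cases $A'\in\{B'_2,A'_{n-1,1},\{\uls\}\}$ are handled directly, as before.

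The main obstacle is this last separation step. In Case 1, $t\in\uls^-$ made $\uls$ separate $t$ from the upper part of the chain. Here $t\in\uls^+$, so I must argue that the components of $A^c_{n1}\setminus A'$ containing $t$ avoid $A'_{n-1,1}$. I expect to do this via Lemma~\ref{lem:+} and the decomposition \eqref{eq:decomp}, using that elements above $\{\uls\}$ lie in $\uls^+$-type components cut off by $A'$. If that fails, my fallback is to propagate along the bottom row of $D$, as in Lemma~\ref{lem:chain}, to a triangle where the $\uls^-$ argument applies.
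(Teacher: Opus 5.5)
Your skeleton is right: you translate Lemma~\ref{lem:s2} into $B'_2<\{\uls\}$ and $B'_2<A'_{n-1,1}$ in $\mathcal C_{P,A_{n1}}$, apply Lemma~\ref{lem:generalized 4cycle2} to $b_2x_4x_3a_{n-1,1}$ around $a_{n1}$, and split according to where $A'$ sits relative to $\{\uls\}$. The gap is that the separation arguments are attached to the wrong generators, so no branch produces a new vertex of $Y_3\cap Y_4$.

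Two adjacencies hold before you start:
\begin{itemize}
\item $x_4\sim b_2$, because the $b_i$ lie in $Y_4$ by construction;
\item $x_3\sim a_{n-1,1}$, because $a_{n-1,1}\in\rho_{23}\subset Y_3$.
\end{itemize}
What you need is $x_3\sim b_2$ or $x_4\sim a_{n-1,1}$, and in Case 2 $x_3$ has type $\hat t$ while $x_4$ has type $\hat s$.

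Your branch $B'_2<A'<\{\uls\}$ (via $B'_2\mid A'\mid s$) and your branch $A'_{n-1,1}=\{\uls\}$ (via Lemma~\ref{lem:separateuls}) only give $x_4\sim b_2$, which you already had. You never show $b_2\in Y_3$, so replacing $u_3$ by $b_2$ is unjustified. In the branch $\{\uls\}<A'\le A'_{n-1,1}$, the conclusion $x_3\sim a_{n-1,1}$ is equally vacuous. The separation you flag as the main obstacle, $A'$ separating $t$ from $A'_{n-1,1}$, cannot be derived from the hypotheses. Elements above $\{\uls\}$ in $\mathcal C_{P,A_{n1}}$ lie in $\uls^+_{A_{n1}}$, the same side of $\uls$ as $t$, so nothing prevents $t$ from reaching $A'_{n-1,1}$ while avoiding $A'$.

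The fix is to swap the roles, which is exactly the mirror of Lemma~\ref{lem:t}~(2): use $t$ below $\{\uls\}$ and $s$ above it.

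Below $\{\uls\}$, suppose $B'_2<A'<\{\uls\}$.
\begin{itemize}
\item Then $A'\subset\uls^-_{A_{n1}}$, and $A'$ separates $B'_2$ from $\uls$ in $A^c_{n1}$ by Lemma~\ref{lem:admissible example}.
\item Since $t\in\uls^+_{A_{n1}}$, every path in $A^c_{n1}$ from $t$ to $B'_2$ passes through $\uls$. Hence $t\mid A'\mid B'_2$ in $A^c_{n1}$.
\item Lemma~\ref{lem:transitive}, applied to $x_3\sim\{a_{n1},a\}$ and $b_2\sim\{a_{n1},a\}$, gives $x_3\sim b_2$.
\end{itemize}
The cases $A'_{n-1,1}=\{\uls\}$ and $A'=\{\uls\}$ go the same way with separator $A_{n1}\cup\{\uls\}$, using $a_{n-1,1}$ (resp. $a$) as the vertex of type $\hat{\uls}$.

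Above $\{\uls\}$, suppose $\{\uls\}<A'<A'_{n-1,1}$.
\begin{itemize}
\item Then $\uls\mid A'\mid A'_{n-1,1}$ in $A^c_{n1}$.
\item Lemma~\ref{lem:disjoint}, through the path $Q$, upgrades this to $s\mid A'\mid A'_{n-1,1}$.
\item Lemma~\ref{lem:transitive}, with $x_4\sim\{a_{n1},a\}$ and $a_{n-1,1}\sim\{a_{n1},a\}$, gives $x_4\sim a_{n-1,1}$. You then replace $u_3$ by $a_{n-1,1}$ and $u_4$ by $b_m$.
\end{itemize}

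The endpoint cases $A'=B'_2$ and $A'=A'_{n-1,1}$ are immediate, and no propagation along $D$ is needed.
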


The proof of this lemma is similar to that of Lemma~\ref{lem:t} (2), using $A'_{n1}<B'_2$ in $\mathcal C_{P,\{\uls\}}$ (Lemma~\ref{lem:s2}).

Note that $t\notin A_{n1}$, otherwise $x_3\sim b_2$ and we can replace as in Lemma~\ref{lem:t} (2) to decrease \eqref{eq:ds}.
In the rest of this section, we assume $t\in \uls^-_{A_{n1}}$. 

\begin{lem}
	\label{lem:t2}
Suppose $t\in \uls^+_{A_{im}}$ for some $i$ with $2\le i\le n$. Then $\{\uls\}<A'_{i-1,m}$ in $\mathcal C_{P,A_{im}}$ and $t\in \uls^+_{A_{i-1,m}}$.
\end{lem}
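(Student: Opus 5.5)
The plan is to mirror Lemma~\ref{lem:inductiont2} (2), with the roles of the two sides of the disk $D$ swapped. In Case 2 the relevant non-adjacency along the top row is Lemma~\ref{lem:nonadj2}: $x_1$ has type $\hat t$ and $x_1\nsim a_{i-1,m-1}$ is unavailable, but $x_1\nsim a_{i,m-1}$ for $2\le i\le n$.

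\textbf{Step 1: the comparison $\{\uls\}<A'_{i-1,m}$ in $\mathcal C_{P,A_{im}}$.} Work in $A^c_{im}$ around the vertex $a_{im}$. In the triangle $\hat a_{i-1,m-1}\le\hat a_{i,m-1}\le\hat a_{i-1,m}$ the lift $\hat a_{i,m}$ lies above $\hat a_{i-1,m}$ and $\hat a_{i,m-1}$. This gives $A'_{i,m-1}<A'_{i-1,m}$ in $\mathcal C_{P,A_{im}}$, and it lets us apply the link poset.

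Lemma~\ref{lem:nonadj2} together with Lemma~\ref{lem:transitive} (using $x_1\sim\{a_{im},a_{i-1,m}\}$ and $a_{i,m-1}\sim\{a_{im},a_{i-1,m}\}$) gives $A'_{i-1,m}\mid tA'_{i,m-1}$ in $A^c_{im}$. So some path $\eta$ from $t$ to a vertex $c\in A'_{i,m-1}$ avoids $A_{im}\cup A_{i-1,m}$.

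Suppose instead that $A'_{i-1,m}\le\{\uls\}$. By Lemma~\ref{lem:admissible example} and Definition~\ref{def:admissible} (3), $A'_{i-1,m}$ separates $A'_{i,m-1}$ from $\uls$ in $A^c_{im}$, or equals $\{\uls\}$. Now use $t\in\uls^+_{A_{im}}$: by Lemma~\ref{lem:+} and the decomposition \eqref{eq:decomp}, $t$ joins $\uls$ through the $+$ side, avoiding the edge $e$. Since $A'_{i,m-1}<A'_{i-1,m}\le\{\uls\}$, the set $A'_{i,m-1}$ lies in $\uls^-$. Concatenating gives that $A'_{i-1,m}$ separates $t$ from $A'_{i,m-1}$, contradicting the path $\eta$. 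The case $A'_{i-1,m}=\{\uls\}$ is excluded separately: it would force $x_1\sim a_{i,m-1}$ via Lemma~\ref{lem:separateuls} and Lemma~\ref{lem:transitive}, as in Lemma~\ref{lem:s}.

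\textbf{Step 2: $t\in\uls^+_{A_{i-1,m}}$.} Having $\{\uls\}<A'_{i-1,m}$, the set $A'_{i-1,m}$ lies on the $+$ side of $\uls$ in $\hat\Lambda_{A_{im}}$. Take the path from $t$ to $\uls$ in $\hat\Lambda_{A_{im}}$ not using $e$, which exists because $t\in\uls^+_{A_{im}}$. We want it to avoid $A'_{i-1,m}$. If it met $A'_{i-1,m}$, then in $\Lambda$ the set $A_{i-1,m}\cup A_{im}$ would not separate $t$ from $A_{i,m-1}$, and we would reroute. The cleaner route is to pass through $\eta$: the path $\eta$ avoids $A_{i-1,m}$ and ends in $A'_{i,m-1}\subset\uls^-$, so it reaches $\uls$ while avoiding $A_{i-1,m}$.

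The intended conclusion then comes from Lemma~\ref{lem:+} applied with $T=A_{i-1,m}$. This needs a path from $t$ to $\uls$ in the complement of $A_{i-1,m}$ that does not use $e$. I expect to obtain it by arguing as in Lemma~\ref{lem:inductiont2} (1), with $A'_{im}<\{\uls\}$ in $\mathcal C_{P,A_{i-1,m}}$. That comparison follows from the partial cyclic order: $\{\uls\}<A'_{i-1,m}$ in $\mathcal C_{P,A_{im}}$ implies $A'_{im}<\{\uls\}$ in $\mathcal C_{P,A_{i-1,m}}$.

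\textbf{Main obstacle.} The hardest point is Step 2, specifically guaranteeing that the $+$-side path avoids $A_{i-1,m}$ rather than merely $A_{im}$. The key fact needed is that $A'_{im}\subset\uls^-_{A_{i-1,m}}$ while $t$ connects to $A'_{im}$'s complement on the $+$ side.
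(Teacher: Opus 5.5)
Your Step 1 is essentially the paper's argument and is fine. Under $A'_{i-1,m}\le\{\uls\}$ you get $A'_{i,m-1}<A'_{i-1,m}\le\{\uls\}$, hence $t\mid A'_{i-1,m}\mid A'_{i,m-1}$ in $A^c_{im}$. Lemma~\ref{lem:transitive} then gives $x_1\sim a_{i,m-1}$, contradicting Lemma~\ref{lem:nonadj2}. The subcase $A'_{i-1,m}=\{\uls\}$ needs no separate treatment, since $\uls$ itself separates $t$ from $A'_{i,m-1}$. Lemma~\ref{lem:separateuls} would not help there anyway: it concerns $s$, while $x_1$ has type $\hat t$ in Case 2.

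The gap is in Step 2. You say $\eta$ ``ends in $A'_{i,m-1}\subset\uls^-$'', but that inclusion was derived from the hypothesis $A'_{i-1,m}\le\{\uls\}$, which Step 1 has just refuted. After Step 1 you only know $A'_{i,m-1}<A'_{i-1,m}$ and $\{\uls\}<A'_{i-1,m}$. So $\{\uls\}<A'_{i,m-1}<A'_{i-1,m}$ is possible. In that case $A'_{i,m-1}$ lies on the same side of $\uls$ as $t$, and $\eta$ need not pass through $\uls$ at all.

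The paper handles this with a case split. If $A'_{i,m-1}\le\{\uls\}$, truncate $\eta$ at its first visit to $\uls$; that segment stays in $\uls^+_{A_{im}}$, so it avoids $e$. If $A'_{i,m-1}>\{\uls\}$ and $\eta$ never meets $\uls$, concatenate $\eta$ with a path $\eta'$ from its endpoint $x\in A'_{i,m-1}$ to $\uls$ that avoids $A'_{i-1,m}$ and $e$. Such an $\eta'$ exists exactly because $\{\uls\}<A'_{i,m-1}<A'_{i-1,m}$. For type II cuts, take an $A_{i,m-1}$-tight path inside $A_{i,m-1}^{\{\uls\}}\cup\{x\}\subset\Lambda_P$ from Remark~\ref{rmk:visit}. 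It misses $A_{i-1,m}$ because $A_{i,m-1}^{\{\uls\}}\subset A_{i-1,m}^{\{\uls\}}$, and it misses $e$ because $e\not\subset\Lambda_P$. Only then does Lemma~\ref{lem:+} with $T=A_{i-1,m}$ apply.

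Your fallback of imitating Lemma~\ref{lem:inductiont2}~(1) via $A'_{im}<\{\uls\}$ in $\mathcal C_{P,A_{i-1,m}}$ runs in the wrong direction. That argument works because the new cut lies in $\uls^-$ of the old one, so a $+$-side path from $t$ automatically misses it. Here $\{\uls\}<A'_{i-1,m}$ puts $A'_{i-1,m}$ on the $+$ side together with $t$, so a $+$-side path from $t$ to $\uls$ in $\hat\Lambda_{A_{im}}$ can cross $A_{i-1,m}$. In the type II picture, the $\uls$-side of $A_{i-1,m}$ is strictly smaller than that of $A_{im}$, so $t\in\uls^+_{A_{im}}$ alone cannot force $t\in\uls^+_{A_{i-1,m}}$. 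The reversed comparison only tells you where $A_{im}$ sits, not where $t$ sits. What is missing is the path $\eta$ supplied by $x_1\nsim a_{i,m-1}$, combined with the case split above.
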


\begin{proof}
First we show $\{\uls\}<A'_{i-1,m}$ in $\mathcal C_{P,A_{im}}$. Suppose $A'_{i-1,m}\le\{\uls\}$. Then $A'_{i,m-1}<A'_{i-1,m}\le \{\uls\}$ in $\mathcal C_{P,A_{im}}$. As $t\in \uls^+_{A_{im}}$, $t\mid A'_{i-1,m}\mid A'_{i,m-1}$ in $A^c_{im}$. Then $t\mid (A_{i-1,m}\cup A_{im})\mid A_{i,m-1}$, and by Lemma~\ref{lem:transitive} we obtain $t\sim a_{i,m-1}$, contradiction Lemma~\ref{lem:nonadj2}. 

Lemma~\ref{lem:nonadj2} implies that $A'_{i-1,m}\mid tA'_{i,m-1}$ in $A^c_{im}$. Then there is an edge path $\eta\subset A^c_{im}$ from $t$ to $x\in A'_{i,m-1}$ avoiding $A'_{i-1,m}$. Let $e$ be the edge in Definition~\ref{def:uls+-}. If $A'_{i,m-1}\le \{\uls\}$ in  $\mathcal C_{P,A_{im}}$, as $t\in \uls^+_{A_{im}}$, then $\eta$ has a subpath from $t$ to $\uls$ without using $e$. If $A'_{i,m-1}>\{\uls\}$ in  $\mathcal C_{P,A_{im}}$, then we can assume $\eta$ does not use $e$, and there is an edge path $\eta'\subset A^c_{im}$ from $x$ to $\uls$ without using $e$ and avoiding $A'_{i-1,m}$ (the existence of $\eta'$ follows from $A'_{i-1,m}>A'_{i,m-1}>\{\uls\}$). In either cases, we have an path outside $A_{im}\cup A_{i-1,m}$ from $t$ to $\uls$ without using $e$. Thus $t\in \uls^+_{A_{i-1,m}}$ by Lemma~\ref{lem:+}.
\end{proof}

\begin{lem}
	\label{lem:t2'}
Let $2\le i\le n$. Suppose $A'_{im}<A'_{i-1,m}$ in $\mathcal C_{P,\{\uls\}}$. Then $t\in \uls^+_{A_{i-1,m}}$.
\end{lem}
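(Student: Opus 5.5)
We argue along the lines of Lemma~\ref{lem:t2}, using the hypothesis $A'_{im}<A'_{i-1,m}$ in $\mathcal C_{P,\{\uls\}}$ in place of $t\in\uls^+_{A_{im}}$. By the partial cyclic order on $\mathcal C_P$, the hypothesis translates into a statement in $\mathcal C_{P,A_{im}}$: we expect $\{\uls\}<A'_{i-1,m}$ there, since $A_{im},A_{i-1,m},\{\uls\}$ being cyclically ordered as $[A_{im},A_{i-1,m},\uls]$ means exactly $A'_{i-1,m}<\{\uls\}$ in $\mathcal C_{P,A_{im}}$ read with the opposite convention, i.e. $\uls$ lies on the $e$-side of $A_{i-1,m}$. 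We first fix this bookkeeping precisely, as in the proof of Lemma~\ref{lem:inductiont2}~(2). In the same way, from $\hat a_{i,m-1}<\hat a_{i-1,m}<\hat a_{im}$ we get $A'_{i,m-1}<A'_{i-1,m}$ in $\mathcal C_{P,A_{im}}$.

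The key input is Lemma~\ref{lem:nonadj2}: $x_1\nsim a_{i,m-1}$, where $x_1$ now has type $\hat t$. Since $x_1\sim\{a_{im},a_{i-1,m}\}$ and $a_{i,m-1}\sim\{a_{im},a_{i-1,m}\}$, Lemma~\ref{lem:transitive} gives $(A_{im}\cup A_{i-1,m})\mid tA_{i,m-1}$ in $\Lambda$, hence $A'_{i-1,m}\mid tA'_{i,m-1}$ in $A^c_{im}$. So there is an edge path $\eta\subset A^c_{im}$ from $t$ to some $x\in A'_{i,m-1}$ avoiding $A'_{i-1,m}$.

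Next we extend $\eta$ to $\uls$ without passing through $e$ and without meeting $A_{i-1,m}$. We split according to the position of $A'_{i,m-1}$ relative to $\{\uls\}$ in $\mathcal C_{P,A_{im}}$. If $A'_{i,m-1}>\{\uls\}$, then $\{\uls\}<A'_{i,m-1}<A'_{i-1,m}$, and by Lemma~\ref{lem:admissible example}, \eqref{eq:decomp} and Lemma~\ref{lem:comparable1}, $x$ is joined to $\uls$ by a path on the $\uls^+$ side avoiding $A'_{i-1,m}$ and not using $e$. If $A'_{i,m-1}\le\{\uls\}$, this would contradict $\{\uls\}<A'_{i-1,m}$ combined with Lemma~\ref{lem:nonadj2}, exactly as in the first paragraph of the proof of Lemma~\ref{lem:t2}. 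That argument used $t\in\uls^+_{A_{im}}$, so here we must instead use the hypothesis to place $A'_{i,m-1}$ and $t$ appropriately. We may also need the standing assumption $t\in\uls^-_{A_{n1}}$, propagated along the chain of Lemma~\ref{lem:s2}.

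Once we have a path from $t$ to $\uls$ outside $A_{im}\cup A_{i-1,m}$ that avoids $e$, Lemma~\ref{lem:+} gives $t\in\uls^+_{A_{i-1,m}}$. We expect the main obstacle to be the case analysis locating $A'_{i,m-1}$ relative to $\{\uls\}$, and in particular ensuring that the concatenated path does not cross $e$. If needed, we would first try to choose $\eta$ tight with respect to $\{\uls\}$, using Lemma~\ref{lem:disjoint} to keep $s$ and $\uls$ on the same side of every relevant cut.
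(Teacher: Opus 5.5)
There is a genuine gap, and it begins with the orientation you leave unresolved. With the convention of Section~\ref{subsec:pco} ($x<y$ in $\mathcal C_{P,T}$ iff $[x,y,T]$), the hypothesis $A'_{im}<A'_{i-1,m}$ in $\mathcal C_{P,\{\uls\}}$ is $[A_{im},A_{i-1,m},\uls]$, i.e.\ $[A_{i-1,m},\uls,A_{im}]$. This says exactly $A'_{i-1,m}<\{\uls\}$ in $\mathcal C_{P,A_{im}}$; there is no ``opposite convention'' that turns it into $\{\uls\}<A'_{i-1,m}$. Combined with $A'_{i,m-1}<A'_{i-1,m}$, you get the chain $A'_{i,m-1}<A'_{i-1,m}<\{\uls\}$ in $\mathcal C_{P,A_{im}}$. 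So the case $A'_{i,m-1}>\{\uls\}$, which is the one you actually treat, never occurs. The case $A'_{i,m-1}\le\{\uls\}$, which you hope to exclude, is the only one. The exclusion argument you borrow from Lemma~\ref{lem:t2} needs $t\in\uls^+_{A_{im}}$, which you do not have; in this configuration it would in fact contradict Lemma~\ref{lem:nonadj2}.

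More seriously, your endgame asks for a path from $t$ to $\uls$ outside $A_{im}\cup A_{i-1,m}$, and in the true configuration no such path exists. Your $\eta$ joins $t$ to $x\in A'_{i,m-1}$ off $A_{im}\cup A_{i-1,m}$. But by Lemma~\ref{lem:admissible example} (Definition~\ref{def:admissible}(3)), $A'_{i-1,m}$ separates $A'_{i,m-1}$ from $\uls$ in $A^c_{im}$.

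The missing idea is that Lemma~\ref{lem:+} with $T=A_{i-1,m}$ only needs a path in $A^c_{i-1,m}$, and that path may cross $A_{im}$. The argument then runs as follows.
\begin{enumerate}
\item The same separation shows $\uls\notin\eta$, so $\eta$ does not use $e$.
\item Move the base point of the cyclic order to $A_{i-1,m}$. Take $[A_{i-1,m},\uls,A_{im}]$ and $[A_{i-1,m},A_{im},A_{i,m-1}]$ (a rotation of $A'_{i,m-1}<A'_{i-1,m}$ in $\mathcal C_{P,A_{im}}$). Transitivity gives $[A_{i-1,m},\uls,A_{i,m-1}]$, i.e.\ $\{\uls\}<A'_{i,m-1}$ in $\mathcal C_{P,A_{i-1,m}}$.
\item Hence $x$ is joined to $\uls$ by a path $\eta'\subset A^c_{i-1,m}$ that does not use $e$.
\item View $\eta$ as a path in $A^c_{i-1,m}$ and concatenate it with $\eta'$. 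Lemma~\ref{lem:+} then gives $t\in\uls^+_{A_{i-1,m}}$.
\end{enumerate}
Neither the standing assumption $t\in\uls^-_{A_{n1}}$ nor any tightness argument via Lemma~\ref{lem:disjoint} is needed.
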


\begin{proof}
Our assumption implies that $A'_{i-1,m}<\{\uls\}$ in $\mathcal C_{P,A_{im}}$. Thus $A'_{i,m-1}<A'_{i-1,m}<\{\uls\}$. Lemma~\ref{lem:nonadj2} implies that $A'_{i-1,m}\mid tA'_{i,m-1}$ in $A^c_{im}$. So there is an edge path $\eta\subset A^c_{im}$ from $t$ to $x\in A'_{i,m-1}$ avoiding $A'_{i-1,m}$. As $A'_{i,m-1}\mid A'_{i-1,m}\mid \uls$ in $A^c_{im}$ by Lemma~\ref{lem:admissible example}, $\uls\notin \eta$. 
Let $e$ by the edge in Definition~\ref{def:uls+-}. Then $\eta$ does not use $e$. 

We view $\eta$ as an edge path in $A^c_{i-1,m}$. As $\{\uls\}<A'_{i,m-1}$ in $\mathcal C_{P,A_{i-1,m}}$, there is an edge path $\eta'\subset A^c_{i-1,m}$ from $x$ to $\uls$ without using $e$. The concatenation of $\eta$ and $\eta'$ gives a path in $A^c_{i-1,m}$ from $t$ to $\uls$ without using $e$. Hence $t\in \uls^+_{A_{i-1,m}}$ by Lemma~\ref{lem:+}.
\end{proof}

\begin{lem}
Suppose $t\in \uls^+_{A_{1m}}$. Then we can replace $(u_1,u_2,u_3,u_4)$ by another element in $\Xi$ with smaller value of \eqref{eq:ds}. 
\end{lem}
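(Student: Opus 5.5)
The plan is to mirror the proof of Lemma~\ref{lem:chain} from Case 1, with the roles of the types $\hat s$ and $\hat t$ exchanged. In Case 2 the vertices $x_1,x_3$ have type $\hat t$, so $u_1\in Y_1\cap Y_2$ is adjacent to a $\hat t$-vertex and an $\hat s$-vertex, just as in Case 1.

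The first step is to build the ordering chain analogous to \eqref{eq:chain} inside $\mathcal C_{P,\{\uls\}}$. Lemma~\ref{lem:s2} gives the bottom row, $A'_{nm}<A'_{n,m-1}<\cdots<A'_{n1}$. For the top row I will use Lemma~\ref{lem:t2} and Lemma~\ref{lem:t2'}. Starting from the hypothesis $t\in\uls^+_{A_{1m}}$, I induct along $A_{1m},A_{2m},\dots,A_{nm}$ and want two things at each step: first, $t\in \uls^+_{A_{im}}$; second, each consecutive pair $A_{i-1,m},A_{im}$ sits on the correct side of $\{\uls\}$ in $\mathcal C_{P,A_{im}}$. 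Lemma~\ref{lem:t2} propagates $\uls^+$ downward in $i$, so it is compatible with the assumed $t\in\uls^-_{A_{n1}}$ only through a definite switch. To find that switch, I locate the first index where $t$ changes from $\uls^+$ to $\uls^-$ along the boundary path $A_{1m},\dots,A_{nm},A_{n,m-1},\dots,A_{n1}$. At the switching vertex, Lemma~\ref{lem:t2'} forces the inequality between the adjacent types. Combining this with the non-adjacency results Lemma~\ref{lem:nonadj1} and Lemma~\ref{lem:nonadj2} and with Lemma~\ref{lem:disjoint}, I expect either a contradiction or a linear chain of the form $A'_{1m}<\dots<A'_{nm}<\dots<A'_{n1}$ in $\mathcal C_{P,\{\uls\}}$.

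The second step reuses the triangle observation from Lemma~\ref{lem:chain} on the disk $D$. Suppose a triangle $\hat a_{ij}\le\hat a_{i+1,j}\le\hat a_{i,j+1}$ has distinct vertices and $A'_{i,j+1}<A'_{i+1,j}$. Then $A'_{ij}$ lies between those two in $\mathcal C_{P,\{\uls\}}$. Sweeping strip by strip from the top right, this yields $A'_{12}<A'_{11}<A'_{21}$, and together with the chain it places $\{\uls\}$ between $A'_{21}$ and $A'_{12}$ in $\mathcal C_{P,A_{11}}$. It also gives $t\in\uls^+_{A_{11}}$ by transporting the side of $t$ through the chain, as in Lemma~\ref{lem:chain}.

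The last step applies Lemma~\ref{lem:generalized 4cycle2} to the generalized 4-cycle $x_2a_{12}a_{21}x_3$ around $a_{11}$, now with $x_2$ of type $\hat s$ and $x_3$ of type $\hat t$. This produces $a$ with $A'_{21}\le A'\le A'_{12}$ and $a\sim\{x_2,x_3\}$, and I split according to where $A'$ sits relative to $\{\uls\}$.
\begin{itemize}
\item If $A'\in\{A'_{21},A'_{12},\{\uls\}\}$, we get $x_2\sim a_{21}$ or $x_3\sim a_{12}$ directly, using Lemma~\ref{lem:separateuls} when $A'=\{\uls\}$.
\item On the $\uls$-side, Lemma~\ref{lem:disjoint} together with Lemma~\ref{lem:admissible example} separates $s$, which handles the $\hat s$-vertex $x_2$.
\item On the other side, $t\in\uls^+_{A_{11}}$ separates $t$, which handles $x_3$.
\end{itemize}
Lemma~\ref{lem:transitive} then gives $x_2\sim a_{21}$ or $x_3\sim a_{12}$. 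Replacing $u_2$ by $a_{21}$ or $a_{12}$ decreases \eqref{eq:ds}.

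The main obstacle is the first step. The side of $t$ is now controlled only by Lemma~\ref{lem:t2} and Lemma~\ref{lem:t2'}, which push in the direction opposite to Case 1, and the orientations of $s$ and $t$ are swapped. I must check carefully that a consistent chain exists, or that any inconsistency already produces a shortening via some $b_i$ or $c_i$ as in Lemma~\ref{lem:t} (2).
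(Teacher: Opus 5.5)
Your first step cannot be carried out in this sub-case, and no amount of careful checking will fix it. In this part of Case~2 the standing assumption $t\in\uls^-_{A_{n1}}$ (made just before Lemma~\ref{lem:t2}) is also in force. The argument runs as follows.
\begin{enumerate}
\item Since $t\notin P$, this assumption forces $A_{n1}$ to be of type~II, because for $T$ of type~I the set $\uls^-_T$ lies in $P$.
\item For $T$ of type~II one has $\uls^+_T=T^{\{\uls\}}\setminus\{\uls\}$, where $T^{\{\uls\}}$ is the component of $\Lambda_P\setminus T$ containing $\uls$.
\item If $T_1<T_2$ are type~II elements of $\mathcal C_{P,\{\uls\}}$, then $T_1^{\{\uls\}}\subset T_2^{\{\uls\}}$. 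So the property $t\in\uls^+$ propagates up any increasing chain.
\item Hence a chain $A'_{1m}<\cdots<A'_{nm}<\cdots<A'_{n1}$ in $\mathcal C_{P,\{\uls\}}$ would give $t\in\uls^+_{A_{n1}}$, which is impossible.
\end{enumerate}
Put differently, your transport from the $A_{1m}$ end gives $t\in\uls^+_{A_{11}}$. Transporting $t\in\uls^-_{A_{n1}}$ from the other end, as the paper does in the complementary lemma, gives $t\in\uls^-_{A_{11}}$.

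What Lemmas~\ref{lem:t2} and~\ref{lem:t2'} actually give is a top row that increases up to the last index $k$ with $t\in\uls^+_{A_{km}}$ and then descends, $A'_{k+1,m}<A'_{km}$. Here $k<n$, since otherwise the chain of Lemma~\ref{lem:s2} would carry $t\in\uls^+$ down to $A_{n1}$. This is neither a monotone chain nor a contradiction with minimality. It is exactly the configuration the lemma must handle, and the strip-by-strip sweep of Lemma~\ref{lem:chain} has no monotone boundary to start from.

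Your last step is also on the wrong side for Case~2. There $x_2$ has type $\hat s$ and $x_3$ has type $\hat t$. So when $\{\uls\}<A'<A'_{12}$ in $\mathcal C_{P,A_{11}}$, you need $A'$ to separate $t$ from $A'_{12}$. That follows from $t\in\uls^-_{A_{11}}$. With $t\in\uls^+_{A_{11}}$, however, $t$ lies on the same side of $\uls$ as $A'$ and $A'_{12}$. Nor does $s$ help, since it reaches $A'_{21}$ through $Q$ and $\uls^-_{A_{11}}$ while avoiding $A'$.

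The missing idea is to argue locally at the corner $u_1=a_{1m}$. That is where the hypothesis $t\in\uls^+_{A_{1m}}$ applies, and where the $\hat t$-vertex is $x_1$.
\begin{enumerate}
\item Take $c_1\in Y_1$ with $\hat a_{2,m-1}<\hat c_1<\hat a_{1m}$, built from $\hat c_{n-1}=\hat b_m$ and $\hat c_i=\hat c_{i+1}\wedge\hat a_{im}$.
\item The argument of Lemma~\ref{lem:t2}, combined with Lemma~\ref{lem:nonadj2}, gives $\{\uls\}<C'_1$ in $\mathcal C_{P,A_{1m}}$.
\item If $A'_{1,m-1}\ge\{\uls\}$, then $A_{1m}\cup A_{1,m-1}$ separates $C_1$ from $s$, so $x_2\sim c_1$ by Lemma~\ref{lem:transitive}.
\item Otherwise, apply Lemma~\ref{lem:generalized 4cycle2} to $x_1c_1a_{1,m-1}x_2$ around $a_{1m}$, and compare the type $A'$ of the resulting $a$ with $\{\uls\}$; the boundary cases are immediate.
\begin{itemize}
\item If $A'$ is above $\{\uls\}$, it separates $s$ from $C_1$ (Lemma~\ref{lem:disjoint}), so $x_2\sim c_1$.
\item If $A'$ is below $\{\uls\}$, then because $t\in\uls^+_{A_{1m}}$ it separates $t$ from $A'_{1,m-1}$, so $x_1\sim a_{1,m-1}$.
\end{itemize}
\item Finally, replace $u_1$ by $c_1$ or $a_{1,m-1}$ and $u_4$ by $b_m$; this lowers \eqref{eq:ds}.
\end{enumerate}
This is the paper's argument, and it needs neither a chain nor a disk sweep.
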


\begin{proof}
Similar to Lemma~\ref{lem:t2}, we have $\{\uls\}<C'_1$ in $\mathcal C_{P,A_{1m}}$. We claim $x_1\sim a_{1,m-1}$ or $x_2\sim c_1$. The lemma follows from this claim as in the former case we replace $u_1$ by $a_{1,m-1}$ and $u_4$ by $b_m$ to decrease \eqref{eq:ds}, in the latter case we replace $u_1$ by $c_1$ and $u_4$ by $b_m$ to decrease \eqref{eq:ds}. It suffices to prove the claim.
Note that $A'_{1,m-1}<C'_1$ in $\mathcal C_{P,A_{1m}}$. If $\{\uls\}\le A'_{1,m-1}<C'_1$, then Lemma~\ref{lem:admissible example} and Lemma~\ref{lem:disjoint} imply that $C'_1\mid A'_{1,m-1}\mid s$ in $A^c_{1m}$. As $x_2\sim\{a_{1m},a_{1,m-1}\}$ and $c_1\sim\{a_{1m},a_{1,m-1}\}$, Lemma~\ref{lem:transitive} implies that $x_2\sim c_1$. Now we assume $A'_{1,m-1}<\{\uls\}$ in $\mathcal C_{P,A_{1m}}$. Consider the generalized 4-cycle $x_1c_1a_{1,m-1}x_2$ around $a_{1m}$. By Lemma~\ref{lem:generalized 4cycle2}, there is $a\in (\lk^0(a_{1m},X),<_{a_{1m}})$ with $a_{1,m-1}\le_{a_{1m}} a\le_{a_{1m}} c_1$ and $a\sim\{x_1,x_2\}$. Suppose $a$ has type $\hat A$. If $A\in \{A'_{1,m-1},\{\uls\},C'_1\}$, then the claim follows. If $\{\uls\}<A'<C'_1$ in $\mathcal C_{P,A_{1m}}$, then $C_1\mid(A_{1m}\cup A)\mid \uls$ by Lemma~\ref{lem:admissible example} and $C_1\mid(A_{1m}\cup A)\mid s$ by Lemma~\ref{lem:disjoint}. As $x_2\sim\{a_{1m},a\}$ and $c_1\sim\{a_{1m},a\}$, by Lemma~\ref{lem:transitive}, $x_1\sim c_1$. It remains to consider $A'_{1,m-1}<A'<\{\uls\}$. As $t\in \uls^+_{A_{1m}}$, $A'_{1,m-1}\mid A'\mid t$ in $A^c_{1m}$. Thus $t\mid (A_{1m}\cup A)\mid A_{1,m-1}$. As $x_1\sim \{a_{1m},a\}$ and $a_{1,m-1}\sim\{a_{1m},a\}$, we have $x_1\sim a_{1,m-1}$ and the claim follows.
\end{proof}

Note that $t\notin A_{1m}$, otherwise $x_1\sim a_{1,m-1}$. So it remains to consider  $t\in \uls^-_{A_{1m}}$.

\begin{lem}
Suppose $t\in \uls^-_{A_{1m}}$. Then we can replace $(u_1,u_2,u_3,u_4)$ by another element in $\Xi$ with smaller value of \eqref{eq:ds}. 
\end{lem}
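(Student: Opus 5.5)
This is the last subcase of Case~2, so I would mirror the proof of Lemma~\ref{lem:chain} from Case~1, with the roles of $s$ and $t$ exchanged. The aim is to establish one long chain in $\mathcal C_{P,\{\uls\}}$ that runs across every strip of the disk $D$. Then, at the corner $a_{11}$ (near $u_2$), we locate a vertex that lets us move $u_2$ to $a_{21}$ or $a_{12}$, which reduces \eqref{eq:ds}.

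\textbf{First step.} I would propagate the position of $t$ along the top row $a_{1m},\dots,a_{nm}$ together with the vertices $c_i$, defined as in the Case~1 lemma: $\hat c_{n-1}=\hat b_m$, and $\hat c_i$ is the meet of $\hat c_{i+1}$ and $\hat a_{im}$. We know $t\in\uls^-_{A_{1m}}$ and $t\in\uls^-_{A_{n1}}$. By the contrapositive of Lemma~\ref{lem:t2'}, $t\in\uls^-_{A_{i-1,m}}$ would force $A'_{i-1,m}<A'_{im}$ in $\mathcal C_{P,\{\uls\}}$. I would therefore try to prove by induction that $t\in\uls^-_{A_{im}}$ for all $i$. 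For this I would use Lemma~\ref{lem:t2} (which says $t\in\uls^+$ propagates downward in the index) and combine it with $t\in\uls^-_{A_{1m}}$. This should rule out any $i$ with $t\in\uls^+_{A_{im}}$, giving
\[A'_{1m}<A'_{2m}<\cdots<A'_{nm}\]
in $\mathcal C_{P,\{\uls\}}$. Next I would splice this with Lemma~\ref{lem:s2}, i.e.\ $A'_{nm}<A'_{n,m-1}<\cdots<A'_{n1}<B'_2$, to obtain a chain along the boundary of $D$ analogous to \eqref{eq:chain}.

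\textbf{Second step.} I would run the strip-by-strip triangle observation from Lemma~\ref{lem:chain}. If a triangle $\hat a_{ij}\le\hat a_{i+1,j}\le\hat a_{i,j+1}$ has distinct vertices and $A'_{i,j+1}<A'_{i+1,j}$, then $A'_{ij}$ lies between them. Processing the strips from the top down, and right to left within each strip, should give $A'_{12}<A'_{11}<A'_{21}$ in $\mathcal C_{P,\{\uls\}}$, with $t$ on a determined side of $\uls$ in $A^c_{11}$. Non-degeneracy of the triangles will come from Lemmas~\ref{lem:nonadj1} and~\ref{lem:nonadj2}, together with the construction of $D$.

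\textbf{Final step.} I would apply Lemma~\ref{lem:generalized 4cycle2} to the generalized 4-cycle $x_2a_{12}a_{21}x_3$ around $a_{11}$; note that here $x_2$ has type $\hat s$ and $x_3$ has type $\hat t$. This produces $a$ with $A'_{21}\le A'\le A'_{12}$ and $a\sim\{x_2,x_3\}$, and I would split into cases according to where $A'$ sits relative to $\{\uls\}$. On the side of $\uls$ facing $s$, Lemma~\ref{lem:disjoint} makes $A'$ separate $s$ from the endpoint, and Lemma~\ref{lem:transitive} then gives $x_2\sim a_{21}$ or $x_2\sim a_{12}$. On the other side, the $\uls^\pm$ information about $t$ yields $x_3\sim$ the corresponding endpoint. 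In every case we can replace $u_2$ and decrease \eqref{eq:ds}.

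The main obstacle is the first step. In Case~2 the roles are swapped, so the orientation of the chain along the top row must be shown directly. The worry is a mixed situation in which $t$ changes sides along the row. I would handle it by taking the first index at which the side flips. At that index I would use the generalized 4-cycle around $a_{im}$ involving $x_1$ and $c$-vertices, via Lemma~\ref{lem:generalized 4cycle2}, to either derive a contradiction with Lemma~\ref{lem:nonadj2} or directly produce a replacement that decreases \eqref{eq:ds}.
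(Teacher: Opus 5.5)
Your proposal is correct and follows the paper's proof. Lemmas~\ref{lem:t2} and~\ref{lem:t2'} give the increasing chain along the top row, which is spliced with Lemma~\ref{lem:s2}. The strip-by-strip triangle argument of Lemma~\ref{lem:chain} then yields $A'_{12}<A'_{11}<A'_{21}$ in $\mathcal C_{P,\{\uls\}}$, and Lemma~\ref{lem:generalized 4cycle2} applied to $x_2a_{12}a_{21}x_3$ around $a_{11}$ finishes the proof.

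A few small corrections:
\begin{itemize}
\item The ``main obstacle'' in your last paragraph is not an obstacle. By Lemma~\ref{lem:t2}, any index with $t\in\uls^+_{A_{im}}$ would propagate down to $t\in\uls^+_{A_{1m}}$, contradicting the hypothesis. So no side flip can occur, and the $c_i$ vertices are not needed.
\item You should state which side $t$ lies on at $a_{11}$: it is $t\in\uls^-_{A_{11}}$, which follows from $t\in\uls^-_{A_{n1}}$ and $\{\uls\}<A'_{11}$ in $\mathcal C_{P,A_{n1}}$.
\item On the $s$-side of $\uls$, the useful conclusion is specifically $x_2\sim a_{21}$. The alternative $x_2\sim a_{12}$ you mention holds automatically, since $a_{12}$ lies on $\rho_{21}\subset Y_2$, so it gives nothing.
\end{itemize}
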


\begin{proof}
By Lemma~\ref{lem:t2} and Lemma~\ref{lem:t2'}, $A'_{im}>A'_{i-1,m}$ in $\mathcal C_{P,\{s\}}$ for $2\le i\le n-1$. This and Lemma~\ref{lem:s2} imply that $$A'_{1m}<A'_{2m}<\cdots A'_{n-1,m}<A'_{n,m-1}<A'_{n,m-2}<\cdots<A'_{n1}.$$
Similar to the proof of Lemma~\ref{lem:chain}, we deduce that $A'_{1m}<A'_{12}<A'_{11}<A'_{21}<A'_{n1}$ in $\mathcal C_{P,\{\uls\}}$. As $t\in \uls^-_{A_{n1}}$ (we assumed so before Lemma~\ref{lem:t2}) and $\{\uls\}<A'_{11}$ in $\mathcal C_{P,A_{n1}}$, we have $t\in \uls^-_{A_{11}}$. 


We now show either $x_2\sim a_{21}$ or $x_3\sim a_{12}$. Considering the generalized 4-cycle $x_2a_{12}a_{21}x_3$ around $a_{11}$. Let $a$ and $A$ be the same as in the last paragraph of the proof of Lemma~\ref{lem:chain}. If $\{\uls\}<A'<A'_{12}$ in $\mathcal C_{P,A_{11}}$, then $A'_{12}\mid A'\mid t$ in $A^c_{11}$ as $t\in \uls^-_{A_{11}}$, hence $a_{12}\sim x_3$. If $A'_{21}<A'<\{\uls\}$, then $\uls\mid A'\mid A'_{21}$ in $A^c_{11}$, hence $s\mid A'\mid A'_{21}$ by Lemma~\ref{lem:disjoint} and $x_2\sim a_{21}$. The rest of the proof is identical to Lemma~\ref{lem:chain}.
\end{proof}

\section{Contractibility of Artin complexes}
\label{sec:contractibility}
\subsection{Cycle reduction}
\begin{thm}
	\label{thm:cycle reduction}
Let $\mathcal C$ be a class of Coxeter diagrams which is closed under taking induced subdiagrams. Suppose that we have $\mathcal C_1\subset\mathcal C$ such that each element in $\mathcal C\setminus \mathcal C_1$ is not a forest. 
\begin{enumerate}
	\item Suppose for each element $\Lambda$ in $\mathcal C_1$, $\Delta_\Lambda$ satisfies the labeled 4-cycle condition. Then $\Delta_\Lambda$ satisfies the labeled 4-cycle condition for any $\Lambda\in \mathcal C$.
	\item Suppose in addition that for each element $\Lambda$ in $\mathcal C_1$ which is not spherical, $\Delta_\Lambda$ is contractible. Then $\Delta_\Lambda$ is contractible for each non-spherical $\Lambda\in \mathcal C$. In particular, $A_\Lambda$ satisfies the $K(\pi,1)$-conjecture for each $\Lambda\in \mathcal C$.
\end{enumerate}
\end{thm}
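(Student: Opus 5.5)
We prove both parts by induction on the number of vertices of $\Lambda\in\mathcal C$. Since $\mathcal C$ is closed under induced subdiagrams, every proper induced subdiagram of $\Lambda$ again lies in $\mathcal C$ and is covered by the induction hypothesis.

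\emph{Part (1).} Take $\Lambda\in\mathcal C$. If $\Lambda\in\mathcal C_1$ there is nothing to prove, so assume $\Lambda\in\mathcal C\setminus\mathcal C_1$; then $\Lambda$ is not a forest. If $\Lambda$ is connected, it is not a tree. By induction, all proper induced subdiagrams satisfy the labeled 4-cycle condition, and Proposition~\ref{prop:prop} gives the conclusion.

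If $\Lambda$ is disconnected, write $\Lambda=\Lambda_1\sqcup\Lambda_2$. Then $A_\Lambda=A_{\Lambda_1}\times A_{\Lambda_2}$, and $\Delta_\Lambda$ is the join $\Delta_{\Lambda_1}*\Delta_{\Lambda_2}$. Consider a special 4-cycle of type $\hat s\hat t\hat s\hat t$. If $s$ and $t$ lie in different components, the cycle is not embedded, since all four vertices would be pairwise adjacent. So $s,t$ lie in a single component $\Lambda_1$. A connected $\Lambda'$ containing $s,t$ then lies in $\Lambda_1$, and the cycle lives in $\Delta_{\Lambda_1}$. The factor $\Lambda_1$ is proper, so induction produces the center.

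\emph{Part (2).} Let $\Lambda\in\mathcal C$ be non-spherical; we show $\Delta_\Lambda$ is contractible. If $\Lambda\in\mathcal C_1$, this is the hypothesis. Otherwise $\Lambda$ is not a forest. If $\Lambda$ is disconnected with a non-spherical factor, $\Delta_\Lambda$ is a join with a contractible factor, using induction on that factor. So we may assume $\Lambda$ is connected and contains an induced cycle.

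Choose $P$ as in Section~\ref{subsec:P}: a maximal subpath of an induced cycle whose interior vertices have valence $2$. Then $\Lambda_P$ is connected, and the interior vertices of $P$ are non-separating. If $\Lambda$ is itself a cycle, take $P$ to be the whole cycle. By part (1), every proper induced subdiagram satisfies the labeled 4-cycle condition, so Proposition~\ref{prop:mincutAn} shows $\Delta^P_\Lambda$ is $\widetilde A_n$-like. Lemma~\ref{lem:garsideA_n} then gives that $\Delta^P_\Lambda$ is contractible.

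It remains to relate $\Delta^P_\Lambda$ to $\Delta_\Lambda$, and I expect this to be the main obstacle. The plan has two steps.

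\emph{Reduce to vertices of $P$.} Let $T=\Lambda\setminus P$. For nonempty $R\subseteq T$, the diagram $\Lambda\setminus R$ is a proper induced subdiagram. It is either spherical or covered by induction; one has to check that it is never spherical, or else handle that case separately. Lemma~\ref{lem:dr} then gives that $\Delta_\Lambda$ deformation retracts onto the relative complex $\Delta_{\Lambda,P}$.

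\emph{Compare $\Delta_{\Lambda,P}$ with $\Delta^P_\Lambda$.} The plan is a nerve-type argument in the style of Lemma~\ref{lem:homotopy injective}. Both complexes are covered by stars of cosets, and the coset intersection structure of Proposition~\ref{prop:intersection} should make the two nerves match, giving a homotopy equivalence. Here non-special vertices of $\Delta^P_\Lambda$ correspond to barycenters spanned by the type II cut vertices together with the endpoints $a,b$ of $P$.

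If this direct comparison proves awkward, the alternative is to show $\Delta_\Lambda$ is homotopy equivalent to $\Delta_1$ from Definition~\ref{def:Delta1}. One would use the vertex types in $\Sigma_P$ and the same deformation retraction lemma, then apply Lemma~\ref{lem:homotopy injective}.

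Finally, contractibility of $\Delta_\Lambda$ for all non-spherical $\Lambda\in\mathcal C$ gives the $K(\pi,1)$-conjecture by Theorem~\ref{thm:kpi1}, inducting on the vertex count; spherical groups are the known base case.
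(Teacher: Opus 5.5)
Your part (1) matches the paper, apart from one small slip. When $s,t$ lie in different components, the $4$-cycle \emph{can} be embedded: for $\mathbb Z^2$, $\Delta_\Lambda$ is a complete bipartite graph. The condition still holds there, but because it is vacuous: no connected $\Lambda'$ contains both $s$ and $t$. Your contractibility of $\Delta^P_\Lambda$ and the final $K(\pi,1)$ deduction also match the paper. The genuine gap is the step you flagged yourself: relating $\Delta_\Lambda$ to $\Delta^P_\Lambda$. Almost all of the paper's work in this proof lies there.

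\textbf{First step fails.} You apply Lemma~\ref{lem:dr} with $T=\Lambda\setminus P$. For $R=T$, its hypothesis requires the Artin complex of the diagram spanned by the vertices of $P$ to be contractible. Unless $P$ contains every vertex of the cycle, that diagram is a path, and typically a spherical one. Take a $4$-cycle $v_1v_2v_3v_4$ with a pendant vertex at $v_1$ and one at $v_3$, all labels $3$. Every valid choice of $P$ is $v_1v_2v_3$ up to symmetry. It spans $A_3$, whose Artin complex is a nontrivial wedge of $2$-spheres. So this is not a case to ``handle separately''; the retraction onto $\Delta_{\Lambda,P}$ is simply unavailable.

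\textbf{Second step is unsupported.} A type II vertex of $\Delta^P_\Lambda$ of type $\hat C$ with $C\neq\{a\},\{b\}$ is the barycenter of a simplex whose vertices have types $\hat c$, $c\in C\subset\Lambda_P\setminus\{a,b\}$. So it does not lie in $\Delta_{\Lambda,P}$ at all, and no nerve identification between the two complexes is apparent. Your fallback, showing $\Delta_\Lambda\simeq\Delta_1$, names the right target. But Lemma~\ref{lem:dr} cannot retract onto $\Delta_1$, since $\Delta_1$ is not a relative Artin complex.

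\textbf{The missing idea} is to use a whole family of non-spherical subdiagrams instead of one.
\begin{enumerate}
\item Let $V$ be a union of interior vertex sets of induced $a$--$b$ paths in $\Lambda_P$. The diagram on $V\cup P$ contains an induced cycle, so every $\Lambda\setminus R\supseteq V\cup P$ is non-spherical and covered by induction. Hence $\Delta_\Lambda$ retracts onto $\Delta_{\Lambda,V\cup P}$.
\item In the subdivision $\Delta_0$, this gives $H_*(\Delta_0,\Delta_0^V)=0$.
\item A set $T\subset\Lambda_P\setminus\{a,b\}$ meets every such path iff it contains an element of $\mc_{\Lambda_P}(\{a\},\{b\})$. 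Therefore $\Delta_1=\bigcap_V\Delta_0^V$.
\item Since $\Delta_0^{V_1}\cup\Delta_0^{V_2}=\Delta_0^{V_1\cup V_2}$, a Mayer--Vietoris induction gives $H_*(\Delta_0,\Delta_1)=0$.
\item Both $\Delta_0$ and $\Delta_1$ are simply connected, so relative Hurewicz plus Whitehead give $\Delta_1\simeq\Delta_0$.
\end{enumerate}
Only after this does Lemma~\ref{lem:homotopy injective} connect $\Delta_1$ to $\Delta^P_\Lambda$ and finish the argument.
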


\begin{proof}
For (1), we induct on number of vertices in $\Lambda\in \mathcal C$. It suffices to consider the case $\Lambda$ is connected, as otherwise by induction each connected component of $\Lambda$ satisfies the labeled 4-cycle condition. Also we can assume $\Lambda\notin \mathcal C_1$, then $\Lambda$ is not a tree. Then Proposition~\ref{prop:prop} implies that $\Delta_\Lambda$ satisfies the labeled 4-cycle condition.

For (2), we induct on number of vertices in $\Lambda\in \mathcal C$. It suffices to consider the case $\Lambda$ is connected, as otherwise there is a non-spherical connected component $\Lambda'\subset \Lambda$ with $\Delta_{\Lambda'}$ contractible. As $\Delta_{\Lambda'}$ is a join factor of $\Delta_\Lambda$, $\Delta_\Lambda$ is contractible. We can also assume $\Lambda\notin \mathcal C_1$, so $\Lambda$ is not a tree. We choose a path $P\subset \Lambda$ and define $\Lambda_P$ as in the beginning of Section~\ref{sec:minimal cut complex}.

\begin{claim}
The complex $\Delta_{\Lambda}$ deformation retracts onto $\Delta_{\Lambda,\Lambda'}$ for any connected induced non-spherical subdiagrams $\Lambda'$ of $\Lambda$.
\end{claim}

\begin{proof}
This follows from Lemma~\ref{lem:dr} and our induction assumption.	
\end{proof}

Let $\mathcal C_P,\Sigma_P,\Delta_1$ be as in Section~\ref{subsec:sc}.
We define a subdivision $\Delta_0$ of $\Delta_{\Lambda}$ as follows. A vertex $v$ of $\Delta_0$ correspondences a left coset of form $gA_{\hat T}$ with $T$ being either a single vertex of $P$ (in which case $v$ is called a \emph{special vertex}), or $T$ being a subset of $\Lambda_P\setminus\{a,b\}$ (in which case $v$ is a \emph{non-special vertex}). A special vertex is adjacent to another vertex if the associated two cosets have non-empty intersection. Two non-special vertices are adjacent, if the coset associated to one vertex is contained in the coset associated to the other vertex. Then $\Delta_0$ is the flag subcomplex on its 1-skeleton. Note that $\Delta_1$ embeds as a subcomplex of $\Delta_0$.

For a set of vertex $V\subset \Lambda_P$, let $\Delta^V_0$ to be the full subcomplex of $\Delta_0$ spanned by all vertices of type $\hat T$ with $T\cap (V\cup P)\neq\emptyset$. Note that given two sets of vertices $V_1,V_2$ of $\Lambda_0\cap \Lambda$, $\Delta^U_0=\Delta^{V_1}_0\cup\Delta^{V_2}_0$ where $U=V_1\cup V_2$. Clearly, $\Delta^{V_1}_0\cup\Delta^{V_2}_0\subset\Delta^U_0$. Given a simplex $\sigma$ of $\Delta^U_0$, we write a join decomposition $\sigma=\sigma_1*\sigma_2$ where $\sigma_1$ is spanned by special vertices and $\sigma_2$ is spanned by non-special vertices. Suppose $\sigma_2$ has vertices $\{v_i\}_{i=1}^k$ such that $v_i$ has type $\hat T_i$. Suppose $T_1\subset\cdots\subset T_k$. As $T_1\cap U\neq \emptyset$, we know either $T_1\cap V_1\neq\emptyset$ or $T_2\cap V_2\neq\emptyset$. In the former case, $T_i\cap V_1\neq\emptyset$ for $1\le i\le k$, hence $\sigma\subset \Delta^{V_1}_0$. In the latter case $\sigma\subset\Delta^{V_2}_0$.

Consider the set of all embedded induced edge paths in $\Lambda_P$ joining $a$ to $b$. Let $\Theta$ denote the collection of vertex sets consisting of the interior vertices of such paths, and let $\Theta'$ be the family of (non-empty) unions of elements of $\Theta$. If $V\in \Theta'$, then the induced subdiagram of $\Lambda$ spanned by $V\cup P$ is connected, hence $\Delta_{\Lambda}$ deformation retracts onto $\Delta_{\Lambda,V\cup P}$ by Claim 1. Thus $H_i(\Delta_0,\Delta_{\Lambda,V\cup P})=0$ for all $i$. Note that $\Delta^V_0$ is the union of the collection of simplices of $\Delta_0$ that have non-empty intersection with  $\Delta_{\Lambda,V\cup P}$. Hence there is a deformation retraction $r:\Delta^V_0\to \Delta_{\Lambda,V\cup P}$ sending vertices of type $\hat T$ to vertices of type $\hat T'$ with $T'=T\cap(V\cup P)$. Hence $H_i(\Delta^V_0,\Delta_{\Lambda,V\cup P})=0$ for $i\ge 0$. Then  $H_i(\Delta_0,\Delta^V_0)=0$ for $i\ge 0$.

Next we prove $\Delta_1=\cap_{V\in\Theta'}\Delta^V_0$. It suffices to show $\Delta_1=\cap_{V\in\Theta}\Delta^V_0$. We only need to show these two subcomplexes have the same set of non-special vertices.
Given a non-special vertex $v\in \cap_{V\in\Theta}\Delta^V_0$ of type $\hat T$, then $T\cap V\neq\emptyset$ for any $V\in \Theta$. Hence $T$ has non-empty intersection with any
embedded and induced edge path in $\Lambda_P$ from $a$ to $b$. Hence $T$ separates $a$ from $b$ in $\Lambda_P$.  It follows that $T$ contains an element in $\mc_{\Lambda_P}(\{a\},\{b\})$ and $v\in \Delta_1$. Conversely, if $v\in \Delta_1$ is a non-special vertex of type $\hat T$, then $T$ contains an element from $\mc_{\Lambda_P}(\{a\},\{b\})$. Hence $T\cap V\neq\emptyset$ for any $V\in \Theta$.

\begin{claim}
For	any $V_1,V_2,\ldots V_k\in \Theta'$, $H_i(\Delta_0,\cap_{i=1}^k\Delta^{V_i}_0)=0$ for $i\ge 0$.
\end{claim} 

\begin{proof}
 We induct on $k$ and the case $k=1$ is already proved. Suppose the claim holds true for $k-1$ sets. We consider the Mayer–Vietoris  sequence:
\begin{align*}
\cdots &\to H_n(\Delta_0,(\cap_{i=1}^{k-1}\Delta^{V_i}_0)\cap\Delta^{V_k}_0)\to H_n(\Delta_0,\cap_{i=1}^{k-1}\Delta^{V_i}_0)\oplus H_n(\Delta_0,\Delta^{V_k}_0)\\
&\to H_n(\Delta_0,(\cap_{i=1}^{k-1}\Delta^{V_i}_0)\cup\Delta^{V_k}_0)\to H_{n-1}(\Delta_0,(\cap_{i=1}^{k-1}\Delta^{V_i}_0)\cap\Delta^{V_k}_0)\to\cdots
\end{align*}
By previous discussion,
\begin{align*}
(\cap_{i=1}^{k-1}\Delta^{V_i}_0)\cup\Delta^{V_k}_0=\cap_{i=1}^{k-1} (\Delta^{V_i}_0\cup\Delta^{V_k}_0)=\cap_{i=1}^{k-1}\Delta^{V_i\cup V_k}_0.
\end{align*}
As $V_i\cup V_k\in \Theta'$, by induction, we know $$ H_n(\Delta_0,(\cap_{i=1}^{k-1}\Delta^{V_i}_0)\cup\Delta^{V_k}_0)=0\ \textrm{and}\ H_n(\Delta_0,\cap_{i=1}^{k-1}\Delta^{V_i}_0)=0.$$
Thus the claim follows. 
\end{proof}

In particular, $H_i(\Delta_0,\Delta_1)=0$ for all $i$. As $\Delta_0$ is simply-connected (\cite[Lem 4]{cumplido2020parabolic}) and $\Delta_1$ is connected,  $\pi_1(\Delta_0,\Delta_1)=0$. As $\Delta_1$ is simply-connected by Lemma~\ref{lem:Delta1sc}, by the relative version of Hurewicz Theorem \cite[Thm 4.32]{hatcher}, $\pi_i(\Delta_0,\Delta_1)=0$ for $i\ge 1$. By the long exact sequence of relative homotopy groups, we obtain that the inclusion $\Delta_1\to \Delta_0$ induces isomorphism on homotopy groups in all dimensions. Hence the Whitehead Theorem implies that $\Delta_1$ and $\Delta_0$ are homotopic equivalent. However, $\Delta_1$ and $\Delta^P_\Lambda$ are homotopic equivalent by Lemma~\ref{lem:homotopy injective}, and $\Delta^P_\Lambda$ is contractible by Assertion (1), Proposition~\ref{prop:mincutAn} and Lemma~\ref{lem:garsideA_n}. Thus $\Delta_\Lambda$ is contractible. 

To see $A_\Lambda$ satisfies the $K(\pi,1)$-conjecture, we induct on the number of vertices in $\Lambda$, and use Theorem~\ref{thm:kpi1} and the fact that the $K(\pi,1)$-conjecture is known for spherical Artin groups \cite{deligne} to conclude the proof.
\end{proof}

\begin{cor}
	\label{cor:cycle reduction all}
Suppose that for each non-spherical tree Coxeter diagram $\Lambda$, $\Delta_\Lambda$ satisfies the labeled 4-cycle condition and $A_\Lambda$ satisfies the $K(\pi,1)$-conjecture. Then any Artin group satisfies the $K(\pi,1)$-conjecture.
\end{cor}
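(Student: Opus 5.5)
The plan is to apply Theorem~\ref{thm:cycle reduction} with $\mathcal C$ the class of all Coxeter diagrams, which is closed under taking induced subdiagrams. For $\mathcal C_1$ we take all forests. Then every element of $\mathcal C\setminus\mathcal C_1$ is not a forest, as the theorem requires. It remains to verify the two hypotheses of Theorem~\ref{thm:cycle reduction} for forests. Once they hold, part (2) of that theorem gives the $K(\pi,1)$-conjecture for every $A_\Lambda$.

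\textbf{Labeled 4-cycle condition.} We need $\Delta_\Lambda$ to satisfy the labeled 4-cycle condition for every forest $\Lambda$, while the hypothesis only covers non-spherical trees. So we argue by induction on the number of vertices.

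First, if $\Lambda$ is disconnected, $\Delta_\Lambda$ is a join of the Artin complexes of its components. A special $4$-cycle of type $\hat s\hat t\hat s\hat t$ with $s,t$ in different components is impossible in an embedded $4$-cycle, since then $x_1\sim x_3$ through the join structure. If $s,t$ lie in one component $\Lambda_0$, the connected subgraph $\Lambda'$ containing them lies in $\Lambda_0$, so we reduce to $\Delta_{\Lambda_0}$.

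Second, for spherical trees the condition is the known spherical case; this is \cite[Cor.~9.14]{huang2023labeled} together with the spherical results cited in the introduction. If the paper's formulation there only gives the unlabeled ``has a center'' statement, we upgrade it as at the start of Section~\ref{sec:propagation}. Namely, we pass to the link of the center, which is $\Delta_{\Lambda\setminus\{r\}}$, and use induction on proper subdiagrams to move the center's type into $\Lambda'$.

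Third, for non-spherical trees the condition holds by hypothesis.

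\textbf{Contractibility.} We need $\Delta_\Lambda$ contractible for each non-spherical forest $\Lambda$. If $\Lambda$ is a tree, we use the hypothesis that $A_\Lambda$ satisfies the $K(\pi,1)$-conjecture together with the equivalence \cite{godelle2012k} recalled in the introduction: for non-spherical $A_\Lambda$ whose proper parabolics satisfy the conjecture, the $K(\pi,1)$-conjecture is equivalent to contractibility of $\Delta_\Lambda$. This needs the proper parabolic subgroups, which are forests, to satisfy the conjecture. Those are products of trees, each either spherical (Deligne) or covered by the hypothesis. If $\Lambda$ is a disconnected non-spherical forest, some component $\Lambda_0$ is non-spherical, and $\Delta_\Lambda$ is a join with the contractible factor $\Delta_{\Lambda_0}$, so it is contractible.

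The main obstacle I expect is the spherical-tree case of the labeled condition. The hypothesis excludes spherical trees, so we must rely on the known spherical result and check that it gives the labeled version, not just the existence of a center. The other steps are bookkeeping.
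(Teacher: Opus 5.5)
This is essentially the paper's proof: the same choice of $\mathcal C$ as all diagrams and $\mathcal C_1$ as all forests in Theorem~\ref{thm:cycle reduction}, the labeled 4-cycle condition reduced to tree components, and contractibility from \cite[Thm 3.1]{godelle2012k} applied to a non-spherical component plus the join structure. You are more explicit than the paper about spherical tree components, where the right reference is \cite[Cor 8.2]{huang2023labeled} rather than Cor.~9.14 (the locally reducible case); getting $K(\pi,1)$ for the maximal parabolics from Deligne plus the hypothesis on each tree factor, instead of the paper's inheritance to parabolics \cite[Cor 2.4]{godelle2012k}, is a harmless variation. One slip: when $s,t$ lie in different components such embedded $4$-cycles do exist, since two distinct vertices of type $\hat s$ are never adjacent, so $x_1\nsim x_3$; the condition is nonetheless vacuous for them, because no connected $\Lambda'$ contains both $s$ and $t$.
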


\begin{proof}
We apply Theorem~\ref{thm:cycle reduction} with $\mathcal C$ being the class of all Coxeter diagrams, and $\mathcal C_1$ being the class of all Coxeter diagrams that are forests. For $\Lambda\in \mathcal C_1$, $\Delta_{\Lambda'}$ satisfies the labeled 4-cycle condition for each connected component $\Lambda'$ of $\Lambda$. Hence $\Delta_\Lambda$ is contractible. Now take $\Lambda\in \mathcal C_1$ non-spherical, then it has at least one non-spherical connected component $\Lambda'$. By \cite[Cor 2.4]{godelle2012k}, $A_{\Lambda'}$ and $A_{\Lambda'\setminus\{s\}}$ satisfy the $K(\pi,1)$-conjecture for each vertex $s\in\Lambda'$. \cite[Thm 3.1]{godelle2012k} implies that $\Delta_{\Lambda'}$ is contractible. Hence $\Delta_\Lambda$ is contractible. Then the corollary follows from Theorem~\ref{thm:cycle reduction}.
\end{proof}

\begin{cor}
	\label{cor:cycle reduction single}
	Let $\Lambda$ be a Coxeter diagram such that for any non-spherical induced subdiagram $\Lambda'$ which is a tree, $\Delta_{\Lambda'}$ satisfies the labeled 4-cycle condition and $A_{\Lambda'}$ satisfies the $K(\pi,1)$-conjecture. Then $A_\Lambda$ satisfies the $K(\pi,1)$-conjecture.
\end{cor}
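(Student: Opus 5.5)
The plan is to apply Theorem~\ref{thm:cycle reduction} with $\mathcal C$ the class of all induced subdiagrams of $\Lambda$ (closed under taking induced subdiagrams), and $\mathcal C_1\subset\mathcal C$ the induced subdiagrams of $\Lambda$ that are forests. Every element of $\mathcal C\setminus\mathcal C_1$ is then not a forest, as the theorem requires. Once both hypotheses of Theorem~\ref{thm:cycle reduction} are checked for $\mathcal C_1$, part (2) gives the $K(\pi,1)$-conjecture for every member of $\mathcal C$, in particular for $\Lambda$ itself.

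\textbf{Step 1: the labeled 4-cycle condition for $\Lambda_0\in\mathcal C_1$.} Each connected component of a forest $\Lambda_0$ is an induced tree subdiagram of $\Lambda$.
\begin{enumerate}
\item If a component is non-spherical, the labeled 4-cycle condition holds for it by hypothesis.
\item If a component is spherical, I would invoke the known spherical case of Conjecture~\ref{conj:bowtie free} and its labeled version from \cite{huang2023labeled}, as is done implicitly in Corollary~\ref{cor:cycle reduction all}.
\end{enumerate}
Since $\Delta_{\Lambda_0}$ is the join of the Artin complexes of its components, I then pass from components to $\Lambda_0$. A special 4-cycle of type $\hat s\hat t\hat s\hat t$ with $s,t$ in the same component lives in that component's join factor, and the required center exists there. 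If $s$ and $t$ lie in different components, then $x_1\sim x_3$ already holds in the join structure, so a 4-cycle of this type cannot be embedded. Finally, a connected induced $\Lambda'$ containing $s$ and $t$ meets only their common component, so the type condition on the center is satisfied.

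\textbf{Step 2: contractibility of $\Delta_{\Lambda_0}$ for non-spherical $\Lambda_0\in\mathcal C_1$.} Choose a non-spherical component $\Lambda'$; it is a non-spherical induced tree of $\Lambda$.
\begin{enumerate}
\item By hypothesis, $A_{\Lambda'}$ satisfies the $K(\pi,1)$-conjecture.
\item By \cite[Cor 2.4]{godelle2012k}, the parabolic subgroups $A_{\Lambda'\setminus\{s\}}$ also satisfy it.
\item Then \cite[Thm 3.1]{godelle2012k}, which is the equivalence underlying Theorem~\ref{thm:kpi1}, gives that $\Delta_{\Lambda'}$ is contractible.
\end{enumerate}
Since $\Delta_{\Lambda'}$ is a join factor of $\Delta_{\Lambda_0}$, the join $\Delta_{\Lambda_0}$ is contractible as well. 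This mirrors the proof of Corollary~\ref{cor:cycle reduction all}.

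\textbf{Main obstacle.} The delicate point is Step 1 for spherical tree components, because the hypothesis only covers non-spherical trees. I would rely on the spherical case of the labeled 4-cycle property established in \cite{huang2023labeled}. If this is not available in the labeled form of Definition~\ref{def:labaled}, I would instead deduce it from the fact that spherical Artin complexes have the required center property via Garside theory (Deligne), and then reduce the label condition through links, using the link lemma (Lemma~\ref{lem:link}) and induction on the number of vertices as at the start of Section~\ref{sec:propagation}. Everything else is a direct assembly of Theorem~\ref{thm:cycle reduction}.
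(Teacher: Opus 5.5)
Your proposal is correct and is essentially the paper's argument: the same choice of $\mathcal C$ and $\mathcal C_1$ in Theorem~\ref{thm:cycle reduction}, the labeled 4-cycle condition for forests checked component-wise (spherical tree components via \cite[Cor 8.2]{huang2023labeled}, as the paper uses in Corollary~\ref{cor:spherical combine}), and contractibility of a non-spherical tree component via \cite{godelle2012k} together with the join structure.

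One small slip concerns the case where $s$ and $t$ lie in different components. There $x_1$ and $x_3$ are distinct vertices of type $\hat s$, so they are never adjacent, and such embedded 4-cycles do exist in the join. The condition holds in that case only vacuously, because no connected induced subgraph of the forest contains both $s$ and $t$.
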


\begin{proof}
We apply Theorem~\ref{thm:cycle reduction} such that $\mathcal C$ is the collection of all induced subdiagrams of $\Lambda$, and $\mathcal C_1$ is the collection of all induced subdiagrams of $\Lambda$ that is a forest. The rest of the proof is similar to Corollary~\ref{cor:cycle reduction all}.	
\end{proof}

\subsection{$\widetilde G_2$-reduction}

\begin{definition}
An edge $e$ is \emph{$n$-solid} in $\Lambda$ if the graph $\Delta_{\Lambda,e}$ has girth $\ge 2n$.
\end{definition}
\begin{prop}(\cite[Prop 9.11]{huang2023labeled})
	\label{prop:tree}
	Suppose $\Lambda$ is tree Coxeter diagram. Suppose there exists a collection $E$ of open edges with label $\ge 6$ such that for each component $\Lambda'$ of $\Lambda\setminus E$ the Artin complex $\Delta_{\Lambda'}$ satisfies the labeled 4-cycle condition.  Then  $\Delta_{\Lambda,\Lambda'}$ satisfies the labeled 4-cycle condition for each component $\Lambda'$ of $\Lambda\setminus E$; and each edge $e\in E$ is $6$-solid in $\Lambda$.
\end{prop}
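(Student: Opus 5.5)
The plan is to induct on $|E|$. When $E=\emptyset$ the first assertion is exactly the hypothesis and the second is vacuous. For the inductive step, fix an edge $e=st\in E$ with label $m\ge 6$. Removing the open edge $e$ from the tree $\Lambda$ leaves two subtrees: $\Lambda_s$ containing $s$ and $\Lambda_t$ containing $t$. Set $\Lambda^s=\Lambda_s\cup\{t\}$ and $\Lambda^t=\Lambda_t\cup\{s\}$. Since $\Lambda$ is a tree, the presentation gives the amalgam
\[
A_\Lambda=A_{\Lambda^s}*_{A_{e}}A_{\Lambda^t},
\]
where $A_e$ is the dihedral Artin group of $e$. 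Each of $\Lambda^s,\Lambda^t$ has fewer edges of $E$, so the inductive hypothesis applies to them once the edge $e$ is accounted for. The key structural step is to describe $\Delta_{\Lambda,e}$ through the Bass--Serre tree $\mathcal T$ of this splitting. I expect each coset of $A_{\hat s}$ or $A_{\hat t}$ to be determined by its intersections with the vertex-group cosets of $\mathcal T$. This should make $\Delta_{\Lambda,e}$ a tree of copies of $\Delta_{\Lambda^s,e}$ and $\Delta_{\Lambda^t,e}$, glued along copies of the dihedral Artin complex $\Delta_e$, a bipartite graph of girth $2m\ge 12$.

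Girth. Consider an embedded cycle $\gamma$ in $\Delta_{\Lambda,e}$ of length $<12$. I would write $\gamma$ as a relation $w_1w_2\cdots w_{2k}=1$ with $w_i$ alternating between $A_{\hat s}$ and $A_{\hat t}$, following Lemma~\ref{lem:4-cycle}. I would then reduce it using the retraction $\Pi_{\widehat F}$ of Definition~\ref{def:retraction} onto the standard subcomplex associated to the relevant parabolic subgroups, together with reduced forms in the amalgam. By normal forms in the amalgam, either $\gamma$ lies in a single piece $\Delta_{\Lambda^s,e}$ or $\Delta_{\Lambda^t,e}$, or it passes through a gluing copy of $\Delta_e$. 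A cycle that genuinely crosses between pieces would force a relation of length $<2m$ in $A_e$, which is impossible. Thus $\gamma$ lies in one piece, say $\Delta_{\Lambda^s,e}$. In $\Lambda^s$ the vertex $t$ is a leaf attached by $e$. By Lemma~\ref{lem:link}, the link of a $\hat t$-vertex in $\Delta_{\Lambda^s,e}$ consists of the $\hat s$-vertices of $\Delta_{\Lambda_s}$ with the same relative structure, and the $\hat s$-vertices adjacent to a given $\hat t$-vertex form a single $A_{\Lambda_s}$-orbit. I would then show that $\Delta_{\Lambda^s,e}$ retracts, type-preservingly and without shortening cycles, onto a copy of $\Delta_e$. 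The intended map sends $gA_{\hat s}$ to the coset of $A_e$-components obtained by projecting $g$ through $A_{\Lambda^s}\to A_{\Lambda^s}/\langle\!\langle \Lambda_s\setminus\{s\}\rangle\!\rangle$, or equivalently via the Godelle--Paris retraction to $\widehat F$ for $F$ of type $e$. This gives girth $\ge 2m\ge 12$, so $e$ is $6$-solid.

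Labeled 4-cycle for $\Delta_{\Lambda,\Lambda'}$. Take an embedded $4$-cycle in $\Delta_{\Lambda,\Lambda'}$ of type $\hat u\hat v\hat u\hat v$ with $u,v\in\Lambda'$. Using the amalgam structure, I would show that its four vertices determine cosets meeting a common coset of $A_{\Lambda''}$, where $\Lambda''$ is the component of $\Lambda\setminus\{e\}$, or of $\Lambda\setminus E$ in the general case, that contains $\Lambda'$. The point is that a $4$-cycle which is not contained in one vertex group of $\mathcal T$ would give a short cycle of the type excluded by the girth bound for the edges of $E$ separating $\Lambda'$ from the rest. Once the cycle is translated inside a copy of $\Delta_{\Lambda''}$, I would apply Lemma~\ref{lem:link} to identify the relevant links, pass to $\Delta_{\Lambda'}$, and invoke the hypothesis on $\Delta_{\Lambda'}$ to obtain a center of the required type.

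The main obstacle is the girth step, specifically the claim that a short cycle cannot mix the two pieces and that the within-piece retraction onto $\Delta_e$ does not collapse a cycle. I would attack it first through explicit Bass--Serre normal forms for $A_{\Lambda^s}*_{A_e}A_{\Lambda^t}$, checking carefully how cosets of $A_{\hat s}$ and $A_{\hat t}$ intersect the edge stabilizers $gA_e$.
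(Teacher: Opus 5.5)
There is a genuine gap at the foundation. Your whole argument rests on the splitting $A_\Lambda=A_{\Lambda^s}*_{A_e}A_{\Lambda^t}$, and that splitting is false in the convention of this paper. In a Coxeter diagram, two vertices that are not joined by an edge have $m=2$, i.e.\ they commute.

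\begin{itemize}
\item Since $e$ is the only edge between $\Lambda_s$ and $\Lambda_t$, every $u\in\Lambda_s\setminus\{s\}$ commutes with every $v\in\Lambda_t\setminus\{t\}$ in $A_\Lambda$.
\item In the amalgam, $uvu^{-1}v^{-1}$ is a reduced word of length $4$ (because $u^{\pm1},v^{\pm1}\notin A_e$), hence nontrivial.
\item Already for the path $r-s-t-u$ with $e=st$, $A_\Lambda$ is a proper quotient of the amalgam.
\end{itemize}

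So whenever your decomposition is nontrivial it is false. When it does hold (one endpoint of $e$ is a leaf), it is trivial and gives nothing. There is therefore no Bass--Serre tree, and $\Delta_{\Lambda,e}$ is not a tree of copies of $\Delta_{\Lambda^s,e}$ and $\Delta_{\Lambda^t,e}$ glued along copies of $\Delta_e$. Both halves of your plan lose their key step: the claim that a short cycle cannot mix the two sides, and the claim that a $4$-cycle of $\Delta_{\Lambda,\Lambda'}$ can be localised into one coset of $A_{\Lambda''}$.

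The within-piece step is also unsupported.
\begin{itemize}
\item Killing $\Lambda_s\setminus\{s\}$ kills $s$ as well as soon as $s$ has an odd-labelled neighbour: the braid relation becomes $s^k=s^{k+1}$.
\item The retraction $\Pi_{\widehat F}$ of Definition~\ref{def:retraction} is a map of spaces, not a homomorphism $A_{\Lambda^s}\to A_e$. By Lemma~\ref{lem:retraction property} it can lower the type of a standard subcomplex, so vertices may be sent to edges of $\Delta_e$. That embedded cycles are not collapsed is exactly what would need proof.
\item Induction on $|E|$ does not descend: if $E=\{e\}$, then $\Lambda^s$ still contains $e$.
\end{itemize}

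A further symptom is that your girth argument never uses the hypothesis on the $\Delta_{\Lambda'}$, nor $m\ge 6$ until the last line. It would therefore prove, unconditionally, girth $\ge 2m$ for every edge of every tree diagram. For a label-$3$ edge this says $\Delta_{\Lambda,e}$ has no embedded $4$-cycle, which is a consequence of Conjecture~\ref{conj:bowtie free}, whose hardest case is exactly the label-$3$ one.

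The hypothesis has to enter through vertex links, in a local-to-global way as in Lemma~\ref{lem:enlarge0}. For the girth, induct on the number of vertices.
\begin{itemize}
\item If $\Lambda\neq e$, write $e=b_2b_3$ with $b_2$ having another neighbour $b_1$.
\item Metrize $\Delta_{\Lambda,b_1b_2b_3}$ (simply connected by Lemma~\ref{lem:sc}) with angles $\pi/6,\pi/2,\pi/3$ at vertices of type $\hat b_1,\hat b_2,\hat b_3$.
\item By Lemma~\ref{lem:link}, $\hat b_2$-links are complete bipartite.
\item $\hat b_1$-links are copies of $\Delta_{C_1,e}$ for a smaller tree $C_1$, so they have girth $\ge 12$ by induction.
\item $\hat b_3$-links are copies of $\Delta_{C_3,b_1b_2}$. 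These have no embedded $4$-cycle, by the first assertion for the smaller tree $C_3$ (this is where the labeled $4$-cycle hypothesis is used), or by induction if $b_1b_2\in E$.
\item Hence the complex is CAT$(0)$.
\end{itemize}
Now fill an embedded cycle of length $\ell$ in the $\{\hat b_2,\hat b_3\}$-subgraph by a reduced disk. Boundary angles are $\ge\pi$ at $\hat b_2$-vertices and $\ge 2\pi/3$ at $\hat b_3$-vertices, so Gauss--Bonnet gives $2\pi\le\sum_{v\in\partial}(\pi-\theta_v)\le\frac{\ell}{2}\cdot\frac{\pi}{3}$, i.e.\ $\ell\ge 12$.

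The first assertion likewise needs such a local-to-global argument (links plus convexity in these CAT$(0)$ complexes, as in the proof of Lemma~\ref{lem:enlarge0}), not a localisation through a splitting.
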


The following is a consequence of \cite[Prop 6.20, Lem 6.9]{huang2023labeled}. Recall that the notion of bowtie free is defined in Definition~\ref{def:bowtie free}.
\begin{lem}
	\label{lem:subdiagrams}
Given tree Coxeter diagrams $\Lambda',\Lambda,\Lambda'_1,\Lambda_1$ such that $\Lambda',\Lambda_1,\Lambda'_1$ are connected induced subdiagrams of $\Lambda$, $\Lambda'_1\subset \Lambda_1$ and $\Lambda'_1\subset \Lambda'$. Suppose $\Delta_{\Lambda,\Lambda'}$ is bowtie free. Then $\Delta_{\Lambda_1,\Lambda'_1}$ is bowtie free.
\end{lem}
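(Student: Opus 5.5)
The statement is said to follow from \cite[Prop 6.20, Lem 6.9]{huang2023labeled}, so the plan is to reduce bowtie freeness of $\Delta_{\Lambda_1,\Lambda'_1}$ to that of $\Delta_{\Lambda,\Lambda'}$. We do this in two monotonicity steps: first shrink the ambient diagram, then shrink the linear subdiagram.

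\textbf{Step 1 (shrink the linear subdiagram).} We pass from $\Delta_{\Lambda,\Lambda'}$ to $\Delta_{\Lambda,\Lambda'_1}$. Since $\Lambda$ is a tree, any connected induced subdiagram is admissible, so the relation on $\Delta^0_{\Lambda,\Lambda'_1}$ is a poset by \cite[Lem 6.7]{huang2023labeled}. Take a bowtie $x_1y_1x_2y_2$ in $\Delta_{\Lambda,\Lambda'_1}$, with types $\hat s_i,\hat s_j$ in $\Lambda'_1$. Because $\Lambda'_1\subset\Lambda'$ and the linear order on $\Lambda'_1$ is the restriction of the one on $\Lambda'$, the same four vertices form a bowtie in $\Delta_{\Lambda,\Lambda'}$. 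Bowtie freeness there gives a center $z$ with $x_i\le z\le y_j$, of type $\hat s_k$ with $s_k\in\Lambda'$ between $s_i$ and $s_j$. Since $\Lambda'_1$ is a connected subpath containing $s_i$ and $s_j$, it also contains $s_k$, so $z\in\Delta_{\Lambda,\Lambda'_1}$. This is presumably the content of \cite[Lem 6.9]{huang2023labeled}.

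\textbf{Step 2 (shrink the ambient diagram).} We pass from $\Delta_{\Lambda,\Lambda'_1}$ to $\Delta_{\Lambda_1,\Lambda'_1}$. Since $\Lambda_1\supset\Lambda'_1$ is a connected induced subdiagram of the tree $\Lambda$, we can remove the vertices of $\Lambda\setminus\Lambda_1$ one at a time, always removing a leaf, so it suffices to treat the removal of one vertex $r\notin\Lambda'_1$. Write $\Lambda_1=\Lambda\setminus\{r\}$. The idea is to realize $\Delta_{\Lambda_1,\Lambda'_1}$ inside $\Delta_{\Lambda,\Lambda'_1}$:
\begin{enumerate}
\item Fix the vertex $v$ of type $\hat r$ corresponding to $A_{\Lambda_1}$.
\item By Lemma~\ref{lem:link}, $\lk(v,\Delta_\Lambda)\cap\Delta_{\Lambda,\Lambda'_1}$ is a type-preserving copy of $\Delta_{\Lambda_1,\Lambda'_1}$.
\item A bowtie in this link is a bowtie in $\Delta_{\Lambda,\Lambda'_1}$, so bowtie freeness there gives a center $z$.
\end{enumerate}

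The main obstacle is showing that $z$ can be chosen inside the link of $v$. Each bowtie vertex corresponds to a coset meeting $A_{\Lambda_1}$; we need $z\sim v$ as well. The first thing to try is Lemma~\ref{lem:transitive} together with the tree structure: $r$ is separated from the relevant types by $\Lambda'_1$-vertices, so adjacency to $v$ propagates. If that does not apply directly, the fallback is a retraction. Project cosets via $A_\Lambda\to A_{\Lambda_1}$, or use the retraction $\Pi_{\widehat F}$ of Definition~\ref{def:retraction} for a standard subcomplex $F$ of type $\Lambda_1$. The resulting type-preserving simplicial retraction of $\Delta_{\Lambda,\Lambda'_1}$ onto the link preserves the order relation and fixes the bowtie, so it carries $z$ to a center inside the link. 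This retraction argument is what I expect \cite[Prop 6.20]{huang2023labeled} supplies.
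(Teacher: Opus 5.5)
Your Step~1 is correct: a bowtie in $\Delta_{\Lambda,\Lambda'_1}$ is also a bowtie in $\Delta_{\Lambda,\Lambda'}$. Any center there has type index between those of the bowtie vertices, so it already lies in $\Delta_{\Lambda,\Lambda'_1}$.

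The gap is in Step~2, at exactly the point you flag: nothing in the proposal produces a center adjacent to $v$.

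\textbf{Why Lemma~\ref{lem:transitive} fails.} It cannot work when $r$ hangs off a vertex of $\Lambda'_1$ that lies strictly between the types involved. Take $\Lambda$ of type $D_4$, $\Lambda'_1=s_1s_2s_3$, $r$ a leaf at $s_2$, the $x_i$ of type $\hat s_1$, the $y_j$ of type $\hat s_3$, and $z$ of type $\hat s_2$. Since $r$ and $s_2$ are adjacent in $\Lambda$, neither $s_1$ nor $s_3$ separates them, so adjacency to $v$ does not propagate to $z$.

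\textbf{Why the retraction fallback fails.} $\Pi_{\widehat F}$ is not type-preserving on $\Delta_{\Lambda,\Lambda'_1}$. Let $u$ be the minimal double coset representative. The projection onto $F$ (of type $\Lambda_1$) of the residue $uW_{\Lambda\setminus\{s\}}$ is a residue of type $\Lambda_1\cap u(\Lambda\setminus\{s\})u^{-1}$, and this set can contain $s$. In the $D_4$ example, $u=rs_2s_1$ is such a representative, and one computes $u^{-1}s_1u=s_2$ and $u^{-1}s_2u=r$. Hence $uW_{\hat s_1}$ projects onto the residue $W_{\{s_1,s_2\}}$, which lies in no residue of type $\{s_2,s_3\}=\Lambda_1\setminus\{s_1\}$. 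By Lemma~\ref{lem:retraction property}, the induced map on $A_\Lambda$ then sends the corresponding coset to a set containing two elements that differ by $s_1^{\pm1}$. That image is not contained in any coset of $A_{\Lambda_1\setminus\{s_1\}}$, i.e.\ not in a vertex of type $\hat s_1$. So this construction does not supply the ``type-preserving simplicial retraction onto the link'' you invoke, and you give no other.

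\textbf{How to close the gap.} Use Lemma~\ref{lem:4-cycle}, which packages the retraction argument correctly: it projects a null-homotopic loop into one standard subcomplex and uses the support splitting of \cite[Lem 5.10]{huang2023labeled}.
\begin{enumerate}
\item Let $e_j\in\Delta'_\Lambda$ be the barycenter of the edge $x_jy_j$; these endpoints are adjacent because $x_j<y_j$.
\item The sequence $e_1\,v\,e_2\,z$ is a generalized 4-cycle, by flagness and Proposition~\ref{prop:intersection}.
\item Lemma~\ref{lem:4-cycle} yields $z'$ of the same type as $z$ with $z'\sim e_1$, $z'\sim v$ and $z'\sim e_2$.
\item So $z'$ lies in the link of $v$ and is adjacent or equal to each of $x_1,x_2,y_1,y_2$. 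Since it has the type of $z$, it satisfies $x_i\le z'\le y_j$, as required.
\end{enumerate}
If you take $v$ to be the vertex of $\Delta'_\Lambda$ given by the coset $A_{\Lambda_1}$, you also no longer need to peel off leaves one at a time.
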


\begin{lem}
	\label{lem:enlarge0}
Let $\Lambda$ be a tree Coxeter diagram satisfying the assumption of Proposition~\ref{prop:tree}. Let $\Lambda'$ be a linear subdiagram of $\Lambda$ with its consecutive nodes being $\{s_i\}_{i=1}^n$, and let $\{b_1,b_2,b_3\}$ be three consecutive vertices of $\Lambda'$ such that $b_1$ is the closest to $s_1$ among them. We assume that
	\begin{enumerate}
		\item the edge $b_2b_3$ has label $\ge 6$;
		\item if $\Lambda'_1$ is the connected component of $\Lambda'\setminus\{b_1\}$ containing $s_n$ and $C_1$ is the component of $\Lambda\setminus\{b_1\}$ containing $\Lambda'_1$, then the  $(C_1,\Lambda'_1)$-relative Artin complex is bowtie free;
		\item if $\Lambda'_3$ is the connected component of $\Lambda'\setminus\{b_3\}$ containing $s_1$ and $C_3$ is the component of $\Lambda\setminus\{b_3\}$ that containing $\Lambda'_3$, then the $(C_3,\Lambda'_3)$-relative Artin complex is bowtie free.
	\end{enumerate}
	Then the $(\Lambda,\Lambda')$-relative Artin complex satisfies the bowtie free condition.
\end{lem}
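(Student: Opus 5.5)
This is a gluing statement for bowtie freeness along a linear subdiagram. The $\ge 6$ edge $b_2b_3$ is used as a ``hinge'', and the argument follows the strategy of \cite[Sec 6--9]{huang2023labeled}. Order the vertices of $\Lambda'$ so that types increase from $s_1$ to $s_n$. The key structural fact is that a bowtie $x_1y_1x_2y_2$ in $\Delta_{\Lambda,\Lambda'}$ involves at most four types $\hat s_i$. So a bowtie is either ``local'' or ``crossing''.

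\textbf{Local case.} Suppose all four vertices have types lying on one side, say all in $\Lambda'\setminus\{s_1,\dots\}$ beyond $b_1$, or all before $b_3$. Then all four vertices are adjacent to a common vertex of type $\hat b_1$ (resp.\ $\hat b_3$), obtained by the usual filling via Lemma~\ref{lem:4-cycle}. By Lemma~\ref{lem:link}, the link of that vertex is the relative Artin complex $\Delta_{C_1,\Lambda'_1}$ (resp.\ $\Delta_{C_3,\Lambda'_3}$), possibly up to a join factor. Assumptions (2) and (3) then produce the required $z$.

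When no such common neighbour exists a priori, I would first reduce to the following situation: the lower pair $x_1,x_2$ has type $\le \hat b_2$ and the upper pair $y_1,y_2$ has type $\ge \hat b_2$. The reduction goes as follows. Take an upper element whose type lies at or before $b_1$; it is then $<$ all the upper vertices. Using the bowtie in the link of that vertex, we reduce to the link complex, which is a relative Artin complex of the type in (3). For this link complex, Lemma~\ref{lem:subdiagrams} guarantees bowtie freeness. The case of a lower element beyond $b_3$ is symmetric.

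\textbf{Crossing case.} Here the lower vertices have types in $\{\hat s_i: s_i\le b_2\}$ and the upper vertices have types in $\{\hat s_i : s_i\ge b_2\}$, with at least one strictly on each side of $b_2$. Form the 4-cycle and project it to vertices of types $\hat b_2$ and $\hat b_3$. Concretely, for each $x_i$ choose a neighbour $x_i'$ of type $\hat b_2$ in a common simplex, and for each $y_j$ a neighbour $y_j'$ of type $\hat b_3$, arranged so that $x_i'\sim y_j'$. The main obstacle is arranging this simultaneously for all four pairs; I would use Lemma~\ref{lem:4-cycle} and Lemma~\ref{lem:transitive} to move the chosen vertices along the links, together with Lemma~\ref{lem:reduction}-style shortening.

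This yields a closed edge path in $\Delta_{\Lambda,b_2b_3}$ of length at most $4$. By Proposition~\ref{prop:tree}, the edge $b_2b_3$ is $6$-solid, so $\Delta_{\Lambda,b_2b_3}$ has girth $\ge 12$. Hence the path is degenerate: either $x_1'=x_2'$ or $y_1'=y_2'$. In the first case a single vertex $z$ of type $\hat b_2$ satisfies $x_i\le z\le y_j$. In the second case we obtain a common neighbour of type $\hat b_3$, and we return to the local case with the bowtie inside its link, where (2) or (3) applies.

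I expect the construction of a compatible projected 4-cycle of types $\hat b_2,\hat b_3$ to be the hardest step. I would first try to choose $x_i'$ as the join in the link poset and invoke Lemma~\ref{lem:generalized 4cycle1}-style arguments. These arguments require the comparability and separation properties given by admissibility of linear subdiagrams of trees.
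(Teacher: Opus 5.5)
There is a genuine gap, and it affects both of your cases.

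\emph{Local case.} You claim that a bowtie all of whose types lie beyond $b_1$ has a common neighbour of type $\hat b_1$. This is false. Lemma~\ref{lem:4-cycle} cannot produce such a neighbour: it only replaces a vertex of a generalized 4-cycle by one of a type already present. Here is a counterexample, assuming $b_3\neq s_n$. Let $z$ have type $\hat b_3$. Take $x_1,x_2\in\lk(z,\Delta_\Lambda)$ of type $\hat b_2$ with no common neighbour of type $\hat b_1$ in the factor $\Delta_{C_3}$ of $\lk(z,\Delta_\Lambda)$. Take $y_1,y_2\in\lk(z,\Delta_\Lambda)$ of type $\hat s_l$ with $s_l$ beyond $b_3$. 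Then $x_1y_1x_2y_2$ is a bowtie with all types beyond $b_1$. If some $w$ of type $\hat b_1$ were adjacent to $x_1$ and $x_2$, Lemma~\ref{lem:4-cycle} applied to $x_1zx_2w$ would put such a vertex inside $\lk(z,\Delta_\Lambda)$, which is impossible. Your reduction paragraph does not work either: an upper vertex is not below the other upper vertex, and a bowtie does not lie in the link of one of its own vertices.

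Links of vertices whose type is off the hinge, e.g.\ of type $\hat s_1$ with $s_1\neq b_1$, are complexes $\Delta_{\Theta_1,\Theta'_1}$ that still contain $b_1b_2b_3$. Neither (2), (3) nor Lemma~\ref{lem:subdiagrams} gives their bowtie freeness; it comes only from an induction on the number of vertices of $\Lambda'$. The paper sets up exactly this induction and invokes \cite[Lem 6.10]{huang2023labeled}. That reduces the lemma to two things: bowtie freeness of links of vertices of type $\hat s_1$ and $\hat s_n$, and a center for every embedded 4-cycle $x_1y_1x_2y_2$ of type $\hat s_1\hat s_n\hat s_1\hat s_n$.

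\emph{Crossing case.} The step you flag as the main obstacle is to choose $x_i'$ of type $\hat b_2$ and $y_j'$ of type $\hat b_3$ with $x_i'\sim y_j'$ for all four pairs. This is essentially the whole content of the lemma. Choosing projections pair by pair is easy, using a chamber containing $x_i$ and $y_j$, but making them consistent around the cycle is exactly the problem. None of the tools you cite do this; Lemmas~\ref{lem:generalized 4cycle1} and~\ref{lem:reduction} concern the minimal cut complex of a non-tree diagram.

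The missing idea is to work in the 2-complex $X=\Delta_{\Lambda,\{b_1,b_2,b_3\}}$ rather than in the graph $\Delta_{\Lambda,b_2b_3}$.
\begin{enumerate}
\item With triangles of angles $\pi/6,\pi/2,\pi/3$ at $\hat b_1,\hat b_2,\hat b_3$, $X$ is CAT$(0)$. The links have girth $\ge 12$ at $\hat b_1$ by $6$-solidity, are complete bipartite at $\hat b_2$, and have girth $\ge 6$ at $\hat b_3$ by (3).
\item The subcomplexes $X_i,Y_j$ spanned by vertices adjacent to $x_i,y_j$ are convex. For $Y_j$ at $\hat b_1$-vertices this uses (2) through the auxiliary CAT$(0)$ complex $\Delta_{C_1,b_2b_3b_4}$ and \cite[Lem 9.8]{huang2023labeled}.
\item Take a minimal-perimeter quadrilateral with geodesic sides and corners in $X_1\cap Y_1$, $Y_1\cap X_2$, $X_2\cap Y_2$, $Y_2\cap X_1$. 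Every non-degenerate corner has angle $\ge\pi/2$, and the resulting flat rectangle is excluded by a local analysis. This yields $Y_1\cap Y_2\cap X_i\neq\emptyset$ for $i=1,2$, or the symmetric statement.
\end{enumerate}
So the output is not a short cycle that degenerates by a girth count. It is an edge path in the convex set $Y_1\cap Y_2$ from $x_1$ to $x_2$ through common neighbours of $y_1,y_2$. That path must then still be shortened to a center, using bowtie freeness of the links of its interior vertices.
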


\begin{proof}

Let $\Lambda''$ be the induced subdiagram spanned by $\{b_1,b_2,b_3\}$. Let $X=\Delta_{\Lambda,\Lambda''}$. We metric triangles in $X$ such that they are flat triangles in the Euclidean plane with angle $\pi/2$ at vertex of type $\hat b_2$, angle $\pi/6$ at vertex of type $\hat b_1$ and angle $\pi/3$ at vertex of type $\hat b_3$. As the link of each vertex of type $\hat b_2$ in $X$ is a bipartite graph, the link of each vertex of type $\hat b_1$ has girth $\ge 12$ by Lemma~\ref{lem:link}, Proposition~\ref{prop:tree} and assumption (1), and the link of each vertex of type $\hat b_3$ has girth $\ge 6$ by Lemma~\ref{lem:link} and assumption (3). Hence $X$ is locally CAT$(0)$ with such metric \cite[Thm II.5.5 and Lem II.5.6]{BridsonHaefliger1999}. As $X$ is simply-connected (\cite[Lem 6.1]{huang2023labeled}), it is CAT$(0)$ \cite[Thm II.4.1]{BridsonHaefliger1999}.
 
We will prove the lemma by induction on the number of vertices in $\Lambda'$. The base case is $\Lambda'$ has three vertices, i.e. $\Lambda'=\Lambda''$, in which case $\Delta_{\Lambda,\Lambda'}$ is bowtie free by Proposition~\ref{prop:tree} and \cite[Lem 9.3]{huang2023labeled}. Actually, we deduce that $\Delta_{\Lambda,\Lambda'}$ is bowtie free whenever $b_3=s_n$.

Assume the lemma holds for all pairs $\Lambda'\subset \Lambda$ satisfies the assumptions of the lemma such that $\Lambda'$ has $\le n-1$ vertices. Now take $\Lambda'\subset \Lambda$ such that $\Lambda'$ has $n$ vertices. It suffices to verify the two assumptions of \cite[Lem 6.10]{huang2023labeled}. 
Take $v\in \Delta_{\Lambda,\Lambda'}$ of type $\hat s_1$. If $s_1\neq b_1$, then by Lemma~\ref{lem:link}, $\lk(v,\Delta_{\Lambda,\Lambda'})\cong \Delta_{\Theta_1,\Theta'_1}$, where $\Theta'_1=\Lambda'\setminus\{s_1\}$ and $\Theta_1$ is the component of $\Lambda\setminus\{s_1\}$ that contains $\Theta'_1$. Then $\Theta'_1\subset\Theta_1$ satisfies the induction assumption as 
\begin{enumerate}
	\item by Lemma~\ref{lem:subdiagrams} any connected subdiagram of $\Lambda$ (in particular $\Theta$) satisfies the assumption of Proposition~\ref{prop:tree};
\item 	by Lemma~\ref{lem:link} and Lemma~\ref{lem:subdiagrams}, the pair $\Theta'_1\subset\Theta_1$ satisfies three assumptions of the lemma.
\end{enumerate}
Hence $\lk(v,\Delta_{\Lambda,\Lambda'})$ is bowtie free. If $s_1=b_1$, then $C_1=\Theta_1$ and $\Lambda'_1=\Theta'_1$ and $\Delta_{\Theta_1,\Theta'_1}$ is bowtie free by assumption (2). The case $v$ is of type $\hat s_n$ can be handled similarly.
	

It remains to verify assumption (2) of \cite[Lem 6.10]{huang2023labeled}. Take embedded 4-cycle $x_1y_1x_2y_2$ in $\Delta_{\Lambda,\Lambda'}$ of type $\hat s_1\hat s_n\hat s_1\hat s_n$.

\begin{claim*}
	If there is an edge path $w_1w_2\cdots w_k$ in $\Delta_{\Lambda,\Lambda'}$ from $x_1$ to $x_2$ such that for $1\le i\le k$, $w_i$ is adjacent to each of $y_1$ and $y_2$, then there is a vertex $z\in \Delta_{\Lambda,\Lambda'}$ such that $z$ is adjacent to each of $\{x_1,x_2,y_1,y_2\}$.  
	
	If there is an edge path $u_1u_2\cdots u_k$ in $\Delta_{\Lambda,\Lambda'}$ from $y_1$ to $y_2$ such that for $1\le i\le k$, $u_i$ is adjacent to each of $x_1$ and $x_2$, then there is a vertex $z\in \Delta_{\Lambda,\Lambda'}$ such that $z$ is adjacent to each of $\{x_1,x_2,y_1,y_2\}$.  
\end{claim*}

\begin{proof}
We only prove the first statement, as the second can be proved in a similar way.	
We induct on $k$. The base case of $k=3$ is clear. Now we consider $k>3$. Suppose $w_2,w_3,w_4$ have type $\hat s_{i_2},\hat s_{i_3},\hat s_{i_4}$ respectively. Then $i_2>1$. If $i_3>i_2$, then $w_1$ and $w_3$ are adjacent by Lemma~\ref{lem:transitive} and we finish by induction assumption. Assume $i_2>i_3$. Similarly, we can assume $i_4>i_3$. Let $\Theta$ (resp. $\Theta'$) be the connected component of $\Lambda\setminus\{s_{i_3}\}$ (resp. $\Lambda'\setminus\{s_{i_3}\}$) that contains $s_n$. Then $y_1w_2y_2w_4$ is a 4-cycle in $\lk(w_3,\Delta_{\Lambda,\Lambda'})\cong \Delta_{\Theta,\Theta'}$.

 However, $\Delta_{\Theta,\Theta'}$ is bowtie free. This is similar to the proof of $\Delta_{\Theta_1,\Theta'_1}$ being bowtie free as above, if $s_{i_3}$ is between $s_1$ and $b_1$ in $\Lambda'$ (including these two endpoint). If $s_{i_3}$ is between $b_2$ and $s_n$, then $\Theta\subset C_1$ and $\Theta'\subset\Lambda'_1$, hence $\Delta_{\Theta,\Theta'}$ is bowtie free by assumption (2) and Lemma~\ref{lem:subdiagrams}.
 
 It follows that there is $w'_3\in \Delta_{\Theta,\Theta'}$ that is adjacent to each of $\{y_1,w_2,y_2,w_4\}$. Thus $w'_3$ is of type $\hat s_{i'_3}$ with $i'_3>i_2$. Hence $w'_3$ is adjacent to $w_1$ by Lemma~\ref{lem:transitive}. As $w'_3$ is adjacent $w_4$, this decreases the length of the edge path.
\end{proof}

We view $\Delta_{\Lambda,\Lambda'}$ and $X$ as subcomplexes of $\Delta_\Lambda$. Let $X_i$ be the full subcomplex of $X$ spanned by vertices of $X$ that are adjacent to $x_i$. Similarly we define $Y_i$. Now we prove for $i=1,2$, $X_i$ and $Y_i$ are convex subcomplexes of $X$. This claim is only interesting when $x_i,y_i\notin X$. So we can assume $s_1\neq b_1$ and $s_n\neq b_3$. By Lemma~\ref{lem:link}, $X_i\cong \Delta_{\Theta_1,\Lambda''}$, so \cite[Lem 6.2]{huang2023labeled} implies $X_i$ is connected. Similarly, $Y_i$ is connected. Convexity of $X_i$ follows from \cite[Lem 9.4]{huang2023labeled}. To see $Y_1$ is convex, it suffices to show $Y_1$ is locally convex in $X$ around each vertex of $Y_1$, which reduces further to show for each vertex $y\in Y_1$, $\lk(y,X_1)$ is $\pi$-convex in $\lk(y,X)$, i.e. for any two points in $\lk(y,X_1)$ of distance $<\pi$, the shortest path in $\lk(y,X)$ between these two points is contained in $\lk(y,X_1)$. If $y$ is of type $\hat b_3$, as $b_3$ separates $\{b_1,b_2\}$ from $s_n$ in $\Lambda$,  Lemma~\ref{lem:transitive} implies that $\lk(y,Y_1)=\lk(y,X)$, so $\pi$-convexity is clear. If $y$ is of type $\hat b_2$, then both $\lk(y,Y_1)$ and $\lk(y,X)$ are complete bipartite graphs with edge length $\pi/2$, hence $\lk(y,Y_1)$ is $\pi$-convex in $\lk(y,X)$.

Suppose $y$ is of type $\hat b_1$. By Lemma~\ref{lem:link}, $\lk(y,X)\cong \Delta_{C_1,b_2b_3}$. Let $b_4$ be the vertex in $\Lambda'$ that is adjacent to $b_3$ such that $b_4\neq b_2$. Let $X_{234}=\Delta_{C_1,b_2b_3b_4}$, such that triangles in $X_{234}$ are flat triangles in Euclidean plane with angle $\pi/2$ at vertices of type $\hat b_3$, angle $\pi/3$ at vertices of type $\hat b_2$, and angle $\pi/6$ at vertices of type $\hat b_4$. By Proposition~\ref{prop:tree}, the link of each vertex of type $\hat b_4$ in $X_{234}$ have girth $\ge 12$. Let $\Theta_2$ (resp. $\Theta'_2$) be the connected component of $C_1\setminus\{b_2\}$ (resp. $\Lambda'\setminus\{b_2\}$) that contains $s_n$. Then Lemma~\ref{lem:subdiagrams} and assumption (2) imply that $\Delta_{\Theta_2,\Theta'_2}$ is bowtie free. As the link of a vertex of type $\hat b_2$ in $X_{234}$ is a copy of $\Delta_{\Theta_2,b_3b_4}$ by Lemma~\ref{lem:link}, the bowtie free condition implies such link has girth $\ge 6$. Thus $X_{234}$ is CAT$(0)$. Let $Y_{234}$ be the full subcomplex of $X_{234}$ spanned by vertices that are adjacent to $y_1$ (we view both $X_{234}$ and $y_1$ as sitting inside $\Delta_{C_1,\Lambda'_1}\cong \lk(y,\Delta_{\Lambda,\Lambda'})$). Then $Y_{234}$ is a convex subcomplex of $X_{234}$ by \cite[Lem 9.4]{huang2023labeled}. Note that for any vertex $v$ of type $\hat b_4$ in $Y_{234}$, we have $\lk(v,X_{234})=\lk(v,Y_{234})$ by Lemma~\ref{lem:transitive}. 
As $\lk(y,X)$ is the induced subcomplex of $X_{234}$ spanned by vertices of type $\hat b_2$ and $\hat b_3$, and $\lk(y,Y_1)$ is the induced subcomplex of $Y_{234}$ spanned by vertices of type $\hat b_2$ and $\hat b_3$, by \cite[Lem 9.8 (2)]{huang2023labeled}, for any two vertices of $\lk(y,Y_1)$ that are joined by an edge path in $\lk(y,X)$ with $<6$ edges, any shortest edge path of $\lk(y,X)$ connecting these two vertices are contained in $\lk(y,Y_1)$. Thus $\lk(y,Y_1)$ is $\pi$-convex in $\lk(y,X)$. This shows $Y_1$ is convex subcomplex of $X$. Similarly, $Y_2$ is a convex subcomplex of $X$.
	
\begin{claim*}
Either $Y_1\cap Y_2\cap X_i\neq\emptyset$ for $i=1,2$, or $X_1\cap X_2\cap Y_j\neq\emptyset$ for $j=1,2$.
\end{claim*}

\begin{proof}
A $4$-gon in $X$ is \emph{admissible} if it is made of fours geodesic segments $\{\bS_i\}_{i=1}^4$ such that $\bS_1$ goes from $p_1\in X_1\cap Y_1$ to $p_2\in Y_1\cap X_2$, $\bS_2$ goes from $p_2\in Y_1\cap X_2$ to $p_3\in Y_2\cap X_2$, $\bS_3$ goes from $p_3\in Y_2\cap X_2$ to $p_4\in Y_2\cap X_1$ and $\bS_4$ goes from $Y_2\cap X_1$ to $Y_1\cap X_1$. Let $\ell(\bS_i)$ denotes the length of $\bS_i$. The \emph{perimeter} of this 4-gon is defined to be $\sum_{i=1}^4\ell(\bS_i)$.
Let $P$ be a 4-gon with perimeter minimized among all admissible 4-gons. We say $P$ is non-degenerate at $p_1$, if $p_4\neq p_1$ and $p_2\neq p_1$.
The first three paragraphs of the proof of \cite[Lem 9.6]{huang2023labeled} imply that such $P$ exists, and if $P$ is non-degenerate at the corner $p_1$ with $\angle_{p_1}(p_2,p_4)<\pi$, then $p_1$ is a vertex of $X$, and the shortest arc in $\lk(p_1,X)$ from $\log_{p_1}(p_4)$ to $\log_{p_1}(p_2)$ does not contain any point in $\lk(p_1,X_1\cap Y_1)$, where $\log_{p_1}(p_4)$ is the point in $\lk(p_1,X)$ given by the geodesic segment from $p_1$ to $p_4$. Similar statements hold at other corners of $P$.

By Lemma~\ref{lem:transitive}, if  $p_1$ is of type $\hat b_1$, then $\lk(p_1,X_1)=\lk(p_1,X)$; if $p_1$ is of type $\hat b_3$, then $\lk(p_1,Y_1)=\lk(p_1,X)$. Thus the previous paragraph implies that 
if $P$ is non-degenerate at $p_1$ with $\angle_{p_1}(p_2,p_4)<\pi$, then $p_1$ is not type $\hat b_1$ or $\hat b_3$. Thus $p_1$ is of type $\hat b_2$, and $\lk(p_1,X)$ is a complete bipartite graph which is a join $B_1\circ B_3$ where $B_1$ (resp. $B_3$) is made of all type $\hat b_1$ (resp. $\hat b_3$) vertices in $\lk(p_1,X)$. Lemma~\ref{lem:transitive} implies that $B_1\subset \lk(p_1,Y_1)$ and $B_3\subset\lk(p_1,X_1)$. The complete bipartite structure implies that at $y_1$ we can have four triangles $\{\delta_i\}_{i=1}^4$ forming a piece of a flat plane as in Figure~\ref{fig:4triangles} left such that the segments $p_1p_2$ and $p_1p_4$ have their initial subsegments contained in one of $\{\delta_i\}_{i=1}^4$. Let $\{z_i\}_{i=1}^4$ be vertices in Figure~\ref{fig:4triangles} left such that $z_1,z_3$ having type $\hat b_3$ and $z_2,z_4$ having type $\hat b_1$. Note that the edges $y_1z_4,y_1z_2$ are contained in $Y_1$, and the edges $y_1z_1,y_1z_3$ are contained in $X_1$. Convexity of $X_1$ implies that if $\angle_{y_1}(z_4,p_4)<\pi/2$, then $\sigma_4,\sigma_3\subset X_1$; if $\angle_{y_1}(z_2,p_4)<\pi/2$, then $\sigma_1,\sigma_2\subset X_1$. An analogous statement can be deduced from convexity of $Y_1$. Thus the previous paragraph implies that $\angle_{p_1}(p_2,p_4)\ge\pi/2$. 

Suppose $P$ is non-degenerate at each of its 4 corners. Then by previous discussion the angle at each corner is $\ge \pi/2$. By \cite[Chapter II.2.12]{BridsonHaefliger1999}, the angle at each corner is $\pi/2$ and $P$ bounds a flat convex rectangle $R$ in $X$. As the angle at $p_1$ is $\pi/2$, the only way the condition at the end of the first paragraph is satisfied is that the segment $p_4p_1$ contains one of the edges $p_1z_1,p_1z_3$ and $p_1p_2$ contains one of the edges $p_1z_2,p_1z_4$. We have similar conclusions at other corners of $P$. Thus $P$ is contained in the 1-skeleton of $X$ and the flat rectangle bounded by $P$ is tessellated by flat triangles with angles $(\pi/6,\pi/3,\pi/2)$. Figure~\ref{fig:4triangles} right shows part of such tessellation around the corner $p_1$. Let $z_5,z_6,z_7,z_8$ be the vertices in Figure~\ref{fig:4triangles} right with $z_5,z_6\in X_1$. As $z_6$ has type $\hat b_1$ and $z_7$ has type $\hat b_2$, Lemma~\ref{lem:transitive} implies that $x_1\sim z_7$, hence $z_7\in X_1$. As $X_1$ is a convex subcomplex of $X$, we know $z_8\in X_1$. However, $z_8\in Y_1$, which contradicts the end of the first paragraph of the proof. Thus the possibility of $P$ being non-degenerate at each of its 4 corners is ruled out.

\begin{figure}
	\centering
	\includegraphics[scale=1.2]{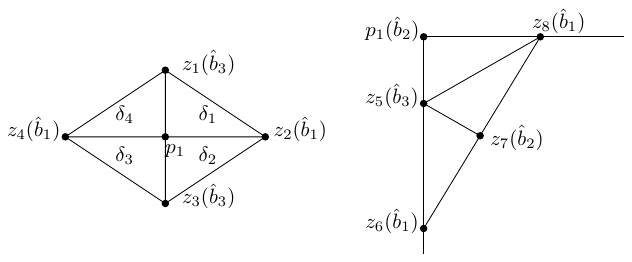}
	\caption{}
	\label{fig:4triangles}
\end{figure}

It remains to consider $P$ is degenerate at one of its corners, say $p_1$. Then we have either $p_1=p_4$ or $p_1=p_2$. If both equality holds, then we are done. Suppose $p_1=p_4$ and $p_1\neq p_2$. By the same reasoning we can assume in addition that $p_3\neq p_4$. Note that $p$ is degenerate at one of  $p_2,p_3$, as otherwise the angles at both corner are $\ge \pi/2$, forcing the triangle $p_1p_2p_3$ to be degenerate. If $P$ is degenerate at $p_2$, as $p_1\neq p_2$, we must have $p_2=p_3$, and the claim follows. If $P$ is degenerate at $p_3$, as $p_3\neq p_4$, we still have $p_2=p_3$. It remains to consider $p_1\neq p_4$ and $p_1=p_2$, which can be handled in a similar way.
\end{proof}
As $Y_1\cap Y_2\cap X_i$ is convex, hence connected, if non-empty, taking an edge path in $Y_1\cap Y_2$ from a vertex in $Y_1\cap Y_2\cap X_1$ to a vertex in $Y_1\cap Y_2\cap X_2$ gives an edge path $w_1w_2\cdots w_k$ in $\Delta_{\Lambda,\Lambda'}$ from $x_1$ to $x_2$ such that each $w_i$ is adjacent to both $\{y_1,y_2\}$, as desired. The case of $X_1\cap X_2\cap Y_i\neq\emptyset$ is similar.
\end{proof}

\begin{prop}
	\label{prop:G2bowtie free}
	Suppose $\Lambda$ is a tree Coxeter diagram. 	Let $\Lambda'$ be the diagram obtained from $\Lambda$ by removing all open edges of label $\ge 6$.	Suppose the labeled 4-cycle condition holds for $\Delta_{\Gamma}$ where $\Gamma$ is any connected component of $\Lambda$. Then $\Delta_\Lambda$ satisfies the labeled 4-cycle condition.
\end{prop}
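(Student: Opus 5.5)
The plan is to reduce the labeled 4-cycle condition for the tree $\Lambda$ to bowtie freeness of relative Artin complexes $\Delta_{\Lambda,L}$ with $L$ linear, and then to prove bowtie freeness by induction on the number of vertices of $L$, using Lemma~\ref{lem:enlarge0} as the inductive step.

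\textbf{Reduction to bowtie freeness.} Let $x_1x_2x_3x_4$ be an embedded $4$-cycle in $\Delta_\Lambda$ of type $\hat s\hat t\hat s\hat t$, and let $\Lambda''$ be a connected induced subgraph containing $s$ and $t$. Since $\Lambda$ is a tree, $\Lambda''$ contains the unique embedded path $L$ from $s$ to $t$. This path is an admissible linear subdiagram, so $\Delta^0_{\Lambda,L}$ is a poset by \cite[Lem 6.7]{huang2023labeled}. In this poset, after ordering $L$ from $s$ to $t$, the cycle is a bowtie: $x_1,x_3<x_2,x_4$. Hence if $\Delta_{\Lambda,L}$ is bowtie free, we obtain $z$ with $x_i\le z\le x_j$. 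Such a $z$ is a common neighbour of all four $x_i$, and its type is $\hat r$ with $r\in L\subset\Lambda''$. (If $z$ equals some $x_i$, the cycle has a diagonal, which contradicts embeddedness; alternatively, one can pass to the link and repeat the argument.) So it suffices to show the following.

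\emph{Claim: $\Delta_{\Lambda,L}$ is bowtie free for every linear induced subdiagram $L$ of $\Lambda$.}

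\textbf{Base case and inductive step.} I will prove the claim by induction on the number of vertices of $L$, and I will do so simultaneously for all connected induced subtrees of $\Lambda$. This is legitimate because, by Lemma~\ref{lem:subdiagrams}, these subtrees inherit the hypotheses of Proposition~\ref{prop:tree}.

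Suppose first that $L$ contains no edge with label $\ge 6$. Then $L$ lies in one component $\Gamma$ of $\Lambda\setminus E$, where $E$ is the set of open edges with label $\ge 6$. By Proposition~\ref{prop:tree}, $\Delta_{\Lambda,\Gamma}$ satisfies the labeled 4-cycle condition. Applying that condition to a bowtie of type $\hat s_1\hat s_n\hat s_1\hat s_n$ and to the connected subgraph $L$, we obtain a common neighbour whose type lies in $L$. Adjacency to all four vertices then forces $z$ to sit between them in the order. For bowties whose types are not the endpoint types, I would pass to links via Lemma~\ref{lem:link}, which reduces to shorter linear subdiagrams, as in \cite[Lem 6.10]{huang2023labeled}.

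Suppose next that $L$ contains an edge $b_2b_3$ with label $\ge 6$. If $L$ has only two vertices, then this edge is $6$-solid by Proposition~\ref{prop:tree}, so $\Delta_{\Lambda,L}$ has girth $\ge 12$ and has no bowties. Otherwise, after possibly reversing the order of $L$, I choose three consecutive vertices $b_1,b_2,b_3$ with $b_1$ closest to $s_1$ and with the edge $b_2b_3$ of label $\ge 6$. Reversal is allowed because bowtie freeness does not depend on the orientation.

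Hypotheses (2) and (3) of Lemma~\ref{lem:enlarge0} ask for bowtie freeness of $\Delta_{C_1,\Lambda'_1}$ and $\Delta_{C_3,\Lambda'_3}$. Here $\Lambda'_1$ and $\Lambda'_3$ are linear with fewer vertices than $L$, and $C_1,C_3$ are connected subtrees of $\Lambda$. So both hypotheses hold by the induction hypothesis applied to those subtrees. Lemma~\ref{lem:enlarge0} then gives that $\Delta_{\Lambda,L}$ is bowtie free.

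\textbf{Expected main obstacle.} I expect the delicate point to be the base case, namely extracting genuine bowtie freeness for all bowties, not only those of endpoint type, from the labeled 4-cycle condition. If the link reduction above becomes awkward, I would instead invoke the equivalence in \cite{huang2023labeled} between the labeled 4-cycle condition and bowtie freeness of relative Artin complexes of linear subdiagrams for trees. A secondary point is checking that the induction really stays inside the class of subtrees satisfying Proposition~\ref{prop:tree}; this is exactly what Lemma~\ref{lem:subdiagrams} provides.
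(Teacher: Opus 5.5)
Your proof is correct. It rests on the same two ingredients as the paper: reduce to bowtie freeness of $\Delta_{\Lambda,L}$ for linear $L$, then glue across edges of label $\ge 6$ with Lemma~\ref{lem:enlarge0}. The organization differs in the following ways.

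\textbf{The reduction.} The paper cites \cite[Prop 6.17]{huang2023labeled}, whereas you prove it directly, which is fine. One small correction: $z\neq x_1$ holds because $z=x_1$ would give $x_3\le x_1$, hence $x_3=x_1$, since vertices of the same type are never comparable. No diagonal is involved.

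\textbf{The gluing.} The paper sweeps from left to right along a maximal linear $\Theta=\Theta_1e_1\Theta_2\cdots e_{k-1}\Theta_k$. It feeds hypothesis (3) of Lemma~\ref{lem:enlarge0} with the prefix already treated, and hypothesis (2) with pieces of the form $e_i\cup\Theta_{i+1}$, whose bowtie freeness it imports from \cite[Lem 9.3, Prop 6.20]{huang2023labeled}. Your strong induction on $|L|$ takes both (2) and (3) from the induction hypothesis. The pieces $e_i\cup\Theta_{i+1}$ are then produced by Lemma~\ref{lem:enlarge0} itself in reversed orientation, with the $6$-solid edge as input. This is more uniform, and your ``possibly reversing'' step covers cases the paper's choice $b_1\in\Theta_1$ does not literally allow, such as $|\Theta_1|=1$. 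The dependence on \cite[Lem 9.3]{huang2023labeled} does not really disappear, though, since the base case in the proof of Lemma~\ref{lem:enlarge0} uses it.

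\textbf{A simplification.} You can drop the simultaneous induction over all subtrees. That also removes the need for your inheritance claim, which Lemma~\ref{lem:subdiagrams} (a statement about bowtie freeness) gives only after translating between the labeled 4-cycle condition and bowtie freeness. Instead, induct on $|L|$ inside $\Lambda$ alone. Then transfer bowtie freeness of $\Delta_{\Lambda,\Lambda'_1}$ and $\Delta_{\Lambda,\Lambda'_3}$, and of $\Delta_{\Lambda,L\setminus\{s_1\}}$ in your link argument, to the relevant subtrees via Lemma~\ref{lem:subdiagrams}. This is exactly how the paper handles hypothesis (3).

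\textbf{The hypothesis of the statement.} You were right to read it with $\Gamma$ ranging over the components of $\Lambda'$. As printed, with components of the tree $\Lambda$, it would make the statement tautological.
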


\begin{proof}
	By \cite[Prop 6.17]{huang2023labeled}, it suffices to show $\Delta_{\Lambda,\Theta}$ is bowtie free for any maximal linear subdiagram $\Theta$ of $\Lambda$. This follows immediately from Proposition~\ref{prop:tree} and Lemma~\ref{lem:subdiagrams} if $\Theta$ does not contain any edge of label $\ge 6$. Now suppose $\Theta$ contains at least one edge of label $\ge 6$. Let $\{e_i\}_{i=1}^{k-1}$ be all such edges and let $\{\Theta_i\}_{i=1}^{k}$ be all connected components of $\Theta\cap \Lambda'$ such that $e_i$ is between $\Theta_i$ and $\Theta_{i+1}$.
	Let $\Lambda_i$ be the connected component of $\Lambda'$ that contains $\Theta_i$. By Proposition~\ref{prop:tree}, $\Delta_{\Lambda,\Lambda_i}$ satisfies the labeled 4-cycle condition. Hence $\Delta_{\Lambda,\Theta_i}$ satisfies the labeled 4-cycle condition, in particular, it is bowtie free by \cite[Lem 6.14]{huang2023labeled}.
	
	Now we use Lemma~\ref{lem:enlarge0} to show $\Delta_{\Lambda,\Theta_1\cup e_1\cup\Theta_2}$ is bowtie free. We choose consecutive vertices $\{b_1,b_2,b_3\}$ such that $b_1\in \Theta_1$ and $b_2,b_3\in e_1$. Then assumption (1) of Lemma~\ref{lem:enlarge0} follows from Proposition~\ref{prop:tree}. By Proposition~\ref{prop:tree}, \cite[Lem 9.3 and Prop 6.20]{huang2023labeled}, $\Delta_{\Lambda,e_2\cup\Theta_2}$ is bowtie free, hence assumption (2) of Lemma~\ref{lem:enlarge0} follows. Assumption (3) of Lemma~\ref{lem:enlarge0} follows from the previous paragraph and Lemma~\ref{lem:subdiagrams}.
	
	By repeatedly applying Lemma~\ref{lem:enlarge0} in such a way, $\Delta_{\Lambda,\Theta}$ is bowtie free.
\end{proof}

\begin{cor}
	\label{cor:G2}
	Suppose that for each non-spherical tree Coxeter diagram $\Lambda$ such that all edge labels are $\le 5$, $\Delta_\Lambda$ satisfies the labeled 4-cycle condition and $A_\Lambda$ satisfies the $K(\pi,1)$-conjecture. Then any Artin group satisfies the $K(\pi,1)$-conjecture.
\end{cor}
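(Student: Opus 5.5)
We reduce Corollary~\ref{cor:G2} to Corollary~\ref{cor:cycle reduction all}. It therefore suffices to show that every non-spherical tree Coxeter diagram $\Lambda$ has two properties: $\Delta_\Lambda$ satisfies the labeled 4-cycle condition, and $A_\Lambda$ satisfies the $K(\pi,1)$-conjecture. In fact we will prove the labeled 4-cycle condition for all tree diagrams, spherical or not, since this is needed inductively for components.

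\emph{Labeled 4-cycle condition.} Let $\Lambda$ be a tree and let $\Lambda'$ be obtained by deleting all open edges with label $\ge 6$. Each connected component $\Gamma$ of $\Lambda'$ is a tree with all labels $\le 5$. If $\Gamma$ is non-spherical, the hypothesis gives the labeled 4-cycle condition for $\Delta_\Gamma$. If $\Gamma$ is spherical, the condition is known from the spherical case cited in the introduction (\cite{huang2023labeled}). Proposition~\ref{prop:G2bowtie free} then applies (its hypothesis concerns the components of $\Lambda'$) and yields the labeled 4-cycle condition for $\Delta_\Lambda$. Since induced subdiagrams of trees are forests whose components are again trees, the same argument covers all subdiagrams.

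\emph{$K(\pi,1)$ for trees.} We induct on the number of vertices of $\Lambda$. If all labels are $\le 5$, the conclusion is the hypothesis. Otherwise, apply Theorem~\ref{thm:cycle reduction}(2) with $\mathcal C$ the class of induced subdiagrams of $\Lambda$ and $\mathcal C_1=\mathcal C$. This choice is allowed, because $\mathcal C\setminus\mathcal C_1$ is empty, so the requirement that its elements are not forests holds vacuously. However, this choice merely repackages the claim we want: Theorem~\ref{thm:cycle reduction}(2) still demands contractibility of $\Delta_\Lambda$ itself.

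So the real step is to prove directly that $\Delta_\Lambda$ is contractible for a tree $\Lambda$ containing an edge of label $\ge 6$. By induction, every proper parabolic $A_{\hat s}$ satisfies $K(\pi,1)$, and every non-spherical proper connected subdiagram has a contractible Artin complex. By Lemma~\ref{lem:dr}, $\Delta_\Lambda$ deformation retracts onto a relative Artin complex $\Delta_{\Lambda,\Theta}$ for a suitable connected subdiagram $\Theta$. The tree case with the labeled 4-cycle condition should then give contractibility, via the approach of \cite{huang2023labeled} for trees (bowtie-freeness of relative complexes along linear subdiagrams giving lattice, hence Garside/CAT(0)-type, structure), combined with Theorem~\ref{thm:kpi1}. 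Finally, Corollary~\ref{cor:cycle reduction all} extends the result to all Artin groups.

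The main obstacle is this step: deducing $K(\pi,1)$ for trees with large labels from the labeled 4-cycle condition. I would first try to cite \cite[Thm~1.1]{huang2023labeled} or its proof, namely that for a tree, the labeled 4-cycle condition together with $K(\pi,1)$ for proper subdiagrams implies contractibility of $\Delta_\Lambda$. If that is not directly available, I would instead realize $\Delta_{\Lambda,\Theta}$, for $\Theta$ a maximal linear subdiagram, as a bowtie-free poset complex and invoke the bowtie-free contractibility criterion.
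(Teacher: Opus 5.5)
Your proof of the labeled 4-cycle condition for all trees matches the paper's argument: Proposition~\ref{prop:G2bowtie free} applied to the components left after deleting the label-$\ge 6$ edges, with spherical components handled by \cite[Cor 8.2]{huang2023labeled}, a point the paper leaves implicit. The final appeal to Corollary~\ref{cor:cycle reduction all} is also the same.

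The gap is the other input that corollary needs: $K(\pi,1)$ for trees containing an edge of label $\ge 6$. You correctly see that taking $\mathcal C_1=\mathcal C$ in Theorem~\ref{thm:cycle reduction} is circular, but you then leave the step open, and neither proposed fix works as stated.
\begin{itemize}
\item The principle you want to cite is that, for a tree, the labeled 4-cycle condition plus $K(\pi,1)$ for proper subdiagrams makes $\Delta_\Lambda$ contractible. This is not available. Combined with Theorem~\ref{thm:kpi1} and induction on the number of vertices, it would make the $K(\pi,1)$ hypothesis on trees redundant in this corollary and in Theorem~\ref{thm:cycle 1}. The whole conjecture would then follow from Conjecture~\ref{conj:bowtie free} alone, which the paper does not claim.
\item \cite[Thm~1.1]{huang2023labeled} does not suffice either. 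It concerns a special class of trees (Corollary~\ref{cor:spherical combine0} is presented as its generalization). It does not cover trees whose label-$\le 5$ pieces are arbitrary diagrams known only to satisfy the hypotheses of Corollary~\ref{cor:G2}.
\item The bowtie-free route along a maximal linear subdiagram is only a sketch; you have not identified any local-to-global contractibility statement for it.
\end{itemize}

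What is missing is that contractibility comes from the label-$\ge 6$ edge itself. The label-$\le 5$ pieces contribute only their labeled 4-cycle condition and their $K(\pi,1)$. The paper closes the step by citing \cite[Prop 9.12]{huang2023labeled}, which together with Proposition~\ref{prop:G2bowtie free} yields $K(\pi,1)$ for every tree.

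You can also prove this directly by induction on the number of vertices:
\begin{itemize}
\item Choose a path $b_1b_2b_3$ with $b_2b_3$ of label $\ge 6$. Every subdiagram containing $\{b_1,b_2,b_3\}$ is non-spherical, since it contains a $\widetilde G_2$-or-worse piece. So induction and Lemma~\ref{lem:dr} give a deformation retraction of $\Delta_\Lambda$ onto $\Delta_{\Lambda,\{b_1,b_2,b_3\}}$.
\item As in the first paragraph of the proof of Lemma~\ref{lem:enlarge0}, this complex is CAT$(0)$ with $(\pi/2,\pi/3,\pi/6)$-triangles. The type-$\hat b_2$ links are complete bipartite. The type-$\hat b_1$ links have girth $\ge 12$ by Proposition~\ref{prop:tree}. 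The type-$\hat b_3$ links have girth $\ge 6$, because the labeled 4-cycle condition applied with $\Lambda'=\{b_1,b_2\}$ rules out embedded $\hat b_1\hat b_2\hat b_1\hat b_2$ cycles.
\item Theorem~\ref{thm:kpi1} then finishes. For the proper parabolics use the hypothesis (non-spherical label-$\le 5$ trees), Deligne (spherical trees), or induction (trees with a label-$\ge 6$ edge).
\end{itemize}
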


\begin{proof}
By Proposition~\ref{prop:G2bowtie free} and \cite[Prop 9.12]{huang2023labeled}, for any tree Coxeter diagram $\Lambda$, $\Delta_\Lambda$ satisfies the labeled 4-cycle and $A_\Lambda$ satisfies the $K(\pi,1)$-conjecture. Now we are done by Corollary~\ref{cor:cycle reduction all}.
\end{proof}

\begin{cor}
	\label{cor:G2 single}
	Let $\Lambda$ be a Coxeter diagram such that for any non-spherical induced subdiagram $\Lambda'$ which is a tree with all edge labels $\le 5$, $\Delta_{\Lambda'}$ satisfies the labeled 4-cycle condition and $A_{\Lambda'}$ satisfies the $K(\pi,1)$-conjecture. Then $A_\Lambda$ satisfies the $K(\pi,1)$-conjecture.
\end{cor}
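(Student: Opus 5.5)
The plan is to mirror the proof of Corollary~\ref{cor:cycle reduction single}, with Corollary~\ref{cor:G2} as the model for handling the large labels. Let $\mathcal C$ be the class of all induced subdiagrams of $\Lambda$; it is closed under taking induced subdiagrams. Let $\mathcal C_1\subset\mathcal C$ consist of those induced subdiagrams that are forests. Every element of $\mathcal C\setminus\mathcal C_1$ is then not a forest, so Theorem~\ref{thm:cycle reduction} applies once its hypotheses on $\mathcal C_1$ are checked.

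\textbf{Step 1: the labeled 4-cycle condition on forests.} Take a forest $\Theta\in\mathcal C_1$; it suffices to treat a connected component, which is a tree $\Theta_0$. Remove the open edges of $\Theta_0$ with label $\ge 6$. Each resulting component $\Gamma$ is an induced subtree of $\Lambda$ with all labels $\le 5$.
\begin{itemize}
\item If $\Gamma$ is non-spherical, the hypothesis gives the labeled 4-cycle condition for $\Delta_\Gamma$.
\item If $\Gamma$ is spherical, I would quote that spherical tree diagrams satisfy Conjecture~\ref{conj:bowtie free}, i.e.\ the labeled 4-cycle condition. This is recorded in the introduction as a result of \cite{huang2023labeled}.
\end{itemize}
Proposition~\ref{prop:G2bowtie free} then gives the labeled 4-cycle condition for $\Delta_{\Theta_0}$. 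Because $\Delta_\Theta$ is a join of the complexes of the components, the condition passes to $\Delta_\Theta$, and this verifies Theorem~\ref{thm:cycle reduction}(1).

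\textbf{Step 2: $K(\pi,1)$ and contractibility on forests.} Again take a tree $\Theta_0\subset\Lambda$. The components $\Gamma$ of $\Theta_0$ minus the label-$\ge 6$ edges satisfy the $K(\pi,1)$-conjecture: in the non-spherical case by hypothesis, in the spherical case by \cite{deligne}. Together with the labeled 4-cycle property from Step 1, \cite[Prop 9.12]{huang2023labeled} should give the $K(\pi,1)$-conjecture for $A_{\Theta_0}$, exactly as it is used in the proof of Corollary~\ref{cor:G2}. Hence every forest in $\mathcal C_1$ satisfies the $K(\pi,1)$-conjecture.

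To get contractibility, take a non-spherical forest and a non-spherical component $\Lambda''$. Apply \cite[Cor 2.4, Thm 3.1]{godelle2012k} to $A_{\Lambda''}$ and its parabolic subgroups $A_{\Lambda''\setminus\{s\}}$, which are themselves forests in $\mathcal C_1$. This shows $\Delta_{\Lambda''}$ is contractible, hence so is the join, as in Corollary~\ref{cor:cycle reduction all}. Theorem~\ref{thm:cycle reduction}(2) then yields the $K(\pi,1)$-conjecture for $A_\Lambda$.

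\textbf{Main obstacle.} The delicate point is the induced-subdiagram bookkeeping. Proposition~\ref{prop:G2bowtie free} and \cite[Prop 9.12]{huang2023labeled} must only require information about subtrees of the given tree $\Theta_0$, and these are induced subdiagrams of $\Lambda$, so the local hypothesis suffices. The spherical subtrees must also be covered by known results, since they are excluded from the hypothesis. I would check that the proof of Proposition~\ref{prop:G2bowtie free} invokes Lemma~\ref{lem:subdiagrams} and Proposition~\ref{prop:tree} only on subdiagrams of $\Theta_0$.
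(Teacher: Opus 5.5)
Your proposal is correct and follows the paper's route: Proposition~\ref{prop:G2bowtie free} and \cite[Prop 9.12]{huang2023labeled} give the labeled 4-cycle condition and the $K(\pi,1)$-conjecture for every induced tree of $\Lambda$, and the paper then invokes Corollary~\ref{cor:cycle reduction single}, whose proof via Theorem~\ref{thm:cycle reduction} with $\mathcal C_1$ the induced forests is exactly what you unwind. Your explicit treatment of the spherical pieces fills in a point the paper leaves implicit; note that for them the labeled 4-cycle condition itself, and not merely the existence of centers, is what is needed, and it is available from \cite[Cor 8.2]{huang2023labeled}, as used in Corollary~\ref{cor:spherical combine}.
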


\begin{proof}
By Proposition~\ref{prop:G2bowtie free} and \cite[Prop 9.12]{huang2023labeled}, for any induced tree subdiagram $\Lambda'$ of $\Lambda$, $\Delta_{\Lambda'}$ satisfies the labeled 4-cycle and $A_{\Lambda'}$ satisfies the $K(\pi,1)$-conjecture. Now we are done by Corollary~\ref{cor:G2 single}.
\end{proof}

\begin{cor}
	\label{cor:spherical combine}
Let $\Lambda$ be a Coxeter diagram such that if we remove all the open edges of $\Lambda$ of label $\ge 6$ from $\Lambda$, the remaining diagram has each of its connected component being spherical. Then $A_\Lambda$ satisfies the $K(\pi,1)$-conjecture.
\end{cor}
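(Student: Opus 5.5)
The plan is to reduce Corollary~\ref{cor:spherical combine} to Corollary~\ref{cor:G2 single}. In the setting of that corollary, this means checking two things for every non-spherical induced subdiagram $\Lambda'\subset\Lambda$ that is a tree with all edge labels $\le 5$: the labeled 4-cycle condition for $\Delta_{\Lambda'}$, and the $K(\pi,1)$-conjecture for $A_{\Lambda'}$.

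The key observation is what happens to such a $\Lambda'$ once the open edges of label $\ge 6$ are deleted from $\Lambda$. Since $\Lambda'$ is an induced subdiagram whose edges all have label $\le 5$, none of its edges is deleted. Being connected, $\Lambda'$ therefore lies entirely inside one connected component $\Gamma$ of the remaining diagram, and it is an induced subdiagram of $\Gamma$. By hypothesis $\Gamma$ is spherical. Induced subdiagrams of spherical diagrams are spherical, because parabolic subgroups of finite Coxeter groups are finite. So $\Lambda'$ is spherical.

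This means no $\Lambda'$ of the required kind exists, and the hypothesis of Corollary~\ref{cor:G2 single} holds vacuously. That corollary then yields the $K(\pi,1)$-conjecture for $A_\Lambda$.

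The one point that needs care is the exact quantifier in the statement being used. Corollary~\ref{cor:G2 single} as stated restricts to non-spherical $\Lambda'$, so the vacuous argument applies directly.

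If the argument instead required the labeled 4-cycle condition for spherical trees too, we would supply it from known results. The conjecture holds for spherical $A_S$, as recorded in the introduction (this is the spherical case of \cite[Thm~1.1]{huang2023labeled}). The $K(\pi,1)$-conjecture for spherical Artin groups is Deligne's theorem \cite{deligne}. Alternatively, we would feed these facts into Proposition~\ref{prop:G2bowtie free} and then into Corollary~\ref{cor:cycle reduction single}.

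I expect no real obstacle beyond correctly matching hypotheses. The step most worth double-checking is the claim that a connected induced tree with labels $\le5$ cannot cross a removed edge. This holds because every edge of $\Lambda'$ is an edge of $\Lambda$ carrying the same label.
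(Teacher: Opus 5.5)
Your proposal is correct and follows the paper's route: the paper also deduces the statement from Corollary~\ref{cor:G2 single}, but it supplies the spherical inputs directly (the labeled 4-cycle condition from \cite[Cor 8.2]{huang2023labeled} and the $K(\pi,1)$-conjecture from \cite{deligne}) rather than arguing that the hypothesis is vacuous; this is the more robust choice, because the proof of Corollary~\ref{cor:G2 single} itself silently relies on those spherical facts for spherical tree components, which are all the components here. In your fallback, cite \cite[Cor 8.2]{huang2023labeled}: the introduction's remark is about Conjecture~\ref{conj:bowtie free} (mere existence of a center), which is weaker than the labeled 4-cycle condition.
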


\begin{proof}
For each Coxeter diagram $\Lambda$ which is connected and spherical, $\Delta_\Lambda$ satisfies the labeled 4-cycle condition by \cite[Cor 8.2]{huang2023labeled}, and $\Delta_\Lambda$ satisfies the $K(\pi,1)$-conjecture by \cite{deligne}. Thus the corollary follows from Corollary~\ref{cor:G2 single}.
\end{proof}

\bibliographystyle{alpha}
\bibliography{mybib}
\end{document}